\patchcmd{\ttlh@hang}{\parindent\z@}{\parindent\z@\leavevmode}{}{}
\patchcmd{\ttlh@hang}{\noindent}{}{}{}
\begin{document}

\renewcommand{\tamumanuscripttitle}{Newton polytopes and numerical algebraic geometry}

\renewcommand{\tamupapertype}{Dissertation}

\renewcommand{\tamufullname}{Taylor Christian Brysiewicz}

\renewcommand{\tamudegree}{Doctor of philosophy}
\renewcommand{\tamuchairone}{Frank Sottile}

\renewcommand{\tamumemberone}{Laura Matusevich}
\newcommand{\tamumembertwo}{Andrea Bonito}
\newcommand{\tamumemberthree}{Christopher Menzel}
\renewcommand{\tamudepthead}{Sarah Witherspoon}

\renewcommand{\tamugradmonth}{May}
\renewcommand{\tamugradyear}{2020}
\renewcommand{\tamudepartment}{Mathematics}

%
%
%
%


\providecommand{\tabularnewline}{\\}

\begin{titlepage}
\begin{center}
\MakeUppercase{\tamumanuscripttitle}
\vspace{4em}

A \tamupapertype

by

\MakeUppercase{\tamufullname}

\vspace{4em}

\begin{singlespace}

Submitted to the Office of Graduate and Professional Studies of \\
Texas A\&M University \\

in partial fulfillment of the requirements for the degree of \\
\end{singlespace}

\MakeUppercase{\tamudegree}
\par\end{center}
\vspace{2em}
\begin{singlespace}
\begin{tabular}{ll}
 & \tabularnewline
& \cr
Chair of Committee, & \tamuchairone\tabularnewline
Committee Members, & \tamumemberone\tabularnewline
 & \tamumembertwo\tabularnewline
 & \tamumemberthree\tabularnewline
Head of Department, & \tamudepthead\tabularnewline

\end{tabular}
\end{singlespace}
\vspace{3em}

\begin{center}
\tamugradmonth \hspace{2pt} \tamugradyear

\vspace{3em}

Major Subject: \tamudepartment \par
\vspace{3em}
Copyright \tamugradyear \hspace{.5em}\tamufullname 
\par\end{center}
\end{titlepage}
\pagebreak{}

%
%
%
%

\chapter*{ABSTRACT}

\addcontentsline{toc}{chapter}{ABSTRACT} 

\pagestyle{plain} 
\pagenumbering{roman} 
\setcounter{page}{2}

\indent 

We develop a collection of numerical algorithms which connect ideas from polyhedral geometry and algebraic geometry. The first algorithm we develop functions as a numerical oracle for the Newton polytope of a hypersurface and is based on ideas of Hauenstein and Sottile. Additionally, we construct a numerical tropical membership algorithm which uses the former algorithm as a subroutine. Based on recent results of Esterov, we give an algorithm which recursively solves a sparse polynomial system when the support of that system is either lacunary or triangular.  
Prior to explaining these results, we give necessary background on polytopes, algebraic geometry, monodromy groups of branched covers, and numerical algebraic geometry.

\pagebreak{}

%
%
%
%

\chapter*{DEDICATION}
\addcontentsline{toc}{chapter}{DEDICATION}  

\begin{center}
\vspace*{\fill}
To my father
\vspace*{\fill}
\end{center}

\pagebreak{}

%
%
%
%

\chapter*{ACKNOWLEDGMENTS}
\addcontentsline{toc}{chapter}{ACKNOWLEDGMENTS}  

\indent 

I offer my deepest gratitude to my family, friends, teachers, and mentors whom have helped me along my journey. Without their love and support, none of this work would have begun.

To my parents, Kathleen and Robert Brysiewicz, for their unconditional love.
To Nicholas, Bobbi, Shelby, Alexandra, and Tobias, for constantly supporting me, always picking up the phone, and generally mapping out corners of the world before I need to.

To Alex, Hank, Fulvio, and all of the other friends and colleagues I have met throughout graduate school, for the countless conversations about mathematics. 
To my best friend, Jamie, for her unwavering love and reminders to breathe. 

To my teachers, for believing in me, in particular, Alexandra Brysiewicz, Ginger Benning, Karen Yerly, Sue Samonds, Joan Kustak, Andrew Wang, Deepak Naidu, Michael Geline, and Seth Dutter.
To Peter Howard and Monique Stewart for helping me navigate graduate school.
To Laura Matusevich, Andrea Bonito, and Christopher Menzel for serving on my committee.
To Jonathan Hauenstein, Michael Burr, Christopher O'Neill, Timo de Wolff, Laura Matusevich, Anton Leykin, Bernd Sturmfels, and Cynthia Vinzant, for their general mentorship. 

Finally, to Frank (Sottile), whose commitment, thoughtfulness, and mathematical insight are more than I could ask for in an advisor. He has helped me grow in every facet of what it means to be a professional mathematician, and for that, I am forever grateful.

\pagebreak{}
%
%
%
%

\chapter*{CONTRIBUTORS AND FUNDING SOURCES}
\addcontentsline{toc}{chapter}{CONTRIBUTORS AND FUNDING SOURCES}  

\subsection*{Contributors}
This work was supported by a dissertation committee consisting of Professor Frank Sottile [advisor], Professor Laura Matusevich, and Professor Andrea Bonito of the Department of Mathematics and Professor Christopher Menzel of the Department of Philosophy.
The material in Section $8$ is joint work with Jose Rodriguez, Frank Sottile, and Thomas Yahl.

All other work conducted for the dissertation was completed by the student independently.
\subsection*{Funding Sources}
Graduate study was supported by a graduate fellowship from Texas A\&M University. The material in Section $7$ was supported by NSF grant DMS-1501370 and completed during the ICERM-2018 semester on nonlinear algebra. 
\pagebreak{}

%
%
%
%

\phantomsection
\addcontentsline{toc}{chapter}{TABLE OF CONTENTS}  

\begin{singlespace}
\renewcommand\contentsname{\normalfont} {\centerline{TABLE OF CONTENTS}}

\setcounter{tocdepth}{4} 

\setlength{\cftaftertoctitleskip}{1em}
\renewcommand{\cftaftertoctitle}{%
\hfill{\normalfont {Page}\par}}

\tableofcontents

\end{singlespace}

\pagebreak{}


\phantomsection
\addcontentsline{toc}{chapter}{LIST OF FIGURES}  

\renewcommand{\cftloftitlefont}{\center\normalfont\MakeUppercase}

\setlength{\cftbeforeloftitleskip}{-12pt} 
\renewcommand{\cftafterloftitleskip}{12pt}

\renewcommand{\cftafterloftitle}{%
\\[4em]\mbox{}\hspace{2pt}FIGURE\hfill{\normalfont Page}\vskip\baselineskip}

\begingroup

\begin{center}
\begin{singlespace}
\setlength{\cftbeforechapskip}{0.4cm}
\setlength{\cftbeforesecskip}{0.30cm}
\setlength{\cftbeforesubsecskip}{0.30cm}
\setlength{\cftbeforefigskip}{0.4cm}
\setlength{\cftbeforetabskip}{0.4cm}



\listoffigures

\end{singlespace}
\end{center}

\pagebreak{}

%
\phantomsection
\addcontentsline{toc}{chapter}{LIST OF TABLES}  

\renewcommand{\cftlottitlefont}{\center\normalfont\MakeUppercase}

\setlength{\cftbeforelottitleskip}{-12pt} 

\renewcommand{\cftafterlottitleskip}{1pt}

\renewcommand{\cftafterlottitle}{%
\\[4em]\mbox{}\hspace{2pt}TABLE\hfill{\normalfont Page}\vskip\baselineskip}

\begin{center}
\begin{singlespace}

\setlength{\cftbeforechapskip}{0.4cm}
\setlength{\cftbeforesecskip}{0.30cm}
\setlength{\cftbeforesubsecskip}{0.30cm}
\setlength{\cftbeforefigskip}{0.4cm}
\setlength{\cftbeforetabskip}{0.4cm}

\listoftables 

\end{singlespace}
\end{center}
\endgroup
\pagebreak{}  

\pagestyle{plain} 
\pagenumbering{arabic} 
\setcounter{page}{1}

\chapter{INTRODUCTION \label{cha:introduction}}
Understanding the solution sets of polynomial systems,
\begin{equation}
\label{eq:polynomialsystem}
f_1(x_1,\ldots,x_n)=f_2(x_1,\ldots,x_n)=\cdots=f_k(x_1,\ldots,x_n)=0,
\end{equation} is a ubiquitous need throughout mathematics, as well as the primary goal of algebraic geometry. 
Such solution sets, $$\V(f_1,\ldots,f_k)=\{(a_1,\ldots,a_n) \in \C^n \mid f_i(a_1,\ldots,a_n)=0 \text{ for } i=1,\ldots,k\},$$ are called varieties. One way to study varieties is to partition them into families with respect to some structure so that most varieties in the same family have the same properties. Those that do not exhibit these generic properties may still be understood through the role they play in their family.  In this dissertation, we study families of varieties delineated via the monomials appearing in their defining polynomials. 

The support of a polynomial,
$$f(x_1,\ldots,x_n) = \sum_{\alpha=(\alpha_1,\ldots,\alpha_n)\in \Z^n} c_\alpha x_1^{\alpha_1}\cdots x_n^{\alpha_n}, \quad c_\alpha \in \C,$$ is the set $\supp(f)=\{\alpha \in \Z^n \mid c_\alpha \neq 0\}$. Studying a polynomial system $F=(f_1,\ldots,f_k)$ through its support $\Adot = (\supp(f_1),\ldots,\supp(f_k))$  endows it with the structure of a sparse polynomial system and identifies $F$ as a point in the coefficient space $\C^{\Adot}$. Sparse polynomial systems belonging to the same family share a striking number of properties, many depending only on the collection $P_\bullet$ of convex hulls of the supports in $\Adot$, called Newton polytopes. 

The polyhedral geometry of the Newton polytopes $P_\bullet$ encodes much information about $\V(F)$. For example, the famous Bernstein-Kushnirenko Theorem (Proposition \ref{prop:BKK}) states that when $F$ is a square system ($k=n$) the number of isolated points of $\V(F)$ in $(\C^\times)^n$ is bounded by a numerical value called the mixed volume of $P_\bullet$. It also states that this bound is almost always attained, inducing a branched cover
\begin{align}
\label{eq:introbranchedcover}
{\pi_{\Adot}}\colon X_{\Adot}&\to \C^{\Adot}\\
(x,F) &\mapsto F \nonumber
\end{align}
from the incidence variety ${X_\Adot}=\{(x,F) \mid x \in (\C^\times)^n, F(x)=0\}$ whose fiber $\pi_{\Adot}^{-1}(F)$ is identified with the solutions of $F=0$ in $(\C^{\times})^n$. This viewpoint gives geometric structure to families of sparse polynomial systems whereby we may understand their constituents.

More difficult than counting solutions of polynomial systems is computing them.
Over the last sixty years, mathematicians laid the groundwork for computational algebraic geometry, developing symbolic algorithms for studying and computing solutions of polynomials. More recently, techniques from numerical analysis joined  algebraic geometry to form a novel computational paradigm known as numerical algebraic geometry. While symbolic algorithms use the algebraic properties of a polynomial system to study its solutions, numerical algebraic geometry studies varieties by computing numerical approximations of points on them, thus providing a predominantly geometric viewpoint toward computations in algebraic geometry.

Due to the geometric nature of numerical algebraic geometry, many definitions and proofs from geometry translate directly to numerical algorithms. For example, the definition of the monodromy group of a branched cover immediately suggests a numerical method to compute it (Algorithm \ref{alg:extractmonodromyelements}). Another example is Huber and Sturmfels' proof of the Bernstein-Kushnirenko  Theorem in \cite{HuberSturmfels}  which produces the polyhedral homotopy algorithm~(Algorithm \ref{alg:polyhedralhomotopy}) for  computing all solutions of $F=0$.

In Section~\ref{section:solvingsparsedecomposablesystems} we give an algorithm which improves upon the polyhedral homotopy whenever the branched cover $\pi_{\Adot}$ decomposes into a composition of branched covers. This decomposition happens if and only if the monodromy group of $\pi_{\Adot}$ is imprimitive, a condition that Esterov \cite{Esterov} classified via computable conditions on $\Adot$. Our algorithm (Algorithm \ref{alg:SDS}) assesses whether or not $\pi_\Adot$ decomposes and recursively computes fibers of the decomposition to compute a fiber of $\pi_\Adot$, thus solving a sparse polynomial system with support $\Adot$.

Conversely, algorithms in numerical algebraic geometry can extract information about Newton polytopes. In 2012, Hauenstein and Sottile suggested a numerical algorithm (Algorithm \ref{alg:hsalgorithm}) which functions as a vertex oracle for the Newton polytope of the defining equation of a hypersurface. In Section~\ref{section:numericalNP}, we explain how this algorithm is stronger than a vertex oracle and as a consequence, introduce the notion of a numerical oracle. Based on ideas from Hept and Theobald \cite{HeptTheobald}, we develop a tropical membership test (Algorithm \ref{alg:TropicalMembership}) which relies on the algorithm of Hauenstein and Sottile as a subroutine. We analyze the convergence rates of each algorithm (Theorem \ref{thm:convergencerates}) and explain our implementation of them in Section~\ref{sec:numericalnp}. Finally, we use our implementation to investigate the colossal L\"uroth polytope (Section~\ref{sec:Luroth}) and determine the implicit equation  a hypersurface from algebraic vision (Section~\ref{sec:AlgebraicVision}). 

We provide all necessary background in Sections \ref{section:polytopes}-\ref{section:numericalalgebraicgeometry}.
Section~\ref{section:polytopes} includes elementary results regarding polytopes, numerical oracles, mixed volumes, and subdivisions. In Section~\ref{section:algebraicgeometry} we give a basic introduction to algebraic geometry necessary for the subsequent sections. In Section~\ref{section:branchedcoversandgroups} we discuss branched covers, decomposable branched covers, and monodromy/Galois groups; we also give a proof that the monodromy/Galois group of a branched cover is imprimitive if and only if the branched cover is decomposable.  In Section~\ref{section:newtonpolytopes} we connect the previous sections by introducing Newton polytopes, sparse polynomial systems, and tropical algebraic geometry. Section~\ref{section:numericalalgebraicgeometry} builds the theory of numerical algebraic geometry and contains an assembly of numerical algorithms, including Huber and Sturmfels' treatment of the polyhedral homotopy, as well as algorithms which use monodromy to solve polynomial systems.

\chapter{POLYTOPES \label{section:polytopes}}
\renewcommand*{\thefootnote}{\fnsymbol{footnote}}
We remark that a portion of the discussion of numerical oracles in this section also appears in the article \cite{Bry:NPtrop} by the author\footnote{Reprinted with permission from T. Brysiewicz, ``Numerical Software to Compute Newton polytopes and Tropical Membership,''  {\it{Mathematics in Computer Science,}} 2020. Copyright 2020 by Springer Nature.}.
\renewcommand*{\thefootnote}{\arabic{footnote}}
\section{Describing polytopes}
\label{subsection:representingpolytopes}
A subset $S \subset \R^n$ is \mydef{convex} if for any $p,q \in S$ the line segment between them $\mydefMATH{[p,q]}=\{\lambda p + (1-\lambda)q \mid 0 \leq \lambda \leq 1\}$ is also contained in $S$.  The \mydef{convex hull} of $S$ is 
$$\mydefMATH{\conv(S)} = \bigcap \{S' \subset \R^n \mid S \subset S', S' \text{ convex}\}.$$
\begin{lemma}
If $\mathcal A=\{\alpha_1,\ldots,\alpha_k\} \subset \R^n$ is finite then 
$$\conv(\mathcal A) = \left\{\sum_{i=1}^k \lambda_i\alpha_i \mymid \sum_{i=1}^k \lambda_i =1, \lambda_i \in \R_{\geq 0}\right\}.$$
\end{lemma}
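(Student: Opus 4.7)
The plan is to prove two inclusions between $\conv(\mathcal{A})$ and the set $C := \bigl\{\sum_{i=1}^k \lambda_i \alpha_i \mymid \sum \lambda_i = 1,\ \lambda_i \geq 0\bigr\}$ of convex combinations.

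For the inclusion $\conv(\mathcal{A}) \subseteq C$, I would verify that $C$ is itself a convex set containing $\mathcal{A}$, so that by the defining intersection $\conv(\mathcal{A}) = \bigcap\{S' \mid \mathcal{A}\subset S',\ S' \text{ convex}\}$, the set $C$ is one of the sets being intersected and therefore contains $\conv(\mathcal{A})$. Membership $\alpha_j \in C$ is immediate by taking $\lambda_j = 1$ and the remaining $\lambda_i = 0$. For convexity, given $p = \sum \lambda_i \alpha_i$ and $q = \sum \mu_i \alpha_i$ in $C$ and $t \in [0,1]$, a direct computation shows $tp + (1-t)q = \sum(t\lambda_i + (1-t)\mu_i)\alpha_i$, whose coefficients are nonnegative and sum to $t + (1-t) = 1$, hence lie in $C$.

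For the reverse inclusion $C \subseteq \conv(\mathcal{A})$, I would show that every convex set $S'$ containing $\mathcal{A}$ must contain $C$, by induction on $k$. The case $k=1$ is trivial, and $k=2$ is exactly the definition of convexity. For the inductive step, given a convex combination $\sum_{i=1}^k \lambda_i \alpha_i$, if $\lambda_k = 1$ the point equals $\alpha_k \in S'$; otherwise $1 - \lambda_k > 0$ and I would rewrite
\[
\sum_{i=1}^k \lambda_i \alpha_i \;=\; (1-\lambda_k)\sum_{i=1}^{k-1} \frac{\lambda_i}{1-\lambda_k}\,\alpha_i \;+\; \lambda_k \alpha_k.
\]
The renormalized coefficients $\lambda_i/(1-\lambda_k)$ are nonnegative and sum to $1$, so by the inductive hypothesis the inner sum lies in $S'$; convexity of $S'$ then places the whole expression in $S'$. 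Since this holds for every convex $S' \supseteq \mathcal{A}$, we conclude $C \subseteq \conv(\mathcal{A})$.

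The only delicate point is the inductive step, where one must separate out the edge case $\lambda_k = 1$ before dividing by $1 - \lambda_k$; otherwise, the argument is routine. No substantial obstacle is expected.
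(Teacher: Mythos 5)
Your proof is correct and follows essentially the same two-inclusion strategy as the paper: showing the set of convex combinations is itself a convex set containing $\mathcal A$ for one direction, and inducting on $k$ with a renormalization for the other. The one small improvement over the paper's version is that you explicitly handle the degenerate case $\lambda_k = 1$ before dividing by $1-\lambda_k$; the paper assumes $\lambda_1 \neq 0$ without also ruling out $\lambda_1 = 1$, which would make the denominator $1-\lambda_1$ vanish.
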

\begin{proof}
The forward containment is true since the right-hand-side is a convex set containing $\mathcal A$. Indeed, if $p=\sum_{i=1}^k \lambda_i\alpha_i$ and $q = \sum_{i=1}^k \nu_i \alpha_i$ are elements of the right-hand-side and $\gamma \in [0,1]$, then $$\gamma p + (1-\gamma)q = \sum_{i=1}^k (\gamma \lambda_i + (1-\gamma) \nu_i) \alpha_i$$
is as well.

The reverse containment for $k=1$ or $k=2$ is true by definition. Assume it is true for $k-1$ and let $\alpha=\sum_{i=1}^k \lambda_i\alpha_i$ be an element of the right-hand-side. Without loss of generality, assume $\lambda_1\neq0$ so that $$\alpha = \lambda_1\alpha_1 + (1-\lambda_1)\left(\frac{\lambda_2}{1-\lambda_1} \alpha_2 + \cdots + \frac{\lambda_k}{1-\lambda_1} \alpha_k\right).$$
Since $p:=\alpha_1$ and $q:=\sum_{i=2}^k \frac{\lambda_i}{1-\lambda_1}\alpha_i$ are points in $\conv(\mathcal A)$ by induction, the segment $[p,q]$ containing $\alpha$ must be in $\conv(\mathcal A)$ as well. 
\end{proof}
\begin{definition}
A \mydef{polytope} is any subset  $P \subset \R^n$ that can be written as the convex hull of finitely many points. If these points can be taken to be in $\Z^n$, then $P$ is called an \mydef{integral polytope}. 
\end{definition}

\begin{example}
\label{ex:polytopeExample} 
For ease of reading, we will often encode points in $\R^n$ as the columns of a matrix.  
Let $\mathcal A={ \left(\begin{smallmatrix} 0 & 0 & 3/2 & 2 & 2 & 2 & 3 & 4 \\ 2 & 3 & 3/2 &0 & 3 & 4 & 2 & 0 \end{smallmatrix}\right)}\subset \R^2$. The polytope $Q=\conv(\mathcal A)$ shown in Figure \ref{fig:ConvexHull1} is an integral polytope since we may write $Q=\conv\bigl( \begin{smallmatrix} 0 & 0  & 2  & 2 & 4 \\ 2 & 3  &0  & 4  & 0 \end{smallmatrix}\bigr)$. 
\begin{figure}[htpb!]
\includegraphics[scale=0.6]{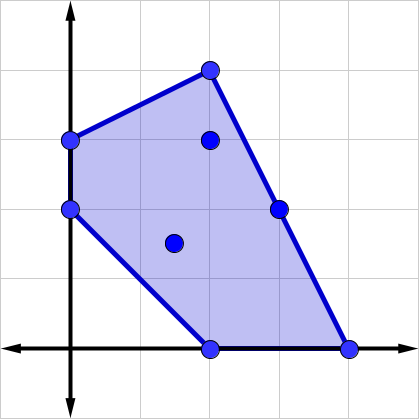}
\caption{An integral polytope $Q \subset \R^2$. }
\label{fig:ConvexHull1}
\end{figure}
\end{example}

The \mydef{dimension} of a subset $S \subset \R^n$, denoted \mydefMATH{$\dim(S)$}, is the dimension of its \mydef{affine span}, $$\mydefMATH{\R S}=\left\{\sum_{i=1}^k \lambda_is_i \mymid s_i \in S, \; \lambda_i \in \R, \; \sum_{i=1}^k \lambda_i = 1\right\},$$
and the \mydef{codimension} of $S$ is $\mydefMATH{\codim(S)}=n-\dim(S)$. \mydef{Polygons} are polytopes of dimension two. If $S$ is compact, we define the \mydef{support function} of $S$ as
\begin{align*}
\mydefMATH{h_S}\colon\R^n &\to \R \\
\omega &\mapsto \underset{x \in S}{\max} \langle x, \omega \rangle.
\end{align*}
Given $\omega \in \mathbb{R}^n$, the \mydef{subset of $S$ exposed by $\omega$} is 
$$\mydefMATH{S_\omega}=\{x \in S \mid \langle x, \omega \rangle = h_S(\omega)\}.$$
A \mydef{face} $\mathcal F$ of a polytope $P\subset \R^n$ is any subset of $P$ of the form $\mathcal F=\emptyset$ or $\mathcal F=P_\omega$ for some $\omega \in \R^n$.  Faces of dimensions $0,1, k, \dim(P)-1$ are called \mydef{vertices}, \mydef{edges}, \mydef{$k$-faces}, and \mydef{facets} respectively.  The set of vertices is denoted $\mydefMATH{\vertices(P)}$ and the set of facets is denoted $\mydefMATH{\facets(P)}$.

\begin{example}
Let $Q$ be as in Example \ref{ex:polytopeExample}. \begin{figure}[t]
\includegraphics[scale=0.8]{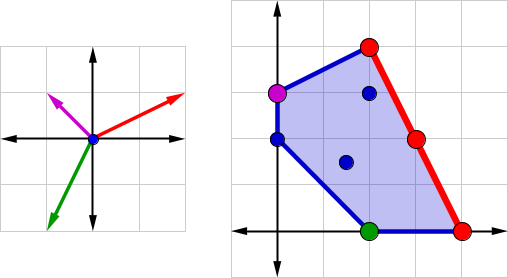}
\caption{
\label{fig:supportfunction}Left: Directions $\omega_1,\omega_2,\omega_3$, and $\omega_4=(0,0)$. Right: The polytope $Q \subset \R^2$ and three of its proper faces exposed by $\omega_1,\omega_2$, and $\omega_3$.}
\end{figure}The dimension of $Q$ is $2$ and its codimension is $0$. Let $\omega_1=(-1,1), \omega_2=(2,1), \omega_3=(-1,-2)$, and $\omega_4=(0,0)$. Then 
$$h_Q(\omega_1) = 3, \quad h_Q(\omega_2) = 8, \quad h_Q(\omega_3) = -2, \quad h_Q(\omega_4)=0,$$
and the faces exposed by $\omega_1,\omega_2,\omega_3$, and $\omega_4$ are 
$$Q_{\omega_1}=\{(0,3)\}, \quad  Q_{\omega_2}=\conv(\{(2,4),(4,0)\}), \quad Q_{\omega_3}=\{(2,0)\}, \quad Q_{\omega_4}=Q.$$  Figure \ref{fig:supportfunction} depicts these directions and faces.
In total, $Q$ has one empty face, five vertices, five facets (edges), and one $2$-face.
\hfill $\diamond$
\end{example}
Given a polytope $P \subset \R^n$, it is useful to collect directions $\omega \in \R^n$ which expose the same face into cones. A subset $C \subset \R^n$ is a \mydef{cone} if for any $p \in C$, we have that $\lambda p \in C$ for $\lambda \in \R_{\geq 0}$. A cone is a convex cone if it is closed under addition. Indeed if $p$ and $q$ are elements of a cone $C$ which is closed under addition and $\lambda \in [0,1]$ then $\lambda p + (1-\lambda)q \in C$ since each summand is in $C$.
The \mydef{(outer) normal fan} of a polytope $P$ is the collection
$$\mydefMATH{\mathcal N(P)}=\{C[\omega]\}_{\omega \in \R^n}$$ of convex cones $$\mydefMATH{C[\omega]}=\{ \omega' \in \mathbb{R}^n \mid P_{\omega} \subseteq P_{\omega'}\}.$$
We denote the set of all $C[\omega]$ of codimension at least $i$ by $\mathcal N^{(i)}(P)$.
\begin{example}
Figure \ref{fig:polytope} displays $Q$ along with its normal fan $\mathcal N(Q)$ which has one zero-dimensional cone (the origin), five one-dimensional cones, and five two-dimensional cones.\hfill $\diamond$
\begin{figure}[!htpb]
\includegraphics[scale=0.65]{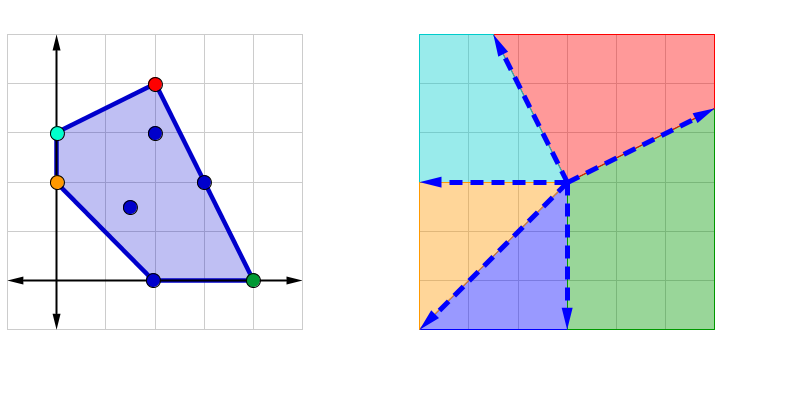}
\caption{A polytope and its corresponding normal fan}
\label{fig:polytope}
\end{figure}
\end{example}

\begin{lemma}\cite[Proposition 2.2]{Ziegler}
Every polytope may be written as 
\begin{equation}
\label{eq:vertexrep}
P = \conv(\vertices(P)).
\end{equation} If $\mathcal A \subset \R^n$ is finite, then $\vertices(\conv(\mathcal A)) \subseteq \mathcal A$.
\label{lem:Ziegler1}
\end{lemma}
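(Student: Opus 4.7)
The plan is to prove the second statement first and then use it, together with an induction on generating sets, to obtain the first. Throughout, let $P := \conv(\mathcal A)$.

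For the inclusion $\vertices(\conv(\mathcal A))\subseteq \mathcal A$, suppose $v$ is a vertex, so $\{v\}=P_\omega$ for some $\omega\in\R^n$. By the previous lemma I may write $v=\sum_i \lambda_i \alpha_i$ as a convex combination of elements of $\mathcal A$. Pairing with $\omega$, each scalar $\langle \alpha_i,\omega\rangle$ is at most $h_P(\omega)=\langle v,\omega\rangle$, yet their convex combination equals $\langle v,\omega\rangle$. This forces $\langle \alpha_i,\omega\rangle=\langle v,\omega\rangle$ whenever $\lambda_i>0$, so $\alpha_i\in P_\omega=\{v\}$, giving $v=\alpha_i\in\mathcal A$.

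For the equality $P=\conv(\vertices(P))$, one inclusion is immediate because $P$ is convex and contains its vertices. For the reverse, I would establish the following reduction lemma: if $a\in\mathcal A$ is not a vertex of $P$, then $\conv(\mathcal A\setminus\{a\})=P$. Granting this, I start from any finite $\mathcal A$ with $P=\conv(\mathcal A)$ and iteratively delete non-vertex elements, terminating at a subset $\mathcal A'\subseteq\mathcal A$ with $P=\conv(\mathcal A')$ whose every element is a vertex of $P$. By the first half of the argument, $\mathcal A'\subseteq\vertices(P)$, so $P=\conv(\mathcal A')\subseteq\conv(\vertices(P))$.

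The reduction lemma is the main obstacle, since the available definition of a vertex is phrased via support functions while what I need is that non-vertices are redundant in a generating set. I would argue contrapositively: suppose $a\in\mathcal A$ fails to lie in $\conv(\mathcal A\setminus\{a\})$. The two sets $\{a\}$ and $\conv(\mathcal A\setminus\{a\})$ are disjoint, compact, and convex, so a separating hyperplane produces $\omega\in\R^n$ with $\langle a,\omega\rangle>\langle a',\omega\rangle$ for all $a'\in\mathcal A\setminus\{a\}$. For any $p=\sum\lambda_i\alpha_i\in P$, linearity of $\langle \cdot,\omega\rangle$ shows $\langle p,\omega\rangle\leq \langle a,\omega\rangle$ with equality only if all $\alpha_i$ with $\lambda_i>0$ equal $a$, i.e.\ only if $p=a$. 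Hence $P_\omega=\{a\}$ and $a$ is a vertex. This separating-hyperplane step is the one place I must invoke an external convexity result; everything else is bookkeeping with support functions.
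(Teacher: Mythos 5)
The paper does not supply a proof of this lemma at all; it simply cites Ziegler's Proposition 2.2 and moves on, so there is no in-paper argument to compare yours against. On the merits, your proof is correct, and the decomposition into three steps is clean: (i) a convex combination attaining the linear maximum $h_P(\omega)$ must put all its weight on points of $P_\omega$, which gives $\vertices(\conv(\mathcal A))\subseteq\mathcal A$; (ii) the equality $P=\conv(\vertices(P))$ reduces, by iterated deletion, to the claim that a non-vertex element of a finite generating set is redundant; and (iii) that redundancy claim is proven contrapositively via strict separation of $\{a\}$ from the compact convex set $\conv(\mathcal A\smallsetminus\{a\})$. You are right that the separating hyperplane theorem is the one external convexity fact you need — the paper develops no tool that would substitute for it, since the halfspace-representation lemma (Lemma~\ref{lem:reptheoremforpolytopes}) is likewise cited from Ziegler without proof, so there is no circularity-free route through the paper's own statements.

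One micro-point worth tightening: when $\mathcal A\smallsetminus\{a\}=\emptyset$ the separation step degenerates (you would be separating a point from the empty set). In that case $P=\{a\}$, and $P_\omega=\{a\}$ for every $\omega$, so $a$ is a vertex by inspection; it is cleaner to dispose of this case separately before invoking separation. With that caveat noted, the argument is complete and the termination of the deletion process is correctly justified by finiteness of $\mathcal A$.
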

\begin{lemma}\cite[Proposition 2.3]{Ziegler}
Let $F$ be a face of a polytope $P \subset \R^n$.
\begin{enumerate}
\item $F$ is a polytope with $\vertices(F) = F \cap \vertices(P)$.
\item Every intersection of  faces of $P$ is a face of $P$.
\item The faces of $F$ are exactly the faces of $P$ that are contained in $F$.
\item $F=P \cap \R F$.
\end{enumerate}
\label{lem:Ziegler2}
\end{lemma}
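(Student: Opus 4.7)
The plan is to treat the four assertions in order, drawing on the defining property that $F=P_\omega$ for some $\omega\in\R^n$ (the empty face being trivial) together with the vertex representation from Lemma~\ref{lem:Ziegler1}.

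For item (1), I would write every $x\in F$ as a convex combination $x=\sum\lambda_i v_i$ of the vertices of $P$, which exist by Lemma~\ref{lem:Ziegler1}. Since $\langle v_i,\omega\rangle\le h_P(\omega)$ for all $i$ and the weighted average equals $h_P(\omega)$, the inequality must be an equality for each $v_i$ with $\lambda_i>0$. Hence $F=\conv(F\cap\vertices(P))$, so $F$ is a polytope and $\vertices(F)\subseteq F\cap\vertices(P)$ by Lemma~\ref{lem:Ziegler1}. The reverse containment is immediate because a vertex of $P$ lying in $F$ cannot be expressed as a non-trivial convex combination of other points of $P$, hence not of $F$ either.

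For item (2), given $F_1=P_{\omega_1}$ and $F_2=P_{\omega_2}$, I would show $F_1\cap F_2=P_{\omega_1+\omega_2}$ by observing that for $x\in P$ the bound $\langle x,\omega_1+\omega_2\rangle\le h_P(\omega_1)+h_P(\omega_2)$ holds with equality exactly when $x\in F_1\cap F_2$. An induction, together with the finiteness of the set of faces (since each face is determined by a subset of $\vertices(P)$ by (1)), handles arbitrary intersections. For item (4), the inclusion $F\subseteq P\cap\R F$ is clear; conversely, if $y\in P\cap\R F$ then $y=\sum\lambda_i x_i$ with $x_i\in F$ and $\sum\lambda_i=1$, so $\langle y,\omega\rangle=\sum\lambda_i h_P(\omega)=h_P(\omega)$, forcing $y\in F$.

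The main obstacle will be item (3). The easy direction is to check that if $G\subseteq F$ is a face of $P$, say $G=P_{\omega'}$, then $h_F(\omega')=h_P(\omega')$ (one inequality from $F\subseteq P$, the other from the presence of points of $G$ in $F$), whence $G=F_{\omega'}$. For the harder direction, given a face $G=F_{\omega'}$ of $F$, the idea is to perturb $\omega$ in the direction of $\omega'$, setting $\omega'':=N\omega+\omega'$ for $N$ sufficiently large. On $F$ the $N\omega$ term is constant at $Nh_P(\omega)$, so maximizing $\omega''$ over $F$ is equivalent to maximizing $\omega'$, which picks out $G$. Off $F$, one uses that $\langle x,\omega\rangle\le h_P(\omega)-\delta$ on the compact set $P\setminus\{x:\langle x,\omega\rangle>h_P(\omega)-\delta\}$, together with the boundedness of $\langle\,\cdot\,,\omega'\rangle$ on $P$, to choose $N$ large enough that the maximum of $\omega''$ over $P$ is attained only inside $F$. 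Thus $P_{\omega''}=G$, exhibiting $G$ as a face of $P$. Making the choice of $N$ effective and clean is the one calculation that needs real care; everything else is bookkeeping with the support function.
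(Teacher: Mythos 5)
The paper states this lemma as a citation to Ziegler's book and supplies no proof of its own, so there is nothing in the text to compare against; I can only assess your argument on its own terms. Your overall strategy --- reduce everything to the support function and the exposing direction $\omega$, and for item (3) perturb to $\omega''=N\omega+\omega'$ with $N\gg 0$ --- is the standard route and is sound in outline. Item (4) is complete and correct as written: the computation $\langle y,\omega\rangle=\sum\lambda_i\langle x_i,\omega\rangle=(\sum\lambda_i)h_P(\omega)=h_P(\omega)$ works for arbitrary real $\lambda_i$ summing to $1$, which is exactly what membership in the affine span requires.

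There is a genuine gap in item (2). The set $P_{\omega_1+\omega_2}$ is \emph{never} empty (it is the maximizer set of a continuous function on a nonempty compact set), yet two faces of $P$ can certainly be disjoint --- take opposite edges of a square. In that situation $\langle x,\omega_1+\omega_2\rangle$ never attains $h_P(\omega_1)+h_P(\omega_2)$, so $h_P(\omega_1+\omega_2)<h_P(\omega_1)+h_P(\omega_2)$ and the identity $F_1\cap F_2=P_{\omega_1+\omega_2}$ fails. You need to split cases: if $F_1\cap F_2=\emptyset$ it is a face by the paper's convention; if it is nonempty, any point of the intersection witnesses $h_P(\omega_1+\omega_2)=h_P(\omega_1)+h_P(\omega_2)$ and then your argument goes through. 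A similar case split is needed before you induct.

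Two smaller points. In item (3), your description of the set you want to be compact, ``$P\setminus\{x:\langle x,\omega\rangle>h_P(\omega)-\delta\}$,'' already uses the $\delta$ you are trying to produce. The clean way to pin down $N$ is to pass to the finitely many vertices: by (1) you already know $F=\conv(F\cap\vertices(P))$, so set $\delta=h_P(\omega)-\max\{\langle v,\omega\rangle:v\in\vertices(P)\setminus F\}>0$ and $M=\max_{v\in\vertices(P)}|\langle v,\omega'\rangle|$; since the linear form $\omega''$ attains its maximum over $P$ at a vertex, any $N>(M-h_F(\omega'))/\delta$ forces the maximizer to lie in $F$, where it reduces to maximizing $\omega'$. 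In item (1), invoking the extreme-point characterization of vertices leaves an unproved equivalence with the paper's ``$0$-dimensional face'' definition; it is shorter to argue directly that if $\{v\}=P_{\omega'}$ and $v\in F$ then $h_F(\omega')=h_P(\omega')$ (one inequality from $F\subseteq P$, the other from $v\in F$), whence $F_{\omega'}=F\cap P_{\omega'}=\{v\}$ is a vertex of $F$.
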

Lemma \ref{lem:Ziegler1} gives one way to canonically represent a polytope: as the convex hull of its vertices. This representation is called the \mydef{vertex representation} of a polytope.
Halfspaces provide another way to represent polytopes. A \mydef{halfspace} of $\R^n$ is any subset of the form
$$\mydefMATH{\R^n_{\omega,c}}=\{x \in \R^n \mid \langle x, \omega \rangle\leq c\}\subset \R^n,$$ 
for some $\omega \in \R^n$ and $c \in \R$. Given a polytope $P \subset \R^n$ and any direction $\omega \in \R^n$, the halfspace $\mydefMATH{H_P(\omega)}=\R^n_{\omega,h_P(\omega)}$ contains $P$. Note that $H_P(\omega)=H_P(\lambda \cdot \omega)$ for any $\lambda>0$.
\begin{lemma}\cite[Theorem 2.15]{Ziegler}
\label{lem:reptheoremforpolytopes}
Every polytope $P \subset \R^n$ may be written as
\begin{equation}
\label{eq:halfspacerep}
P=\R P \cap \left(\bigcap_{i=1}^m H_P(\omega_i)\right)
\end{equation} for any set $\{\omega_i\}_{i=1}^m \subset \R^n$ such that $\{P_{\omega_i}\}_{i=1}^m=\facets(P)$.
Conversely, any bounded intersection of halfspaces is a polytope.
\end{lemma}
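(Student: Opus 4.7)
The plan is to prove the two parts separately. For the forward direction, the containment $P \subseteq \R P \cap \bigcap_{i=1}^m H_P(\omega_i)$ is immediate from the definitions: every point of $P$ lies in its affine span $\R P$, and every point $x \in P$ satisfies $\langle x, \omega \rangle \leq h_P(\omega)$ for every $\omega \in \R^n$, so $x \in H_P(\omega_i)$ for each $i$. The substance is therefore the reverse inclusion.

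For the reverse inclusion I would argue by contradiction. Suppose $x \in \R P \cap \bigcap_i H_P(\omega_i)$ but $x \notin P$. Since $P = \conv(\vertices(P))$ is compact and convex (Lemma \ref{lem:Ziegler1}), the standard Euclidean nearest-point projection of $x$ onto $P$, carried out inside $\R P$, produces a point $y \in P$ and a direction $\eta := x - y$ satisfying $\langle x, \eta \rangle > \langle p, \eta \rangle$ for every $p \in P$; in particular $\langle x, \eta \rangle > h_P(\eta)$. The main obstacle — and the only nontrivial step — is to upgrade this ad hoc separator $\eta$ into one of the prescribed facet directions $\omega_i$. The idea is to perturb $\eta$ within its own normal cone $C[\eta]$ by an amount small enough to preserve the strict inequality $\langle x, \eta \rangle > h_P(\eta)$, while enlarging $P_\eta$ until it becomes codimension one in $\R P$, i.e.\ a facet. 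At that moment $\eta$ is a positive multiple of one of the $\omega_i$ by hypothesis, and since $H_P(\lambda \omega) = H_P(\omega)$ for $\lambda > 0$, the strict separation is a contradiction. Making the perturbation rigorous amounts to walking down the face lattice of $P_\eta$, increasing $\dim P_\eta$ at each step while preserving separation by continuity; this process terminates because $\dim P_\eta$ is bounded above by $\dim P - 1$ (the case $P_\eta = P$ is excluded since separation requires $h_P(\eta) < \langle x, \eta \rangle$ and $x \in \R P$).

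For the converse, let $Q = \bigcap_{j=1}^m \R^n_{\eta_j, c_j}$ be a bounded intersection of halfspaces. The plan is to locate a finite set of candidate vertices and show $Q$ equals their convex hull. An extreme point of $Q$ must saturate at least $\dim Q$ linearly independent defining inequalities — otherwise one could translate inside $Q$ along a nontrivial direction, contradicting extremality — and so every extreme point is the unique solution of some $\dim Q$-element subsystem of the equalities $\langle x, \eta_j \rangle = c_j$. This yields at most $\binom{m}{\dim Q}$ extreme points, a finite set $V$. To conclude $Q = \conv(V)$, I would induct on $\dim Q$: the base case $\dim Q = 0$ is trivial, and for the inductive step, any interior point of $Q$ lies on a segment joining two points of $\partial Q$ (using boundedness to find such endpoints), each of which lies on a proper face of $Q$ obtained by saturating some inequality, reducing to smaller-dimensional polyhedra whose extreme points are extreme points of $Q$. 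The main subtlety is verifying that boundedness is precisely what makes this recursion well-founded, preventing escape to infinity along unbounded directions of faces.
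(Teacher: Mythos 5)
The paper imports this lemma without proof, citing Ziegler's Theorem 2.15 directly, so there is no in-house argument to compare against; what follows assesses your proposal on its own terms. The converse direction is essentially right: extreme points of a bounded polyhedron must be pinned down uniquely by their tight constraints, hence solve $n\times n$ subsystems (so the count should be at most $\binom{m}{n}$, not $\binom{m}{\dim Q}$, but this is bookkeeping and does not harm finiteness), and the induction on dimension using boundedness to produce compact line intersections is the standard Minkowski argument.

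The forward direction, however, has a real gap in the step that upgrades the ad hoc separator $\eta$ to a facet normal. You propose to "perturb $\eta$ within its own normal cone $C[\eta]$ by an amount small enough to preserve the strict inequality" while "enlarging $P_\eta$ until it becomes a facet." But small perturbations of $\eta$ inside the interior of $C[\eta]$ leave $P_\eta$ unchanged; to enlarge $P_\eta$ you must reach the \emph{boundary} of $C[\eta]$, and in particular the facet rays you want are extreme rays of that cone, not arbitrarily close to $\eta$. Continuity guarantees strict separation only near $\eta$, not along the whole trip to a boundary ray, so the walk does not terminate where you need it to. What closes the argument is a positivity observation, not continuity. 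Perturb $\eta$ slightly so that it lies in the interior of a full-dimensional normal cone $C[v]$ (legitimate: the strictly separating directions form a nonempty open set, and the interiors of the top-dimensional cones are dense). Since $v$ lies on each facet $P_{\omega_{i_j}}$ whose normal ray bounds $C[v]$, we have $h_P(\omega_{i_j}) = \langle v, \omega_{i_j}\rangle$; writing $\eta = \sum_j \lambda_j \omega_{i_j}$ with $\lambda_j \ge 0$ not all zero and using $h_P(\eta) = \langle v,\eta\rangle$ gives
\[
\sum_j \lambda_j\bigl(\langle x,\omega_{i_j}\rangle - h_P(\omega_{i_j})\bigr) \;=\; \langle x,\eta\rangle - h_P(\eta) \;>\; 0,
\]
which forces $\langle x,\omega_{i_j}\rangle > h_P(\omega_{i_j})$ for at least one $j$ and contradicts $x \in H_P(\omega_{i_j})$. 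Your sketch gestures toward this by invoking the normal fan, but never produces the inequality that does the actual work.
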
If a polytope is $n$-dimensional, then it has a unique representation of the form \eqref{eq:halfspacerep} since each facet is $(n-1)$-dimensional and is exposed by its unique outer-normal ray. Note that these are the one-dimensional cones in the normal fan of a polytope. If a polytope has positive codimension, then it has a unique representation of the form \eqref{eq:halfspacerep} within its affine hull (the $\omega_i$ in \eqref{eq:halfspacerep} are taken to be parallel with the affine hull of $P$). We call such a unique representation the \mydef{halfspace representation} of a polytope.
\begin{example}
The polytope $Q$ in Example \ref{ex:polytopeExample} has the halfspace representation,
$$Q= H_Q(2,1) \cap H_Q(0,-1) \cap H_Q(-1,-1)\cap H_Q(-1,0) \cap H_Q(-1,2).$$  Each of these halfspaces are displayed in Figure \ref{fig:HalfSpace}. 
\begin{figure}[htpb!]
\includegraphics[scale=0.4]{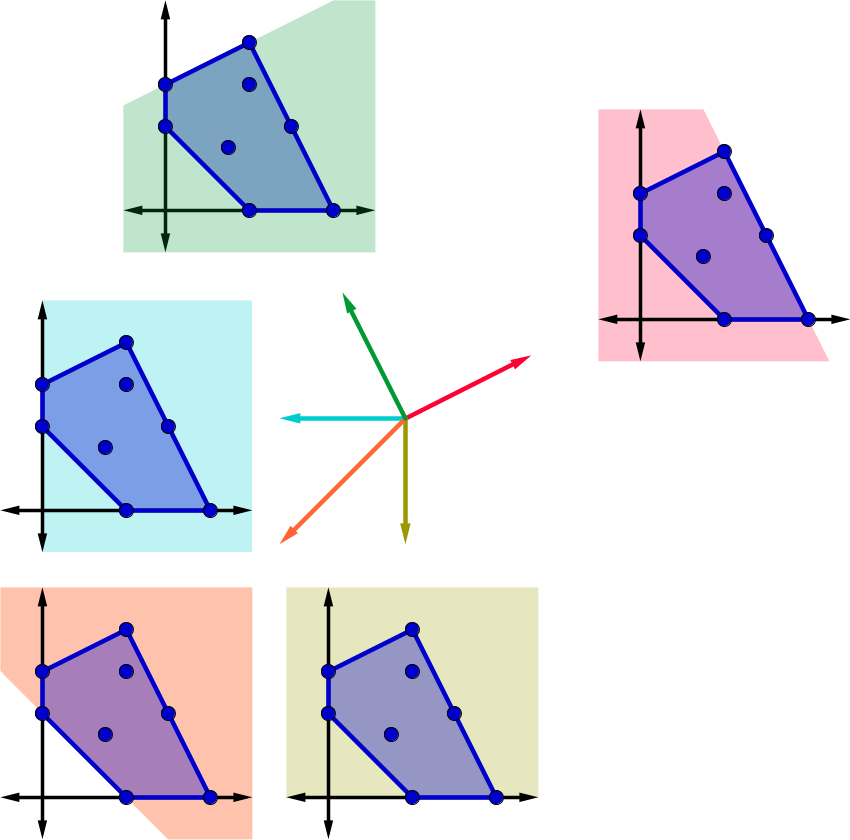}
\caption{Five halfspaces in $\R^2$ whose intersection is $Q$.}
\label{fig:HalfSpace}
\end{figure}\hfill $\diamond$
\end{example} 
\section{Oracles}
While the vertex and halfspace representations are the most common ways of expressing a polytope, other representations come from functions called oracles. Colloquially, an oracle is an entity which provides prophetic insight whenever queried. Likewise, the \mydef{vertex oracle} for a polytope $P \subset \R^n$ is the function 
$$ {\mathbb{V}}_P\colon \mathbb{R}^n \to  \mathbb{R}^n \cup \{\texttt{PFE}\} $$
$$\omega \mapsto \begin{array}{cc}
  \left\{  
    \begin{array}{lccl}
      P_\omega &&& \dim(P_\omega)=0\\
      \texttt{PFE} &\quad&& \text{otherwise}\\
    \end{array}
    \right.
\end{array}$$
where $\texttt{PFE}$ abbreviates the expression ``Positive dimensional Face Exposed''.  We remark that $\mathbb{V}_P(\omega)=\texttt{PFE}$ if and only if $\omega \in \mathcal N^{(1)}(P)$. The process of evaluating a vertex oracle is called \mydef{querying} the oracle.
\begin{remark}
\label{remark:oraclehalfspace}
When a vertex oracle query returns a vertex $\mathbb{V}_P(\omega)=v$, it implicitly returns the information that $h_P(\omega) = \langle v,\omega \rangle$ and therefore that $P \subset \R^n_{\omega,\langle v,\omega \rangle}=H_P(\omega)$.
\end{remark}
Let $\mydefMATH{\bzero}$ denote the all $0$'s vector in $\R^n$, $\mydefMATH{\bone}$ denote the all $1$'s vector in $\R^n$, and $\mydefMATH{e_i}$ denote the $i$-th coordinate vector in $\R^n$. For any $v \in \R^n$, let $\mydefMATH{|v|}$ denote the sum of its coordinates. Given a polytope $P \subset \R^n$, let $\mydefMATH{\mathcal L(P)}=P \cap \Z^n$ denote its set of \mydef{lattice points}. 
\begin{proposition}
\label{prop:oracletovertex}
If $P \subset \R^n$ is an integral polytope, then the vertex representation of $P$ can be recovered from the vertex oracle for $P$.
\end{proposition}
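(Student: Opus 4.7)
The plan is to exhibit a procedure that, using finitely many queries to $\mathbb{V}_P$, produces $\vertices(P)$; the vertex representation $P = \conv(\vertices(P))$ then follows from Lemma \ref{lem:Ziegler1}. My approach is to maintain two approximations of $P$ simultaneously: an inner approximation $P_{\text{in}} := \conv(V) \subseteq P$ built from a growing set $V$ of discovered vertices, and an outer approximation $P_{\text{out}} := \bigcap_{H \in \mathcal H} H \supseteq P$ built from a growing collection $\mathcal H$ of halfspaces, each known to contain $P$ by Remark \ref{remark:oraclehalfspace}. The algorithm refines $V$ and $\mathcal H$ until they coincide, at which point $P = P_{\text{in}} = \conv(V)$.

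For initialization, I would query $\mathbb{V}_P$ at $2n$ directions whose positive hull is $\R^n$, for instance $\pm e_1,\ldots,\pm e_n$, perturbing any direction by a small generic vector and requerying whenever $\texttt{PFE}$ is returned. The resulting $\mathcal H$ has normals positively spanning $\R^n$, so $P_{\text{out}}$ is a bounded polytope. Integrality of $P$ now implies $\vertices(P) \subseteq \mathcal L(P_{\text{out}})$, which is finite.

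The refinement loop runs as long as $P_{\text{in}} \neq P_{\text{out}}$. Applying Lemma \ref{lem:Ziegler1} to $P_{\text{out}}$, the strict inclusion $P_{\text{in}} \subsetneq P_{\text{out}}$ guarantees a vertex $u$ of $P_{\text{out}}$ not in $P_{\text{in}}$. I pick a direction $\omega$ with $h_{P_{\text{out}}}(\omega) > h_{P_{\text{in}}}(\omega)$ (for instance, an outer normal of a facet of $P_{\text{out}}$ containing $u$), and query $\mathbb{V}_P(\omega)$, perturbing $\omega$ generically if $\texttt{PFE}$ is returned. Adding the returned vertex $v$ to $V$ and the halfspace $H_P(\omega)$ to $\mathcal H$ makes strict progress: either $v$ was not previously in $V$ and $|V|$ increases, or $v \in V$ already, in which case $h_P(\omega) = \langle v,\omega\rangle \le h_{P_{\text{in}}}(\omega) < h_{P_{\text{out}}}(\omega)$ forces the new halfspace to strictly contract $P_{\text{out}}$ in direction $\omega$.

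The main obstacle is proving termination. Additions to $V$ are bounded above by $|\mathcal L(P_{\text{out}})| < \infty$; thus after finitely many such additions, $V = \vertices(P)$ and $P_{\text{in}} = P$, and the remaining iterations can only shrink $P_{\text{out}}$. I would argue termination of this second phase by choosing $\omega$ at each step to be the outer normal of a facet of $P_{\text{out}}$ that is not a facet of $P$; combined with the finiteness of the face lattice of $P$ from Lemma \ref{lem:Ziegler2} and the halfspace representation of Lemma \ref{lem:reptheoremforpolytopes}, a careful bookkeeping argument shows that the facets of $P_{\text{out}}$ eventually coincide with those of $P$. Upon termination, $\conv(V) = P_{\text{in}} = P_{\text{out}} \supseteq P \supseteq \conv(V)$ forces $P = \conv(V)$, whence $\vertices(P) \subseteq V$ by Lemma \ref{lem:Ziegler1}, with the reverse containment holding because every $v \in V$ was returned by the oracle as the unique maximizer of a linear functional on $P$.
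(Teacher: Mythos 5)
Your strategy---maintain an inner approximation $P_{\text{in}} = \conv(V)$ and an outer approximation $P_{\text{out}}$, query directions, and refine until they coincide---is the approach the paper takes, but the termination argument has a genuine gap. You say each iteration either increases $|V|$ (bounded) or strictly contracts $P_{\text{out}}$, and then assert that after $|V|$ stabilizes you have $V = \vertices(P)$ and the remaining contractions terminate. Neither claim is justified. Stabilization of $|V|$ only means you stopped finding new vertices, not that you found them all; and, more fundamentally, the contractions of $P_{\text{out}}$ need not be finitely many. Every facet $F$ of $P$ has an outer normal $\omega_F$ for which $\mathbb{V}_P(\omega_F) = \texttt{PFE}$, and any generic perturbation $\omega'$ of $\omega_F$ yields a halfspace $H_P(\omega')$ that supports $P$ at a \emph{single vertex} of $F$. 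Finitely many such vertex-supporting halfspaces always leave a positive-volume wedge above $F$, so your loop shrinks $P_{\text{out}}$ toward $P$ only asymptotically; there is no monovariant bounding the iteration count, and the ``careful bookkeeping argument'' you allude to cannot be supplied.

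The missing idea is an integer-hull step, which is precisely where the integrality of $P$ enters. The paper replaces the outer approximation $P^*$ with $\conv(\mathcal L(P^*))$ at the start of each iteration, so $P^*$ is always an integral polytope whose vertices are lattice points, and then chooses $\omega$ to \emph{uniquely} expose some vertex $p \in \vertices(P^*)\smallsetminus P_*$. With this choice, a $\texttt{PFE}$ return tells you $p \notin P$: if $p\in P$, then $P_\omega \subseteq (P^*)_\omega = \{p\}$ forces $\mathbb{V}_P(\omega) = p$, a contradiction. So $p$ can be deleted from $\mathcal L(P^*)$. Each iteration then either strictly increases $|\mathcal L(P_*)|$ or strictly decreases $|\mathcal L(P^*)|$; both are bounded nonnegative integers, so termination is immediate.
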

\begin{proof}
Let $P\subset \R^n$ be an integral polytope and $\mathbb{V}_P$ its vertex oracle. To prove the proposition, we first bound $P$ between two polytopes by querying the vertex oracle as follows.

Let $\omega^* = (\omega^*_1,\ldots,\omega^*_n) \in \R^n_{> 0}$ be a vector such that $\omega^*_1,\ldots,\omega^*_n$ are rationally independent (i.e. $\langle x, \omega^* \rangle \neq 0$ for any ${\bf 0} \neq x \in \Z^n$). Observe that $\mathbb{V}_P(\omega^*)$ must return a vertex: otherwise, there exist two vertices $p_1,p_2$ such that $\langle p_1,\omega^* \rangle = \langle p_2,\omega^* \rangle$ implying that $x=p_1-p_2$ is an integer point whose dot product with $\omega^*$ is nonzero. A consequence of Remark \ref{remark:oraclehalfspace} is that the halfspace $H_P(\omega^*)$ containing $P$ is computed as well. Since $\omega$ is in the positive orthant, $H_P(\omega^*)$ bounds $P\cap \R^n_{\geq 0}$.

Similarly, for every vertex $v$ of the hypercube $\mydefMATH{\text{cube}(n)}=[-1,1]^n$, we let $v   \circ   \omega^*$ denote the Hadamard (coordinate-wise) product so that the output $\mathbb{V}_P(v\circ\omega^*)$ is a vertex of $P$. Again, each oracle query bounds $P$ in the corresponding orthant of $\R^n$ so that the intersection 
$$P^*=\bigcap_{v \in \text{cube}(n)} H_P(v\circ \omega^*),$$ is bounded, and thus by Lemma \ref{lem:reptheoremforpolytopes}, is a polytope. Setting $P_*=\conv(\{\mathbb{V}_P(v\circ \omega) \mid v \in \text{cube}(n)\})$ gives containments
\begin{equation}
\label{eq:boundpolytope}
P_* \subseteq P \subseteq P^*.
\end{equation}

The proof proceeds algorithmically. Set $P^*=\conv\left(\mathcal L(P^*)\right)$ so that $P^*$ is integral. Since $P$ is integral, the containments \eqref{eq:boundpolytope} are still true. For every $p \in \vertices(P^*) \smallsetminus P_*$, pick $\omega$ such that $\mathbb{V}_{P^*}(\omega) = p$. Since $p$ is the unique point in $P^*$ obtaining a maximum dot product with $\omega$ and $P \subseteq P^*$ then $p \in P$ if and only if $\mathbb{V}_P(\omega)=p$. We have three cases: either $p \in P$ and so $\mathbb{V}_P(\omega)=p$ (case (i)), or $\mathbb{V}_P(\omega)$ returns $\texttt{PFE}$ (case (ii)) or $\mathbb{V}_P(\omega)$ returns another vertex $q\neq p$ (case (iii)).

\noindent
{\bf Case (i)}: If $\mathbb{V}_P(\omega) = p$ then set $P_*=\conv(P_* \cup p)$. Note that the containments \eqref{eq:boundpolytope} still hold and that the number of lattice points of $P_*$ has increased.

\noindent
{\bf Case (ii)}: If $\mathbb{V}_P(\omega) = \texttt{PFE}$, then $p \not\in P$ and so we may set $P^* =\conv(\mathcal L(P^*)\smallsetminus p)$ while preserving \eqref{eq:boundpolytope}. In this case, the number of lattice points of $P^*$ has decreased.

\noindent
{\bf Case (iii)}: If $\mathbb{V}_P(\omega) = q \neq p$, then we may set $P_* = \conv(P_* \cup q)$ and $P^* = \conv(\mathcal L(P^* \cap H_P(\omega)))$ while preserving \eqref{eq:boundpolytope}. In this case, the number of lattice points of $P_*$ may have increased depending on whether or not $q$ was already in $P_*$, but it will always be the case that the number of lattice points of $P^*$ has decreased. 

Each oracle query involves one of the above cases and each case preserves the containments \eqref{eq:boundpolytope} while either increasing the number of lattice points in $P_*$ or decreasing the number of lattice points in $P^*$. Thus, this process must terminate with  $\vertices(P^*)\smallsetminus P_* = \emptyset$, proving that these polytopes are equal to each other and so $P_*=P=P^*$.
\end{proof}

\boxit{
\begin{algorithm}[Vertex oracle $\to$ vertex representation] \nothing \\
\myline
{\bf Input:} \\
$\bullet$ The vertex oracle $\mathbb{V}_P$ for an integral polytope $P \subset \R^n_{\geq 0}$\\ \myline
{\bf Output:}\\
$\bullet$ The vertex representation for $P$\\
\myline
{\bf Steps:}
\begin{enumerate}[nosep]
\item[0] Pick $\omega^*=(\omega_1^*,\ldots,\omega^*_n) \in \R^n_{>0}$ with rationally independent coordinates
\item[1] \set $P_* = \emptyset$, \set $P^* = \R^n$
\item[2] \myfor each vertex $v \in \text{cube}(n)$ \mydo
\begin{enumerate}[nosep]
\item[2.1] \set $P_* = \conv(P_* \cup \mathbb{V}_P(v \circ \omega^*))$
\item[2.2] \set $P^* = P^* \cap H_P(v \circ \omega^*)$
\end{enumerate}
\item[3] \while $\mathcal L(P_*) \neq \mathcal L(P^*)$ \mydo 
\begin{enumerate}[nosep]
\item[3.1] \set $P^*=\conv(\mathcal L(P^*))$
\item[3.2] Pick $p \in \vertices(P^*) \smallsetminus P_*$
\item[3.3] Find $\omega \in \R^n$ such that $\mathbb{V}_{P^*}(\omega)=p$
\item[3.4] \myif $\mathbb{V}_P(\omega)=p$ \then \set $P_*=\conv(P_* \cup p)$

\item[3.5] \myif $\mathbb{V}_P(\omega)=\texttt{PFE}$ \then \set $P^* = \conv(\mathcal L(P^*) \smallsetminus p)$

\item[3.6] \myif $\mathbb{V}_P(\omega)=q \neq p$ \then
\begin{enumerate}[nosep]
\item[3.6.1] \set $P^*=P^* \cap H_P(\omega)$
\item[3.6.2] \myif $q \not\in P_*$ \then \set $P_*=\conv(P_* \cup p)$
\end{enumerate}
\end{enumerate}
\item[4] \return $\vertices(P_*)$
\end{enumerate}
\label{alg:notbeneathbeyond}
\end{algorithm}
}

\begin{example}
Figure \ref{fig:notbeneathbeyond} displays the steps required to complete Algorithm \ref{alg:notbeneathbeyond} on $Q$ from Example \ref{ex:polytopeExample}.  We use $\omega^*=(1,\sqrt{2})$ in step $(0)$ of the algorithm. Step $(2)$ in Algorithm \ref{alg:notbeneathbeyond} is represented by the top-left graphic showing the four vertex oracle queries on the vectors $\omega^*,-\omega^*,(-1,\sqrt{2}),$ and $(1,-\sqrt{2})$. Each query reveals a vertex of $Q$ and a halfspace containing $Q$. The intersection of all such halfspaces $Q^*$ is depicted in grey in the first image along with  $Q_*$ in green and $\conv(\mathcal L(Q^*))$ in red. 

\begin{figure}[!htpb]
\includegraphics[scale=0.62]{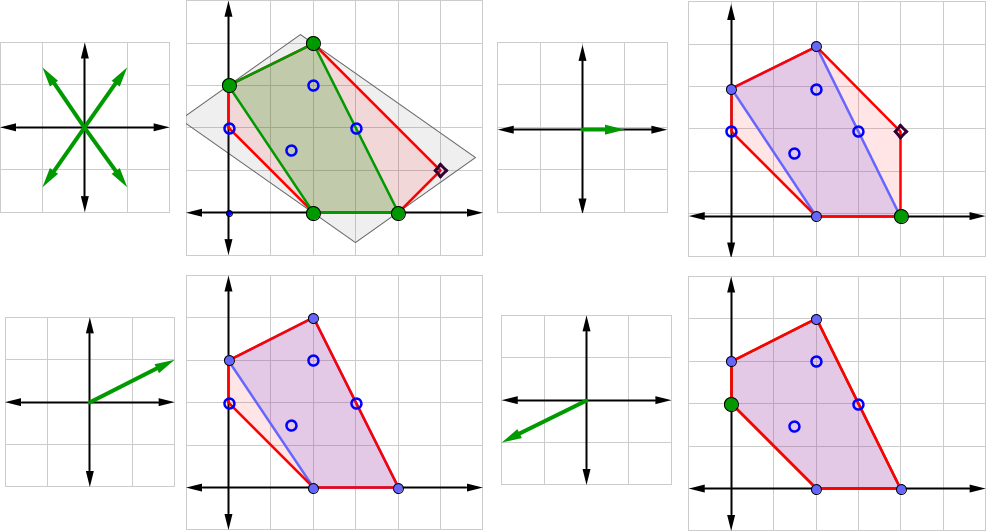}
\caption{A graphical interpretation of Algorithm \ref{alg:notbeneathbeyond} running on the polytope $Q$ in Example \ref{ex:polytopeExample}.}
\label{fig:notbeneathbeyond}
\end{figure}

The next image (to the right) displays the oracle query $\mathbb{V}_Q(1,0) = (4,0)$, revealing a vertex which was already found. Thus, this oracle query does not increase the size of $Q_*$ but it does establish that $(5,1)$ (a previous vertex of $Q^*$) is not contained in $Q$ and so the size of $Q^*$ is reduced. The third image (bottom left) attempts to establish whether or not $(4,2) \in Q$ by choosing $\omega=(2,1)$ so that $\mathbb{V}_{Q^*}(\omega)=(4,2)$ and querying $\mathbb{V}_Q(\omega)=\texttt{PFE}$. This does not find a new vertex of $Q$, nor does it find a new halfspace containing $Q$. It does, however, reveal that $(4,2) \not\in Q$ and so $Q^*$ is again reduced to $\conv(\mathcal L(Q^*)\smallsetminus (4,2))$. At this stage, $(0,2)$ is the unique vertex of $Q^*$ which is not in $Q_*$ and $\mathbb{V}_Q(-2,1)=(0,2)$ reveals that it is a vertex of $Q$. The outer polytope $Q^*$ is reduced again, the inner polytope $Q_*$ grows, and $Q^*$ becomes equal to $Q_*$, ending the algorithm.\hfill $\diamond$
\end{example}
\begin{remark}
\label{rem:oracletovertex}
Implementing Algorithm \ref{alg:notbeneathbeyond}, as is, requires the representation of a rationally independent vector $\omega^*$ on a computer for step $(2)$. Theoretically, a random $\omega \in \R^n$ will expose a vertex of $P$ with probability one and so in practice, we replace steps $(0)$ and $(2)$ by randomly querying the oracle in each orthant until a vertex is returned. This process bounds $P$ in a polytope $P^*$. We remark that probability one statements about the theory may not translate to probability one computations and we give a more detailed discussion in Remark \ref{rem:positivemeasure} in Section~\ref{section:numericalNP}. \hfill $\diamond$
\end{remark}
We denote the standard full-dimensional simplex in $\R^n$ by $\mydefMATH{\Delta_n}=\conv(\bzero,e_1,\ldots,e_n)$ and the dilation of $\Delta_n$ by a factor of $d$ by $\mydefMATH{d\Delta_n}=\conv(\bzero,d\cdot e_1,\ldots, d\cdot e_n)$.
The \mydef{degree} of a polytope $P \subset \R_{\geq 0}^n$, is \mydefMATH{$\deg(P)$}$=h_P({\bf 1})$. A polytope is \mydef{homogeneous} if $|p| =\deg(P)$ for all $p \in P$ and the \mydef{homogenization} of $P$ is $\mydefMATH{\widetilde{P}}=\{(p,\deg(P)-|p|) \mid p \in P\} \subset \R^{n+1}$.

\begin{definition}
\label{def:numericaloracle}
The \mydef{numerical oracle} for a polytope $P \in \R^n$ is the function
$$ {\mathcal O}_P\colon \mathbb{R}^n \to  \mathbb{R}^n \cup \{\texttt{EEP}\} $$
$$\omega \mapsto \begin{array}{ll}
  \left\{  
    \begin{array}{llll}
      P_\omega &&& \dim(P_\omega)=0\\
      \min(P_\omega) &&& 0 < \dim(P_\omega) < \dim(P) \\
      \texttt{EEP} &\quad&& P_\omega =P\\
    \end{array}
    \right.
\end{array}$$
where \mydefMATH{$\min(P_\omega)$ }is the coordinate-wise minimum of all points in $P_\omega$.
\end{definition} The expression \texttt{EEP} abbreviates \texttt{Exposes Entire Polytope}. This oracle is dubbed ``numerical'' because it arises naturally from the numerical HS-algorithm (Algorithm \ref{alg:hsalgorithm} of Section~\ref{section:numericalNP}).

Generally, one cannot distinguish whether the output of a numerical oracle for a polytope $P$ is a vertex $v=P_\omega$ or the coordinate-wise minimum $w=\min(P_\omega)$ of a positive-dimensional face. For example, the numerical oracle query $\mathcal O_{\Delta_2}(1,1)$ returns ${\bf 0}$ not because ${\bf 0}$ is a vertex, but because ${\bf 0}=\min(\conv(e_1,e_2))$. Thus, at first glance, a numerical oracle may seem weaker than a vertex oracle. However, when the polytope $P$ is homogeneous of degree $d$ these cases may be distinguished easily since the sum of the coordinates of a vector output of $\mathcal O_P(\omega)$ will be $d$ if and only if the vector is a vertex and it will be less than $d$ otherwise. Restricted to homogeneous polytopes, a numerical oracle gives strictly more information than a vertex oracle, implying the following corollary to Proposition \ref{prop:oracletovertex}.
\begin{corollary}
If $P$ is a homogeneous integral polytope then the vertex representation of $P$ may be recovered from its numerical oracle.
\end{corollary}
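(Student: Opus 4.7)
The plan is to upgrade the numerical oracle $\mathcal{O}_P$ into a vertex oracle $\mathbb{V}_P$ and then invoke Proposition \ref{prop:oracletovertex}. The only obstruction to doing this is distinguishing, from a nontrivial output of $\mathcal{O}_P(\omega)$, whether we have received an actual vertex $v = P_\omega$ or merely the coordinate-wise minimum $w = \min(P_\omega)$ of a positive-dimensional proper face. The paragraph preceding the corollary already signals the route: homogeneity forces $|p|=d$ for every $p \in P$, so any vertex $v$ satisfies $|v| = d$, and we only need to show that $|w| < d$ strictly whenever $P_\omega$ is positive-dimensional.

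To verify this strict inequality, I would fix two distinct points $v_1, v_2 \in P_\omega$. By definition $w_i \leq (v_1)_i$ for all $i$, and since $|v_1| = d$, the hypothetical equality $|w| = d$ would pinch each inequality to equality, forcing $w = v_1$. The same reasoning applied to $v_2$ would give $w = v_2$, contradicting $v_1 \neq v_2$. Hence $|w| < d$, so the coordinate-sum test cleanly separates vertices from minima of larger faces.

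With this dichotomy, defining $\mathbb{V}_P$ is routine. First I would determine $d$ by querying $\mathcal{O}_P$ at a generic direction $\omega^* \in \R^n_{>0}$ with rationally independent coordinates as in the proof of Proposition \ref{prop:oracletovertex}; the result is necessarily a vertex $v$, and $d := |v|$. Then for arbitrary $\omega$, I would query $\mathcal{O}_P(\omega)$ and return $\texttt{PFE}$ whenever the output is $\texttt{EEP}$ or a point $x$ with $|x| < d$, and return $x$ whenever $|x| = d$. Feeding this $\mathbb{V}_P$ into Algorithm \ref{alg:notbeneathbeyond} then yields the vertex representation of $P$. The main obstacle is the short coordinate-sum argument in the previous paragraph; once it is in place, the rest is bookkeeping. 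One degenerate case deserves a line: if $\dim(P) = 0$ then $P = \{p\}$, the $\dim(P_\omega)=0$ clause takes precedence over $P_\omega = P$ in the definition of $\mathcal{O}_P$, and any single query immediately returns the unique vertex.
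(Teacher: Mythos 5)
Your proposal is correct and takes essentially the same approach as the paper, which states no separate proof and instead relies on the coordinate-sum test sketched in the paragraph immediately preceding the corollary to upgrade $\mathcal O_P$ to $\mathbb{V}_P$ and then invoke Proposition \ref{prop:oracletovertex}. Your pinching argument showing $|\min(P_\omega)| < d$ strictly whenever $P_\omega$ is positive-dimensional (two distinct points of $P_\omega$ cannot both coincide with the coordinate-wise minimum) is exactly the verification the paper leaves implicit.
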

Other oracles for polytopes exist and are well-studied. For example, Emiris et. al. \cite{OToV} developed an algorithm similar to Algorithm \ref{alg:notbeneathbeyond} for oracles which are stronger than vertex oracles: instead of returning $\texttt{PFE}$, they return a vertex on the corresponding positive-dimensional face.

\section{Mixed volume}
\label{subsection:mixedvolume}
We develop some of the theory of mixed volumes of polytopes and include multiple formulas and characterizations of mixed volume. We list them here for convenience.
\begin{enumerate}
\item Coefficient of a volume function (Definition \ref{def:mixedvolume}).
\item Volume alternating sum formula (Lemma \ref{lem:mixedvolume2}).
\item Axiomatic characterization (Lemma \ref{lem:mixedvolume3}).
\item Lattice point alternating sum formula for integral polytopes (Lemma \ref{lem:mixedvolume4}).
\item Sum of volumes of mixed cells formula (Lemma \ref{lem:mixedvolume5}).
\end{enumerate}
We give a sixth way of computing mixed volume in Section~\ref{section:newtonpolytopes} via the Bernstein-Kushnirenko  Theorem (Proposition \ref{prop:BKK}). 

We begin our discussion by introducing two natural operations on subsets of $\R^n$. 
Let $S_1,S_2 \subset \R^n$ and $\lambda \in \R_{\geq 0}$. The set
$$\mydefMATH{\lambda S_1} = \{\lambda s \mid s \in S_1\},$$ is the \mydef{scaling} of $S_1$ by $\lambda$.
The set 
$$\mydefMATH{S_1+S_2}=\{s_1+s_2\mid s_1 \in S_1, s_2 \in S_2\},$$ is the \mydef{Minkowski sum} of $S_1$ and $S_2$.
The scaling of a polytope $P=\conv(\mathcal A)$ by $\lambda \in \R_{\geq 0}$ is clearly a polytope given as $\lambda P=\conv(\lambda \mathcal A)$. The following lemma proves an analogous result for Minkowski sums of polytopes.
\begin{lemma}
\label{lem:minkowskiproperties}
Let $P,Q \subset \R^n$ be polytopes. 
\begin{enumerate}
\item The support functions of $P$ and $Q$ are additive: $h_{P+Q}=h_{P}+h_{Q}$.
\item The Minkowski sum $P+Q$ is a polytope which may be written as $\conv(\vertices(P)+\vertices(Q))$.
\item If $F\subset P+Q$ is a face, then there exist unique faces $F_P\subseteq P$ and $F_Q\subseteq Q$ such that $F=F_P+F_Q$. 
\item If $P$ and $Q$ are integral, so is $P+Q$.
\end{enumerate}
\end{lemma}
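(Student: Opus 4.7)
The plan is to dispense with the four parts roughly in the given order, since each leans on the previous ones.

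For (1), I would just unfold the definition of the support function. A point of $P+Q$ is of the form $p+q$ with $p\in P, q\in Q$, and $\langle p+q,\omega\rangle=\langle p,\omega\rangle+\langle q,\omega\rangle$, so maximizing over $P+Q$ separates into two independent maximizations, yielding $h_{P+Q}(\omega)=h_P(\omega)+h_Q(\omega)$.

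For (2), I would first prove the slightly more general fact that for finite sets $\mathcal A,\mathcal B\subset\R^n$ one has $\conv(\mathcal A)+\conv(\mathcal B)=\conv(\mathcal A+\mathcal B)$. The containment $\supseteq$ is immediate, and for $\subseteq$ a sum $(\sum_i\lambda_i a_i)+(\sum_j\mu_j b_j)$ can be rewritten as $\sum_{i,j}\lambda_i\mu_j(a_i+b_j)$, which is a convex combination of the points $a_i+b_j$ since $\sum_{i,j}\lambda_i\mu_j=1$. Applying this with $\mathcal A=\vertices(P)$ and $\mathcal B=\vertices(Q)$ (via Lemma \ref{lem:Ziegler1}) shows $P+Q=\conv(\vertices(P)+\vertices(Q))$, which exhibits it as a polytope.

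For (3), let $F$ be a face of $P+Q$, so $F=(P+Q)_\omega$ for some $\omega\in\R^n$. I would take $F_P:=P_\omega$ and $F_Q:=Q_\omega$ as the proposed decomposition. Existence follows from (1): any $p+q$ with $p\in P_\omega,q\in Q_\omega$ attains $h_P(\omega)+h_Q(\omega)=h_{P+Q}(\omega)$, so lies in $F$, and conversely any $x\in F$ writes as $p+q$ with $\langle p+q,\omega\rangle=h_P(\omega)+h_Q(\omega)$, forcing $\langle p,\omega\rangle=h_P(\omega)$ and $\langle q,\omega\rangle=h_Q(\omega)$ (since neither term can exceed its bound), so $p\in P_\omega$ and $q\in Q_\omega$. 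The main subtlety is uniqueness. Suppose $F=A+B$ is any other decomposition with $A$ a face of $P$ and $B$ a face of $Q$; the same inequality argument just given shows $A\subseteq P_\omega$ and $B\subseteq Q_\omega$. Applying (1) to both decompositions yields, for every direction $\mu\in\R^n$,
\[
h_A(\mu)+h_B(\mu)=h_F(\mu)=h_{P_\omega}(\mu)+h_{Q_\omega}(\mu).
\]
Because $A\subseteq P_\omega$ and $B\subseteq Q_\omega$ imply $h_A\le h_{P_\omega}$ and $h_B\le h_{Q_\omega}$ pointwise, the equality forces $h_A=h_{P_\omega}$ and $h_B=h_{Q_\omega}$ identically. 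Since a compact convex set is uniquely determined by its support function, $A=P_\omega$ and $B=Q_\omega$. I expect this support-function squeeze to be the main (though brief) obstacle, since the naive approach of chasing points is surprisingly slippery.

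For (4), combining (2) with Lemma \ref{lem:Ziegler1} gives $\vertices(P+Q)\subseteq\vertices(P)+\vertices(Q)$. If $P$ and $Q$ are integral, then $\vertices(P),\vertices(Q)\subset\Z^n$, so every vertex of $P+Q$ is a sum of integer vectors, hence lies in $\Z^n$, proving that $P+Q$ is integral.
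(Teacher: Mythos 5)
Your proof is correct, and it follows the same overall architecture as the paper (additivity of support functions first, a sandwich for part (2), part (3) from part (1), and part (4) from part (2)). There are two places where your route differs in a way worth noting. In part (2), for the inclusion $P+Q\subseteq\conv(\vertices(P)+\vertices(Q))$ you give a direct constructive argument --- expanding $(\sum_i\lambda_ia_i)+(\sum_j\mu_jb_j)$ into the double convex combination $\sum_{i,j}\lambda_i\mu_j(a_i+b_j)$ --- whereas the paper argues by contradiction: a point outside the convex hull would violate $h_{P+Q}\le h_P+h_Q$, so it leans on part (1). Your computation is self-contained and arguably cleaner, at the small cost of needing to note explicitly that a Minkowski sum of convex sets is convex (so that the $\supseteq$ direction really is immediate). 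In part (3), the paper's uniqueness argument is quite terse: it observes that evaluating $\langle\cdot,\omega\rangle$ on an alternative decomposition $P_{\omega'}+Q_{\omega''}$ forces $P_{\omega'}\subseteq P_\omega$ and $Q_{\omega''}\subseteq Q_\omega$, and then asserts equality without further comment. Your support-function squeeze --- $h_A\le h_{P_\omega}$, $h_B\le h_{Q_\omega}$, with $h_A+h_B=h_{P_\omega}+h_{Q_\omega}$ forcing equality pointwise, and then invoking that a compact convex set is determined by its support function --- supplies exactly the missing step and is the right way to make the paper's closing implication rigorous. You are correct that uniqueness is the genuine subtlety here; the rest is bookkeeping.
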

\begin{proof}
Additivity of support functions is immediate since $$h_{P+Q}(\omega) = \max_{s \in P+Q} \langle s,\omega \rangle= \max_{p \in P, q \in Q} \langle p+q,\omega \rangle = \max_{p \in P} \langle p,\omega \rangle + \max_{q \in Q} \langle q, \omega \rangle.$$

To show that $P+Q$ is a polytope, we first show $P+Q$ is convex.  Let $a=p_1+q_1$ and $b=p_2+q_2$ for $p_1,p_2\in P$ and $q_1,q_2 \in Q$. Then $v \in [a,b]$ implies
\begin{align*}
v &= \lambda a + (1-\lambda)b \\
&= \lambda(p_1+q_1) + (1-\lambda)(p_2+q_2) \\
&= (\lambda p_1 +(1-\lambda p_2)) + (\lambda q_1+(1-\lambda) q_2) \in P+Q,
\end{align*}
proving that $P+Q$ is convex. To see that $P+Q\subset \conv(\vertices(P)+\vertices(Q))$, suppose towards contradiction that there exists $v \in P+Q \smallsetminus \conv(\vertices(P)+\vertices(Q))$. Then there exists a halfspace of $\conv(\vertices(P)+\vertices(Q))$ not containing $v$. In other words, there exists $\omega$ such that $\langle v, \omega \rangle = h_{P+Q}(\omega)> h_P(\omega)+h_Q(\omega)$, a contradiction by part $(1)$. Thus, 
$$\vertices(P)+\vertices(Q) \subset P+Q \subset \conv(\vertices(P)+\vertices(Q)),$$
and taking the convex hull of this containment proves parts $(2)$ and $(4)$.

To prove part $(3)$, observe that for any $\omega \in \R^n$ we have $(P+Q)_\omega=P_\omega+Q_\omega$ by part $(1)$. Suppose $$(P+Q)_\omega = P_{\omega'}+Q_{\omega''},$$ for some other $\omega',\omega''\in \R^n$. The evaluation of $x \mapsto \langle x,\omega \rangle$ at any point on the right-hand-side must equal $h_{P}(\omega)+h_{Q}(\omega)$, implying that $P_{\omega'}=P_\omega$ and $Q_{\omega''}=Q_\omega$.
\end{proof}

To fix notation, let $\mydefMATH{\Pdot}=\{P_1,\ldots,P_n\}$ be a collection of $n$ polytopes in $\R^n$. We denote the set $\{1,\ldots,n\}$ by $\mydefMATH{[n]}$. The following result is due to Minkowski when $d=3$ \cite{Minkowski}.

\begin{lemma}[H. Minkowski \cite{Minkowski}]
The function $$\mydefMATH{V(\Pdot)}\colon\R^n_{\geq 0} \to \R$$
$$V(\Pdot)(\lambda_1,\ldots,\lambda_n) = \vol(\lambda_1P_1+\cdots+\lambda_nP_n)$$ 
is a homogeneous polynomial of degree $n$ in $\R[\lambda_1,\ldots,\lambda_n]$ where $\mydefMATH{\vol}$ denotes the $n$-dimensional Euclidean volume.
\end{lemma}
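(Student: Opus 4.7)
The plan is to prove two properties of $V(\Pdot)$ separately and then combine them: homogeneity of degree $n$, and polynomiality of total degree at most $n$. The first is immediate because, for $t \geq 0$,
$$\lambda_1(tP_1) + \cdots + \lambda_n(tP_n) \;=\; t(\lambda_1 P_1 + \cdots + \lambda_n P_n),$$
and $n$-dimensional Lebesgue volume is $n$-homogeneous under dilation, giving $V(\Pdot)(t\lambda) = t^n V(\Pdot)(\lambda)$. Once polynomiality of degree at most $n$ is also established, comparing coefficients of powers of $t$ in the identity $\sum_i t^i f_i(\lambda) = t^n \sum_i f_i(\lambda)$, where $f_i$ denotes the degree-$i$ homogeneous component of $V(\Pdot)$, forces $f_i \equiv 0$ for $i < n$, so $V(\Pdot) = f_n$ is homogeneous of degree $n$.

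For the polynomiality I would prove the following strengthened statement by induction on the ambient dimension $d$: for any polytopes $Q_1,\dots,Q_m \subset \R^d$ (with $m$ unrelated to $d$), the map $\mu \mapsto \vol_d(\mu_1 Q_1 + \cdots + \mu_m Q_m)$ on $\R^m_{\geq 0}$ is a polynomial of total degree at most $d$. The base case $d=1$ holds because a Minkowski sum of closed intervals is a closed interval of length $\sum_i \mu_i\,\ell(Q_i)$, which is linear in $\mu$. For the inductive step, set $Q(\mu) := \sum_i \mu_i Q_i$ and, after translating so the origin lies in the interior of $Q(\mu)$, apply the divergence theorem: $\vol_d(Q(\mu))$ is a finite weighted sum over facets $F$ of $Q(\mu)$ of the form $\tfrac{1}{d\|\omega_F\|} h_{Q(\mu)}(\omega_F)\,\vol_{d-1}(F)$, with $\omega_F$ an outer normal to $F$.

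Three observations then finish it. First, by Lemma~\ref{lem:minkowskiproperties}(3) the face lattice of $Q(\mu)$ depends only on the common refinement of the normal fans $\mathcal N(Q_1),\dots,\mathcal N(Q_m)$, which is constant for $\mu \in \R^m_{>0}$; hence the sum has a fixed finite index set and one may fix a single representative $\omega_F$ per facet cone. Second, by Lemma~\ref{lem:minkowskiproperties}(1), $h_{Q(\mu)}(\omega_F) = \sum_i \mu_i h_{Q_i}(\omega_F)$ is linear in $\mu$. Third, each facet decomposes as $F = \sum_i \mu_i F_i$ with $F_i \subseteq Q_i$ a face, lies in an affine $(d-1)$-plane isometric to $\R^{d-1}$, and therefore, by the inductive hypothesis applied to $F_1,\dots,F_m$ in that ambient space, contributes $\vol_{d-1}(F)$ polynomial of degree at most $d-1$ in $\mu$. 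The product of a linear expression with one of degree at most $d-1$ has degree at most $d$, and summing finitely many such preserves that degree bound. The resulting polynomial identity extends from $\R^m_{>0}$ to $\R^m_{\geq 0}$ by continuity of $\vol_d$.

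The main obstacle is the combinatorial stability of the facet structure of $Q(\mu)$ as $\mu$ varies across $\R^m_{>0}$, which is essential for the divergence sum to have a single index set and fixed normal directions throughout. Lemma~\ref{lem:minkowskiproperties}(3) supplies exactly this by identifying $\mathcal N(Q(\mu))$ with the common refinement of the $\mathcal N(Q_i)$, so the real technical content is packed into that earlier lemma; with it in hand, the rest of the argument is a routine inductive bookkeeping of degrees.
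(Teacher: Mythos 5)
The paper does not supply a proof of this lemma; it is stated as a citation to Minkowski. So there is nothing internal to compare against, and I will assess your argument on its own terms. Your proof is correct and is a standard route to Minkowski's theorem: deduce $n$-homogeneity from scaling of Lebesgue measure, then establish polynomiality of degree $\leq n$ by induction on the ambient dimension via the pyramid/divergence decomposition $d\cdot\vol_d(P)=\sum_F h_P(u_F)\vol_{d-1}(F)$, using the combinatorial stability of the normal fan of $\sum\mu_iQ_i$ over $\R^m_{>0}$, the linearity $h_{Q(\mu)}=\sum\mu_i h_{Q_i}$, and the Minkowski-decomposition of facets $F=\sum\mu_iF_i$. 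Combining homogeneity with the degree bound by comparing homogeneous components is also fine.

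Two small inaccuracies worth flagging, neither fatal. First, the translation you perform so that the origin lies in the interior of $Q(\mu)$ is both unnecessary and mildly dangerous: such a translation would depend on $\mu$ and would perturb the support-function values. In fact the identity $d\cdot\vol_d(P)=\sum_F h_P(u_F)\vol_{d-1}(F)$ holds with no hypothesis on the position of the origin (the terms with $h_P(u_F)<0$ subtract off correctly), so you should simply drop the translation. Second, Lemma~\ref{lem:minkowskiproperties}(3) as stated gives only the unique Minkowski decomposition of a fixed face; the claim that $\mathcal N\bigl(\sum\mu_iQ_i\bigr)$ equals the common refinement of the $\mathcal N(Q_i)$ and is therefore constant over $\R^m_{>0}$ is true but does require the further observation that, for $\omega$ in the relative interior of a cone of the common refinement, the tuple of exposed faces $\bigl((Q_1)_\omega,\dotsc,(Q_m)_\omega\bigr)$ is constant — a one-line check but not literally what the cited lemma says. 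You should also be explicit that the facet $F=\sum\mu_iF_i$ is handled by orthogonally projecting to $\omega_F^\perp\cong\R^{d-1}$, where the projection is a rigid motion on each affine hyperplane normal to $\omega_F$, so $(d-1)$-volumes are preserved and the inductive hypothesis applies.
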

\begin{definition}
\label{def:mixedvolume}
The \mydef{mixed volume} of  $\Pdot$, denoted $\mydefMATH{\MV(\Pdot)}$, is the coefficient of $\lambda_1 \lambda_2\cdots \lambda_n$ in $V(\Pdot)$.
\end{definition}
\begin{example}
\label{ex:MixedVolume}
Consider $A=\conv(\bzero,e_1,e_2,e_1+e_2)$ and $B=\conv(\bzero,e_1,e_2)$ as displayed in Figure \ref{fig:mixedvolume}. Then $V(A,B) = \lambda_1^2+2\lambda_1\lambda_2+\frac{1}{2}\lambda_2^2$ and so $\MV(A,B)=2$. 
\begin{figure}[htpb!!]
\includegraphics[scale=0.6]{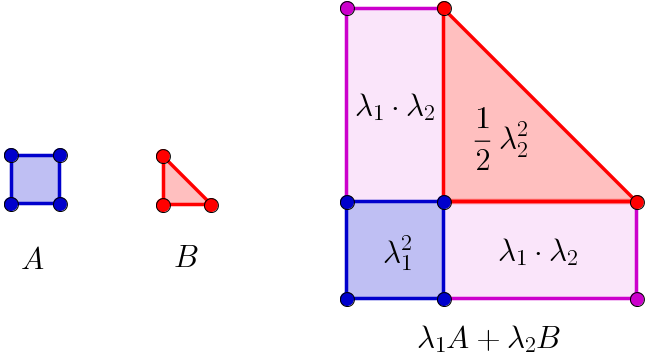}
\caption{A graphic expressing $\text{vol}(\lambda_1A+\lambda_2B)$ for two polygons $A,B \subset \R^2$.}
\label{fig:mixedvolume}
\end{figure}\hfill $\diamond$
\end{example}
\begin{lemma}
\label{lem:properties of mixed volume}
Let $P,P_1,\ldots,P_n,Q \subset \R^n$ be polytopes and let $a \in \R_{\geq 0}$. Then,
\begin{enumerate}
\item $\MV(P,\ldots,P) = n! \vol(P)$.
\item $\MV$ is symmetric in its arguments.
\item $\MV$ is multilinear:
$$\MV(aP_1+Q,P_2,\ldots,P_n) = a\MV(P_1,\ldots,P_n)+\MV(Q,P_2,\ldots,P_n).$$
\end{enumerate}
\end{lemma}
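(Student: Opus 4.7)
The plan is to prove the three properties in the order they are stated, since each makes the next easier. I will rely throughout on the observation that for a convex set $P$ and $a,b\in\R_{\geq 0}$, one has $aP+bP=(a+b)P$, together with the definition of $\MV$ as the coefficient of $\lambda_1\cdots\lambda_n$ in $V(P_\bullet)(\lambda_1,\ldots,\lambda_n)=\vol(\lambda_1 P_1+\cdots+\lambda_n P_n)$.

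For (1), I would apply the convexity identity to collapse the Minkowski sum: $\lambda_1 P+\cdots+\lambda_n P=(\lambda_1+\cdots+\lambda_n)P$, so $V(P,\ldots,P)(\lambda_1,\ldots,\lambda_n)=(\lambda_1+\cdots+\lambda_n)^n\vol(P)$. Extracting the coefficient of $\lambda_1\cdots\lambda_n$ via the multinomial theorem yields $n!\vol(P)$.

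For (2), let $\sigma\in S_n$ and set $Q_i=P_{\sigma(i)}$. Then
\[
V(Q_\bullet)(\lambda_1,\ldots,\lambda_n)=\vol\!\Bigl(\sum_{i=1}^n \lambda_i P_{\sigma(i)}\Bigr)=V(P_\bullet)(\lambda_{\sigma^{-1}(1)},\ldots,\lambda_{\sigma^{-1}(n)}),
\]
and since $\lambda_1\cdots\lambda_n$ is a symmetric monomial, its coefficient is unchanged after the permutation of variables. Hence $\MV(Q_\bullet)=\MV(P_\bullet)$.

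For (3), the main obstacle is isolating the correct coefficient after a substitution. I would introduce the $(n+1)$-variable polynomial
\[
f(\lambda_1,\ldots,\lambda_n,\mu)\;=\;\vol\!\Bigl(\lambda_1 P_1+\lambda_2 P_2+\cdots+\lambda_n P_n+\mu Q\Bigr),
\]
which is homogeneous of degree $n$ by the preceding lemma. Using $\lambda_1(aP_1+Q)=a\lambda_1 P_1+\lambda_1 Q$ (again the convexity identity), I would rewrite
\[
\vol\!\Bigl(\lambda_1(aP_1+Q)+\sum_{i\geq 2}\lambda_i P_i\Bigr)=f(a\lambda_1,\lambda_2,\ldots,\lambda_n,\lambda_1).
\]
Writing $f=\sum_{|\alpha|+k=n} c_{\alpha,k}\,\lambda^{\alpha}\mu^k$, the substitution turns $\lambda^{\alpha}\mu^k$ into $a^{\alpha_1}\lambda_1^{\alpha_1+k}\lambda_2^{\alpha_2}\cdots\lambda_n^{\alpha_n}$. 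For this to contribute to $\lambda_1\cdots\lambda_n$, we need $\alpha_2=\cdots=\alpha_n=1$ and $\alpha_1+k=1$, leaving exactly two contributing monomials: $(\alpha_1,k)=(1,0)$ contributes $a\,c_{(1,\ldots,1),0}=a\,\MV(P_1,P_2,\ldots,P_n)$, and $(\alpha_1,k)=(0,1)$ contributes $c_{(0,1,\ldots,1),1}$. Setting $\lambda_1=0$ in $f$ identifies this latter coefficient with $\MV(P_2,\ldots,P_n,Q)$, which by the symmetry proved in (2) equals $\MV(Q,P_2,\ldots,P_n)$. Summing gives the desired identity.
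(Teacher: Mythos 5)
Parts (1) and (2) of your argument coincide with the paper's own treatment: the paper gives the same $(\lambda_1+\cdots+\lambda_n)^n\vol(P)$ computation for (1) and declares (2) ``immediate from the definition,'' which your explicit variable-permutation step merely spells out. The real divergence is part (3), for which the paper offers no proof at all and instead cites Ewald (Lemma 3.6). Your self-contained argument -- introducing the auxiliary homogeneous polynomial $f(\lambda_1,\ldots,\lambda_n,\mu)=\vol(\lambda_1 P_1+\cdots+\lambda_n P_n+\mu Q)$, using the distributivity $\lambda_1(aP_1+Q)=a\lambda_1 P_1+\lambda_1 Q$ to rewrite the left-hand volume polynomial as $f(a\lambda_1,\lambda_2,\ldots,\lambda_n,\lambda_1)$, and then tracking which monomials survive extraction of $\lambda_1\cdots\lambda_n$ -- is correct: only $(\alpha_1,k)\in\{(1,0),(0,1)\}$ with the remaining $\alpha_i=1$ contribute, and the two surviving coefficients are identified as $\MV(P_1,\ldots,P_n)$ and $\MV(P_2,\ldots,P_n,Q)$ by specializing $\mu=0$ and $\lambda_1=0$ respectively, with (2) supplying the final reordering. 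One small point worth flagging: you invoke Minkowski's polynomiality result for $n{+}1$ polytopes in $\R^n$, while the paper states it only for $n$-tuples; the general version (a polynomial of total degree equal to the ambient dimension, in any number of summands) is standard but is a slightly stronger fact than what the text records, so a reader following the paper verbatim would have to supply it. In exchange for that one additional input, you get a fully self-contained proof of multilinearity where the paper simply outsources the step to a reference.
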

\begin{proof}
Note that $\vol(\lambda_1P+\cdots+\lambda_nP) = \vol((\lambda_1+\cdots+\lambda_n)P) = (\lambda_1 + \cdots + \lambda_n)^n \vol(P)$ and so the coefficient of $\lambda_1 \cdots \lambda_n$ is $n!\vol(P)$.
Part $(2)$ is immediate from the definition of mixed volume. For a proof of part $(3)$, see  \cite[Lemma 3.6]{Ewald}.
\end{proof}

\begin{lemma}\cite[Theorem 3.7]{Ewald}
\label{lem:mixedvolume2}
Given a collection of polytopes $P_{1},\ldots,P_n$,
$$\MV(P_1,\ldots,P_n) = \sum_{I \subset [n]} (-1)^{n-|I|}\vol\left(\sum_{i \in I} P_i \right).$$
\end{lemma}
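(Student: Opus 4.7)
The plan is to extract the coefficient of $\lambda_1 \cdots \lambda_n$ from the homogeneous polynomial $V(P_\bullet)$ by a finite-difference / inclusion-exclusion identity, and then recognize each term in the identity as the volume of a Minkowski sum.

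First I would write $V(P_\bullet)(\lambda_1,\ldots,\lambda_n) = \sum_{\alpha} c_\alpha \lambda^\alpha$, where the sum is over $\alpha\in\Z_{\geq 0}^n$ with $|\alpha|=n$ (using that $V(P_\bullet)$ is a homogeneous polynomial of degree $n$, as guaranteed by Minkowski's lemma). By definition of mixed volume, our goal is to show $c_{(1,\ldots,1)}$ equals the claimed alternating sum. Let $\mathbb{1}_I \in \{0,1\}^n$ denote the indicator vector of $I \subseteq [n]$. Since $0\cdot P_i = \{0\}$ under the scaling operation and Minkowski summation with $\{0\}$ is the identity, one checks directly that
$$V(P_\bullet)(\mathbb{1}_I) \;=\; \vol\!\left(\sum_{i\in I} P_i\right).$$
Thus the right-hand side of the lemma is $\sum_{I\subseteq [n]} (-1)^{n-|I|} V(P_\bullet)(\mathbb{1}_I)$, and it suffices to prove the purely algebraic identity
$$\sum_{I\subseteq [n]}(-1)^{n-|I|} V(P_\bullet)(\mathbb{1}_I) \;=\; c_{(1,\ldots,1)}.$$

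Next I would verify this identity by substituting the monomial expansion of $V(P_\bullet)$ and swapping sums. Evaluating the monomial $\lambda^\alpha$ at $\mathbb{1}_I$ gives $1$ when $\mathrm{supp}(\alpha)\subseteq I$ and $0$ otherwise, so
$$\sum_{I\subseteq[n]}(-1)^{n-|I|}\mathbb{1}_I^{\alpha} \;=\; \sum_{I\supseteq \mathrm{supp}(\alpha)}(-1)^{n-|I|}.$$
Setting $s=|\mathrm{supp}(\alpha)|$ and reindexing by $J = I\setminus\mathrm{supp}(\alpha)\subseteq [n]\setminus\mathrm{supp}(\alpha)$, this sum becomes $(-1)^{n-s}\sum_{J\subseteq[n]\setminus\mathrm{supp}(\alpha)}(-1)^{|J|} = (-1)^{n-s}(1-1)^{n-s}$, which vanishes unless $s=n$, in which case it equals $1$. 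Since $|\alpha|=n$ and $\alpha_i\geq 1$ for each $i$ forces $\alpha=(1,1,\ldots,1)$, only the monomial $\lambda_1\cdots\lambda_n$ survives, and its coefficient $c_{(1,\ldots,1)}$ is picked out exactly.

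The main obstacle I anticipate is purely notational: keeping the bookkeeping clean between the index set $I$, the support of $\alpha$, and the evaluation $\mathbb{1}_I^\alpha$. There is no geometric content beyond the homogeneity of $V(P_\bullet)$ and the identification $V(P_\bullet)(\mathbb{1}_I)=\vol(\sum_{i\in I}P_i)$; the rest is a clean application of the $(1-1)^m=0^m$ identity to kill all non-multilinear monomials. One could alternatively phrase this as iterated first-order finite differences $\Delta_{\lambda_1}\cdots\Delta_{\lambda_n}V(P_\bullet)\big|_{\lambda=0}$, which annihilate any monomial missing some variable and act as $n!$ times the coefficient on $\lambda_1\cdots\lambda_n$—but since we only need the coefficient itself, the direct inclusion-exclusion computation above is the most efficient route.
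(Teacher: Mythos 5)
Your proof is correct, and it takes a genuinely different (and arguably cleaner) route than the one in the paper. The paper restates Ewald's argument: it leans on the three characterizing properties of mixed volume (the $n!\vol$ normalization, symmetry, and multilinearity) to reduce the claim to the formal polynomial identity
$n!\,x_1\cdots x_n = \sum_{I\subseteq[n]}(-1)^{n-|I|}\bigl(\sum_{i\in I}x_i\bigr)^n$,
which it then verifies by counting occurrences of each monomial via $(1-1)^{n-1}=0$. Your proof bypasses the axiomatic properties entirely: it works directly with the Minkowski volume polynomial $V(P_\bullet)$, identifies each term on the right-hand side as the evaluation $V(P_\bullet)(\mathbb{1}_I)=\vol(\sum_{i\in I}P_i)$, and extracts the coefficient of $\lambda_1\cdots\lambda_n$ by inclusion-exclusion on the monomial support. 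The combinatorial kernel, namely the vanishing of $\sum_{J\subseteq[m]}(-1)^{|J|}=(1-1)^m$ for $m>0$, is the same in both, but the inputs differ: you only need Minkowski's homogeneity lemma and the definition of mixed volume as a coefficient, whereas the paper also invokes symmetry and multilinearity (Lemma~\ref{lem:properties of mixed volume}), which themselves require proof. In particular, your argument cleanly yields the same result used in Lemma~\ref{lem:mixedvolume3} (that any function satisfying those axioms must agree with $\MV$), but it does not \emph{depend} on those axioms, so it is the more self-contained of the two. One small caveat worth making explicit if you write this up: the identification $V(P_\bullet)(\mathbb{1}_I)=\vol(\sum_{i\in I}P_i)$ uses that $0\cdot P_i=\{\bzero\}$, which requires each $P_i$ to be nonempty; this is implicit in the lemma's hypotheses but is the one geometric fact you are silently using.
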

\begin{proof} We restate the proof given in \cite{Ewald}.
Due to precisely the properties of mixed volume in  Lemma \ref{lem:properties of mixed volume}, we may treat the statement in the theorem as the polynomial equation
\begin{equation}
\label{eq:alternatingaspolynomial}
n!x_1\cdots x_n = (x_1+\cdots+x_n)^n-\sum_{i=1}^n(x_1+\cdots+x_{i-1}+x_{i+1}+\cdots x_n)^n +-\cdots
\end{equation}
$$\cdots +(-1)^{n-2} \sum_{i < j} (x_i+x_j)^n + (-1)^{n-1}\sum_{i=1}^n x_i^n,$$
where $x_{i_1}\cdots x_{i_N}\leftrightarrow \vol\left( \lambda_{i_1} \cdot P_{i_1}+ \cdots +\lambda_{i_N}\cdot P_{i_N}\right)$.
To verify \eqref{eq:alternatingaspolynomial}, we may simply check how many times each monomial appears in the right-hand-side. The monomial $x_i^n$ appears once in the first term, $n-1$ times in the second, and so on to give a total of 
$$1-(n-1)+{{n-1}\choose{2}}-\cdots+(-1)^{n-2}(n-1)+(-1)^{n-1} = (1-1)^{n-1} = 0.$$
Similarly, every term on the right-hand-side cancels except for the mixed term $x_1\cdots x_n$ which appears $n!$ times. 
\end{proof}
Since the formula in Lemma \ref{lem:mixedvolume2} is short when $n=2$, we state it as a corollary.
\begin{corollary}
The mixed volume of two convex polygons $P_1,P_2 \subset \R^2$ is $$\MV(P_1,P_2)=\vol(P_1+P_2)-\vol(P_1)-\vol(P_2).$$
\end{corollary}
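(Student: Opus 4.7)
The plan is to simply specialize Lemma \ref{lem:mixedvolume2} to the case $n=2$ and enumerate the four subsets of $[2]=\{1,2\}$. This is a direct substitution rather than an argument requiring a new idea, so the proof proposal is essentially bookkeeping.

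First I would write out the formula from Lemma \ref{lem:mixedvolume2} with $n=2$:
\[
\MV(P_1,P_2)=\sum_{I\subseteq [2]} (-1)^{2-|I|}\vol\left(\sum_{i\in I} P_i\right).
\]
Then I would list the four subsets and their contributions. The subsets $\{1\}$ and $\{2\}$ each contribute with sign $(-1)^{2-1}=-1$, giving $-\vol(P_1)$ and $-\vol(P_2)$ respectively. The full subset $\{1,2\}$ contributes $(-1)^{0}\vol(P_1+P_2)=\vol(P_1+P_2)$.

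The only minor subtlety is the empty subset $I=\emptyset$, which formally contributes $(-1)^{2}\vol\left(\sum_{i\in\emptyset} P_i\right)$. I would address this by invoking the standard convention that the empty Minkowski sum is the singleton $\{0\}\subset\R^2$, whose two-dimensional Euclidean volume is zero, so this term vanishes. (Alternatively, one could note that Lemma \ref{lem:mixedvolume2} as proved via equation \eqref{eq:alternatingaspolynomial} treats the sum as ranging over nonempty $I$ implicitly, since the constant term of the polynomial identity is zero.) Assembling the three nonzero terms yields
\[
\MV(P_1,P_2)=\vol(P_1+P_2)-\vol(P_1)-\vol(P_2),
\]
which is the claimed formula. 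There is no real obstacle here; this is a two-line corollary whose only content beyond Lemma \ref{lem:mixedvolume2} is the combinatorial enumeration of subsets of a two-element set.
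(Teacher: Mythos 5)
Your proposal is correct and is exactly the paper's route: the corollary is stated as an immediate specialization of Lemma \ref{lem:mixedvolume2} to $n=2$, with the paper offering no further proof since the enumeration of subsets of $[2]$ is trivial. Your handling of the empty-subset term (empty Minkowski sum is $\{0\}$, which has zero area) is the right way to make the bookkeeping airtight.
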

\begin{lemma}
\label{lem:mixedvolume3}
The only function from $n$-tuples of polytopes to $\R$ satisfying the properties in Lemma \ref{lem:properties of mixed volume} is $\MV$.
\end{lemma}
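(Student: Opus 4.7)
The plan is to prove uniqueness by polarization: a symmetric multilinear function of $n$ arguments is determined by its values on the diagonal, and property~$(1)$ fixes those values in terms of ordinary volume. The argument parallels the verification carried out in the proof of Lemma~\ref{lem:mixedvolume2}, but now run in reverse.

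First I would fix a candidate function $F$ satisfying properties $(1)$--$(3)$ of Lemma~\ref{lem:properties of mixed volume} and, for polytopes $P_1,\ldots,P_n\subset\R^n$, expand
$$F\!\left(\sum_{i\in I}P_i,\ldots,\sum_{i\in I}P_i\right)=\sum_{(i_1,\ldots,i_n)\in I^n}F(P_{i_1},\ldots,P_{i_n})$$
by iterated use of multilinearity $(3)$ (with $a=1$), which is legal because Minkowski sums are polytopes by Lemma~\ref{lem:minkowskiproperties} and the relevant coefficients are nonnegative. Symmetry $(2)$ then lets me regroup terms on the right by the multiset $\{i_1,\ldots,i_n\}$. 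Consequently, if I introduce formal commuting variables $x_1,\ldots,x_n$ and make the substitution
$$x_{i_1}\cdots x_{i_n}\;\longleftrightarrow\;F(P_{i_1},\ldots,P_{i_n}),$$
every polynomial identity in $\R[x_1,\ldots,x_n]$ of degree $n$ in which each monomial has total degree $n$ translates into a corresponding identity in the values of $F$.

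Next I would apply this correspondence to the polynomial identity
$$n!\,x_1\cdots x_n \;=\; \sum_{I\subseteq[n]}(-1)^{n-|I|}\!\left(\sum_{i\in I}x_i\right)^{\!n},$$
which was established in the proof of Lemma~\ref{lem:mixedvolume2}. Under the substitution above, the left-hand side becomes $n!\,F(P_1,\ldots,P_n)$ and the right-hand side becomes
$$\sum_{I\subseteq[n]}(-1)^{n-|I|}F\!\left(\sum_{i\in I}P_i,\ldots,\sum_{i\in I}P_i\right).$$
Property $(1)$ then rewrites each diagonal evaluation as $n!\,\vol\!\bigl(\sum_{i\in I}P_i\bigr)$, giving
$$n!\,F(P_1,\ldots,P_n)\;=\;n!\sum_{I\subseteq[n]}(-1)^{n-|I|}\vol\!\left(\sum_{i\in I}P_i\right),$$
and Lemma~\ref{lem:mixedvolume2} identifies the right-hand side with $n!\,\MV(P_1,\ldots,P_n)$. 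Dividing by $n!$ yields $F=\MV$.

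The only delicate point is the justification that a polynomial identity in formal variables transfers to an identity in values of $F$; this is where symmetry and multilinearity are both essential, since without symmetry the ordering of arguments would obstruct the substitution $x_{i_1}\cdots x_{i_n}\leftrightarrow F(P_{i_1},\ldots,P_{i_n})$, and without multilinearity the expansion of $F$ on Minkowski sums would fail. Once that step is in place, the argument is a mechanical reuse of the combinatorial identity already proved in Lemma~\ref{lem:mixedvolume2}, so I do not expect any further obstacles.
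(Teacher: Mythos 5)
Your proof is correct and is essentially the paper's argument, spelled out in full detail: the paper simply observes that the derivation of the alternating-sum formula in Lemma~\ref{lem:mixedvolume2} used only properties~$(1)$--$(3)$, so any function satisfying them must obey the same formula and hence coincide with $\MV$. You have made the implicit polarization and substitution steps explicit, but the route is the same.
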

\begin{proof}
The proof of the formula of Lemma \ref{lem:mixedvolume2} relied precisely on the properties in Lemma \ref{lem:properties of mixed volume}. Thus, any other function satisfying those properties will have the same formula.
\end{proof}
When each polytope in a collection $\Pdot$ is integral, there is a discrete analog of Lemma \ref{lem:mixedvolume2} involving lattice point enumeration.
\begin{lemma}\cite[Corollary 3.10]{Steffens}
\label{lem:mixedvolume4}
Given a collection of integral polytopes $P_{1},\ldots,P_n$,
$$\MV(P_1,\ldots,P_n) =(-1)^n+ \sum_{\emptyset \neq I \subset [n]} (-1)^{n-|I|}\Bigl{|}\mathcal L\Bigl(\sum_{i \in I} P_i \Bigr)\Bigr|.$$ 
\end{lemma}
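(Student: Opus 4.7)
The plan is to mirror the derivation of Lemma \ref{lem:mixedvolume2} but to replace Euclidean volume by the lattice-point counting function $|\mathcal L(\cdot)|$ throughout. The essential input is a theorem of McMullen generalizing Ehrhart's theorem: for integral polytopes $P_1, \ldots, P_n \subset \mathbb{R}^n$, the Minkowski--Ehrhart function
\[
E(t_1, \ldots, t_n) \;:=\; \bigl|\mathcal L(t_1 P_1 + \cdots + t_n P_n)\bigr|
\]
agrees on $\mathbb{Z}_{\geq 0}^n$ with a polynomial in $t_1, \ldots, t_n$ of total degree at most $n$, whose homogeneous component of degree $n$ is exactly $V(\Pdot)$. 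Granting this, the coefficient of $t_1 \cdots t_n$ in $E$ coincides with the coefficient of $t_1 \cdots t_n$ in $V(\Pdot)$ (lower-degree terms cannot contribute to a degree-$n$ monomial), and the latter is $\MV(\Pdot)$ by Definition \ref{def:mixedvolume}.

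The second ingredient is a purely combinatorial identity: for any polynomial $f \in \mathbb{R}[t_1, \ldots, t_n]$ of total degree at most $n$,
\[
[t_1 \cdots t_n]\, f \;=\; \sum_{I \subseteq [n]} (-1)^{n-|I|}\, f(\mathbf{1}_I),
\]
where $\mathbf{1}_I \in \{0,1\}^n$ is the indicator vector of $I$. I would prove this by evaluating both sides on an arbitrary monomial $t_1^{a_1}\cdots t_n^{a_n}$: the right-hand side collapses to $\sum_{I \supseteq \mathrm{supp}(a)} (-1)^{n-|I|}$, which vanishes unless $\mathrm{supp}(a) = [n]$, and the degree bound $a_1 + \cdots + a_n \leq n$ then forces any surviving monomial to be $t_1 \cdots t_n$ itself.

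Combining the two steps by applying the identity to $f = E$ yields
\[
\MV(\Pdot) \;=\; \sum_{I \subseteq [n]} (-1)^{n-|I|}\, \bigl|\mathcal L\bigl(\textstyle\sum_{i \in I} P_i\bigr)\bigr|,
\]
and isolating the $I = \emptyset$ term---for which $\sum_{i \in \emptyset} P_i = \{\mathbf{0}\}$ contributes a single lattice point and hence $(-1)^n$---produces the stated formula. I expect the main obstacle to be McMullen's polynomiality result itself: unlike in Lemma \ref{lem:mixedvolume2}, where polynomiality of $\vol(t_1 P_1 + \cdots + t_n P_n)$ is Minkowski's classical theorem, the analogous statement for lattice-point counts is more delicate and is typically proved via a half-open Minkowski decomposition of $t_1 P_1 + \cdots + t_n P_n$ or by reducing to Ehrhart's theorem on a suitable slice of the product polytope $P_1 \times \cdots \times P_n$.
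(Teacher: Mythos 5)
The paper gives no proof of this lemma; it is stated with only a citation to \cite[Corollary 3.10]{Steffens}, so there is no in-text argument for your proposal to be compared against. That said, your proof is correct, and it is the standard route. Your finite-difference identity holds exactly as claimed: on a monomial $t_1^{a_1}\cdots t_n^{a_n}$ with $a_1+\cdots+a_n\le n$, the term $\prod_{i\in I}1^{a_i}\prod_{i\notin I}0^{a_i}$ is nonzero precisely when $I$ contains $\{i:a_i>0\}$, and summing $(-1)^{n-|I|}$ over such $I$ gives $(1-1)^{n-k}$ with $k=|\{i:a_i>0\}|$, which vanishes unless $k=n$; the degree bound then pins the surviving monomial to $t_1\cdots t_n$. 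Applying this to the Minkowski--Ehrhart polynomial $E$, using $E(\mathbf 1_I)=|\mathcal L(\sum_{i\in I}P_i)|$ and that the $I=\emptyset$ term is $|\mathcal L(\{\mathbf 0\})|=1$, yields precisely the stated formula. You are also right that the genuine content is McMullen's multivariate Ehrhart theorem, i.e.\ that $E$ agrees on $\mathbb{Z}_{\ge 0}^n$ with a polynomial of total degree at most $n$ whose degree-$n$ homogeneous part equals $V(\Pdot)$; that is what makes this lemma less elementary than its Euclidean analog, Lemma~\ref{lem:mixedvolume2}, whose only input is Minkowski's classical polynomiality of $\vol(\lambda_1 P_1+\cdots+\lambda_n P_n)$, and it is the result one would need to import (or prove, e.g.\ via a half-open decomposition or a slice of $P_1\times\cdots\times P_n$ as you suggest) to make the argument complete.
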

\section{Subdivisions} Following \cite{HuberSturmfels} we give the notion of subdivisions of collections of finite subsets of $\R^n$.  The combinatorial constructions in this section provide a fifth description of the mixed volume of a collection of polytopes and are fundamentally important for Algorithm \ref{alg:polyhedralhomotopy} of Section~\ref{subsection:polyhedralhomotopy}. 

Let $\mydefMATH{\Adot}= (\calA_1,\ldots,\calA_k)$ be a collection of finite subsets of $\R^n$ whose union affinely spans $\R^n$. 
A \mydef{cell} of $\Adot$ is a tuple $\mathcal C_\bullet = (\mathcal C_1,\ldots,\mathcal C_k)$ of nonempty subset $\mathcal C_i \subset \mathcal A_i$. We define
\begin{align*}
\mydefMATH{\type(\mathcal C_\bullet)} &= (\dim(\conv(\mathcal C_1)),\ldots,\dim(\conv(\mathcal C_k))), \\
\mydefMATH{\conv(\mathcal C_\bullet)} &= \conv(\mathcal C_1+ \cdots + \mathcal C_k), \\
\mydefMATH{|\mathcal C_\bullet|} &= |\mathcal C_1| + |\mathcal C_2| + \cdots + |\mathcal C_k|,\\
\mydefMATH{\vol(\mathcal C_\bullet)} &= \vol(\conv(\mathcal C_\bullet)).
\end{align*}
\begin{definition}
\label{def:subdivision}
A \mydef{subdivision} of $\Adot$ is a collection $S^\bullet=\left\{\Cdot^{(1)},\ldots,\Cdot^{(m)}\right\}$ of cells satisfying 
\begin{enumerate}
\item $\dim\left(\conv\left(\Cdot^{(i)}\right)\right) = n$ for all $i=1,\ldots,m$.
\item $\conv\left(\Cdot^{(i)}\right) \cap \conv\left(\Cdot^{(j)}\right)$ is a proper face of $\conv\left(\Cdot^{(i)}\right)$ and $\conv\left(\Cdot^{(j)}\right)$ for all $i\neq j \in [m]$.
\item $\bigcup_{i=1}^m \conv\left(\Cdot^{(i)}\right) = \conv(\Adot).$
\end{enumerate}
If $S^\bullet$ additionally satisfies 
\begin{itemize}
\item[(4)] $\left|\type\left(\Cdot^{(i)}\right)\right| = n$ for all $i=1,\ldots,m$,
\end{itemize}
then we say it is a \mydef{mixed subdivision}. Even stronger, if $S^\bullet$ additionally satisfies
\begin{itemize}
\item[(5)] $\sum_{i=1}^k\left(\left|\mathcal C^{(j)}_i\right|-1\right)=n$ for all $j=1,\ldots,m$,
\end{itemize}
then we say it is a \mydef{fine mixed subdivision}.
\end{definition}
A cell $\mathcal C_\bullet$ of a subdivision $S^{\bullet}$ is called a \mydef{mixed cell} when $\min(\type(\mathcal C_\bullet))>0$ and a \mydef{fine mixed cell} if it additionally satisfies $\sum_{i=1}^k (|\mathcal C_i|-1)=n$. When $k=n$, a cell $\mathcal C_\bullet$ is mixed if $\type(\mathcal C_\bullet)=\bone$ and it is fine mixed if $|\mathcal C_i|=2$ for $i=1,\ldots,k$.   
\begin{example}
When $k=1$, every subdivision of $\Adot$ is a mixed subdivision because parts $(1)$ and $(4)$ of Definition \ref{def:subdivision} become the same statement. The fine mixed subdivisions of $\Adot$ are those with the property that the convex hull of each cell is an $n$-simplex. Such subdivisions comprise a rich family of combinatorial objects called \mydef{triangulations} \cite{triangulations}. \hfill $\diamond$
\end{example} The definitions above provide a new description of mixed volume.
\begin{lemma}\cite[Theorem 2.4]{HuberSturmfels}
\label{lem:mixedvolume5}
Suppose $\Adot=(\mathcal A_1,\ldots,\mathcal A_k)$ is a collection of finite subsets of $\R^n$ whose union affinely spans $\R^n$ and let $P_i=\conv(\mathcal A_i)$. If $S^\bullet$ is a mixed subdivision of $\Adot$ and  $r=(r_1,\ldots,r_k)\subset \mathbb{N}^k$ such that $|r|=n$, then the mixed volume of $$P=(\underbrace{P_1,\ldots,P_1}_{r_1},\underbrace{P_2,\ldots,P_2}_{r_2},\ldots,\underbrace{P_k,\ldots,P_k}_{r_k}),$$ is the sum of the volumes of the mixed cells in $S^\bullet$ of type $(r_1,r_2,\ldots,r_k)$:
$$\MV(P) = \sum_{\substack{\Cdot \in S^\bullet \\ \type(\Cdot)=(r_1,\ldots,r_k)}} \vol(\mathcal C_\bullet).$$
\end{lemma}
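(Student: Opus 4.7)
The plan is to compute $\widetilde V(\lambda_1,\ldots,\lambda_k) := \vol(\lambda_1 P_1 + \cdots + \lambda_k P_k)$ in two different ways and compare coefficients. By Minkowski's theorem $\widetilde V$ is a homogeneous polynomial of degree $n$ in $\lambda_1,\ldots,\lambda_k$, and the multilinearity and symmetry of mixed volume (Lemma \ref{lem:properties of mixed volume}) expand it as
$$\widetilde V(\lambda) \;=\; \sum_{r_1 + \cdots + r_k = n}\frac{\lambda_1^{r_1} \cdots \lambda_k^{r_k}}{r_1!\cdots r_k!}\;\MV(\underbrace{P_1,\ldots,P_1}_{r_1},\ldots,\underbrace{P_k,\ldots,P_k}_{r_k}).$$
The goal is to obtain a parallel expansion of $\widetilde V$ coming from the combinatorics of $S^\bullet$, with the coefficient of $\lambda_1^{r_1}\cdots\lambda_k^{r_k}$ arising as the sum of volumes of cells of type $r$, and to read off the desired identity from the uniqueness of polynomial coefficients together with the characterization of $\MV$ in Lemma \ref{lem:mixedvolume3}.

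First I would verify a combinatorial persistence step: for every $\lambda \in \R^k_{>0}$, the collection $\bigl\{\lambda_1 \conv(\mathcal C_1) + \cdots + \lambda_k \conv(\mathcal C_k) : \Cdot \in S^\bullet\bigr\}$ is still a subdivision of $\lambda_1 P_1 + \cdots + \lambda_k P_k$. The covering and proper-intersection requirements of Definition \ref{def:subdivision} are preserved under per-factor scaling thanks to the additivity of support functions (Lemma \ref{lem:minkowskiproperties}(1)), but the cleanest justification routes through the Cayley trick, presenting $S^\bullet$ as the projection of a regular subdivision of a lifted configuration in $\R^{n+k-1}$; in that picture, scaling the $\lambda_i$ is just a linear change of the lifting data, which manifestly preserves regularity.

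The second step is a per-cell volume computation: fix $\Cdot$ of type $(r_1,\ldots,r_k)$ with $|r|=n$. Because $\dim(\conv(\Cdot)) = n = \sum_i \dim(\conv(\mathcal C_i))$, the affine hulls of the $\conv(\mathcal C_i)$ must be in direct-sum position, so there is a linear isomorphism $\R^{r_1}\oplus\cdots\oplus\R^{r_k}\to \R^n$ identifying $\conv(\Cdot)$ with $\conv(\mathcal C_1)\times\cdots\times\conv(\mathcal C_k)$. Scaling the $i$th factor by $\lambda_i$ rescales its $r_i$-dimensional volume by $\lambda_i^{r_i}$, and the Jacobian of the identification is a fixed constant, so
$$\vol\bigl(\lambda_1 \conv(\mathcal C_1) + \cdots + \lambda_k \conv(\mathcal C_k)\bigr) \;=\; \lambda_1^{r_1} \cdots \lambda_k^{r_k}\,\vol(\Cdot).$$
Summing over $\Cdot \in S^\bullet$ and grouping by type yields $\widetilde V(\lambda) = \sum_r \lambda^r \bigl(\sum_{\Cdot:\,\type(\Cdot)=r} \vol(\Cdot)\bigr)$, and matching coefficients of $\lambda_1^{r_1}\cdots\lambda_k^{r_k}$ against the expansion from the opening paragraph delivers the claimed identity. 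The principal obstacle is the first step, since ensuring that $S^\bullet$ remains a subdivision under asymmetric scaling requires ruling out unexpected cell overlaps; the lifting/Cayley-trick picture underlying \cite{HuberSturmfels} is the natural framework to handle this cleanly, and once it is in place the rest of the argument is essentially bookkeeping with polynomial coefficients.
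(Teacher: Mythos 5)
The paper offers no proof of this lemma; it is stated with a citation to \cite[Theorem 2.4]{HuberSturmfels}. Your strategy --- compute $\vol(\lambda_1 P_1 + \cdots + \lambda_k P_k)$ once via multilinearity of mixed volume and once cell-by-cell, then match coefficients --- is the standard Huber--Sturmfels argument, and both of your expansions are carried out correctly. In particular, your per-cell identity holds because $\sum_i\dim\conv(\mathcal C_i) = n$ forces the affine hulls of the summands into direct-sum position, and you rightly flag the persistence of $S^\bullet$ under asymmetric scaling as the genuinely nontrivial step; the Cayley trick is the right tool and applies to all mixed subdivisions, not only coherent ones.

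The gap is in the last step. Matching coefficients of $\lambda_1^{r_1}\cdots\lambda_k^{r_k}$ between your two expansions gives
$$\frac{1}{r_1!\cdots r_k!}\,\MV(P)\;=\;\sum_{\substack{\Cdot\in S^\bullet\\ \type(\Cdot)=r}}\vol(\Cdot),$$
not the stated $\MV(P) = \sum\vol(\Cdot)$: there is an unaccounted factor of $r_1!\cdots r_k!$. When $r = (1,\ldots,1)$ (the case $k=n$ used by the polyhedral homotopy, and the case literally appearing in \cite[Theorem 2.4]{HuberSturmfels}) this factor equals $1$; for general $r$ it does not. A direct check shows your calculation, not the displayed lemma, has the right normalization: take $P_1=[0,1]^2\times\{0\}$ and $P_2=\{(0,0)\}\times[0,1]$ in $\R^3$ with the trivial mixed subdivision, whose unique cell has type $(2,1)$ and volume $1$, while $\MV(P_1,P_1,P_2)=2$. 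So you have derived a correct identity, but it is not the identity you claim to have delivered --- a careful writeup must either carry the $r_1!\cdots r_k!$ through explicitly or note that the displayed form of the lemma requires that factor.
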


We  describe a process which produces subdivisions from functions. 
Let $\mathcal A \subset \R^n$ be a finite set and let $\ell\colon\mathcal A \to \R$ be any function. Let $\mydefMATH{\Gamma_\ell}\colon \mathcal A \to \R^{n+1}$ be the function ${\Gamma_\ell}(\alpha)=(\alpha,\ell(\alpha))$. We call $\ell$ a \mydef{lifting function} and we call the polytope 
$$\mydefMATH{\conv_\ell(\mathcal A)}=\conv(\Gamma_\ell(\mathcal A))\subset \R^{n+1},$$ the \mydef{lift of $\mathcal A$ by $\ell$}.
Similarly, given a set of functions $\mydefMATH{\ell_\bullet}=(\ell_1,\ldots,\ell_k)$ with $\ell_i\colon\mathcal A_i \to \R$, let $\mydefMATH{\Gamma_{\ell_\bullet}}\colon\Adot \to \R^{n+1}$ be the function $\Gamma_{\ell_\bullet}(\alpha_1,\ldots,\alpha_k)=\sum_{i=1}^k \Gamma_{\ell_i}(\alpha_i)$. Analogously, define
$$\mydefMATH{\conv_{\ell_\bullet}(\mathcal A_\bullet)}=\conv(\Gamma_{\ell_\bullet}(\Adot)) = \sum_{i=1}^k \conv_{\ell_i} (\mathcal A_i).$$
For any polytope $P \subset \R^{n+1}$, the \mydef{lower hull} of $P$ is the set $$\mydefMATH{\underline{\text{hull}}(P)}=\{P_\omega \mid \omega \in \R^{n+1} \text{ and } \langle \omega, e_{n+1} \rangle <0\}.$$
The $n+1$ above is suggestive in that we will often take lower hulls of lifts of polytopes.
\begin{lemma}
\label{lem:lowerhullsurjects}
Let $\mathcal A \subset \R^n$ be a finite collection of points and $\ell\colon \mathcal A \to \R$ a function. The projection of the lower hull of $\conv_\ell(\mathcal A)$ onto the first $n$ coordinates is $\conv(\mathcal A)$. 
\end{lemma}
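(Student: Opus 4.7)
The plan is to prove equality of sets by verifying both containments. Let $\pi\colon\R^{n+1}\to\R^n$ denote the projection onto the first $n$ coordinates, and write $P=\conv_\ell(\mathcal A)$. The easy containment comes from observing that $\pi(\Gamma_\ell(\alpha))=\alpha$ for each $\alpha\in\mathcal A$, so $\pi(P)=\conv(\pi(\Gamma_\ell(\mathcal A)))=\conv(\mathcal A)$; since the lower hull is a subset of $P$, its projection is automatically contained in $\conv(\mathcal A)$. The content of the lemma is therefore the reverse inclusion.

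For a point $x \in \conv(\mathcal{A})$, I want to exhibit a point of $\underline{\text{hull}}(P)$ lying over $x$. The natural candidate is the bottom of the vertical fiber: define
\[
g(x) \;=\; \min\bigl\{ s \in \R \mymid (x,s)\in P \bigr\},
\]
which exists because the fiber $(\{x\}\times\R)\cap P$ is a nonempty compact interval (nonempty because $x\in \pi(P)$). Unwinding the description of $P$ as a convex hull of the lifted points, one sees that $g$ is the optimal value of a parametric linear program and is therefore a convex, piecewise-linear function on $\conv(\mathcal A)$. The goal then reduces to showing that the graph point $(x,g(x))$ lies in $\underline{\text{hull}}(P)$, i.e.\ is exposed by some $\omega\in\R^{n+1}$ with $\langle\omega,e_{n+1}\rangle<0$.

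I will construct such an $\omega$ from a supporting affine functional of $g$ at $x$. By convexity of $g$, there exist $a\in\R^n$ and $b\in\R$ with $a\cdot y + b \leq g(y)$ for every $y\in\conv(\mathcal A)$ and $a\cdot x + b = g(x)$. In particular, at each vertex $\alpha\in\mathcal A$ we have $\ell(\alpha)\geq g(\alpha)\geq a\cdot\alpha + b$. Taking $\omega = (a,-1)\in\R^{n+1}$, the pairing $\langle \omega,(y,s)\rangle = a\cdot y - s$ is bounded above on every lifted vertex by $-b$, and hence on all of $P$ by convexity of the support function; equality is attained at $(x,g(x))$. Thus $(x,g(x)) \in P_\omega$ with $\omega_{n+1}=-1<0$, placing it in $\underline{\text{hull}}(P)$, and its projection is $x$. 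This completes the reverse containment.

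The main obstacle is justifying the existence of the affine support for $g$ with the correct sign on the last coordinate; the rest is bookkeeping. I would handle this either by invoking the fact that the pointwise minimum over a compact polytope of an affine function of its parameter yields a convex piecewise-linear function (which admits affine minorants matching at every point), or directly via LP duality applied to the linear program $\min \sum \lambda_i \ell(\alpha_i)$ subject to $\sum \lambda_i \alpha_i = x$, $\sum \lambda_i = 1$, $\lambda_i \geq 0$; the dual variables produce the coefficients $(a,b)$, and the sign convention in the construction $\omega=(a,-1)$ automatically enforces $\omega_{n+1}<0$.
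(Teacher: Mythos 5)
Your proof is correct and takes a genuinely different route from the paper. The paper reduces the reverse inclusion to vertices: for a vertex $\alpha$ of $\conv(\mathcal A)$ exposed by some $\omega\in\R^n$, the direction $(\omega,0)$ exposes the lifted vertex $\Gamma_\ell(\alpha)$ and lies in the interior of its full-dimensional normal cone, so a nearby direction with negative last coordinate still exposes $\Gamma_\ell(\alpha)$ and places it in the lower hull. You instead treat an arbitrary $x\in\conv(\mathcal A)$ at once: you locate the bottom $(x,g(x))$ of the vertical fiber, note $g$ is a convex polyhedral function, and turn a supporting affine minorant $a\cdot y+b\le g(y)$ into the exposing direction $(a,-1)$. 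Each approach buys something. The paper's is terser once the normal-cone vocabulary is in hand, but it directly places only the \emph{lifted vertices} in the lower hull and leaves tacit why that suffices to cover all of $\conv(\mathcal A)$; your construction covers every point of $\conv(\mathcal A)$ explicitly, so it is more self-contained on that score. The single step you flag—existence of a supporting affine functional for $g$ at $x$—is indeed the thing to justify, and your LP-duality resolution is right: $g$ is the value function of a bounded, feasible linear program in the barycentric weights $\lambda_i$ with affine dependence on the parameter $x$, hence convex and piecewise linear on the polytope $\conv(\mathcal A)$, and such polyhedral convex functions admit subgradients at every point of their domain (including boundary points). One small wording nit: the bound $\langle(a,-1),\cdot\rangle\le -b$ on all of $P$ follows because a linear functional on $P=\conv(\Gamma_\ell(\mathcal A))$ attains its maximum on the generating set $\Gamma_\ell(\mathcal A)$, not from ``convexity of the support function'' per se.
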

\begin{proof}
Since $\conv(\mathcal A)$ is full-dimensional in its affine span, we may assume $\dim(\conv(\mathcal A))=n$ and show that $\alpha \in \vertices(\conv(\mathcal A)) \implies \Gamma_\ell(\alpha) \in \underline{\text{hull}}(\conv_{\ell}(\mathcal A))$.

Let $\alpha \in \vertices(\conv(\mathcal A))$ and $\omega \in \R^n$ so that $\conv(\mathcal A)_\omega= \alpha$. Then $(\omega,0)$ exposes $\Gamma_\ell(\alpha)$ and is in the interior of the $(n+1)$-dimensional cone $C[(\omega,0)]$. Thus, there exists a direction with negative last coordinate which exposes $\Gamma_\ell(\alpha)$ implying that $\Gamma_\ell(\alpha) \in \underline{\text{hull}}(\conv_{\ell}(\mathcal A))$. 
\end{proof}

\begin{definition}
\label{def:inducedsubdivision}
Given a set $\ell_\bullet$ of lifting functions $\ell_i\colon\mathcal A_i \to \R$, let $\mydefMATH{S^{\ell_\bullet}}$ be the set of maximal (with respect to inclusion) cells $\mathcal C_\bullet$ of $\mathcal A_\bullet$ satisfying
\begin{enumerate}
\item $\dim(\conv_{\ell_\bullet}(\mathcal C_\bullet))=n$,
\item $\conv_{\ell_\bullet}(\mathcal C_\bullet) \in \underline{\text{hull}}(\conv_{\ell_\bullet}(\mathcal A_\bullet))$.
\end{enumerate}
\end{definition}
We remark that the maximality condition in Definition \ref{def:inducedsubdivision} ensures that $\{\conv_{\ell_\bullet}(\mathcal C_\bullet)\}_{\mathcal C_{\bullet} \in S^{\ell_\bullet}}$ are distinct. Indeed if $\conv_{\ell_\bullet}(\mathcal C_\bullet) = \conv_{\ell_\bullet}(\mathcal C'_\bullet)$ but $\mathcal C_\bullet \neq \mathcal C'_\bullet$ then the union $\mathcal C_\bullet \cup \mathcal C'_\bullet$ satisfies conditions $(1)$ and $(2)$ of Definition \ref{def:inducedsubdivision} and contains each cell, contradicting maximality.
\begin{lemma}The set $S^{\ell_\bullet}$ is a subdivision of $\mathcal A_\bullet$.
\end{lemma}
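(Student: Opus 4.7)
The plan is to identify each $n$-dimensional face of $\conv_{\ell_\bullet}(\Adot)$ lying in its lower hull with a unique cell of $S^{\ell_\bullet}$, and then transfer the three subdivision axioms of Definition~\ref{def:subdivision} across the projection $\pi\colon \R^{n+1}\to \R^n$ that forgets the last coordinate.

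First I would set up this bijection. Since $\conv_{\ell_\bullet}(\Adot) = \sum_{i=1}^k \conv_{\ell_i}(\calA_i)$, any face $F$ of $\conv_{\ell_\bullet}(\Adot)$ decomposes uniquely as $F = \sum_{i=1}^k F_i$ with $F_i$ a face of $\conv_{\ell_i}(\calA_i)$, by Lemma~\ref{lem:minkowskiproperties}(3). Setting $\mathcal C_i := \{\alpha \in \calA_i \mid \Gamma_{\ell_i}(\alpha) \in F_i\}$ produces a cell $\mathcal C_\bullet$ of $\Adot$ with $\conv_{\ell_\bullet}(\mathcal C_\bullet) = F$, and the maximality requirement in Definition~\ref{def:inducedsubdivision} forces $\mathcal C_\bullet$ to be the unique such preimage. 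Thus $S^{\ell_\bullet}$ is in bijection with the $n$-dimensional faces of $\conv_{\ell_\bullet}(\Adot)$ lying in its lower hull.

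Next I would verify axioms (1) and (3) of Definition~\ref{def:subdivision} using $\pi$. The crucial observation is that any $n$-dimensional lower-hull face $F$ is exposed by some $(\omega,t) \in \R^{n+1}$ with $t<0$; otherwise $F$ would lie in a vertical hyperplane, whose normal cone is contained in $\{(\omega',0) \mid \omega' \in \R^n\}$, contradicting exposure by a vector with strictly negative last coordinate. Consequently $\pi|_F$ is injective, so $\dim(\conv(\mathcal C_\bullet)) = \dim(\pi(F)) = n$, yielding axiom~(1). For axiom~(3), every $x \in \conv(\Adot) = \pi(\conv_{\ell_\bullet}(\Adot))$ has a nonempty preimage interval in $\conv_{\ell_\bullet}(\Adot)$ whose lowest point lies in the lower hull, hence inside the closure of some $n$-dimensional face; applying $\pi$ places $x$ in the corresponding $\conv(\mathcal C_\bullet)$.

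Finally, axiom~(2) follows from Lemma~\ref{lem:Ziegler2}(2): distinct cells $\mathcal C_\bullet \neq \mathcal C_\bullet'$ in $S^{\ell_\bullet}$ correspond to distinct $n$-dimensional faces $F \neq F'$ of $\conv_{\ell_\bullet}(\Adot)$, whose intersection is a proper face of each, and $\pi$ carries this intersection to a proper face of both $\conv(\mathcal C_\bullet)$ and $\conv(\mathcal C_\bullet')$ by the injectivity noted above. The main obstacle will be controlling $\pi$ on lower-hull faces: one must verify it preserves dimension and face incidences simultaneously, which is exactly what the characterization of the lower hull via directions with negative last coordinate supplies.
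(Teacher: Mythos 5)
Your plan is correct and mirrors the paper's proof: verify the three axioms of Definition~\ref{def:subdivision} by transferring statements across the projection $\pi$, using the fact that a lower-hull face is exposed by a direction with strictly negative last coordinate to guarantee $\pi$ preserves dimension (axiom 1), citing Lemma~\ref{lem:Ziegler2} for the common-face property (axiom 2), and invoking surjectivity of the lower hull onto $\conv(\Adot)$ (axiom 3). Your extra preamble identifying cells of $S^{\ell_\bullet}$ with $n$-dimensional lower-hull faces is the content of the remark following Definition~\ref{def:inducedsubdivision}, and your explicit lower-envelope argument for axiom~3 is a spelled-out version of what the paper delegates to Lemma~\ref{lem:lowerhullsurjects}, so this is the same route, not a new one.
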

\begin{proof}
If $\conv_{\ell_\bullet}(\Adot)$ is only $n$-dimensional, it must lie in a hyperplane implying that $S^{\ell_\bullet}=\Adot$ is the trivial subdivision. 

Let $\pi\colon\R^{n+1} \to \R^n$ be the projection onto the first $n$ coordinates.
Suppose $\mathcal C_\bullet \in S^{\ell_\bullet}$ and $\conv_{\ell_\bullet}(\mathcal C_\bullet)$ is exposed by $\omega \in \R^{n+1}$ where $\omega$ has negative last coordinate. Since $\dim(\conv_{\ell_\bullet}(\mathcal C_\bullet))=n$, its projection under $\pi$ has dimension at most $n$. Moreover, its projection has dimension less than $n$ only if the affine span of $\conv_{\ell_\bullet}(\mathcal C_\bullet)$ contains a line which projects to a point under $\pi$. But no such line exists because $\omega$ has negative last coordinate and exposes $\conv_{\ell_\bullet}(\mathcal C_\bullet)$. Thus, $S^{\ell_\bullet}$ satisfies $(1)$ of Definition \ref{def:subdivision}.

Given distinct $\Cdot$ and $\Cdot'$ in $S^{\ell_\bullet}$, both $\conv_{\ell_\bullet}(\Cdot)$ and $\conv_{\ell_\bullet}(\Cdot')$ are facets of $\conv_{\ell_\bullet}(\Adot)$, and so by part $(2)$ of Lemma \ref{lem:Ziegler2}, their intersection is a face of of $\conv_{\ell_\bullet}(\Adot)$ as well.  By part $(3)$ of Lemma \ref{lem:Ziegler2}, that intersection is a face of both $\conv_{\ell_\bullet}(\Cdot)$ and $\conv_{\ell_\bullet}(\Cdot')$ . It is proper since $\conv_{\ell_\bullet}(\mathcal C_\bullet)$ and $\conv_{\ell_\bullet}(\mathcal C_\bullet')$ are distinct. Thus, $S^{\ell_\bullet}$ satisfies $(2)$ of Definition \ref{def:subdivision}. 
Part $(3)$ of Definition \ref{def:subdivision} follows from Lemma \ref{lem:lowerhullsurjects}.
\end{proof} Any subdivision of the form $S^{\ell_\bullet}$ is called the \mydef{coherent subdivision} of $\mathcal A_\bullet$ \mydef{induced} by $\ell_\bullet$.

\begin{example}
\label{ex:liftexample}
Consider the set $\mathcal A_\bullet=\{\mathcal A_1\}$ where $\mathcal A_1$ consists of all lattice points in the $3$-dilate of the unit square in $\mathbb{R}^2$. Let $\ell_\bullet = \{\ell_1\}$ where $\ell_1\colon\mathcal A_1 \to \R$ is defined by $$\ell_1(\alpha)= \begin{array}{cc}
  \left\{  
    \begin{array}{cccl}
      \pi &&&  \alpha \text{ is in the boundary of } \conv(\mathcal A_1)\\
      1 &\quad&& \text{otherwise}\\
    \end{array}
    \right.
\end{array}.$$
Then $$\conv_{\ell_\bullet}(\mathcal A_\bullet) =\conv_{\ell_1}(\mathcal A_1) = \conv{ \begin{pmatrix} 0&0&3&3&1&1&2&2 \\ 0&3&0&3&1&2&1&2 \\ \pi&\pi&\pi&\pi&1&1&1&1 \end{pmatrix}}.$$ The lower hull of $\conv_{\ell_\bullet}(\mathcal A_\bullet)$ consists of five facets exposed by the directions $$(0,0,-1),(0,1-\pi,-1),(1-\pi,0,-1),(0,\pi-1,-1),(\pi-1,0,-1),$$ which project down to $\conv(\mathcal A_1)$, producing a description of the subdivision $S^{\ell_\bullet}=\left\{\Cdot^{(1)},\ldots,\Cdot^{(5)}\right\}$. The collection $\left\{\conv\left(\Cdot^{(i)}\right)\right\}_{i=1}^5$ consists of five quadrangles displayed in blue in Figure \ref{fig:subdivision1}.
\begin{figure}[htpb!!]
\includegraphics[scale=0.6]{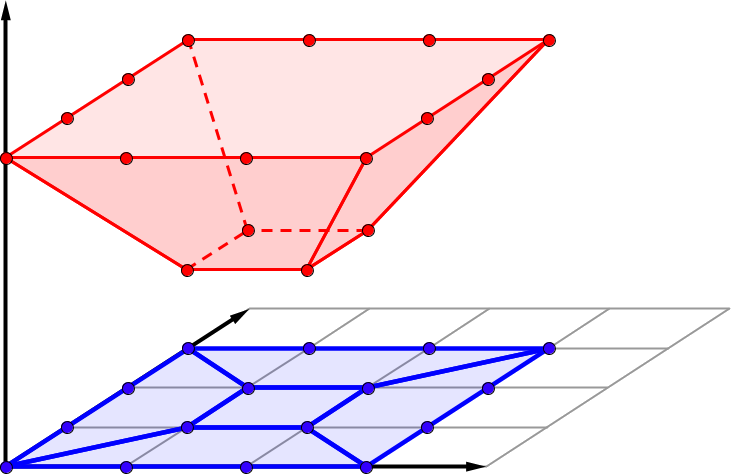}
\caption{A lifting of a dilated square and the corresponding polyhedral subdivision.}
\label{fig:subdivision1}
\end{figure}\hfill $\diamond$
\end{example}

\begin{example}
\label{ex:subdivision}
Let $\Adot = \{\mathcal A_1,\mathcal A_2\}$ where 
\begin{align*}
\mathcal A_1 &= \{(0,0),(0,1),(1,0),(1,1)\},\\
\mathcal A_2 &= \{(0,0),(1,2),(2,1)\}.
\end{align*}
Let $\ell_\bullet=(\ell_1,\ell_2)$ be the functions defined by 
\begin{align*}
\ell_1(0,0)&=2,\quad \ell_1(0,1)=3, \quad \ell_1(1,0)=3, \quad \ell_1(1,1) = 3,\\
\ell_2(0,0)&=1, \quad \ell_2(1,2)=1, \quad \ell_2(2,1) = 1.
\end{align*}
Figure \ref{fig:subdivision2} displays $\conv(\mathcal A_1)$ and $\conv(\mathcal A_2)$ along with the lower hulls of the convex hulls of their lifts in the first two images.   The third image displays the lower hull of $\conv_{\ell_\bullet}(\mathcal A_\bullet)$ along with the two points of $\Gamma_{\ell_1}(\mathcal A_1)+\Gamma_{\ell_2}(\mathcal A_2)$ which do not belong to any facet in the lower hull. The third image also contains a depiction of the induced subdivision on $\Adot$. The green parallelograms and the pink diamond are the mixed cells of the subdivision. The sum of their areas is equal to $4=\MV(\Adot)$ verifying Lemma \ref{lem:mixedvolume5}. Figure \ref{fig:subdivision3} shows the projections of these lower facets.
\begin{figure}[htpb!!]
\includegraphics[scale=0.45]{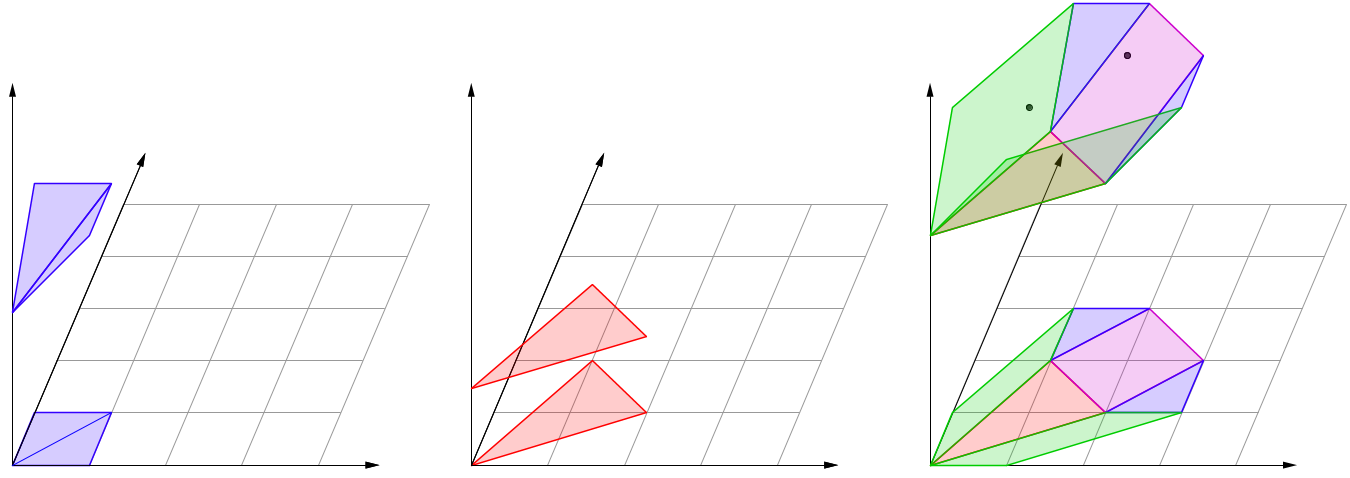}
\caption{A coherent fine mixed subdivision.}
\label{fig:subdivision2}
\end{figure}\begin{figure}[htpb!!]
\includegraphics[scale=0.5]{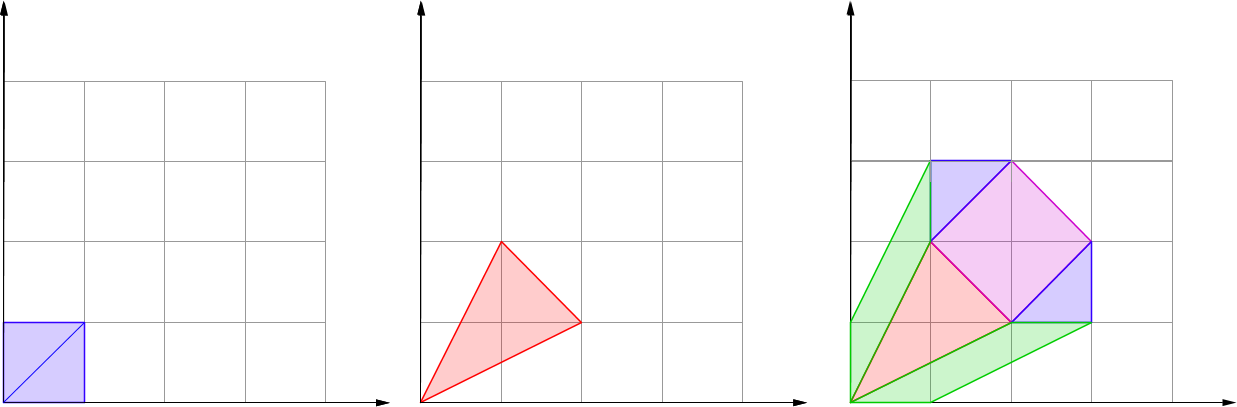}
\caption{The projection of a coherent fine mixed subdivision.}
\label{fig:subdivision3}
\end{figure}\hfill $\diamond$
\end{example}
\section{Monotonicity and positivity of mixed volume}
\label{subsection:monotonicity}
The \mydef{defect} of a collection of polytopes $P=\{P_1,\ldots,P_k\}$ is
$$\mydefMATH{d(P)}=\dim\left(\sum_{i=1}^k P_i\right)-k.$$
We say $P$ is \mydef{essential} if the defect of any nonempty subset of $P$ is nonnegative. 
\begin{lemma}
\label{lem:whenmixedvolumeiszero}
A collection of polytopes $\Pdot=\{P_1,\ldots, P_n\}$ in $\R^n$ has positive mixed volume if and only if $\Pdot$ is essential.
\end{lemma}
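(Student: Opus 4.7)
My plan is to prove the two implications separately: the forward direction is a clean dimensional argument, while the reverse direction requires constructing a concrete subcollection witnessing positivity.

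For $(\Rightarrow)$, I would argue the contrapositive. Suppose $\Pdot$ is not essential, so there is a nonempty $I \subseteq [n]$ with $\dim(\sum_{i \in I} P_i) \leq |I|-1$. Because the affine span of a Minkowski sum is a translate of the sum of the parallel linear subspaces, subadditivity of dimension gives
$$\dim\Bigl(\sum_{i=1}^n P_i\Bigr) \;\leq\; \dim\Bigl(\sum_{i \in I} P_i\Bigr) + \dim\Bigl(\sum_{i \notin I} P_i\Bigr) \;\leq\; (|I|-1) + (n-|I|) \;=\; n-1.$$
Positive scaling preserves dimension, so $\dim(\sum \lambda_i P_i) \leq n-1$, and therefore $\vol(\sum \lambda_i P_i) = 0$ for every $\lambda \in \R^n_{>0}$. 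The polynomial $V(\Pdot)$ thus vanishes on a nonempty open set and so is identically zero; in particular its coefficient $\MV(\Pdot)$ is zero.

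For $(\Leftarrow)$, suppose $\Pdot$ is essential. The parallel linear subspaces $V_i$ of the $P_i$ satisfy the hypotheses of Rado's transversal theorem for the linear matroid on $\R^n$: for every $I \subseteq [n]$, $\dim(\sum_{i \in I} V_i) = \dim(\sum_{i \in I} P_i) \geq |I|$. Hence there exist $v_i \in V_i$ with $v_1, \ldots, v_n$ linearly independent. Taking any relative interior point of $P_i$, one finds a segment $S_i \subseteq P_i$ of positive length $\ell_i$ parallel to $v_i$. The Minkowski sum $\sum \lambda_i S_i$ is (after translation) the parallelepiped spanned by $\lambda_i \ell_i v_i$, with volume $|\det(v_1,\ldots,v_n)| \prod_i \lambda_i \ell_i$, so reading off the coefficient of $\lambda_1 \cdots \lambda_n$ gives
$$\MV(S_1, \ldots, S_n) \;=\; |\det(v_1, \ldots, v_n)| \cdot \ell_1 \cdots \ell_n \;>\; 0.$$
Applying monotonicity of mixed volume (if $S_i \subseteq P_i$ for all $i$, then $\MV(S_\bullet) \leq \MV(P_\bullet)$) concludes $\MV(\Pdot) > 0$.

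The main obstacle is that monotonicity of mixed volume is not listed among the properties in Lemma \ref{lem:properties of mixed volume}, so it must be established en route. My preferred route would be to bypass monotonicity entirely by invoking Lemma \ref{lem:mixedvolume5}: pick a coherent fine mixed subdivision of $(\vertices(P_1),\ldots,\vertices(P_n))$ from a generic lift as in Definition \ref{def:inducedsubdivision}, so that every top-dimensional cell is fine and $\MV(\Pdot)$ equals the sum of the $n$-dimensional volumes of its type-$\bone$ cells. One then shows directly, again using the Rado/matroid transversal argument applied locally to the cells, that essentiality forces at least one cell of type $\bone$ to appear, and each such cell is an $n$-dimensional zonotope on linearly independent edges, hence of positive volume. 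This circumvents the need to prove monotonicity for general polytopes and keeps the proof within the combinatorial framework already developed.
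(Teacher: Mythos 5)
Your reverse direction (essential implies positive mixed volume) is sound: Rado's transversal theorem for the linear matroid produces linearly independent $v_i$ in the tangent spaces of the $P_i$, the segments $S_i \subseteq P_i$ parallel to $v_i$ have $\MV(S_1,\ldots,S_n) = |\det(v_1,\ldots,v_n)|\prod_i \ell_i > 0$, and monotonicity gives $\MV(\Pdot) \geq \MV(S_1,\ldots,S_n) > 0$. Note that the paper states monotonicity \eqref{eq:monotonicity} immediately after this lemma, as an independent fact, so you may simply cite it; your worry about circularity is unfounded, and the detour through a fine mixed subdivision to avoid it is optional.

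Your forward direction contains a genuine error. The bound $\dim(\sum_{i\notin I} P_i) \leq n - |I|$ is false for proper $I\subsetneq[n]$: each $P_i$ may itself be full-dimensional, so the left side can be as large as $n$. Concretely, take $n=2$, $P_1$ a single point, $P_2$ the unit square, and $I=\{1\}$. Then $\dim(P_1) = 0 < 1 = |I|$, so $\Pdot$ is not essential, yet $\dim(P_1+P_2)=2$ and $\vol(\lambda_1 P_1 + \lambda_2 P_2) = \lambda_2^2 \not\equiv 0$. Your chain of inequalities would wrongly conclude $V(\Pdot)\equiv 0$; the true reason $\MV(\Pdot)=0$ in this example is that $V(\Pdot)$ has degree $0 < |I|$ in $\lambda_1$ and hence cannot contain the monomial $\lambda_1\lambda_2$. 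Two correct repairs: (i) a Fubini slicing argument (slice $\sum_i \lambda_i P_i$ by translates of the linear span of $\sum_{i\in I}(P_i-P_i)$) shows that $V(\Pdot)$ has degree at most $\dim(\sum_{i\in I}P_i)<|I|$ in the variables $\{\lambda_i : i\in I\}$, so the coefficient of $\lambda_1\cdots\lambda_n$ vanishes; or (ii) if $\sum_i P_i$ is not full-dimensional then $V(\Pdot)\equiv 0$ trivially, and otherwise a fine mixed subdivision exists and a type-$\bone$ cell would have to be an $n$-parallelepiped on edge vectors $e_i$ parallel to $P_i$, but then $\{e_i : i\in I\}$ are $|I|$ vectors in a subspace of dimension $\dim(\sum_{i\in I}P_i) < |I|$, hence dependent, contradicting $n$-dimensionality; so no type-$\bone$ cell exists and Lemma \ref{lem:mixedvolume5} forces $\MV(\Pdot)=0$.

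For reference, the paper states Lemma \ref{lem:whenmixedvolumeiszero} without any proof, treating it as a standard fact, so there is no proof of record to compare against.
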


Mixed volume is monotonic with respect to inclusion: if $P_1$ and $Q_1,\ldots,Q_n$ are polytopes in $\R^n$ where $P_1 \subset Q_1$, then 
\begin{equation}
\label{eq:monotonicity} 
\MV(P_1,Q_2,\ldots,Q_n) \leq \MV(Q_1,\ldots,Q_n).
\end{equation} On the other hand, $P_1 \subsetneq Q_1$ does not imply that the inequality \eqref{eq:monotonicity} is strict. 

Conditions for strict monotonicity were originally determined by Maurice Rojas \cite{Rojas} in 1994 but have since been rediscovered  for the unmixed case \cite{EsterovMonotonicity}  ten years later and again rediscovered and explained in the mixed case \cite{Chen,Bihan} another ten years after that. The following version comes from \cite{Bihan}.  
 
A subset $S \subset P$ of a convex polytope \mydef{touches} a face $F$ of $P$ whenever $S \cap F$ is nonempty.
\begin{lemma}\cite[Proposition 3.2]{Bihan}
\label{lem:littlemonotonicity}
Let $P_1$ and $Q_\bullet=(Q_1,\ldots,Q_n)$ where $P_1,Q_1,\ldots,Q_n$ are polytopes in $\R^n$ such that $P_1 \subset Q_1$. Then $\MV(P_1,Q_2,\ldots Q_n) = \MV(Q_\bullet)$ if and only if $P_1$ touches every face $(Q_1)_\omega$ for $\omega$ in the set 
$$U=\{\omega \in \R^n \mid \{(Q_2)_\omega,\ldots,(Q_n)_\omega\} \text{ is essential}\}.$$
\end{lemma}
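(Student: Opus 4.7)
My plan is to rewrite $\MV(Q_\bullet)-\MV(P_1,Q_2,\ldots,Q_n)$ as an explicit sum of nonnegative terms and then identify exactly when each term vanishes. The key tool I would invoke is the classical Minkowski recursion for mixed volume: for any polytope $R_1\subset\R^n$,
$$\MV(R_1,Q_2,\ldots,Q_n) \;=\; \sum_{\omega} h_{R_1}(\omega)\,\MV_{n-1}\!\bigl((Q_2)_\omega,\ldots,(Q_n)_\omega\bigr),$$
where the sum ranges over a chosen unit outer normal $\omega$ to each facet of $Q_2+\cdots+Q_n$ and $\MV_{n-1}$ denotes the $(n{-}1)$-dimensional mixed volume of the exposed faces inside their common affine hyperplane. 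I would derive this identity from the cone decomposition $\vol(K)=\tfrac{1}{n}\sum_F h_K(\omega_F)\vol_{n-1}(F)$ applied to $K=t\,R_1+Q_2+\cdots+Q_n$, reading off the coefficient of $t$ and using multilinearity (Lemma~\ref{lem:properties of mixed volume}).

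Applying this recursion once with $R_1=Q_1$ and once with $R_1=P_1$ and subtracting yields
$$\MV(Q_\bullet)-\MV(P_1,Q_2,\ldots,Q_n) \;=\; \sum_\omega \bigl(h_{Q_1}(\omega)-h_{P_1}(\omega)\bigr)\,\MV_{n-1}\!\bigl((Q_2)_\omega,\ldots,(Q_n)_\omega\bigr).$$
Since $P_1\subseteq Q_1$ forces $h_{P_1}(\omega)\le h_{Q_1}(\omega)$ and $\MV_{n-1}\ge 0$, the right-hand side is a sum of nonnegative terms and vanishes if and only if every summand does. I would then pin down which $\omega$ can contribute nontrivially. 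By Lemma~\ref{lem:whenmixedvolumeiszero} applied inside the affine hyperplane containing the exposed faces, $\MV_{n-1}((Q_2)_\omega,\ldots,(Q_n)_\omega)>0$ precisely when $((Q_2)_\omega,\ldots,(Q_n)_\omega)$ is essential, i.e.\ exactly when $\omega\in U$. Conversely, if $\omega\in U$ then essentiality of the full tuple forces $\dim\sum_{i\ge 2}(Q_i)_\omega\ge n-1$, which is possible only if $\omega$ is a positive multiple of a facet normal of $Q_2+\cdots+Q_n$; since both the equation $h_{P_1}(\omega)=h_{Q_1}(\omega)$ and the face $(Q_1)_\omega$ depend on $\omega$ only up to positive scaling, it makes no difference whether we index the summands by unit facet normals or by the rays of $U$.

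The final ingredient is the support-function characterization of touching: $h_{P_1}(\omega)=h_{Q_1}(\omega)$ precisely when some point of $P_1$ attains the maximum of $\langle\cdot,\omega\rangle$ over $Q_1$, which is exactly the condition $P_1\cap (Q_1)_\omega\neq\emptyset$, i.e., that $P_1$ touches $(Q_1)_\omega$. Assembling the three ingredients (nonnegativity of each summand, positivity of $\MV_{n-1}$ being equivalent to essentiality, and the support-function identity) yields the claimed biconditional. The main obstacle will be the Minkowski recursion itself, which the excerpt has not developed; once that identity is in hand, the rest of the argument is straightforward bookkeeping using only tools already established.
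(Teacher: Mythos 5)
The paper does not prove this lemma; it states it with a citation to Bihan and uses it as given, so there is no internal argument to compare yours against.

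Your plan is a correct and standard route, through the mixed surface area measure representation
\[
\MV(R_1,Q_2,\ldots,Q_n) \;=\; \sum_{\omega} h_{R_1}(\omega)\,\MV_{n-1}\bigl((Q_2)_\omega,\ldots,(Q_n)_\omega\bigr).
\]
Given that identity, the bookkeeping is exactly as you describe: each summand of the difference $\MV(Q_\bullet)-\MV(P_1,Q_2,\ldots,Q_n)$ is nonnegative, a summand vanishes exactly when $h_{P_1}(\omega)=h_{Q_1}(\omega)$ or when the $(n{-}1)$-dimensional mixed volume is zero, the latter is equivalent to non-essentiality of $\{(Q_2)_\omega,\ldots,(Q_n)_\omega\}$ by Lemma~\ref{lem:whenmixedvolumeiszero} applied in dimension $n-1$, and $h_{P_1}(\omega)=h_{Q_1}(\omega)$ is precisely $P_1\cap(Q_1)_\omega\neq\emptyset$. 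Two caveats if you were to write this out in full. Deriving the recursion by applying the cone-volume identity to $tR_1+Q_2+\cdots+Q_n$ and extracting the coefficient of $t$ is not as immediate as you suggest: the facets of that sum are indexed by the common refinement of all the normal fans rather than just the facet fan of $Q_2+\cdots+Q_n$, and the needed aggregation (together with the translation-invariance of the right-hand side in $R_1$, which relies on the Minkowski relation $\sum_\omega \omega\,\MV_{n-1}((Q_2)_\omega,\ldots,(Q_n)_\omega)=\bzero$) is essentially the content of the mixed surface area measure theorem, which is cleaner to cite outright. Also, your claim that every $\omega\in U$ is a positive multiple of a facet normal of $Q_2+\cdots+Q_n$ breaks down when $\dim(Q_2+\cdots+Q_n)=n-1$: the directions $\pm\nu$ normal to its affine span can lie in $U$ without being facet normals, so the sum should be indexed by the support of the mixed surface area measure, which does place atoms at $\pm\nu$ in that degenerate case. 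Neither point is fatal; the strategy is sound.
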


\begin{figure}[htpb!]
\includegraphics[scale=0.4]{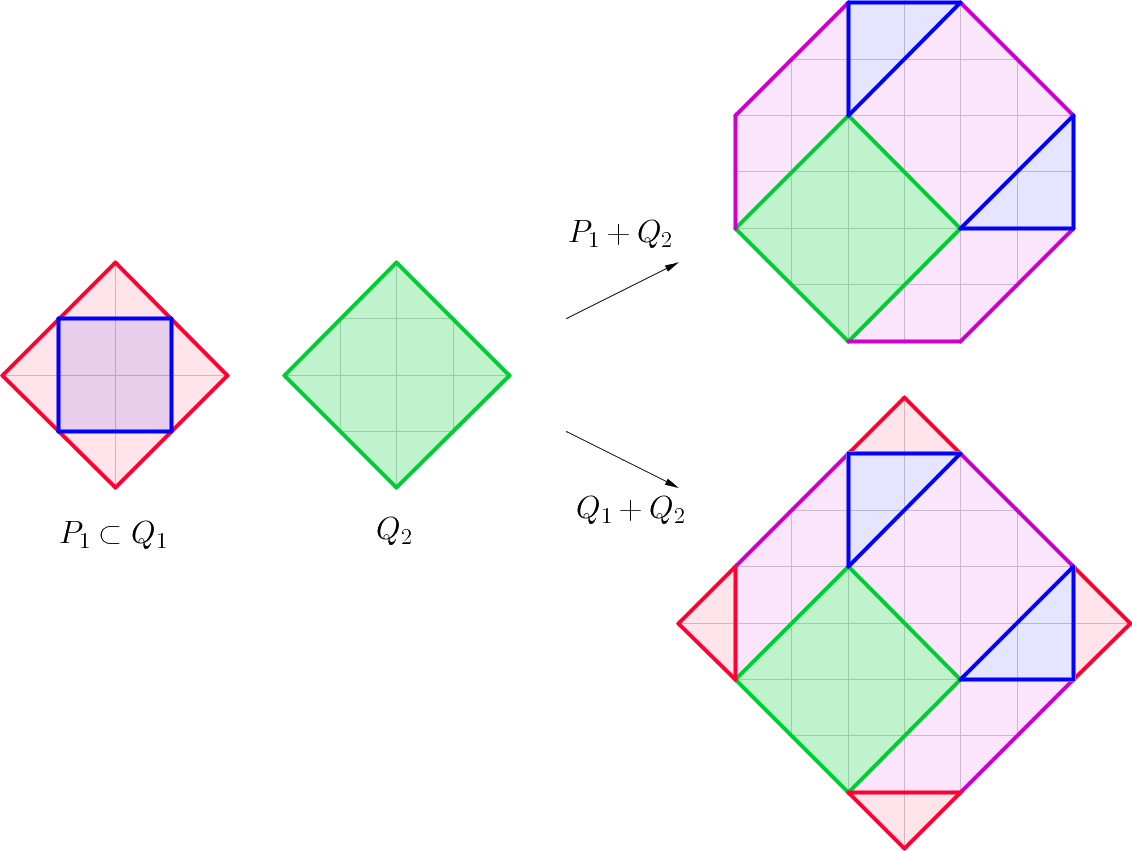}
\caption{Polytopes $P_1,Q_1,$ and $Q_2$ as in Example \ref{ex:monotonicity} along with a fine mixed subdivision displaying that $\MV(P_1,Q_2)=\MV(Q_1,Q_2)$.}
\label{fig:monotonicity}
\end{figure} 
\begin{example}
\label{ex:monotonicity}
Let $Q_1=Q_2=\conv(e_1,-e_1,e_2,-e_2)$ and let $P_1 = [0,1]^2$. 
The collection $U$ in Lemma \ref{lem:littlemonotonicity} is the set $\{e_1+e_2,e_1-e_2,-e_1+e_2,-e_1-e_2\}$ of directions exposing the facets of $Q_1$. Indeed, for every $\omega \in U$, we have $P_1 \cap (Q_1)_\omega \neq \emptyset$, so $P_1$ touches each facet. Figure \ref{fig:monotonicity} displays $P_1 \subset Q_1$ and $Q_2$ along with a depiction of a mixed subdivision of each of the sums $P_1+Q_2$ and $Q_1+Q_2$. The mixed cells of each subdivision are the same and so the pairs of polytopes have the same mixed volume, $MV(P_1,Q_2)=\MV(Q_1,Q_2) = 16$.\hfill $\diamond$
\end{example}

Given two collections of polytopes $P_\bullet = (P_1,\ldots,P_n)$ and $Q_\bullet=(Q_1,\ldots,Q_n)$ in $\R^n$ with $P_i \subset Q_i$, one may either iterate Lemma \ref{lem:littlemonotonicity} to determine strict monotonicity or use the following generalized version.
\begin{lemma}\cite[Theorem 3.3]{Bihan}
\label{lem:monotonicity}
Let $\Pdot=(P_1,\ldots,P_n)$ and $Q_\bullet=(Q_1,\ldots,Q_n)$ be collections of polytopes in $\R^n$ such that $P_i \subset Q_i$ for $i=1,\ldots,n$. 
For $\omega \in \R^n$ let $$T_\omega=\{i \in [n] \mid P_i \text{ touches } (Q_i)_\omega\}.$$
Then $$\MV(\Pdot) < \MV(Q_\bullet)$$ if and only if there exists $\omega$ such that the collection $\{(Q_i)_\omega \mid i \in T_\omega\} \cup \{Q_i \mid i \in [n] \smallsetminus T_\omega\}$ is essential. 
\end{lemma}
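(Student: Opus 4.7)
The plan is to reduce to the single-polytope case, Lemma \ref{lem:littlemonotonicity}, via a telescoping argument. Introduce the chain of intermediate collections
$$R^{(k)}_\bullet = (Q_1, \ldots, Q_k, P_{k+1}, \ldots, P_n), \qquad k = 0, 1, \ldots, n,$$
so that $R^{(0)}_\bullet = P_\bullet$ and $R^{(n)}_\bullet = Q_\bullet$. Monotonicity of mixed volume gives $\MV(R^{(k-1)}_\bullet) \leq \MV(R^{(k)}_\bullet)$ at every step, so $\MV(P_\bullet) < \MV(Q_\bullet)$ if and only if at least one step is strict. At step $k$, Lemma \ref{lem:littlemonotonicity} applies with the pair $P_k \subseteq Q_k$ as the distinguished pair and the other $n-1$ polytopes $(Q_i)_{i<k}$, $(P_i)_{i>k}$ held fixed: strict inequality at step $k$ is equivalent to the existence of $\omega \in \R^n$ with $P_k \cap (Q_k)_\omega = \emptyset$ and the $(n-1)$-element collection $\{(Q_i)_\omega : i < k\} \cup \{(P_i)_\omega : i > k\}$ essential.

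For the forward direction, a strict step supplies $k$ and a witnessing $\omega$; I would claim this $\omega$ already satisfies the essentiality condition in the statement. Since $P_k$ does not touch $(Q_k)_\omega$, we have $k \in [n] \smallsetminus T_\omega$, so the target collection places $Q_k$ at index $k$. To verify that the target collection is essential, I would invoke the principle that essentiality is monotone under member-enlargement: for $i \in T_\omega$ one has $(P_i)_\omega \subseteq (Q_i)_\omega$, while for $i \in [n] \smallsetminus T_\omega$ both $(P_i)_\omega$ and $(Q_i)_\omega$ sit inside $Q_i$, so each polytope in the single-swap essential collection can be enlarged to its target counterpart without losing essentiality. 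Finally, inserting $Q_k$ at index $k$ preserves essentiality because $R^{(k)}_\bullet$ has positive mixed volume and is therefore essential by Lemma \ref{lem:whenmixedvolumeiszero}, which supplies the dimensional growth required on every subsum containing $k$.

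For the backward direction, given $\omega$ realizing the target essentiality, I would pick any $j \in [n] \smallsetminus T_\omega$ and reorder the telescope so that the $j$-th index is swapped first. The objective is to verify the hypothesis of Lemma \ref{lem:littlemonotonicity} at this first swap, producing strict inequality there. The main obstacle lies precisely here: from the statement's essentiality of the $n$-element target collection one must extract essentiality of the specific $(n-1)$-element collection of face-restrictions $\{(Q_i)_\omega : i \in [n] \smallsetminus \{j\}\}$ demanded by Lemma \ref{lem:littlemonotonicity}. Tracking how each target entry (either a face $(Q_i)_\omega$ or a full $Q_i$) interacts dimensionally with the corresponding restriction $(Q_i)_\omega$, together with handling edge cases where $T_\omega = [n]$ or where certain face-restrictions collapse to lower dimension, is the technical heart of the proof.
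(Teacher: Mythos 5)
The paper cites this lemma from \cite{Bihan} without proving it, so there is no in-house argument to compare against; here both directions of your telescoping reduction to Lemma~\ref{lem:littlemonotonicity} have gaps. In the forward direction the enlargement observations are fine (touching forces $h_{P_i}(\omega)=h_{Q_i}(\omega)$, hence $(P_i)_\omega\subseteq(Q_i)_\omega$ for $i\in T_\omega$), but the justification for inserting $Q_k$ is wrong: essentiality of $R^{(k)}_\bullet$ concerns the \emph{full} polytopes $Q_1,\dots,Q_k,P_{k+1},\dots,P_n$ and says nothing about a collection in which several entries have been replaced by faces $(Q_i)_\omega$ or $(P_i)_\omega$, which may have strictly lower dimension. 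What actually works is a perpendicularity argument: every face exposed by $\omega$ sits in a hyperplane $\langle x,\omega\rangle=\mathrm{const}$, so, writing $E_i$ for the face in position $i\neq k$ of the single-swap essential collection, the direction space of $\sum_{i\in S\smallsetminus\{k\}}E_i$ lies in $\omega^\perp$; meanwhile $k\notin T_\omega$ forces $(Q_k)_\omega\subsetneq Q_k$, so the direction space of $Q_k$ contains a vector outside $\omega^\perp$, and adding $Q_k$ raises every relevant subsum dimension by at least one.

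Your backward direction has a more serious flaw, which you flag but underestimate: feeding the \emph{same} $\omega$ into Lemma~\ref{lem:littlemonotonicity} at an index $j\in[n]\smallsetminus T_\omega$ fails outright, and the failure is not cured by tracking dimensions. Take $n=2$, $Q_1=Q_2=[0,1]^2$, $P_1=P_2=\{(0,0)\}$, and $\omega=(1,1)$. Then each $(Q_i)_\omega$ is the vertex $\{(1,1)\}$, $T_\omega=\emptyset$, and the target collection $\{Q_1,Q_2\}$ is essential; but for either $j$ the singleton $\{(Q_{3-j})_\omega\}$ demanded by Lemma~\ref{lem:littlemonotonicity} is a point, hence not essential. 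The conclusion $\MV(\Pdot)<\MV(Q_\bullet)$ does hold here ($0<2$), but it must be witnessed by a \emph{different} direction such as $\omega'=(1,0)$. Producing such an $\omega'$ from the hypothesis is the missing idea; it is not supplied by any reordering of the telescope.
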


\chapter{ALGEBRAIC GEOMETRY \label{section:algebraicgeometry}}

Algebraic geometry is the study of solution sets of polynomial equations. Such sets are called varieties and there is an intimate dictionary between the algebraic properties of collections of polynomials and the geometric properties of the varieties they define. 

We explain a small subset of algebraic geometry relevant to this dissertation. For a more thorough treatment of algebraic geometry we invite the reader to consult \cite{CLO,HarrisBook,Hartshorne,Shafarevich}. In particular,  \emph{Ideals, Varieties, and Algorithms} by Cox, Little, and O'shea \cite{CLO} takes a concrete and computational approach to solutions of polynomial equations that is suitable for undergraduates.

Throughout this section, we write $\mathbb{C}[x]$ for the polynomial ring $\mathbb{C}[x_1,\ldots,x_n]$ in $n$ variables with coefficients in $\C$. Given a collection $F \subset \C[x]$, we write $\langle F \rangle$ for the ideal in $\C[x]$ generated by all elements of $F$.  When working in few variables, we use the more familiar variables of $x,y,z,$ and $w$ in that order.

\section{Affine varieties} 
We denote $n$-dimensional complex affine space by
$$\mydefMATH{\C^n}=\{(a_1,a_2,\ldots,a_n) \mid a_i \in \C, \quad i=1,\ldots,n\}.$$ For any subset $F \subset \mathbb{C}[x]$, the \mydef{affine variety} defined by $F$ is
$$\mydefMATH{\V(F)} = \{(a_1,\ldots,a_n) \in \mathbb{C}^n \mid f(a_1,\ldots,a_n)=0 \text{ for all } f \in F \}\subset \C^n.$$
We also refer to $\V(F)$ as the \mydef{vanishing locus} of $F$, the \mydef{zero set} of $F$, or the affine variety \mydef{cut out} by $F$. It is worth mentioning that many texts refer to $\V(F)$ as an ``affine algebraic set'' and reserve the term ``affine variety'' for a more specific object. If $f \in \C[x]$ and $f(a)=0$ for some $a \in \C^n$, then we say that $f$ \mydef{vanishes} at $a$.  We sometimes will decorate the notation $\C^n$ with subscripts to indicate the coordinates involved. For example, $\V(y-x^2) \subset \C_{x,y}^2$.

If $X\subset Y$ are both affine varieties, we say $X$ is a \mydef{subvariety} of $Y$. The set $\C^n$ is an affine variety cut out by $\{0\} \subset \C[x]$.
We list some affine subvarieties of $\C^2$ in Figures \ref{fig:Cn}-\ref{fig:point}.
\begin{figure}[!htpb]
\begin{minipage}{0.3\textwidth}
\begin{center}
\includegraphics[scale=0.35]{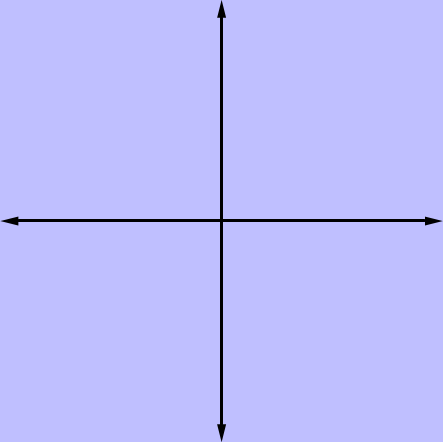}
\caption{The set $\V(0)$ defines $\C^2 \subset \C^2$.}
\label{fig:Cn}
\end{center}
\end{minipage}
\hspace{0.1 in}
\begin{minipage}{0.3\textwidth}
\begin{center}
\includegraphics[scale=0.35]{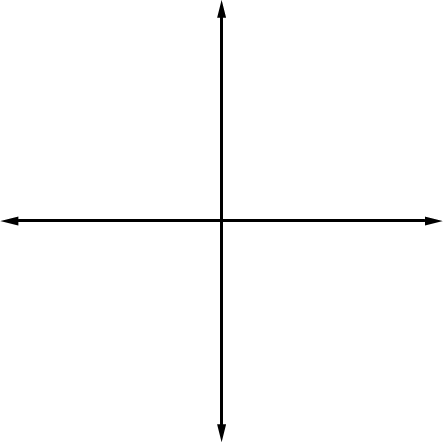}
\caption{The set $\V(1)$ defines $\emptyset \subset \C^2$.}
\label{fig:emptyset}
\end{center}
\end{minipage}
\hspace{0.1 in}
\begin{minipage}{0.3\textwidth}
\begin{center}
\includegraphics[scale=0.35]{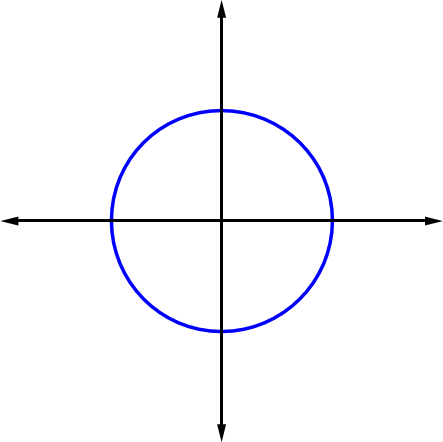}
\caption{The set $\V(x^2+y^2-1)$ defines the unit circle in $\C^2$.}
\label{fig:unitCircle}
\end{center}
\end{minipage}
\end{figure}
\begin{figure}[!htpb]
\begin{minipage}{0.45\textwidth}
\begin{center}
\includegraphics[scale=0.5]{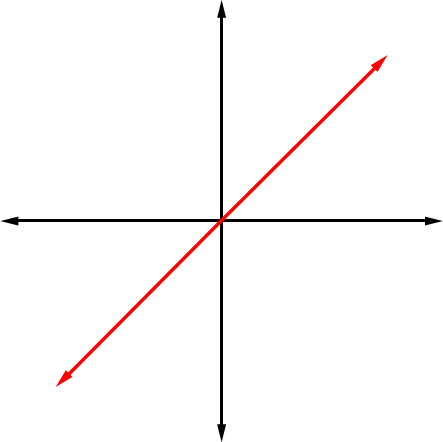}
\caption{The set $\V(x-y)$ defines a line in $\C^2$.}
\label{fig:line}
\end{center}
\end{minipage}
\hspace{0.1 in}
\begin{minipage}{0.45\textwidth}
\begin{center}
\includegraphics[scale=0.5]{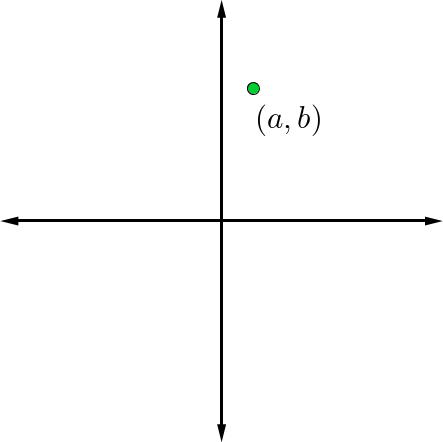}
\caption{The set $\V(x-a,y-b)$ defines a single point $(a,b) \subset \C^2$.}
\label{fig:point}
\end{center}
\end{minipage}
\end{figure}

For any subset $S\subset \C^n$ (not necessarily a variety), we denote the set of all polynomials which vanish on $S$ by 
$$\mydefMATH{\mathcal I(S)}=\{f \in \C[x] \mid f(s)=0 \text{ for all } s \in S\}.$$
This set is an ideal in the polynomial ring $\C[x]$ since if $f,g \in \mathcal I(S)$ and $h \in \C[x]$, then $f+hg \in \mathcal I(S)$ because $$f(s)+h(s)g(s) = 0+h(s)\cdot 0=0 \quad \text{ for all } s \in S.$$ Hence, we call $\mathcal I(S)$ the \mydef{ideal} of $S$. At this point, we may think of $\V$ and $\I$ as the functions,
\begin{align*}
\V &\colon\left\{\text{subsets of }\C[x]\right\} \to \left\{\text{subsets of }\C^n\right\}\\
\I &\colon\left\{\text{subsets of }\C^n\right\} \to \left\{\text{subsets of }\C[x]\right\}.
\end{align*}
\begin{lemma}
\label{lem:reversecorrespondence}
The functions $\V$ and $\I$ are inclusion reversing:
\begin{enumerate}
\item If $S_1 \subset S_2 \subset \C^n$ then $\I(S_2) \subset \I(S_1)$.
\item If $F_1 \subset F_2 \subset \C[x]$ then $\V(F_2) \subset \V(F_1)$. 
\end{enumerate}
\end{lemma}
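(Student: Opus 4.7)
The plan is to prove both statements by directly unpacking the definitions of $\V$ and $\I$; each statement reduces to the elementary fact that a universally quantified condition over a larger set automatically holds over any subset of it.

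For part (1), I would fix an arbitrary $f \in \I(S_2)$. By the definition of $\I$, this means $f(s) = 0$ for every $s \in S_2$. Since $S_1 \subset S_2$, the same equality $f(s) = 0$ holds for every $s \in S_1$, so $f \in \I(S_1)$, giving the desired containment $\I(S_2) \subset \I(S_1)$.

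For part (2), I would argue symmetrically. Fix $a \in \V(F_2)$, which by definition of $\V$ means $f(a) = 0$ for every $f \in F_2$. Since $F_1 \subset F_2$, in particular $f(a) = 0$ for every $f \in F_1$, so $a \in \V(F_1)$, giving $\V(F_2) \subset \V(F_1)$.

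There is no substantive obstacle here: the entire content of the lemma is that the condition ``vanishes at every point of $S$'' and the condition ``is a zero of every $f \in F$'' are both preserved when the quantified set shrinks. The lemma is really a notational sanity check, setting up the Galois-connection-style relationship between $\V$ and $\I$ that will be invoked in later results.
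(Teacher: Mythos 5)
Your proof is correct and takes essentially the same approach as the paper: both simply unpack the definitions of $\V$ and $\I$ and observe that a condition quantified over a set is inherited by any subset.
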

\begin{proof}
Suppose $S_1 \subset S_2 \subset \C^n$. Then any polynomial vanishing on $S_2$ vanishes on the subset $S_1$ and so $\I(S_2) \subset \I(S_1)$. 
Suppose that $F_1 \subset F_2 \subset \C[x]$. Then if every element of $F_2$ vanishes at some $a \in \C^n$, then every element of the subset $F_1$ vanishes at $a$ as well.
\end{proof}

\begin{lemma}
\label{lem:vanishingisvanishingofideal}
For any subset $F \subset \C[x]$, we have $\V(F)=\V(\langle F \rangle)$.
\end{lemma}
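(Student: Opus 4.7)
The plan is to prove set equality by establishing both inclusions, each of which is essentially a one-line consequence of the definitions and the lemma immediately preceding it.

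For the easy direction, $\V(\langle F \rangle) \subseteq \V(F)$, I would simply observe that $F \subseteq \langle F \rangle$ (since every element of $F$ is a trivial $\C[x]$-linear combination of itself) and then invoke part (2) of Lemma \ref{lem:reversecorrespondence}, which says that $\V$ reverses inclusions.

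For the reverse direction, $\V(F) \subseteq \V(\langle F \rangle)$, I would take a point $a \in \V(F)$ and an arbitrary element $g \in \langle F \rangle$. The only nontrivial input here is the standard characterization of the ideal generated by a (possibly infinite) set: any $g \in \langle F \rangle$ can be written as a finite sum $g = \sum_{i=1}^m h_i f_i$ with $h_i \in \C[x]$ and $f_i \in F$. Evaluating at $a$ and using that each $f_i(a) = 0$ gives
\[
g(a) = \sum_{i=1}^m h_i(a) f_i(a) = \sum_{i=1}^m h_i(a) \cdot 0 = 0,
\]
so $a \in \V(\langle F \rangle)$.

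There is essentially no obstacle here; the proof is a direct unwinding of definitions. The only thing worth flagging, and the place a reader might want a reference or a short remark, is the use of the explicit form of elements of $\langle F \rangle$ as finite $\C[x]$-linear combinations of elements of $F$. If desired, I would note this explicitly in the proof rather than leave it implicit, since $F$ is allowed to be an arbitrary (not necessarily finite) subset of $\C[x]$.
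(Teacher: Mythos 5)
Your proof is correct and follows essentially the same two-inclusion argument as the paper: write $g \in \langle F \rangle$ as a finite $\C[x]$-combination of elements of $F$ and evaluate, then use $F \subseteq \langle F \rangle$ for the reverse containment. The only cosmetic differences are that you cite Lemma~\ref{lem:reversecorrespondence} explicitly and take care to note the finiteness of the sum, both of which are fine.
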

\begin{proof}
If $g \in \langle F \rangle$, then 
\begin{equation}
\label{eq:inideal}
g=\sum_{f \in F} h\cdot f, \quad h \in \C[x],
\end{equation}
and so evaluating the sum at a point $a \in \V(F)$ shows that $g(a)=\sum\limits_{f \in F} h(a)\cdot 0=0$ and thus $\V(F) \subseteq \V(\langle F \rangle)$. Conversely, since $F \subset \langle F \rangle$, we have $\V(\langle F \rangle) \subseteq \V(F)$ proving equality.
\end{proof}

\begin{proposition}[Hilbert's Basis Theorem \cite{Hilbert}]
\label{prop:hilbertbasis}
Every ideal $I \in \C[x]$ may be written as $I=\langle f_1,\ldots,f_k \rangle$ for some $k \in \mathbb{N}$ and $f_i \in \C[x]$.
\end{proposition}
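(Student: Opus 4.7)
The plan is to prove this by induction on the number of variables $n$, by establishing the stronger statement that $\C[x_1,\ldots,x_n]$ is \emph{Noetherian}, i.e.\ every ideal is finitely generated. The base case $n=0$ is trivial since $\C$ is a field whose only ideals are $\langle 0 \rangle$ and $\langle 1 \rangle$. The entire content lies in the inductive step: assuming every ideal in $R := \C[x_1,\ldots,x_{n-1}]$ is finitely generated, deduce that every ideal $I$ in $R[x_n] = \C[x_1,\ldots,x_n]$ is finitely generated.

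For the inductive step, the key device I would use is the ideal of leading coefficients with respect to $x_n$. First I would define, for each nonnegative integer $d$, the set $J_d \subset R$ of all elements $a \in R$ such that there exists $f \in I$ of degree exactly $d$ in $x_n$ whose leading coefficient (as a polynomial in $x_n$) is $a$, together with $0$. A short check shows each $J_d$ is an ideal of $R$, and multiplication by $x_n$ shows $J_0 \subset J_1 \subset J_2 \subset \cdots$. Let $J = \bigcup_d J_d$, which is also an ideal of $R$. By the inductive hypothesis $J$ is generated by finitely many elements $a_1,\ldots,a_r \in R$, and each $a_i$ arises as the leading coefficient of some $f_i \in I$ of $x_n$-degree $d_i$. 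Set $d^\ast = \max_i d_i$.

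Next I would argue that the finitely many $f_i$ handle all sufficiently high-degree elements of $I$: given any $g \in I$ with $\deg_{x_n}(g) \geq d^\ast$, its leading coefficient lies in $J$, so writing it as an $R$-linear combination of the $a_i$ and multiplying each $f_i$ by an appropriate power of $x_n$ produces a polynomial in $\langle f_1,\ldots,f_r\rangle$ with the same leading term as $g$. Subtracting this from $g$ strictly lowers the $x_n$-degree, so by iteration one reduces $g$ modulo $\langle f_1,\ldots,f_r\rangle$ to an element of $I$ of $x_n$-degree strictly less than $d^\ast$. To finish I would apply the inductive hypothesis finitely many more times: for each $e < d^\ast$, the ideal $J_e \subset R$ is finitely generated, and pulling back a finite generating set produces finitely many $h_{e,j} \in I$ of $x_n$-degree $e$. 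The union $\{f_1,\ldots,f_r\} \cup \{h_{e,j} \mid 0 \leq e < d^\ast\}$ is a finite generating set for $I$.

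The main obstacle, and the step requiring the most care, is the reduction argument: one must verify that the leading-term cancellation actually takes place inside $\langle f_1,\ldots,f_r\rangle$ and strictly decreases the $x_n$-degree, rather than merely landing in some larger ideal. Once this is in hand, Lemma \ref{lem:vanishingisvanishingofideal} is not needed — the statement is purely algebraic — and one obtains the theorem for $\C[x_1,\ldots,x_n]$ after $n$ applications of the inductive step.
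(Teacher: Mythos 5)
The paper does not give a proof of this statement; it simply cites Hilbert's original work and remarks that the general Noetherian version ($R$ Noetherian implies $R[x]$ Noetherian) holds. Your proposal supplies the standard textbook proof, by induction on the number of variables with the ascending chain $J_0 \subseteq J_1 \subseteq \cdots$ of leading-coefficient ideals in $R=\C[x_1,\ldots,x_{n-1}]$ as the key device, and it is correct. The spot you flag as delicate is indeed the crux, and your sketch handles it properly: for $\deg_{x_n}(g)=d\geq d^\ast$ every exponent $d-d_i$ is nonnegative, so $\sum_i r_i x_n^{d-d_i}f_i$ is a genuine element of $\langle f_1,\ldots,f_r\rangle$ with the same $x_n$-leading term as $g$, and subtraction strictly drops the degree; for $e<d^\ast$ the finitely generated $J_e$ furnish the auxiliary generators $h_{e,j}$ so the descent terminates at $0$. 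One small point you could make explicit is that $J_d$ is closed under multiplication by $R$ because $R$ is an integral domain (so scaling a degree-$d$ witness by a nonzero $r$ preserves $x_n$-degree), and under addition because you have adjoined $0$ to absorb leading-coefficient cancellation. You are also right that Lemma \ref{lem:vanishingisvanishingofideal} plays no role; the Basis Theorem is purely ring-theoretic.
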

A more general version of Hilbert's Basis Theorem states that the polynomial ring $R[x]$ over any Noetherian ring $R$ is Noetherian. Hilbert proved the case when $R$ is either a field or the ring of integers \cite{Hilbert}. 
Consequently, when studying affine varieties $X=\V(F)\subset \C^n$, we may assume that $F$ is finite.

Given an affine variety $X=\V(f_1,\ldots,f_k)\subset \C^n$, declare that the subvarieties of $X$ of the form $X \cap \V(g_1,\ldots,g_m)$ for some $g_1,\ldots,g_m \in \C[x]$ are \mydef{closed}.  Lemma \ref{lem:unionsandintersections} along with the facts that $\emptyset$ and $\C^n$ are affine varieties prove that this gives a topology on $X=\C^n$, which we call the \mydef{Zariski topology}. 

\begin{lemma}
\label{lem:unionsandintersections}
Finite unions and arbitrary intersections of closed affine subvarieties of $\C^n$ are closed affine subvarieties of $\C^n$. 
\end{lemma}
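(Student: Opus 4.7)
The plan is to handle the two claims separately, treating intersections first since they require almost no work, then handling finite unions, which reduces to the two-variety case by induction.

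For arbitrary intersections, suppose $\{X_\alpha\}_{\alpha \in A}$ is a family of closed affine subvarieties, with $X_\alpha = \V(F_\alpha)$ for some $F_\alpha \subset \C[x]$. I would observe that a point $p \in \C^n$ lies in every $X_\alpha$ if and only if every polynomial in every $F_\alpha$ vanishes at $p$, i.e.\ if and only if $p \in \V\bigl(\bigcup_\alpha F_\alpha\bigr)$. Thus $\bigcap_\alpha X_\alpha = \V\bigl(\bigcup_\alpha F_\alpha\bigr)$, which is manifestly an affine variety. If one wants a finite defining set, Lemma~\ref{lem:vanishingisvanishingofideal} lets us replace $\bigcup_\alpha F_\alpha$ with the ideal it generates, and Hilbert's Basis Theorem (Proposition~\ref{prop:hilbertbasis}) supplies a finite generating set.

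For finite unions, it suffices by induction to treat the union of two closed affine subvarieties $X = \V(f_1,\ldots,f_k)$ and $Y = \V(g_1,\ldots,g_m)$. The natural candidate is
\[
X \cup Y \;=\; \V\bigl(\{\, f_i g_j \mymid 1 \le i \le k,\; 1 \le j \le m\,\}\bigr).
\]
The forward inclusion is immediate: if $p \in X$ then every $f_i(p)=0$, so every product $f_i(p)g_j(p) = 0$, and symmetrically if $p \in Y$. For the reverse inclusion, I would argue contrapositively: suppose $p$ annihilates every $f_i g_j$ but $p \notin X$. Then some $f_{i_0}(p) \neq 0$, and from $f_{i_0}(p)\,g_j(p) = 0$ for every $j$ we conclude $g_j(p) = 0$ for all $j$, so $p \in Y$. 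Hence $p \in X \cup Y$.

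There is no real obstacle here; the only point that requires any thought is the reverse inclusion in the union case, where one must exploit the existence of a single non-vanishing $f_{i_0}$ to force every $g_j$ to vanish. Both arguments use only the definitions and Hilbert's Basis Theorem, so the proof will be quite short.
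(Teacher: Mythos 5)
Your proposal is correct and takes essentially the same approach as the paper: arbitrary intersections via $\V\bigl(\bigcup_\alpha F_\alpha\bigr)$ with Hilbert's Basis Theorem supplying a finite generating set, and finite unions via the set of pairwise products $\{f_i g_j\}$. The only difference is that you spell out the reverse inclusion for the union case, which the paper leaves implicit.
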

\begin{proof}
Let $F,G\subset \C[x]$ be finite generating sets for the ideals $I$ and $J$ respectively. Then $$\V(I)\cap \V(J) = \V(I+J),$$
equivalently,
$$\V(F) \cap \V(J) = \V(F \cup G).$$
These intersections may be taken to be arbitrary by Hilbert's Basis Theorem.
Finite unions are also varieties since, 
$$\V(I)\cup \V(J) = \V(IJ),$$
or equivalently,
$$\V(F) \cup \V(G) = \V(\{f\cdot g \mid f \in F, g \in G\}),$$
completing the proof.
\end{proof}
Figures \ref{fig:intersection} and \ref{fig:union} display examples of unions and intersections of varieties.

\begin{figure}[!htpb]
\begin{minipage}{0.45\textwidth}
\begin{center}
\includegraphics[scale=0.5]{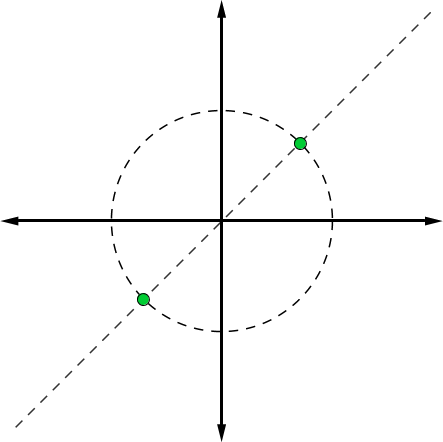}
\caption{The set $\V(x^2+y^2-1,x-y)$ defines two points.}
\label{fig:intersection}
\end{center}
\end{minipage}
\hspace{0.1 in}
\begin{minipage}{0.45\textwidth}
\begin{center}
\includegraphics[scale=0.5]{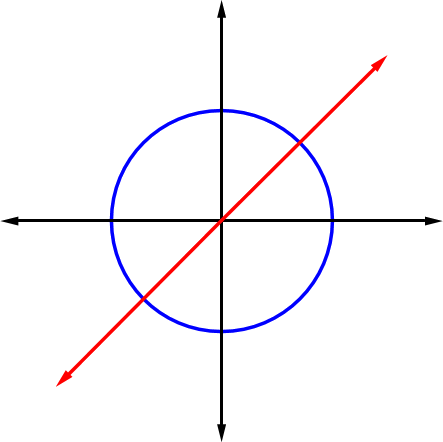}
\caption{The set $\V((x^2+y^2-1)\cdot (x-y))$ defines the union of the unit circle and a line.}
\label{fig:union}
\end{center}
\end{minipage}
\end{figure}
The Zariski topology on a closed subvariety of $\C^n$ is the subspace topology inherited from the Zariski topology on $\C^n$.
Affine varieties come equipped with a second topology: the subspace topology inherited from the Euclidean topology on $\C^n \cong \R^{2n}$. The Zariski topology is weaker than the Euclidean topology in the sense that closed/open sets in the Zariski topology are closed/open in the Euclidean topology but the converse is very much not true.

For any subset $S \subset \C^n$, denote its closure in the Zariski topology by $\overline S$. The following lemma is dual to Lemma \ref{lem:vanishingisvanishingofideal}.
\begin{lemma}
For any subset $S \subset \C^n$ we have $\I(S)=\I(\overline S)$. 
\end{lemma}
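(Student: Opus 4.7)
The plan is to prove the two containments $\I(\overline{S}) \subseteq \I(S)$ and $\I(S) \subseteq \I(\overline{S})$ separately, using only the definition of the Zariski closure and the inclusion-reversing property of $\I$ established in Lemma \ref{lem:reversecorrespondence}.

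For the forward containment $\I(\overline{S}) \subseteq \I(S)$, I would simply observe that $S \subseteq \overline{S}$ by definition of closure. Applying part (1) of Lemma \ref{lem:reversecorrespondence} immediately gives $\I(\overline{S}) \subseteq \I(S)$. This direction is essentially a one-line argument and requires no new ideas.

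The reverse containment $\I(S) \subseteq \I(\overline{S})$ is the substantive step. Let $f \in \I(S)$; I want to show $f$ vanishes on every point of $\overline{S}$. The key observation is that the single-polynomial vanishing locus $\V(f) \subseteq \C^n$ is Zariski-closed (it is an affine variety cut out by $f$), and by hypothesis $S \subseteq \V(f)$. Since $\overline{S}$ is by definition the smallest Zariski-closed set containing $S$, we must have $\overline{S} \subseteq \V(f)$. Therefore $f$ vanishes at every point of $\overline{S}$, which means $f \in \I(\overline{S})$.

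Combining the two containments yields the equality $\I(S) = \I(\overline{S})$. The main conceptual point, rather than any obstacle, is recognizing that one should test membership in $\I(\overline{S})$ one polynomial at a time and exploit the closedness of each individual vanishing locus $\V(f)$; no appeal to the Hilbert Basis Theorem or to any finiteness condition on $S$ is needed.
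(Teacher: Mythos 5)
Your proof is correct and takes essentially the same approach as the paper: both hinge on the fact that $\V(f)$ is Zariski-closed and contains $S$, so by minimality of the closure it must contain $\overline{S}$. The paper phrases the second containment as a proof by contradiction (considering $\overline{S}\cap\V(f)$ as a strictly smaller closed set containing $S$), whereas you argue directly; the underlying idea is identical.
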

\begin{proof}
We have $\I(S) \supset \I(\overline S)$ immediately. 
Suppose $f \in \I(S)$ so that $f(s)=0$ for all $s \in S$.  If $f \not\in \I(\overline S)$ then there exists some point $s' \in \overline S$ such that $f(s') \neq 0$ implying that $\overline S \cap \V(f)$ is a variety which is strictly smaller than $\overline S$ and contains $S$, a contradiction.
\end{proof}
Even when restricted to ideals and closed affine varieties, the functions $\V$ and $\I$ are not inverses of each other. It is true that $\V(\I(X))=X$ for any closed affine variety $X \subset \C^n$, but it is not true that $\I(\V(I))=I$ for any ideal $I \subset \C[x]$. For example $\I(\V(\langle x^2 \rangle)) = \langle x \rangle$. For $\V$ and $\I$ to be inverses of each other, we must restrict the domain of $\V$ to the subset of ideals satisfying $f^m \in I \iff f \in I$, called \mydef{radical ideals}. For any ideal $I$, the set $\mydefMATH{\sqrt{I}}=\{f \in \C[x] \mid f^m \in I \text{ for some } m \in \mathbb{N}\}$ is a radical ideal called the \mydef{radical} of $I$.
\begin{proposition}[Hilbert's Nullstellensatz \cite{HilbertNull}]
Given an ideal $I \subset \C[x]$,
$$\I(\V(I)) = \sqrt{I}.$$
\end{proposition}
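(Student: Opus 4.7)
The plan is to prove the two containments separately. The inclusion $\sqrt{I} \subseteq \I(\V(I))$ is the easy half: if $f \in \sqrt{I}$, then $f^m \in I$ for some $m \in \mathbb{N}$, so $f^m$ vanishes identically on $\V(I)$; since $\C$ is an integral domain, $f$ itself vanishes on $\V(I)$, and thus $f \in \I(\V(I))$.

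The reverse inclusion $\I(\V(I)) \subseteq \sqrt{I}$ is the content of the theorem. I would reduce it to the \emph{weak Nullstellensatz}, which states that every proper ideal $J \subsetneq \C[x_1,\ldots,x_n]$ has nonempty vanishing locus in $\C^n$. Establishing the weak form is the main obstacle, and I would assume it: it is typically proved either via Zariski's lemma on field extensions together with the fact that $\C$ is algebraically closed, or via Noether normalization. The slick passage from the weak form to the full statement is Rabinowitsch's trick, described next.

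Let $f \in \I(\V(I))$ and, using Hilbert's Basis Theorem (Proposition~\ref{prop:hilbertbasis}), write $I = \langle g_1,\ldots,g_k \rangle$. Introduce a new variable $y$ and form the ideal
$$J = \langle g_1,\ldots, g_k, \, 1 - yf \rangle \subset \C[x_1,\ldots,x_n, y].$$
I would first check that $\V(J) \subset \C^{n+1}$ is empty: any point $(a,b) \in \V(J)$ would force $a \in \V(I)$, hence $f(a) = 0$, yet simultaneously $1 - b\,f(a) = 1 \neq 0$, a contradiction. The weak Nullstellensatz then forces $J = \C[x_1,\ldots,x_n,y]$, and in particular $1 \in J$, yielding an identity
$$1 = h(x,y)\bigl(1 - y f(x)\bigr) + \sum_{i=1}^k h_i(x,y)\, g_i(x)$$
for some $h, h_i \in \C[x_1,\ldots,x_n, y]$.

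To finish, I would pass to the fraction field $\C(x_1,\ldots,x_n)$ and substitute $y = 1/f(x)$, which annihilates the first summand and leaves
$$1 = \sum_{i=1}^k h_i\!\left(x, \tfrac{1}{f(x)}\right) g_i(x).$$
Clearing denominators by multiplying through by $f^N$ for $N$ at least the maximal $y$-degree of the $h_i$ produces polynomial coefficients $\tilde h_i(x) \in \C[x_1,\ldots,x_n]$ with $f^N = \sum_{i=1}^k \tilde h_i(x) g_i(x) \in I$. Hence $f \in \sqrt{I}$, completing the reverse inclusion.
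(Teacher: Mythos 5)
The paper states the Nullstellensatz as a cited proposition without giving a proof, so there is no in-paper argument to compare against. Your proposal is the standard and correct reduction: the easy containment $\sqrt{I}\subseteq\I(\V(I))$ is handled exactly right (using that $\C$ has no nilpotents), and the reverse containment via Rabinowitsch's trick — adjoining $y$, forming $J=\langle g_1,\ldots,g_k,1-yf\rangle$, observing $\V(J)=\emptyset$, invoking the weak Nullstellensatz to get $1\in J$, substituting $y=1/f$, and clearing denominators — is sound and is the textbook argument. The one thing to flag is that you explicitly leave the weak Nullstellensatz as an assumption; since the full strength of the theorem is concentrated there (Zariski's lemma or Noether normalization plus algebraic closure of $\C$), your write-up is a complete reduction rather than a self-contained proof. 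That is a reasonable choice for a dissertation-background chapter, and indeed the paper itself does less, simply citing Hilbert.
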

The Nullstellensatz implies that with a further restriction to radical ideals, the functions
\begin{align*}
\V &\colon\left\{\text{radical ideals in }\C[x]\right\} \to \left\{\text{closed affine subvarieties of }\C^n\right\}\\
\I &\colon\left\{\text{closed affine subvarieties of }\C^n\right\} \to \left\{\text{radical ideals of }\C[x]\right\} 
\end{align*}
are inverses.
As corollaries we have that $\V(I)=\V(\sqrt{I})$ and that $\V(I)\subset \C^n$ is empty if and only if $I=\C[x]$.

Every polynomial $f \in \C[x]$ defines a function \begin{align*} f\colon\C^n &\to \C\\ x &\mapsto f(x).
\end{align*}
A \mydef{regular function} on an affine variety $X \subset \C^n$ is the restriction of a polynomial function on $\C^n$ to $X$. Two regular functions $f$ and $g$ on $X$ are the same if and only if $f-g \in \I(X)$ and so the regular functions on $X$ are identified with equivalence classes in the quotient ring $\mydefMATH{\C[X]}=\C[x]/\I(X)$ called the \mydef{coordinate ring} of $X$. Just as regular functions on affine varieties are restrictions of polynomials, a \mydef{regular map}  of affine varieties $X \subset \C^n, Y \subset \C^m$ is any function
\begin{align*}
\varphi\colon X &\to Y \\
x &\mapsto (\varphi_1(x),\ldots,\varphi_m(x))
\end{align*} where each $\varphi_i\colon X \to \C$ is a regular function. We say $\varphi$ is an \mydef{isomorphism} if it is bijective and its inverse is also a regular map.

A regular map $\varphi\colon X \to Y$ of affine varieties naturally induces a $\C$-algebra homomorphism on the coordinate rings of $X$ and $Y$ in the opposite direction: 
\begin{align*}
\mydefMATH{\varphi*}\colon \C[Y] &\to \C[X]\\
 f & \mapsto f \circ \varphi.
\end{align*}  Conversely, given any $\C$-algebra homomorphism $\phi\colon\C[Y] \to \C[X]$, with $\C[Y]=\C[y]/\I(Y)$ and $\C[X]=\C[x]/\I(X)$ let $[g_i] \in \C[X]$ be the image of $[y_i]$ under $\phi$. The map, 
\begin{align*}
\mydefMATH{\phi^\#}\colon X &\to Y\\
x &\mapsto (g_1(x),\ldots,g_m(x)),
\end{align*}
is a regular map of varieties. 
Note then that $\varphi$ is an isomorphism of affine varieties if and only if $\varphi^*$ is a $\C$-algebra isomorphism. 

\begin{example}
Given an affine variety $\V(f_1,\ldots,f_k) = X \subset \C^n$, subvarieties of $X$ are not always closed. To see this, consider the open subset $U_f=X \smallsetminus \V(f)$ for some $0 \neq f \in \C[X]$.  While $U_f$ cannot be expressed as $X \cap \V(g_1,\ldots,g_r)$ for any collection $g_1,\ldots,g_r \in \C[x]$ ($U_f$ is not closed) it can still be given the structure of a variety in the following way. 

Introduce a new variable $z$ and consider $Y=\V(f_1,\ldots,f_k) \cap \V(fz-1) \subset \C^{n+1}$. Here, $Y$ is a closed subvariety of the variety cut out by the same equations as $X$ considered in a higher dimensional space. The coordinate ring $\C[Y]$ is isomorphic to $\C[X][\frac{1}{f}]$ via the map $z \mapsto \frac{1}{f}$. This gives $U_f$ the structure of an affine variety and hence we call it a \mydef{principal affine open} subvariety of $X$. 
\end{example}
\section{Projective varieties}
The fundamental theorem of algebra states that a univariate polynomial of degree $d$ has $d$ complex zeros, counted with multiplicity. This fact does not hold over the real numbers and so extending the notion of polynomial equations over $\R$ to those over $\C$ casts the real case into a larger picture which is better behaved. Similarly for varieties, we extend the notion of affine varieties to projective varieties. Doing so produces a more unified understanding of affine varieties.

We wish to keep the notation of $\C[x]$ for a polynomial ring in $n$ variables, and so many of our statements will involve $\P^{n-1}$ rather than $\P^n$. When we write this, we assume $n\geq 2$. 
\begin{definition}
Define the equivalence $\sim$ on the set
 $\C^{n} \smallsetminus \{\bzero\}$ by setting $x=(x_1,\ldots,x_n) \sim (y_1,\ldots,y_n)=y$ if and only if $y=\lambda x$ for some $\lambda \in \C \smallsetminus \{0\}$.  \mydef{Projective $(n-1)$-space} is the quotient
$$\mydefMATH{\P^{n-1}}=(\C^{n}\smallsetminus \{\bzero\})/\sim.$$
We write the equivalence class of $(a_1,\ldots,a_n)$ in $\P^{n-1}$ as $[a_1:\cdots:a_n]$.  
\end{definition}
The zeros of a polynomial $f \in \C[x]$ are well-defined on $\P^{n-1}$ whenever $f$ satisfies the condition
\begin{align*}
f(x)=0\text{ if and only if }f(\lambda x)=0\text{, for any }\lambda \in \C \smallsetminus \{0\}.
\end{align*} This property is equivalent to $f$ being homogeneous.
A polynomial $$f=\sum_{\alpha \in \mathcal A} c_\alpha x_1^{\alpha_1}\cdots x_n^{\alpha_n} \in \C[x], \quad c_\alpha \in \C \smallsetminus \{0\}$$ is \mydef{homogeneous} of degree $d$ if $|\alpha|=d$ for all $\alpha \in \mathcal A$. Denote the set of homogeneous polynomials of degree $d$ by $\mydefMATH{\C[x]_d}$. If a polynomial is not homogeneous, we say it is \mydef{inhomogeneous}. 

One may erroneously guess that since the zeros of $f \in \C[x]$ are well-defined on $\P^{n-1}$ if and only if $f$ is homogeneous, then the zeros of $\{f_1,\ldots,f_k\}\subset \C[x]$ are well-defined if and only if $f_1,\ldots,f_k$ are homogeneous, but this is \textbf{not} necessary. For example, the zero set of $\{x^3+x^2+y^2-z^2,x\}$ are the points $\{[0:1:-1],[0:-1:1]\} \in \P^2$. Due to the argument in Lemma \ref{lem:vanishingisvanishingofideal}, the zeros of $\{x^3+x^2+y^2-z^2,x\}$ are the same as the zeros of $I=\langle x^3+x^2+y^2-z^2,x\rangle=\langle x^2+y^2-z^2,x \rangle$. Ideals such as $I$ which can be generated by homogeneous elements are called \mydef{homogeneous ideals}. The common zeros of a collection $F \subset \C[x]$ are well-defined on projective space exactly when $\langle F \rangle$ is a homogeneous ideal.
\begin{definition}
Let $F \subset \C[x]$ be a collection of polynomials such that $\langle F \rangle$ is a homogeneous ideal. The \mydef{projective variety} defined by $F$ is
$$\mydefMATH{\V(F)} = \{[a_1:\cdots:a_n] \in \P^{n-1} \mid f([a_1:\cdots:a_n]) = 0 \text{ for all } f \in F\} \subset \P^{n-1}.$$
\end{definition}
Since $\P^{n-1}$ is a quotient of $\C^n\smallsetminus \{\bzero\}$ with projection $\pi\colon \C^{n}\smallsetminus \{\bzero\} \to \P^{n-1}$, any subset $S \subset \P^{n-1}$ may be pulled back to the subset $$\mydefMATH{\mathcal C X}=\overline{\pi^{-1}(S)}\cup \bzero \subset \C^n,$$ called the \mydef{affine cone} over $S$. 
\begin{figure}[!htpb]
\includegraphics[scale=0.35]{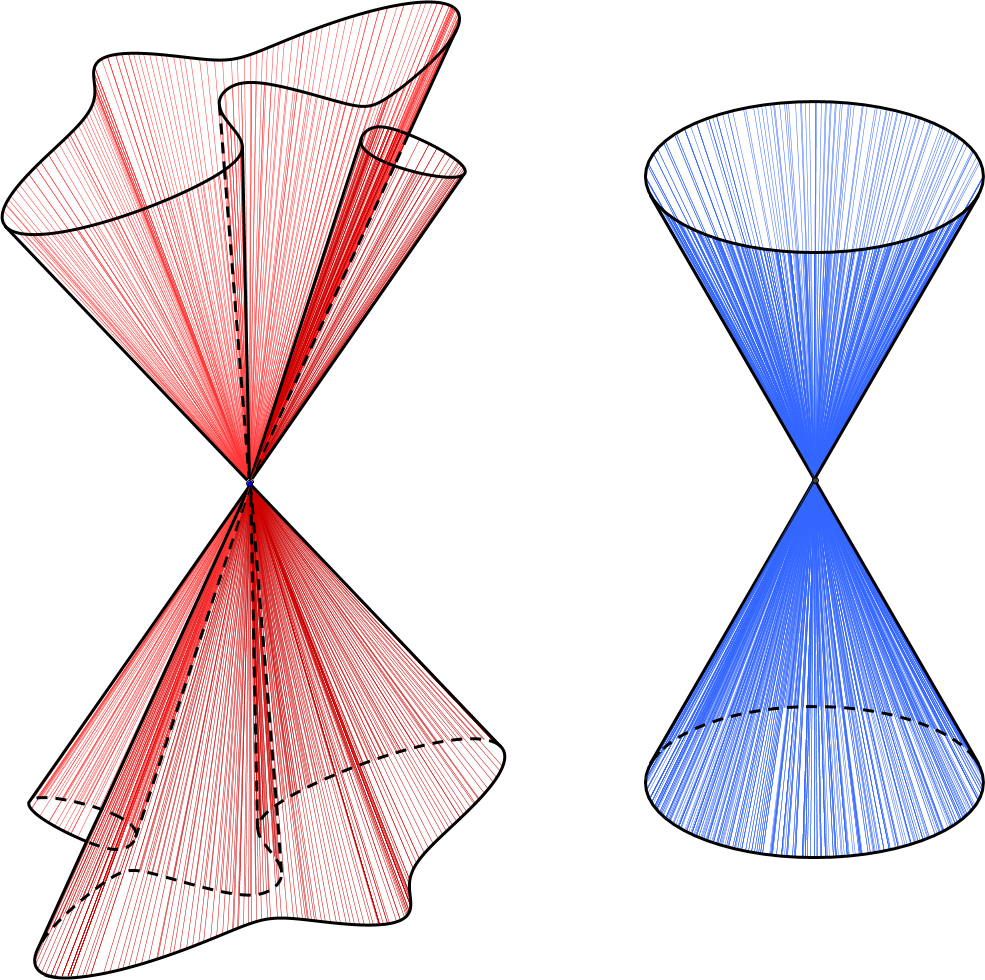}
\caption{Left: An affine cone over a quartic curve in $\P^2$. Right: An affine cone over a circle in $\P^2$.}
\label{fig:projective}
\end{figure}
For any subset $S \subset \P^{n-1}$, we define the set $\mydefMATH{\I(S)}$ to be the set of polynomials which vanish on the cone over $S$. This is an ideal, and the following proves something stronger.
\begin{lemma}
\label{lem:idealishomogeneous}
If $S \subset \mathbb{P}^{n}$, then the ideal $\I(S)$ is homogeneous.
\end{lemma}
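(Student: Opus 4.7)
The plan is to verify homogeneity of $\I(S)$ by the standard criterion: an ideal $I \subset \C[x]$ is homogeneous precisely when, for every $f \in I$, each homogeneous component of $f$ also lies in $I$. So I would take an arbitrary $f \in \I(S)$, decompose $f = f_0 + f_1 + \cdots + f_d$ into its homogeneous components with $f_i \in \C[x]_i$, and show that each $f_i$ vanishes on the affine cone $\mathcal C S$.

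The key observation is that $\mathcal C S$ is closed under scalar multiplication by construction (it is the closure of $\pi^{-1}(S)$ together with $\bzero$, and $\pi^{-1}(S)$ is a union of punctured lines through the origin). Thus for any $a \in \mathcal C S$ and any $\lambda \in \C$, the point $\lambda a$ is also in $\mathcal C S$. Evaluating $f$ along this scaled point gives
\[
0 = f(\lambda a) = \sum_{i=0}^d \lambda^i f_i(a),
\]
which, viewed as a polynomial in $\lambda$, vanishes for every $\lambda \in \C$ (an infinite set). Hence all of its coefficients vanish, i.e., $f_i(a) = 0$ for each $i$. Since $a \in \mathcal C S$ was arbitrary, $f_i \in \I(S)$ for each $i$.

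This shows $\I(S)$ contains the homogeneous components of all of its elements. To conclude that $\I(S)$ is homogeneous as an ideal, I would invoke Hilbert's Basis Theorem (Proposition~\ref{prop:hilbertbasis}) to pick a finite generating set $f^{(1)}, \ldots, f^{(k)}$ for $\I(S)$, replace each $f^{(j)}$ by its collection of homogeneous components (still in $\I(S)$ by the above), and observe that this enlarged finite collection of homogeneous elements still generates $\I(S)$. I do not anticipate a serious obstacle: the only subtlety is making sure $\mathcal C S$ is genuinely closed under scalar multiplication (including by $\lambda = 0$, where $\bzero$ was adjoined precisely so that this holds), but this is immediate from the definition of the affine cone.
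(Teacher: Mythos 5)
Your argument is correct and is essentially the paper's own: both expand $f$ into homogeneous components, evaluate at a scaled point to get a polynomial in $\lambda$ that vanishes identically, conclude each component lies in $\I(S)$, and then appeal to Hilbert's Basis Theorem to replace a finite generating set by the homogeneous components of its elements. The only cosmetic difference is that you phrase the evaluation on the affine cone $\mathcal{C}S$ explicitly (and note closure under scaling, including $\lambda = 0$), whereas the paper evaluates at $\lambda s$ for $s \in S$ and observes separately that $f_0 = 0$; the substance is the same.
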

\begin{proof}
Suppose $f$ is a non-homogeneous generator of $\I(S)$ given as  
$$f=\sum_{i=0}^k f_i(x),$$ where each $f_i(x)$ is homogeneous. Since $f$ vanishes on a subset of projective space, for any $s \in S$, we have $f(s)=0$ if and only if $f(\lambda s)=0$ for any $\lambda \in \C \smallsetminus \{0\}$. On the other hand 
$$f(\lambda s) = \sum_{i=0}^k \lambda^{\deg(f_i)}f_i(s),$$ is a polynomial in $\lambda$ which must vanish whenever $s \in S$. Thus, thinking of $\lambda$ as a variable, each coefficient $f_i(s)$ of $\lambda^i$ must be zero. This implies that $f_i(x) \in \I(S)$ and in particular, $f_0=0$. Thus, $f$ may be replaced as a generator of $\I(S)$ with the finite set $\{f_i\}_{i=1}^k$ since $f \in \langle f_1,\ldots,f_k \rangle\subset \I(S)$.
\end{proof}
The sets $\P^{n-1}=\V(0)$ and $\emptyset=\V( x_1,\ldots,x_n ) \subset \P^{n-1}$ are projective varieties. For any projective variety $X \subset \P^{n-1}$, declaring subvarieties of the form $X \cap \V(I)\subset X$  to be closed gives a topology by the same arguments as in the affine case. This topology is also called the \mydef{Zariski topology}.
The ideal $\mydefMATH{\mathfrak m_0}=\langle x_1,\ldots,x_n \rangle$ is called the \mydef{irrelevant ideal} since for any homogeneous ideal $I$,  $\V(I\cdot \mathfrak m_0) \subset \P^{n-1}$ is the same as $\V(I)$. Since $\bzero$ is always contained in the cone over $S$ the ideal $\I(S)$ is always contained in the irrelevant ideal.

The same arguments as in the affine case show that the following basic properties of the functions 
\begin{align*}
\V&\colon \{\text{homogeneous ideals in } \C[x]\} \to \{\text{closed projective subvarieties of }\P^{n-1}\} \\
\I&\colon \{\text{subsets of }\P^{n-1}\} \to \{\text{homogeneous ideals in } \C[x] \text{ containing } \mathfrak m_0 \}
\end{align*}
hold projectively.
\begin{enumerate}
\item $\V$ and $\I$ are inclusion reversing,
\item $\V(F) = \V(\langle F \rangle)$,
\item $\I(S) = \I(\overline S)$,
\item $\sqrt{I\cdot \mathfrak m_0} = \I(\V(I))$ (projective Nullstellensatz).
\end{enumerate}
Thus, the functions
\begin{align*}
\V&\colon\{\text{homog. radical ideals in }\C[x] \text{ contained in } \mathfrak m_0 \} \to \{\text{closed projective subvarieties of }\P^{n-1}\} \\
\I&\colon \{\text{closed projective subvarieties of }\P^{n-1}\} \to \{\text{homog. radical ideals in }\C[x] \text{ contained in } \mathfrak m_0 \} 
\end{align*}
are inclusion-reversing inverses.

\section{Charts on projective space}
Consider the open set $$\mydefMATH{U_i} = \P^{n-1}\smallsetminus \V(x_i).$$
Since any point in projective space has some nonzero coordinate, the $U_i$ cover $\P^{n-1}$ and every point in $U_i$ has a unique representative  of the form
$$\left(\frac{x_1}{x_i}, \ldots ,\frac{x_{i-1}}{x_i},1,\frac{x_{i+1}}{x_i}, \ldots,\frac{x_n}{x_i}\right).$$ The maps 
\begin{align*}\varphi_i\colon U_i &\to \C^{n-1} \\
[x]&\mapsto \left(\frac{x_1}{x_i},\ldots,\frac{x_{i-1}}{x_i},\frac{x_{i+1}}{x_i},\ldots,\frac{x_{n}}{x_i} \right)
\end{align*} are charts for $\P^{n-1}$ as a manifold. We call these the \mydef{standard affine open charts} for $\P^{n-1}$ as they identify each $U_i$ with an affine space. We sometimes will refer to the $U_i$ themselves as charts.

\section{Homogenizing and dehomogenizing}
Let $X \subset \P^{n-1}$ be a projective variety. For $X=\V(f_1,\ldots,f_k)\subset \P^{n-1}$ with $f_i \in \C[x]$, the affine cone of $X$ is simply $\mathcal C X=\V(f_1,\ldots,f_k) \subset \C^n$. For any variable $x_i$, the intersection of the affine cone of $X$ with the hyperplane $\mydefMATH{H_i}=\V(x_i-1) \cong \C^{n-1}$ is a closed affine subvariety $\mathcal CX \cap \V(x_i-1)  \subset \C^{n-1}$ called the \mydef{dehomogenization} of $X$ with respect to $x_i$. Identifying $H_i$ with $\C^{n-1}$ via the standard affine open chart $\varphi_i$, the dehomogenization of $X$ with respect to $x_i$ is the same as the image of $\varphi_i\colon X \cap U_i \to \C^{n-1}$.

Conversely, suppose $X$ is an affine subvariety of $\C^{n-1}$.  By introducing a new coordinate $x_{n}$ we define the \mydef{projective closure} of $X$ as
\begin{equation}
\label{eq:projectiveclosure}
\mydefMATH{\overline{X}} = \overline{\{[x:1] \mid x \in X\}} \subset \P^{n-1}.
\end{equation} This is the same as taking the closure $\overline{\varphi_n^{-1}(X)}$ where $\varphi_n$ is the standard affine open chart on $\P^{n-1}$. When considering the projective closure \eqref{eq:projectiveclosure} of an affine variety, we call the hyperplane $\mydefMATH{H^{\infty}_{n}} = \V(x_n)$ the \mydef{hyperplane at infinity}.  Of course, the processes of projectively closing an affine variety and dehomogenizing a projective variety may be done with respect to any hyperplane $H \subset \C^n$ not passing through the origin, via the exact same geometric procedure. In these cases, the corresponding hyperplane at infinity is the hyperplane $\mydefMATH{H^\infty}$ through the origin with the same normal direction as $H$.

The dehomogenization of $\overline{X}\subset \P^{n-1}$ with respect to $x_{n}$ is exactly $X$ and writing equations for a dehomogenization is straightforward: if $F=\{f_1,\ldots,f_k\} \subset \C[x]$ is a collection of homogeneous polynomials, then the dehomogenization of $\V(F)$ with respect to the variable $x_i$ is the affine variety  $$\V(g_1,\ldots,g_k) \subset \C^{n-1},$$ where $\mydefMATH{g_j}:=f_j(x_1,\ldots,x_{i-1},1,x_{i+1},\ldots,x_n)$ is the \mydef{dehomogenization} of $f_j$ with respect to $x_i$.
The inverse task of producing the algebraic equations for $\overline{X}$ from those for $X$ is much more difficult. Given a polynomial $$f=\sum_{\alpha \in \mathcal A} c_\alpha x_1^{\alpha_1}\cdots x_{n-1}^{\alpha_{n-1}} \in \C[x_1,\ldots,x_{n-1}], \quad c_\alpha \in \C \smallsetminus \{0\}$$ of degree $d$, the \mydef{homogenization} of $f$ with respect to a new variable $x_n$ is the polynomial $$\mydefMATH{\tilde{f}}=\sum_{\alpha \in \mathcal A} c_\alpha x_1^{\alpha_1}\cdots x_{n-1}^{\alpha_{n-1}} \cdot x_{n}^{d-|\alpha|}\in \C[x].$$ Similarly, for a subset $F$ of polynomials, let $\mydefMATH{\widetilde{F}}=\{\tilde{f}\}_{f \in F}$ be the \mydef{homogenization} of $F$. It is easy to see that dehomogenizing $\tilde{f}$ with respect to $x_{n}$ recovers $f$ and so the dehomogenization of $\V(\widetilde{F})\subset \P^{n-1}$ with respect to $x_{n}$ is $\V(F) \subset \C^{n-1}$. Unfortunately, $\V(\widetilde{F}) \neq \overline{\V(F)}$ and so merely homogenizing equations for an affine variety does not produce equations for its projective closure. In order for this to work, we must homogenize the ideal $I=\langle F \rangle$ generated by $F$; that is, $\V(\widetilde{I}) = \overline{\V(F)}$. Thus, the homogenization of the ideal generated by a collection of polynomials is \emph{not} the ideal generated by the homogenizations of those polynomials. We illustrate the failure of the na\"ive homogenization $\widetilde{F}$ to cut out the projective closure $\overline{\V(F)}$ in the following example.
\begin{example}
Let $F=\{xy-1,z-x^2\} \subset \C[x,y,z]$ define the set $C \subset \C^3$ called the twisted cubic displayed in Figure \ref{fig:twistedcubic}. 
\begin{figure}[!htpb]
\includegraphics[scale=0.6]{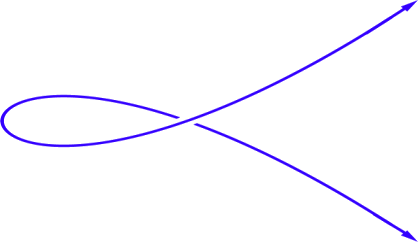}
\caption{A twisted cubic.}
\label{fig:twistedcubic}
\end{figure}
Let $I=\langle F \rangle$. The homogenization of $I$ with respect to $w$ is $\widetilde{I} = \langle xy-w^2,zw-x^2,yz-xw \rangle$, but the ideal generated by the homogenization of $F$ with respect to $w$ is $\langle \widetilde{F}\rangle =\langle xy-w^2,zw-x^2\rangle \subset \widetilde{I}$.  Notice that the line $\{[0:s:t:0] \mid [s:t] \in \P^1\}$ is contained in $\V(xy-w^2,zw-x^2)$ but not in $\V(xy-w^2,zw-x^2,yz-xw)$.\hfill $\diamond$
\end{example}

\section{Regular functions}

We define the \mydef{homogeneous coordinate ring} of a projective variety $X \subset \P^{n-1}$ to be the graded quotient ring $$\mydefMATH{\C[X]}=\C[x]/\I(X).$$ Note that this is the coordinate ring of $\mathcal C X$. 
Contrary to the affine case, most polynomials are not functions on any projective variety $X\subset \P^{n-1}$, rather, the \emph{only} polynomial functions on $X$ are constants: if $f \in \C[x]$ is not constant, then for $\lambda \in \C \smallsetminus \{0,1\}$ we have $f(\lambda x)=\lambda^{\deg(f)}f(x) \neq f(x)$.  

Despite there being almost no polynomial functions on a projective variety, there are still functions between projective varieties which are locally given by polynomials.
Given a collection $\{f_1,\ldots,f_k\} \subset \C[X]_d$ of polynomials of the same degree, the function 
\begin{align*}
f\colon X\smallsetminus \V(f_1,\ldots,f_k) &\to \P^{k-1}\\
x &\mapsto [f_1(x):\cdots:f_k(x)]
\end{align*}
is well-defined. If $\varphi\colon X \to \P^{k-1}$ is a function such that for every $x \in X$ there exist $f_1,\ldots,f_k \in \C[x]$ of the same degree such that $x \not\in \V(f_1,\ldots,f_k)$ and $$\varphi(y)=[f_1(y):\cdots:f_k(y)],\text{ for all } y \in X \smallsetminus  \V(f_1,\ldots,f_k), $$ then we say the map $\varphi$ is \mydef{regular}. Two projective varieties are isomorphic if there exist regular maps $\varphi\colon X \to Y$ and $\psi\colon  Y \to X$ which are inverses of each other. Regular maps of affine/projective varieties are continuous maps under the Zariski topology.

\section{Irreducibility and dimension}
The term ``variety'' without the adjectives ``affine'' or ``projective'' refers to either an affine variety or a projective variety.

\subsection{Irreducibility}
A \emph{nonempty} variety $X$ is \mydef{irreducible} if it satisfies
$$X=X_1 \cup X_2 \implies X=X_1 \text{ or } X=X_2,$$ whenever $X_1$ and $X_2$ are closed subvarieties of $X$. Otherwise, we say it is \mydef{reducible}.
A union $X=X_1\cup X_2 \cup \cdots \cup X_m$ of sets is \mydef{irredundant} if $X_i \not\subset X_j$ for any distinct $i,j \in [m]$. Note that if $X=X_1 \cup X_2$ is a witness for the reducibility of  a variety $X$, then this union is irredundant.
\begin{lemma}
\label{lem:varietiescanbedecomposed}
Every nonempty variety $X$ may be written as an irredundant union of finitely many irreducible closed subvarieties
$$X = X_1 \cup X_2 \cup \cdots \cup X_m.$$
\end{lemma}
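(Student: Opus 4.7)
The plan is to prove this by contradiction using Noetherian induction on closed subvarieties. The key ingredient is the descending chain condition: any chain $Y_1 \supsetneq Y_2 \supsetneq \cdots$ of closed subvarieties of $X$ must stabilize, because by Lemma \ref{lem:reversecorrespondence} it corresponds to a strictly ascending chain of ideals $\mathcal I(Y_1) \subsetneq \mathcal I(Y_2) \subsetneq \cdots$ in $\C[x]$, which cannot be infinite by Hilbert's Basis Theorem (Proposition \ref{prop:hilbertbasis}). Thus every nonempty collection of closed subvarieties of $X$ has a minimal element with respect to inclusion, which is exactly the tool needed for Noetherian induction.

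Call a nonempty closed subvariety \emph{good} if it admits a decomposition as a finite union of irreducible closed subvarieties, and \emph{bad} otherwise. First I would show that the set of bad closed subvarieties of $X$ is empty. If not, by the descending chain condition this set contains a minimal element $Y$. This $Y$ is not irreducible (else $Y = Y$ is already a one-term decomposition, hence good), so there is a proper decomposition $Y = Y_1 \cup Y_2$ with $Y_1, Y_2 \subsetneq Y$ closed. By minimality of $Y$ among bad subvarieties, both $Y_1$ and $Y_2$ are good, say $Y_i = Z_{i,1} \cup \cdots \cup Z_{i,m_i}$ with each $Z_{i,j}$ irreducible and closed. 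Then
\begin{equation*}
Y = Z_{1,1} \cup \cdots \cup Z_{1,m_1} \cup Z_{2,1} \cup \cdots \cup Z_{2,m_2}
\end{equation*}
is a finite union of irreducible closed subvarieties, so $Y$ is good, a contradiction. Therefore $X$ itself is good and may be written as $X = X_1 \cup \cdots \cup X_m$ with each $X_i$ irreducible and closed.

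To obtain irredundancy, I would iteratively prune the decomposition: whenever there exist distinct indices $i,j$ with $X_i \subseteq X_j$, delete $X_i$ from the list. The remaining union still equals $X$, and since the number of terms strictly decreases at each step, this process terminates after finitely many steps, yielding an irredundant decomposition. The only subtle point, which I would flag but not consider a serious obstacle, is that Hilbert's Basis Theorem is stated for $\C[x]$, while we may be working inside a projective variety; however, the correspondence between closed subvarieties and ideals of $\C[x]$ (in either the affine or projective setting, via the homogeneous coordinate ring) ensures the descending chain condition transfers to any ambient variety $X$, so the Noetherian induction goes through uniformly.
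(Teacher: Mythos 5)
Your argument is correct and rests on the same underlying engine as the paper's: Hilbert's Basis Theorem (Proposition \ref{prop:hilbertbasis}) forbids an infinite strictly descending chain of closed subvarieties, because applying $\I$ turns such a chain into a strictly ascending chain of ideals. The paper runs this as a direct iterative construction, repeatedly splitting off a piece to produce a chain of remainders and deriving a contradiction; you run it as Noetherian induction on a minimal element of the set of ``bad'' subvarieties, which is the standard and somewhat cleaner packaging of the same idea (in particular it avoids the paper's slight bookkeeping slip where the descending chain is written in terms of the split-off pieces $Y_i$ rather than the remainders $Y^{(j)}$, which is what the construction actually controls). One thing you do that the paper's proof does not: the statement asks for an \emph{irredundant} union, and the paper's argument stops once it has produced some finite union of irreducibles, whereas your final pruning step explicitly discards redundant components, so your proof actually delivers the full conclusion. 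Your closing remark about the affine versus projective setting is also warranted, since the lemma is stated for varieties of either type and the Noetherianity argument must be transported to the (homogeneous) coordinate ring in the projective case.
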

\begin{proof}
Let $X$ be a variety. If it is irreducible, the lemma is satisfied. Otherwise, it may be written as a union $X=Y_1 \cup Y^{(1)}$ of proper closed subvarieties.
As a convention, we suppose that $Y^{(1)}$ is irreducible if $Y_1$ is, and otherwise we reorder them. Similarly, if $Y^{(1)}$ is reducible, we write $Y^{(1)}=Y_2 \cup Y^{(2)}$. Iteratively applying this process to $Y^{(j)}$ produces a proper infinite chain
$$X \supsetneq Y_1 \supsetneq Y_2 \supsetneq Y_3 \supsetneq \cdots$$
of closed subvarieties $Y_i \subset X$. 
Applying $\I$ to this chain gives an ascending chain of ideals which is proper since each $Y_i$ is closed, contradicting Hilbert's Basis Theorem. 
\end{proof}
\begin{lemma}
\label{lem:irrdecompisunique}
Let $X$ be a variety. If $X$ admits two irredundant decompositions,
$$X=X_1 \cup \cdots \cup X_m, \text{ and }
X=Y_1 \cup \cdots \cup Y_{m'},$$
into irreducible closed subvarieties,
then $m=m'$ and $\{X_1,\ldots,X_m\}=\{Y_1,\ldots,Y_{m'}\}.$
\end{lemma}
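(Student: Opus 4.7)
The plan is to use the irreducibility of the components in one decomposition to force each of them to be contained in some component of the other, and then use irredundancy to upgrade containment to equality.

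Fix the two irredundant decompositions
\[
X = X_1 \cup \cdots \cup X_m = Y_1 \cup \cdots \cup Y_{m'}
\]
into irreducible closed subvarieties. I would start with a single component $X_i$ and intersect with the second decomposition, writing
\[
X_i = X_i \cap X = \bigcup_{j=1}^{m'} (X_i \cap Y_j).
\]
Each $X_i \cap Y_j$ is closed in $X_i$ since $Y_j$ is closed in $X$, so this is an expression of the irreducible variety $X_i$ as a finite union of closed subvarieties. By irreducibility, one of the terms must equal the whole: there exists some $j=\sigma(i)$ with $X_i \cap Y_{\sigma(i)} = X_i$, i.e.\ $X_i \subseteq Y_{\sigma(i)}$.

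By the same argument applied to $Y_{\sigma(i)}$ in the first decomposition, there exists $k=\tau(\sigma(i))$ with $Y_{\sigma(i)} \subseteq X_k$. Chaining these inclusions gives $X_i \subseteq Y_{\sigma(i)} \subseteq X_k$. Irredundancy of the first decomposition forces $k=i$, and hence both inclusions are equalities, so $X_i = Y_{\sigma(i)}$. This shows $\sigma\colon \{1,\ldots,m\} \to \{1,\ldots,m'\}$ is well-defined and each $X_i$ appears among the $Y_j$. Symmetrically, every $Y_j$ appears among the $X_i$, so $\{X_1,\ldots,X_m\} = \{Y_1,\ldots,Y_{m'}\}$ as sets, which forces $m=m'$ and yields the conclusion.

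The only mild subtlety — and the one place where care is warranted — is the step that extracts $\sigma(i)$ from irreducibility: one must be sure the union $\bigcup_j (X_i \cap Y_j)$ has only finitely many terms (it does, since $m'$ is finite) and each $X_i \cap Y_j$ is closed in $X_i$ (it is, by the definition of the Zariski subspace topology on $X_i$ as a closed subvariety of $X$). Once these are in place, irreducibility does the work, and everything else is bookkeeping with the irredundancy hypothesis on both decompositions.
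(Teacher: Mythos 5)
Your proof is correct and follows essentially the same route as the paper: intersect one irreducible component with the other decomposition, use irreducibility to extract a single containing component, chain to get a self-containment, and invoke irredundancy to force equality. The only superficial difference is that you start from the $X_i$ while the paper starts from the $Y_i$, and you close the argument by symmetry rather than by the paper's "iterate on the remaining components" phrasing.
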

\begin{proof}
We will show that for all $i$, $Y_i=X_j$ for exactly one $j$. Consider
$$Y_i = X \cap Y_i = \bigcup_{j=1}^m X_j \cap Y_i.$$
Since $Y_i$ is irreducible, one of the sets in the union must equal $Y_i$, or equivalently, $X_j \cap Y_i=Y_i$ for some $j \in [m]$, implying that $Y_i \subseteq X_j$. Applying this argument to $X_j$ shows that $X_j \cap Y_k = X_j$ for some $k \in [m']$, implying that $X_j \subseteq Y_k$. Together, this implies $Y_i \subseteq Y_k$ and since the unions are irredundant, $Y_i=Y_k=X_j$.   Iterating this argument on $X=\bigcup_{k \neq j} X_k$ and $Y=\bigcup_{k \neq i} Y_k$ proves the result.
\end{proof}
We call the decomposition in Lemma \ref{lem:irrdecompisunique} the \mydef{irreducible decomposition} of $X$.

\begin{lemma}
Irreducible varieties are those whose ideals are prime.
\end{lemma}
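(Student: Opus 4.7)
The plan is to prove the two directions of the biconditional separately, using the inclusion-reversing Galois-type correspondence between closed subvarieties and radical ideals established earlier (in particular the fact that $\V(fg) = \V(f) \cup \V(g)$ and that $X_1 \subsetneq X_2$ among closed subvarieties translates to $\I(X_1) \supsetneq \I(X_2)$).

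For the forward direction, I would assume $X$ is irreducible and take $f,g \in \C[x]$ with $fg \in \I(X)$. Then $X \subseteq \V(fg) = \V(f) \cup \V(g)$, which gives the decomposition
\begin{equation*}
X = (X \cap \V(f)) \cup (X \cap \V(g))
\end{equation*}
into closed subvarieties of $X$. Irreducibility forces one of the two pieces to equal $X$, say $X = X \cap \V(f)$, whence $X \subseteq \V(f)$ and $f \in \I(X)$. This shows $\I(X)$ is prime. I would note up front that this argument works uniformly in the affine and projective settings, since in the projective case one checks homogeneity by invoking Lemma \ref{lem:idealishomogeneous}; or, equivalently, one can apply the affine argument to the affine cone $\mathcal C X$ and use that $X$ is irreducible if and only if $\mathcal C X$ is.

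For the reverse direction, I would take the contrapositive. Suppose $X$ is reducible, so $X = X_1 \cup X_2$ with each $X_i$ a closed proper subvariety. By the inclusion-reversing property of $\I$ (Lemma \ref{lem:reversecorrespondence}) together with $\V(\I(Y)) = Y$ for closed $Y$, the strict containments $X_i \subsetneq X$ give strict containments $\I(X) \subsetneq \I(X_i)$. Hence I can pick $f \in \I(X_1) \setminus \I(X)$ and $g \in \I(X_2) \setminus \I(X)$. Then $fg$ vanishes on $X_1 \cup X_2 = X$, so $fg \in \I(X)$ while neither $f$ nor $g$ lies in $\I(X)$, witnessing that $\I(X)$ is not prime.

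There is no real obstacle here; the only mild subtlety is being careful about the nonempty/empty conventions (since by definition an irreducible variety is nonempty, and indeed the zero ideal and the full ring behave as the two degenerate cases) and, in the projective case, keeping track of the irrelevant ideal convention used earlier. Both are handled by the remark that the proof can be read off the affine cone, where the correspondence is cleanest.
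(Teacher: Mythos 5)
Your proof is correct and takes essentially the same route as the paper: both directions reduce to the basic $\V$--$\I$ correspondence, with the "reducible $\Rightarrow$ not prime" step picking $f \in \I(X_1)\smallsetminus\I(X)$ and $g \in \I(X_2)\smallsetminus\I(X)$ exactly as the paper does, and your forward step via $\V(fg)=\V(f)\cup\V(g)$ is just the contrapositive reading of the paper's explicit construction $X=\V(\langle f_1\rangle+\I(X))\cup\V(\langle f_2\rangle+\I(X))$. The extra remarks on the affine cone and the empty-variety convention are fine clarifications but do not change the argument.
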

\begin{proof}
Suppose $X$ is reducible, witnessed by $X_1 \cup X_2$, where $X_1,X_2$ are proper nontrivial closed subvarieties of $X$. Write $I_1 = \I(X_1)$ and $I_2 = \I(X_2)$ so that $I_1I_2 = \I(X)$. Picking $f_1 \in I_1 \smallsetminus I_2$ and $f_2 \in I_2 \smallsetminus I_1$, we see that $f_1,f_2 \not\in \I(X)$ but $f_1  f_2 \in \I(X)$ so $\I(X)$ is not prime. Conversely, suppose $\I(X)$ is not prime, witnessed by $f_1,f_2 \not\in \I(X)$ yet $f_1 f_2 \in \I(X)$. Let $I_1 = \langle f_1 \rangle+\I(X)$ and $I_2 = \langle f_2 \rangle+\I(X)$. We claim that $X=X_1\cup X_2$ where $X_1=\V(I_1)$ and $X_2=\V(I_2)$. Both $X_1,X_2 \subset X = \V(\I(X))$ since $\I(X) \subset I_1(X)$ and $\I(X) \subset I_2(X)$. Moreover, their union $X_1 \cup X_2$ is $\V(I_1I_2) = \V(\I(X))=X$.  
\end{proof}

\subsection{Dimension}

The \mydef{dimension} of an irreducible variety $X$ is the longest length $\mydefMATH{\dim(X)}$ of a  proper chain of irreducible closed subvarieties
$$\emptyset = X_{-1} \subsetneq X_0 \subsetneq X_1 \subsetneq \cdots \subsetneq X_{\dim(X)} = X.$$ If $X$ is not irreducible, then its \mydef{dimension} is the maximum dimension of its irreducible components. The \mydef{codimension} of a subvariety $X \subset Z$ is $\mydefMATH{\codim_Z(X)}=\dim(Z)-\dim(X)$. We will omit the subscript on codimension whenever $Z=\C^n$ or $Z=\P^n$ or we have specifically mentioned $Z$ and so the subscript is clear from context. If $X$ and $Y$ are both subvarieties of $Z$ and $\dim(X)=\codim(Y)$, then we say that $X$ and $Y$ have \mydef{complementary dimension}. 

A variety of codimension $1$ is the zero set of a single polynomial and  is called a \mydef{hypersurface}. Zero-dimensional varieties are finite collections of points. If $X$ is a closed subvariety of an irreducible variety $Z$ and $\dim(X)=\dim(Z)$ then $X=Z$. 

Given a variety $X$ one expects the intersection of $X$ and a hypersurface to have dimension one less than $X$. The following lemma states that the dimension is lowered by at most one in the projective setting. 
\begin{lemma}\cite[I.6.2 Corollary 5 of Theorem 4]{Shafarevich}
\label{lem:projectivelowerbounddimension}
Let $f_1,\ldots,f_k \in \C[x]$ be homogeneous polynomials and suppose $X\subset \P^{n-1}$ is a projective variety of dimension $m$. Then we have that $\dim(\V(f_1,\ldots,f_k) \cap X) \geq m-k$.
\end{lemma}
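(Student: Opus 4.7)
The plan is to proceed by induction on $k$. The base case $k=1$ is the projective form of Krull's Hauptidealsatz: for any single homogeneous polynomial $f$ and projective variety $X$ of dimension $m$, every irreducible component of $\V(f) \cap X$ has dimension at least $m-1$. Once the base case is established, the inductive step is a straightforward one-hypersurface-at-a-time argument.

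For the base case, I would first reduce to the irreducible setting by passing to an irreducible component $Z \subset X$ of dimension $m$, since $\dim(\V(f) \cap X) \geq \dim(\V(f) \cap Z)$. Then I split on whether $f$ vanishes on $Z$. If it does, then $\V(f) \cap Z = Z$ has dimension $m \geq m-1$ and we are done. Otherwise, pass to the affine cone $\mathcal{C} Z \subset \C^n$, which is an irreducible affine variety of dimension $m+1$ whose coordinate ring $\C[\mathcal{C} Z] = \C[x]/\I(Z)$ is an integral domain. Because $f$ does not vanish on $Z$, its class in $\C[\mathcal{C} Z]$ is a nonzero element of this domain, hence a nonzerodivisor. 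By the affine version of Krull's principal ideal theorem (which I take as an available commutative-algebra fact), every irreducible component of $\mathcal{C} Z \cap \V(f)$ has codimension exactly $1$ in $\mathcal{C} Z$, that is, affine dimension $m$. Since $\mathcal{C} Z \cap \V(f)$ is visibly the affine cone over $Z \cap \V(f)$ (both $\I(Z)$ and $f$ are homogeneous), and since the cone over a nonempty projective variety of dimension $d$ has affine dimension $d+1$, it follows that $Z \cap \V(f)$ has projective dimension $m-1$, as required. (The only edge case is $m=0$, in which case the claim reads $\dim(\V(f) \cap X) \geq -1$, which holds by the convention $\dim(\emptyset) = -1$.)

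For the inductive step, assume the statement holds for $k-1$ homogeneous polynomials, and set $Y = \V(f_1,\ldots,f_{k-1}) \cap X$. By the inductive hypothesis, $\dim(Y) \geq m-(k-1)$. Choose an irreducible component $Y_0 \subset Y$ realizing this dimension and apply the base case to the single polynomial $f_k$ and the variety $Y_0$, obtaining $\dim(\V(f_k) \cap Y_0) \geq \dim(Y_0) - 1 \geq m - k$. Since $\V(f_k) \cap Y_0 \subseteq \V(f_1,\ldots,f_k) \cap X$, this yields the desired bound.

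The main obstacle is the base case: everything rests on Krull's Hauptidealsatz applied inside the coordinate ring of the affine cone, together with the bookkeeping that translates the affine codimension-one statement about $\mathcal{C} Z \cap \V(f)$ back into a projective dimension statement about $Z \cap \V(f)$. The subtle point in that translation is verifying that the restriction of $f$ to $\mathcal{C} Z$ is a nonzerodivisor, which is exactly why I isolate the case where $f$ vanishes identically on $Z$ before invoking the principal ideal theorem.
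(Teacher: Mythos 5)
The paper does not prove this lemma; it is cited directly from Shafarevich, so there is no in-paper argument to compare against. Your proof is correct and is essentially the standard textbook argument: reduce to $k=1$, pass to the affine cone, and invoke Krull's principal ideal theorem in the coordinate ring of the cone. The genuinely projective content of the statement---that the intersection is \emph{nonempty} whenever $m \geq k$, in contrast to the affine analog (Lemma~\ref{lem:affinelowerbounddimension})---is present in your argument but compressed: the point is that $\bzero \in \mathcal C Z \cap \V(f)$ whenever $\deg f \geq 1$, so Krull makes this cone have pure dimension $m$, and since $m \geq 1$ it strictly contains $\{\bzero\}$, forcing $Z \cap \V(f) \neq \emptyset$; it would be worth stating that explicitly rather than leaving it folded into the phrase ``it follows that $Z \cap \V(f)$ has projective dimension $m-1$.'' One further caveat: the lemma as written tacitly assumes each $f_i$ is either zero or of positive degree (a nonzero constant $f$ has $\V(f) = \emptyset$ in $\P^{n-1}$ and the conclusion fails), and your use of Krull quietly relies on the restriction of $f$ to $\mathcal C Z$ not being a unit, i.e.\ on $\deg f \geq 1$. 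With that standing assumption acknowledged, the base case is sound and the one-hypersurface-at-a-time induction closes correctly, including the degenerate cases where $Y$ is empty or zero-dimensional.
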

The affine analog of Lemma \ref{lem:projectivelowerbounddimension} gives a weaker conclusion.
\begin{lemma}\cite[I.6.2 Corollary 2 of Theorem 5]{Shafarevich}
\label{lem:affinelowerbounddimension}
Let $f_1,\ldots,f_k \in \C[x]$ and $X \subset \C^n$ an affine variety of dimension $m$. Every irreducible component of $\V(f_1,\ldots,f_k) \cap X \subset \C^n$ has dimension at least $m-k$. 
\end{lemma}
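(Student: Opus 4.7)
The plan is to induct on $k$, where the case $k=0$ is trivial since $\V(\emptyset)\cap X = X$ has dimension $m$. For the inductive step, first reduce to the case where $X$ is irreducible: every irreducible component $Y$ of $\V(f_1,\ldots,f_k)\cap X$ lies in a unique irreducible component $X_i$ of $X$ and is in fact an irreducible component of $X_i \cap \V(f_1,\ldots,f_k)$, so the bound for the irreducible case transfers. Now assume $X$ is irreducible of dimension $m$, and let $Y$ be any irreducible component of $X \cap \V(f_1,\ldots,f_k)$. Since $Y \subset X \cap \V(f_1,\ldots,f_{k-1})$, it is contained in some irreducible component $Z$ of the latter, and it is in fact an irreducible component of $Z \cap \V(f_k)$. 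The inductive hypothesis applied to $X$ with the polynomials $f_1,\ldots,f_{k-1}$ yields $\dim(Z) \geq m - (k-1)$, so the result follows provided one establishes the base case $k=1$ applied to the irreducible variety $Z$ and the single polynomial $f_k$.

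Everything thus reduces to the following claim: for any irreducible affine variety $Z$ of dimension $d$ and any $f \in \C[x]$, every irreducible component of $Z \cap \V(f)$ has dimension at least $d-1$. If $f$ vanishes identically on $Z$, then $Z \cap \V(f) = Z$ and the bound holds trivially. Otherwise the class $[f] \in \C[Z]$ is a nonzero nonunit in the Noetherian integral domain $\C[Z]$, and Krull's Hauptidealsatz asserts that every minimal prime ideal containing $[f]$ has height exactly one. Geometrically this says that each irreducible component of $Z \cap \V(f)$ has codimension exactly one in $Z$, which is the required bound.

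The main obstacle is the base case, which rests on Krull's principal ideal theorem rather than on any of the polyhedral or elementary geometric tools developed in the excerpt. One might hope to sidestep this by taking projective closures and invoking Lemma \ref{lem:projectivelowerbounddimension}, but that lemma only bounds the dimension of the whole intersection, whereas the affine statement demands a bound on every irreducible component. Since the Hauptidealsatz is a standard and well-documented theorem in commutative algebra, the cleanest route is to cite it directly (following Shafarevich) and perform the inductive reduction sketched above.
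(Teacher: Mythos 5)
The paper does not prove this lemma; it is stated as a citation to Shafarevich, so there is no internal argument for you to match. Your proof --- induction on $k$, reducing the step to the case of a single polynomial, and then invoking Krull's Hauptidealsatz in the coordinate ring $\C[Z]$ --- is the standard commutative-algebra proof and is correct. It is also a genuinely different route from the geometric one Shafarevich himself takes (projective closure plus fiber-dimension counting as in Lemma~\ref{lem:fibredim}); the Hauptidealsatz version is cleaner here because, as you correctly observe, the projective statement in Lemma~\ref{lem:projectivelowerbounddimension} bounds only $\dim(X\cap\V(f_\bullet))$ and does not control individual components, and projective closure does not behave well component by component.

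One wrinkle worth flagging: your reduction to irreducible $X$ only gives the claimed bound if $X$ is equidimensional. If $Y$ sits inside an irreducible component $X_i$ with $\dim X_i < m$, the irreducible case yields $\dim Y \geq \dim X_i - k$, which is weaker than $m-k$. In fact the lemma as literally written is false for non-equidimensional $X$: take $X$ to be the plane $\V(z)\subset\C^3$ together with the isolated point $(0,0,1)$, so $\dim X = 2$, and intersect with $\V(x)$; the point $(0,0,1)$ is then an irreducible component of dimension $0 < 2-1$. This is an imprecision inherited from the lemma's statement (Shafarevich's hypothesis is that $X$ is irreducible), not a defect in your argument for the intended case, but you should make the irreducibility (or equidimensionality) hypothesis explicit rather than claim the bound ``transfers'' unconditionally. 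Also, ``lies in a unique irreducible component $X_i$'' should be ``lies in some irreducible component'' --- uniqueness can fail if components of $X$ overlap, though it does not affect the argument.
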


We distinguish the conclusions of Lemma \ref{lem:projectivelowerbounddimension} and Lemma \ref{lem:affinelowerbounddimension} in the following example.
\begin{example}
Let $X=\C^2_{x,y}$ and $f_1=xy-1,f_2=x$. Then $\V(f_1,f_2) = \emptyset$. While it is true that Lemma \ref{lem:affinelowerbounddimension} guarantees that every irreducible component of $\V(f_1,f_2)\cap X$ has dimension at least $0$, the variety $\V(f_1,f_2)\cap X$ has no irreducible components and so the lemma does not apply. 
\begin{figure}[!htpb]
\includegraphics[scale=0.4]{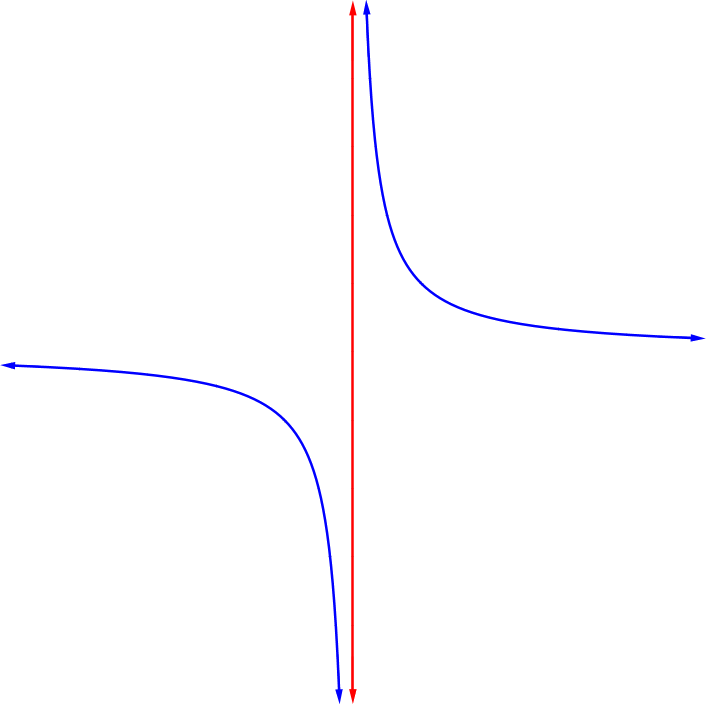}
\caption{The affine varieties $\V(xy-1)$ and $\V(x)$.}
\label{fig:hyperbolaline}
\end{figure}

Na\"ively homogenizing, take $\tilde f_1 = xy-z^2, \tilde f_2=x$ and $X=\P^2$, so that $$\V(\tilde f_1,\tilde f_2)\cap X = \{[0:1:0]\}\subset \P^2.$$ This is nonempty as guaranteed by Lemma \ref{lem:projectivelowerbounddimension}. 

Notice that with respect to this homogenization, the point $[0:1:0]$ is on the line at infinity. This aligns with our intuition as Figure \ref{fig:hyperbolaline} shows that the line $\V(x)$ and the hyperbola $\V(xy-1)$ asymptotically approach each other along the $y$-axis.\hfill $\diamond$
\end{example}
\begin{corollary}
\label{cor:nonemptyhasdimension}
If $\emptyset \neq \V(f_1,\ldots,f_k) \subset \C^n$ then $\V(f_1,\ldots,f_k)$ has dimension at least $n-k$. 
\end{corollary}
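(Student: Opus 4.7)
The plan is to derive the corollary directly from Lemma \ref{lem:affinelowerbounddimension} by taking the ambient variety to be $X = \C^n$ itself. Since $\C^n$ is an irreducible affine variety of dimension $n$, the lemma applies with $m = n$, and the intersection $\V(f_1,\ldots,f_k) \cap \C^n$ is just $\V(f_1,\ldots,f_k)$.

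Concretely, I would proceed as follows. First, invoke the hypothesis that $\V(f_1,\ldots,f_k)$ is nonempty. By Lemma \ref{lem:varietiescanbedecomposed}, this variety admits an irredundant decomposition into finitely many irreducible closed subvarieties; in particular, it has at least one irreducible component, call it $Y$. Next, apply Lemma \ref{lem:affinelowerbounddimension} with $X = \C^n$ (so $m = n$) to conclude that $\dim(Y) \geq n - k$. Finally, recall that the dimension of a possibly reducible variety is defined as the maximum of the dimensions of its irreducible components, so $\dim(\V(f_1,\ldots,f_k)) \geq \dim(Y) \geq n - k$.

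There is essentially no obstacle here: the whole argument is a one-line unwrapping of definitions, given Lemma \ref{lem:affinelowerbounddimension}. The only subtle point worth flagging is the role of the nonemptiness hypothesis, which is precisely what prevents the vacuous case (where $\V(f_1,\ldots,f_k)$ has no irreducible components and the lemma yields no information, as illustrated by the example $\V(xy-1, x) = \emptyset$ in $\C^2$ discussed immediately before the corollary). Beyond flagging this, the proof is a single short paragraph.
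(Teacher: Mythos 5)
Your proof is correct and is precisely the argument the paper intends: the corollary is stated without proof as an immediate consequence of Lemma \ref{lem:affinelowerbounddimension} with $X=\C^n$, with nonemptiness guaranteeing the existence of an irreducible component. Your flagging of the nonemptiness hypothesis matches the point of the preceding example in the paper.
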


\begin{lemma}\cite[I.6.2 Theorem 6.]{Shafarevich}
Let $X$ and $Y$ be subvarieties of $\C^n$ (or $\P^n$) of dimensions $m_1$ and $m_2$ respectively. Then every irreducible component of $X \cap Y$ has dimension at least $m_1+m_2-n$. Moreover, if $X$ and $Y$ are projective and $m_1+m_2 \geq n$ then $X \cap Y \neq \emptyset$.
\end{lemma}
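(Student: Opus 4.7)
The plan is to reduce the dimension bound to Lemma \ref{lem:affinelowerbounddimension} via the standard diagonal trick, and then handle the projective nonemptiness via a passage to affine cones combined with Corollary \ref{cor:nonemptyhasdimension}.

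First, I would treat the affine case. Consider the product $X \times Y \subset \C^n \times \C^n = \C^{2n}$, using coordinates $(x_1,\ldots,x_n,y_1,\ldots,y_n)$. One checks (and I would cite or sketch the easy fact) that $\dim(X \times Y) = m_1 + m_2$; this follows because the coordinate ring of $X \times Y$ is the tensor product $\C[X] \otimes_\C \C[Y]$, and it can be verified by taking products of chains of irreducible subvarieties. Let $\Delta = \V(x_1-y_1,\ldots,x_n-y_n) \subset \C^{2n}$ be the diagonal. The map $(x,x) \mapsto x$ is an isomorphism from $(X \times Y) \cap \Delta$ onto $X \cap Y$, so their irreducible decompositions correspond. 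Since $\Delta$ is cut out by $n$ polynomials, Lemma \ref{lem:affinelowerbounddimension} applied to $X \times Y$ immediately gives that every irreducible component of $(X \times Y) \cap \Delta$, and hence of $X \cap Y$, has dimension at least $(m_1+m_2) - n$.

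Next, I would pass to the projective case via affine cones. If $X, Y \subset \P^{n-1}$ are projective of dimensions $m_1, m_2$ (adjusting the ambient dimension from $n$ to $n-1$ to match the setup of the lemma as stated), the affine cones $\mathcal C X, \mathcal C Y \subset \C^n$ are affine varieties of dimensions $m_1+1$ and $m_2+1$, and $\mathcal C(X \cap Y) = \mathcal C X \cap \mathcal C Y$. Applying the affine case just proved, every irreducible component of $\mathcal C X \cap \mathcal C Y$ has dimension at least $(m_1+1)+(m_2+1) - n = m_1 + m_2 - n + 2$; dehomogenizing shows that every component of $X \cap Y$ has dimension at least $m_1 + m_2 - (n-1)$, which matches the statement for the correct ambient projective space.

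Finally, for the nonemptiness assertion, assume $m_1 + m_2 \geq n$. The cones $\mathcal C X$ and $\mathcal C Y$ both contain the origin $\bzero \in \C^n$, so their intersection is nonempty. By Corollary \ref{cor:nonemptyhasdimension} (or by the affine bound above), every irreducible component of $\mathcal C X \cap \mathcal C Y$ through $\bzero$ has dimension at least $(m_1+1)+(m_2+1) - n \geq 2$, so $\mathcal C X \cap \mathcal C Y$ strictly contains $\{\bzero\}$. Any nonzero point of this intersection projects to a point of $X \cap Y \subset \P^{n-1}$, proving nonemptiness. The main conceptual step is the diagonal trick; the main technical point I would need to be careful about is verifying $\dim(X \times Y) = m_1 + m_2$ cleanly without invoking more machinery than the excerpt provides, but this follows from concatenating chains of irreducible subvarieties of $X$ and $Y$.
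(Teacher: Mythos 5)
The paper does not prove this lemma---it is stated as a citation to Shafarevich I.6.2 Theorem~6, so there is no in-paper proof to compare against. Your argument is the standard one (the diagonal reduction plus affine cones), which is precisely the route Shafarevich takes, and it is essentially correct. The affine case is clean: $\dim(X\times Y)=m_1+m_2$ follows in the paper's framework from Lemma~\ref{lem:irreducibilityfromfibres} and Lemma~\ref{lem:fibredim} applied to the projection $X\times Y\to X$, the diagonal is cut out by $n$ linear forms, and Lemma~\ref{lem:affinelowerbounddimension} gives the bound. The cone argument for the projective bound also works, with the observation (implicit in your write-up) that $\mathcal{C}(X\cap Y)=\mathcal{C}X\cap\mathcal{C}Y$ so that components on the two sides correspond with a shift of dimension by one.

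Two small points to tighten. First, you re-index to ambient $\P^{n-1}$ (cones in $\C^n$) but then carry over the nonemptiness hypothesis $m_1+m_2\geq n$ from the lemma's original $\P^n$ statement; with your indexing the matching hypothesis is $m_1+m_2\geq n-1$, which gives cone-intersection components of dimension at least $1$ through $\bzero$, and this already suffices for nonemptiness (your stronger hypothesis yields dimension $\geq 2$, which is fine but does not cover the boundary case of the re-indexed statement). Second, the parenthetical appeal to Corollary~\ref{cor:nonemptyhasdimension} for the nonemptiness step is not quite the right tool: that corollary bounds $\dim\V(f_1,\ldots,f_k)$ by $n-k$ in terms of the number of defining equations, which you do not control for $\mathcal{C}X\cap\mathcal{C}Y$; the correct source is, as you also say, the ``affine bound above'' obtained from the diagonal trick and Lemma~\ref{lem:affinelowerbounddimension}.
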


\section{Function fields and rational functions} 
When $X\subset \C^n$ is irreducible, $\I(X)$ is prime and so its coordinate ring $\C[X]$ is an integral domain. The field of fractions of $\C[X]$ is called the \mydef{function field} of $X$, denoted $\mydefMATH{\C(X)}$, and consists of all rational functions $g/h\colon X \dashrightarrow \C$ such that $h \not\in \I(X)$. 

If $X$ is an irreducible projective variety, the \mydef{function field} of $X$, denoted $\mydefMATH{\C(X)}$, consists of rational functions $g/h\colon X \dashrightarrow \C$ such that $g$ and $h$ have the same degree and $h \not\in \I(X)$. We use the dashed arrow notation to remind ourselves that rational functions are not defined everywhere but they are well-defined on the open subset $U=X\smallsetminus \V(h)$. Indeed if $u \in U$, then $$g(\lambda u)/h(\lambda u) = (\lambda^dg(u))/(\lambda^dh(u)) = g(u)/h(u)$$ for all $\lambda \in \C \smallsetminus \{0\}$. Unlike the affine case, the function field of $X$ is \emph{not} the field of fractions of $\C[X]$, but rather, its $0$-th graded piece. A \mydef{rational map} $\varphi\colon X \dashrightarrow \P^{m}$ from $X$ to projective space is given as
\begin{equation}
\label{eq:rationalmap}
\varphi=[\varphi_1:\cdots:\varphi_{m+1}], \quad \varphi_i \in \C(X),\quad \text{for }  i=1,\ldots,m+1,
\end{equation} where $\varphi_i = g_i/h_i$. The map $\varphi$ is defined on the open set $$V=X \smallsetminus \left(\V(g_1,\ldots,g_{m+1}) \cup \V(h_1\cdots h_{m+1})\right).$$ We may always write a rational map \eqref{eq:rationalmap} so that $\varphi_i$ are polynomials. Since each $\varphi_i$ is of the form $\varphi_i= g_i/h_i$, we simply clear denominators, 
\begin{equation}
\label{eq:rationaltopolynomial}
\varphi = [g_1/h_1:\cdots:g_{m+1}/h_{m+1}]= [f_1:\cdots:f_{m+1}]
\end{equation} 
where $f_i = (g_i/h_i)\cdot \prod_{j=1}^{m+1} h_j$. Even though the coordinates of every rational function may be written as polynomials, these are not regular functions because $\V(f_1,\ldots,f_{m+1})$ may not be empty.

Two rational functions $g/h,g'/h'\in \C(X)$ are equal whenever $gh'-g'h \in \I(X)$. Of course, they may be defined on different open subsets $U = X \smallsetminus \V(h)$ and $U' = X \smallsetminus \V(h')$, but they agree on the dense open subset $U \cap U'$. Similarly, two rational maps
$$
\varphi=[f_1:\cdots:f_{m+1}]\quad \text{ and }\quad
\varphi'=[f'_1:\cdots:f'_{m+1}],
$$written in the form \eqref{eq:rationaltopolynomial}, are the same if $f_if'_j-f_jf'_i \in \I(X)$ for all $i,j \in [m+1]$. Equivalently, $\varphi$ and $\varphi'$
agree on an dense open subset of $X$. Thus, for any dense open subset $U\subset X$, rational maps on $X$ are determined by their values on $U$. Hence, when $U$ is an affine open subvariety of $X$, $\C(U) = \C(X)$. 

\section{Products, graphs, and the degree of a variety}
Given a function $f\colon A \to B$ of sets, the graph of $f$ is simply the set $\Gamma(f)=\{(a,b) \mid a \in A, b=f(a)\}\subset A \times B$. We may similarly define the graph of a regular or rational map of algebraic varieties, however, \emph{a priori} these graphs do not come equipped with the structure of a variety. We obtain a variety structure on the graph of a map by developing a variety structure on products of varieties \emph{vis-\'a-vis} Segre maps.

\subsection{Segre maps}
Given two projective spaces $\P^{n-1}$ and $\P^{m-1}$, define the \mydef{Segre map} 
$$\mydefMATH{\sigma_{n-1,m-1}}\colon  \P^{n-1} \times \P^{m-1} \to \P^{nm-1},$$to be the function sending a pair of points $[x]\in\P^{n-1}$ and $[y] \in \P^{m-1}$ to the point whose coordinates are all possible pair-wise products of the coordinates of $[x]$ and $[y]$, namely, $$\sigma_{n-1,m-1}([x_1:\cdots:x_{n}],[y_1:\cdots:y_m]) = [x_1y_1:\cdots:x_iy_j:\cdots:x_ny_m].$$
Giving $\P^{nm-1}$ coordinates $\mydefMATH{z_{i,j}}=x_iy_j$, the image of the Segre map is 
$$\mydefMATH{\Sigma_{n-1,m-1}} = \V(z_{i,j}z_{k,l}-z_{i,l}z_{k,j}) \subset \P^{nm-1},$$
and is called the \mydef{Segre variety}.
\subsection{Products}
Defining the product of affine varieties is easy.
If $X\subset \C^n$ and $Y \subset \C^m$, the Cartesian product $\mydefMATH{X \times Y}=\{(x,y) \mid x \in X, y \in Y\}$ naturally lives in $\C^n \times \C^m \cong \C^{n+m}$ via the map $((x_1,\ldots,x_n),(y_1,\ldots,y_m)) \mapsto (x_1,\ldots,x_n,y_1,\ldots,y_m)$ and its structure as an affine variety comes from this realization of $X\times Y$ as a subvariety of $\C^{n+m}$. 

Given two projective varieties $X \subset \P^{n-1}$ and $Y \subset \P^{m-1}$, from now on, whenever we write the \mydef{product} $X \times Y$ we will mean the image of the Cartesian product $X \times Y$ under the Segre map $$\mydefMATH{X \times Y} = \{\sigma_{n-1,m-1}(x,y) \mid x \in X, y \in Y\}.$$
The Segre map is injective and so we will write elements of $X \times Y$ as $(x,y)$ where $x \in X$ and $y \in Y$. The projection maps $\mydefMATH{\pi_X}\colon X \times Y \to X$ and $\mydefMATH{\pi_Y}\colon  X \times Y \to Y$ onto the first and second coordinates are regular maps.
When $X\subset \C^{n-1}$ and $Y\subset \P^{m-1}$ we have $X \xhookrightarrow{\iota} \overline X \subset \P^{n-1}$ and so we take $X \times Y$ to be the variety $\mydefMATH{X \times Y} = \sigma_{n-1,m-1}(\iota (X), Y) \subset \P^{nm-1}$.

\subsection{Graphs}
Given a regular function $\varphi\colon X \to Y$ define the \mydef{graph of $\varphi$} to be  
$$\mydefMATH{\Gamma(\varphi)}=\{(x,y) \mid x \in X, y=\varphi(x)\}\subset X \times Y.$$
This is a closed subvariety of $X \times Y$ and the projection maps are regular. When $X$ or $Y$ are projective, we will often first take affine open subsets so that $\varphi$ is a map of affine varieties and the graph is an affine variety.
When we do this, we may assume $X=\V(f_1,\ldots,f_k)\subset \C^n$ and $Y \subset \C^m$ so the graph of $\varphi$ is the subvariety of $\C^n \times \C^m \cong \C^{n+m}$ given explicitly as 
$${\Gamma(\varphi)} = \V(f_1,\ldots,f_k,\varphi_1-x_{n+1},\ldots,\varphi_m-x_{n+m}).$$

\begin{lemma}
The closure of the image of an irreducible variety under a regular map is irreducible.
\end{lemma}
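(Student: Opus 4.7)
The plan is to use the standard topological characterization of irreducibility: a variety $Z$ is irreducible if and only if it cannot be written as a union of two proper closed subvarieties. Since the paper has already noted that regular maps of varieties are continuous with respect to the Zariski topology, I will leverage continuity to pull back any hypothetical decomposition of $\overline{\varphi(X)}$ to a decomposition of $X$.

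First I would let $\varphi\colon X \to Y$ be a regular map with $X$ irreducible, and set $W = \overline{\varphi(X)}$. To show $W$ is irreducible, I would suppose for contradiction that $W = W_1 \cup W_2$ for closed subvarieties $W_1, W_2 \subset W$. Pulling back, I would consider $X = \varphi^{-1}(W) = \varphi^{-1}(W_1) \cup \varphi^{-1}(W_2)$, where the first equality uses that $\varphi(X) \subset W$. Here the key point is that each $\varphi^{-1}(W_i)$ is closed in $X$ because $\varphi$ is continuous in the Zariski topology and $W_i$ is closed in $Y$ (as a closed subvariety of the closed subvariety $W$).

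Since $X$ is irreducible, this forces $\varphi^{-1}(W_i) = X$ for some $i$, say $i = 1$. Then $\varphi(X) \subset W_1$, and since $W_1$ is closed, taking closures gives $W = \overline{\varphi(X)} \subset W_1 \subset W$, so $W = W_1$. This shows that any decomposition of $W$ into closed subvarieties is trivial, completing the argument.

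The argument is genuinely short and the only subtle point is to remember that continuity of regular maps has been established earlier, so closed preimages are automatic; there is no real obstacle to overcome. One detail worth articulating is that in the definition of irreducibility one requires the variety to be nonempty, but $W$ is certainly nonempty whenever $X$ is, since $\varphi(X) \subset W$.
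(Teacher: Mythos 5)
Your proof is correct and takes essentially the same approach as the paper: pull back a hypothetical decomposition of the image by Zariski continuity and use irreducibility of $X$. The paper phrases it contrapositively (``if $Y$ is reducible, so is $X$'') and asserts without justification that the pulled-back union is irredundant and proper, whereas you argue more carefully that one preimage must be all of $X$ and then push forward — a slightly cleaner way to close the same argument.
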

\begin{proof}
Suppose $\varphi\colon X \to Y$ is a regular map with $Y$ reducible, witnessed by $Y=Y_1 \cup Y_2$. Since $\varphi$ is continuous with respect to the Zariski topology, $X=\varphi^{-1}(Y_1) \cup \varphi^{-1}(Y_2)$ is an irredundant union of proper nonempty closed subvarieties of $X$ witnessing the reducibility of $X$.
\end{proof}

Given a rational map $\varphi\colon X \dashrightarrow \P^{m}$ of projective varieties, let $U\subset X$ be its domain of definition. We define the \mydef{graph} of $\varphi$, denoted $\mydefMATH{\Gamma(\varphi)}$, to be the closure of $\Gamma(\varphi|_U)$ in $X \times \P^{m}$ and we define the \mydef{image} of $\varphi$ to be the image of $\Gamma(\varphi)$ under $\pi_Y$.
The inverse image of a subvariety $Z \subset \P^{m}$ is $\mydefMATH{\varphi^{-1}(Z)}=\pi_X(\pi^{-1}_{\P^{m}}(Z))$. Given $Y \subset \P^{m}$, a \mydef{rational map} $\varphi\colon  X \dashrightarrow Y$ is any rational map $\varphi\colon  X \dashrightarrow \P^{m}$ whose image is contained in $Y$. 
\begin{lemma}
The image of an irreducible variety under a rational map is irreducible.
\end{lemma}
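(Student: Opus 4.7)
The plan is to reduce the statement to the previous lemma (that the closure of the image of an irreducible variety under a regular map is irreducible) by showing that the graph $\Gamma(\varphi)$ is itself an irreducible variety, and then projecting to $Y$ via $\pi_Y$.

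First I would handle two basic topological facts. Let $U \subset X$ denote the domain of definition of $\varphi$, which is a nonempty open subset of $X$. I would show that $U$ is irreducible. Suppose $U = U_1 \cup U_2$ with $U_1,U_2$ closed in $U$. Taking closures in $X$, I have $X = \overline{U} = \overline{U_1} \cup \overline{U_2}$, and by irreducibility of $X$, WLOG $X = \overline{U_1}$; since $U_1$ is closed in $U$ and dense in $U$, we get $U_1 = U$. The second topological fact I need is that the closure of an irreducible set is irreducible: if $\overline{S} = F_1 \cup F_2$ with $F_i$ closed, then $S = (S \cap F_1) \cup (S \cap F_2)$, so by irreducibility of $S$, $S \subset F_i$ for some $i$, whence $\overline{S} = F_i$.

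Next, the restriction $\varphi|_U \colon U \to \P^{m}$ is a regular map, and its graph $\Gamma(\varphi|_U) \subset U \times \P^{m}$ is a closed subvariety. The projection $\pi_X$ restricted to $\Gamma(\varphi|_U)$ is an isomorphism onto $U$, with inverse $u \mapsto (u,\varphi(u))$, which is regular. Since $U$ is irreducible by the previous step and isomorphism preserves irreducibility, $\Gamma(\varphi|_U)$ is irreducible. The full graph is defined as $\Gamma(\varphi) = \overline{\Gamma(\varphi|_U)}$ in $X \times \P^{m}$, and since the closure of an irreducible set is irreducible, $\Gamma(\varphi)$ is irreducible.

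Finally, I would apply the previous lemma to the regular projection $\pi_Y \colon \Gamma(\varphi) \to \P^{m}$. Since $\Gamma(\varphi)$ is irreducible, the closure $\overline{\pi_Y(\Gamma(\varphi))}$ is irreducible in $\P^{m}$. The image of $\varphi$ is defined to be $\pi_Y(\Gamma(\varphi))$, which has irreducible closure, and therefore the image itself is irreducible (a subset is irreducible iff its closure is). The main potential obstacle is purely bookkeeping: one must be a little careful that the graph of a rational map is built from the regular part first and then closed up, so that irreducibility can be transported along the isomorphism $\Gamma(\varphi|_U) \cong U$ before taking closures — all the other steps are then direct applications of the topological lemmas and the preceding result.
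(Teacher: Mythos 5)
The paper states this lemma without proof, so there is no paper argument to compare against. Your proof is correct and is the natural way to reduce the rational case to the regular-map lemma: show the graph $\Gamma(\varphi)$ is irreducible, then project. Each of the three supporting steps is sound. For the irreducibility of $U$, you implicitly use that a nonempty open subset of an irreducible space is dense (so that $\overline{U}=X$); that is a standard fact, though it would be worth flagging since it is exactly what makes $U_1$ dense in $U$ and forces $U_1=U$. The identification $\Gamma(\varphi|_U)\cong U$ via $\pi_X$ correctly transports irreducibility, and the "closure of irreducible is irreducible" lemma then handles $\Gamma(\varphi)$. Applying the previous lemma to $\pi_Y$ gives irreducibility of $\overline{\pi_Y(\Gamma(\varphi))}$, and your closing remark that a subset is irreducible iff its closure is completes the argument. (In the projective setting the paper works in, $\pi_Y(\Gamma(\varphi))$ is in fact already closed, which would let you skip that last step, but your more general argument is fine and does not rely on completeness.)
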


\subsection{Dominant maps}

Unfortunately, given two rational maps $\varphi\colon  X \dashrightarrow Y$, and $\psi\colon Y\dashrightarrow Z$, the composition $\psi \circ \varphi\colon  X \dashrightarrow Z$ is not always well-defined as shown in the following example.
\begin{example}
\label{ex:compnotdefined}
Let\\
\begin{minipage}{0.4\textwidth}
\begin{align*}
\varphi\colon \P^1 &\to \P^3 \\
[u:v] &\mapsto [u^3:u^2v:uv^2:v^3] \\
\end{align*}
\end{minipage}
and  \qquad
\begin{minipage}{0.4\textwidth}
\begin{align*}
\psi\colon  \P^3 &\dashrightarrow \P^2 \\
[x:y:z:w] &\mapsto [xz-y^2:yw-z^2:xw-yz]. \\
\end{align*}
\end{minipage}\\
Then $\psi \circ \varphi ([u:v]) = [0:0:0]$ is a point in $\mathbb{P}^2$.\hfill $\diamond$
\end{example}
The problem in Example \ref{ex:compnotdefined} is that the image of $\varphi$ is disjoint from the domain of definition of $\psi$. This motivates the definition of dominant maps, a subset of rational maps for which composition is always well-defined.

We say a rational map $\varphi\colon  X \dashrightarrow Y$  of varieties is \mydef{dominant} if $\varphi(X)$ is dense in $Y$. If $\varphi\colon  X \dashrightarrow Y$ is dominant with domain of definition $U$ and $\psi\colon  Y \dashrightarrow Z$ with domain of definition $V$, then the domain of definition of the composition $\psi \circ \varphi\colon  X \dashrightarrow Z$ is $U \cap \varphi^{-1}(V)$. 

In the same way that a regular map $\varphi\colon X \to Y$ of affine varieties induces a $\C$-algebra homomorphism $\varphi^*\colon  \C[Y] \to \C[X]$, a dominant map $\varphi\colon X \dashrightarrow Y$ induces an \emph{injective} $\C$-algebra homomorphism  which (when $X$ is irreducible) extends to the function field $\varphi^*\colon \C(Y) \to \C(X)$. Conversely, given an injective homomorphism $\phi\colon \C(Y) \to \C(X)$ of function fields, we obtain a dominant rational map $\phi^\#\colon X \dashrightarrow Y$.

\begin{lemma}\cite[I.6.3 Theorem 7]{Shafarevich}
\label{lem:fibredim}
Let $\varphi \colon X \to Y$ be a surjective regular map between irreducible varieties and that $\dim(X)=n$ and $\dim(Y)=m$. Then $m \leq n$ and 
\begin{enumerate}
\item $\dim(F) \geq n-m$ for any $y \in Y$ and for any component $F$ of the fiber $\varphi^{-1}(y)$.
\item there exists a nonempty open subset $U \subset Y$ such that $\dim(\varphi^{-1}(y))=n-m$ for $y \in U$.
\end{enumerate}
\end{lemma}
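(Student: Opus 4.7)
The plan is to establish the three claims in sequence, leveraging the function-field machinery developed just before the statement and the lower-bound dimension lemmas (Lemma \ref{lem:affinelowerbounddimension} and Corollary \ref{cor:nonemptyhasdimension}).

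First I would dispatch the inequality $m \leq n$. Since $\varphi$ is surjective and $Y$ is irreducible, $\varphi$ is dominant, so it induces an injection $\varphi^{*}\colon \C(Y) \hookrightarrow \C(X)$ of function fields. Because the dimension of an irreducible variety equals the transcendence degree of its function field over $\C$ (this identification is implicit in the chain-of-subvarieties definition combined with Noether normalization, and can be taken as a standard fact), the existence of an embedding $\C(Y) \hookrightarrow \C(X)$ of fields with transcendence degrees $m$ and $n$ over $\C$ forces $m \leq n$.

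For part (1), I would work locally near a chosen point $y \in Y$. Replace $Y$ by an affine open neighborhood $V$ of $y$; then $\varphi^{-1}(V)$ is open in $X$, hence irreducible of dimension $n$. Inside the affine variety $V$ of dimension $m$, the point $y$ is the vanishing locus of finitely many regular functions, and by iteratively applying Corollary \ref{cor:nonemptyhasdimension} (or, equivalently, Lemma \ref{lem:affinelowerbounddimension}) one may choose $m$ regular functions $g_1,\dots,g_m$ on $V$ such that $\{y\}$ is an irreducible component of $\V(g_1,\dots,g_m)\cap V$. Then the fiber $\varphi^{-1}(y)$ is cut out in the $n$-dimensional affine variety $\varphi^{-1}(V)$ by the $m$ pullbacks $\varphi^{*}(g_1),\dots,\varphi^{*}(g_m)$, and Lemma \ref{lem:affinelowerbounddimension} immediately gives $\dim(F) \geq n-m$ for every component $F$.

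Part (2) is the main obstacle, because the inequality in (1) can be strict on a proper closed subset of $Y$ and one needs to exhibit an open set where equality holds. My strategy is to factor $\varphi$ generically through a projection. After restricting to affine opens, pick a transcendence basis $t_1,\dots,t_{n-m}\in \C(X)$ of $\C(X)$ over $\varphi^{*}\C(Y)$; after further shrinking we may assume each $t_i$ is a regular function on $X$. Form the map $\psi\colon X \to Y\times \C^{n-m}$ given by $x\mapsto (\varphi(x),t_1(x),\dots,t_{n-m}(x))$, so that $\varphi=\pi_Y\circ\psi$. The defining choice of $t_i$ makes the induced field extension $\C(Y)(t_1,\dots,t_{n-m}) \hookrightarrow \C(X)$ algebraic, and (by the primitive element theorem plus clearing denominators) finite; this translates, after shrinking $Y$ to a suitable open $U$, to the statement that $\psi$ has finite (hence $0$-dimensional) fibers over $U\times \C^{n-m}$. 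Since the projection $\pi_Y\colon Y\times\C^{n-m}\to Y$ has every fiber of dimension exactly $n-m$, combining with part (1) applied to $\psi$ yields $\dim\varphi^{-1}(y)=n-m$ for $y\in U$.

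The delicate step will be justifying the transition from the algebraic statement ``$\C(X)$ is finite over $\C(Y)(t_1,\dots,t_{n-m})$'' to the geometric statement ``$\psi$ has generically finite fibers over a Zariski open set $U\subset Y$.'' This requires showing that the locus where $\psi$ fails to be quasi-finite is a proper closed subset, which uses constructibility of images under regular maps together with the fact that the image closure of $\psi$ is all of $Y\times \C^{n-m}$ (by dominance coming from the transcendence basis choice). Once that openness is in hand the conclusion is immediate from combining the fiber-dimension bounds for $\psi$ and $\pi_Y$.
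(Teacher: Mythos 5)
The paper states this lemma by citing Shafarevich and does not supply a proof of its own, so the only question is whether your argument is sound. Your treatment of $m\le n$ and of part (1) is essentially standard and is fine modulo an easily repaired slip: after choosing $g_1,\dots,g_m$ so that $\{y\}$ is an irreducible component of $\V(g_1,\dots,g_m)\cap V$, you must shrink $V$ once more so that $\V(g_1,\dots,g_m)\cap V$ equals $\{y\}$ exactly; otherwise $\V(\varphi^*g_1,\dots,\varphi^*g_m)\cap\varphi^{-1}(V)$ strictly contains $\varphi^{-1}(y)$, a component of the fiber need not be a component of this larger variety, and Lemma \ref{lem:affinelowerbounddimension} does not apply to it directly.

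The genuine gap is in part (2). You assert that, after shrinking $Y$ to an open $U$, the map $\psi\colon X\to Y\times\C^{n-m}$ becomes quasi-finite over the entire cylinder $U\times\C^{n-m}$. This is false in general: Proposition \ref{prop:finiteifffinitefibres} only furnishes a nonempty open $W\subset Y\times\C^{n-m}$ over which $\psi$ has finite fibers, and the complement of $W$ can project \emph{onto} $Y$. For a concrete instance take $Y=\C^2$, let $X$ be the blow-up of $Y\times\C^{2}=\C^{4}$ along the surface $S=Y\times\{(0,0)\}$, and let $\psi$ be the blow-down; the locus of positive-dimensional $\psi$-fibers is exactly $S$, which the projection maps onto $Y$, so no nonempty $U$ does what you want. (The lemma of course still holds there: the exceptional $\P^{1}$ inside $\varphi^{-1}(y)$ has dimension $1<n-m=2$.) What the argument actually needs is a separate bound on the part of $\varphi^{-1}(y)$ lying over $(\{y\}\times\C^{n-m})\smallsetminus W$, and the standard repair is a Noetherian induction on $\dim X$: set $Z=X\smallsetminus\psi^{-1}(W)$, a closed subset of dimension $<n$; for each irreducible component $Z_i$ of $Z$ that does not dominate $Y$ delete $\overline{\varphi(Z_i)}$ from $U$, and for each $Z_i$ that does dominate $Y$ apply the lemma inductively to $\varphi|_{Z_i}$ to conclude $\dim(\varphi^{-1}(y)\cap Z_i)=\dim Z_i - m<n-m$ for generic $y$. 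That inductive step, far from being ``immediate,'' is precisely what your plan omits; the handling of the good part also deserves care, since quasi-finiteness alone does not preserve dimension of preimages without first inverting a denominator to make $\psi$ a finite morphism over a principal open.
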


\begin{lemma}\cite[I.6.3 Theorem 8]{Shafarevich}
\label{lem:irreducibilityfromfibres}
Let $\varphi\colon X \to Y$ be a regular map between projective varieties with $\varphi(X)=Y$. Suppose that $Y$ is irreducible, and that all the fibers $\varphi^{-1}(y)$ for $y \in Y$ are irreducible of the same dimension. Then $X$ is irreducible.
\end{lemma}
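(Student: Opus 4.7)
The plan is to argue by contradiction. I would write $X = X_1 \cup \cdots \cup X_s$ as the irreducible decomposition given by Lemma \ref{lem:irrdecompisunique} and aim to show $s=1$. Setting $Y_i = \varphi(X_i)$, each $Y_i$ is closed in $Y$ because regular maps from projective varieties have closed images. Since $Y = Y_1 \cup \cdots \cup Y_s$ is irreducible, some $Y_i$ must equal $Y$; I would reorder so that $Y_1, \ldots, Y_k$ equal $Y$ while $Y_{k+1}, \ldots, Y_s$ are proper closed subvarieties, and set $Z = \bigcup_{i > k} Y_i \subsetneq Y$. The argument then splits into showing first that $k=1$ and then that $s=1$.

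To rule out $k \geq 2$, I would apply Lemma \ref{lem:fibredim}(2) to each surjection $\varphi|_{X_i}\colon X_i \to Y$ for $i \leq k$, obtaining a dense open $U \subset Y \setminus Z$ on which every $\varphi|_{X_i}^{-1}(y)$ has the generic dimension $\dim(X_i) - m$ (where $m = \dim Y$). For $y \in U$, the fiber decomposes as $\varphi^{-1}(y) = \bigcup_{i \leq k} \varphi|_{X_i}^{-1}(y)$ since the components with $i > k$ contribute nothing. Irreducibility of $\varphi^{-1}(y)$ then forces $\varphi^{-1}(y) = \varphi|_{X_{j(y)}}^{-1}(y) \subset X_{j(y)}$ for some $j(y) \leq k$, so for every other $i \leq k$ with $i \neq j(y)$ the nonempty $\varphi|_{X_i}^{-1}(y)$ is contained in $X_i \cap X_{j(y)}$, a proper closed subvariety of the irreducible $X_i$. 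Restricting $\varphi$ to each irreducible component $T$ of $X_i \cap X_{j(y)}$ and applying Lemma \ref{lem:fibredim}(2) to those with $\varphi(T) = Y$, while also deleting the images of the non-dominating components, would yield a further dense open where $\dim \varphi|_{X_i \cap X_{j(y)}}^{-1}(y) < \dim(X_i) - m$. Since this fiber coincides with $\varphi|_{X_i}^{-1}(y)$, this contradicts the generic dimension, and so $k=1$.

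With $Y_1 = Y$ and $Y_i \subsetneq Y$ for $i > 1$, I would fix a generic $y \in Y \setminus Z$, so that $\varphi^{-1}(y) = \varphi|_{X_1}^{-1}(y)$ has dimension $d$ by hypothesis and dimension $\dim(X_1) - m$ by Lemma \ref{lem:fibredim}(2); hence $\dim(X_1) = d + m$. For an arbitrary $y \in Y$, Lemma \ref{lem:fibredim}(1) then guarantees that every irreducible component of $\varphi|_{X_1}^{-1}(y)$ has dimension at least $d$. Since this fiber is a nonempty closed subvariety of the irreducible $d$-dimensional variety $\varphi^{-1}(y)$, and the only closed subvariety of such a variety with a component of dimension $\geq d$ is the whole variety itself, I would conclude $\varphi|_{X_1}^{-1}(y) = \varphi^{-1}(y)$. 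Therefore $\varphi^{-1}(y) \subset X_1$ for every $y \in Y$, forcing $X = \varphi^{-1}(Y) \subset X_1$, and so $s = 1$.

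The hard part will be ruling out $k \geq 2$, since it requires tracking the dimensions of fibers of $\varphi$ restricted to the intersections $X_i \cap X_j$ (which may themselves be reducible) and simultaneously intersecting several dense open subsets of $Y$ so that a single $y$ can serve as a generic point for every fiber-dimension estimate at once. Once $k=1$ is established, the final step is a clean dimension-count argument using the irreducibility of each fiber.
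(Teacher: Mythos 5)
The paper does not prove this statement; it is cited from Shafarevich, so there is no internal proof to compare against. On its own terms your argument is correct, and the bookkeeping you flag as the hard part of the $k\geq 2$ case does close: since $k$ is finite, you can apply Lemma \ref{lem:fibredim}(2) to each dominating irreducible component of each $X_i\cap X_j$, delete the images of the non-dominating ones, and intersect all of the resulting dense open subsets of $Y$ \emph{before} choosing $y$, so the dependence of $j(y)$ on $y$ causes no trouble.

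That said, the entire detour through ruling out $k\geq 2$ is unnecessary, and removing it collapses your proof to essentially its last paragraph. Among the components $X_i$ with $\varphi(X_i)=Y$, let $X_{i_0}$ be one of maximal dimension and let $Z$ be the union of the $\varphi(X_i)$ that are proper in $Y$. For $y$ in a dense open subset of $Y\smallsetminus Z$ on which Lemma \ref{lem:fibredim}(2) applies to every dominating $X_i$, the fiber $\varphi^{-1}(y)$ is the finite union of the $\varphi|_{X_i}^{-1}(y)$ over dominating $i$, and hence
\[
d\ =\ \dim\varphi^{-1}(y)\ =\ \max_{i\,:\,\varphi(X_i)=Y}\dim\varphi|_{X_i}^{-1}(y)\ =\ \max_{i\,:\,\varphi(X_i)=Y}\bigl(\dim X_i - m\bigr)\ =\ \dim X_{i_0}-m.
\]
Now Lemma \ref{lem:fibredim}(1) applied to $\varphi|_{X_{i_0}}$ gives, for \emph{every} $y\in Y$, that each component of the nonempty closed set $\varphi|_{X_{i_0}}^{-1}(y)$ has dimension at least $d$; since it lies in the irreducible $d$-dimensional variety $\varphi^{-1}(y)$, it equals $\varphi^{-1}(y)$. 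Thus $\varphi^{-1}(y)\subset X_{i_0}$ for all $y$, so $X=X_{i_0}$ and $s=1$. Choosing $i_0$ so that its generic fiber dimension realizes $d$ is exactly what makes the final dimension count work, and it does not require knowing that only one component surjects onto $Y$.
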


\begin{proposition}\cite[Proposition 7.16]{HarrisBook}
\label{prop:finiteifffinitefibres}
Given a dominant map $\pi\colon X \dashrightarrow Y$, there exists an open subset $U \subset Y$ such that the fiber $\pi^{-1}(u)$ is finite if and only if $\pi^*$ expresses the field $\C(X)$ as a finite extension of the field $\C(Y)$. The number of points in a fiber over $u \in U$ is the degree of the field extension. 
\end{proposition}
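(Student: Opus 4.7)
The plan is to prove the equivalence first via dimension theory and then address the degree equality via the primitive element theorem.

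For the equivalence, I would begin by recalling that for an irreducible variety $W$, the dimension equals the transcendence degree of $\C(W)$ over $\C$. Since $\pi$ is dominant, Lemma \ref{lem:fibredim} supplies a nonempty open $U \subseteq Y$ on which every fiber has pure dimension $\dim(X) - \dim(Y)$. Hence the generic fiber is finite if and only if $\dim(X) = \dim(Y)$, which is equivalent to saying that $\C(X)$ and $\C(Y)$ have the same transcendence degree over $\C$. Because $\C(X)$ is finitely generated as a field extension of $\C$, and $\pi^*\colon \C(Y) \hookrightarrow \C(X)$ is injective, this equality of transcendence degrees is in turn equivalent to $\C(X)/\C(Y)$ being algebraic, and being finitely generated and algebraic means it is a finite extension.

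For the degree formula, let $d = [\C(X) : \C(Y)]$ assuming this is finite. The plan is to use the primitive element theorem (available since $\C$ has characteristic $0$) to write $\C(X) = \C(Y)(\theta)$ for some $\theta \in \C(X)$, with minimal polynomial $p(t) \in \C(Y)[t]$ of degree $d$. Geometrically, after shrinking $Y$ and $X$ to affine open subvarieties if necessary, I would realize $\theta$ as a regular function on $X$ and interpret the birational picture as follows: there is a dense open $V \subseteq Y$ such that $\pi$ factors birationally through the map $X \dashrightarrow V \times \C$ sending $x \mapsto (\pi(x), \theta(x))$, whose image is the hypersurface cut out by $p(t)$ viewed as a polynomial in one variable over $\C(Y)$. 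Clearing denominators, the coefficients of $p$ become regular on some open $V' \subseteq V$, and further shrinking so that the discriminant of $p$ is nonvanishing ensures that for each $y \in V'$ the specialized polynomial $p_y(t) \in \C[t]$ has exactly $d$ distinct roots. Each root corresponds bijectively to a point of the generic fiber $\pi^{-1}(y)$, giving $|\pi^{-1}(y)| = d$.

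The main obstacle I expect is verifying this last bijection carefully: one must ensure that after all the shrinking, the map $X \to \Gamma$ sending $x \mapsto (\pi(x),\theta(x))$, where $\Gamma = \V(p) \subset V' \times \C$, is actually an isomorphism over $V'$, not merely a birational equivalence. This requires showing that the birational inverse extends to a regular map on $V'$, which in turn uses that $\theta$ is a primitive element together with the fact that $\Gamma \to V'$ is finite and unramified over the locus where the discriminant is nonzero. Once this is secured, the fiber count over such $y \in V'$ reduces to counting roots of $p_y$ in $\C$, and simplicity of those roots gives the required count $d$.

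Finally, to produce the single open set $U$ for both directions of the statement, I would intersect the open set from Lemma \ref{lem:fibredim} with $V'$; on this $U$, fibers are finite of cardinality exactly $[\C(X):\C(Y)]$, completing the proof.
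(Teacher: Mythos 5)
Your proposal is correct and hits the same two load-bearing ideas as the paper's proof (take the minimal polynomial of a primitive element over $\C(Y)$, clear denominators, and throw away the discriminant locus), but you reach the ``finite fiber $\iff$ finite extension'' equivalence by a genuinely different route. The paper first reduces to the situation where $\pi$ is the coordinate projection $(x_1,\dotsc,x_n)\mapsto(x_1,\dotsc,x_{n-1})$ of affine varieties --- a reduction that already smuggles in the primitive element theorem, since it makes $\C(X)=\C(Y)(x_n)$ by fiat --- and then splits into the two cases ``$x_n$ algebraic'' and ``$x_n$ transcendental.'' The transcendental case is handled by the elementary remark that any relation involving $x_n$ over $\C(x_1,\dotsc,x_{n-1})$ would vanish identically, so the fiber over any $y$ is all of $\C$. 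You instead invoke Lemma \ref{lem:fibredim} to say the generic fiber is finite exactly when $\dim X=\dim Y$, then translate into a statement about transcendence degrees. Both are valid; the paper's argument is lower-tech and self-contained once the coordinate-projection reduction is accepted, while yours buys cleaner logic at the cost of importing the fact $\dim(W)=\operatorname{trdeg}_{\C}\C(W)$ (which the paper never states). Your main worry about the graph $\Gamma=\V(p)$ is also a bit overblown: you do not need $X\to\Gamma$ to be an isomorphism over all of $V'$, only over some dense open $U\subset V'$, and that follows immediately because the locus in $X$ where a birational map of irreducible varieties fails to be a local isomorphism is a proper closed subset, whose image in $Y$ under the quasi-finite map $\pi$ is constructible and misses the generic point. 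Shrinking $V'$ by that constructible set gives the desired bijection. The paper simply skips this point.
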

\begin{proof}
We recount the proof from \cite{HarrisBook}.
Without loss of generality, replace $X$ and $Y$ with affine open subsets so that $\pi$ is a projection map $(x_1,\ldots,x_n) \mapsto (x_1,\ldots,x_{n-1})$ of affine varieties. Thus, the function field $\C(X)$ is generated over $\C(Y)$ by $x_n$. If $x_n$ is algebraic over $\C(Y)$ with minimal polynomial $$g_{(x_1,\ldots,x_{n-1})}(x_n) = a_d(x_1,\ldots,x_{n-1})x_n^d+a_{d-1}(x_1,\ldots,x_{n-1})x_n^{d-1}+\cdots,$$ we may clear denominators so that the coefficients of $g$ are regular functions. The discriminant $D$ of $g$ is a closed subset of the coefficient space since $\C$ is algebraically closed and so outside of this locus the fibers of $\pi$ consist of exactly $d$ points. 

Conversely, if $x_n$ is transcendental, then any polynomial in $\I(X)$ written in $\C(x_1,\ldots,x_{n-1})[x_n]$ must be identically zero as functions on $Y$. That is, the fiber $\pi^{-1}(y)$ for any $y \in Y$ contains infinitely many points.
\end{proof}
We remark that the locus of points $x^* \in Y$ which do not have the generic fiber size in Proposition \ref{prop:finiteifffinitefibres} come in three types:
\begin{enumerate}
\item The coefficient $x^*$ belongs to the discriminant $D$ because $g_{x^*}(x_n)$ has roots with multiplicity.
\item The coefficient $x^*$ belongs to the discriminant $D$ because $a_d(x^*)=0$. 
\item The rational coefficients $a_i(x_1,\ldots,x_{n-1})$ are not defined at $x^*$.
\end{enumerate}

A rational map $\pi\colon X \dashrightarrow Y$ satisfying Proposition \ref{prop:finiteifffinitefibres} is called a \mydef{generically finite} map. The degree of the field extension is called the \mydef{degree} of the map.

\begin{corollary}
Suppose $f\colon X \dashrightarrow Y$ is a dominant map of irreducible varieties of the same dimension. Then $f$ satisfies Proposition \ref{prop:finiteifffinitefibres}.
\end{corollary}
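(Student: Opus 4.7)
The plan is to apply the fiber dimension Lemma \ref{lem:fibredim} to conclude that the generic fiber of $f$ is $0$-dimensional and therefore finite, and then invoke Proposition \ref{prop:finiteifffinitefibres} to translate this geometric fact into the algebraic statement about function fields.

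First I would reduce to the case of a regular surjection. Let $U \subseteq X$ be the domain of definition of $f$; then $U$ is open, hence irreducible of the same dimension as $X$, and $f|_U \colon U \to Y$ is regular with dense image. As in the proof of Proposition \ref{prop:finiteifffinitefibres}, one can further pass to affine open subvarieties: choose an affine open $V \subset Y$ lying inside the image of $f|_U$, set $X' = (f|_U)^{-1}(V)$ and $Y' = V$, and observe that $f|_{X'} \colon X' \to Y'$ is then a regular surjection between irreducible affine varieties, both still of dimension $n := \dim(X) = \dim(Y)$. (Equivalently, one may work with the graph $\Gamma(f) \subset X \times Y$, whose projection $\pi_Y \colon \Gamma(f) \to Y$ is a regular dominant map having the same fibers as $f$, and then restrict $\pi_Y$ to a surjection over an affine open in $Y$.)

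Now Lemma \ref{lem:fibredim}(2), applied to $f|_{X'}$ with $n = m$, produces a nonempty open subset $U' \subset Y'$ such that $\dim\bigl((f|_{X'})^{-1}(y)\bigr) = n - n = 0$ for every $y \in U'$. Since zero-dimensional varieties are finite collections of points, every fiber of $f$ over $U'$ is finite. Because $U'$ is open in the affine open $Y' \subset Y$, it is also open in $Y$, so $f$ has finite fibers over a Zariski open subset of $Y$. By the biconditional in Proposition \ref{prop:finiteifffinitefibres}, this is precisely equivalent to $f^* \colon \C(Y) \hookrightarrow \C(X)$ realizing $\C(X)$ as a finite extension of $\C(Y)$, which is the conclusion we want.

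The main obstacle is the reduction step — specifically, the need to arrange that $f$ (or an equivalent model of it on the graph) becomes a genuine regular surjection of affine varieties before Lemma \ref{lem:fibredim} can be applied. This hinges on the fact that the image of the regular locus of a dominant map contains an open subset of the (irreducible) target, a standard consequence of the constructibility of images of morphisms. Once this bookkeeping is cleaned up, the argument is a one-line composition of Lemma \ref{lem:fibredim}(2) with Proposition \ref{prop:finiteifffinitefibres}.
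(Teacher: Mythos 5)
Your argument is correct. The paper states this corollary without proof, treating it as an immediate consequence of Lemma \ref{lem:fibredim} and Proposition \ref{prop:finiteifffinitefibres}, and the route you take is exactly the one the paper has in mind: shrink to an affine open where the dominant rational map becomes a regular surjection, apply Lemma \ref{lem:fibredim}(2) with $m=n$ to see that the generic fiber is zero-dimensional and hence finite, and then read off the algebraic conclusion from the biconditional in Proposition \ref{prop:finiteifffinitefibres}. The reduction step you flag as the "main obstacle" — replacing the rational map by a surjective regular map onto an affine open inside the image — is the same reduction already performed at the start of the paper's proof of Proposition \ref{prop:finiteifffinitefibres} itself, so nothing new is being smuggled in there.
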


\subsection{Degree of a variety}
A variety cut out by linear equations is called a \mydef{linear} variety. The set of all linear subvarieties of $\P^{n}$ of dimension $k$ corresponds to the set of all $k+1$ planes in $\C^{n+1}$ through the origin. This space is called the \mydef{Grassmannian} of $(k+1)$-planes in $\C^{n+1}$ and is denoted $\mydefMATH{\text{Gr}(k+1,n+1)}$. The Grassmannian itself is a projective variety cut out by all relations amongst the minors of a $(k+1) \times (n+1)$ matrix. Similarly, a linear subvariety $L\subset \C^{n}$ of dimension $k$ corresponds to the $(k+1)$-plane $\overline L$ in $\P^{n}$.
Thus, it makes sense to talk about subvarieties and open subsets of the space of linear spaces of a particular dimension.
\begin{lemma}
\label{lem:whenhyperplanesintersect}
Let $X$ be an irreducible codimension $m$ subvariety of $\C^n$ or $\P^n$. There exists an open subset $V\subset \Gr(k+1,n+1)$ with the property  $L \in V \implies 0<|L \cap X|<\infty$ if and only if $k=m$. When $k=m$, there exists a smaller open subset $V'\subset V$ for which the number of such intersection points is constant.
\end{lemma}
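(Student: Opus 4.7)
The plan is to analyze the incidence correspondence $I = \{(L,x) \in \text{Gr}(k+1,n+1) \times X \mid x \in L\}$ through its two projections $\pi_X \colon I \to X$ and $\pi_{\text{Gr}} \colon I \to \text{Gr}(k+1,n+1)$. Every fiber of $\pi_X$ is isomorphic to the sub-Grassmannian of $(k+1)$-planes in $\C^{n+1}$ containing a fixed nonzero vector, which is itself isomorphic to $\text{Gr}(k,n)$ and has dimension $k(n-k)$. Since $X$ is irreducible and all fibers of $\pi_X$ are irreducible of this constant dimension, Lemma \ref{lem:irreducibilityfromfibres} gives that $I$ is irreducible with $\dim I = (n-m) + k(n-k)$. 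A short calculation then shows $\dim I - \dim \text{Gr}(k+1,n+1) = k - m$.

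For the forward direction, suppose such a $V$ exists. If $k < m$, then $\dim I < \dim \text{Gr}(k+1,n+1)$, so $\overline{\pi_{\text{Gr}}(I)}$ is a proper closed subvariety of the irreducible variety $\text{Gr}(k+1,n+1)$, and its complement is a nonempty open subset on which $L \cap X = \emptyset$; any nonempty open $V \subset \text{Gr}(k+1,n+1)$ must meet this complement, contradicting $|L \cap X| > 0$ throughout $V$. If instead $k > m$, then whenever $L \cap X \neq \emptyset$, Lemma \ref{lem:projectivelowerbounddimension} (projective case) or Lemma \ref{lem:affinelowerbounddimension} (affine case) forces every irreducible component of $L \cap X$ to have dimension at least $k - m > 0$, so $|L \cap X| \in \{0,\infty\}$ for every $L$ and no open $V$ can satisfy the finite-nonempty requirement. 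Either possibility contradicts the defining property of $V$, forcing $k = m$.

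For the converse, assume $k = m$, so $\dim I = \dim \text{Gr}(k+1,n+1)$. In the projective setting, the intersection theorem from Shafarevich cited just after Corollary \ref{cor:nonemptyhasdimension} gives $L \cap X \neq \emptyset$ for every $L$ (since $\dim X + \dim L = n$), so $\pi_{\text{Gr}}$ is surjective; applying Proposition \ref{prop:finiteifffinitefibres} then yields an open $V \subset \text{Gr}(k+1,n+1)$ over which the fibers of $\pi_{\text{Gr}}$ are finite of constant cardinality $d$, with $d > 0$ by surjectivity. In the affine case, I would pass to the projective closure $\overline X \subset \P^n$ and apply the projective argument to obtain an open $V' \subset \text{Gr}(k+1,n+1)$ where $|L \cap \overline X|$ is a constant positive integer $d$. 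The boundary $X_\infty = \overline X \smallsetminus X$ has dimension at most $n - m - 1$, and an analogous incidence-variety dimension count shows that the locus of $L$ meeting $X_\infty$ forms a proper closed subvariety of $\text{Gr}(k+1,n+1)$; its complement $V''$ is a nonempty open subset, and $V = V' \cap V''$ is the required open set, since on it $L \cap X = L \cap \overline X$ has constant cardinality $d$.

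I anticipate the main obstacle to be the affine reduction: one must carefully verify by dimension count that the locus of $L$ meeting $X_\infty$ is proper in $\text{Gr}(k+1,n+1)$, which is precisely the $k < m$ forward estimate applied to $X_\infty$ in place of $X$, and then check that the intersection $V = V' \cap V''$ is nonempty and that $L \cap X$ genuinely equals $L \cap \overline X$ throughout $V$. A secondary subtlety in the $k < m$ forward direction is that $\pi_{\text{Gr}}(I)$ is only constructible, so one must pass to its Zariski closure before taking the complement to produce the open set of empty intersections.
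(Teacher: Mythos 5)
Your proposal follows the paper's proof almost exactly: the same incidence correspondence, the same irreducibility-and-dimension count via Lemmas~\ref{lem:irreducibilityfromfibres} and \ref{lem:fibredim}, the same use of Lemma~\ref{lem:projectivelowerbounddimension} to rule out $k > m$, and the same appeal to Proposition~\ref{prop:finiteifffinitefibres} for the constant fiber cardinality. You are slightly more careful than the paper at two points where its proof is terse: you invoke the Shafarevich intersection theorem to verify that $\pi_{\text{Gr}}$ is actually dominant (hence $d>0$) before asserting generic finiteness, and you spell out the affine-to-projective reduction by a dimension count on the boundary $X_\infty$, whereas the paper disposes of both with one-line assertions.
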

\begin{proof}
The result is true for an affine variety if and only if it is true for its projective closure.
Let $X$ be projective and suppose such an open set $V\subset Y=\text{Gr}(k+1,n+1)$ exists. Consider the variety 
$$Z=\{(x,L) \mid L \in Y, x \in L \cap X\}\subset X \times Y$$
with projections $\pi_X$ and $\pi_Y$ to $X$ and $Y$ respectively. By assumption, the image of $\pi_Y$ contains $V$ and the fibers of $\pi_Y$ over a point $v \in V$ are finite. The fibers over $\pi_X$ are all irreducible of dimension $\dim(Y)-(n-k)$ and so $Z$ is irreducible of dimension $\dim(X)+\dim(Y)-(n-k)$ by Lemmas \ref{lem:fibredim} and \ref{lem:irreducibilityfromfibres}. If $\dim(X)<n-k$ then $\dim(Z)<\dim(Y)=\dim(V)$ and so $V \not\subset \pi_Y(Z)$, a contradiction. Thus, $k\geq n-\dim(X)=m$. On the other hand, if $k> m$ then by Lemma \ref{lem:projectivelowerbounddimension} the intersection $X \cap L$ is either empty or at least one-dimensional. We conclude $k=m$.

Conversely, if $k=m$, $$\dim(Z)=\dim(X)+\dim(Y)-(n-m)=\dim(Y),$$ implying that $\pi_Y$ is generically finite (such a $V$ exists). By Lemma \ref{prop:finiteifffinitefibres} there is an open subset $V' \subset V$ such that the number of points in a fiber of $\pi_Y$ over $V'$ is constant. 
\end{proof}

When $X,L \subset Z$, and $L \in V'$ as in the above lemma, then cardinality $|X \cap L|$ is some constant $d \in \mathbb{N}$. This number $d$ is called the \mydef{degree} of $X$ and is denoted $\mydefMATH{\deg(X)}$. Given an irreducible polynomial $f \in \C[x]$ The degree of a hypersurface $\V(f)$ is the degree of $f$. The degree of a collection of $d$ points is $d$.
We give the \mydef{first Bertini theorem}.
\begin{lemma}\cite[II.6.1 Theorem 1]{Shafarevich}
Let $X$ and $Y$ be irreducible varieties defined over a field of characteristic $0$ and $f\colon X \to Y$ a regular map such that $f(X)$ is dense in $Y$. Suppose that $X$ remains irreducible over the algebraic closure $\overline{\C(Y)}$ of $\C(Y)$. Then there exists an open dense set $U \subset Y$ such that all the fibers $f^{-1}(y)$ over $y \in U$ are irreducible. 
\end{lemma}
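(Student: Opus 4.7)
My plan is to prove the contrapositive by translating everything to function fields, constructing a Stein-type intermediate factorization, and using that $\C(Y)$ is algebraically closed in $\C(X)$.

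\textbf{Step 1 (Reduction to affine).} The conclusion is local on $Y$, so I would replace $X$ and $Y$ by affine open subsets. Using that the image $f(X)$ is dense in $Y$, I would further shrink $Y$ (and pull back to shrink $X$) so that $f$ is surjective onto an affine $Y$.

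\textbf{Step 2 (Translate the hypothesis).} The condition that $X$ remains irreducible over $\overline{\C(Y)}$ says exactly that the generic fiber $X \times_Y \operatorname{Spec} \C(Y)$ is geometrically irreducible. In characteristic $0$ the tensor product $\C(X) \otimes_{\C(Y)} \overline{\C(Y)}$ is automatically reduced, so geometric irreducibility is equivalent to the purely algebraic statement that $\C(Y)$ is algebraically closed in $\C(X)$.

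\textbf{Step 3 (Build a Stein-type factorization).} Suppose toward a contradiction that no such open $U$ exists, so on a dense subset of $y \in Y$ the fiber $f^{-1}(y)$ decomposes into $r \geq 2$ irreducible components. I would argue (via a constructibility/spreading-out argument) that $r$ is generically constant on $Y$ and that the components fit together into a variety $Z$ with a commutative diagram
\[
\begin{tikzcd}
X \arrow[r, "g"] \arrow[rd, "f"'] & Z \arrow[d, "\pi"] \\
 & Y
\end{tikzcd}
\]
in which $g$ has generically irreducible fibers and $\pi$ is dominant with generic fiber of cardinality $r$. Concretely, one takes $Z$ to be the normalization of $Y$ in the algebraic closure of $\C(Y)$ inside $\C(X)$; by Proposition \ref{prop:finiteifffinitefibres} this produces a generically finite $\pi : Z \to Y$, and $r = [\C(Z):\C(Y)]$.

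\textbf{Step 4 (Field-theoretic contradiction).} By Proposition \ref{prop:finiteifffinitefibres}, $\pi^{*}$ realizes $\C(Z)/\C(Y)$ as a finite extension of degree $r$, and the inclusion $\C(Z) \hookrightarrow \C(X)$ coming from $g$ exhibits $\C(Z)$ as an algebraic extension of $\C(Y)$ sitting inside $\C(X)$. But Step 2 forces $\C(Z)=\C(Y)$, so $r=1$ and the fibers of $f$ are generically irreducible after all. This contradicts the assumption and concludes the proof.

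\textbf{Main obstacle.} The delicate point is Step 3: showing that the number of irreducible components of a fiber is generically constant and that they assemble globally into a single variety $Z$ with a finite map to $Y$. The cleanest path is to define $Z$ intrinsically as $\operatorname{Spec}$ of the integral closure of $\mathcal O(Y)$ inside $\C(X)$ intersected with a suitable finitely generated subring, then verify via Chevalley constructibility on $Y$ that, after shrinking $Y$, the fiber cardinality of $\pi$ and the fiber-component count of $f$ coincide. Once $Z$ is constructed, the field-theoretic step is short.
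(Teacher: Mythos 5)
The paper does not actually prove this lemma: it is stated as a citation to Shafarevich (II.6.1, Theorem~1), so there is no in-paper proof to compare against. Evaluating your attempt on its own merits: Steps~1, 2, and~4 are correct and concise, and your translation in Step~2 of ``$X$ remains irreducible over $\overline{\C(Y)}$'' into ``$\C(Y)$ is algebraically closed in $\C(X)$'' (valid in characteristic~$0$ by automatic separability) is exactly the right field-theoretic reformulation.

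The problem is concentrated in Step~3, which you yourself flag as the ``delicate point,'' and the delicacy is worse than a technical loose end. Your concrete proposal is to take $Z$ to be the normalization of $Y$ in $\overline{\C(Y)}\cap\C(X)$ and then to assert $r = [\C(Z):\C(Y)]$, where $r$ is the generic number of irreducible components of a closed fiber $f^{-1}(y)$. But Proposition~\ref{prop:finiteifffinitefibres}, which you invoke, only gives you that a generic fiber of $\pi\colon Z\to Y$ has $[\C(Z):\C(Y)]$ \emph{points}; it says nothing about the number of irreducible \emph{components} of the fiber $f^{-1}(y)$ of the original map. To get $r = [\C(Z):\C(Y)]$ from this factorization, one needs to know that the generic fibers of $g\colon X\to Z$ are irreducible — and since $\C(Z)$ is algebraically closed in $\C(X)$ by construction, that is precisely an instance of the lemma you are trying to prove. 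So, as written, the argument is circular.

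There is a non-circular variant, but it requires reorganizing Step~3: instead of defining $Z$ from field theory and trying to deduce a statement about fiber components, define $Z$ directly from the fiber components (roughly, $Z$ parameterizes pairs $(y,\,\text{irreducible component of }f^{-1}(y))$) and prove that this produces a variety with a generically finite morphism $\pi\colon Z\to Y$ admitting a dominant $g\colon X\to Z$ over $Y$. Once that $Z$ exists, $\C(Z)$ is a finite, hence algebraic, extension of $\C(Y)$ inside $\C(X)$, so $r\geq 2$ forces $\C(Y)$ not algebraically closed in $\C(X)$, giving the contradiction — and only then does one observe \emph{a posteriori} that this $Z$ coincides with the normalization. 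The existence of $Z$ in this sense is a genuine spreading-out/constructibility statement (the kind of thing proved in EGA~IV,~\S 9) and is substantially harder than the field-theoretic wrap-up; your proposal does not supply it, and monodromy among the fiber components (so that $Z$ is a nontrivial cover of $Y$ rather than $Y\times\{1,\dots,r\}$) is exactly the subtlety that makes it nontrivial. So the plan is salvageable, but as it stands the crux of the theorem is deferred to an unproven and, in the form you phrase it, circular step.
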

\begin{corollary}
\label{cor:hyperplaneresult1}
Let $X$ be a variety and $H$ a general hyperplane. Then
\begin{enumerate}
\item $\deg(X)=\deg(X \cap H)$.
\item $\dim(X)-1=\dim(X \cap H)$.
\item If $X$ is irreducible of dimension at least two, then $X \cap H$ is irreducible.
\end{enumerate}
\end{corollary}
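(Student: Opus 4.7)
The plan is to handle the three assertions in the order (2), (1), (3), since the dimension count is cheapest and the irreducibility statement is the deepest.

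For part (2), the lower bound $\dim(X\cap H)\ge \dim(X)-1$ is immediate from Lemma \ref{lem:projectivelowerbounddimension} applied to the single homogeneous polynomial cutting out $H$. For the reverse inequality, I would argue that a general $H$ does not contain any irreducible component of $X$: indeed, the set of hyperplanes containing a fixed positive-dimensional irreducible closed subvariety $X_i\subseteq X$ is a proper linear subvariety of the dual projective space $(\P^n)^*$, so for $H$ outside the finite union of these proper subvarieties (one for each component of $X$) we have $X_i\not\subseteq H$, hence $\dim(X_i\cap H)<\dim(X_i)$. Combined with the lower bound this yields equality componentwise, hence $\dim(X\cap H)=\dim(X)-1$.

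For part (1), let $m=\codim(X)$. By Lemma \ref{lem:whenhyperplanesintersect} the degree $\deg(X)$ is computed by $|X\cap L|$ for $L$ ranging in some nonempty open subset $V'\subseteq \Gr(m+1,n+1)$, and likewise $\deg(X\cap H)$ is computed by $|(X\cap H)\cap L'|$ for $L'$ ranging in an open subset of the Grassmannian of $m$-dimensional linear subspaces of $H$ (note that part (2) justifies that $X\cap H$ has codimension $m$ in $H$). The key point is that the incidence $\{(L,H)\mid L\subseteq H\}$ projects dominantly to $\Gr(m+1,n+1)$ with irreducible fibers, so a pair $(L,H)$ with $H$ general and $L\subseteq H$ general yields an $L$ that is itself general in $\P^n$. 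Choosing such a pair in the intersection of both open conditions gives $X\cap L=(X\cap H)\cap L$, and the cardinality of this set equals $\deg(X)$ when viewed from one side and $\deg(X\cap H)$ when viewed from the other.

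For part (3), which I expect to be the main obstacle, I would invoke the Bertini theorem stated just above the corollary. Form the incidence variety
\[
Z=\{(x,H)\in X\times(\P^n)^*\mid x\in H\},
\]
with projections $\pi_X\colon Z\to X$ and $\pi_H\colon Z\to(\P^n)^*$. The fiber $\pi_X^{-1}(x)$ is the hyperplane of $(\P^n)^*$ consisting of hyperplanes through $x$, hence irreducible of constant dimension $n-1$, so Lemma \ref{lem:irreducibilityfromfibres} forces $Z$ to be irreducible. The map $\pi_H$ is dominant because $\dim X\ge 1$ implies every hyperplane meets $X$. The hard part is verifying that $Z$ remains irreducible over $\overline{\C((\P^n)^*)}$, which is what Bertini requires; this is where the hypothesis $\dim X\ge 2$ enters, since it guarantees that the generic fiber $\pi_H^{-1}(H)=X\cap H$ is positive-dimensional, so the linear system of hyperplane sections on $X$ has irreducible generic member by the standard argument (a reducible generic hyperplane section would force $Z$ to split, contradicting the irreducibility established above). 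Applying Bertini then yields an open dense $U\subseteq(\P^n)^*$ over which every fiber $X\cap H$ is irreducible, completing the proof.
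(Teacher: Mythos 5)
Parts (1) and (2) are correct. For (2) you supply essentially the same idea as the paper (a general $H$ contains no component of $X$, so the dimension drops), just in more detail. For (1) the paper declares the result ``follows directly from the definition of degree,'' whereas you actually give the missing content via the incidence $\{(L,H)\mid L\subseteq H\}$ and the transitivity-of-genericity argument; this is a genuine improvement in rigor and is correct.

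For part (3) you take a genuinely different route. The paper fixes the pencil through $H$ and applies the first Bertini theorem to the projection $f\colon X\to L$ onto the normal line, so the base $Y$ is $\P^1$; you instead build the full incidence $Z\subset X\times(\P^n)^*$ and apply Bertini with base $(\P^n)^*$. Both reductions are legitimate, and your setup (irreducibility of $Z$ via \ref{lem:irreducibilityfromfibres} and the $\P^{n-1}$-fibers of $\pi_X$) is fine. The gap is in the sentence ``a reducible generic hyperplane section would force $Z$ to split, contradicting the irreducibility established above.'' That implication is false: irreducibility of $Z$ over $\C$ does \emph{not} imply $Z$ remains irreducible over $\overline{\C((\P^n)^*)}$. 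A minimal counterexample: $Z=\{x^2=y\}\subset\P^1_x\times\P^1_y$ is irreducible (it is isomorphic to $\P^1_x$), but its fiber over a general $y$ consists of two points, so the generic fiber is geometrically reducible. In Stein-factorization language, a reducible generic fiber produces a nontrivial finite intermediate cover $Y'\to(\P^n)^*$ while $Z$ stays irreducible, precisely because monodromy permutes the fiber components. The Bertini hypothesis --- that $\C((\P^n)^*)$ is algebraically closed in $\C(Z)$ --- is exactly where the content lies (classically: the linear system of hyperplane sections is not composite with a pencil), and it requires a separate argument rather than a deduction from irreducibility of $Z$ alone. The paper's proof is also terse here, but it does not advance an incorrect deduction, whereas your stated justification does, so you should repair this step (for instance by invoking the classical Bertini irreducibility theorem directly, or by arguing from the very-ampleness of $\mathcal{O}(1)$ that the linear system cannot be composite with a pencil when $\dim X\ge 2$) rather than leaning on the irreducibility of $Z$ over $\C$.
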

\begin{proof}
Part $(1)$ follows directly from the definition of the degree of a variety. For part $(2)$, if $X$ is irreducible and $\dim(X)=\dim(X \cap H)$ then $H$ must contain $X$, but most hyperplanes do not contain a nonempty variety. For part $(3)$, suppose that $L$ is the normal line to the hyperplane $H$ and consider the linear projection $f\colon X \to L$. Then for a general point $y \in L$, the fiber $f^{-1}(y)$ is irreducible and the hyperplane slice $X \cap H$ corresponds to one such fiber.
\end{proof}

\section{Singular points}

Let $X=\V(F) \subset \C^n$ be an irreducible affine variety of dimension $m$ such that $\langle F \rangle$ is a radical ideal. We say $X$ is \mydef{smooth} at a point $p \in X$ if the rank of the \mydef{Jacobian matrix} $$DF=\left[ \frac{\partial f_i}{\partial x_j} \right]$$ evaluated at $p$ is $n-m$, otherwise we say $p$ is \mydef{singular}. We say $X$ is \mydef{smooth} if it is smooth at all of its points. The set $\mydefMATH{\text{Sing}(X)}$ of singular points of $X$ is a proper closed subvariety of $X$ \cite[Theorem 5.3]{Hartshorne} and so the set of smooth points of $X$ is open and dense. If $p$ is a point of a projective variety $X$, then $p$ is smooth on $X$ if $p$ is smooth on $U_i \cap X$ for some affine chart containing $p$. 

The following proposition is called the \mydef{second Bertini theorem}.
\begin{proposition}\cite[II.6.2 Theorem 2]{Shafarevich}
\label{prop:bertinitwo}
Let $f\colon X \to Y$ be a regular dominant map with $X$ smooth. There exists a dense open set $U \subset Y$ such that the fiber $f^{-1}(y)$ is nonsingular for every $y \in U$.
\end{proposition}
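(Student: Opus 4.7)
The plan is to show that the set of $y \in Y$ with singular fiber is contained in the union of two proper closed subvarieties: the locus where fibers have larger-than-generic dimension, and the image of the locus where $df$ fails to be surjective. The Jacobian criterion for smoothness will then give the conclusion.

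First I would reduce to the case where $X$ and $Y$ are irreducible affine varieties: one can work component-by-component on $X$, restrict to affine charts (smoothness is local), and then discard any irreducible component of $X$ whose image is not dense in an irreducible component of $Y$. Let $n=\dim(X)$ and $m=\dim(Y)$. By Lemma \ref{lem:fibredim} there is a dense open $U_1\subset Y$ such that every component of $f^{-1}(y)$ has dimension exactly $n-m$ for $y\in U_1$. Next, writing $f=(f_1,\dots,f_m)$ on an affine patch, define
$$Z=\{x\in X \mid \mathrm{rank}\, Df(x)<m\}\subset X,$$
a closed subvariety cut out by the $m\times m$ minors of the Jacobian $Df$. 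For any $x\in X\setminus Z$ with $f(x)\in U_1$, the fiber $f^{-1}(f(x))$ is locally cut out near $x$ by the $m$ equations $f_i=f_i(x)$, and the Jacobian criterion (using that $X$ itself is smooth at $x$) makes the fiber smooth of the expected dimension $n-m$ at $x$.

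The main task is then to show that $\overline{f(Z)}\subsetneq Y$, so that $U=U_1\setminus \overline{f(Z)}$ is a dense open subset of $Y$ on which every fiber is smooth at every one of its points. For this I would argue in two steps. First, $Z\neq X$: since $f$ is dominant, $f^*$ embeds $\mathbb{C}(Y)$ into $\mathbb{C}(X)$, and in characteristic zero this field extension is automatically separable, so the induced map on tangent spaces at the generic point is surjective; equivalently, $\mathrm{rank}\,Df$ attains the value $m$ on a nonempty open set, which forces $Z$ to be a proper closed subvariety. Second, I would show no irreducible component $Z_i$ of $Z$ dominates $Y$: if some $Z_i$ did, then applying the same generic-rank argument to the dominant map $f|_{Z_i}\colon Z_i\dashrightarrow Y$ (on the smooth locus of $Z_i$) shows that the differential of $f|_{Z_i}$ is surjective at a general smooth point $x\in Z_i$. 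But $d(f|_{Z_i})_x$ factors through $T_xZ_i\hookrightarrow T_xX\xrightarrow{df_x} T_{f(x)}Y$, so $df_x$ itself would be surjective, contradicting $x\in Z$.

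Setting $U=U_1\setminus\overline{f(Z)}$ finishes the argument: for $y\in U$, the fiber $f^{-1}(y)$ avoids $Z$, so by the Jacobian criterion it is smooth at every point. The main obstacle is the second step above, controlling $f(Z)$: a priori $Z$ could have dimension as large as $n-1\geq m$, and without the characteristic-zero hypothesis one really could have $f(Z)$ dense (the classical Frobenius-type counterexamples in positive characteristic). The separability of $\mathbb{C}(X)/\mathbb{C}(Y)$ is the essential input that makes the rank stratification behave well enough for the fiber version of generic smoothness to go through.
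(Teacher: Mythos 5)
The paper does not prove this proposition; it is stated with a citation to Shafarevich [II.6.2, Theorem 2] and no internal argument, so there is no paper proof to compare against. Your argument is correct and is essentially the standard characteristic-zero proof of generic smoothness (the one in Shafarevich, and in Hartshorne III.10.7): define the degeneracy locus $Z=\{x\in X \mid \rank\,df_x < \dim Y\}$, use separability of $\C(X)/\C(Y)$ to see that $Z$ is a proper closed subvariety, and show that no irreducible component $Z_i$ of $Z$ can dominate $Y$. Your observation that $d(f|_{Z_i})_x$ factors as $T_xZ_i\hookrightarrow T_xX\xrightarrow{df_x} T_{f(x)}Y$, so generic submersion applied to $f|_{Z_i}$ on the smooth locus of $Z_i$ would contradict $Z_i\subseteq Z$, is exactly the right way to close off the second step. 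Combined with the fiber-dimension lemma (Lemma \ref{lem:fibredim}) and the Jacobian criterion applied at points of $X\smallsetminus Z$ lying over $U_1\smallsetminus\overline{f(Z)}$, this produces the desired open $U$. The one phrasing to tighten: what must have rank $m$ is the differential on the tangent space $T_xX$, not the raw Jacobian of $(f_1,\dotsc,f_m)$ in the ambient affine coordinates; ``cut out by $m\times m$ minors of $Df$'' should therefore be understood locally after choosing defining equations for $X$, where the rank condition on $T_xX$ is again expressed by vanishing of minors of a larger matrix, so $Z$ is indeed Zariski closed, but the statement as written elides this. Your closing remark correctly isolates characteristic zero (separability) as the only place the hypothesis is used.
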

A corollary of the second Bertini theorem is fundamental to the theory of numerical algebraic geometry (Section~\ref{section:numericalalgebraicgeometry}).
\begin{corollary}
\label{lem:hyperplaneresult2}
If $X$ is a smooth variety and $H$ is a general hyperplane then  $X \cap H$ is smooth.
\end{corollary}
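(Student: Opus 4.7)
The plan is to realize $X \cap H$ as a fiber of a natural dominant map from a smooth incidence variety and invoke the second Bertini theorem (Proposition \ref{prop:bertinitwo}). Write $\mathbb{P}^*$ for the projective space parameterizing hyperplanes of the ambient space, and form the incidence variety
\[
I = \{(x,H) \in X \times \mathbb{P}^* \mid x \in H\},
\]
together with the two projections $\pi_X \colon I \to X$ and $\pi_H \colon I \to \mathbb{P}^*$.

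First I would check that $I$ is smooth. The projection $\pi_X$ expresses $I$ as a bundle over $X$ whose fiber over $x$ is the set of hyperplanes passing through $x$, which is itself a hyperplane in $\mathbb{P}^*$. Since $X$ is smooth by hypothesis and each fiber is a smooth projective variety of constant dimension, $I$ is smooth. (Equivalently, in local coordinates $I$ is cut out of $X \times \mathbb{P}^*$ by the single bilinear incidence equation $\sum a_i x_i = 0$; the partial derivatives with respect to the $a_i$ are the $x_i$, which do not all vanish simultaneously on $I$, so the incidence equation is transverse to $X \times \mathbb{P}^*$.)

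Next I would verify that $\pi_H$ is dominant. If $\dim(X) \geq 1$, then Lemma \ref{lem:projectivelowerbounddimension} guarantees that every hyperplane meets $X$ in the projective setting (and the affine case reduces to the projective one by passing to the projective closure and excluding the hyperplane at infinity). Hence $\pi_H$ is surjective onto $\mathbb{P}^*$, and in particular dominant. Applying Proposition \ref{prop:bertinitwo} to the regular dominant map $\pi_H$ from the smooth variety $I$ yields a dense open subset $U \subset \mathbb{P}^*$ such that for every $H \in U$ the fiber $\pi_H^{-1}(H)$ is nonsingular. The projection $(x,H) \mapsto x$ identifies $\pi_H^{-1}(H)$ with $X \cap H$, and the conclusion follows.

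The main obstacle is the smoothness of the incidence variety $I$; once that is in hand the rest is a direct quotation of the second Bertini theorem. The $0$-dimensional edge case (where $X$ is a finite set of smooth points) is trivial since a general hyperplane will miss $X$ entirely and the empty variety is vacuously smooth, so I would dispose of that case at the outset before invoking the nonemptiness of the intersection.
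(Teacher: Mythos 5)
Your proof is correct but takes a genuinely different route from the paper. The paper (following its proof of Corollary \ref{cor:hyperplaneresult1}) picks the normal line $L$ to a fixed hyperplane $H$ and studies the linear projection $f\colon X\to L$, whose fibers are the slices of $X$ by the pencil of hyperplanes parallel to $H$; it then applies Proposition \ref{prop:bertinitwo} to this map. You instead introduce the full incidence variety $I=\{(x,H)\in X\times\P^* \mid x\in H\}$ and apply the second Bertini theorem to the projection $\pi_H\colon I\to\P^*$. The trade-off is about what needs verifying versus what is gained. Your construction costs you the extra step of checking that $I$ is smooth (which you handle correctly via the transversality of the bilinear incidence equation to $X\times\P^*$; the first ``bundle over $X$'' justification alone would not immediately suffice, but the parenthetical computation does). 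In return, your ``general'' is literal genericity in the whole projective space of hyperplanes $\P^*$, whereas the paper's argument a priori only gives genericity within a one-parameter family of parallel hyperplanes, which one must then promote to genericity over $\P^*$ by varying the direction. Your approach is the more standard textbook Bertini argument; the paper's is lighter on machinery. You also correctly flag and dispose of the $\dim X=0$ edge case, which the paper glosses over.
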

\begin{proof}
This follows by the same argument as in Corollary \ref{cor:hyperplaneresult1} replacing the first Bertini theorem with the second Bertini theorem.
\end{proof}

Let $\V(F) \subset \C_x^n \times \C_t$ be an irreducible affine variety of dimension one such that the projection \begin{align*}
\pi\colon  \V(F) &\to \C_t \\
(t,x_1,\ldots,x_n) &\mapsto t
\end{align*}
is dominant. The Jacobian $DF$ encodes the points $t \in \C_t$ for which $\pi^{-1}(t)$ does not have the generic cardinality as in Proposition \ref{prop:finiteifffinitefibres}. Let $D_tF = \frac{\partial F}{\partial t}$ and $D_xF=\frac{\partial F}{\partial x}$ so that $DF$ is the matrix whose first column is $D_t$ and whose last $n$ columns are $D_xF$. Given a point $p=(t^*,x^*) \in \V(F)$, $p$ is smooth on $\V(F)$ when $\rank(DF(t^*,x^*)) = n$. If $\rank(D_{x}F(p))=n-1$, then  $p$ is singular ($\rank(DF(p))=n-1$) on $\V(F)$ or the fiber $\pi^{-1}(t^*)$ has points with multiplicity. We depict this dichotomy in Figure \ref{fig:singularitiesofprojection}.

\begin{example}
\label{ex:singularitiesofprojection}
Consider the curve $\V(f)$ with 
$$f =(x-3)^2 - (t -1) (t + 1) (t +2)^2,$$
displayed in Figure \ref{fig:singularitiesofprojection}. The rank of $D_xf$ is zero at the points $(-2,3),(-1,3),$ and $(1,3)$ on $\V(f)$. The rank of $Df$ at these points is $0, 1,$ and $1$ respectively.
\begin{figure}[!htpb]
\includegraphics[scale=0.4]{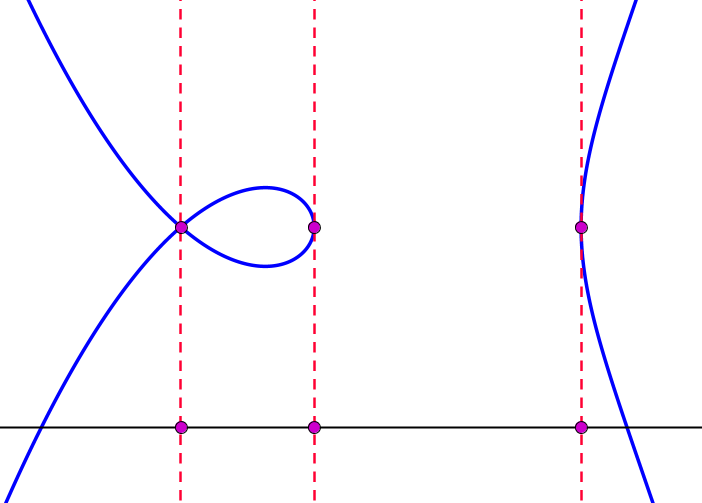}
\caption{\label{fig:singularitiesofprojection} Three points on a quartic curve in $\V(f) \subset \C_t \times \C_x$ such that the matrix $D_xf$ has rank zero when evaluated at these points.}
\end{figure}
 \hfill $\diamond$
\end{example}

\chapter{BRANCHED COVERS AND GROUPS \label{section:branchedcoversandgroups}}
Representing geometric objects as fibers of maps is a powerful method in geometry. For example, the simple problem of solving a quadratic equation $ax^2+bx+c=0$ for $a,b,c \in \C$ may be interpreted geometrically via a map 
\begin{align*}
\pi\colon \{([a:b:c],x) \in \P_{a,b,c}^2 \times \C \mid ax^2+bx+c=0\} &\to \P_{a,b,c}^2\\
([a:b:c],x) &\mapsto [a:b:c],
\end{align*}
over the parameter space $\P^2_{a,b,c}$. We identify
the solutions of a quadratic equation such as $3x^2+8x+4=0$ with the fiber $\pi^{-1}([3:8:4]) = \{([3:8:4],-2),([3:8:4],-\frac 2 3)\}$. The subset $U\subset \P^2_{a,b,c}$ of parameters whose corresponding quadratic equation has two distinct solutions is the complement of the vanishing of the discriminant $B=\V(a(b^2-4ac))$ and comprises a dense open subset of $\P_{a,b,c}^2$. Since $a \neq 0$ for $[a:b:c] \in U$, rescaling to monic quadratic equations,
\begin{align*}
\pi|_{a=1}\colon \{(b,c,x) \in \C^3 \mid x^2+bx+c=0\} &\to \C_{b,c}^2\\
(b,c,x) &\mapsto (b,c)
\end{align*}
gives a ``branched cover'' of affine varieties.
In this framework, the solutions of $3x^2+8x+4=0$ are identified with the fiber over the parameter $\left(\frac 8 3, \frac 4 3 \right) \in \C_{b,c}^2$. Figure \ref{fig:discriminant} depicts this parameter space along with the set $U|_{a=1} \subset \C^2_{b,c}$.
\begin{figure}[!htpb]
\includegraphics[scale=0.35]{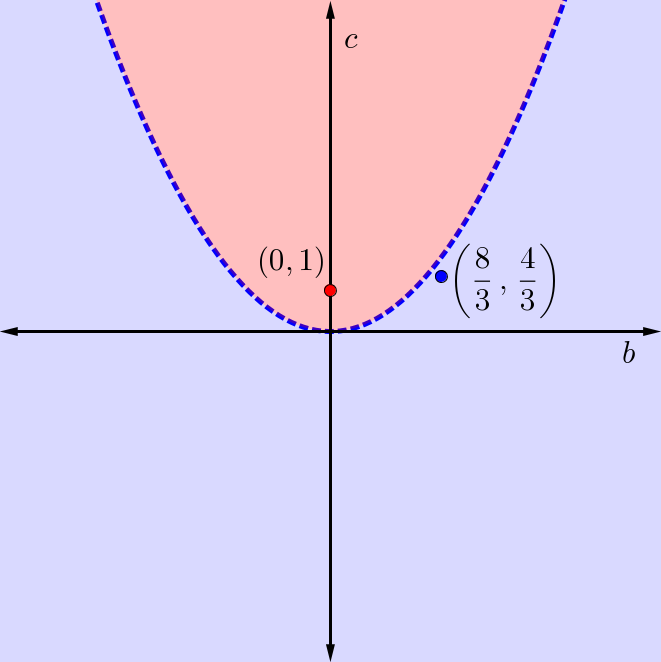}
\caption{The parameter space $\C^2_{b,c}$ along with the discriminant $\V(b^2-4c)$.}
\label{fig:discriminant}
\end{figure} 
Every point $(b,c) \in \C_{b,c}^2$ which is not on the dotted parabola in Figure \ref{fig:discriminant} is in $U|_{a=1}$. Fibers over parameters in the red region, like $\pi|_{a=1}^{-1}(0,1) = \pm \sqrt{-1}$, have two distinct (complex conjugate) nonreal points, and the fibers over points in the blue region have two distinct real points. Points on the parabola have fibers consisting of one real solution occurring with multiplicity two. 

The variety $\V(b^2-4c)$ is a hypersurface in $\C_{b,c}^2$ and thus has (complex) codimension $1$ and {real} codimension $2$. Thus, $U$ is a connected real manifold, even though it is disconnected when restricted to $\R^2_{b,c}$ as seen in Figure \ref{fig:discriminant}. 

The discussion above distills the essence of the behavior of branched covers. We give an elementary treatment of branched covers and covering spaces in Section~\ref{subsection:branchedcovers} and we provide background on permutation groups, monodromy groups, and Galois groups in Sections~\ref{subsection:permutationgroups}-\ref{subsection:galoisandmonodromy}. We conclude in Section~\ref{subsection:decomposablebranchedcovers} with a discussion of decomposable branched covers.

\section{Branched covers}
\label{subsection:branchedcovers}
An \mydef{(irreducible) branched cover} is a dominant map $\pi\colon  X \dashrightarrow Z$ of irreducible varieties of the same dimension. We may assume that we restrict to an affine open subset of $X$ so that $\pi\colon X \to Z$ is regular with $X \subset \C^n$ and $Y \subset \C^m$. Irreducible branched covers are generically finite in the sense of Proposition \ref{prop:finiteifffinitefibres} and thus there exists a number $\mydefMATH{d}$ and a dense open set $\mydefMATH{U} \subset Z$ such that for any $u \in U$, the fiber $\pi^{-1}(u)$ has cardinality $d$ and $\pi^*$ expresses the field $\C(X)$ as a degree $d$ field extension of $\C(Z)$.

More generally, a \mydef{branched cover} is a map $\pi\colon X \to Z$ such that $X$ is reducible and the restriction of $\pi$ to some top dimensional component of $X$ is an irreducible branched cover. The restriction of $\pi$ to every other top dimensional component is either dominant or the image is a proper closed subvariety of $Z$. Let $X_1,\ldots,X_k$ be those components of $X$ such that the restriction $\pi_i$ of $\pi$ to $X_i$ is dominant. Suppose $\pi_i$ has fibers of cardinality $d_i$ over any point in the dense open subset $U_i \subset Z$. Then it is immediate that for any $u \in \mydefMATH{U}=\bigcap_{i=1}^k U_i$, the cardinality of the fiber $\pi^{-1}(u)$ is $\mydefMATH{d}=\sum_{i=1}^k d_i$.
 
Given a branched cover $X \xrightarrow{\pi} Z$ as above,  $d$ is the \mydef{degree} of $\pi$,  $U$ is the set of \mydef{regular values} of $\pi$, and the complement of $U$ is the \mydef{branch locus} of $\pi$. We say $\pi$  is \mydef{trivial} if $d=1$.  With respect to the real Euclidean topologies $X$ and $Z$ inherit from their ambient spaces, there exists an open cover $\{V_{\beta}\}$ of $U$ such that for each $\beta$, the fiber $\pi^{-1}(V_\beta)$ is a disjoint union of $d$ open sets in $\pi^{-1}(U)$, each of which is mapped homeomorphically onto $V_\beta$. Such a map $\pi|_U\colon  \pi^{-1}(U) \to U$ is called a \mydef{$d$-sheeted covering space}. 

Many properties of branched covers, like the well-definedness of degree and regular values, extend immediately from  their irreducible restrictions. Therefore, in the interest of brevity, we use ``branched cover'' to refer to an irreducible branched cover, unless otherwise stated. We refrain from elaborating on branched covers which are not irreducible.

\section{Permutation groups}
\label{subsection:permutationgroups}
We recall some terminology concerning permutation groups~\cite{Wielandt}.
For $d \in \mathbb{N}$, the \mydef{symmetric group  $S_d$} on $d$ elements is the group of bijections from $[d]$ to $[d]$ under composition. Any subgroup $G \subset S_d$ of the symmetric group acts on the ordered set $\{1,2,\ldots,d\}$ by permuting its elements and is thus called a \mydef{permutation group}.  A permutation group acts \mydef{transitively} if for every $i,j \in [d]$, there exists $g \in G$ such that $g(i)=j$. For now, we will assume that $G$ acts transitively on $[d]$.

A \demph{block} of $G$ is a subset $B\subset [d]$ such that for every $g\in G$, either $gB=B$ or $gB\cap B=\emptyset$.
The subsets $\emptyset$, $[d]$, and every singleton are blocks of every permutation group.
If these trivial blocks are the only blocks, then $G$ is \demph{primitive} and otherwise it is \demph{imprimitive}. 

When $G$ is imprimitive, we have a factorization $d=ab$ with $1<a,b<d$ and there is a bijection
$[a]\times[b]\leftrightarrow[d]$ such that $G$ preserves the projection $[a]\times[b]\to [b]$.
That is, the fibers $\{[a]\times\{i\}\mid i\in [b]\}$ are blocks of $G$,  its action on this set of blocks gives a
homomorphism $G\to S_b$ with transitive image, and the kernel acts transitively on each fiber $[a]\times\{i\}$.
In particular, $G$ is a subgroup of the wreath product
$S_a\wr S_b = (S_a)^b\rtimes S_b$, where $S_b$ acts on $(S_a)^b$ by permuting factors.

We observe a second characterization of imprimitive permutation groups $G$.
Since $G$ acts transitively, if $H\subset G$ is the stabilizer of a point $c\in[d]$, then $H$ has index $d$ in $G$
and we may identify $[d]$ with the set $G/H$ of cosets.
If $B$ is a nontrivial block of $G$ containing $c$, then its stabilizer $L$ is a proper subgroup of $G$ that 
strictly contains $H$.
Furthermore, using the map $G/H \to G/L$, we see that $G$ is imprimitive if and only if the stabilizer of the point
$eH\in G/H$ is not a maximal subgroup.

\section{Monodromy groups and Galois groups}
\label{subsection:galoisandmonodromy}
Let $\pi\colon X \to Z$ be a degree $d$ branched cover so that the restriction  $\pi^{-1}(U) \xrightarrow{\pi} U$ is a $d$-sheeted covering space. A \mydef{lift} of a continuous function $\gamma\colon  Y \to U$ is a map $\widetilde{\gamma}\colon Y \to X$ such that $\pi\circ\widetilde{\gamma} = \gamma$. The \mydef{path lifting property} for a covering space says that for any path $\gamma\colon [0,1] \to U$ and any lift $\widetilde{u_0}$ of the point $u_0=\gamma(0)$, there is a unique path $\widetilde{\gamma}\colon [0,1] \to X$ which lifts $\gamma$ with the property that $\widetilde{\gamma}(0) = \widetilde{u_0}$ \cite{Hatcher}. 

Since the cardinality of the fiber $\pi^{-1}(\gamma(0))$ is $d$, there are $d$ paths $\{\tilde \gamma_i(t)\}_{i=1}^d$ lifting $\gamma$, giving a bijection $\mydefMATH{m_\gamma}$ from the fiber over $\gamma(0)$ to the fiber over $\gamma(1)$ defined by $m_\gamma(\tilde \gamma_i(0))=\tilde \gamma_i(1)$. When $\gamma(0)=\gamma(1)$, we call $\gamma$ a \mydef{(monodromy) loop based} at $\gamma(0)$. 
The set of all $m_\gamma$ such that $\gamma$ is a loop based at $u \in U$ forms a group $\mydefMATH{\mathcal M_{\pi,u}}$  called the \mydef{monodromy group} of $\pi$ based at $u$.

For any path $\gamma$ in $U$, conjugation by $m_\gamma$ gives an isomorphism $\mathcal M_{\pi,\gamma(0)}\cong \mathcal M_{\pi,\gamma(1)}$. Since $X$ is irreducible, $U$ is path-connected and so as a permutation group, the monodromy group is well-defined up to the relabelling of points in a fiber. We define the \mydef{monodromy group} of $\pi$, denoted $\mydefMATH{\mathcal M_\pi}$, to be this group. 

\begin{lemma}
\label{lem:monodromyofirreducibleistransitive}
The monodromy group of a branched cover $X \xrightarrow{\pi} Z$ is transitive.
\end{lemma}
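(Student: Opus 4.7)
The plan is to show that any two points in a generic fiber can be connected by a lifted path, and then project that path down to obtain a loop whose associated monodromy permutation witnesses the desired transitivity. Fix a regular value $u \in U$ and write $\pi^{-1}(u) = \{x_1,\ldots,x_d\}$. Given $i,j \in [d]$, the goal is to produce a loop $\gamma$ in $U$ based at $u$ whose lift starting at $x_i$ ends at $x_j$; this immediately gives $m_\gamma(x_i)=x_j$ and hence transitivity of $\mathcal M_{\pi,u}$.

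The key input is that $\pi^{-1}(U)$ is path-connected in the Euclidean topology. Here I would appeal to the standard fact that the smooth locus of an irreducible complex variety is connected (because it is an irreducible complex manifold, and its complement has real codimension at least two). Since the branch locus of $\pi$ is a proper closed subvariety of $Z$, the preimage $\pi^{-1}(U)$ is a dense open subset of the irreducible variety $X$; removing a proper subvariety from an irreducible complex variety cannot disconnect it (again, real codimension at least two). Thus $\pi^{-1}(U)$ is path-connected, and there exists a continuous path $\widetilde{\gamma}\colon [0,1]\to \pi^{-1}(U)$ with $\widetilde{\gamma}(0)=x_i$ and $\widetilde{\gamma}(1)=x_j$.

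Now set $\gamma = \pi \circ \widetilde{\gamma}$. Since $\pi(x_i)=\pi(x_j)=u$, this is a loop in $U$ based at $u$. By the path lifting property recalled just above, $\widetilde{\gamma}$ is the unique lift of $\gamma$ starting at $x_i$, so by definition $m_\gamma(x_i) = \widetilde{\gamma}(1) = x_j$. This produces an element of $\mathcal M_{\pi,u}$ sending $x_i$ to $x_j$, proving transitivity.

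The only potentially subtle point is the claim that $\pi^{-1}(U)$ is path-connected. If one wishes to avoid invoking the general connectedness theorem for irreducible complex varieties, one can first restrict to the smooth locus $X^{\mathrm{sm}}\cap \pi^{-1}(U)$, which is a connected complex manifold of pure dimension $\dim Z$, and observe that passing back to $\pi^{-1}(U)$ only adds points and so preserves path-connectedness. Everything else in the argument is a direct unwinding of the definition of $m_\gamma$ and the path lifting property for the covering space $\pi^{-1}(U) \to U$.
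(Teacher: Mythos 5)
Your proof takes the same route as the paper: pick two points in a fiber, connect them by a path in $\pi^{-1}(U)$ using path-connectedness, and project to get a monodromy loop. The paper asserts path-connectedness of $\pi^{-1}(U)$ without comment, whereas you fill in the justification (irreducibility plus real codimension $\geq 2$ of the removed locus); this is a welcome elaboration but not a different argument.
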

\begin{proof} Let $p,q \in \pi^{-1}(u)$ for some $u \in U$. The set $\pi^{-1}(U)$ is path-connected and so a path  $\tau$ connecting $p$ to $q$ projects to a loop $\gamma = \pi \circ \tau$ with $\tau$ as a lift. Hence, $m_\gamma(p)=q$.  
\end{proof}

We define the \mydef{Galois group} \mydefMATH{$G_\pi$} of $\pi$ to be the Galois group of $K/\CC(Z)$ where \mydefMATH{$K$} is the Galois closure of $\CC(X)/\CC(Z)$. 
Harris~\cite{Harris} gave a modern proof of the following proposition, but this idea goes back
at least to Hermite~\cite{Hermite}.
\begin{proposition}\cite[pg. 689]{Harris}
The groups $G_\pi$ and $\mathcal M_\pi$ for a branched cover $\pi$ are equal.
\end{proposition}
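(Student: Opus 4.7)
The plan is to realize both $\mathcal{M}_\pi$ and $G_\pi$ as the same subgroup of the symmetric group $S_d$ acting on a generic fiber $\pi^{-1}(u)$. The bridge between the analytic side (monodromy) and the algebraic side (Galois) is a carefully chosen irreducible component of the $d$-fold fibered power of $X$ over $Z$.

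First I would set up the $d$-fold fibered power
\[
X^d_Z \;=\; \{(x_1,\dots,x_d)\in X^d \mid \pi(x_1)=\cdots=\pi(x_d)\},
\]
remove the large diagonal $\Delta=\{(x_1,\dots,x_d) : x_i=x_j \text{ for some } i\neq j\}$, and examine the irreducible components of $X^d_Z\setminus \Delta$ that dominate $Z$. Over a regular value $u\in U$, the fiber of $X^d_Z\setminus\Delta\to Z$ is the set of ordered $d$-tuples of distinct preimages of $u$. Using the path-lifting property, monodromy acts on this fiber diagonally, and I would show (via the standard correspondence between irreducible components of a branched cover and orbits of monodromy on a generic fiber, applied to the cover $X^d_Z\setminus\Delta \to Z$) that irreducible components correspond bijectively to orbits of $\mathcal{M}_\pi$ on ordered $d$-tuples of distinct elements of $\pi^{-1}(u)$.

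Next I would select a specific component: fix an ordering $x_1,\dots,x_d$ of $\pi^{-1}(u)$ and let $Y\subset X^d_Z\setminus \Delta$ be the unique irreducible component whose fiber over $u$ contains $(x_1,\dots,x_d)$. By transitivity of $\mathcal{M}_\pi$ on $\pi^{-1}(u)$ (Lemma~\ref{lem:monodromyofirreducibleistransitive}), the fiber of $Y\to Z$ over $u$ is exactly the $\mathcal{M}_\pi$-orbit of $(x_1,\dots,x_d)$, and $Y\to Z$ is a branched cover of degree $|\mathcal{M}_\pi|$. On the algebraic side, I would observe that $\mathbb{C}(Y)$ is generated over $\mathbb{C}(Z)$ by the coordinate functions $\xi_i$, where $\xi_i(y_1,\dots,y_d)=y_i$; each $\xi_i$ satisfies the same irreducible polynomial over $\mathbb{C}(Z)$ as a generator of $\mathbb{C}(X)/\mathbb{C}(Z)$, so $\mathbb{C}(Y)$ is the compositum of $d$ conjugate copies of $\mathbb{C}(X)$ inside an algebraic closure of $\mathbb{C}(Z)$. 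Hence $\mathbb{C}(Y)=K$, the Galois closure, and $G_\pi=\mathrm{Gal}(\mathbb{C}(Y)/\mathbb{C}(Z))$ has order equal to $|\mathcal{M}_\pi|$.

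Finally I would match the two actions. The Galois group $G_\pi$ acts on $\mathbb{C}(Y)$ by $\mathbb{C}(Z)$-algebra automorphisms, hence acts on the generic fiber of $Y\to Z$; since $\xi_i$ are permuted among themselves by any element of $G_\pi$ (they are the roots of a fixed minimal polynomial over $\mathbb{C}(Z)$), this induces an embedding $G_\pi\hookrightarrow S_d$ on the fiber $\pi^{-1}(u)$. The monodromy group $\mathcal{M}_\pi$ also acts on $\pi^{-1}(u)$ and, via its diagonal action, on the fiber of $Y\to Z$ over $u$; by construction this action is simply transitive on that fiber. The two embeddings into $S_d$ must coincide because both groups have order $|\mathcal{M}_\pi|$ and both preserve the image $\{x_1,\dots,x_d\}=\pi^{-1}(u)$ compatibly with the chosen $Y$: any $\sigma\in G_\pi$ is determined by its action on one $d$-tuple in the fiber of $Y\to Z$, and that fiber is an $\mathcal{M}_\pi$-orbit, so $\sigma$ agrees with a unique element of $\mathcal{M}_\pi$.

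The main obstacle is the middle step, identifying $\mathbb{C}(Y)$ with the Galois closure $K$: one has to verify that $\mathbb{C}(Y)$ is normal over $\mathbb{C}(Z)$ and that it coincides with the splitting field of a minimal polynomial of a primitive element of $\mathbb{C}(X)/\mathbb{C}(Z)$. The other steps are formal once the correspondence between irreducible components and monodromy orbits is in hand.
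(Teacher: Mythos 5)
The paper does not supply its own proof of this proposition; it simply cites Harris. Your proposal correctly reconstructs that argument: the $d$-fold fibered power minus the big diagonal, the identification of its irreducible components with monodromy orbits on ordered $d$-tuples, the choice of the component $Y$ through a fixed ordering of a generic fiber, the identification $\mathbb{C}(Y)=K$ with the Galois closure, and the matching of the two simply transitive actions on the fiber of $Y\to Z$. This is the standard modern proof, and it is what the cited source does.

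A few places would benefit from more precision. In the middle step, the coordinate functions $\xi_i$ take values in $X$ rather than in $\mathbb{C}$, so strictly speaking it is the pullbacks $\xi_i^*\theta$ of a primitive element $\theta\in\mathbb{C}(X)$ that are the $d$ conjugate roots of a single irreducible polynomial over $\mathbb{C}(Z)$, giving $\mathbb{C}(Y)\supseteq K$; the reverse containment holds because each image $\xi_i^*(\mathbb{C}(X))$ is one of the $d$ conjugates of $\mathbb{C}(X)$ inside the normal extension $K$, so $\mathbb{C}(Y)$ is their compositum. In the final step, equal order alone does not force two subgroups of $S_d$ to coincide; the saving observation, which you gesture at with ``determined by its action on one $d$-tuple,'' is that $(x_1,\dots,x_d)$ lists \emph{all} of $\pi^{-1}(u)$, so two permutations of $\pi^{-1}(u)$ with the same diagonal action on that single tuple must be equal. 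You should also make explicit that $\mathcal{M}_\pi$ acts on tuples diagonally, whereas $G_\pi$ acts by reordering coordinates (as deck transformations of $Y\to Z$); matching the two at the base tuple requires a small bookkeeping check, and a careless choice of conventions produces an anti-isomorphism rather than an isomorphism, though this does not affect the claimed equality as subgroups of $S_d$. None of these are serious gaps, but they are the places where the argument needs to be nailed down to be complete.
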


\section{Decomposable branched covers}
\label{subsection:decomposablebranchedcovers}

A branched cover $\pi\colon X\to Z$ is \demph{decomposable} if there is a dense open subset $V\subset Z$ over which
$\pi$ factors
 \begin{equation}\label{Eq:Ndecomposition}
  \pi^{-1}(V)\ \xrightarrow{\; \varphi\;}\ Y\ \xrightarrow{\; \psi\;}\ V\,,
 \end{equation}
with $\varphi$ and $\psi$ both nontrivial branched covers.
The fibers of $\varphi$ over points of $\psi^{-1}(v)$ are blocks of the action of $G_\pi$ on $\pi^{-1}(v)$, which
implies that $G_\pi$ is imprimitive. 
Pirola and Schlesinger~\cite{PirolaSchlesinger} observed that decomposability of $\pi$ is equivalent to imprimitivity of
$G_\pi$.
We give a proof, as we discuss the problem of computing a decomposition.

\begin{proposition}\label{P:DecomposableIsImprimitive}
 A branched cover is decomposable if and only if its Galois group is imprimitive.
\end{proposition}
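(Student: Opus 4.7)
The plan is to prove both directions by translating between the geometric picture of the branched cover and the algebraic picture of the field extension $\C(X)/\C(Z)$, using the identification $G_\pi = \mathcal{M}_\pi$ of the Galois and monodromy groups.

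For the direction $(\Rightarrow)$, suppose $\pi$ factors over a dense open $V \subset Z$ as $\pi^{-1}(V) \xrightarrow{\varphi} Y \xrightarrow{\psi} V$ with both $\varphi$ and $\psi$ nontrivial. First I would shrink $V$ so that every $v \in V$ is a regular value of both $\pi$ and $\psi$, and every point of $\psi^{-1}(V)$ is a regular value of $\varphi$. Then the fiber $\pi^{-1}(v)$ partitions as the disjoint union $\bigsqcup_{y \in \psi^{-1}(v)} \varphi^{-1}(y)$ of $\deg(\psi)$ sets, each of cardinality $\deg(\varphi)$. For any loop $\gamma$ in $V$ based at $v$, the path-lifting property applied first to $\psi$ and then to $\varphi$ along each lifted segment shows that $m_\gamma$ permutes the collection $\{\varphi^{-1}(y)\}_{y \in \psi^{-1}(v)}$, carrying each block bijectively onto $\varphi^{-1}(m_{\psi \circ \gamma}(y))$. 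Nontriviality of both factors forces these blocks to be nontrivial, so $G_\pi = \mathcal{M}_\pi$ is imprimitive.

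For the direction $(\Leftarrow)$, suppose $G_\pi$ is imprimitive. Pick a regular value $z_0 \in Z$ and a point in $\pi^{-1}(z_0)$ whose stabilizer is $H \subset G_\pi$. By the second characterization of imprimitivity recalled in Section~\ref{subsection:permutationgroups}, $H$ is not maximal, so there exists $L$ with $H \subsetneq L \subsetneq G_\pi$. Letting $K$ denote the Galois closure of $\C(X)/\C(Z)$, so that $\C(X) = K^H$ and $\C(Z) = K^{G_\pi}$, the fundamental theorem of Galois theory produces an intermediate field $F := K^L$ with $\C(Z) \subsetneq F \subsetneq \C(X)$. I would realize $F$ as the function field $\C(Y)$ of an irreducible variety $Y$ (for instance, by taking the normalization of $Z$ in $F$). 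The chain $\C(Z) \hookrightarrow \C(Y) \hookrightarrow \C(X)$ corresponds, via the dictionary following Proposition~\ref{prop:finiteifffinitefibres}, to a factorization of $\pi$ as dominant rational maps $X \dashrightarrow Y \dashrightarrow Z$. Restricting to a dense open $V \subset Z$ over which both rational maps are regular yields branched covers $\varphi$ and $\psi$ whose composition equals $\pi$, and the index identity $[G_\pi : H] = [G_\pi : L] \cdot [L : H]$ translates into $\deg(\pi) = \deg(\psi) \cdot \deg(\varphi)$, with the strict containments forcing both factors to be nontrivial.

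The main obstacle is the backward direction: passing from the abstract intermediate field $F$ to a genuine variety $Y$ and from rational maps to a regular factorization over a suitable dense open $V \subset Z$. The field $F$ only determines $Y$ up to birational equivalence, so one must choose a model and then shrink $Z$ to ensure that $\varphi$ and $\psi$ are honest branched covers in the sense of Section~\ref{subsection:branchedcovers}. Once this model-selection step is settled, the remainder is a careful accounting of fiber cardinalities, block sizes, and subgroup indices.
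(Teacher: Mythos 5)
Your proof is correct and follows essentially the same route as the paper: the forward direction by exhibiting the fibers of $\varphi$ as nontrivial blocks preserved by the monodromy action, and the reverse direction via the Galois correspondence, producing the intermediate fixed field $K^L$ of a block stabilizer $L$ with $H \subsetneq L \subsetneq G_\pi$ and realizing it as the function field of a variety $Y$ over a suitably shrunk base $V$. The only quibble is notational: in the forward argument, ``$m_{\psi\circ\gamma}(y)$'' should refer to the monodromy of the cover $\psi$ acting on $\psi^{-1}(v)$ along the loop $\gamma$ (composing $\psi$ with a loop in $V$ is not defined), but the intended meaning is clear and the argument is sound.
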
  

\begin{proof}
  We need only to prove the reverse direction.
  As above, let $\CC(Z)$, $\CC(X)$, and $K$ be the function fields of $Z$, $X$, and the Galois closure of
  $\CC(X)/\CC(Z)$, respectively, and let $G_\pi$ be the Galois group of $K/\CC(Z)$.
  Let $H$ be the subgroup of $G_\pi$ such that $\CC(X)=K^H$, the fixed field of $H$.
  The set of Galois conjugates of $\CC(X)$ forms the orbit $G_\pi/H$, and the number of conjugates is the degree of
  the branched cover $X\to Z$.

  If $G_\pi$ acts imprimitively, then the stabilizer $L$ of a nontrivial block $B$ containing $\CC(X)$ is a proper subgroup 
  properly containing $H$.
  Thus its fixed field $\defcolor{M}=K^L$, which is the intersection of the conjugates of $\CC(X)$ in the block $B$, is
  an intermediate field between $\CC(Z)$ and $\CC(X)$.
  For any variety $Y'$ with function field $M$, there will be dense open subsets $Y$ of $Y'$ and $V$ of $Z$ such
  that~\eqref{Eq:Ndecomposition} holds.
\end{proof}
  
  While imprimitivity is equivalent to decomposability, the proof does not address how to compute
  the variety $Y$ of~\eqref{Eq:Ndecomposition}.
  One way is as follows.
  Replace $Z$ and $X$ by affine open subsets, if necessary, and let 
  $y_1,\dotsc,y_m\in\CC[X]$ be regular functions on $X$ that generate $M$ over $\CC(Z)$.
  Let $x_1,\dotsc,x_m$ be indeterminates and let $I\subset\CC(Z)[x_1,\dotsc,x_m]$ be the kernel of the map
  $\CC(Z)[x_1,\dotsc,x_m]\to \CC(X)$ given by $x_i\mapsto y_i$.
  This is the  zero-dimensional ideal of algebraic relations satisfied by $y_1,\dotsc,y_m$.
  Replacing $Z$ by a dense affine open subset if necessary, we may choose generators $g_1,\dotsc,g_r$ of $I$ that lie in
  $\CC[Z][x_1,\dotsc,x_m]$---their coefficients are regular functions on $Z$.
  There is an open subset $V\subset Z$ such that  the ideal $I$ defines an irreducible variety
  $\defcolor{Y}\subset V\times\CC^m$ whose projection to $V$ is a branched cover and whose function field is $M$.
  Restricting $X\to Z$ to $V$, we obtain the desired decomposition, with the map $X\to Y$ given by the functions
  $y_1,\dotsc,y_m$. 

  This does not address the practicality of computing $Y$, but it does indicate an approach.
  Given the subgroup $L$ of $G_\pi$ and a set of generators of $\CC[X]$ over $\CC[Z]$, if we apply the Reynolds averaging 
  operator~\cite{DerksenKemper} for $L$ to monomials in the generators, we obtain the desired generators $y_1,\dotsc,y_m$
  of $M$. 
  One problem is that elements of $G_\pi$ may not act on $X$, so their action on elements of $\CC[X]$ may be hard
  to describe.

  There is an exception to this.
  If $L\neq H$ normalizes $H$ in $G$ and $\pi\colon X\to Z$ is a covering space, then $\defcolor{\Gamma}=L/H$ acts freely
  on $X$, preserving the fibers---it is a group of \mydef{deck transformations} of $X\to Z$~\cite[Ch~13]{Munkres}.
  When $\Gamma$ acts on the original branched cover, $Y=X/\Gamma$ is the desired space, and both $Y$ and the map
  $X\to Y$ may be computed by applying the Reynolds operator for $\Gamma$ to generators of $\CC[X]$.
  The examples given in~\cite[Section $5$]{AmendolaRodriguez} are of this form, and the authors use this approach to compute
  decompositions.

\begin{example}\label{Ex:NoMaximal}
  Not all imprimitive groups have this property.
  Consider the wreath product $G=S_3\wr S_3$, which acts imprimitively on the nine-element set
  $[3]\times[3]$.
  The stabilizer of the point $(3,3)$ is the subgroup $H=((S_3)^2\times S_2)\rtimes S_2$, where $S_2\subset S_3$ is the
  stabilizer of $\{3\}$.
  Then $H$ is its own normalizer in $G$, as $S_2$ is its own normalizer in $S_3$.\hfill $\diamond$
\end{example}

  All imprimitive Galois groups in the Schubert calculus constructed in~\cite[Section $3$]{GIVIX} and in~\cite{SWY}
  have stabilizer $H$ equal to its normalizer.
  For these, the decomposition of the branched cover follows from a deep structural understanding of
  the corresponding Schubert problem.
  There remain many Schubert problems whose Galois group is expected to be imprimitive, yet a decomposition~\eqref{Eq:Ndecomposition} of the corresponding branched cover is unknown.

\section{Real branched covers}
\label{subsection:realbranchedcovers}
The nonreal solutions of any univariate polynomial $f \in \R[x]$ come in complex conjugate pairs. Similarly, for a multivariate polynomial system $F=(f_1,\ldots,f_k) \subset \R[x]$, any point $z \in \C^n$ satisfies $F(z)=0$ if and only if its complex conjugate $\overline z$ satisfies $F(\overline z)=0$.

When a branched cover $\pi\colon X \to Z$ with $Z \subset \C^m$ has the property that for any $z \in Z \cap \R^m$ the ideal $\I(\varphi^{-1}(z))$ can be generated by real polynomials, we say $\pi$ is a \mydef{real branched cover}. The set of real regular values of a real branched cover is possibly disconnected in $\R^m$, and we call these connected components \mydef{discriminant chambers}. 
\begin{lemma}
If $z,z' \in Z \cap \R^m$ are in the same discriminant chamber, then the number of real points in $\pi^{-1}(z)$ is equal to the number of real points in $\pi^{-1}(z')$. 
\end{lemma}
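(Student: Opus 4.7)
The plan is to use path lifting together with the involution given by complex conjugation. Since $z$ and $z'$ lie in the same discriminant chamber of $Z \cap \R^m$, by definition of a connected component there exists a continuous path $\gamma \colon [0,1] \to Z \cap \R^m$ with $\gamma(0)=z$, $\gamma(1)=z'$, and whose image lies entirely within the real regular values. The path $\gamma$ therefore lies in $U$, the set of regular values of $\pi$, so the path lifting property from Section~\ref{subsection:galoisandmonodromy} applies: $\gamma$ admits exactly $d$ lifts $\tilde{\gamma}_1,\ldots,\tilde{\gamma}_d \colon [0,1] \to X$, one for each point of $\pi^{-1}(z)$, and $t \mapsto \tilde{\gamma}_i(t)$ gives a bijection $m_\gamma \colon \pi^{-1}(z) \to \pi^{-1}(z')$.

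The next step is to exploit the fact that $\pi$ is a real branched cover. Because $\mathcal I(\pi^{-1}(w))$ is generated by real polynomials for real $w$, componentwise complex conjugation $\sigma \colon \C^n \to \C^n$ restricts to an involution $\sigma \colon X \to X$ satisfying $\pi \circ \sigma = \sigma \circ \pi$. For any lift $\tilde\gamma_i$, the composite $\sigma \circ \tilde{\gamma}_i$ is again a continuous path in $X$, and it projects under $\pi$ to $\sigma \circ \gamma = \gamma$ since $\gamma$ is real-valued. Hence $\sigma \circ \tilde{\gamma}_i$ is itself a lift of $\gamma$, starting at the point $\sigma(\tilde{\gamma}_i(0)) \in \pi^{-1}(z)$.

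Now I appeal to the uniqueness of path lifts. If $p_i = \tilde{\gamma}_i(0)$ is a real point of $\pi^{-1}(z)$, then $\sigma(p_i)=p_i$, so both $\sigma \circ \tilde{\gamma}_i$ and $\tilde{\gamma}_i$ are lifts of $\gamma$ starting at $p_i$; by uniqueness they agree, and evaluating at $t=1$ gives $\sigma(\tilde{\gamma}_i(1)) = \tilde{\gamma}_i(1)$, i.e.\ $m_\gamma(p_i)$ is a real point of $\pi^{-1}(z')$. Conversely, if $m_\gamma(p_i)=\tilde\gamma_i(1)$ is real, the same argument applied to the reversed path $\gamma(1-t)$ shows that $p_i$ is real. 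Thus $m_\gamma$ restricts to a bijection between the real points of $\pi^{-1}(z)$ and those of $\pi^{-1}(z')$, and the two cardinalities agree.

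I do not anticipate any serious obstacle; the only subtle point is the implicit claim that because each $\gamma(t)$ is a regular value, every lift $\tilde\gamma_i$ stays inside $\pi^{-1}(U)$ where $\pi$ is a genuine covering space and path lifting is applicable. This follows immediately from the assumption that the entire image of $\gamma$ consists of regular values in $Z \cap \R^m$, which is precisely what the hypothesis ``same discriminant chamber'' is designed to provide.
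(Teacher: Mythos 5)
Your proof is correct and follows essentially the same strategy as the paper: lift the path $\gamma$ in the discriminant chamber and track the real points of the fiber along it. The one substantive difference is in how the conclusion is reached. The paper argues more informally that the count of real points ``changes only if either two real points come together and become complex or two complex points come together and become real,'' and that this cannot happen because the fibers over $\gamma$ stay distinct. You replace that counting heuristic with a sharper argument: conjugation $\sigma$ commutes with $\pi$, so $\sigma\circ\tilde\gamma_i$ is again a lift of $\gamma$, and the uniqueness of lifts forces $\sigma\circ\tilde\gamma_i=\tilde\gamma_i$ whenever the starting point is real, hence the monodromy bijection $m_\gamma$ restricts to a bijection on the real points. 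This is a tidier and more airtight way of proving exactly the same fact; it avoids any appeal to the informal picture of points ``merging.'' No gaps.
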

\begin{proof}
Let $z,z'$ be in the same discriminant chamber $D_z$ and let $\gamma\colon [0,1] \to Z \cap D_z$ be a path from $z$ to $z'$. For any point $\gamma(t^*)$ for  $t^* \in [0,1]$ the fiber $\pi^{-1}(t^*)$ consists of $\deg(\pi)$ distinct points. On the other hand, since nonreal points in the fiber come in complex conjugate pairs, the number of real points in a fiber over $\gamma([0,1])$ changes only if either two real points come together and become complex or two complex points come together and become real. However, this cannot happen since points in each fiber over $\gamma$ are distinct. 
\end{proof}

\begin{example}

Let 
$$f=4(\phi^2x^2-y^2)(\phi^2y^2-z^2)(\phi^2z^2-x^2)-(1+2\phi)(x^2+y^2+z^2-1^2)^2 \in \C[x,y,z],$$
where $\phi=\frac{1+\sqrt{5}}{2}$ is the golden ratio. The surface $\V(f)$ is known as the \mydef{Barth sextic}. 
The projection $$\pi\colon \C_{x,y,z}^3 \to \C^2_{x,y}$$ is a branched cover of degree $4$. 

The branch locus $B$ of $\pi$ is displayed in Figure \ref{fig:Barth} along with labels indicating the number of real points in any fiber of the corresponding discriminant chamber. Over $\mathbb{Q}$, $B$ decomposes into two lines (purple and green) and two sextics (blue and red). Over $\R$, the blue sextic curve decomposes into the union of a conic and four lines. The red curve is irreducible over $\R$.
The boxed region in Figure \ref{fig:Barth} contains a small discriminant chamber whose fibers have two real points. An enlarged depiction of this chamber is displayed in Figure \ref{fig:barthsmalldiscriminant}.
\hfill $\diamond$
\end{example}

\begin{figure}[!htpb]
\includegraphics[scale=0.35]{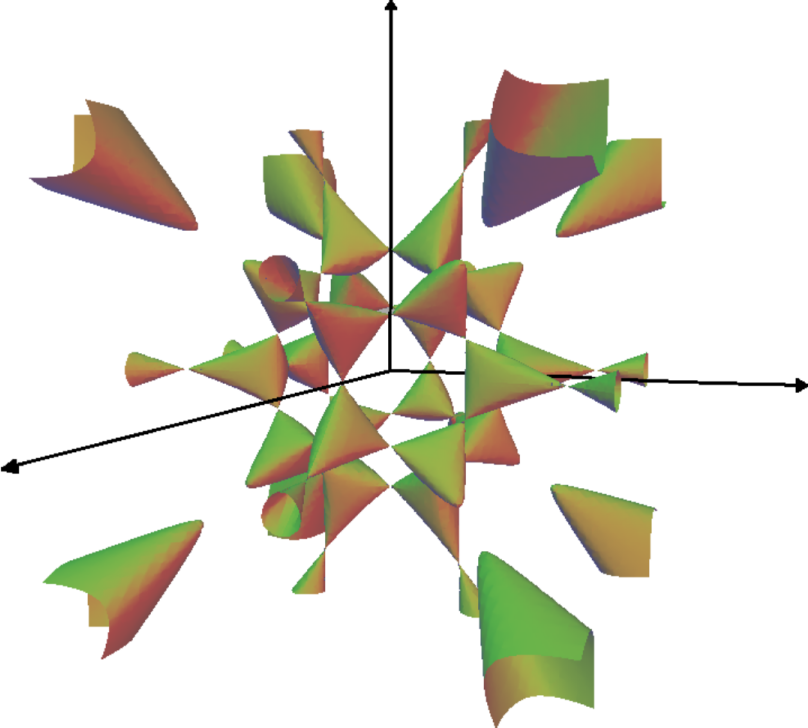}
\caption{The Barth sextic.}
\label{fig:BarthSextic}
\end{figure}

\begin{figure}[!htpb]
\includegraphics[scale=0.35]{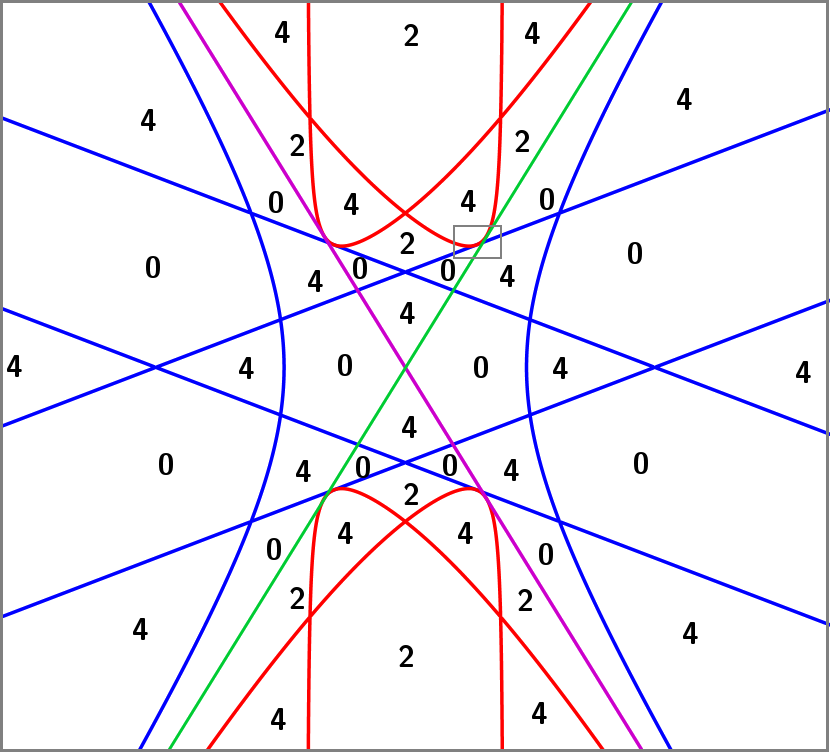}
\caption{The discriminant of the projection of the Barth sextic onto $\C^2_{x,y}$ with the number of real points in the fiber of any point in each discriminant chamber indicated.}
\label{fig:Barth}

\end{figure} 
\begin{figure}
\includegraphics[scale=0.35]{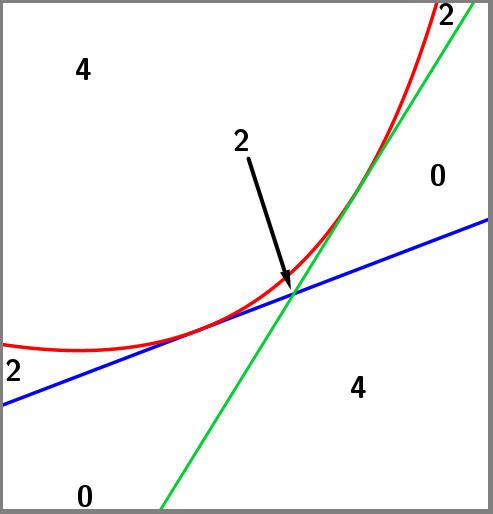}
\caption{\label{fig:barthsmalldiscriminant} One of the small discriminant chambers not easily noticeable in Figure \ref{fig:Barth}.}
\end{figure}

\chapter{NEWTON POLYTOPES, SUPPORT, TROPICAL GEOMETRY, AND SPARSE POLYNOMIAL SYSTEMS \label{section:newtonpolytopes}}
We introduce Newton polytopes, sparse polynomial systems, and tropical geometry. These connect ideas from Sections \ref{section:polytopes}, \ref{section:algebraicgeometry}, and \ref{section:branchedcoversandgroups}. 
\renewcommand*{\thefootnote}{\fnsymbol{footnote}}
Material in Section \ref{subsection:tropicalgeometry} appears in the article \cite{Bry:NPtrop} by the author\footnote{Reprinted with permission from T. Brysiewicz, ``Numerical Software to Compute Newton polytopes and Tropical Membership,''  {\it{Mathematics in Computer Science,}} 2020. Copyright 2020 by Springer Nature.}.
\renewcommand*{\thefootnote}{\arabic{footnote}}

\section{Newton polytopes}
Let $\mydefMATH{\C^\times} = \C\smallsetminus \{0\}$ be the multiplicative group of nonzero complex numbers and $\mydefMATH{(\C^\times)^n}$ be the $n$-dimensional complex torus. For each $\alpha = (\alpha_1,\ldots,\alpha_n) \in \Z^n$, the \mydef{(Laurent) monomial} with exponent $\alpha$,
$$\mydefMATH{x^\alpha}=x_1^{\alpha_1}x_2^{\alpha_2}\cdots x_n^{\alpha_n},$$
is a character (multiplicative map) $x^\alpha\colon  (\C^\times)^n \to \C^\times$.  Any finite linear combination
$$f = \sum_{\alpha \in \mathcal A} c_\alpha x^\alpha, \quad c_\alpha \in \C,$$
of monomials is a \mydef{(Laurent) polynomial} which also defines a function $f\colon  (\C^\times)^n \to \C$. When $c_\alpha \in \C^\times$ for all $\alpha \in \mathcal A$, we say that $\mathcal A$ is the \mydef{support} of $f$ and write $\mydefMATH{\supp(f)}=\mathcal A$. Otherwise, we say that $f$ is \mydef{supported on} $\mathcal A$. 
We denote the vector space of all polynomials supported on $\mathcal A$ by $\mydefMATH{\C^{\mathcal A}}$. 
Consistent with the notation for polytopes, for any $\omega \in \R^n$ we set,
$$\mydefMATH{f_\omega} = \sum_{\alpha \in \mathcal A_\omega} c_\alpha x^\alpha.$$

If $\calA$ is the support of $f$, then the support of $x^\beta f$ is $\beta+\calA$, the translation of $\calA$ by $\beta$. 
As a monomial $x^\beta$ for $\beta\in\ZZ^n$ is invertible on $(\CC^\times)^n$, the polynomials $f$ and $x^\beta f$ have the
same sets of zeros in $\C^\times$.
By translating the support of a polynomial by integer vectors we may assume that $\bzero$ is in the affine $\Z$-span of $\mathcal A$ without changing any assertions about the zeros of $f$ in $(\CC^\times)^n$, thus we define $\mydefMATH{\ZZ\calA}$ to be the lattice generated by differences $\alpha-\beta$ for $\alpha,\beta \in \calA$.
For similar reasons, the results from Section~\ref{section:algebraicgeometry} extend to this setting by shifting $\supp(f)$ to the positive orthant so that $f$ is polynomial.

The \mydef{Newton polytope} of $f$  (or of $\V(f)$) is
$$\mydefMATH{\New(f)}=\mydefMATH{\New(\V(f))}=\conv(\supp(f)).$$ We say $f$ has \mydef{dense support} in $\New(f)$ if $\supp(f) = \mathcal L(\New(f))$, the set of lattice points in $\New(f)$. The Newton polytope of a polynomial and its support both encode a considerable amount of information about the polynomial and its zero set. Moreover, these combinatorial objects behave well under certain algebro-geometric transformations on polynomials and varieties.

\subsection{Basic observations about Newton polytopes}
Let $f \in \C[x]$. Then the following observations are immediate from our definitions.
\begin{enumerate}
\item $\New(\widetilde{f}) = \widetilde{\New(f)}$ where $\widetilde{\nothing}$ denotes homogenization.
\item $f$ is homogeneous if and only if $\New(f)$ is homogeneous.
\item $\deg(f)=\deg(\New(f))$.
\item $\New(f)$ is an integral polytope.
\end{enumerate} 
For any $f,g \in \C[x]$, the Newton polytope of $f\cdot g$ is $\New(f)+\New(g)$. Indeed, Lemma \ref{lem:minkowskiproperties} implies that the vertices of $\New(f)+\New(g)$ are uniquely represented as $\alpha' + \beta'$ for some $\alpha' \in \vertices(\New(f))$ and $\beta' \in \vertices(\New(g))$. Thus, the only term of the sum $$f \cdot g = \sum_{\substack{\alpha \in \supp(f) \\\beta \in \supp(g)}} c_\alpha c_\beta x^\alpha x^\beta$$ which has exponent $\alpha'+\beta'$ is $c_{\alpha'}c_{\beta'}x^{\alpha'+\beta'}$ which is in the support of $f \cdot g$ because $c_{\alpha'}\cdot c_{\beta'} \neq 0$.   

Supports (and Newton polytopes) respect permutations of variables. For any permutation $\sigma \in S_n$, the support of $$\mydefMATH{\sigma(f)}=\sum_{\alpha \in \mathcal A} c_\alpha x_1^{\alpha_{\sigma(1)}}x_2^{\alpha_{\sigma(2)}}\cdots x_n^{\alpha_{\sigma(n)}}$$ is the set $$\mydefMATH{\sigma(\mathcal A)}=\{\mydefMATH{\sigma(\alpha)} \mid \alpha \in \mathcal A\} =\{(\alpha_{\sigma(1)},\ldots,\alpha_{\sigma(n)}) \mid \alpha \in \mathcal A\}.$$ Consequently, $\New(\sigma(f))=\mydefMATH{\sigma(\New(f))}=\conv(\sigma(\mathcal A))$. Hyperplanes containing $\New(f)$ correspond to scalings of the variables $x_1,\ldots,x_n$ which do not alter the variety $\V(f)$.
\begin{lemma}
\label{lem:torusinvarianthyperplane}
Let $f \in \C[x]$ be a polynomial with support $\calA$ and let $\omega \in \R^n$. Then $\mathcal A$ is contained in the hyperplane $\langle \alpha,\omega \rangle - h_{\mathcal A}(\omega)=0$ if and only if $\V(f)=\V(f(t^{\omega_1}x_1,\ldots,t^{\omega_n}x_n))$ for all $t \in \C^\times$.
\end{lemma}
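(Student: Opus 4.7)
The forward direction is a direct substitution. If $\langle \alpha, \omega\rangle = c := h_{\mathcal{A}}(\omega)$ for every $\alpha \in \mathcal{A}$, then
$$
f(t^{\omega_1}x_1,\ldots,t^{\omega_n}x_n) \;=\; \sum_{\alpha \in \mathcal{A}} c_\alpha t^{\langle \alpha, \omega\rangle} x^\alpha \;=\; t^c f(x),
$$
whose zero set on $(\C^\times)^n$ coincides with $\V(f)$ for every $t \in \C^\times$.

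For the reverse direction, write $\phi_t(x) := (t^{\omega_1}x_1,\ldots,t^{\omega_n}x_n)$; since $\V(f \circ \phi_t) = \phi_t^{-1}(\V(f))$, the hypothesis says $\V(f)$ is invariant under the one-parameter subgroup $\{\phi_t\}_{t \in \C^\times}$ of the torus. Factor $f = u\prod_i f_i^{e_i}$ in the UFD $\C[x_1^{\pm 1},\ldots,x_n^{\pm 1}]$ with $u$ a monomial unit and $f_i$ distinct irreducibles, and set $g := \prod_i f_i$, a squarefree Laurent polynomial with $\V(g) = \V(f)$. The same invariance holds for $g$, and I will first establish the support conclusion for $g$ and then transfer it to $f$.

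Group the monomials of $g$ by $\omega$-value, $g = \sum_k G_k$ with $G_k := \sum_{\alpha \in \supp(g),\,\langle \alpha,\omega\rangle = k} d_\alpha x^\alpha$, so that $g \circ \phi_t = \sum_k t^k G_k$. For any $p \in \V(g)$, the hypothesis forces the Laurent polynomial $t \mapsto \sum_k t^k G_k(p)$ to vanish on all of $\C^\times$, hence identically; therefore each $G_k$ lies in $\mathcal{I}(\V(g))$. Squarefreeness of $g$ and the Nullstellensatz yield $g \mid G_k$; writing $G_k = q_k g$, additivity of Newton polytopes under multiplication gives
$$
\New(q_k) + \New(g) \;=\; \New(G_k) \;\subseteq\; \New(g).
$$
Comparing support functions on both sides forces $h_{\New(q_k)} \leq 0$ on all of $\R^n$, so $\New(q_k) = \{0\}$ and $q_k$ is a scalar $\mu_k$. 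From $g = \sum_k G_k = (\sum_k \mu_k) g$ and the disjointness of the supports of the nonzero $G_k$ (they live on distinct $\omega$-hyperplanes), exactly one $\mu_k$ is nonzero, so $\supp(g)$ lies on a single $\omega$-hyperplane.

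To transfer to $f$, use $\New(g) = \sum_i \New(f_i)$ together with additivity of support functions: the identity $h_{\New(g)}(\omega) + h_{\New(g)}(-\omega) = \sum_i \bigl(h_{\New(f_i)}(\omega) + h_{\New(f_i)}(-\omega)\bigr)$ has left side equal to $0$ (since $\New(g)$ lies on a single $\omega$-hyperplane) and nonnegative summands on the right (the $\omega$-widths of the $\New(f_i)$), so each $\New(f_i)$ is itself on a single hyperplane. Consequently $\mathcal{A} = \supp(f) = \supp(u) + \sum_i e_i \supp(f_i)$ lies on one hyperplane, whose common $\omega$-value must equal $h_{\mathcal{A}}(\omega)$. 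The only delicate step is the Minkowski-cancellation $\New(q_k) + \New(g) \subseteq \New(g) \Rightarrow \New(q_k) = \{0\}$, which is exactly why reducing to the squarefree $g$ is useful; everything else is bookkeeping with Newton polytopes.
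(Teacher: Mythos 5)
Your proof is correct and takes a genuinely different route from the paper's. The paper works directly with $f$: after deducing that each graded piece $g_j$ (grouped by $\omega$-value) vanishes on $\V(f)$, it invokes a degree comparison — that $\V(f)\subset\V(g_j)$ forces $\deg(g_j)\geq\deg(f)$, while $g_j$ being a sub-sum of $f$ gives $\deg(g_j)\leq\deg(f)$, hence $g_j = r\cdot f$ with $r$ constant — and then observes that all other $g_k$ must vanish. You instead first pass to the squarefree part $g$ of $f$ in the Laurent ring, use squarefreeness and the Nullstellensatz to get the divisibility $g\mid G_k$ outright, and then run a Minkowski-cancellation argument on Newton polytopes ($\New(q_k)+\New(g)\subseteq\New(g)$ forces $\New(q_k)=\{0\}$) to conclude each $G_k$ is a scalar multiple of $g$; finally you transfer the zero $\omega$-width from $g$ to $f$ via additivity of $\omega$-widths over the irreducible factors. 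The gain of your route is that the divisibility step, which the paper's degree argument tacitly needs (the claims ``$\deg(g_j)\geq\deg(f)$'' and ``$g_j = r\cdot f$'' from $\V(f)\subset\V(g_j)$ both presume $\langle f\rangle$ radical), is made explicit and justified by the squarefree reduction. The cost is some extra bookkeeping in transferring back from $g$ to $f$. One small imprecision: the line ``$\supp(f)=\supp(u)+\sum_i e_i\supp(f_i)$'' is not literally true (supports are not Minkowski-additive in general; Newton polytopes are), but your conclusion is fine since $\New(f)=\New(u)+\sum_i e_i\New(f_i)$ and $\omega$-widths add, so $\New(f)$, and hence $\mathcal A$, has $\omega$-width zero.
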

\begin{proof}
The equality $\V(f) = \V(f(t^{\omega_1}x_1,\ldots,t^{\omega_n}x_n))$ holds for all $t \in \C^\times$ if and only if for all $a \in \V(f)$,
\begin{align}
\label{eq:torusaction}
0=f(t^{\omega_1}a_1,\ldots,t^{\omega_n}a_n) &= \sum_{\alpha \in \mathcal A} c_\alpha t^{\langle \alpha, \omega \rangle} a^\alpha \nonumber \\
&= \sum_{k=-h_\calA(\omega)}^{h_\calA(\omega)}\left( \sum_{\substack{\alpha \in \mathcal A \\ \langle \alpha, \omega \rangle =k}} t^k c_\alpha a^\omega\right) \nonumber \\
&=\sum_{k=-h_\calA(\omega)}^{h_\calA(\omega)} t^k\mydefMATH{g_k(a)}.
\end{align}
The right-most-side is a polynomial in $t$ and thus $g_k(a)=0$ for all $k = -h_\calA(\omega),\ldots,h_\calA(\omega)$ and all $a \in \V(f)$. However, this means that $\V(f) \subset \V(g_k)$ for all $k$. Since $f$ is not identically zero, at least one $g_k$ is not. Suppose $g_j \neq 0$ for some $j$. Then containment of hypersurfaces implies $\deg(g_j)\geq \deg(f)$ and since $t^jg_j$ is a summand of \eqref{eq:torusaction}, these degrees must be the same. Containment of hypersurfaces also implies that $g_j(x)=r(x)\cdot f(x)$ for some $r \in \C[x]$, but since the degrees of $g_j$ and $f$ are equal, $r$ must be a constant implying $\V(f)=\V(g_j)$. Consequently, every other summand of \eqref{eq:torusaction} must be zero, proving that $\mathcal A$ is contained in the hyperplane $\langle \alpha, \omega \rangle - h_\mathcal A(\omega)=0$. 

The converse is true since if $\langle \alpha, \omega \rangle =h_\mathcal A(\omega)$ for all $\alpha \in \mathcal A$, then $f(t^{\omega_1}x_1,\ldots,t^{\omega_n}x_n) = t^{h_\mathcal A(\omega)} f(x)$, and thus cuts out the same variety as $f$ for any $t \in \C^\times$. 
\end{proof}
\begin{remark} 
Fix $r \in \mathbb{N}$ and $k=(k_1,\ldots,k_r) \in \mathbb{N}^r$ and consider the grouping of variables $\left\{\{x_{i,j}\}_{i=1}^{k_j}\right\}_{j=1}^r$.
 By definition of projective space, the zero set of a polynomial $$f=\sum_{\alpha=(\alpha^{(1)},\ldots,\alpha^{(r)}) \in \mathcal A} c_\alpha x_{i,1}^{\alpha^{(1)}}\cdots x_{i,r}^{\alpha^{(r)}}\in \C[x_{i,j}]$$ with support $\mathcal A$ is well-defined subvariety of $\P^{k_1} \times \cdots \times \P^{k_r}$ if and only if it is invariant under scaling any of the variable groups: for each $j \in [r]$ and $t \in \C^{\times}$, the polynomial $f$ is invariant under the action which multiplies each variable in the group $\{x_{i,j}\}_{i=1}^{k_j}$ by $t$. By Lemma \ref{lem:torusinvarianthyperplane}, this is equivalent to the condition that for all $\alpha \in \mathcal A$ and $j \in [r]$, there exists $d_j$ such that $|\alpha^{(j)}|=d_j$. The vector $d=(d_1,\ldots,d_r)$ is called the \mydef{multidegree} of $\V(f)$.  
\end{remark}

Lemma \ref{lem:torusinvarianthyperplane} has strong implications when considering invariants. Fix some support $\mathcal A \subset \Z^n$ and suppose that $\mathcal F \subset \C[c_\alpha]_{\alpha \in \mathcal A}$ is a polynomial in the coefficient space $\C^{\mathcal A}$ of all polynomials 
$$f = \sum_{\alpha \in \mathcal A} c_\alpha x^\alpha \in \C[x]$$ supported on $\mathcal A$.
Observe that an action of a group $G{\curvearrowright}\C^n$  naturally induces an action $G\curvearrowright \C^{\mathcal A}$ on the coefficient space. If for all $f \in \V(\mathcal F)$ and all $\sigma \in G$, we have that $\sigma \cdot f \in \V(\mathcal F)$, then we say that $\mathcal F$ is \mydef{invariant} under the action of $G$. 

\begin{proposition}
\label{prop:invariantresult}
Suppose that $\mathcal F \in \C[c_{\alpha}]_{\alpha \in \mathcal A}$ is a homogeneous polynomial of degree $D$ with variables in the coefficient space $\C^{\mathcal A}$ of all polynomials 
$$f=\sum_{\alpha \in \mathcal A} c_\alpha x^\alpha \in \C[x],$$
supported on $\mathcal A \subset \Z^n$. Suppose further that $|\alpha| = d$ for all $\alpha \in \mathcal A$. Let $A$ be the $n\times |\mathcal A|$ matrix whose columns are points in $\mathcal A$ and whose rows are $\{\omega_{x_1},\ldots,\omega_{x_n}\}$. 
\begin{enumerate}
\item If  $\mathcal F$ is invariant under the scaling $x_i \mapsto tx_i$ for some $i \in [n]$ and all $t \in \C^\times$, then $\mathcal F_{\omega_{x_i}}=F$.
\item Suppose $\mathcal F$ is invariant under all scalings and permutations of the variables $x_i$ and that $\R\mathcal A$ is $n$ dimensional. Then $p \in \New(\mathcal F)$ solves the linear equation
$$\left({A \atop \bone}\right) p  = \left(\frac{dD}{n},\ldots,\frac{dD}{n},D\right)^T.$$
In particular, $\New(\mathcal F)\subset \R_p^{|\mathcal A|}$ is contained in an affine linear space of codimension $n$.
\end{enumerate}
\end{proposition}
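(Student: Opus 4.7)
The plan is to reduce both parts to Lemma~\ref{lem:torusinvarianthyperplane}, which characterizes hyperplane-containment of a polynomial's support via torus invariance of its vanishing locus, applied to $\mathcal F$ viewed as a polynomial on the coefficient space $\C^{\mathcal A}$.

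For part (1), I would start by translating the scaling $x_i \mapsto t x_i$ on $\C^n$ into its induced action on $\C^{\mathcal A}$. Since a monomial $c_\alpha x^\alpha$ transforms to $t^{\alpha_i} c_\alpha x^\alpha$, the induced action on coefficients is $c_\alpha \mapsto t^{\alpha_i} c_\alpha$, whose weight vector in $\R^{\mathcal A}$ is precisely $\omega_{x_i}$. Invariance of $\V(\mathcal F)$ then becomes $\V(\mathcal F) = \V(\mathcal F((t^{\alpha_i} c_\alpha)_{\alpha \in \mathcal A}))$ for every $t \in \C^\times$. Applying Lemma~\ref{lem:torusinvarianthyperplane} to $\mathcal F$, viewed as an element of $\C[c_\alpha]$, with weight vector $\omega_{x_i}$, forces $\langle \beta, \omega_{x_i} \rangle$ to be constant on $\supp(\mathcal F)$; this is precisely the statement $\mathcal F_{\omega_{x_i}} = \mathcal F$.

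For part (2), applying part (1) for each $i \in [n]$ shows that $(Ap)_i = \langle \omega_{x_i}, p \rangle$ takes a single constant value $k_i$ on $\New(\mathcal F) = \conv(\supp(\mathcal F))$. Homogeneity of $\mathcal F$ of degree $D$ in the $c_\alpha$ forces $|\beta| = D$ for every $\beta \in \supp(\mathcal F)$, hence $\langle \bone, p \rangle = D$ on $\New(\mathcal F)$. Each $\sigma \in S_n$ acts on $\C^{\mathcal A}$ by the coordinate permutation $c_\alpha \mapsto c_{\sigma(\alpha)}$, and the induced permutation of $\R^{\mathcal A}$ carries $\omega_{x_i}$ to $\omega_{x_{\sigma^{-1}(i)}}$, so invariance of $\New(\mathcal F)$ under this $S_n$-action forces $k_1 = \cdots = k_n =: k$. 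Summing the relations $\langle \omega_{x_i}, p \rangle = k$ over $i$ and using $\sum_{i=1}^n \omega_{x_i} = d\,\bone$ (which is equivalent to the hypothesis $|\alpha|=d$) gives $n k = d \cdot D$, so $k = dD/n$. This yields the asserted linear equation. For the codimension claim, the identity $\sum_i \omega_{x_i} = d\,\bone$ again shows that the bottom row of $\left({A \atop \bone}\right)$ equals $\frac{1}{d}$ times the sum of its first $n$ rows, so this matrix has rank at most $n$. The hypothesis that $\R\mathcal A$ has dimension $n$, read as the linear span of $\mathcal A$ (since $|\alpha|=d$ already pins the affine span inside a hyperplane), gives $\rank(A) = n$, and the solution locus has codimension exactly $n$.

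The main obstacle will be the $S_n$-invariance step: one has to carefully lift the permutation of the variables $x_i$ to a well-defined action on $\C^{\mathcal A}$ (which implicitly presupposes that $\mathcal A$ itself is $S_n$-invariant, forced by the assumption that $\V(\mathcal F)$ is permutation-invariant) and verify that this induced action permutes the weight vectors $\{\omega_{x_1},\ldots,\omega_{x_n}\}$ in the expected way. Once that correspondence is set up, the remaining steps are direct linear algebra from the hypotheses $|\alpha|=d$ and the homogeneity of $\mathcal F$.
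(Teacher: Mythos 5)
Your proof is correct and follows the same route as the paper's: translate the scaling $x_i \mapsto t x_i$ into a weight action $c_\alpha \mapsto t^{\alpha_i} c_\alpha$ on the coefficient space, invoke Lemma~\ref{lem:torusinvarianthyperplane} with weight $\omega_{x_i}$ to get $\mathcal F_{\omega_{x_i}}=\mathcal F$, then use $S_n$-invariance to equate the support-function values $h_{\New(\mathcal F)}(\omega_{x_i})$ and the left-kernel relation $(1,\ldots,1,-d)\left({A \atop \bone}\right)=\bzero$ to compute the common value $dD/n$. Your observation that ``$\R\mathcal A$ is $n$-dimensional'' must mean the \emph{linear} span of $\mathcal A$ (the paper's stated convention makes $\R\mathcal A$ the affine span, which the constraint $|\alpha|=d$ confines to an $(n-1)$-dimensional hyperplane) is a genuine subtlety the paper elides, and it is exactly the reading needed for the codimension-$n$ claim.
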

\begin{proof}
Given $f = \{c_{\alpha}\}_{\alpha \in \calA}$, the action of $t \mapsto tx_1$ on $\C^n$ induces the action $$\mydefMATH{f_t}=f(tx_1,x_2,\ldots,x_n) = \left\{t^{\langle e_1,\alpha\rangle}c_{\alpha}\right\} =\left\{t^{(\omega_{x_1})_\alpha} c_\alpha\right\}$$
on the coefficients of $f$ and thus the variables of $\mathcal F$. If $\mathcal F$ is invariant under this action, then $f_t \in \V(\mathcal F)$ if and only if $f \in \V(\mathcal F)$ for all $t \in \C^\times$. Hence by Lemma \ref{lem:torusinvarianthyperplane} we have that $\mathcal F_{\omega_{x_1}}=\mathcal F$. The same argument applies for scaling any other variable.

If $\mathcal F$ is invariant under scaling any of the variables $x_1,\ldots,x_n$, then $P$ is contained in the intersection $\bigcap_{i=1}^n H_{i}$ where $$\mydefMATH{H_i} = \left\{p \in \R_p^{|\mathcal A|} \mymid \langle p, \omega_{x_i} \rangle = h_P(\omega_{x_i})\right\}$$ by part $(1)$. Since $\mathcal F$ is also invariant under the symmetric group $S_n$, the value of the support function $\mydefMATH{h}=h_P(\omega_{x_i})$ does not depend on $i$.
Since $\mathcal F$ is homogeneous, $P$ is also contained in the affine hyperplane $$\mydefMATH{H_{\deg}}=\left\{p \in \R^{|\calA|}_p \mymid \langle p, \bone \rangle = D\right\}.$$
Thus, the set $$\mydefMATH{H}=H_{\deg} \cap \left(\bigcap_{ i=1}^n H_{i} \right)$$ is the solution set of the matrix equation,
$$\left( A \atop \bone \right) p = (h,h,\ldots,h,D)^T.$$
Note that since $|\alpha|=d$ for all $\alpha \in \calA$, we have $(1,1,\ldots,1,-d)\left( A \atop \bone \right) =\bzero$. Therefore, $hn -dD=0$ and so $h=\frac{dD}{n}$.
\end{proof}

\subsection{Integer linear algebra and coordinate changes}
\label{subsubsection:integerlinear}
Supports of polynomials do not maintain their structure under generic linear changes of coordinates: for a generic linear map $\phi\colon \C^n \to \C^n$, the composition $f(\phi(z))$ has dense support $\deg(f)\Delta_{n}$. Supports do, however, respect partial evaluation in the following sense. Let $\mydefMATH{\pi_I}\colon \Z^n \to \Z^{|I|}$ be the projection onto the coordinates indexed by $I \subset [n]$. 
\begin{lemma}
Let $f \in \C[x]$ be a polynomial with support $\mathcal A$ and let $a_{k+1},\ldots,a_n \in \C^\times$ be general. Then the support of $f(x_1,\ldots,x_k,a_{k+1},\ldots,a_n)$ is the projection $\pi_{[k]}(\mathcal A)$.
\end{lemma}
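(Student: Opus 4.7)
The plan is to substitute, regroup terms by the projected exponent, and then argue that each grouped coefficient is a nonzero Laurent polynomial in the specialized variables, so that a general specialization avoids the union of their zero sets.

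First I would write
\begin{align*}
f(x_1,\ldots,x_k,a_{k+1},\ldots,a_n)
  &= \sum_{\alpha \in \mathcal A} c_\alpha\, x_1^{\alpha_1}\cdots x_k^{\alpha_k}\cdot a_{k+1}^{\alpha_{k+1}}\cdots a_n^{\alpha_n}\\
  &= \sum_{\beta \in \pi_{[k]}(\mathcal A)} g_\beta(a_{k+1},\ldots,a_n)\, x_1^{\beta_1}\cdots x_k^{\beta_k},
\end{align*}
where for each $\beta \in \pi_{[k]}(\mathcal A)$,
\[
  g_\beta(y_{k+1},\ldots,y_n)
  \;=\; \sum_{\substack{\alpha \in \mathcal A\\ \pi_{[k]}(\alpha)=\beta}} c_\alpha\, y_{k+1}^{\alpha_{k+1}}\cdots y_n^{\alpha_n}
\]
is a Laurent polynomial in $y_{k+1},\ldots,y_n$. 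This already shows the containment $\supp(f(x_1,\ldots,x_k,a_{k+1},\ldots,a_n)) \subseteq \pi_{[k]}(\mathcal A)$, regardless of the choice of $a_{k+1},\ldots,a_n$.

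Next I would show that each $g_\beta$ is not the zero Laurent polynomial. The exponent vectors $(\alpha_{k+1},\ldots,\alpha_n)$ appearing in $g_\beta$ are pairwise distinct, because distinct $\alpha,\alpha'\in\mathcal A$ with $\pi_{[k]}(\alpha)=\pi_{[k]}(\alpha')=\beta$ necessarily differ in some coordinate $i>k$. Hence $g_\beta$ is a linear combination of pairwise distinct monomials with nonzero coefficients $c_\alpha$, so $g_\beta \not\equiv 0$ as a function on $(\C^\times)^{n-k}$. Its zero locus $\V(g_\beta)\subset(\C^\times)^{n-k}$ is therefore a proper Zariski-closed subset.

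Finally, since $\pi_{[k]}(\mathcal A)$ is finite, the union
\[
  B \;=\; \bigcup_{\beta\in\pi_{[k]}(\mathcal A)} \V(g_\beta) \;\subsetneq\; (\C^\times)^{n-k}
\]
is a proper Zariski-closed subset by Lemma \ref{lem:unionsandintersections}. For any $(a_{k+1},\ldots,a_n)\in(\C^\times)^{n-k}\smallsetminus B$---which is the sense of ``general'' intended here---every $g_\beta(a_{k+1},\ldots,a_n)$ is nonzero, and therefore the support of the specialized polynomial is exactly $\pi_{[k]}(\mathcal A)$. The only point that could be thought of as nontrivial is the linear independence of the monomials in $g_\beta$, but this is an immediate consequence of the distinctness of exponents in a Laurent polynomial ring; beyond that, the proof is bookkeeping.
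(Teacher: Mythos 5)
Your proof is correct. The paper actually states this lemma without proof, and the argument you give---regroup by the projected exponent $\beta$, observe that each aggregated coefficient $g_\beta$ is a nontrivial Laurent polynomial because the truncated exponent vectors $(\alpha_{k+1},\ldots,\alpha_n)$ appearing in it are pairwise distinct and the $c_\alpha$ are nonzero, then choose $(a_{k+1},\ldots,a_n)$ outside the finite union of zero loci---is exactly the standard one, and it parallels the reasoning the paper does spell out in the paragraph immediately following (where genericity of the \emph{coefficients} of $f$ ensures that $f\circ\Phi$ has support $\varphi(\mathcal A)$). One pedantic point: Lemma~\ref{lem:unionsandintersections} only gives that $B$ is closed; concluding $B\subsetneq(\C^\times)^{n-k}$ additionally uses that $(\C^\times)^{n-k}$ is irreducible, so a finite union of proper closed subsets remains proper. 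That is worth a half-sentence but does not affect the correctness of your argument.
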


Supports of polynomials transform naturally under monomial changes of coordinates. Identifying the set $\Hom((\CC^\times)^n,\CC^\times)$ of characters on $(\CC^\times)^n$ with the free abelian group
$\ZZ^n$, a homomorphism $\Phi\colon(\CC^\times)^m\to(\CC^\times)^k$ is determined by $k$ characters of $(\CC^\times)^m$,
equivalently by a homomorphism (linear map) $\varphi\colon\ZZ^k\to\ZZ^m$ of free abelian groups. Note that $\varphi$ is also the map
pulling a character of $(\CC^\times)^k$ back along $\Phi$.
In particular, an invertible map $\Phi\colon(\CC^\times)^n\to(\CC^\times)^n$ (a monomial change of coordinates) pulls back to an
invertible map $\varphi\colon\ZZ^n\to\ZZ^n$, identifying $\GL(n,\ZZ)$ with the group of possible monomial coordinate changes.
We will write 
$\Phi=\varphi^* \text{ and } \varphi=\Phi^*$ 
for these, not to be confused with the notation for the homomorphism of coordinate rings induced by a regular map of varieties. 
If $\Phi(x)=(x^{\alpha_1},\dotsc,x^{\alpha_n})$ where the integer span of $\{\alpha_1,\dotsc,\alpha_n\}$ is $\ZZ^n$, then
the map $\varphi=\Phi^*\colon\ZZ^n\xrightarrow{\sim}\ZZ^n$ sends the $i$-th standard basis vector $e_i$ to $\alpha_i$ and is
represented by the invertible matrix $A$ whose $i$-th column is $\alpha_i$.

Suppose that $f$ is a polynomial on $(\CC^\times)^n$ with support $\calA$.
Given a homomorphism $\Phi\colon(\CC^\times)^m\to(\CC^\times)^n$, the composition $f(\Phi(z))$ for $z\in(\CC^\times)^m$ is
a polynomial supported on $\varphi(\calA)$, where the coefficient of $z^\beta$ is the sum of coefficients of $x^\alpha$ for
$\alpha\in \varphi^{-1}(\beta)\cap\calA$.
For generic choices of coefficients of $x^\alpha$, this sum is nonzero and so $f(\Phi(z))$ has support $\varphi(\calA)$.

\subsection{Smith normal form}
\label{subsubsection:smithnormalform}
Let $\calA=\{0,\alpha_1,\dotsc,\alpha_m\}\subset\ZZ^n$ be a collection of integer vectors.
The sublattice $\ZZ\calA\subset\ZZ^n$ that it generates is the image of a $\ZZ$-linear map $\ZZ^m\to\ZZ^n$ and is
represented by a $n\times m$ integer matrix \defcolor{$A$} whose columns are the vectors $a_i$.
Suppose that $\ZZ\calA$ has rank $k$.
The \demph{Smith normal form} of $A$ is a factorization into integer matrices
 \begin{equation}
  A\ =\ PDQ\,,
 \end{equation}
where $P\in\GL(n,\ZZ)$ and $Q\in\GL(m,\ZZ)$ are invertible,
and $D$ is the rectangular matrix whose only nonzero entries are $d_1,\dotsc,d_k$ along the diagonal of its principal 
$k\times k$ submatrix.
These are the  \demph{invariant factors} of $A$ and they satisfy $d_1|d_2|d_3|\dotsb|d_k$.
The sublattice $\ZZ\calA\subset\ZZ^n$ has a basis given by the columns of the matrix $PD$.
If we apply the coordinate change $P^{-1}$ to $\ZZ^n$, then $\ZZ\calA$ becomes the subset of the coordinate space
$\ZZ^k\oplus\bzero^{n-k}$ given by $d_1\ZZ\oplus d_2\ZZ\oplus\dotsb\oplus d_k\ZZ\oplus\bzero^{n-k}$.

The Smith normal form is also useful in solving binomial equations over $(\C^\times)^n$. Fix a collection $F \subset \C[x]$ of binomials 
\begin{align*}
a_1x^{\alpha_1}=b_1x^{\beta_1} \quad a_2x^{\alpha_2}=b_2x^{\beta_2} \quad \ldots \quad a_nx^{\alpha_n}=b_nx^{\beta_n}
\end{align*}
 with $a_i,b_i \in \C^\times$ for $i=1,\ldots,n$. Recall that we can scale the equations and translate their support so that $a_i=1$ and  $\beta_i=0$ for all $i=1,\ldots,n$. We now assume our system $F$ is of the form
\begin{align}
\label{eq:binomial}
x^{\alpha_1}=b_1 \quad x^{\alpha_2}&=b_2  \quad \ldots \quad
x^{\alpha_n}=b_n.  
\end{align}
It is useful to use matrices as exponents. For example, we encode $x^{\alpha_1}$ as $(x_1,\ldots,x_n)^{((\alpha_1)_1,\ldots,(\alpha_1)_n)}$. Letting $A$ be the matrix whose columns are $\alpha_1,\ldots,\alpha_n$, we write $x^{ A} = x^{(\alpha_1,\ldots,\alpha_n)}=(x^{\alpha_1},\ldots,x^{\alpha_n})$ so that \eqref{eq:binomial} is written as $x^A=b$.

Assume for simplicity that $\mathcal A$ spans $\R^n$ so that $d_n$ of a  Smith normal form $ A= PDQ$ is nonzero. Then 
$$(x^A)^{Q^{-1}} = b^{Q^{-1}}$$
and setting $z^{P^{-1}} = x$ gives
\begin{equation}
\label{eq:easybinomial}
z^{P^{-1} A Q^{-1}} = z^{D} = b^{Q^{-1}}.
\end{equation}
whose solutions are clearly the set $\mathcal Z = \{z \mid z_i \text{ is a } d_i\text{-th root of } b_i\}$ of $\prod_{i=1}^n d_i$ points.
Taking $x=z^P$ expresses these solutions in terms of $x$.

\subsection{Centroids and trace curves}
Given an affine variety $X \subset \C^n$ and a generic linear space $L$ of complementary dimension to $X$, the intersection $X \cap L$ is finite and consists of $\deg(X)$ points. The \mydef{centroid} of $X \cap L$, denoted $\mu(X \cap L)$ is the coordinate-wise average of those points. A family of linear spaces $\{L_t\}_{t \in \C}$ is a \mydef{pencil} if there exists a vector $v \in \C^n$ such that $L_t=t\cdot v + L_0$ for all $t \in \C$. The following lemma is the basis for the numerical algorithm known as the \mydef{trace test} (see Section~\ref{subsubsection:solvingviamonodromy}). 
\begin{lemma}
\label{lem:traceline}
Let $X \subset \C^n$ be an irreducible affine variety and let $L_t$ be a general pencil of linear spaces of complementary dimension. The Zariski closure of the union $$\mydefMATH{\mu(X \cap L_t)}=\bigcup_{t \in \C} \mu(X \cap L_t)$$ is  an affine line. 
\end{lemma}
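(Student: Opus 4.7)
My plan is to slice $X$ with the affine sweep of the pencil to reduce to analyzing a curve, then argue that each coordinate of $\mu(X \cap L_t)$ is an affine-linear function of $t$ via field-trace theory combined with an asymptotic analysis at infinity.

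Let $k = \dim(X)$ and set $M := L_0 + \C v \subset \C^n$, the $(n-k+1)$-dimensional affine sweep of the pencil. For a generic pencil, the Bertini-type results already in place (Corollary \ref{cor:hyperplaneresult1}, Corollary \ref{lem:hyperplaneresult2}, Lemma \ref{lem:whenhyperplanesintersect}) ensure that $Y := X \cap M$ is an irreducible curve of degree $d = \deg(X)$ and that the affine-linear map $\ell \colon M \to \C$ whose fibers are the $L_t$ restricts to a degree-$d$ branched cover $\ell|_Y \colon Y \to \C$. I would choose coordinates on $\C^n$ so that $\ell = x_1$; then the first coordinate of every point of $X \cap L_t$ equals $t$, so the first coordinate of $\mu(X \cap L_t)$ is $t$ itself. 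For $i \geq 2$ define the trace $T_i(t) := \sum_{p \in \ell|_Y^{-1}(t)} x_i(p)$, so that $d \cdot \mu(X \cap L_t) = (dt, T_2(t), \ldots, T_n(t))$, and it suffices to show each $T_i$ is an affine-linear function of $t$.

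I would next argue that each $T_i$ is a polynomial in $t$ of degree at most one. Polynomiality follows from standard field-trace considerations: up to sign, $T_i(t)$ is the coefficient of degree $d-1$ in the characteristic polynomial of multiplication by $x_i|_Y$ in the degree-$d$ extension $\C(Y)/\ell^*\C(t)$, and after possibly shrinking the base to a Noether-normalization-style affine open, $x_i|_Y$ is integral over $\C[t]$, so these coefficients lie in the integral closure of $\C[t]$ in $\C(t)$, namely $\C[t]$ itself. For the degree bound I would pass to the projective closure $\bar Y \subset \P^{n-k+1}$ with coordinates $[y_0 : y_1 : \cdots : y_{n-k+1}]$ chosen so that $\ell = y_1/y_0$ and $x_i = y_i/y_0$. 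The branches of $\bar Y$ along which $\ell \to \infty$ are those at points $q \in \bar Y$ with $y_0(q) = 0$ and $y_1(q) \neq 0$; along any such branch, $\ell$ and $x_i$ acquire poles of the same order while $x_i/\ell = y_i/y_1$ extends to the finite value $y_i(q)/y_1(q)$, giving $x_i = (y_i(q)/y_1(q))\,\ell + O(1)$ in that branch. Summing leading terms across branches at infinity (with multiplicities from the normalization of $\bar Y$) yields $T_i(t) = C_i t + O(1)$, and combined with polynomiality this forces $\deg T_i \leq 1$. Therefore $t \mapsto \mu(X \cap L_t)$ is affine-linear from $\C$ to $\C^n$, and since its first coordinate is already the non-constant function $t$, its image (and the Zariski closure thereof) is an affine line.

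The main obstacle will be the compactification step: when $\bar Y$ has singular points at infinity, one must pass to its normalization to separate branches correctly and to sum the leading Laurent contributions with the right multiplicities. I would also need to verify that the ``general pencil'' hypothesis rules out branches at infinity with $y_1(q) = 0$ — i.e.\ tangent directions of $\bar Y$ at infinity aligned with $v$ — which is a proper closed condition on the pair $(L_0, v)$ and hence avoided on a dense open subset of pencils. Polynomiality of $T_i$ and the final assembly into a line are routine by comparison.
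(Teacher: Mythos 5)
Your approach is genuinely different from the paper's. The paper reduces by iterated generic projections to the hypersurface case, then intersects with the plane $P=\bigcup_t L_t$ to arrive at a plane curve, and there makes an explicit Vieta's-formula observation: after a generic linear change of coordinates the defining polynomial has full support $d\Delta_2$, so the coefficient of $y^{d-1}$ (which equals $-\sum_i y_i(t)$) is affine-linear in $x$. You instead slice directly to the curve $Y=X\cap M$ in the sweep $M$ of the pencil, and control the trace $T_i(t)=\sum_p x_i(p)$ abstractly: polynomiality via field-trace and integral-closure considerations, and the degree bound via an asymptotic analysis of the branches of $\bar Y$ at infinity. Your reduction to a curve is cleaner than the paper's two-step reduction, and your degree bound is conceptually illuminating, at the cost of invoking normalization and Laurent-series analysis at infinity where the paper needs only a one-line observation about dense support.

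There is, however, a genuine gap in your polynomiality step. You write that after shrinking the base to a ``Noether-normalization-style affine open'' the trace $T_i$ lies in the integral closure of $\C[t]$, ``namely $\C[t]$ itself.'' But once you shrink, you are integral over a localization $\C[t]_f$, and the integral closure of $\C[t]_f$ in $\C(t)$ is $\C[t]_f$, not $\C[t]$; this only shows $T_i$ is regular away from $V(f)$, not that it is a polynomial. What actually delivers polynomiality is properness of $\ell\colon Y\to\C_t$, which is exactly the genericity condition you invoke later for the degree bound: when no branch of $\bar Y$ at infinity lies on $V(y_1)$, the rational function $\ell$ sends all of $\bar Y\cap H_\infty$ to $\infty\in\P^1$, so $Y\to\C_t$ is finite, $\C[Y]$ is integral over $\C[t]$ with no shrinking, and $T_i\in\C[t]$ directly. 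You should tie that genericity condition to both halves of the argument; as written, the polynomiality half does not go through.
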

\begin{proof}
Let $X\subset \C^n$ be an irreducible affine variety of dimension $m$. Observe that if $L$ is a linear space of complementary dimension, then $\pi(\mu(X \cap L))=\mu(\pi(X) \cap \pi(L))$ where $\pi\colon  \C^n \to \C^{n-1}$ is any projection such that $\dim(\pi(L))=\dim(L)-1$. Projecting this way $n-m-1$ times produces $\pi'\colon \C^n \to \C^{m+1}$ so that $\dim(\pi'(L))=1$. Thus, $\pi'(X)$ is a hypersurface in $\C^{m+1}$ and $\mu(X \cap L) \in \pi'^{-1}(\mu(\pi'(X) \cap \pi'(L))$. Let $v_1,\ldots,v_{n-m}$ span $L$ and define $\pi_i\colon \C^n \to \C^{m+1}$ to be the projection such that $\dim(\pi_i(v_j))=0$ whenever $i \neq j$. Then the intersection 
$$\bigcap_{i=1}^{n-m} \pi_i^{-1}(\mu(\pi_i(X) \cap \pi_i(L)))$$ is the point $\mu(X \cap L)$. Thus, it is enough to prove the statement for when $X$ is a hypersurface.

Let $X \subset \C^n$ be a hypersurface, let  $L_t$ be a general pencil of lines, and let $P=\bigcup_{t \in \C} L_t$. Consider $X'=X \cap P$. By Lemma \ref{cor:hyperplaneresult1}, $X'$ is a curve and so it is enough to prove the statement for plane curves.

Suppose $\V(f)=X \subset \C^2$ is a plane curve of degree $d$, and $L_t$ a general pencil of lines. After an action by rotation, we may assume that $L_t$ is the family $\V(x-t)$. This rotation is a generic linear change of coordinates because the family $L_t$ is general and so the support of $f$ must be $d\Delta_{n}$. Since scaling does not change the zero set, we assume that the coefficient of $y^{d}$ is one. Then $X \cap L_t = X \cap \V(x-t)$ has points $\{(t,y_i(t))\}_{i=1}^{d}$ where $y_i(t)$ are the zeros of
$$f(t,y) = \prod_{i=1}^{d} (y-y_i(t)) = y^{d}-(y_1(t)+\cdots+y_{d}(t)) y^{d-1} + \cdots$$
for some rational functions $y_i(t)$.
On the other hand, the coefficient of $y^{d-1}$ in $f \in \C[x][y]$ is $c_{(1,d-1)}x+c_{(0,d-1)}$ and so $-(y_1(t)+\cdots+y_d(t)) = c_{(1,d-1)}x+c_{(0,d-1)}$. Since the $y$-coordinate of $\mu(X \cap L_t)$ is $\frac{1}{d} (y_1(t)+\cdots+y_d(t))$, the points satisfying $-dy=c_{(1,d-1)}x+c_{(0,d-1)}$ are the points which are centroids of this family. In other words, the centroids are on the graph of the function
\begin{equation}
\label{eq:traceline}
y=-\frac{1}{d} (c_{(1,d-1)}x+c_{(0,d-1)}).
\end{equation}
\end{proof}
The line of centroids guaranteed by Lemma \ref{lem:traceline} is called the \mydef{trace line} of $X$ with respect to $L_t$.

\begin{figure}[!htpb]
\includegraphics[scale=0.5]{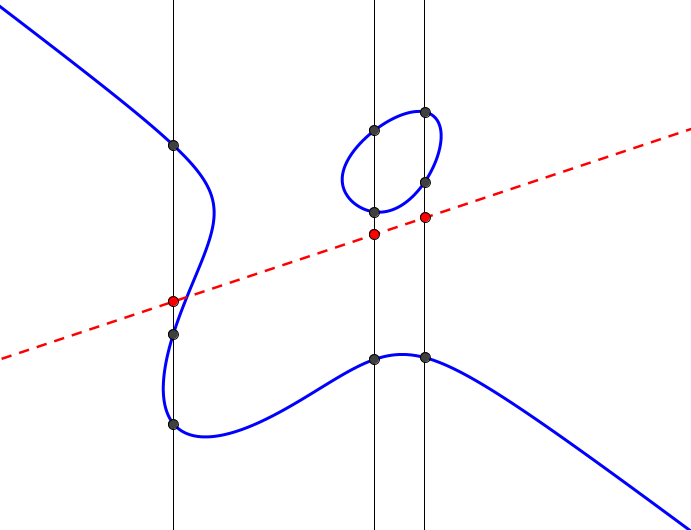}
\caption{\label{fig:tracetest}A plane cubic $\V(f)$ (blue), the trace line $\mu(\V(f,x-t))$ (red), and the specific centroids $\mu(\V(f,x+3)),\mu(\V(f,x-1)),\mu(\V(f,x-2))$.}
\end{figure}

\begin{example}
Let $$f=2-4x+x^3 +(-2-2x)y+(3-x)y^2+y^3$$
and let $L_t=\V(x-t)$
so that \eqref{eq:traceline} computes the trace line of $\V(f)$ to be $\V\left(y-\frac{1}{3}x+1\right)$. The cubic $\V(f)$ and its trace line are depicted in Figure \ref{fig:tracetest}. 
Notice that even though many lines $L_t$ do not intersect $\V(f)$ in three real points, the centroids are still real. This is because the points $\V(f) \cap L_t$ must appear in complex conjugates and so their imaginary parts will cancel in the average. \hfill $\diamond$\end{example}
When the Newton polytope of a plane curve $X$ of degree $d$ is smaller than $d\Delta_2$ the family of lines $\V(x-t)$ is not generic with respect to $X$. Therefore, Equation \ref{eq:traceline} does not compute the curve of centroids. In particular, the closure of these centroids may not be a line. The following result gives a formula for the curve of centroids when the family $\V(x-t)$ is not generic with respect to $X$.
\begin{lemma}
\label{lem:nonlineartrace}
Suppose $$f=\sum_{(i,j) \in \mathcal A} c_{i,j}x^iy^j \in \C[x,y]$$ for $\mathcal A \subset \Z_{\geq 0}^2$ and $L_t=\V(x-t)$. Then $$\overline{\bigcup_{t \in \C}\mu(\V(f) \cap L_t)}=\V\left(\sum_{i=0}^{\deg_{x}(f)} c_{i,\deg_{y}(f)-1}   x^i + \deg_{y}(f)y\left(\sum_{i=0}^{\deg_{x}(f)} c_{i,\deg_{y}(f)} x^i \right)\right),$$
where $\deg_x(f)=\max_{(i,j) \in \mathcal A} (i)$ and $\deg_y(f)=\max_{(i,j) \in \mathcal A} (j)$. 
\end{lemma}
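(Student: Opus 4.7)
The plan is to reduce to Vieta's formulas applied to the restriction of $f$ to the vertical line $L_t = \V(x-t)$, in the same spirit as the proof of Lemma \ref{lem:traceline}, but without the simplification afforded by the genericity assumption that forces the leading $y$-coefficient to be constant.

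Let me set $d_y=\deg_y(f)$ and group $f$ by powers of $y$, writing
\begin{equation*}
f(x,y) \;=\; q(x)\, y^{d_y} \;+\; r(x)\, y^{d_y-1} \;+\; (\text{terms of lower $y$-degree}),
\end{equation*}
where $q(x)=\sum_i c_{i,d_y}x^i$ and $r(x)=\sum_i c_{i,d_y-1}x^i$. For $t$ in the open set $\C\setminus\V(q)$, the univariate polynomial $f(t,y)$ has degree exactly $d_y$, so its $d_y$ complex roots $y_1(t),\ldots,y_{d_y}(t)$ (counted with multiplicity, which is generically one) are the $y$-coordinates of the points of $\V(f)\cap L_t$. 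Vieta's formula gives
\begin{equation*}
y_1(t)+\cdots+y_{d_y}(t) \;=\; -\,\frac{r(t)}{q(t)},
\end{equation*}
so the $y$-coordinate of the centroid is $\mu_y(t)=-r(t)/(d_y q(t))$, and the point $(t,\mu_y(t))$ lies on the curve $\V(r(x)+d_y\,y\,q(x))$.

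Having identified this equation, I would finish with a two-way closure argument. For the forward inclusion, since $\V(r+d_y y q)$ is Zariski closed and contains the set $\{(t,\mu_y(t)) : t\in\C\setminus\V(q)\}$, it contains the closure of $\bigcup_t\mu(\V(f)\cap L_t)$. For the reverse inclusion, observe that on the dense open subset of $\V(r+d_y y q)$ where $q(x_0)\neq 0$, the $y$-coordinate is forced to be $-r(x_0)/(d_y q(x_0))=\mu_y(x_0)$; hence this dense open subset lies in $\bigcup_t\mu(\V(f)\cap L_t)$, and taking closures gives the stated equality.

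The main place I expect friction is the behavior of both sides along the finite set $\V(q)\subset \C_x$ where the intersection $\V(f)\cap L_t$ drops degree. If $q$ and $r$ share a common zero $t_0$ then $\V(r+d_y y q)$ picks up the vertical line $\V(x-t_0)$, and one must argue that this is either also a limit of centroids (when $q$ and $r$ vanish to the same order so that $\mu_y(t)$ extends regularly across $t_0$) or that it is absorbed in taking the Zariski closure; conversely, if $q$ vanishes to higher order at $t_0$ than $r$, then $\mu_y(t)\to\infty$ and the missing asymptotic behavior sits at infinity rather than in $\C^2$. The cleanest resolution is to view the centroid assignment $t\mapsto(t,\mu_y(t))$ as a rational map $\C\dashrightarrow\C^2$ whose image closure is the zero set of the irreducible factor of $r(x)+d_y y q(x)$ on which the last $y$-coordinate is nonconstant, and this matches the stated polynomial on the dense open subset where $q\neq 0$.
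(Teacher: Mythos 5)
Your argument is essentially the same as the paper's: write $f(t,y)$ as a polynomial in $y$, divide by the leading coefficient $q(t)=\sum_i c_{i,\deg_y(f)}t^i$, read the negative sum of roots off the coefficient of $y^{\deg_y(f)-1}$, and conclude that the $y$-coordinate of the centroid over $x=t$ is $-r(t)/(\deg_y(f)\,q(t))$ with $r(x)=\sum_i c_{i,\deg_y(f)-1}x^i$; the paper then finishes with a bare ``clearing denominators gives the result.'' Where you add value is in treating the Zariski-closure assertion carefully and isolating the genuine edge case: if $q$ and $r$ share a zero $t_0$, then $\V(r+\deg_y(f)\,y\,q)$ acquires the vertical line $\V(x-t_0)$, while the left-hand side is the closure of an irreducible rational parametrization $t\mapsto(t,-r(t)/(\deg_y(f)q(t)))$ and so is an irreducible curve that cannot contain such a line in addition to the generic branch. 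Thus the stated equality is literally correct only when $\gcd(q,r)=1$ (which holds generically, and in the example that follows in the paper); the paper implicitly assumes this. One small slip in your closing paragraph: the extraneous factors $\V(x-t_0)$ are the ones on which the $x$-coordinate is constant, not the $y$-coordinate, so you want the irreducible factor of $r+\deg_y(f)\,y\,q$ on which $x$ is nonconstant. With that correction your closure argument is sound and strictly more careful than the paper's own.
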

\begin{proof}
As with the proof of Lemma \ref{lem:traceline}, we take 
\begin{align*}
f(t,y) &=  \left(\sum_{i=0}^{\deg_x(f)} c_{i,\deg_y(f)} t^i \right) y^d + \left(\sum_{i=0}^{\deg_x(f)} c_{i,\deg_y(f)-1} t^i \right)y^{\deg_y(f)-1} + \cdots
\end{align*}
and writing $f(t,y)$ as a monic polynomial tells us that 
$$-(y_1(t)+\cdots+y_{\deg_y(f)}(t)) = \frac{\left(\sum_{i=0}^{\deg_x(f)} c_{i,\deg_y(f)-1} t^i \right)}{\left(\sum_{i=0}^{\deg_x(f)} c_{i,\deg_y(f)} t^i \right)}.$$
Since $x=t$ and the $y$-coordinate of $\mu(X \cap L_t)$ is $\frac {1}{\deg_y(f)}(y_1(t)+\cdots+y_{\deg_y(f)}(t))$ we write this as $$-\deg_y(f)y=\frac{\left(\sum_{i=0}^{\deg_x(f)} c_{i,\deg_y(f)-1} x^i \right)}{\left(\sum_{i=0}^{\deg_x(f)} c_{i,\deg_y(f)} x^i \right)},$$
and clearing denominators gives the result.
\end{proof}
When the family $L_t$ is not general as in Lemma \ref{lem:nonlineartrace}, we define the \mydef{trace curve} of $X$ with respect to $L_t$ to be the closure of the set of centroids of $X \cap L_t$ for $t \in \C$.
\begin{example}
Consider the quartic curve $$f=1-x+x^2+(5+x-3x^2)y+(-3+3x-x^2)y^2$$ in $\C^2$ whose Newton polytope, support, and coefficients are depicted in Figure \ref{fig:nonlineartrace}. 
\begin{figure}[!htpb]
\includegraphics[scale=0.35]{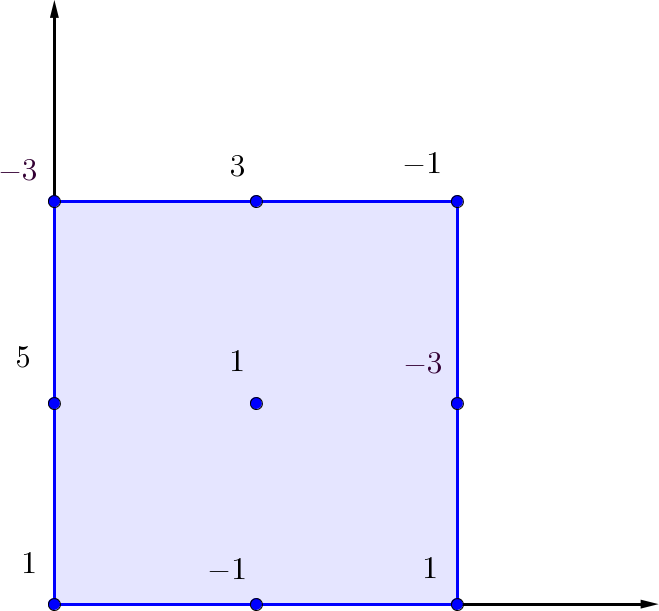}
\includegraphics[scale=0.35]{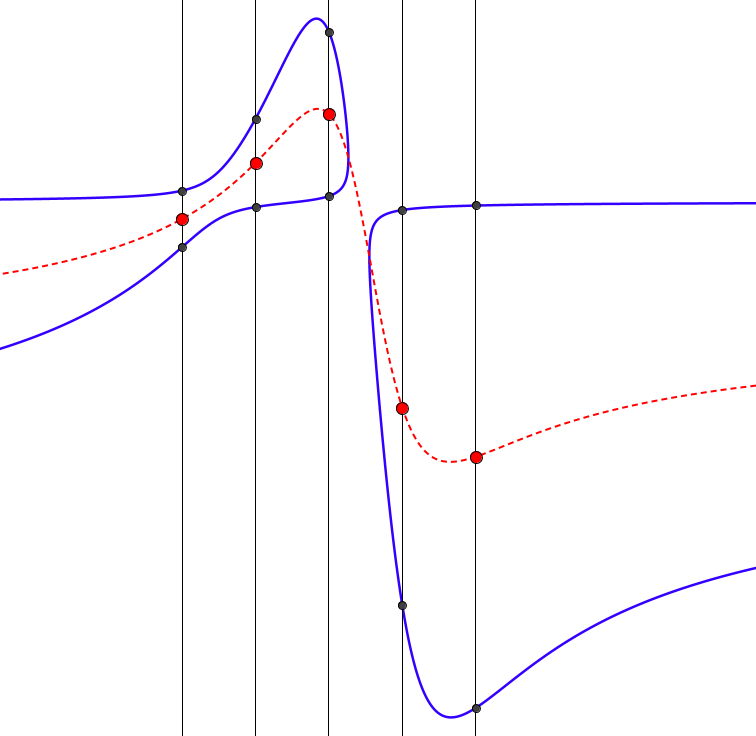}
\caption{Left: The Newton polytope, support, and coefficients of $f$. Right: The curve $\V(f)$ (blue), the trace curve of $\V(f)$ with respect to $\V(x-t)$ (red), and five lines in the family $L_t$ along with the centroids of their intersections with $\V(f)$.}
\label{fig:nonlineartrace}
\end{figure}

The equation of the trace curve of $\V(f)$ with respect to the nongeneric family of lines $\V(x-t)$ is 
$$g=(5+x-3x^2)+2y(-3+3x-x^2).$$ Lemma \ref{lem:nonlineartrace} essentially states that the equation $g$ can be read off from the coefficients of the top two rows of the polytope $\New(f)$. \hfill $\diamond$
\end{example}
If $X=X_1 \cup \cdots \cup X_r \subset \C^n$ is a reducible affine variety and $L$ is a generic linear space of complementary dimension, then $\mu(X \cap L)=\frac{1}{\deg(X)} \sum_{i=1}^r \mu(X_i \cap L)\cdot \deg(X_i)$ and so we have the following corollary.
\begin{corollary}
Let $X \subset \C^n$ be an affine variety which is possibly reducible and let $L_t$ be a general pencil of linear spaces of complementary dimension. The union of the centroids of the intersections $X \cap L_t$ is an affine line.
\end{corollary}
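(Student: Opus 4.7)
The plan is to reduce the reducible case to the irreducible case, which has already been handled by Lemma \ref{lem:traceline}, and then assemble the components using the weighted-centroid identity $\mu(X \cap L) = \frac{1}{\deg(X)}\sum_{i=1}^r \deg(X_i)\cdot \mu(X_i \cap L_t)$ recalled in the paragraph just preceding the corollary.

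First, I would observe that a general pencil $L_t$ is general with respect to each irreducible component $X_i$ simultaneously: the condition on $L_t$ of being ``general with respect to $X_i$'' in the sense of Lemma \ref{lem:traceline} is a dense open condition in the parameter space of pencils of linear spaces of complementary dimension, and a finite intersection of dense open sets is dense open. Hence I may assume $L_t$ is general with respect to every $X_i$.

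Next, I would apply Lemma \ref{lem:traceline} to each irreducible component $X_i$ to conclude that $\overline{\bigcup_{t \in \C} \mu(X_i \cap L_t)}$ is an affine line $\ell_i \subset \C^n$. The key observation I need, which follows from the explicit parametrization in the proof of Lemma \ref{lem:traceline} (equation \eqref{eq:traceline}), is that the centroid $\mu(X_i \cap L_t)$ depends \emph{affinely} on $t$: there exist $a_i, v_i \in \C^n$ such that $\mu(X_i \cap L_t) = a_i + t\cdot v_i$ for all $t$ in the dense open subset of $\C$ over which the intersection is transverse. Then I would compute
\[
\mu(X \cap L_t) \;=\; \frac{1}{\deg(X)}\sum_{i=1}^r \deg(X_i)\bigl(a_i + t\cdot v_i\bigr) \;=\; A + t\cdot V,
\]
where $A = \frac{1}{\deg(X)}\sum_i \deg(X_i) a_i$ and $V = \frac{1}{\deg(X)}\sum_i \deg(X_i) v_i$. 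Since $t \mapsto A + tV$ is an affine-linear map $\C \to \C^n$, its image is either a single point or an affine line; in either case, its Zariski closure is an affine linear subvariety of dimension at most one. Taking the closure of the union of centroids completes the argument.

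The only mild obstacle is verifying the affine dependence of $\mu(X_i \cap L_t)$ on $t$, which is not quite stated as a separate fact in Lemma \ref{lem:traceline} but is evident from its proof: after reducing to a plane curve and rotating the pencil to $\V(x - t)$, the $y$-coordinate of the centroid is given by \eqref{eq:traceline} as an affine function of $x = t$, and the inverse projections used to reduce to the planar case are themselves linear, so affine dependence is preserved throughout. This affine parametrization is what makes the weighted sum of centroids again affine in $t$, and hence delivers the line.
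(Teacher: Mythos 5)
Your proposal is correct and follows the same route the paper takes: the paper's ``proof'' is just the one-line remark immediately preceding the corollary, namely that $\mu(X\cap L_t)$ is the degree-weighted average of the $\mu(X_i\cap L_t)$, and you assemble the result from Lemma \ref{lem:traceline} applied to each component. Your extra observation---that what one must actually extract from the proof of Lemma \ref{lem:traceline} is not merely that each trace is a line but that $t\mapsto\mu(X_i\cap L_t)$ is \emph{affine} in $t$ (so that the weighted average is again affine)---is the right thing to notice and is left implicit in the paper; you are also right that this affine dependence is visible in equation \eqref{eq:traceline} and is preserved by the linear reductions in the lemma's proof.
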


\section{Tropical geometry}
\label{subsection:tropicalgeometry}

Newton polytopes are intimately related to tropical geometry. We only begin to touch on the topic here and encourage the reader to reference  \cite{MacSturm} for a more extensive treatment.

The tropicalization of a variety depends on the choice of a valuation $\nu$ on the base field involved (in our case $\mathbb{C}$). Relevant to this document is the trivial valuation: $\nu(c)=0$ for all $c \in \mathbb{C}^\times$. With this valuation, the \mydef{tropicalization} of a polynomial $$f=\sum_{\alpha \in \mathcal A} c_\alpha x^\alpha,  \quad \mathcal A = \supp(f)$$ 
is the map \begin{align}\label{eqdef:tropf}\mydefMATH{\trop(f)}\colon \mathbb{R}^n &\to \mathbb{R} \\
\omega&\mapsto \max_{\alpha \in \calA}\langle \alpha, \omega \rangle \nonumber
\end{align} and the \mydef{tropicalization} of the hypersurface $\V(f)$ is 
\begin{equation}
\label{eqdef:tropX}
\mydefMATH{\trop(\V(f))}=\{\omega \in \mathbb{R}^n \mid \text{ the maximum in }\trop(f)(\omega)\text{ is attained at least twice}\}.
\end{equation}

By \eqref{eqdef:tropf}, $\trop(f)$ is the same function as $h_{\New(f)}$ and by \eqref{eqdef:tropX}, the tropicalization of $\V(f)$ is the codimension $1$ part of the normal fan of the Newton polytope of $f$, namely $\mathcal N^{(1)}(\New(f))$ (see Section~\ref{subsection:representingpolytopes}).  

Let $P=\New(f)$, and fix a monomial change of coordinates $\Phi\colon (\C^\times)^n \to (\C^\times)^n$  with $\varphi=\Phi^*$ so that we have $Q=\varphi(P)=\New(f \circ \Phi)$. The map $\varphi$ induces a map in the opposite direction on functionals $\{\alpha \mapsto \langle \alpha, \omega \rangle \mid \omega \in \R^n\}$. Consequently, $\omega$ is an element of $\trop(\V(f))$ if and only if $\varphi^{-1}(\omega) \in \trop(\V(f \circ \Phi))$ and so
$$\varphi^{-1}(\trop(\V(f))) = \trop(\V(f \circ \Phi)),$$
or equivalently,
\begin{equation}
\trop(\V(f))=\varphi(\trop(\V(f \circ \Phi))).
\label{eq:tropunderchange}
\end{equation}

The \mydef{tropicalization} of $\V(I)$ for some ideal $I \subseteq \mathbb{C}[x_1,\ldots,x_n]$ is the intersection $$\mydefMATH{\trop(\V(I))}=\bigcap_{f \in I} \trop(\V(f)).$$

Hept and Theobald in \cite{HeptTheobald}, motivated by the results of Bieri and Groves in \cite{BieriGroves},  investigated how to write $\trop(\V(I))$ as an intersection of finitely many tropical hypersurfaces coming from projections. The following is a consequence of the proof of Theorem 1.1 in \cite{HeptTheobald}.
\begin{theorem}
 If $I\subseteq \mathbb{C}[x]$ is an $m$-dimensional prime ideal, and $\{\pi_i\colon \R^n \to \R^{m+1}\}_{i=0}^{n-m}$ are generic projections, 
 $$\trop(\V(I)) =  \bigcap_{i=0}^{n-m} \pi_i^{-1}(\pi_i(\trop(\V(I)))$$ 
 where each $\pi_i^{-1}(\pi_i(\trop(\V(I))))$ is a tropical hypersurface. 
\label{thm:TropicalProjection}
\end{theorem}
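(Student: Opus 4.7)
The plan is to prove the two set-theoretic inclusions separately, leveraging the compatibility of tropicalization with monomial coordinate changes established in equation~\eqref{eq:tropunderchange}.

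The forward inclusion $\trop(\V(I)) \subseteq \bigcap_{i=0}^{n-m} \pi_i^{-1}(\pi_i(\trop(\V(I))))$ is tautological: for any $\omega \in \trop(\V(I))$, we trivially have $\omega \in \pi_i^{-1}(\pi_i(\omega)) \subseteq \pi_i^{-1}(\pi_i(\trop(\V(I))))$ for each $i$, since $\pi_i(\omega)$ lies in the image $\pi_i(\trop(\V(I)))$.

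The substantive content is in the reverse inclusion and in verifying that each $\pi_i^{-1}(\pi_i(\trop(\V(I))))$ really is a tropical hypersurface. For the latter, I would use the theorem of Bieri and Groves (cited in the preamble) guaranteeing that $\trop(\V(I))$ is a pure $m$-dimensional polyhedral fan in $\R^n$. Each $\pi_i$ dualizes to a monomial map $\Phi_i\colon(\C^\times)^n \to (\C^\times)^{m+1}$ (interpreting $\pi_i$ as an integer linear map between character lattices, as in Section~\ref{subsubsection:integerlinear}). The closure of the image $\overline{\Phi_i(\V(I))}\subseteq(\C^\times)^{m+1}$ is an $m$-dimensional subvariety of an $(m{+}1)$-dimensional torus, hence a hypersurface $\V(h_i)$ for some Laurent polynomial $h_i$. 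Applying identity~\eqref{eq:tropunderchange} gives $\pi_i(\trop(\V(I))) = \trop(\V(h_i))$, so $\pi_i^{-1}(\pi_i(\trop(\V(I)))) = \trop(\V(h_i \circ \Phi_i))$ is indeed a tropical hypersurface in $\R^n$.

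For the reverse inclusion, I would argue contrapositively: fix $\omega_0 \in \R^n \smallsetminus \trop(\V(I))$ and exhibit some index $i$ with $\pi_i(\omega_0) \notin \pi_i(\trop(\V(I)))$. Because $\trop(\V(I))$ is a finite union of closed polyhedral cones, its complement has finitely many connected components, and it suffices to handle one representative $\omega_0$ per component. For a single $\omega_0$ outside $\trop(\V(I))$, the condition $\pi_i(\omega_0) \in \pi_i(\trop(\V(I)))$ is equivalent to the affine fiber $\pi_i^{-1}(\pi_i(\omega_0))$ meeting $\trop(\V(I))$; this fiber has dimension $n - (m+1)$, the target is $m$-dimensional, and the ambient space is $n$-dimensional, so a dimension count shows the locus of projections $\pi_i$ for which this intersection is nonempty is a proper (closed) subvariety of the space of linear maps $\R^n \to \R^{m+1}$.

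The main obstacle is to upgrade these pointwise genericity statements into a single statement that $n-m+1$ projections chosen simultaneously-generically suffice to cut $\trop(\V(I))$ out exactly. The key is that the complement of $\trop(\V(I))$ has only finitely many maximal cones to worry about, turning the universally-quantified genericity into a finite intersection of Zariski-open conditions on the tuple $(\pi_0, \dotsc, \pi_{n-m})$, which is therefore nonempty. The reason one needs $n-m+1$ projections rather than $n-m$ is that although any $n-m$ generic tropical hypersurfaces intersect in a set of the correct dimension $m$, the intersection can carry extra components outside $\trop(\V(I))$; the additional $(n{-}m{+}1)$st projection is used to discard these spurious components, which is the genuinely nontrivial ingredient (due to Hept and Theobald~\cite{HeptTheobald}).
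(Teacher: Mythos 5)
The paper itself does not prove this theorem; it states it as ``a consequence of the proof of Theorem 1.1 in \cite{HeptTheobald}'' and then offers a heuristic remark about geometric regularity rather than an argument. So there is no internal proof to compare against, and your proposal must be judged on its own merits. Two of its steps have real gaps.

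First, the invocation of \eqref{eq:tropunderchange}. That identity is stated for a monomial \emph{change of coordinates}, i.e.\ an automorphism $\Phi\colon(\CC^\times)^n\to(\CC^\times)^n$. Your $\Phi_i\colon(\CC^\times)^n\to(\CC^\times)^{m+1}$ is a surjection with positive-dimensional kernel, so \eqref{eq:tropunderchange} does not apply as written. The statement you actually need, namely $\pi_i(\trop(\V(I)))=\trop\bigl(\overline{\Phi_i(\V(I))}\bigr)$, is the set-theoretic version of the Sturmfels--Tevelev push-forward formula for tropicalizations under morphisms of tori. That is a genuine theorem, not something derivable from the automorphism case by formal manipulation, and the paper does not establish it. (You also implicitly use that $\Phi_i|_{\V(I)}$ is generically finite so that $\overline{\Phi_i(\V(I))}$ has dimension $m$ rather than dropping dimension; true for a generic $\pi_i$, but it should be flagged.)

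Second, and more seriously, the reduction to ``one representative $\omega_0$ per connected component of the complement'' does not work. For fixed generic $\pi_0,\dotsc,\pi_{n-m}$, the excess set
\[
E\ =\ \bigcap_{i=0}^{n-m}\pi_i^{-1}\bigl(\pi_i(\trop(\V(I)))\bigr)\ \smallsetminus\ \trop(\V(I))
\]
is a closed subset of the open complement $\R^n\smallsetminus\trop(\V(I))$, but there is no reason for it to be a \emph{union} of connected components of that complement. So knowing that each chosen representative $\omega_0$ avoids $E$ tells you nothing about whether $E$ is empty. Concretely, already for $n=2$, $m=0$, $\trop(\V(I))=\{\bzero\}$: each $\pi_i^{-1}(\pi_i(\bzero))=\ker\pi_i$ is a line through the origin, the complement $\R^2\smallsetminus\{\bzero\}$ has a single connected component, and choosing the $\pi_i$ to miss a single representative $\omega_0$ gives no control over whether $\bigcap_i\ker\pi_i=\{\bzero\}$ (all three projections could still share the same kernel as long as that kernel misses $\omega_0$). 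Your pointwise dimension count --- for fixed $\omega_0\notin\trop(\V(I))$, the bad locus of $\pi_i$ is a proper closed subset --- is correct, but the quantifier over $\omega_0$ ranges over an uncountable set, and the finiteness of connected components does not discretize it. The argument that actually closes this gap, due to Bieri and Groves and used by Hept and Theobald, exploits a geometric property of tropical varieties (they are \emph{totally concave} finite unions of polyhedra of pure dimension $m$) together with the notion of geometrically regular projections that the paper's remark alludes to; this is a substantive structure theorem, not a transversality count.

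Your opening observation (the forward inclusion is tautological) and your identification of the hypersurfaces $\V(h_i)$ as closures of images under monomial maps are both sound. But as it stands, the proposal does not constitute a proof: it borrows the hard geometric input from Bieri--Groves/Hept--Theobald without reproducing it, while the replacement argument you supply for that input is incorrect.
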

Coordinate projections are not always generic and it is possible that only the proper containment $$\bigcap_{\substack{J \subseteq [n]\\ \text{codim}(\pi_J(\V(I)))=1}} \pi_J^{-1}(\pi_J(\V(I))) \subsetneq \trop(\V(I)) $$ holds where $\pi_J$ is the projection onto the coordinates indexed by $J \subset [n]$. 
\begin{remark}
The notion of genericity involved in Theorem \ref{thm:TropicalProjection} comes from that of a geometrically regular projection. Let $Y$ be a union of $m$-dimensional linear subsets of $\R^n$. A projection $\pi\colon \R^{n} \to \R^{m+1}$ is \mydef{geometrically regular} with respect to $Y\subset \R^n$ if the image of $k$-dimensional linear subspaces of $Y$ remain $k$-dimensional and $\pi$ respects containments: $\pi(Y_1) \subset \pi(Y_2) \implies Y_1 \subset Y_2$. These properties form an open dense subset within the set of projections and taking $\pi_1,\ldots,\pi_{n-m}$ distinct such projections gives 
$$Y=\bigcap_{i=1}^{n-m} \pi_i^{-1}(\pi_i(Y)). $$
A tropical variety is contained in a union of finitely many linear spaces, but requires one more projection $\pi_0$ in order to write it as the intersection of preimages; this projection determines which part of each linear space belongs to the tropical variety. \hfill $\diamond$
\end{remark}
\begin{example}
The following is 
Example 4.2.11 in \cite{Chan}. Let
\begin{align*}
I_1&=\langle xz+4yz-z^2+3x-12y+5z,xy-4y^2+yz+x+2y-z\rangle, \\
I_2&=\langle xy-3xz+3yz-1,3xz^2-12yz^2+xz+3yz+5z-1 \rangle.
\end{align*} The varieties defined by these two ideals are curves in $\C^3$ whose tropicalizations are the rays from the origin to the positive (product of coordinates is positive) and negative vertices of the cube $[-1,1]^3$ respectively. We display both curves in Figure \ref{fig:tropcube}.
\begin{figure}[htpb!]
\includegraphics[scale=0.5]{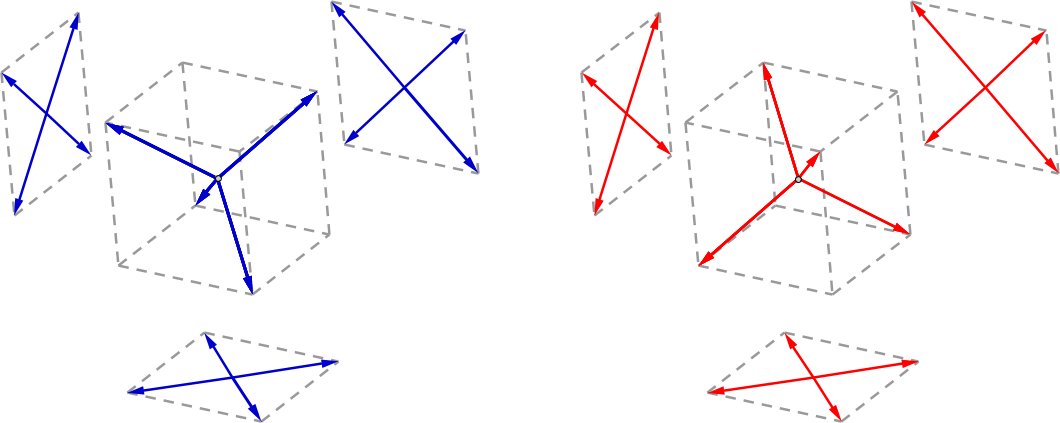}
\caption{(Reprinted from \cite{Bry:NPtrop}) An example of two tropical curves which cannot be distinguished from their coordinate projections}
\label{fig:tropcube}
\end{figure}
 Notice that for any $\{i,j\} \subset \{1,2,3\}$, we have that $\pi_{\{i,j\}}(\trop(\V(I_1)))=\pi_{\{i,j\}}(\trop(\V(I_2)))$ is the tropical plane curve whose rays are the positive span of the vertices of the square $[-1,1]^2$. Therefore, these two tropical curves cannot be distinguished from their coordinate projections. Note that these projections are not geometrically regular with respect to the union of linear spaces containing each tropical curve. \hfill $\diamond$
 \label{example:tropcube}
\end{example}

\begin{remark}
\label{remark:producingprojections}
By \eqref{eq:tropunderchange}, we have that $\trop(\V(f)) = \varphi(\trop(\V(f \circ \Phi)))$ for any monomial change of coordinates $\Phi$ and so for any $f_1,\ldots,f_m \in \C[x]$,
$$\varphi^{-1}(\trop(\V(f_1,\ldots,f_m))) = \trop(\V(f_1\circ\Phi,\ldots,f_m\circ \Phi)),$$
where $\varphi=\Phi^*$. Projecting gives
\begin{equation}
\label{eq:producingprojections}
\pi_{[k]}\varphi^{-1}(\trop(\V(f_1,\ldots,f_m))) = \pi_{[k]}\trop(\V(f_1\circ\Phi,\ldots,f_m\circ \Phi)).
\end{equation}
Thus, one way to produce a projection $A\colon \R^n \to \R^k$ on tropical varieties other than a coordinate projection is to write $A$ as $\pi_{[k]} \circ \varphi^{-1}$ such that $\varphi$ is an $n\times n$ matrix over $\ZZ$ and apply \eqref{eq:producingprojections}. \hfill $\diamond$
\end{remark}

\section{Sparse polynomial systems}

Given a collection ${\Adot}=(\calA_1,\dotsc,\calA_n)$ of nonempty finite subsets of $\ZZ^n$, write
$\defcolor{\CC^\Adot}=\CC^{\calA_1}\times\dotsb\times\CC^{\calA_n}$ for the vector space of $n$-tuples $F=(f_1,\dotsc,f_n)$ of
polynomials, where $f_i$ is supported on $\calA_i$, for each $i$.
An element $F\in\CC^\Adot$ corresponds to a system of polynomial equations
 \[
     f_1(x_1,\dotsc,x_n)\ =\ 
     f_2(x_1,\dotsc,x_n)\ =\ \dotsb\ =\ 
     f_n(x_1,\dotsc,x_n)\ =\ 0\,,
 \]
called a  \demph{sparse polynomial system} \demph{supported on $\Adot$}.
We write $F$ to refer to these equations or to their vector of coefficients, depending on context. For $\omega \in \R^n$, we let $\mydefMATH{F_\omega} = ((f_1)_\omega,\ldots,(f_n)_\omega)$. 
Letting $\Pdot=(P_1,\ldots,P_n)$ where $P_i = \conv(\mathcal A_i)$, we define the mixed volume $\mydefMATH{\MV(\Adot)}$ of $\Adot$ to be $\MV(\Pdot)$.

\subsection{Geometry of sparse polynomial systems}
\label{subsection:sparsepolynomialsystems}
Given $\Adot=(\calA_1,\dotsc,\calA_n)$, consider the incidence variety
\[
  \defcolor{X_{\Adot}}\ =\ \left\{(F,x)\in\CC^{\Adot}\times(\CC^\times)^n \mid F(x) = 0  \right\}
\]
equipped with projections $\defcolor{\pi_{\Adot}}\colon X_{\Adot}\to\CC^{\Adot}$ and
$\defcolor{p}\colon X_{\Adot}\to(\CC^\times)^n$. For $F\in\CC^{\Adot}$, the fiber $\pi_{\Adot}^{-1}(F)$ is identified with the set $\calV(F)$ of solutions in $(\CC^\times)^n$ to $F=0$.

For any $x\in(\CC^\times)^n$, the fiber $p^{-1}(x)$ is a codimension $n$ vector subspace of $\CC^{\Adot}$.
Indeed, for each $i=1,\dotsc,n$, the condition that $f_i(x)=0$ is a linear equation in the coefficients $\CC^{\calA_i}$ of 
$f_i$, and these $n$ linear equations are independent.
As a consequence $X_{\Adot}$ is irreducible of dimension 
\[
  \dim(\CC^\times)^n + \dim\CC^{\Adot} - n\ =\ \dim\CC^{\Adot}\,,
\]by Lemma \ref{lem:fibredim} and Lemma \ref{lem:irreducibilityfromfibres}.

\begin{proposition}[Bernstein-Kushnirenko]\label{prop:BKK}
  Let $F\in\CC^{\Adot}$ be a system of polynomials supported on $\Adot$.
  The number of isolated solutions in $(\CC^\times)^n$ to $F=0$ is at most $\MV(\Adot)$.
  There is a dense open subset $U\subset\CC^{\Adot}$ consisting of systems with exactly $\MV(\Adot)$ solutions.
\end{proposition}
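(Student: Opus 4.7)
The plan is to follow Huber and Sturmfels' polyhedral homotopy proof, leveraging the combinatorial machinery already developed in the excerpt. Fix generic lifting functions $\ell_\bullet = (\ell_1,\ldots,\ell_n)$ with $\ell_i\colon \mathcal{A}_i \to \R$, and let $S^{\ell_\bullet}$ be the induced coherent fine mixed subdivision of $\Adot$. For $F = (f_1,\ldots,f_n) \in \C^{\Adot}$ with $f_i = \sum_{\alpha \in \mathcal{A}_i} c_{i,\alpha} x^\alpha$, introduce the one-parameter deformation
\[
h_i(x,t) = \sum_{\alpha \in \mathcal{A}_i} c_{i,\alpha} t^{\ell_i(\alpha)} x^\alpha, \qquad i=1,\ldots,n,
\]
so that $H(x,1) = F(x)$. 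The zero set $\V(H) \subset (\C^\times)^n \times \C_t$ is a union of finitely many branches, and I will count them by tracking from $t=0$ to $t=1$.

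For each fine mixed cell $\mathcal{C}_\bullet = (\{\alpha_i,\beta_i\})_{i=1}^n$ of $S^{\ell_\bullet}$, the associated facet of the lower hull of $\conv_{\ell_\bullet}(\Adot)$ determines a direction $w \in \R^n$ such that the substitution $x_j = t^{w_j} y_j$ isolates, in the limit $t \to 0^+$, the two monomials of $h_i$ indexed by $\alpha_i$ and $\beta_i$. The resulting binomial system $c_{i,\alpha_i} y^{\alpha_i} + c_{i,\beta_i} y^{\beta_i} = 0$ for $i=1,\ldots,n$ has exactly $|\det(A_{\mathcal{C}_\bullet})|$ solutions in $(\C^\times)^n$ by the Smith normal form discussion of Section \ref{subsubsection:smithnormalform}, where $A_{\mathcal{C}_\bullet}$ is the $n\times n$ integer matrix with columns $\alpha_i-\beta_i$. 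This determinant equals the Euclidean volume of the parallelepiped $\conv(\mathcal{C}_\bullet)$, so summing across all fine mixed cells and invoking Lemma \ref{lem:mixedvolume5} produces exactly $\MV(\Adot)$ starting points, one per branch of $\V(H)$ meeting the torus at $t=0$.

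For generic $F$, each such branch extends without collision or escape to a distinct solution of $F=0$ at $t=1$, because the set of $F \in \C^{\Adot}$ for which two branches merge or a branch tends to the toric boundary at some $t\in(0,1]$ is a proper closed subvariety; its complement is the desired dense open $U \subset \C^{\Adot}$ on which $|\pi_{\Adot}^{-1}(F)| = \MV(\Adot)$. For arbitrary $F$, the upper bound follows by extending $\pi_{\Adot}$ to a proper map over a toric compactification of $(\C^\times)^n$ adapted to $\ell_\bullet$ and counting fiber points with multiplicity: every isolated solution of $F=0$ in the torus is accounted for by (and limits from) one of the $\MV(\Adot)$ branches.

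The main obstacle I anticipate is the Puiseux series analysis at $t=0$: one must verify that every branch of $\V(H)$ with a limit in the torus as $t \to 0$ is accounted for by some fine mixed cell, and that non-mixed cells contribute only branches whose limits lie on the toric boundary. This requires combining the combinatorial structure of the lower hull of $\conv_{\ell_\bullet}(\Adot)$ with a properness argument on the toric compactification determined by $\ell_\bullet$; together these also ensure that, away from the discriminant of $\pi_{\Adot}$, no branch is lost to infinity at an intermediate value of $t$, closing the bijection between starting binomial solutions and solutions of $F=0$.
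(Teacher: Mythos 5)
The paper states Proposition~\ref{prop:BKK} without proof, crediting the polyhedral-homotopy argument to Huber and Sturmfels; the closest in-text analogue is the construction in Section~\ref{subsection:polyhedralhomotopy}. Your sketch is precisely that argument, and you correctly sidestep a circularity that a na\"ive borrowing from Section~\ref{subsection:polyhedralhomotopy} would import: there, the lemma characterizing which cells yield torus solutions of the initial system $G^\nu$ is proved by \emph{invoking} Bernstein--Kushnirenko for $G^\nu$, whereas you count the binomial system's $(\C^\times)^n$-solutions directly from the Smith normal form of the matrix of edge vectors, which is the logically correct order of operations when Bernstein--Kushnirenko itself is the goal. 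The identity $|\det(A_{\mathcal C_\bullet})|=\vol(\conv(\mathcal C_\bullet))$ is also right, since the convex hull of a fine mixed cell is the parallelepiped spanned by those edge vectors.

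The gaps you flag are the genuine ones in the Huber--Sturmfels argument, so as written this is a sketch rather than a proof, but the route to close them is standard and mostly within the paper's toolkit. For the converse Puiseux claim (only fine mixed cells give torus-limiting branches), note that a torus solution of an initial form system forces its supports to be an essential collection by a dimension count: a non-essential subcollection, after a monomial change of coordinates, gives more equations than variables with independent generic coefficients and hence no torus solutions. An essential cell of a fine mixed subdivision is automatically fine mixed, since essentiality forces every part to be positive-dimensional. For the upper bound on an arbitrary $F$, you do not actually need a toric compactification: once the dense open $U$ with exactly $\MV(\Adot)$ solutions is produced, $\pi_{\Adot}$ is a degree-$\MV(\Adot)$ branched cover when $\MV(\Adot)>0$ (and if $\MV(\Adot)=0$ essentiality already rules out torus solutions), so the paper's lemma that every target solution of a homotopy is a limit $x_i(0)$ of some path immediately bounds the isolated torus solutions of any $F$ by $\MV(\Adot)$.
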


Thus $\pi_{\Adot}\colon X_{\Adot}\to\CC^{\Adot}$ is a branched cover if and only if $\MV(\Adot)\neq 0$. When this is the case, we denote the Galois group of $\pi_{\Adot}$ by $\mydefMATH{G_{\Adot}}$. We remark that Proposition \ref{prop:BKK} gives a different way to compute the mixed volume of a collection of polytopes $P_\bullet$ than the formulas given in Section~\ref{subsection:mixedvolume}: solve a polynomial system whose Newton polytopes comprise the collection $P_\bullet$ and count the solutions in the algebraic torus.
A corollary of Proposition \ref{prop:BKK} is B\'ezout's theorem.
\begin{corollary}[B\'ezout]
Let $\Delta_\bullet=(d_1\Delta_{n},\ldots,d_n\Delta_{n})$ with $d_1,\ldots,d_n \in \mathbb{N}$. Then $\pi_{\Delta_\bullet}$ is a branched cover of degree $\prod_{i=1}^n d_i$. 
\end{corollary}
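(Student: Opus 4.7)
The plan is to directly apply Proposition \ref{prop:BKK} and then reduce the computation of the mixed volume to a straightforward calculation using the multilinearity and symmetry properties of $\MV$ established in Lemma \ref{lem:properties of mixed volume}.

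First, by Proposition \ref{prop:BKK}, the degree of the branched cover $\pi_{\Delta_\bullet}$ (that is, the generic fiber size) equals $\MV(\Delta_\bullet) = \MV(d_1\Delta_n, \ldots, d_n\Delta_n)$, provided this mixed volume is nonzero. So the entire claim reduces to showing
\[
\MV(d_1\Delta_n, \ldots, d_n\Delta_n)\ =\ \prod_{i=1}^n d_i.
\]

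Next, I would apply multilinearity (part $(3)$ of Lemma \ref{lem:properties of mixed volume}) $n$ times to pull each scalar $d_i$ out of its slot, yielding
\[
\MV(d_1\Delta_n, \ldots, d_n\Delta_n)\ =\ d_1 d_2 \cdots d_n \cdot \MV(\Delta_n,\ldots,\Delta_n).
\]
Then by part $(1)$ of the same lemma, $\MV(\Delta_n,\ldots,\Delta_n) = n!\,\vol(\Delta_n)$. Since $\Delta_n = \conv(\bzero,e_1,\ldots,e_n)$ is the standard $n$-simplex, an elementary integration (or repeated application of the formula $\vol(\Delta_n) = \tfrac{1}{n}\vol(\Delta_{n-1})$) gives $\vol(\Delta_n) = \tfrac{1}{n!}$, so $\MV(\Delta_n,\ldots,\Delta_n) = 1$. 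Combining, $\MV(\Delta_\bullet) = \prod_{i=1}^n d_i$, which is positive, confirming that $\pi_{\Delta_\bullet}$ is indeed a branched cover and has this degree.

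There is essentially no obstacle here: the statement is a direct unpacking of the Bernstein-Kushnirenko theorem combined with the basic axiomatic properties of mixed volume that were already proved. If one wished to avoid invoking multilinearity, an alternative would be to use Lemma \ref{lem:mixedvolume2} directly on $(d_1\Delta_n,\ldots,d_n\Delta_n)$, since $\sum_{i \in I} d_i\Delta_n = \bigl(\sum_{i \in I} d_i\bigr)\Delta_n$ has volume $\tfrac{1}{n!}\bigl(\sum_{i \in I} d_i\bigr)^n$, and then recognize the resulting alternating sum as $n!$ times the coefficient of $\lambda_1\cdots\lambda_n$ in $\tfrac{1}{n!}(d_1\lambda_1 + \cdots + d_n\lambda_n)^n$, which is $\prod d_i$ by the multinomial theorem. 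Either route is short, so I would present the multilinearity proof as the cleaner option.
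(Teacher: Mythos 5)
Your proof is correct and is exactly the route the paper implicitly takes: the corollary is stated immediately after Proposition \ref{prop:BKK} with the remark that $\pi_{\Adot}$ is a branched cover of degree $\MV(\Adot)$ whenever that mixed volume is nonzero, and the paper silently leaves the computation $\MV(d_1\Delta_n,\ldots,d_n\Delta_n)=\prod d_i$ to the reader. You supply it cleanly via multilinearity (taking $Q$ to be a single point so that the $\MV(Q,\ldots)$ term vanishes by Lemma \ref{lem:whenmixedvolumeiszero}), together with $\MV(\Delta_n,\ldots,\Delta_n)=n!\,\vol(\Delta_n)=1$.
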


\chapter{NUMERICAL ALGEBRAIC GEOMETRY \label{section:numericalalgebraicgeometry}}
Numerical algebraic geometry refers to a collection of theoretical and computational techniques for studying algebraic varieties using numerical methods. Contrary to symbolic algorithms which use  the algebraic description of a variety as input, numerical methods represent varieties by computing approximations of points on them. This gives a computational paradigm which is almost entirely geometric, albeit, theoretically grounded in the algebra and geometry developed in Section~\ref{section:algebraicgeometry}. 

At its core, numerical algebraic geometry uses tools from numerical analysis to compute approx\-imate solutions of zero-dimensional polynomial systems. Computations on positive-dimen\-sional varieties are performed numerically via their zero-dimensional intersections with general affine linear spaces of complementary dimension. The information of such an intersection comprises the fundamental data structure in numerical algebraic geometry: a witness set. When equipped with the method of homotopy continuation, a witness set may be used to efficiently extract information from a variety. 

Understanding the basic concepts underlying numerical algebraic geometry does not require an extensive background in algebraic geometry, but the language from Section~\ref{section:algebraicgeometry} illuminates many of the ideas involved. For example, we will see that homotopy methods are conveniently chosen branched covers, a clever interpretation of the fibers, and a special (but not too special!) fiber which can be computed. 

We begin by briefly explaining the core numerical methods underlying the theory in Section~\ref{section:corenumericalmethods} and then move on to an assortment of algorithms from numerical algebraic geometry, including the polyhedral homotopy (Algorithm \ref{alg:polyhedralhomotopy}) and the monodromy solve algorithm (Algorithm \ref{alg:monodromysolve}). 
We remark that Figure \ref{fig:twistedpseudo} appears in the article \cite{Bry:NPtrop} by the author\footnote{Reprinted with permission from T. Brysiewicz, ``Numerical Software to Compute Newton polytopes and Tropical Membership,''  {\it{Mathematics in Computer Science,}} 2020. Copyright 2020 by Springer Nature.}.
\renewcommand*{\thefootnote}{\arabic{footnote}}

\section{Core numerical methods} 
\label{section:corenumericalmethods} We discuss what it means to numerically solve a polynomial system and explain two core numerical algorithms: Euler's method and Newton's method. These algorithms may be used as the predictor and corrector subroutines of a predictor-corrector method.
\subsection{Approximate solutions}
\label{subsection:approximatesolutions}
Given a polynomial map $F\colon \C^n \to \C^n$, the system $F=0$ is a collection of $n$ polynomial equations in $n$ variables and is thus called a \mydef{square system}. We suppose for now that $\V(F)$ is finite. For such a multivariate map, define  $$\mydefMATH{N_F(x)}=x-\left(DF\right)^{-1}F(x),$$
where $DF$ is the Jacobian matrix of $F$ evaluated at $x$, $\left(DF\right)^{-1}$ is its inverse, and $x$ and $F(x)$ are column vectors. We remark that $N_F(x)$ is only well-defined when $DF$ is nonsingular at $x \in \C^n$. Applying $N_F$ to a point $x_0 \in \C^n$ is called a \mydef{Newton step} on $x_0$, or a \mydef{Newton iteration}. A \mydef{Newton sequence} is a sequence of points $\{x_0,x_1,\ldots\}$ defined recursively from some initial point $x_0$ by $\mydefMATH{x_{i+1}}=N_F(x_i)$.  A sequence $\{x_0,x_1,\ldots\}$ \mydef{converges quadratically} to a point $\xi \in \C^n$ if for all $i$ 
$$||x_i-\xi|| \leq 2^{1-{2^i}}||x_0-\xi||.$$
\mydef{Newton's method} is a root-finding algorithm which iteratively applies Newton steps to some point $x_0 \in \C^n$ with the hope that  the Newton sequence $\{x_0,x_1,\ldots\}$ converges to a solution of $F=0$. 
\boxit{
\begin{algorithm}[Newton's Method]
\label{alg:newtonsmethod}
\nothing \\
\myline
{\bf Input:} \\
$\bullet$ A point $x_0 \in \C^n$\\
$\bullet$ A square polynomial system $F$\\
$\bullet$ Some number of iterations, $m \in \mathbb{N}$ \\
{\bf Output:} \\
$\bullet$ The $m$-th Newton iteration, $N^m_F(x_0)$\\
\myline
{\bf Steps:}
\begin{itemize}[nosep]
\item[1] \set $i=0$
\item[2] \while $i<m$ \mydo 
\begin{itemize}
\item[2.1] \set $x_{i+1} = x_i-(DF|_{x_i})^{-1}F(x_i)$
\item[2.2] \set $i=i+1$
\end{itemize}
\item[3] \return $x_m$
\end{itemize}
\end{algorithm}
}

\begin{lemma}\cite[Theorem 3.5]{NoceWrig06}
If $x_0$ is sufficiently near a smooth point $\xi\in \V(F)$ then a Newton sequence beginning with $x_0$ will converge quadratically to $\xi$.
\end{lemma}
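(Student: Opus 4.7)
The plan is to combine Taylor expansion of $F$ around $\xi$ with the invertibility of $DF$ at smooth points to derive a single-step quadratic error bound, and then iterate it. Since $\xi$ is a smooth point of the zero-dimensional variety $\V(F)$, the Jacobian $DF(\xi)$ has full rank $n$ and is therefore invertible. By continuity of $DF$ and of matrix inversion, there is a neighborhood $U$ of $\xi$ on which $DF(x)^{-1}$ exists with $\|DF(x)^{-1}\| \leq M$ for some $M>0$, and on which the map $x \mapsto DF(x)$ is Lipschitz with some constant $L$ (this is automatic since the entries of $DF$ are polynomials).

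From the definition of $N_F$ I would begin with the identity
$$N_F(x) - \xi \;=\; x - \xi - DF(x)^{-1}F(x) \;=\; DF(x)^{-1}\bigl[\,DF(x)(x-\xi) - F(x)\,\bigr].$$
Using the integral form of Taylor's theorem together with $F(\xi)=0$, one writes
$$F(x) \;=\; \int_0^1 DF\bigl(\xi + t(x-\xi)\bigr)(x-\xi)\,dt,$$
so that the bracketed expression becomes $\int_0^1 \bigl[DF(x) - DF(\xi+t(x-\xi))\bigr](x-\xi)\,dt$. Applying the Lipschitz bound under the integral yields
$$\|DF(x)(x-\xi) - F(x)\| \;\leq\; L\|x-\xi\|^2 \int_0^1 (1-t)\,dt \;=\; \tfrac{L}{2}\|x-\xi\|^2,$$
and multiplying by $\|DF(x)^{-1}\|\leq M$ gives the one-step quadratic estimate $\|N_F(x) - \xi\| \leq C\|x-\xi\|^2$ with $C := LM/2$.

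Finally, I would shrink $U$ so that every $x_0 \in U$ also satisfies $C\|x_0-\xi\| \leq \tfrac{1}{2}$. Setting $u_i := \|x_i-\xi\|/\|x_0-\xi\|$, the single-step bound becomes $u_{i+1} \leq C\|x_0-\xi\|\,u_i^2 \leq \tfrac{1}{2}u_i^2$. An induction on $i$ then gives the claimed rate: the base case is $u_0 = 1 = 2^{1-2^0}$, and the inductive step is $u_{i+1} \leq \tfrac{1}{2}\bigl(2^{1-2^i}\bigr)^2 = 2^{1-2^{i+1}}$. This yields $\|x_i-\xi\| \leq 2^{1-2^i}\|x_0-\xi\|$, and in particular $x_i\in U$ for all $i$, so the Newton iteration is well-defined throughout and converges to $\xi$ at the claimed quadratic rate.

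The main technical obstacle is the simultaneous calibration of constants: one must pick a single neighborhood $U$ small enough to control $\|DF(x)^{-1}\|$, to make $DF$ Lipschitz, \emph{and} to force $C\|x_0-\xi\| \leq \tfrac{1}{2}$, since it is this final factor of $\tfrac{1}{2}$ that produces exactly the exponent $2^{1-2^i}$ in the definition of quadratic convergence used in the paper. Everything else is a clean application of Taylor's theorem and induction.
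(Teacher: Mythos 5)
Your proposal is correct. Note first that the paper itself does not prove this lemma --- it cites \cite[Theorem 3.5]{NoceWrig06} and moves on --- so there is no internal argument to compare against; your task was effectively to reconstruct the proof from scratch, and you have done so along the standard lines. The decomposition $N_F(x)-\xi = DF(x)^{-1}\bigl[DF(x)(x-\xi)-F(x)\bigr]$, the integral Taylor bound giving $\|DF(x)(x-\xi)-F(x)\|\le \tfrac{L}{2}\|x-\xi\|^2$, and the shrink-to-get-$C\|x_0-\xi\|\le\tfrac12$ step are exactly what is needed, and your induction does reproduce the paper's precise definition of quadratic convergence $\|x_i-\xi\|\le 2^{1-2^i}\|x_0-\xi\|$, which a weaker "$\|x_{i+1}-\xi\|\le C\|x_i-\xi\|^2$" estimate alone would not immediately give. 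Two points worth making explicit if you wrote this out in full: the neighborhood $U$ should be taken convex (a ball) so that the segment $\xi+t(x-\xi)$ used in the Taylor integral stays inside $U$; and the inductive claim $u_i\le 2^{1-2^i}\le 1$ is also what guarantees $x_i\in U$ for all $i$, so the Jacobian inverse and Lipschitz bounds remain valid at every step --- you gesture at this but it is the hinge that makes the induction legitimate rather than circular.
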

A point $x_0 \in \C^n$ is an \mydef{approximate zero} of $F=0$
with \mydef{associated zero} $\xi\in \V(F)$ if the Newton sequence starting at $x_0$ converges quadratically to $\xi$. In this sense, $x_0$ is a numerical solution to $F=0$.

Certifying that a point is a numerical solution is made possible through $\alpha$-theory \cite[Ch.8]{BCSS}, developed by Smale \cite{smale} in the 1980's. We introduce the notation
\begin{align*}
\mydefMATH{\beta(F,x)} &= ||x-N_F(x)||=||DF(x)^{-1}F(x)||,\\
\mydefMATH{\gamma(F,x)} &= \sup_{k \geq 2} \bigg{|}\bigg{|} \frac{DF(x)^{-1} D^kF(x)}{k!} \bigg{|}\bigg{|}^{\frac{1}{k-1}},\\
\mydefMATH{\alpha(F,x)} &= \beta(F,x) \cdot \gamma(F,x),
\end{align*}
where $D^kF(x)$ is the symmetric tensor comprised of the $k$-th order partial derivatives of $f$. Since $D^kF$ is a linear map from the $k$-fold symmetric power of $\C^n$ to $\C^n$, so is $DF(x)^{-1}  D^kF(x)$. The norm in the definition of $\gamma(F,x)$ is the operator norm induced by the standard norms on $\C^n$ and the symmetric powers of $\C^n$. With this notation, we state a sufficient condition on quadratic convergence which forms the basis for $\alpha$-theory.
\begin{proposition}
\label{prop:alphatheory}
A point $x_0 \in \C^n$ is an approximate solution of a square system $F=0$ if $\alpha(F,x_0) <(13 -3 \sqrt{17})/4 \approx 0.15767078$.
\end{proposition}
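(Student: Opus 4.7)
The plan is to follow the standard path of Smale's $\alpha$-theory: derive a self-bounding recursion for $\alpha(F, x_n)$ under Newton iteration, show it contracts once $\alpha(F, x_0)$ is below an explicit threshold, and conclude quadratic convergence from the induced estimates on $\beta$. The magic constant $(13 - 3\sqrt{17})/4$ will emerge algebraically as the largest real number for which this recursion remains contracting.

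The main technical input is a ``Taylor-control'' lemma. Using the definition of $\gamma(F, x)$ to bound the tail of the Taylor series of $F$ centered at $x$, one obtains that whenever $u := \gamma(F, x)\|y - x\| < 1 - 1/\sqrt{2}$, the Jacobian $DF(y)$ is invertible (via a Neumann series on $DF(x)^{-1}DF(y)$) and
\begin{equation*}
\gamma(F, y) \;\leq\; \frac{\gamma(F, x)}{(1-u)\,\psi(u)}, \qquad \psi(u) := 2u^2 - 4u + 1.
\end{equation*}
The threshold $1 - 1/\sqrt{2}$ is forced as the smallest positive root of $\psi$. I would prove this first, together with the companion estimate $\beta(F, y) \leq \|y - x\| \cdot u/\psi(u)$ when $y$ happens to be a Newton iterate of $x$.

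Specializing to $y = N_F(x)$, so that $\|y - x\| = \beta(F, x)$ and $u = \alpha(F, x)$, the two estimates combine to give
\begin{equation*}
\alpha(F, N_F(x)) \;\leq\; \frac{\alpha(F, x)^2}{(1 - \alpha(F, x))\,\psi(\alpha(F, x))^2} \;=:\; \Phi(\alpha(F, x)).
\end{equation*}
The crux of the proof is then a one-variable analysis: I would show that the equation $\Phi(u) = u/2$ (equivalently, $2u = (1-u)\psi(u)^2$) has $(13 - 3\sqrt{17})/4$ as its largest root in $[0, 1 - 1/\sqrt{2})$ and that $\Phi(u) < u/2$ for $u$ strictly below this value. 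This is the step I expect to be the most technically annoying; it reduces to expanding $(1-u)(2u^2 - 4u + 1)^2 - 2u$ and locating its roots explicitly, and the specific form of the threshold constant is dictated entirely by this polynomial factorization.

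Given the contraction $\alpha_{n+1} \leq \tfrac{1}{2}\alpha_n$, the $\beta$-recursion $\beta_{n+1} \leq \alpha_n\beta_n/\psi(\alpha_n)$ yields $\beta_{n+1} \leq \tfrac{1}{2}\beta_n$, so the Newton sequence $\{x_n\}$ is Cauchy and converges to some $\xi \in \mathbb{C}^n$. Passing to the limit in $DF(x_n)(x_n - x_{n+1}) = F(x_n)$ and using the uniform bound on $\|DF(x_n)^{-1}\|$ (from the Neumann argument) forces $F(\xi) = 0$. Finally, to promote geometric to \emph{quadratic} convergence, I would feed the sharper bound $\beta_{n+1} \leq \gamma_n \beta_n^2 / \psi(\alpha_n)$ back in, giving $\|x_n - \xi\| \leq 2^{1 - 2^n}\|x_0 - \xi\|$ as required by the definition of approximate zero.
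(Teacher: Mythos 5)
The paper does not prove this proposition; it is stated as a citation to Blum--Cucker--Shub--Smale and Smale, so there is no ``paper's proof'' against which to compare. What can be checked is whether your sketch is internally correct, and it is not: the central one-variable claim fails numerically and algebraically. You assert that $\alpha_0 = (13-3\sqrt{17})/4$ is the largest root in $[0, 1-1/\sqrt{2})$ of $2u = (1-u)\psi(u)^2$ and that $\Phi(u) = u^2/((1-u)\psi(u)^2) < u/2$ strictly below it. Using $2\alpha_0^2 - 13\alpha_0 + 2 = 0$ one gets $\psi(\alpha_0) = 9\alpha_0 - 1 \approx 0.41904$, hence $(1-\alpha_0)\psi(\alpha_0)^2 \approx 0.14791$ while $2\alpha_0 \approx 0.31534$: the two sides of your defining equation differ by more than a factor of two, so $\alpha_0$ is not a root of it. Worse, $\Phi(\alpha_0) \approx 1.07\,\alpha_0 > \alpha_0$, so at $\alpha_0$ your recursion does not halve $\alpha$ and is not even nonincreasing. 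The actual largest root of $2u = (1-u)\psi(u)^2$ below $1-1/\sqrt{2}$ is near $0.1245$. You also lose a factor of $(1-u)$ in your $\beta$-estimate: the sharper bound is $\beta(F,N_F(x)) \le (1-\alpha)\alpha\beta(F,x)/\psi(\alpha)$, after which the $(1-\alpha)$ cancels with the $\gamma$-estimate to give $\alpha(F,N_F(x)) \le \alpha^2/\psi(\alpha)^2$; but even for this sharper recursion the critical root of $2u = \psi(u)^2$ is about $0.1307$, still not $(13-3\sqrt{17})/4 \approx 0.1577$.

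The constant $(13-3\sqrt{17})/4$ therefore does not arise from a one-step contraction $\alpha_{k+1} \le \alpha_k/2$. In BCSS it comes from a Kantorovich-type majorizing argument: one dominates the Newton sequence by the one-variable Newton iteration on $h_{\beta,\gamma}(t) = \beta - t + \gamma t^2/(1-\gamma t)$, deduces existence of and distance to a zero $\xi$ from the smaller root of the quadratic $2\gamma t^2 - (1+\alpha)t + \beta$, bounds $\gamma(F,\xi)$ by the accumulated product of the factors $((1-\alpha_k)\psi(\alpha_k))^{-1}$, and then invokes the separate $\gamma$-theorem (whose threshold is $(3-\sqrt{7})/2$) to conclude quadratic convergence. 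The value $\alpha_0$ is the one that makes this chain of estimates close; it is not characterized by $\Phi(\alpha_0) = \alpha_0/2$. To repair your argument you would need to carry out that majorizing-sequence analysis (or an equivalent route through the $\gamma$-theorem) rather than the halving-threshold computation you propose, since the latter yields a strictly smaller constant than the one to be proved.
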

Given a point $x \in \C^n$ and a square polynomial system $F=0$, software such as {\bf alphaCertified} \cite{HS12} and {\bf NumericalCertification} \cite{M2certification} verify the inequality in Proposition \ref{prop:alphatheory} and can thus rigorously certify that $x_0$ is an approximate solution of $F=0$.

\subsection{Euler's method} Euler's method is a standard numerical method for solving a first order ordinary linear differential equation given an initial value. 
Fix an ordinary linear differential equation encoded via a matrix equation
$$\frac{\partial x}{\partial t} = F(t;x(t)), \hspace{0.5 in} x(t_0)=x_0,$$
where $F(t;x(t))\colon  \C_t \times \C_x^n  \to \C^n$ is continuous near $(t_0;x_0)$ in $\C_t \times \C_x^n$. Fix a step size $h >0$ and define
$$\mydefMATH{E_F(t;x)}=x+hF(t;x).$$ Applying $E_F$ to a point $(t_0;x_0)$ is called an \mydef{Euler step}. An \mydef{Euler sequence} is a sequence of points $\{(t_0;x_0),(t_1;x_1),\ldots\}$ where $t_{i+1}=t_i-h$ and $x_{i+1} = E_F(t_i;x_i)$.  

Analogous to Newton's method, given a step size $h$ and a number of steps $m$, \mydef{Euler's method} attempts to compute an approximation $x_m$ of $x(t_m)$.

\boxit{
\begin{algorithm}[Euler's method]
\label{alg:eulersmethod}
\nothing \\
\myline
{\bf Input:} \\
$\bullet$ A first order linear differential equation $\frac{\partial x}{\partial t} = F(t;x)$\\
$\bullet$ An initial value $x(t_0)=x_0$\\
$\bullet$ A step size $h$ \\
$\bullet$ A number of steps $m$\\ \myline
{\bf Output:} \\
$\bullet$ An approximation $x_m$ of $x(t_m)$ \\
\myline
{\bf Steps:}
\begin{itemize}[nosep]
\item[1] \set $i=0$
\item[2] \while $i<m$ \mydo 
\begin{itemize}
\item[2.1] \set $x_{i+1}=E_F(t_i;x_i)$
\item[2.2] \set $t_{i+1}=t_i-h$
\end{itemize}
\item[3] \return $x_m$
\end{itemize}
\end{algorithm}}

\begin{example}
\label{ex:eulersmethod}
Figure \ref{fig:Euler1} displays four branches of a curve $\V(F) \subset \C^2_{t,x}$ where
$$F(t;x)=5(1-t)(x-0.1)(x-0.4)^2(x-0.6)+t(x-0.25)(x-0.5)(x-0.75)(x-0.05).$$ The branch containing the point $(1;0.75)$ is the graph of some function $x(t)\colon [0,1] \to \R^2$ satisfying $F(t,x(t))= 0$ for $t \in [0,1]$ and thus satisfying the differential equation $DF(t;x(t))=0$. After applying the chain rule, this becomes, $$\frac{\partial x}{\partial t} =- \frac{-4x^4+5.95x^3-3.1375x^2+.671875x-.0433125}{ -16tx^3+17.85tx^2+20x^3-6.275tx-22.5x^2+.671875t+7.8x-.8
}$$ We perform Algorithm \ref{alg:eulersmethod} on this differential equation using the auxiliary input $$x_0=x(1)=0.75, \quad h=0.1, \quad \text{ and } \quad m=10,$$ so that $x_m=x(0)$. The computed points $\{(t_i;x_i)\}_{i=0}^m$ are shown in Figure \ref{fig:Euler1} in green.
\begin{figure}[!htpb]
\includegraphics[scale=0.75]{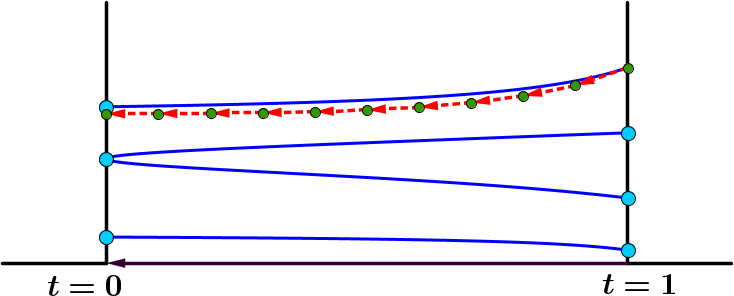}
\caption{Algorithm \ref{alg:eulersmethod} with $h=0.1$, $x(1)=0.75$, and $m=10$. }
\label{fig:Euler1}
\end{figure}
 \hfill $\diamond$
\end{example}

\subsection{Predictor-corrector methods}
Given a differential equation 
\begin{equation}
\frac{\partial x}{\partial t} = F(t;x(t))
\label{eq:diffeq}
\end{equation}
and some starting point $x(t_0)=x_0$ satisfying \eqref{eq:diffeq}, a \mydef{predictor-corrector method} attempts to analytically continue $x(t)$ as $t$ goes from $t_0$ to some $t_{m} \in \R$ (taking $h=\frac{t_m-t_0}{m}$) by interspersing applications of a predictor method (like Euler's method) and a corrector method (like Newton's method). Combining both prediction and correction increases the accuracy of  $(t_m,x_{m})$ dramatically over the use of Euler's method alone (see Example \ref{ex:betterthanjusteuler}).

Predictor-corrector methods are versatile and depend on choices of
\begin{enumerate}
\item a differential equation,
\item a predictor method,
\item a corrector method,
\item the parameters involved in both the predictor and the corrector methods.
\end{enumerate} 
We give a predictor-corrector method below when the predictor and corrector steps are Euler's method and Newton's method respectively. Thus, this algorithm requires both a differential equation and a system of equations $G$ satisfying $G(t;x(t))=0$ for $t \in [0,1]$ as input. We remark that this is an extremely simple version of such an algorithm and in practice, predictor-corrector methods are often much more nuanced, using predictor methods with higher accuracy, applying Newton's method repeatedly, and adapting the step size throughout the process as needed. 
\boxit{
\begin{algorithm}[Predictor-Corrector]
\label{alg:predictorcorrector}
\nothing \\
\myline
{\bf Input:} \\
$\bullet$ A system of equations $G(t;x)$ such that $G(t;x(t))=0$ for all $t \in [0,1]$\\
$\bullet$ A first order linear differential equation $\frac{\partial x}{\partial t} = F(t;x)$ satisfied by $x(t)$\\
$\bullet$ An initial value $x(t_0)=x_0$\\
$\bullet$ A step size $h$ \\
$\bullet$ A target $t$-value, $t'$\\  \myline
{\bf Output:}\\
$\bullet$ An approximate solution of $x(t')$\\
\myline
{\bf Steps:}
\begin{itemize}[nosep]
\item[0] \set $m=\left\lfloor \frac{(t_0-t_m)}{h}\right\rfloor$ so that $t_m-h<t'<t_m$
\item[1] \set $i=0$
\item[2] \while $i<m$ \mydo 
\begin{itemize}
\item[2.1] \set $x_{i+1}=E_F(t_i;x_i)$
\item[2.2] \set $t_{i+1}=t_i-h$
\item[2.3] \set $x_{i+1}=N_{G(t_{i+1};x)}(x_{i+1})$
\end{itemize}
\item[3] \set $x_{m+1}=E_F(t_m;x_m)$ using a stepsize of $t_m-t'$
\item[4] \set $x_{m+1}=N_{G(t';x)}(x_{m+1})$
\item[5] \return $x_{m+1}$
\end{itemize}
\end{algorithm}
}

\begin{example}
\label{ex:betterthanjusteuler}
Figure \ref{fig:predictorcorrector} illustrates the accuracy increase in Algorithm \ref{alg:predictorcorrector} compared to Euler's method alone. We list the numerical data in Table \ref{tab:numericaldata}. \hfill $\diamond$
\begin{table}[!htpb]\scriptsize{
\begin{tabular}{|l|c|c|c|c|c|c|c|c|c|c|c|}\hline
$t$-value&1&0.9&0.8&0.7&0.6&0.5 &0.4&0.3&0.2&0.1&0\\ \hline \hline
True values &.75& .69914& .668991& .649216& .6354& .625304& .617667& .611729& .607003& .603166& .6 \\ \hline \hline
Eul. Only&.75& .68175& .641803& .615861& .598597& .587539& .580981& .577285& .575157& .573848&.572984\\ \hline
Eul. Newt.&.75& .70252& .669168& .649393& .635461& .625331& .61768& .611735& .607006& .603168&.600001\\ \hline \hline
Eul. Err.&0& .01739& .027188& .033355& .036803& .037765& .036686& .034444& .031846& .029318& .027016\\ \hline
Eul. Newt. Err.&0& .00338& .000177& .000177& .000061& .000027& .000013& .000006& .000003& .000002& .000001\\ \hline
\end{tabular}}
\caption{\label{tab:numericaldata}Numerical data for Algorithm \ref{alg:predictorcorrector} on input from Example \ref{ex:eulersmethod}.}
\end{table}
\begin{figure}[!htpb]
\includegraphics[scale=0.66]{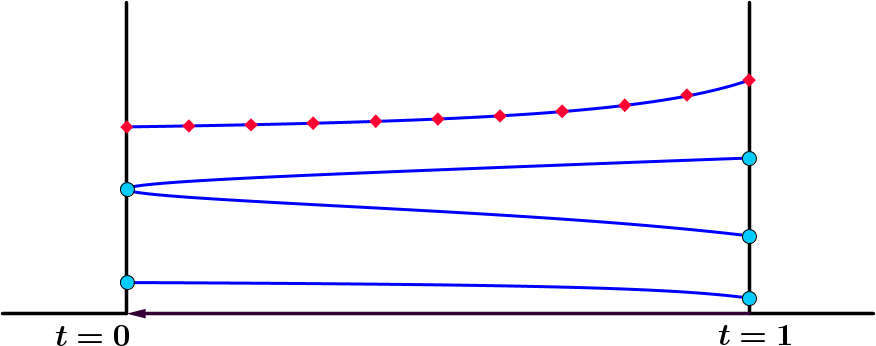}
\caption{Algorithm \ref{alg:predictorcorrector} applied to the same differential equation, initial value, and step size as in Example \ref{ex:eulersmethod}}
\label{fig:predictorcorrector}
\end{figure}
\end{example}

\subsection{Numerical errors}
\label{subsubsection:numericalerrors}
Since Algorithm \ref{alg:predictorcorrector} is numerical, it is subject to numerical errors. Due to the limits of rational computations, numerical methods require the approximation of numbers up to some precision. Applying  the linear maps relevant to Newton's method and Euler's method to these approximations could possibly increase their error. Algorithm \ref{alg:predictorcorrector} is especially prone to this whenever the matrix $DF$ evaluated at the approximation $x^*$ has a high \mydef{condition number}, $$\mydefMATH{\kappa(DF(x^*))}=||(DF(x^*))^{-1}||\cdot||DF(X^*)||.$$ When this happens, we say that the path is \mydef{ill-conditioned} at $x^*$. One way to alleviate issues coming from error accumulation due to low precision is to use \mydef{adaptive precision}. Adaptive precision involves changing the precision used during the predictor-corrector process based on indicators of the conditioning of the path being followed.

Another problem which could occur during Algorithm \ref{alg:predictorcorrector} is \mydef{path-jumping}. Path-jumping occurs when the result $(t^*,x^*)$ of an Euler step attempting to approximate $(t^*,x(t^*))$ is close enough to a solution $(t^*,\hat x(t^*)) \neq (t^*,x(t^*))$ so that a Newton sequence starting with $(t^*,x^*)$ converges to $(t^*,\hat x(t^*))$.  We display how this may occur in Figure \ref{fig:pathjumping}.
\begin{figure}[!htpb]
\includegraphics[scale=0.65]{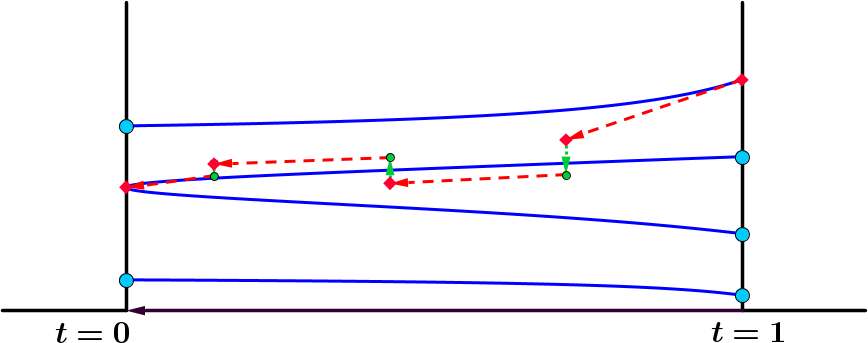}
\caption{\label{fig:pathjumping} A visual display of path-jumping.}
\end{figure}
One way to avoid path-jumping is to decrease the step size during the predictor-corrector process, particularly when $DF$ is has high condition number.

Developing robust or certifiable predictor-corrector methods is a goal of much current research \cite{BeltranLeykin, BurrYapXu, TelenBarelVerschelde}. 
For further information about these topics, we refer the reader to \cite{burgissercondition}.

\section{Homotopies} Homotopies make the idea of continuous deformations rigorous and are defined with respect to general topological spaces. For our purposes, we restrict ourselves to homotopies arising from polynomial systems.
Let $H(s;x) \in \C[s][x]$ be a system of $n$ polynomials in $m$ parameters $s^{(1)},\ldots,s^{(m)}$ and $n$ variables $x$.   Suppose the projection 
\begin{equation}
\label{eq:parametrizedcover}
\pi\colon  \V(H(s;x)) \to \C^m_s
\end{equation}
is a degree $d$ branched cover with regular values $U \subset \C_s$. The system $H(s;x)$ may also be thought of as a map
\begin{equation}
\label{eq:generalparameterhomotopy}
H(s;x)\colon  \C^m_s \times \C_x^n  \to \C^n.
\end{equation}
By the path-lifting property of covering spaces, composing $H(s;x)$ with any continuous path $\tau\colon [0,1]_t \to \C^m_s$ with $\tau(0,1] \subset U$ produces a map
\begin{equation}
\label{eq:parameterhomotopy}
H(\tau(t);x)\colon  [0,1]_t \times \C_x^n  \to \C^n
\end{equation}
along with $d$ lifts $\{\mydefMATH{x_i(t)}\}_{i=1}^d$ over $\tau(0,1]$ satisfying $H(\tau(t);x_i(t))= 0$ for all $t \in (0,1]$.
Set $\mydefMATH{H_\tau(t;x)}=H(\tau(t);x)$ so that for any $t^* \in (0,1]$, the polynomial system $H_\tau(t^*;x) \in \C[x]$ has $d$ solutions $\{x_i(t^*)\}_{i=1}^d$.
We call $H_\tau$ a \mydef{homotopy} with \mydef{start system} $H_\tau(1;x) \in \C[x]$ and \mydef{target system} $H_\tau(0;x) \in \C[x]$. We call $\{x_i(t)\}_{i=1}^d$ the \mydef{paths} of the homotopy $H_\tau$ and the set $\{x_i(1)\}_{i=1}^d \subset \C^n$ the \mydef{start solutions} of $H_\tau$. The isolated solutions of the target system $H_\tau(0;x)$ are called \mydef{target solutions}. A homotopy is called \mydef{regular} if additionally, $\tau(0) \in U$.

We omit the subscript on $H_\tau$ when convenient. 
When the limit $\lim\limits_{t \to 0} x_i(t)$ of some path exists, we extend $x_i(t)\colon (0,1] \to \C^n$ continuously by setting $x_i(0):=\lim\limits_{t \to 0}x_i(t)$.
If $H(t;x)$ is a homotopy, then for any $\lambda \in \C^\times$, we say $\lambda H(t;x)$ and $H(t;x)$ are \mydef{equivalent} and write $H(t;x)\equiv \lambda H(t;x)$ since the zeros of $\lambda H(t;x)$ are the same as those of $H(t;x)$.

\begin{lemma}
If $H(t;x)$ is a homotopy, then each target solution has the form $x_i(0)$ for some path $x_i(t)\colon [0,1] \to \C^n$ of the homotopy.
\end{lemma}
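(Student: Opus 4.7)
The plan is to show that every isolated solution of the target system arises as a limit of one of the $d$ solution paths by exploiting the local structure of the branched cover $\pi\colon\V(H)\to\CC^m_s$ near the target parameter value $\tau(0)$, together with a standard properness/continuity argument.

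First I would set up the local picture. Let $x^*$ be an isolated target solution, so $(\tau(0),x^*)$ is an isolated point of the fiber $\pi^{-1}(\tau(0))\subset\V(H)$. Choose a closed ball $\overline{B}\subset\CC^n$ centered at $x^*$ so small that $x^*$ is the only solution of $H(\tau(0);x)=0$ lying in $\overline{B}$, and in particular $H(\tau(0);\cdot)$ does not vanish on the boundary $\partial B$. Using continuity of $H$ in the parameter, there is a neighborhood $V\subset\CC^m_s$ of $\tau(0)$ such that $H(s;x)$ does not vanish on $V\times\partial B$. This makes $\pi\colon\V(H)\cap(V\times\overline{B})\to V$ proper (a closed subvariety of the compact-over-$V$ set $V\times\overline{B}$), so its fibers are finite and vary semicontinuously.

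Next I would use the path-lifting that already exists. Because $\tau$ is continuous and $\tau(0)\in V$, there is some $\varepsilon>0$ with $\tau([0,\varepsilon])\subset V$, and for every $t\in(0,\varepsilon]$ the parameter $\tau(t)$ is a regular value of $\pi$, so the entire fiber $\pi^{-1}(\tau(t))$ consists of exactly $d$ points which are precisely $\{x_1(t),\dotsc,x_d(t)\}$. I would then argue that the proper map above has nonempty fiber over $\tau(0)$, so by semicontinuity of proper finite morphisms the fiber over $\tau(t)$ is also nonempty for small $t>0$; equivalently, at least one of the homotopy paths $x_i(t)$ lies in $B$ for arbitrarily small $t>0$. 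Pick such an index $i$ and consider the limit behavior of $x_i(t)$ as $t\to 0$.

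For the final step I would show $\lim_{t\to0}x_i(t)=x^*$. Any accumulation point $y$ of $x_i(t)$ inside the compact set $\overline{B}$ satisfies $H(\tau(0);y)=0$ by continuity, and since $x^*$ is the unique such point in $\overline{B}$, every convergent subsequence converges to $x^*$; compactness of $\overline{B}$ then forces $x_i(t)\to x^*$. Extending the path by $x_i(0):=x^*$, as the statement preceding the lemma allows, exhibits $x^*$ as an endpoint of a homotopy path.

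The main obstacle I anticipate is the properness step: one must really guarantee that the family $H(s;\cdot)=0$ has at least one zero in $B$ for all nearby parameters $s$, not merely that it has no zero on $\partial B$. The cleanest way I see to secure this is to invoke a local intersection-theoretic count (a suitable local degree or multiplicity of $\pi$ at $(\tau(0),x^*)$, which is strictly positive since $x^*$ is an isolated solution), and then observe that this positive local degree is preserved under small perturbation of the parameter because the map $\pi$ restricted to $\V(H)\cap(V\times\overline{B})\to V$ is proper; this forces at least one preimage to persist in $B$ for every $s$ near $\tau(0)$, and in particular for $s=\tau(t)$ with $t$ small.
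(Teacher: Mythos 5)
Your proof is correct, and it takes a genuinely different route from the paper's. The paper argues algebraically: it observes that $\V(H(s;x))\subset\CC^{m+n}$ is cut out by $n$ equations, so the irreducible component $C$ containing the point $(\tau(0),p)$ has dimension at least $m$; since $p$ is isolated in its fiber, Lemma 3.4.9 on fiber dimensions forces $\dim C = m$ and hence $\pi|_C$ is dominant, so $\pi(C)$ is dense in the parameter space, and points of $C$ over $\tau(\epsilon)$ for small $\epsilon>0$ necessarily appear among the $d$ path points $x_i(\epsilon)$. You instead work entirely locally: restrict to $V\times\overline{B}$ to get a proper finite map onto a neighborhood $V$ of $\tau(0)$, and use stability of the (positive) local degree at $(\tau(0),x^*)$ under perturbation of the parameter to force a preimage to persist inside $B$ for small $t>0$. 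Each approach has advantages. The paper's argument is shorter and purely algebro-geometric, but it is somewhat glossy on the final convergence step --- it asserts that the path points in $C$ converge to $p$ without explicitly ruling out that they approach a different point of the fiber of $\pi|_C$ over $\tau(0)$. Your properness and conservation-of-number argument closes precisely that gap, since by construction $x^*$ is the unique solution in $\overline{B}$ and any accumulation point there must be $x^*$. On the other hand, your approach leans on local intersection multiplicity or the analytic/topological degree of a holomorphic map on a ball, machinery the paper does not develop, whereas the paper stays within the dimension theory it has already set up. Both are valid; yours is more careful on the convergence step and the paper's fits more naturally into the surrounding framework.
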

\begin{proof}
Suppose $H(t;x)=H(\tau(t);x)$ for $H(s;x) \in \C[s][x]$ and $\tau\colon[0,1] \to \C_s^m$. Let $U\subset \C_m$ be the set of regular values of $  \V(H(s;x)) \xrightarrow{\pi} \C_s$ and let  $p \in \V(H(0;x))$ be a target solution.

Since $\V(H(0;x))$ is nonempty, Corollary \ref{cor:nonemptyhasdimension} implies that $p$ belongs to to an irreducible component $C$ of $\V(H(s;x))$ of dimension at least $m$. But the dimension of $C$ is at most $m$ since $p$ is isolated in its fiber over $t=0$. Thus $\dim(C)=m$.

Since $C$ has dimension $m$ and the point $p$ in the fiber of $\pi|_C\colon C \to \C_s^m$ over $t=0$ is isolated in its fiber, $\pi(C)$ is open and dense in $\C^m_s$ and thus the intersection of $U$ and $\pi|_C(C)$ is open and dense. Considering the homotopy $H(t;x)\colon [0,1] \times \C_x^n \to \C^n$ as a map, observe that since $\pi|_C(C)$ is open and dense in $\C_s^m$, the set $H^{-1}(\bzero) \cap (0,\epsilon) \times \C_x^n$ contains points in $C$ for any $\epsilon >0$.  Such points must be of the form $x_i(\epsilon)$ for some path $x_i(t)$ of $H$ and thus $\lim\limits_{\epsilon \to 0} x_i(\epsilon)$ converges to $p \in C$.  
\end{proof}

\begin{lemma}
\label{lem:gammatrick}
Let $F(x),G(x) \in \C[x_1,\ldots,x_n]$ be square systems. Let $$H(s;x) = (1-s)F(x)+sG(x).$$
If $s=1$ is a regular value of $\pi\colon  \V(H(s;x)) \to \C_s$ then there exists a subset $S \subset \C \times \C$ of full measure such that for 
$\gamma=(\gamma_0,\gamma_1) \in S$,
\begin{equation}
\label{eq:gammatrick}
H_{\tau_\gamma}(t;x)\equiv (1-t)\gamma_0F(x)+t\gamma_1G(x)
\end{equation}
is a homotopy, where 
\begin{align*}
\tau_\gamma\colon [0,1] &\to \C_s\\
t &\mapsto \frac{t\gamma_1}{t\gamma_1+\gamma_0-t\gamma_0}.
\end{align*}
If $s=0$ is a regular value of $\pi$ as well, then $H_{\tau_\gamma}(t;x)$ is regular.
\end{lemma}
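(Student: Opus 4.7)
The plan is to identify the full-measure set $S$ of $\gamma$'s and then check that, for $\gamma\in S$, the claimed path $\tau_\gamma$ is a valid homotopy path into the regular locus. Since $\C_s$ is one-dimensional and the hypothesis that $s=1$ is a regular value implicitly places us in the setting where $\pi\colon \V(H(s;x))\to\C_s$ is a branched cover, its branch locus $B\subset\C_s$ is a proper Zariski-closed subset of the affine line, so $B$ is finite; also $1\notin B$.

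First I would verify the scalar equivalence. Writing $D(t):=(1-t)\gamma_0+t\gamma_1$, direct substitution gives $1-\tau_\gamma(t)=(1-t)\gamma_0/D(t)$ and $\tau_\gamma(t)=t\gamma_1/D(t)$, so
\[
 H(\tau_\gamma(t);x)\;=\;\frac{(1-t)\gamma_0 F(x)+t\gamma_1 G(x)}{D(t)},
\]
which agrees with the right-hand side of the lemma up to the nowhere-vanishing scalar $1/D(t)$, provided $D$ has no zero on $[0,1]$. The zeros of $D$ on $[0,1]$ occur exactly when $\gamma_0=0$, when $\gamma_1=0$, or when $\gamma_1/\gamma_0$ is a negative real number; the union of these three loci is a measure-zero subset of $\C^2$.

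Next I would ensure $\tau_\gamma((0,1])\subset U$. For each $b\in B$, the equation $\tau_\gamma(t)=b$ rearranges to $t(1-b)\gamma_1=b(1-t)\gamma_0$. The case $b=1$ is excluded by hypothesis (and in any event forces $t=1$), while $b=0$ forces $t=0$; so the only threats are $b\in B\smallsetminus\{0,1\}$. For such $b$, as $t$ varies over $(0,1)$ the relation picks out the one-real-parameter family of complex lines $\gamma_1=\lambda_{b,t}\gamma_0$ through the origin of $\C^2$, where $\lambda_{b,t}=b(1-t)/(t(1-b))$; its union is a real-three-dimensional subset of $\C^2\cong\R^4$, hence measure zero. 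Taking the complement of the finite union of these bad loci (one per $b\in B\smallsetminus\{0,1\}$) together with the measure-zero set from the previous step, I obtain the desired full-measure $S$.

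Finally, for $\gamma\in S$ the path $\tau_\gamma$ is continuous on $[0,1]$ with $\tau_\gamma((0,1])\subset U$ (using $\tau_\gamma(1)=1\in U$ and $\tau_\gamma((0,1))\cap B=\emptyset$), so $H_{\tau_\gamma}$ is a homotopy by definition, and the identity above rewrites it in the stated straight-line form. If additionally $0\in U$, then $\tau_\gamma(0)=0\in U$ as well, promoting $H_{\tau_\gamma}$ to a regular homotopy. The main subtlety is the measure-zero bookkeeping for the interior branch points $B\smallsetminus\{0,1\}$, which comes down to a dimension count after parameterizing the bad locus by $(\gamma_0,t)$; everything else is essentially a direct calculation.
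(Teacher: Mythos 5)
Your proof is correct and follows essentially the same approach as the paper: verify the scalar equivalence between $H(\tau_\gamma(t);x)$ and the claimed straight-line form, observe that the only obstruction is the path hitting either the zero of the denominator $D(t)=(1-t)\gamma_0+t\gamma_1$ or the finitely many branch points, and argue each of these constraints carves out a measure-zero subset of $(\gamma_0,\gamma_1)$-space. The only real difference is bookkeeping: the paper dehomogenizes by setting $\gamma_1=1$ and works with a one-dimensional set of bad ratios, whereas you do the dimension count directly in $\C^2\cong\R^4$ (a one-real-parameter family of complex lines through the origin, hence real dimension $3$). Your version is in fact a bit cleaner: the paper's manipulation $\tau_\gamma(t)=\frac{t}{t+(1-t)\gamma_0}=1+\frac{1}{(1-t)}\frac{1}{\gamma_0}$ does not hold as written, though the conclusion it reaches (finitely many half-open segments of $\gamma_0^{-1}$ values) is still measure zero, so the paper's argument survives; your derivation $t(1-b)\gamma_1 = b(1-t)\gamma_0$ avoids that slip entirely and also handles the endpoints $b=0,1$ explicitly. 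Both proofs correctly treat the regularity upgrade when $0\in U$.
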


\begin{proof}
Let $\tau_\gamma(t) = \frac{t\gamma_1}{t\gamma_1+\gamma_0-t\gamma_0}$. We claim that $H(\tau_\gamma(t);x)$ has the same solutions as the right-hand-side of \eqref{eq:gammatrick} for any $t \in \C \smallsetminus \V(t\gamma_1+\gamma_0-t\gamma_0)$.
To see this, note that 
\begin{align*}
H(\tau_\gamma(t);x) &= (1-\tau_\gamma(t))F+\tau_\gamma(t)G \\
& = \left(1-\frac{t\gamma_1}{t\gamma_1+\gamma_0-t\gamma_0}\right)F+\frac{t\gamma_1}{t\gamma_1+\gamma_0-t\gamma_0}G 
\end{align*}
The denominator $t\gamma_1+\gamma_0-t\gamma_0$ is zero when $t = \frac{-\gamma_0}{\gamma_1-\gamma_0}$. 
When $t \neq \frac{\gamma_0}{\gamma_0-\gamma_1}$, the denominator is nonzero.  Thus, for $\gamma $ chosen in a subset of $\C \times \C$ of full measure, we can clear denominators without changing the solutions:
\begin{align*}
&= (t\gamma_1+\gamma_0-t\gamma_0 - t\gamma_1)F + t\gamma_1G \\
&=(1-t)\gamma_0F+t\gamma_1G.
\end{align*}

The branch locus $D$ of $\pi$ has complex codimension $1$ in $\C_t$, (i.e. $D$ is a finite set of points $d_1,\ldots,d_k$ in $\C_t$). We claim that the set of ratios $\gamma_0/\gamma_1$ with the property that $\tau_\gamma(t) = d_i$ for some $i$ and some $t \in [0,1]$ has measure zero in $\C \cong \R^2$. Because scaling does not change solutions, we may assume that $\gamma_1=1$. Note that  $\tau_\gamma(t) = \frac{t}{t+(1-t)\gamma_0}=1+\frac{1}{(1-t)}\frac{1}{\gamma_0}$ so $\tau_\gamma(t) = d_i$ if and only if $(d_i-1)(1-t) = \gamma_0^{-1}$ for some $t \in [0,1]$. Thus, the only $\gamma_0^{-1}$ for which $\tau_\gamma(t)=d_i$ for some $i=1,\ldots,k$ and  $t \in (0,1]$ are those whose inverses are contained on the finitely many half-open line segments $\{(d_i-1)(1-t)\}_{t \in (0,1]}$. This set has measure zero. Thus, the subset $S' \subset \C \times \C$ inducing such ratios has measure zero in $\C \times \C \cong \R^4$ and its complement $S=\C\times \C \smallsetminus S'$ has full measure in $\C \times \C$. Moreover, if $\tau_\gamma(0) \neq d_i$ for any $i=1,\ldots,k$, then $\tau_\gamma([0,1]) \cap \{d_1,\ldots,d_k\}=\emptyset$ for general $\gamma \in \C \times \C$ implying $H(t;x)$ is regular.
\end{proof}
A homotopy of the form 
\begin{equation}
\label{eq:straightlinehomotopy}
H(t;x)=(1-t)F(x)+tG(x)
\end{equation} is called a \mydef{straight-line homotopy}. Given two square polynomial systems $F(x)$ and $G(x)$, the construction \eqref{eq:straightlinehomotopy} may not be a homotopy, however, if $1$ is a regular value of $\pi\colon \V(H(t;x)) \to \C_t$, then \eqref{eq:gammatrick} is a homotopy with probability one: under any probability measure on the space $\C \times \C$ of choices for $\gamma$ in \eqref{eq:gammatrick}, the probability that $\gamma$ is chosen so that $H(\tau_\gamma(t);x)$ is a homotopy is one. Replacing $H(t;x)$ with \eqref{eq:gammatrick} is called the \mydef{$\gamma$-trick}. 
\begin{lemma}
\label{lem:generalgammatrick}
Suppose that $H(s;x) \in \C[s][x]$ is a square system and that $$\pi\colon \V(H(s;x)) \to \C^m_s$$ is a branched cover with regular values $U \subset \C^m_s$. If $s_1 \in U$ and $s_0 \in \C^m_s$, then there exists a path $\tau\colon [0,1] \to U$ such that $\tau(0)=s_0$ and $\tau(1)=s_1$ making $H_\tau$ a homotopy. If $s_0 \in U$ then $H_\tau$ is a regular homotopy.
\end{lemma}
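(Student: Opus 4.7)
The plan is to show that $U$ is path-connected, construct a short path from $s_0$ into $U$, and then concatenate. Recall from the definition of a homotopy given earlier that the path $\tau$ only needs to satisfy $\tau((0,1]) \subset U$, so the endpoint $s_0$ may lie in the branch locus.

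First I would argue that $U$ is path-connected. The branch locus $B = \C^m_s \smallsetminus U$ is a proper Zariski closed subset of $\C^m_s$, hence has complex codimension at least one. Viewing $\C^m_s$ as the real manifold $\R^{2m}$, this means $B$ has real codimension at least two. A standard general-position argument then shows that any continuous path in $\C^m_s$ between two points of $U$ can be perturbed off of $B$, so $U$ is path-connected.

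Next I would handle the possibly-singular endpoint $s_0$. If $s_0 \in U$, then path-connectedness of $U$ already supplies a path $\tau\colon [0,1] \to U$ with $\tau(0)=s_0$ and $\tau(1)=s_1$, and $H_\tau$ is a regular homotopy in the sense defined above. If $s_0 \notin U$, pick a general complex line $L \subset \C^m_s$ through $s_0$. Then $L \cap B$ is a proper algebraic subset of $L \cong \C$, hence a finite set of points. Consequently $s_0$ is isolated in $L \cap B$ in the Euclidean topology, so there is a short path $\tau_0\colon [0,1] \to L$ with $\tau_0(0)=s_0$ and $\tau_0((0,1]) \subset U$. I would then concatenate $\tau_0$ with a path $\tau_1\colon [0,1] \to U$ (guaranteed by the previous paragraph) from $\tau_0(1)$ to $s_1$, reparametrizing to obtain the desired $\tau\colon [0,1] \to \C^m_s$ with $\tau(0)=s_0$, $\tau(1)=s_1$, and $\tau((0,1]) \subset U$. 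Composing with $H$ yields the homotopy $H_\tau$.

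The only potentially subtle point is the claim that a proper Zariski closed subset $B \subset \C^m_s$ has real codimension at least two, which underlies both the path-connectedness of $U$ and the existence of the initial segment $\tau_0$. This is a standard fact already invoked implicitly in the opening discussion of Section~\ref{subsection:branchedcovers}, where the discriminant $\V(b^2-4c) \subset \C^2$ is observed to be real codimension two and therefore fails to disconnect $\C^2_{b,c}$. The remainder of the argument is elementary point-set topology together with the Bernstein-type observation that a line meets a hypersurface in finitely many points.
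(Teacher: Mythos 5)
Your argument is correct, but it takes a genuinely different route from the paper's. You establish existence abstractly: the branch locus $B$ has real codimension at least two, so $U$ is path-connected, and a general line through $s_0$ meets $B$ in a finite set, giving a short escape path from $s_0$ that you then concatenate with a path inside $U$ to $s_1$. The paper instead works directly with the line segment from $s_0$ to $s_1$: since $s_1 \in U$, that line is not contained in $B$ and so meets it in finitely many points, and one then precomposes the parametrization $\tau'(q) = (1-q)s_0 + qs_1$ with the M\"obius reparametrization $\tau_\gamma$ from Lemma~\ref{lem:gammatrick} (the $\gamma$-trick). For generic $\gamma$, the arc $\tau_\gamma((0,1])$ avoids the finitely many bad $q$-values, producing a single explicit path in closed form. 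The paper's version buys concreteness: the path is exactly the one a numerical path-tracker would follow, and the genericity of $\gamma$ is the same probability-one statement already used throughout the section. Your version buys generality and conceptual simplicity, at the cost of a nonconstructive path-connectedness step and a concatenation that would need to be smoothed in an implementation. Both are sound; the paper's is tailored to the computational framing of the surrounding material and explicitly reuses the already-proved $\gamma$-trick lemma rather than redoing a general-position argument.
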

\begin{proof}
Since $s_1 \in U$, the line connecting $s_0,s_1$ in $\C_s^m$  intersects the branch locus of $\pi$ in finitely many points. Parametrize this line by 
\begin{align*}
\tau'\colon \C &\to \C^m_s \\
q & \mapsto (1-q)s_0+qs_1.
\end{align*} 
Composing $\tau'$ with the map $\gamma$ from Lemma \ref{lem:gammatrick} for generic $(\gamma_0,\gamma_1) \in \C \times \C$ produces a path $\tau\colon (0,1] \to U$ so that $H_\tau$ is a homotopy, and additionally, if $s_0 \in U$ then $\tau\colon [0,1] \to U$ and $H_\tau$ is a regular homotopy.
\end{proof}
In light of this result, given a branched cover $\pi\colon X \to \C_s^m$, a value $s_0\in \C_s^m$, and a regular value $s_1\in \C_s^m$, we will henceforth use the phrase ``a homotopy from $s_1$ to $s_0$'' assuming that we take a homotopy as in Lemma \ref{lem:generalgammatrick}. 

\subsection{Homotopy continuation}
Given a homotopy $H(t;x)$ and some path $x(t)\colon (0,1] \to \C^n$ of the homotopy for which $x(1)$ is known, the method of path tracking uses the predictor-corrector algorithm  to analytically continue $x(t)$ as $t$ goes from $1$ toward $0$. Producing a differential equation satisfied by $x(t)$ is simple. By definition, $H(t;x(t))= 0$ and therefore,
\begin{equation}
\label{eq:davidenko}
DH(t;x(t))=0.
\end{equation}
Applying the chain rule to \eqref{eq:davidenko} gives
\begin{equation}
\label{eq:davidenko2}
D_tH+D_xH\cdot \frac{\partial x}{\partial t}=0.
\end{equation}
Reordering, this becomes the \mydef{Davidenko differential equation} \cite{Davidenko},
\begin{equation}
\label{eq:davidenko3}
\frac{\partial x}{\partial t}=-(D_xH)^{-1}D_tH
\end{equation}
which when used as the input to the predictor-corrector algorithm (Algorithm \ref{alg:predictorcorrector}) produces a \mydef{path tracking algorithm} for regular homotopies.

\boxit{
\begin{algorithm}[Path tracking for regular homotopies]
\label{alg:pathtrackingregular}
\nothing \\
\myline
{\bf Input:} \\
$\bullet$ A regular homotopy $H(t;x)$\\
$\bullet$ Approximate start solutions $S_1$ to $H(1;x)=0$\\
$\bullet$ A step size $h$ \\
 \myline
{\bf Output:}\\
$\bullet$ Approximate target solutions $S_0$\\
\myline
{\bf Steps:}
\begin{itemize}[nosep]
\item[1] \myfor $s \in S_1$ \mydo
\begin{itemize}[nosep]
\item[1.0] Let $x_s(t)$ be the path of $H(t;x)$ with $x_s(1)=s$
\item[1.1] \set $x_s(0)$ equal to the output of  Algorithm \ref{alg:predictorcorrector} using the input \begin{itemize}
\item[$\bullet$]Differential equation: $\frac{\partial x}{\partial t}=-(D_xH)^{-1}D_tH$
\item[$\bullet$]System of equations: $H(t;x)=0$
\item[$\bullet$] Initial value: $x_s(1)$
\item[$\bullet$] Step size: $h$
\item[$\bullet$] Target $t$-value: $0$
\end{itemize}
\end{itemize}
\item[2] \return $S_0:=\{x_s(0)\}_{s \in S}$
\end{itemize}
\end{algorithm}
}

Euler and Newton steps of the path tracking algorithm at $(t^*,x^*)$ are explicitly 
\begin{align}
\label{eq:specialeulernewton}
(x^*, t^*) &\xrightarrow{E} (t^*-h,x^*+h(D_xH(t^*;x^*))^{-1}D_t H(t^*;x^*))\\
(x^*, t^*) &\xrightarrow{N} (t^*;x^*-(D_xH(t^*;x^*))^{-1}H(t^*;x^*)). \nonumber
\end{align}

Equations \eqref{eq:specialeulernewton}  are only valid at the points $(t^*;x^*)$ where $D_xH(t^*;x^*)$ is invertible. This is the case at all points $(t;x(t))$ corresponding to a path $x(t)$ of the regular homotopy. When $H$ is not a regular homotopy, these conditions fail at $0$, but more importantly, they become computationally prohibitive near zero as described in Section~\ref{subsubsection:numericalerrors}.

\subsection{Endgames}
\label{subsection:endgames} 
We tame difficulties of homotopies at $t=0$ using endgame algorithms to produce the nonregular analog of Algorithm \ref{alg:pathtrackingregular}. Let $H(t;x)$ be a homotopy coming from lifting a path in $\C_s^m$ 
to paths $\{x_i(t)\}_{i=1}^d$ with respect to the branched cover
$$\pi\colon X \to \C_s^m.$$ If $H(t;x)$ is not regular, a path $x_i(t)$ may exhibit wild behavior near $t=0$ arising from one of two situations, each preventing the effective use of Algorithm \ref{alg:pathtrackingregular} on $H(t;x)$.
\begin{enumerate}
\item As $t \to 0$, the path $x(t)$ diverges.
\item The matrix $D_xH$ is not invertible at $p=(0;x(0))$ because
\begin{enumerate}
\item the rank of $DH|_p$ is $n-1$,
\item the rank of $DH|_p$ is $n$, but the rank of $D_xH|_p$ is $n-1$,
\item the rank of $DH_p$ is less than $n-1$.
\end{enumerate} 
\end{enumerate}
Figure \ref{fig:homotopy2} displays a homotopy where each instance occurs (in order from top to bottom).
\begin{figure}[!htpb]
\includegraphics[scale=0.5]{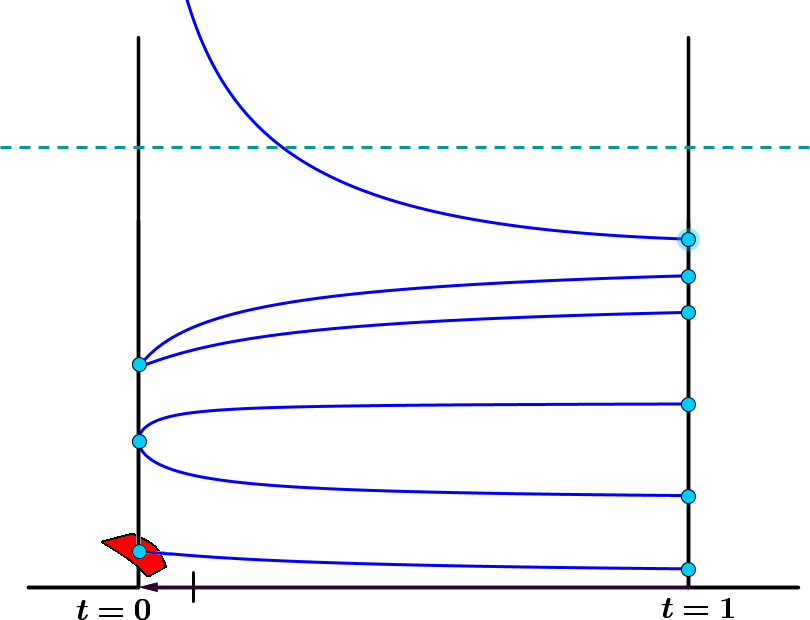}
\caption{A homotopy displaying possible behaviors at $t=0$.}
\label{fig:homotopy2}
\end{figure}
The default practical solution for handling $(1)$ is to simply truncate paths which seem to be diverging. This is assessed throughout the path tracking process by testing at each step whether $|x_i(t)|<N$ for some tolerance $N \gg 0$. If the test fails, the path is no longer tracked under the assumption that it is diverging. Another option is to homogenize the equations of $X$ and take a random dehomogenization. This involves choosing some hyperplane at infinity, and so long as this (real codimension $2$) hyperplane does not meet any of the homotopy paths (which have real dimension $1$), no path will diverge in the corresponding affine chart.

The next section deals with second case.

\subsection{Cauchy endgame}
Each instance of case $(2)$ may be handled the same way via the Cauchy endgame.

Let $x(t)$ be a path of a homotopy $H(t;x)$. We assume throughout this section that the function $H(t;x)$ extends from a function on the domain $[0,1]_t \times \C_x^n$ to a function on $\C_t \times \C_x^n$ so that $x(t)$ extends to a map $x(t)\colon U \to (H(t;x))^{-1}(\bzero)$ where $U$ are the regular values of $\C_t$.
 
There exists $\epsilon >0$ such that $0\in \C_t$ is the only branch point of the homotopy in the disc $\Delta\subset \C_t$ of radius $\epsilon$ centered at $0$ and the map $x(t)$ has a Puiseux expansion $$x(t)=\left(f_1\left(t^{\frac{1}{r}}\right),\ldots,f_n\left(t^{\frac{1}{r}}\right)\right),$$ for some $\mydefMATH{r} \in \mathbb{N}$ and complex analytic functions $f_1,\ldots,f_n$ on the disc $\mydefMATH{D}=\epsilon^{1/r}\Delta$. The number $r$ is called the \mydef{winding number} of $x(t)$.
Figure \ref{fig:cauchy} displays the graph of some $x(t)\colon \Delta\to \C_x$, with winding number $r=2$, projected onto the product of $\Delta\subset \C_t$ and the real axis of $\C_x$. 
\begin{figure}[!htpb]
\includegraphics[scale=0.35]{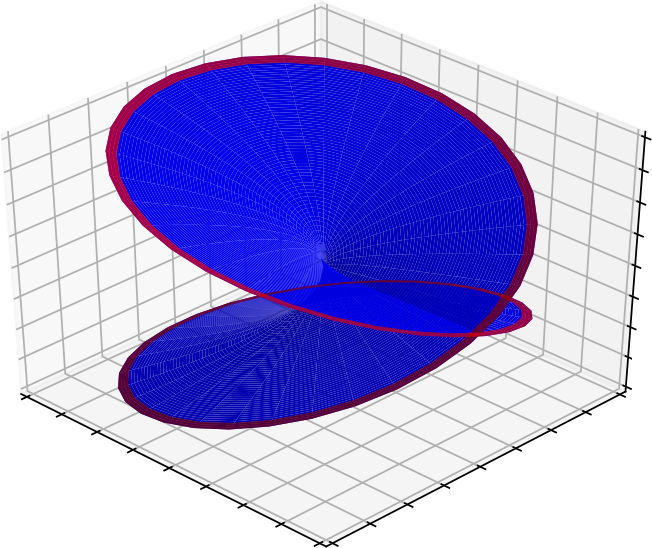}
\caption{A depiction of the local behavior of a path $x(t)$ of a homotopy near a branch point with winding number $2$.}
\label{fig:cauchy}
\end{figure}
Let  $\mydefMATH{\theta}\colon D \to \Delta$ be the map $\theta(z)=z^r$. Composing gives $\mydefMATH{f(z)}=(f_1(z),\ldots,f_n(z))=x(\theta(z))$ which is holomorphic on $D$ and has the property that $f(0)=x(0)$. 
\begin{lemma}
Suppose that $g$ is a holomorphic function on a closed disc $D \subset \C_z$ centered at the origin. Then 
 $$g(0) = \frac{1}{2\pi i} \int_{\partial D} \frac{g(z)}{z} dz.$$
\end{lemma}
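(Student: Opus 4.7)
The plan is to prove this classical Cauchy integral formula by reducing the integral to one computed on arbitrarily small circles around the origin, and then using the continuity of $g$ together with the explicit integral $\int_{|z|=\rho} \frac{1}{z}\,dz = 2\pi i$.

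First I would parametrize $\partial D$ as $z = R e^{i\theta}$ where $R$ is the radius of $D$ and $\theta \in [0, 2\pi]$. For any $0 < \rho < R$, let $C_\rho$ denote the circle of radius $\rho$ centered at $0$, oriented counterclockwise. The key step is to show that the integral $\int_{C_\rho} \frac{g(z)}{z}\,dz$ is independent of $\rho$. This follows from Cauchy's integral theorem applied to the function $g(z)/z$ on the annular region between $C_\rho$ and $\partial D$: since $g(z)/z$ is holomorphic on this annulus (as $g$ is holomorphic on $D$ and $z \neq 0$ there), the integrals around the two boundary circles agree.

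Next I would decompose the integral around $C_\rho$ as
\[
\int_{C_\rho} \frac{g(z)}{z}\,dz = g(0)\int_{C_\rho} \frac{dz}{z} + \int_{C_\rho} \frac{g(z)-g(0)}{z}\,dz.
\]
The first term evaluates explicitly using the parametrization $z = \rho e^{i\theta}$ and $dz = i\rho e^{i\theta}\,d\theta$, giving $\int_{C_\rho} \frac{dz}{z} = 2\pi i$.

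For the second term, I would use that $g$ is continuous at $0$: given $\epsilon > 0$, there exists $\delta > 0$ such that $|g(z) - g(0)| < \epsilon$ whenever $|z| < \delta$. For $\rho < \delta$, the standard ML-estimate gives
\[
\left|\int_{C_\rho} \frac{g(z)-g(0)}{z}\,dz\right| \leq \frac{\epsilon}{\rho} \cdot 2\pi\rho = 2\pi\epsilon.
\]
Since $\epsilon$ is arbitrary, this integral tends to $0$ as $\rho \to 0$. Combining all the pieces and using that the original integral over $\partial D$ equals $\int_{C_\rho} \frac{g(z)}{z}\,dz$ for every $\rho$, we conclude
\[
\int_{\partial D} \frac{g(z)}{z}\,dz = 2\pi i \cdot g(0).
\]
Dividing by $2\pi i$ gives the result. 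The main obstacle is really just invoking Cauchy's integral theorem correctly to justify deformation of the contour; this is standard, but it is the analytic input that makes the proof work.
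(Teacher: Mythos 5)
Your argument is correct and is the standard textbook proof of the Cauchy integral formula at the center of a disc: contract the contour to a small circle $C_\rho$ using Cauchy's theorem on the annulus, split off the constant $g(0)$, evaluate $\int_{C_\rho} dz/z = 2\pi i$ directly, and kill the remainder with an ML-estimate plus continuity of $g$ at $0$. Note, however, that the paper states this lemma as background for the Cauchy endgame and gives no proof at all (it is treated as a classical fact), so there is no paper argument to compare against; your write-up simply supplies the missing standard proof. One small point worth making explicit: to apply Cauchy's theorem on the annulus between $C_\rho$ and $\partial D$ you need $g$ holomorphic on an open neighborhood of the closed disc, which is the usual reading of ``holomorphic on a closed disc''; if one only assumes holomorphicity on the open disc and continuity up to the boundary, a further limiting argument in the radius is required, but the conclusion is the same.
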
 

\boxit{
\begin{algorithm}[Cauchy Endgame]
\label{alg:cauchyendgame}
\nothing \\
\myline
{\bf Input:} \\
$\bullet$ A path $x(t)$ of a homotopy $H(t;x)$ \\ 
$\bullet$ An approximation of $x(\epsilon)$ such that $0 \in \Delta \subset \C_t$ is the only branch point in the disc $\Delta$ centered at $0$ with radius $\epsilon$\\ \myline
{\bf Output:}\\
$\bullet$ A numerical approximation of $x(0)$ \\
$\bullet$ The winding number of $x(t)$\\
\myline
{\bf Steps:}
\begin{itemize}[nosep]
\item[1] Use Algorithm \ref{alg:pathtrackingregular} to track the point $x(\epsilon)$ around a parametrization of the boundary of $\Delta$ by $t(s)=\epsilon e^{\sqrt{-1}s}$ to produce the points $(t(s);x(t(s)))$ (and store them) until on the $r$-th loop, $(\epsilon,x(t(0)))=(x(u(2\pi)),t_\epsilon)$
\item[2] Approximate $x' \approx x(0)$ via the path integral in the Cauchy integral formula using the stored values in step $(1)$
\item[3] \return $(x',r)$
\end{itemize}
\end{algorithm}
}

\begin{figure}[!htpb]
\includegraphics[scale=0.5]{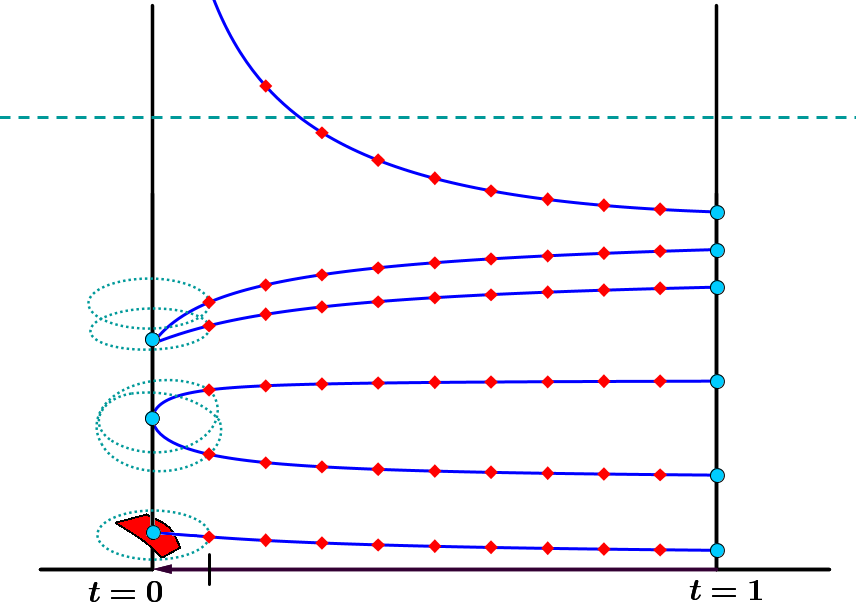}
\caption{\label{fig:endgamesfull} A visual depiction of the full path tracking algorithm. }
\end{figure}

Equipped with the Cauchy endgame, we may now state the full path tracking algorithm. 

\boxit{
\begin{algorithm}[Path tracking]
\label{alg:pathtracking}
\nothing \\
\myline
{\bf Input:} \\
$\bullet$ A homotopy $H(t;x)$\\
$\bullet$ Approximate start solutions $S_1$ to $H(1;x)$\\
$\bullet$ A tolerance $N\gg 0$ for determining divergence \\
$\bullet$ An endgame tolerance $\epsilon>0$\\
 \myline
{\bf Output:}\\
$\bullet$ Approximate target solutions $S_0$\\
\myline
{\bf Steps:}
\begin{itemize}[nosep]
\item[1] \myfor $s \in S_1$ \mydo
\begin{itemize}[nosep]
\item[1.0] Let $x_s(t)$ be the path of $H(t;x)$ with $x_s(1)=s$
\item[1.1] Compute $x_s(\epsilon)$ using Algorithm \ref{alg:pathtrackingregular}
\item[1.2] \myif $|x_s(\epsilon)|\leq N$ and there are no signs of ill-conditioning of the path $x_s(t)$, then continue tracking to $t=0$
\item[1.3] \myif $|x_s(\epsilon)|>N$ then set $x_s(0):=\infty$
\item[1.4] \myelse Use Algorithm \ref{alg:cauchyendgame} to compute $x_s(0)$
\end{itemize}
\item[2] \return $S_0:=\{x_s(0)\}_{s \in S}$
\end{itemize}
\end{algorithm}
}

We illustrate this algorithm in Figure \ref{fig:endgamesfull}.

\section{Homotopy continuation methods}
Given a zero-dimensional polynomial system $F \in \C[x]$, the process of \mydef{homotopy continuation} finds the isolated solutions of $F=0$ by the following model.
\begin{enumerate}
\item If $F$ is overdetermined (more equations than variables), construct a square polynomial system $\hat F$ so that $\V(F) \subset \V(\hat F)$.
\item Find a branched cover $\pi\colon H(t;x) \to \C_s^m$ so that the fiber $\pi^{-1}(y_0)$ is $\V(\hat F)$.
\item Compute the $\deg(\pi)$ solutions in the fiber $\pi^{-1}(y_1)$ for some $y_1 \in U$.
\item Construct a homotopy $H_\tau(t;x)$ where $\tau$ is a path connecting $\tau(1)=y_1$ to $\tau(0)=y_0$.
\item Apply a path tracking algorithm to compute the target solutions $\V(H(0;x))=\V(\hat F) = \pi^{-1}(y_0)$ from the start solutions $\V(H(1;x))=\pi^{-1}(y_1)$.
\item Determine which points of $\V(\hat F)$ are isolated points of $\V(F)$.
\end{enumerate}
Step $(1)$ is done by \mydef{squaring-up} the system $F$. If $F=(f_1,\ldots,f_k) \subset \C[x]$, then for a generic matrix $A\in \C^{k \times n}$, the system $$\hat F = \left\{\sum_{i=1}^k a_{i,j}f_i\right\}_{i=j}^n$$ is a square polynomial system such that the isolated points of $\V(F)$ are isolated points of $\V(\hat F)$. Step $(6)$ is usually performed heuristically by checking if $F(s)\approx 0$ at each isolated point $s \in \V(\hat F)$. If $F(s) \approx 0$ up to some numerical tolerance, then $s$ is deemed to be an isolated solution of $F=0$.  Recently, methods have been developed for certifying solutions of overdetermined systems \cite{AHScert, DHScert}.
In our following discussions we will assume that the polynomial systems involved are already square.

The most general homotopy method is that of a parameter homotopy \cite{LSY89,MS89}. Common special cases of parameter homotopies include the B\'ezout homotopy \cite{Garcia79}, the polyhedral homotopy \cite{HuberSturmfels,V99}, and the witness homotopy. We explain these in the following sections. For reference, we include the ingredients of steps $(1)$ and $(2)$ of each homotopy in Table \ref{table:homotopies} at the end of Section \ref{subsub:witnesscover}.

\subsection{Parameter homotopies}
\label{subsection:parameterhomotopies}
Let $H(s;x) \in \C[s][x]$ be a square parametrized polynomial system. A \mydef{parameter homotopy} is any homotopy coming from the restriction of a branched cover 
$$\pi\colon \V(H(s;x)) \to \C_s^m$$
to a path $\tau\colon [0,1] \to \C_s^m$ such that $\tau(0,1]$ is contained in the regular values $U$ of $\pi$ so that 
$$H_\tau(t;x)\colon [0,1]_t \times C_x^n \to \C^n$$ is a homotopy. In other words, every homotopy is a parameter homotopy.

The \mydef{parameter homotopy method} constructs a fiber $\pi^{-1}(s^*)$ in an \emph{ad hoc} fashion. This theoretically can always be done via the B\'ezout homotopy method, explained in the next section, but often a more immediate or efficient construction is apparent. In either case, it is standard practice to move from $\pi^{-1}(s^*)$ to a fiber $\pi^{-1}(s_1)$ over a general $s_1 \in U$ via Algorithm \ref{alg:pathtrackingregular}. Once $\pi^{-1}(s_1)$ has been computed for a general $s_1 \in \C_s^m$,  one may quickly solve for a fiber $\pi^{-1}(s_0)$ by taking $\tau$ to be a general path connecting $s_1$ to $s_0$ and applying Algorithm \ref{alg:pathtracking} to the homotopy $H_\tau(t;x)$. We note that when $s_0 \in U$, by definition, $H_\tau(t;x)$ is a regular homotopy.

\subsection{The B\'ezout homotopy}
\label{subsection:bezouthomotopy} 
The B\'ezout homotopy method solves a zero-dimensional polynomial system $\V(f_1,\ldots,f_n)$ where $f_i \in \C[x]$ has degree $d_i$. In the language of sparse polynomial systems, this method solves for any fiber of the branched cover
$$\pi_{\Delta_\bullet}\colon X_{\Delta_\bullet} \to \C^{\Delta_\bullet}$$ where $\mydefMATH{\Delta_\bullet}=(d_1\Delta_n,\ldots,d_n\Delta_n)$. By B\'ezout's theorem, this branched cover has degree $\mydefMATH{d}=\prod_{i=1}^n d_i$. The fiber over $G=\{x_i^{d_i}-1\}_{i=1}^n$ consists exactly of the points $(x_1,\ldots,x_n)$ where $x_i$ is any of the $d_i$-th roots of unity. Consequently, $|\pi_{{\Delta_\bullet}}^{-1}(G)|=d$ and so $G$ is a regular value of $\pi_{{\Delta_\bullet}}$.

Given a polynomial system $F \in \C^{\Delta_\bullet}$, if the path $\tau_\gamma\colon [0,1] \to \C^{\Delta_\bullet}$ is given by $\tau(t)=\gamma_0(1-t)F+\gamma_1tG$ for some random $\gamma_0,\gamma_1 \in \C$, then by the $\gamma$-trick, the map $H_{\tau_\gamma}(t;x)\colon [0,1]_t \times \C_x^n \to \C^n$ is a homotopy. Since the parameters of $\pi$ are linear, this homotopy is
$$H_{\tau_\gamma}(t;x) = \gamma_0(1-t)F(x) + \gamma_1tG(x).$$ If $F=0$ has $d$ solutions, then $F$ is a regular value, making $H_\tau(t;x)$ a regular homotopy.

\boxit{
\begin{algorithm}[B\'ezout homotopy]
\label{alg:bezouthomotopy}
\nothing \\
\myline
{\bf Input:} \\
$\bullet$ A square polynomial system $F=(f_1,\ldots,f_n) \subset \C[x]$ \\
 \myline
{\bf Output:}\\
$\bullet$ Approximations of the isolated solutions of $F=0$\\
\myline
{\bf Steps:}
\begin{itemize}[nosep]
\item[0] \set  $d_i=\deg(f_i)$,  $G=\{x_i^{d_i}-1\}_{i=1}^n$,
$S_1=\{a \in \C^n \mid a_i^{d_i}=1\}$, and $\gamma_0,\gamma_1 \in \C$ random complex numbers
\item[1] \return the output of Algorithm \ref{alg:pathtracking} on input homotopy $H(t;x)=\gamma_0(1-t)F+\gamma_1tG$ and start solutions $S_1$
\end{itemize}
\end{algorithm}
}

It is best to perform path tracking between two polynomial systems where at least one of them is general.
In practice, both the start system $G$ of the B\'ezout homotopy, and the target system $F$ could have special structure.  For this reason it is common to apply Algorithm \ref{alg:bezouthomotopy} to solve a random polynomial system $\widehat G \in \C^{\Delta_\bullet}$ and subsequently apply a straight-line homotopy from $\widehat G$ to $F$. This process comprises the \mydef{B\'ezout homotopy method}. 

\boxit{
\begin{algorithm}[B\'ezout homotopy method]
\label{alg:bezouthomotopymethod}
\nothing \\
\myline
{\bf Input:} \\
$\bullet$ A square polynomial system $F=(f_1,\ldots,f_n) \subset \C[x]$ \\
 \myline
{\bf Output:}\\
$\bullet$ Approximations of the isolated solutions to $F=0$\\
\myline
{\bf Steps:}
\begin{itemize}[nosep]
\item[0] \set  $d_i=\deg(f_i)$,  $\widehat{G} \in \C^{\Delta_\bullet}$ random, $\gamma_0,\gamma_1 \in \C$ random, and $H(t;x)=\gamma_0(1-t)F+\gamma_1t\widehat{G}$
\item[1] \set $\widehat S_1$ to be the output of Algorithm \ref{alg:bezouthomotopy} applied to $\widehat G$
\item[2] \return the output of Algorithm \ref{alg:pathtracking} on input homotopy $H(t;x)$ and start solutions $\widehat S_1$
\end{itemize}
\end{algorithm}
}

\subsection{The polyhedral homotopy}
\label{subsection:polyhedralhomotopy}

Generalizing the B\'ezout homotopy, the polyhedral homotopy understands a zero-dimensional polynomial system $F=\{f_1,\ldots,f_n\}$ as a member of the family $\C^\Adot$ of sparse polynomial systems supported on $\Adot=\{\mathcal A_1,\cdots, \mathcal A_n\}$ where $\supp(f_i) =\mathcal A_i$. The relevant branched cover in this scenario is $\pi_\Adot\colon X_{\Adot} \to \C^\Adot$. Unlike more basic homotopy methods, a start system is not immediately available, but must be constructed. Much of the notation in the subsequent discussion comes from Section~\ref{subsection:mixedvolume}.

Suppose $F \in \C^\Adot$ is general and let $\ell_\bullet = (\ell_1,\ldots,\ell_n)$ be a set of lifting functions $\ell_i\colon  \mathcal A_i \to \Z_{\geq 0}$ such that the induced subdivision  $S^{\ell_\bullet}$ (Definition \ref{def:inducedsubdivision}) is a fine mixed subdivision of $\Adot$. Define 
$$\mydefMATH{f_{i,\ell_i}(t;x)} = \sum_{\alpha \in \mathcal A_i} c_{i,\alpha} x^\alpha t^{\ell_i(\alpha)},$$
so that $\New(f_{i,\ell_i}) = \conv_{\ell_i}(\mathcal A_i)$ and similarly define the homotopy $$\mydefMATH{F_{\ell_\bullet}(t;x)} = \{f_{i,\ell_i}(t;x)\}_{i=1}^n,$$
coming from a path in the branched cover $\pi_{\Adot}\colon X_{\Adot} \to \C^{\Adot}$ discussed in Section~\ref{subsection:sparsepolynomialsystems}.
When $t=1$, we have $F_{\ell_\bullet} = F$ and for a general value of $t$, this is a zero-dimensional polynomial system with support $\Adot$ and so $\pi\colon \V(F_{\ell_\bullet}) \to \C_t$ is a branched cover with $\MV(\Adot)$ branches. As $t \to 0$, there are often many solutions of $F_{\ell_\bullet}(t;x)=0$ which diverge, although some may not. We understand these paths $\{x_i(t)\colon  \C_t \to \C^n\}_{i=1}^{\MV(\Adot)}$ near $t=0$ by analyzing their Puiseux expansions and changing coordinates accordingly. We explain the process below.

The branches of $F_{\ell_\bullet}(t;x)$ are functions $x=x(t)$ admitting a Puiseux expansion $$\mydefMATH{x(t)} = (z_{1}t^{\nu_1},\ldots,z_{n}t^{\nu_n}) + \text{ terms with higher powers of }t,$$ 
for some $z=(z_1,\ldots,z_n) \in \C^n$ and $\nu=(\nu_1,\ldots,\nu_n) \in \mathbb{Q}^n$. Taking the composition $F_{\ell_\bullet}(t;x(t))$ yields 
$$\{f_{i,\ell_i}(x(t))\}_{i=1}^n = \left\{\sum_{\alpha \in \mathcal A_i}c_{\alpha}z^\alpha t^{\langle \nu, \alpha \rangle + \ell_i(\alpha)}+\text{ terms with higher powers of }t\right\}_{i=1}^n.$$
The solutions of the above system approach those of 
$$\mydefMATH{F^\nu(t;z)}=\left\{\sum_{\alpha \in \mathcal A_i} c_{i,\alpha} z^\alpha t^{\langle \nu, \alpha \rangle + \ell_i(\alpha)}\right\}_{i=1}^n,$$ as $t \to 0$.  Let $\mydefMATH{\omega}=(-\nu,-1)$ and observe that the terms of 
$$\sum_{\alpha \in \mathcal A_i} c_{i,\alpha} z^\alpha t^{\langle \nu, \alpha \rangle + \ell_i(\alpha)}=\sum_{\alpha \in \mathcal A_i} c_{i,\alpha} z^\alpha t^{\langle -\omega, \Gamma_i(\alpha) \rangle}$$
with lowest power of $t$ are those $\alpha$ such that the inner product $\langle \omega, \Gamma_i(\alpha)  \rangle$ is maximized. Equivalently, these are the vectors $\alpha$ such that $\Gamma_i(\alpha)  \in (\Gamma_i(\mathcal A_i) )_\omega$. Dividing by the lowest power of $t$ occurring in each polynomial of $F^\nu(t;z)$ and evaluating at $t=0$ yields the polynomial system $\mydefMATH{G^\nu}=0$ consisting of polynomials 
$$\mydefMATH{f_i^\nu} = \sum_{\Gamma_i(\alpha)  \in (\Gamma_i(\mathcal A_i) )_\omega}  c_{i,\alpha} z^\alpha.$$
The solutions of $G^\nu$ in $(\C^\times)^n$ are the same as those of $F^\nu(0;z)$ in $(\C^\times)^n$.
Moreover, since the face of a Minkowski sum is a Minkowski sum of faces, we have that $$\conv\left(\sum_{i=1}^n (\Gamma_i(\mathcal A_i))_\omega \right)=(\conv_{\ell_\bullet}(\Adot))_\omega$$ is a face of $\conv_{\ell_\bullet}(\Adot)$. Let $\mydefMATH{\mathcal C_i(\nu)}=\supp(f_i^\nu)$ and $\mydefMATH{\mathcal C_\bullet(\nu)} = (\mathcal C_1(\nu),\ldots,\mathcal C_n(\nu))$.

\begin{lemma}
The system $G^\nu$ has a solution in the algebraic torus if and only if $\mathcal C_\bullet$ is a fine mixed cell of the fine mixed subdivision $S^{\ell_\bullet}$.
\end{lemma}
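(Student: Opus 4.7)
The plan is to route the claim through Bernstein--Kushnirenko together with the essentialness criterion for positive mixed volume (Lemma \ref{lem:whenmixedvolumeiszero}). Since $F \in \CC^{\Adot}$ was chosen general, the coefficients of each $f_i^\nu$, which are inherited unchanged from $f_i$ by restriction to the support $\mathcal C_i(\nu) \subseteq \mathcal A_i$, are generic. Proposition \ref{prop:BKK} therefore gives that the number of isolated torus solutions of $G^\nu=0$ equals $\MV(\mathcal C_\bullet(\nu))$. When $\mathcal C_\bullet(\nu)$ fails to be essential, a subset $I\subseteq[n]$ witnesses $\dim(\sum_{i\in I}\conv(\mathcal C_i(\nu)))<|I|$, and a monomial change of coordinates (as in Section \ref{subsubsection:integerlinear}) rewrites $\{f_i^\nu=0\}_{i\in I}$ as $|I|$ equations in strictly fewer variables, yielding no torus solutions for generic coefficients. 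Combined with Lemma \ref{lem:whenmixedvolumeiszero}, $G^\nu$ has a solution in $(\CC^\times)^n$ if and only if $\mathcal C_\bullet(\nu)$ is essential.

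It remains to show that $\mathcal C_\bullet(\nu)$ is essential if and only if it is a fine mixed cell of $S^{\ell_\bullet}$. For the easy direction $(\Leftarrow)$, the fine mixed condition with $k=n$ forces $|\mathcal C_i(\nu)|=2$ and $\type(\mathcal C_\bullet(\nu))=\bone$, so writing $\mathcal C_i(\nu)=\{\alpha_i,\beta_i\}$ and $v_i=\alpha_i-\beta_i$, the Minkowski sum $\sum_i \conv(\mathcal C_i(\nu))$ is a zonotope of dimension equal to the rank of $\{v_1,\dotsc,v_n\}$. Because $\mathcal C_\bullet(\nu)\in S^{\ell_\bullet}$, this sum is $n$-dimensional, so the $v_i$ are linearly independent; hence every subcollection $\{v_i\}_{i\in I}$ is as well, giving essentiality for each nonempty $I\subseteq [n]$.

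For the harder converse $(\Rightarrow)$, I would first upgrade the cell $\mathcal C_\bullet(\nu)$ to membership in $S^{\ell_\bullet}$. By construction, $\conv_{\ell_\bullet}(\mathcal C_\bullet(\nu))$ is the face $\mathcal F$ of $\conv_{\ell_\bullet}(\Adot)$ exposed by $\omega=(-\nu,-1)$. Because $\omega$ has negative last coordinate, $\mathcal F$ contains no vertical direction, so the vertical projection $\pi_{[n]}\colon \R^{n+1}\to\R^n$ is dimension-preserving on $\mathcal F$ (this is the same reasoning as in the proof of Lemma \ref{lem:lowerhullsurjects}). Therefore $\dim(\mathcal F)=\dim(\pi_{[n]}(\mathcal F))=\dim(\sum_i\conv(\mathcal C_i(\nu)))$. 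Essentiality applied to $I=[n]$ forces this common dimension to be at least $n$, so $\mathcal F$ is a facet of the lower hull and $\mathcal C_\bullet(\nu)\in S^{\ell_\bullet}$. Since $S^{\ell_\bullet}$ is a fine mixed subdivision, $\sum_i(|\mathcal C_i(\nu)|-1)=n$; combining this with the singleton essentiality bounds $|\mathcal C_i(\nu)|\geq 2$ across $n$ summands forces $|\mathcal C_i(\nu)|=2$ and $\type(\mathcal C_\bullet(\nu))=\bone$, so $\mathcal C_\bullet(\nu)$ is a fine mixed cell.

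The main obstacle is the facet-promotion step in $(\Rightarrow)$: a priori the exposing vector $\omega$ could pick out a lower-dimensional lower face of $\conv_{\ell_\bullet}(\Adot)$, and the point of the argument is that essentiality of the unlifted supports rules this out via the dimension-preservation of $\pi_{[n]}$ on lower faces. Once this is done, the combinatorial bookkeeping that $k=n$ collapses the fine mixed type uniquely to $\bone$ closes out the proof.
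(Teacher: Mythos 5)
Your proposal is correct and takes essentially the paper's approach: Bernstein--Kushnirenko plus the essentiality criterion for positive mixed volume, followed by the dimension argument promoting the exposed lower face to a facet. The differences are minor but worth noting. The paper's published proof handles only the forward implication (solution exists $\Rightarrow$ fine mixed cell), leaving the converse implicit, while you fill in both directions; your zonotope/linear-independence argument for $(\Leftarrow)$ is a clean way to make the converse explicit. Also, to establish $|\mathcal C_i(\nu)| \geq 2$ the paper observes directly that if some $f_i^\nu$ were a monomial then $G^\nu$ has no torus solutions, whereas you deduce it from essentiality of singleton subsets $I=\{i\}$; these are equivalent, and yours dovetails nicely with the unified essentiality framework. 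Your first paragraph (the monomial-change-of-coordinates argument for non-essential $\Rightarrow$ no solutions) is a correct observation but redundant, since BKK already gives the \emph{exact} solution count as $\MV$, and Lemma~\ref{lem:whenmixedvolumeiszero} already equates $\MV>0$ with essentiality. One small point to make airtight in the facet-promotion step: you should note that $\mathcal C_\bullet(\nu)$, being by definition the \emph{full} set of lattice points lifting onto the face exposed by $\omega=(-\nu,-1)$, is automatically maximal, which is what membership in $S^{\ell_\bullet}$ requires per Definition~\ref{def:inducedsubdivision}.
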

\begin{proof} 
Suppose $G^\nu$ has a solution in $(\C^\times)^n$.
Since $G^\nu$ is a general sparse polynomial system, the Bernstein-Kushnirenko Theorem asserts that the number of solutions in the algebraic torus is the mixed volume of the supports of the $f_i^\nu$. By Lemma \ref{lem:whenmixedvolumeiszero}, the polytopes $\{\conv(\Gamma_i(\mathcal A_i))_\omega\}_{i=1}^n$ form an essential set and so the dimension of $(\conv_{\ell_\bullet}(\Adot))_\omega$ is $n$. Thus, it is a facet in the lower hull of $\conv_{\ell_\bullet}(\Adot)$ and we conclude that $\mathcal C_\bullet(\nu)$ is a cell of $S^{\ell_\bullet}$.

If for any $i \in [n]$ the polynomial $f_i^\nu$ is a monomial, then $G^\nu$ has no solutions in the algebraic torus. Thus, each $\conv(\mathcal C_i(\nu))$ has dimension at least $1$ and so $\conv(\mathcal C_\bullet(\nu))$ is mixed. Since $S^{\ell_\bullet}$ is a fine mixed subdivision, $\mathcal C_\bullet(\nu)$ is fine mixed.
\end{proof}
If $\nu$ exposes a fine mixed cell, then each $f_i^\nu$ is a binomial and the binomial system $G^\nu$ may be solved using the Smith normal form (see Section~\ref{subsubsection:smithnormalform}) to produce all $\mydefMATH{d_\nu}=\vol(\mathcal C_\bullet(\nu))$ solutions $\mydefMATH{X^{\nu}}$ to ${G}^\nu$. These $d_\nu$ solutions correspond to limits of paths of $F_{\ell_\bullet}(t;x)$ and may be tracked from $t=0$ to $t=1$ by first predicting their values at some $\epsilon >0$ in the $z$-coordinates, applying a corrector method, and changing coordinates back to $F_{\ell_\bullet}(t;x)$ to complete the path tracking from $t=\epsilon$ to $t=1$. This gives $d_\nu$ points $\mydefMATH{X_\nu}$ of $\V(F) \cap (\C^\times)^n$. By Lemma \ref{lem:mixedvolume5}, this comprises all of the $\MV(\Adot)$ branches of this homotopy.

We illustrate the polyhedral homotopy with an example.

\begin{example}
Let $\Adot$ be as in Example \ref{ex:subdivision} and take 
\begin{align*}
f_1 &= 3+4x-2y+xy \\
f_2 &= 6-2xy^2+x^2y
\end{align*}
and let 
\begin{align*}
\ell_1(0,0)&=2,\quad \ell_1(0,1)=\ell_1(1,0)=\ell_1(1,1) = 3,\\
\ell_2(0,0)&=\ell_2(1,2)=\ell_2(2,1) = 1.
\end{align*}
so that $$F(t;x)=\{3t^2+4xt^3-2yt^3+xyt^6,6t-2xy^2t+x^2yt\}.$$
 The three directions $\omega_i=(-\nu_i,-1)$ exposing facets in the lower hull of $\conv_{\ell_\bullet}(\Adot)$ which correspond to fine mixed cells are
$$\omega_1=(2,2,-1),\quad \omega_2=(-2,1,-1),\quad \omega_3=(1,-2,-1),$$
and so $\nu_1=(-2,-2),\nu_2=(2,-1),\nu_3=(-1,2)$. 

\begin{figure}[!htpb]
\includegraphics[scale=0.4]{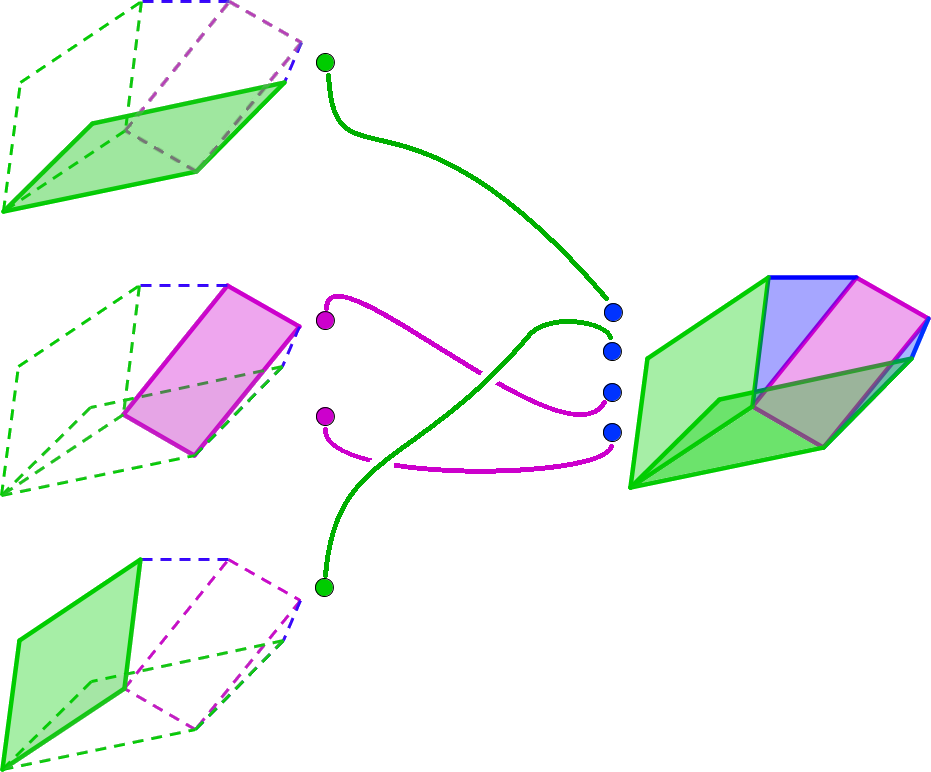}
\caption{A cartoon describing the Polyhedral homotopy.}
\label{fig:polyhedralcartoon}
\end{figure}

Construct
$F^{\nu_1}(t;x(t))=\{3t^2+4z_{1}t-2z_{2}t+z_{1}z_{2}t^2,6t-2z_{1}z_{2}^2t^{-5}+z_{1}^2z_{2}t^{-5}\}$ and divide out by the lowest powers of $t$ to produce
$G^{\nu_1}(z)=\{3+z_{1}z_{2},-2z_{1}z_{2}^2+z_{1}^2z_{2}\}$
which has $2=\vol(\mathcal C_\bullet(\nu_1))$ solutions: $X^{\nu_1}=\left\{(\sqrt{-3/2},\sqrt{-6}),(-\sqrt{-3/2},-\sqrt{-6})\right\}.$

Similarly, for $\nu_2$ construct
$F(t;x(t))^{\nu_2}=\{3t^2+4z_{1}t^5-2z_{2}t^2+z_{1}z_{2}t^7,6t-2z_{1}z_{2}^2t+z_{1}^2z_{2}t^4\}$
and
$G^{\nu_2}(z)=\{3-2z_{2},6-2z_{1}z_{2}^2\}.$
The system $G^{\nu_2}(z)$ has $1=\vol(\mathcal C_\bullet(\nu_2))$ solution, namely $X^{\nu_2}=\left\{\left(4/3,3/2\right)\right\}$.

Finally, for $\nu_3$ we have
$F^{\nu_3}(t;x(t))=\{3t^2+4z_{1}t^2-2z_{2}t^5+z_{1}z_{2}t^7,6t-2z_{1}z_{2}^2t^4+z_{1}^2z_{2}t\}$
and
$G^{\nu_3}(z)=\{3+4z_{1},6+z_{1}^2z_{2}\}$
which has $1=\vol(\mathcal C_\bullet(\nu_3))$ solution: $X^{\nu_3} = \left\{(-3/4,-32/3)\right\}$.

Each solution set $X^{\nu_i}$ may be used to approximate $d_{\nu_i}$ solutions of $F_{\ell_\bullet}(t;x)$ at $t=\epsilon >0$ via a predictor and corrector step followed by a coordinate change. Subsequently, we may track these solutions of $X_{\nu_i} \subset \V(F) \cap (\C^\times)^n$ via the homotopy $F_{\ell_\bullet}(t;x)$ as $t$ goes from $\epsilon$ to $1$.
 \hfill $\diamond$
\end{example}

\boxit{
\begin{algorithm}[Polyhedral homotopy]
\label{alg:polyhedralhomotopy}
\nothing \\
\myline
{\bf Input:} A general sparse polynomial system $F \in \C^{\Adot}$\\ \myline
{\bf Output:} All solutions of $F=0$ in the algebraic torus $(\C^\times)^n$\\
\myline
{\bf Steps:}
\begin{itemize}[nosep]
\item[0] \set $\texttt{solutions}=\emptyset$
\item[1] Choose lifting functions $\ell_\bullet$ such that $S^{\ell_\bullet}$ is a fine mixed subdivision of $\Adot$
\item[2] Compute the mixed cells $\mathcal C_\bullet^{(1)},\ldots,\mathcal C_\bullet^{(m)}$
\item[3] \myfor each mixed cell $\mathcal C_\bullet$ \mydo 
\begin{itemize}
\item [3.1] Compute the vector $(-\nu,-1)$ exposing $\conv_{\ell_\bullet}(\mathcal C_\bullet)$
\item [3.2] Compute $X^\nu =\V(G^\nu)\cap (\C^\times)^n$ using Smith normal form
\item [3.3] Move the solutions $X^\nu$ from $t=0$ to $t=\epsilon >0$ via a prediction and correction step and change coordinates to $x$
\item[3.4] Track the solutions of $F_{\ell_\bullet}(\epsilon;x)$ in step $(3.3)$ from $t=\epsilon$ to $t=1$ (backwards) under the homotopy $F_{\ell_\bullet}(t;x)$ and append the resulting solutions $X_\nu$ to the list $\texttt{solutions}$
\end{itemize}
\item[4] \return $\texttt{solutions}$
\end{itemize}
\end{algorithm}}

Since we usually do not \emph{a priori} know whether or not a sparse polynomial is general in the sense of Proposition \ref{prop:BKK}, given $F \in \C^\Adot$, we solve for the isolated solutions of $F$ in the algebraic torus via the following method.

\boxit{
\begin{algorithm}[Polyhedral homotopy method]
\label{alg:polyhedralhomotopymethod}
\nothing \\
\myline
{\bf Input:} A sparse polynomial system $F \in \C^{\Adot}$\\ \myline
{\bf Output:} All isolated solutions of $F=0$ in the algebraic torus $(\C^\times)^n$\\
\myline
{\bf Steps:}
\begin{itemize}[nosep]
\item[0] Pick a general sparse polynomial system $G \in \C^\Adot$
\item[1] Apply Algorithm \ref{alg:polyhedralhomotopy} to $G$ to produce all isolated points $\V(G) \cap (\C^\times)^n$
\item[2] Track the points $\V(G) \cap (\C^\times)^n$  to the points $\V(F) \cap (\C^\times)^n$ via the straight-line homotopy $H(t;x) = \gamma_0(1-t)F+\gamma_1tG$
\item[3] \return $\V(F) \cap (\C^\times)^n$
\end{itemize}
\end{algorithm}}

\section{Witness sets}
\label{subsection:witnesssets}

Positive-dimensional varieties are represented in numerical algebraic geometry by slicing them with sufficiently many general hyperplanes which cut out degree-many points. Numerical approximations of these points are computed  using homotopy methods and stored in the fundamental data structure of numerical algebraic geometry, a witness set.

\begin{definition}
Let $X$ be an irreducible variety. A \mydef{witness set} for $X$ is a triple $(F,L,S)$ where
\begin{itemize}
\item $F$: a finite set of polynomials such that $X$ is an irreducible component of $\V(F)$.
\item $L$: a general affine linear space of complementary dimension to $X$.
\item $S$: a set containing approximations of each of the points in $X \cap L$. 
\end{itemize}
If $X$ is reducible with top-dimensional components $X_1,\ldots,X_k$ then a witness set for $X$ is $(F,L,S)$ where $S=S_1 \cup \cdots \cup S_k$ such that $(F,L,S_i)$ is a witness set for $X_i$.
\end{definition}
We refer to $L$ as a \mydef{witness slice} and $S$ as \mydef{witness points}. One immediate way to compute a witness set is by using Algorithm \ref{alg:bezouthomotopy}.  

\boxit{
\begin{algorithm}[Constructing a witness set]
\label{alg:witness}
\nothing \\
\myline
{\bf Input:} \\ $\bullet$  A polynomial system $F=(f_1,\ldots,f_{n-m}) \subset \C[x]$ such that $X$ is the union of irreducible components of $\V(F)$ of dimension $m=\dim(\V(F))$\\ \myline
{\bf Output:} \\ $\bullet$ A witness set $(F,L,S)$ for $X$\\
\myline
{\bf Steps:}
\begin{itemize}[nosep]
\item[1] Choose $m$ random linear polynomials $L = \{\ell_1,\ldots,\ell_m\} \subset \C[x]$
\item[2] Apply Algorithm \ref{alg:bezouthomotopy} to $F \cup \ell$ to produce $S$
\item[3] \return $(F,L,S)$
\end{itemize}
\end{algorithm}
}

If $X=X_1 \cup \cdots \cup X_k \subset \C^n$ is the irreducible decomposition of a variety whose components are not all of the same dimension, let $\mydefMATH{\text{Dim}(X)}$ denote the set $\{\dim(X_i)\}_{i=1}^k$. A \mydef{witness superset} of $X$ is a collection $\{(F,L_i,S_i)\}_{i \in \text{Dim}(X)}$ where $(F,L_i,S_i)$ is a witness set for the union of all irreducible components of $X$ of dimension $i$. Methods for computing witness supersets include ``working dimension by dimension'' and the ``cascade algorithm''. These are discussed in \cite[Ch. 9.3-9.4]{BertiniBook}.

\subsection{The witness cover}
\label{subsub:witnesscover}
Given an irreducible variety $X \subset \C^n$ of dimension $m$, let 
$$\mydefMATH{W(X)}=\left\{(x,L) \mymid x \in X \cap \V(L),\text{ and } L \in (\C^{\Delta_n})^m\}\subset X \times (\C^{\Delta_n})^m\right\}$$ be the incidence variety of points on $X$ with linear varieties cut out by $m$ linear polynomials. Then $W(X)$ is irreducible of dimension $m(n+1)$ and the map,
$$\mydefMATH{\pi_{W(X)}}\colon W(X) \to (\C^{\Delta_n})^m,$$
is a degree $\deg(X)$ branched cover (by Lemma \ref{lem:whenhyperplanesintersect}) called the \mydef{witness cover} of $X$. 
With this language, it is straightforward to describe how to ``move'' witness sets.

\boxit{
\begin{algorithm}[Regular Witness homotopy]
\label{alg:regularwitnesshomotopy}
\nothing \\
\myline
{\bf Input:} \\ $\bullet$ A witness set $(F,L,S)$ for $X$\\$\bullet$ A general linear space $L'$ of dimension $n-m$\\
\myline
{\bf Output:} \\ $\bullet$ A witness set $(F,L',S')$ for $X$\\
\myline
{\bf Steps:}
\begin{itemize}[nosep]
\item[1] \set $H(t;x) = \gamma_0(1-t)[F|L]+\gamma_1t[F|L']$
\item[2] Track the witness points $S$ via $H(t;x)$ to the solutions $S'$ of $F=L'=0$
\item[3] \return $(F,L',S')$
\end{itemize}
\end{algorithm}
}

We may deform witness sets of a variety to special linear intersections via a similar algorithm.

\boxit{
\begin{algorithm}[Witness homotopy]
\label{alg:witnesshomotopy}
\nothing \\
\myline
{\bf Input:} \\ $\bullet$  A polynomial system $F=(f_1,\ldots,f_{n-m}) \subset \C[x]$ such that $X$ is an irreducible component of $\V(F)$ of dimension $m=\dim(\V(F))$\\ $\bullet$ A witness set $(F,L,S)$ for $X$\\$\bullet$ A linear space $L'$ of dimension $n-m$\\
\myline
{\bf Output:} \\ $\bullet$ The points $S'=X \cap L'$\\
\myline
{\bf Steps:}
\begin{itemize}[nosep]
\item[1] \set $H(t;x) = \gamma_0(1-t)[F|L]+\gamma_1t[F|L']$
\item[2] Track the witness points $S$ via $H(t;x)$ to the solutions $S'$ of $F=L'=0$
\item[3] \return $S'$
\end{itemize}
\end{algorithm}
}

Since $L$ and $L'$ are regular values of the branched cover $\pi_{W(X)}$, Lemma \ref{lem:gammatrick} guarantees that Algorithm \ref{alg:witnesshomotopy} computes a witness set $(F,L',S')$. We remark that Algorithm \ref{alg:witnesshomotopy} functions just as well for reducible varieties $X=\bigcup_{i=1}^k X_i$ where the map $\pi_{W(X)}$ becomes a branched cover which is not irreducible.

Now that we have explained each of the four homotopy methods mentioned in the introduction, we provide a reference table (Table \ref{table:homotopies}) for their ingredients.
\begin{table}[!htpb]
\begin{tabular}{|l|l|l|l|}
\hline
\textbf{Homotopy} & \textbf{Relevant systems} & \textbf{Branched Cover} & \textbf{Start system} \\
\hline

\begin{minipage}{0.15\textwidth} \vspace{0.1 in} Parameter \\ Section~\ref{subsection:parameterhomotopies}\vspace{0.1 in} \end{minipage} & $F_s \in \C[s][x]$ & $\V(F_s) \xrightarrow{\pi} \C_s^m$ & $\V(F_{s_1})$ \\  \hline 
\begin{minipage}{0.15\textwidth} \vspace{0.1 in} B\'ezout \\ Section~\ref{subsection:bezouthomotopy} \vspace{0.1 in}\end{minipage} & $F \in \C^{\Delta_\bullet}=\C^{d_1\Delta_n,\ldots,d_n\Delta_n}$ & $X_{\Delta_\bullet} \xrightarrow{\pi_{\Delta_\bullet}} \C^{\Delta_\bullet}$ & $\{x_i^{d_i}-1\}_{i=1}^n$ \\
\hline 

\begin{minipage}{0.15\textwidth} \vspace{0.1 in} Polyhedral \\ Section~\ref{subsection:polyhedralhomotopy}\vspace{0.1 in} \end{minipage} & $F \in \C^{\Adot}$ & $X_{\calA_\bullet} \xrightarrow{\pi_{\calA_\bullet}} \C^{\calA_\bullet}$ & \begin{minipage}{0.23\textwidth} Constructed from a \\fine mixed subdivision \end{minipage}\\  \hline

\begin{minipage}{0.15\textwidth} \vspace{0.1 in} Witness \\ Section~\ref{subsection:witnesssets}\vspace{0.1 in} \end{minipage} & \begin{minipage}{0.28\textwidth}  $X \cap L$ where \\ $\dim(X)=m=\codim(L)$ \end{minipage} & \small{$W(X) \xrightarrow{\pi_{W(X)}} (\C^{\Delta_n})^{m}$} & Any witness set for $X$ \\   \hline 
\end{tabular}
\caption{Ingredients for homotopy methods.}
\label{table:homotopies}
\end{table}
Parameter homotopies and the B\'ezout homotopy are implemented in most numerical algebraic geometry software including \textbf{Bertini}, \textbf{PHCPack}, \textbf{HOM4PS}, \textbf{homotopycontinuation.jl} and \textbf{NAG4M2} \cite{V99,Bertini,HCjl, HOM4PS, NAG4M2}.
The polyhedral homotopy is implemented in \textbf{PHCPack} and \textbf{HOM4PS} \cite{V99,HOM4PS}.

\subsection{Witness sets for images of maps}

Much of the strength of numerical algebraic geometry stems from the fact that witness sets (the fundamental data structure in numerical algebraic geometry) can often be computed even when their symbolic analogs, Gr\"obner bases, cannot.  In some sense, this is because Gr\"obner bases transparently express so much information about a variety  while witness sets do not. Rather, witness sets offer users the option to discover information as-needed, similar to the oracles from Section~\ref{section:polytopes}. 

One particular instance where witness sets can be easily computed is when the variety of interest is a projection. Because witness sets are geometric in nature, they behave well with respect to projections. 
\begin{definition}
A \mydef{pseudo-witness set} for an affine variety $Z$ is a quadruple $(F,\varphi,\varphi^{-1}(L_1),S)$ where 
\begin{itemize}
\item $F$: a finite set of polynomials such that $X$ is the union of top dimensional components of $\V(F)\subset \C^N$.
\item $\varphi$: a coordinate projection $\varphi\colon \C^N \to \C^n$ such that $Z=\overline{\varphi(X)}$ and $\dim(Z)=\dim(X)$. 
\item $L_1$: a general affine linear space in $\C^n$ of complementary dimension to $Z$.
\item $S$: a set containing approximations of each of the points in $X \cap \varphi^{-1}(L_1)$. 
\end{itemize}
\end{definition}
Often, one desires a pseudo-witness set for the image $Z=\overline{\varphi(X)}$ of a map $X \xrightarrow{\varphi} \C^n$ where the dimension of $X$ is larger than its image. When this is the case, one may take $\dim(X)-\dim(Z)$ generic linear equations $L_2 \subset \C[x_1,\ldots,x_N]$ so that $X \cap L_2$ so that that the image of $X\cap L_2 \xrightarrow{\varphi} \C^n$ is $Z$ and $\dim(X \cap L_2)=\dim(Z)$. By factoring $\varphi$ through its graph, it is enough to be able to compute witness sets for projections. We do this in the following way.

\boxit{
\begin{algorithm}[Constructing a pseudo-witness set]
\label{alg:pseudowitness}
\nothing \\
\myline
{\bf Input:}  \\ $\bullet$ A witness set $(F,L,S)$ for a variety $X$ of dimension $m$\\
$\bullet$ A coordinate projection $\varphi\colon \C^N \to \C^n$ such that $Z=\overline{\varphi(X)}$ and $\dim(Z)=m$\\ \myline
{\bf Output:}\\ $\bullet$ A pseudo-witness set $(F,\varphi,\varphi^{-1}(L^*),S)$ for $Z$\\
\myline
{\bf Steps:}
\begin{itemize}[nosep]
\item[0] Assume that $\varphi(x_1,\ldots,x_N) = (x_1,\ldots,x_n)$
\item[1] Fix $\dim(Z)$ random linear forms $ L^* \subset \C[x_1,\ldots,x_n]$
\item[2] Use Algorithm \ref{alg:witnesshomotopy} to compute a witness set $(F,L^*,S^*)$ for $X$  by moving $(F,L,S) \to (F,L^*, S^*)$
\item[3] \return $(F,\varphi,\varphi^{-1}(L^*),S)$
\end{itemize}
\end{algorithm}}

\begin{example}
\label{ex:pseudowitness}
We give two examples which exhibit subtleties in pseudo-witness sets. The first is the twisted cubic $C \subset \C^3$ with the projection $\varphi\colon \C^3 \to \C^2$ such that the image is a parabola. During the homotopy which constructs a pseudo-witness, one of the three points of intersection with the twisted cubic diverges towards infinity. 
\begin{figure}
\includegraphics[scale=0.4]{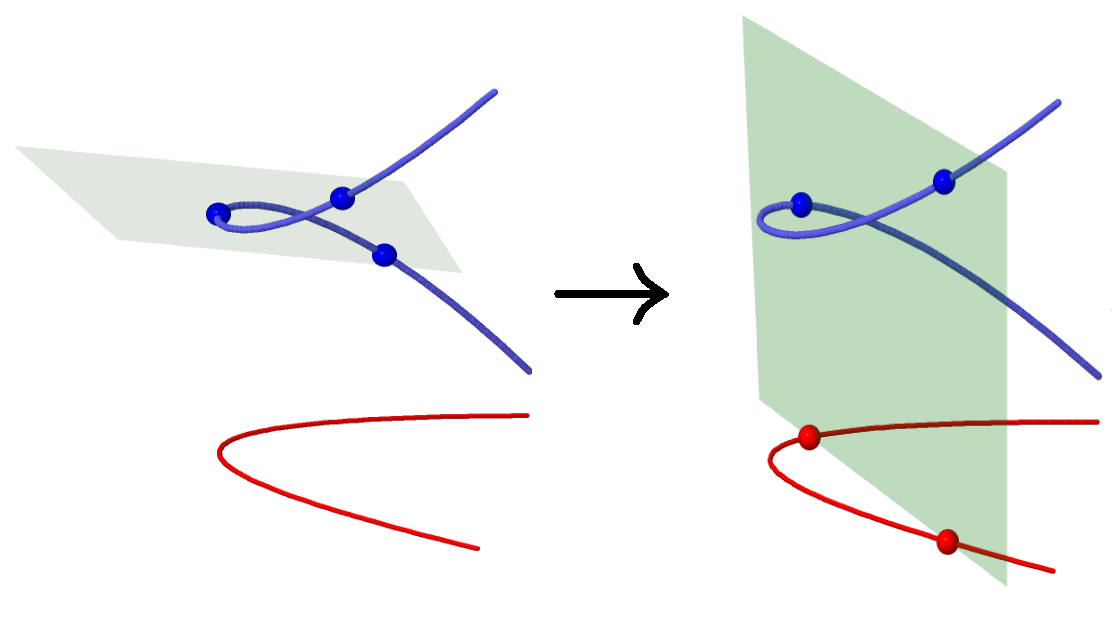}
\caption{
\label{fig:twistedpseudo}(Reprinted from \cite{Bry:NPtrop}) Constructing a pseudo-witness set for a projection of a twisted cubic}
\end{figure}

The second example involves the necessity of a dimension reduction. The variety $X$ in this case is the cylinder $\V(x^2+y^2-1) \subset \C_{x,y,z}^3$ along with the projection $\pi\colon \C_{x,y,z}^3 \to \C_{x,y}$ whose image is the circle defined by the same equation in $\C[x,y]$. In this case, to construct a pseudo-witness set for the circle, we must first slice the cylinder by a hyperplane to produce the red curve $C$ in Figure \ref{fig:cylinderpseudo}. The dimension of $C$ is the same as the dimension of its image and so one may simply deform a witness set for $C$ to be vertical with respect to $\pi$ to produce a pseudo-witness set for the circle.
\begin{figure}
\includegraphics[scale=0.55]{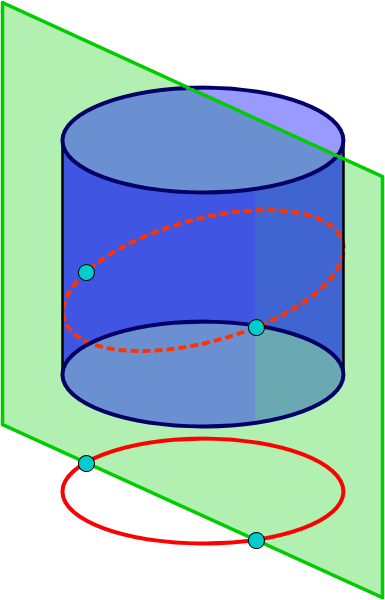}
\caption{
\label{fig:cylinderpseudo}Constructing a pseudo-witness set for a projection of a cylinder via slicing.}
\end{figure}
 \hfill $\diamond$
\end{example}

Any algorithm that may be performed using solely the witness cover of a variety may also be performed using pseudo-witness sets since these may be moved just as easily with respect to the a pseudo-witness cover. Suppose $\varphi\colon X \to Z$ is a projection. To construct a pseudo-witness cover for $Z$, replace $X$ and $Z$ with affine open sets, intersect $X$ with a linear space $L_2$ of codimension $\dim(X)-\dim(Z)$, and relabel variables so that $\varphi(x_1,\ldots,x_N)=(x_1,\ldots,x_n)$ is a degree $d$ branched cover of affine varieties $X \subset \C^N, Z\subset \C^n$ of dimension $m$. Take
$$\mydefMATH{PW(Z,\varphi)} = \left\{(x,L_1) \mymid x \in X \cap L_1, \text{ and } L_1 \in (\C^{\Delta_n})^m\right\}$$
to be the incidence variety of points on $X$ with linear varieties cut out by $m$ polynomials in the first $n$ coordinates. Define the \mydef{pseudo-witness cover} of $Z$ with respect to $\varphi$ (and $L_2$) to be the map
$$\mydefMATH{\pi_{PW(Z,\varphi)}}\colon PW(X) \to (\C^{\Delta_n})^m.$$  
Any pseudo-witness set of the form $(F,\varphi,\varphi^{-1}(L_1),S)$ is a fiber $S=\pi_{PW(X)}^{-1}(L_1)$ of $\pi_{PW(X)}$ by construction. In particular, we see that $Z$ has degree $|S|/d$ witnessed by the $|S|/d$ points $\varphi(S)\subset Z \cap L_1$. Note, that $|S|$ is not necessarily equal to the degree of $X$, as shown in the first part of Example \ref{ex:pseudowitness}. For more information about pseudo-witness sets, see \cite{WSP}.

\section{Monodromy}
Recall the background on monodromy groups of branched covers in Section~\ref{section:branchedcoversandgroups}. 

Let $\mydefMATH{F(s;x)} \subset \C[s][x]$ be a parametrized polynomial system in $m$ parameters $s$ and $n$ variables $x$ so that 
$$\mydefMATH{\pi}\colon \V(F(s;x)) \to \C_s^m$$
is a degree $\mydefMATH{d}$ branched cover with regular values $U \subset \C_s^m$. We do not assume $\pi$ is an irreducible branched cover. For $s_1,s_2 \in U$ and $c_1\in \C$ let  
\begin{align*}
\mydefMATH{\tau_{s_1,s_2,c_1}}\colon [0,1]_t &\to \C_s^m \\
t &\mapsto  (1-t)s_1+c_1 t s_2
\end{align*}
be a path in $\C_s^m$. For this section, we will assume that $c_1$ is in the Euclidean dense subset of $\C$ which satisfies $\tau_{s_1,s_2,c_1}([0,1])\subset U$. Applying the path tracking algorithm for regular homotopies (Algorithm \ref{alg:pathtrackingregular}) to the homotopy $\mydefMATH{H_{s_1,s_2,c_1}(t;x)}$ produced by $\pi$ and $\tau_{s_1,s_2,c_1}$ gives the bijection $m_{\tau_{s_1,s_2,c_1}}$ discussed in Section~\ref{subsection:galoisandmonodromy}. Picking another generic complex number $c_2$, the composition
$$m_{\tau_{s_2,s_1,c_2}} \circ m_{\tau_{s_1,s_2,c_1}}\colon  \pi^{-1}(s_1) \to \pi^{-1}(s_1)$$
 is a monodromy permutation of the $d$ points in the fiber over $s_1$. This permutation is the monodromy element $m_\gamma$ where $\gamma$ is the loop in $\C_s^m$ formed by following the concatenation of the paths $\tau_{s_1,s_2,c_1}$ and $\tau_{s_2,s_1,c_2}$.

This leads immediately to a heuristic algorithm for computing elements of the monodromy group of a branched cover.

\boxit{
\begin{algorithm}[Extract monodromy group element]
\label{alg:extractmonodromyelements}
\nothing\\
\myline
{\bf Input:} \\
$\bullet$ A parametrized polynomial system $F(s;x) \subset \C[s][x]$ such that $\pi\colon \V(F(s;x)) \to \C_s^m$ is an irreducible branched cover of degree $d$ \\
$\bullet$ A fiber $S_1=\pi^{-1}(s_1)$ \\ \myline
{\bf Output:}\\
$\bullet$ An element $g$ of the monodromy group $\mathcal M_\pi$ \\
\myline
{\bf Steps:}
\begin{itemize}[nosep]
\item[1] Label $S_1=\{p_1,\ldots,p_d\}$ so that $p_i$ is identified with $i \in [d]$
\item[2] Pick $s_2, \in U$ and generic $c_1,c_2\in \C$
\item[3] Track all points in $S_1$ along $H_{s_1,s_2,c_1}$ to produce $S_2$
\item[4] Track all points in $S_2$ along $H_{s_2,s_1,c_2}$ to produce $(m_{\tau_{s_2,s_1,c_2}} \circ m_{\tau_{s_1,s_2,c_1}})(S)$
\item[5] Determine $g=m_{\tau_{s_2,s_1,c_2}} \circ m_{\tau_{s_1,s_2,c_1}}(p_i)$ for each $i \in [n]$ to determine $g$
\item[6] \return $g$
\end{itemize}
\end{algorithm}}

\begin{example}
\label{ex:monodromyloopexample}
Rather than producing loops with only two parameter values in our examples, we use three parameters, $s_1, s_2,$ and $s_3$, to clarify the ideas and images. 
Figure \ref{fig:singlemonodromyloop} shows a schematic of the computation of a monodromy group element using numerical algebraic geometry. 
\begin{figure}[!htpb]
 \includegraphics[scale=0.5]{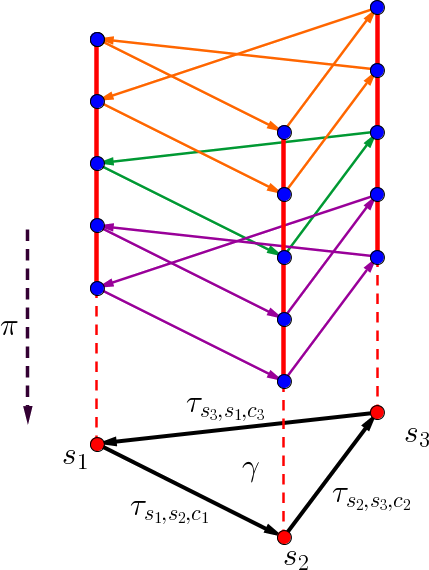}
\caption{\label{fig:singlemonodromyloop} A schematic of a single monodromy loop tracked numerically.}
\end{figure}
Labeling the points of $\pi^{-1}(s_1)$ from bottom to top as $1,2,\ldots,5$, the element $m_\gamma \in M_\pi$ is $m_\gamma=(1,2)(4,5)$, written in cycle notation. Its cycles are depicted in distinct colors in Figure \ref{fig:singlemonodromyloop}.  \hfill $\diamond$
\end{example}

To determine the monodromy group $\mathcal M_\pi$ of a branched cover, one may repeatedly extract group elements  using Algorithm \ref{alg:extractmonodromyelements} until the group generated by these elements fails to grow after many runs of the algorithm. This is, of course, heuristic. A more rigorous way to compute the monodromy group is to restrict the parameter space $\C_s$ to a generic line $\C_t \subset \C_s$. The branch locus of $\pi$ restricted to $\C_t$ consists of finitely many points $b_1,\ldots,b_k$. A theorem of Zariski \cite{Zariski} implies that the monodromy group $\mathcal M_\pi$ is generated by loops around each $b_i$. For more information about computing monodromy groups of branched covers using numerical algebraic geometry, we refer the reader to \cite{NumGal}.

\subsection{Solving via monodromy}
\label{subsubsection:solvingviamonodromy} 
Recall that the monodromy group of an irreducible branched cover is transitive (Lemma \ref{lem:monodromyofirreducibleistransitive}). Thus, the observation that monodromy permutations can be explicitly computed using numerical algebraic geometry suggests one way to compute $\pi^{-1}(s_1)$ given some point $q \in \pi^{-1}(s_1)$: pick a random monodromy loop $\gamma$, use a homotopy to track $q$ via a lift of $\gamma$ thus computing $m_\gamma(q)$, and repeat. 
This is the na\"ive version of the \mydef{monodromy solve} algorithm.

\boxit{
\begin{algorithm}[Na\"ive monodromy solver]
\label{alg:monodromysolve}
\nothing\\
\myline
{\bf Input:} \\
$\bullet$ A parametrized polynomial system $F(s;x) \subset \C[s][x]$ such that $\pi\colon \V(F(s;x)) \to \C_s^m$ is an irreducible branched cover \\
$\bullet$ A single point $q$ in some fiber $\pi^{-1}(s_1)$ \\ \myline
{\bf Output:}\\
$\bullet$ All points in $\pi^{-1}(s_1)$ \\
\myline
{\bf Steps:}
\begin{itemize}[nosep]
\item[1] \set $S_1=\{q\}$
\item[2] \while $\pi^{-1}(s_1)$ has not been fully computed \mydo
\begin{itemize}[nosep]
\item[2.1] Pick $s_2, \in U$ and generic $c_1,c_2\in \C$
\item[2.2] Track all points in $S_1$ along $H_{s_1,s_2,c_1}$ to produce $S_2$
\item[2.3] Track all points in $S_2$ along $H_{s_2,s_1,c_2}$ to produce  $$S_1'=(m_{\tau_{s_2,s_1,c_2}} \circ m_{\tau_{s_1,s_2,c_1}})(S) \subset \pi^{-1}(s_1)$$
\item[2.4] \set $S_1 = S_1 \cup S_1'$
\end{itemize}
\item[3] \return $S_1$
\end{itemize}
\end{algorithm}}

\begin{remark}
Conditions for determining when the fiber has been ``fully computed'' in step $(2)$ are called \mydef{stopping criteria} and are not obvious. When the degree of the cover is known, then a stopping criterion for Algorithm \ref{alg:monodromysolve} is ``stop when $\deg(\pi)$ points of $\pi^{-1}(s_1)$ have been computed''. We give an alternative stopping criterion when $\pi$ is the witness cover of a variety.  \hfill $\diamond$
\end{remark}
By Lemma \ref{lem:traceline}, the centroids of witness points on a pencil of witness slices of an irreducible affine variety lie on an affine line. A stronger result is true.
\begin{lemma}
\label{lem:thetracetest}
Let $X$ be an irreducible affine variety and $L_t$ a general pencil of linear spaces of complementary dimension.
Given a subset of witness points $S_0\subset L_0 \cap X$, the centroids of the paths starting at $S_0$ along the homotopy over $L_t$ in the witness cover moves affine linearly if and only if $S_0 = L_0 \cap X$.  
\end{lemma}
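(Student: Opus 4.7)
My plan is to prove the two implications separately. The forward direction, assuming $S_0 = L_0 \cap X$, is essentially Lemma~\ref{lem:traceline}. That lemma shows the union of centroids lies on an affine line, but what I actually need is the stronger statement that $t \mapsto \mu(X \cap L_t)$ is itself an affine function of the pencil parameter $t$. I would extract this from the proof of Lemma~\ref{lem:traceline}: after reducing to a plane-curve case with pencil $L_t = \V(x - t)$, Vieta's formula gives the sum of the $y$-roots of $f(t,y)$ as the negative of the ratio of the coefficients of $y^{d-1}$ and $y^d$, and the proof shows this quotient is explicitly affine linear in $t$ for a general pencil (and analogously in each coordinate after projecting).

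For the reverse direction, I would argue by monodromy. Let $\tilde{X} = \{(t,x) \in \mathbb{C}_t \times \mathbb{C}^n : x \in X \cap L_t\}$ with projection $\pi \colon \tilde{X} \to \mathbb{C}_t$. First I would establish that $\pi$ is an irreducible branched cover of degree $d = \deg(X)$: irreducibility follows from iteratively applying Corollary~\ref{cor:hyperplaneresult1}(3) to cut $X$ down to an irreducible curve by the $n-1$ hyperplanes spanning $L_0$, after which the pencil parameter sweeps out this curve. Then Lemma~\ref{lem:monodromyofirreducibleistransitive} gives that the monodromy group $G$ of $\pi$ acts transitively on a general fiber.

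Now suppose the centroid $\mu_{S_0}(t) = \tfrac{1}{|S_0|} \sum_{i \in S_0} x_i(t)$ is affine linear in $t$. Then $\mu_{S_0}$ is a single-valued entire function on $\mathbb{C}_t$. The branches $x_i(t)$ extend as multi-valued analytic functions on $\mathbb{C}_t$ minus the branch locus of $\pi$, and analytic continuation along a loop $\gamma$ permutes the branches by $m_\gamma \in G$. Hence analytic continuation of $\mu_{S_0}$ along $\gamma$ produces $\mu_{m_\gamma(S_0)}$, and single-valuedness forces the identity $\mu_{S_0}(t) = \mu_{m_\gamma(S_0)}(t)$ for all $t$ and every loop $\gamma$.

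The hard part will be deducing $S_0 = m_\gamma(S_0)$ as sets from this equality of functions. This implication can fail in general (e.g., the four roots of $y^4 - 2$ sum in two different pairs to $0 \in \mathbb{Q}$), so a genericity argument on the pencil is essential. I would consider the ``average map'' from the fibered symmetric product $\mathrm{Sym}^{|S_0|}_{\mathbb{C}_t}(\tilde{X}) \to \mathbb{C}_t \times \mathbb{C}^n$ sending an unordered $|S_0|$-subset of $X \cap L_t$ to its centroid, and argue that for a general pencil this map is generically injective on each irreducible component. Equivalently, at a generic $t^*$ the $d$ points of $X \cap L_{t^*}$ lie in sufficiently linearly general position in $L_{t^*}$ that distinct $|S_0|$-subsets have distinct centroids; the condition that two distinct subsets yield identically equal centroid functions is a proper closed condition on the pencil and so is avoided by a general choice. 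Once this is in hand, $\mu_{S_0} \equiv \mu_{m_\gamma(S_0)}$ forces $S_0 = m_\gamma(S_0)$ for every loop $\gamma$, so $S_0$ is $G$-invariant. Transitivity of $G$ then forces $S_0$ to be empty or all of $L_0 \cap X$; since $S_0$ is nonempty by hypothesis, $S_0 = L_0 \cap X$.
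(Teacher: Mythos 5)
The paper states Lemma~\ref{lem:thetracetest} with no proof and no citation immediately attached to it, so there is no in-paper argument against which to check you line by line; I can only assess your proposal on its own terms.

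Your decomposition into the two implications is the right shape. The forward direction you extract correctly from the proof of Lemma~\ref{lem:traceline}: after reducing to a plane curve with $\New(f)=d\Delta_2$ and pencil $\V(x-t)$, Vieta gives the centroid's $y$-coordinate as $-\tfrac{1}{d}(c_{(1,d-1)}t+c_{(0,d-1)})$, visibly affine in $t$, and the coordinate-projection reduction propagates this to each coordinate of the centroid in $\CC^n$. The reverse direction's scaffolding is also sound: $\tilde X$ is identified with $X\cap P$ for $P=\bigcup_t L_t$, a general codimension-$(\dim X - 1)$ linear section, so Corollary~\ref{cor:hyperplaneresult1}(3) makes it an irreducible curve and $\pi$ an irreducible branched cover; Lemma~\ref{lem:monodromyofirreducibleistransitive} gives transitivity; and affine-linearity of the centroid forces single-valuedness, hence $\mu_{S_0}=\mu_{m_\gamma(S_0)}$ as functions for every loop $\gamma$, and once $S_0$ is shown $G$-invariant, transitivity finishes the argument.

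The genuine gap is exactly where you flag it, but you do not close it. You assert that ``two distinct subsets yield identically equal centroid functions'' is a \emph{proper} closed condition on the pencil. Closedness is clear. Properness is the entire content of the trace test and is not obvious: as your own $y^4=2$ example and, worse, the curve $y^4=p(x)$ with the vertical pencil show, there \emph{are} pencils for which distinct proper subsets have identical (even constant) traces, so one must actually exhibit a pencil where every pair of distinct $k$-subsets has distinct centroid functions. Saying the ``bad'' locus is closed and invoking genericity does not by itself rule out that the bad locus is everything for some exotic irreducible $X$.

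The way to fill this is to look at the asymptotic behavior of the branches. Reduce to a plane curve $\V(g)$, $\New(g)=d\Delta_2$, pencil $\V(x-t)$; then each branch has a Puiseux expansion $y_i(t)=c_it + O(1)$ as $t\to\infty$, where $c_1,\dotsc,c_d$ are the roots of the degree-$d$ leading form, and a linear change of coordinates in $\CC^2$ acts on the $c_i$ by M\"obius transformations. Given two disjoint nonempty sets $A,B$ of the same size, apply $\mu_w(z)=1/(z-w)$: the rational functions $w\mapsto\sum_{i\in A}1/(c_i-w)$ and $w\mapsto\sum_{i\in B}1/(c_i-w)$ have disjoint pole sets and hence differ, so for a general $w$ the transformed leading coefficients satisfy $\sum_A c'_i\neq\sum_B c'_i$, which makes the two centroid functions differ already in their degree-one Puiseux coefficient. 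This produces, for each irreducible $X$, at least one pencil with no subset-collision, i.e.\ establishes the properness you need. With that supplied, the rest of your argument goes through.

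One smaller issue: as written the lemma allows $S_0=\emptyset$, for which the centroid is undefined, and your last sentence quietly assumes $S_0$ is nonempty. That assumption should be made explicit (it is implicit in the statement), but this is cosmetic rather than substantive.
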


Let $X$ be an irreducible variety of dimension $m$.
When performing Algorithm \ref{alg:monodromysolve} on the witness cover 
$$\pi_{W(X)}\colon W(X) \to (\C^{\Delta_n})^m$$ a stopping criterion is that the condition in Lemma \ref{lem:thetracetest} holds. This may be checked during the monodromy algorithms by moving a witness set to three slices in a pencil and numerically taking the centroids of the witness points.
As witness points are only numerical approximations, this test relies on determining whether the midpoint $m_{pq}$ of two centroids $p,q \in \C^n$ satisfies $\frac{p+q}{2}-m_{pq}=0$: it requires assessing whether or not a numerical value is zero. Although extremely reliable in practice, this means the trace test does not certify the computation of all witness points. Developing an algorithm to certify the trace task is an important open task in numerical algebraic geometry.

\subsection{Monodromy solving for real branched covers}

Algorithm \ref{alg:monodromysolve} is not optimal. For example, suppose the first two loops of  Algorithm \ref{alg:monodromysolve}  are $\gamma$ and $\gamma'$ depicted in Figure \ref{fig:monodromytwoloops}. These loops induce a transitive subgroup of $S_5$, but Algorithm \ref{alg:monodromysolve} would only use each once, discovering a total of three points after the second loop.

\begin{figure}[!htpb]
\includegraphics[scale=0.5]{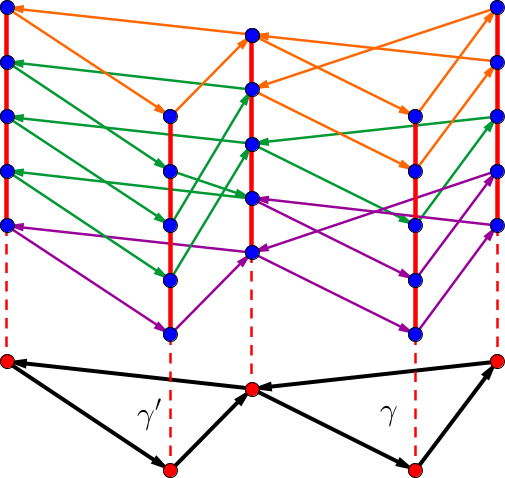}
\caption{\label{fig:monodromytwoloops}An example of two monodromy loops generating a transitive subgroup of a monodromy group.}
\end{figure}

A model for monodromy algorithms as well as a strategy-analysis for choosing monodromy loops are given in \cite{Duff}. We propose improving Algorithm \ref{alg:monodromysolve} by taking advantage of automorphisms of fibers guaranteed by the structure of $\pi$. For example, many polynomial systems of interest are defined over the real numbers, whose solutions come in complex conjugate pairs (see Section~\ref{subsection:realbranchedcovers}) and so computing a nonreal solution $x \in \pi^{-1}(s_1)$ immediately computes its conjugate $\overline x \in \pi^{-1}(s_1)$. This occurs, in particular, whenever $\pi$ is a real branched cover and $s_1$ is real. Thus, we propose an additional step to Algorithm \ref{alg:monodromysolve}.

\boxit{
\begin{algorithm}[Monodromy solver for real branched covers]
\label{alg:monodromysolveoverR}
\nothing \\
\myline
{\bf Input:} \\
$\bullet$ A parametrized polynomial system $F(s;x) \subset \C[s][x]$ such that $\pi\colon \V(F(s;x)) \to \C_s^m$ is an irreducible real branched cover \\
$\bullet$ A single point $q$ in some fiber $\pi^{-1}(s_1)$ with $s_1$ real \\ \myline
{\bf Output:}\\
$\bullet$ All points in $\pi^{-1}(s_1)$ \\
\myline
{\bf Steps:}
\begin{itemize}[nosep]
\item[1] \set $S_1=\{q\}$
\item[2] \while $\pi^{-1}(s_1)$ has not been fully computed \mydo
\begin{itemize}[nosep]
\item[2.1] Pick $s_2, \in U$ and generic $c_1,c_2\in \C$
\item[2.2] Track all points in $S_1$ along $H_{s_1,s_2,c_1}$ to produce $S_2$
\item[2.3] Track all points in $S_2$ along $H_{s_2,s_1,c_2}$ to produce  $$S_1'=(m_{\tau_{s_2,s_1,c_2}} \circ m_{\tau_{s_1,s_2,c_1}})(S) \subset \pi^{-1}(s_1)$$
\item[2.4] \set $S_1 = S_1 \cup S_1'\cup \overline{S_1'}$
\end{itemize}
\item[3] \return $S_1$
\end{itemize}
\end{algorithm}
}

\begin{example}
\begin{figure}
\includegraphics[scale=0.5]{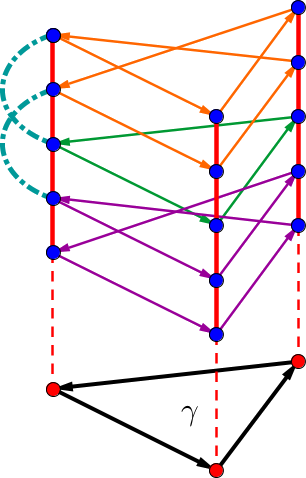}
\caption{\label{fig:monodromyreal}A schematic describing step $(2)$ of Algorithm \ref{alg:monodromysolveoverR}.}
\end{figure}
Figure \ref{fig:monodromyreal} depicts a schematic for Algorithm \ref{alg:monodromysolveoverR} showing a monodromy loop along with complex conjugation generating a transitive subgroup of $S_5$. Algorithm \ref{alg:monodromysolveoverR} computes all solutions in three steps using only one monodromy loop along with complex conjugation.
 \hfill $\diamond$
\end{example}
We remark that step $(2.4)$ in the above algorithm may be replaced by any operation $g$ preserving the fiber $\pi^{-1}(s_1)$. In particular, if $g$ is a deck transformation of the cover $\pi$, then one may append the orbit $gS_1$ to $S_1$ in step $(2.4)$ at a nominal cost.

\subsection{Expected success of monodromy solving}
The authors of \cite{Duff} address the question of how many monodromy loops are necessary to induce a transitive subgroup of $\mathcal M_\pi$ under the assumptions 
\begin{enumerate}
\item $\mathcal M_\pi$ is the full symmetric group.
\item Random choices of $s_2,c_1,$ and $c_2$  samples elements of $\mathcal M_\pi$ uniformly at random. 
\end{enumerate}
For branched covers $\pi$ of degree $d$, they prove a generalization of Dixon's theorem \cite{Dixon}, a result which implies that the probability of two random elements of $S_d$ generating a transitive subgroup of $S_d$ approaches $1$ as $d \to \infty$.

One may hope for an analogous result with respect to Algorithm \ref{alg:monodromysolveoverR}. That is, given a branched cover $\pi$ of degree $d$ and a regular value $s_1$ whose fiber is known to be fixed under the action $\iota$ of complex conjugation, what is the probability that a random monodromy element along with complex conjugation generate a transitive subgroup of $\pi^{-1}(s_1)$?

As in \cite{Duff}, we must make decide how to model the action of complex conjugation on a fiber. A fiber of $\pi$ whose points are fixed under complex conjugation may consist entirely of real points (in which case Algorithm \ref{alg:monodromysolveoverR} is no different than Algorithm \ref{alg:monodromysolve}) or entirely of nonreal points, or some number in between. Thus, we analyze the case when conjugation on a fiber is modeled by random involutions in $S_d$ and the case that it is modeled by a fixed-point free involution. The latter case is relevant in applications as there are many instances  where we can guarantee that a fiber contains no real points (such as computing witness sets for varieties which are compact over $\R$).

We fix some notation. Let $R_i\subset S_i$ be a subset of the symmetric group with the property that whenever the subgroup generated by $\sigma \in S_i$ and $\tau \in R_i$ has $k_i$ orbits of size $i$, then $\tau \in (R_1)^{k_1} \times (R_2)^{k_2} \times \cdots \times (R_d)^{k_d}$ where each factor of $R_i$ acts on an orbit of size $i$. Note that not every sequence of subsets of $S_i$ has this property. For example, any sequence of subsets starting as
$$R_1=\{(1)\},\quad R_2=\{(1)\}, \quad  R_3=\{(1,2)\},\ldots$$ does not have this property since the permutation $(1,2)$ has orbits of sizes $(k_1,k_2)=(1,1)$, but $(1,2) \not\in R_1 \times R_2$.

\begin{proposition}[]
\label{prop:probabilities}
Let $R_i$ be a sequence of subsets of $S_i$ such that whenever $(\sigma,\tau) \in S_d \times R_d$ has $k_i$ orbits of size $i$, then $(\sigma,\tau) \in (S_1 \times R_1)^{k_1}\times \cdots \times (S_d \times R_d)^{k_d}$. Let $t_d$ be the probability that $(\sigma,\tau) \in S_d \times R_d$ generates a transitive subgroup of $S_d$. Then the $t_i$ satisfy the recursion
$$d|R_d| = \sum_{i=1}^d it_i|R_i|\cdot |R_{d-i}|.$$
\end{proposition}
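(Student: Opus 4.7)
The plan is to prove the recursion by double counting pairs $(\sigma,\tau)\in S_d\times R_d$, partitioned according to the size of the orbit of the element $1\in[d]$ under the group $\langle \sigma,\tau\rangle$. The total count is obviously $d!\,|R_d|$, so the goal is to show that the number of such pairs for which this orbit has size exactly $i$ equals $(d-1)!\cdot i\cdot t_i\cdot |R_i|\cdot |R_{d-i}|$; dividing by $(d-1)!$ then yields the asserted identity.

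To count pairs whose orbit of $1$ has size $i$, I would proceed in three stages. First, choose the orbit $A\subset[d]$ of size $i$ containing $1$, of which there are $\binom{d-1}{i-1}$. Second, choose the restrictions $(\sigma|_A,\tau|_A)$: this must be a pair in $S_i\times R_i$ whose generated group acts transitively on $A$ (by definition of orbit), and the number of such pairs is $i!\,|R_i|\,t_i$ after identifying $A$ with $[i]$. Third, choose the restrictions $(\sigma|_{[d]\smallsetminus A},\tau|_{[d]\smallsetminus A})$: by the decomposition hypothesis on the sequence $R_\bullet$, this is any element of $S_{d-i}\times R_{d-i}$, contributing $(d-i)!\,|R_{d-i}|$. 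Multiplying these and simplifying the binomial coefficient produces $(d-1)!\cdot i\cdot t_i\cdot |R_i|\cdot |R_{d-i}|$, and summing over $i=1,\dots,d$ (with the convention $|R_0|=1$ to handle the transitive case $i=d$) gives
\[
 d!\,|R_d|\;=\;\sum_{i=1}^{d}(d-1)!\cdot i\cdot t_i\cdot |R_i|\cdot |R_{d-i}|,
\]
which is equivalent to the claimed recursion.

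The main obstacle lies in step two and three above: I need the hypothesis on $R_\bullet$ to give both the \emph{decomposition} direction (every $\tau\in R_d$ restricts to $R_i$ on each orbit of size $i$, which is stated verbatim in the assumption) and the \emph{recombination} direction (any choice of $(\tau|_A,\tau|_{[d]\smallsetminus A})\in R_i\times R_{d-i}$ can be glued to produce a legitimate element of $R_d$). The statement of the hypothesis is one-sided, so I would first reinterpret it as an exact factorization $S_d\times R_d\cong \bigsqcup \prod (S_i\times R_i)^{k_i}$ indexed by orbit-type partitions, which is essentially the content one needs for the bijective decomposition above. Once this bookkeeping is justified, the rest of the argument is a routine simplification of binomial coefficients; the orbit-of-$1$ trick is precisely what converts the awkward factor $\binom{d-1}{i-1}i!(d-i)!$ into the clean weight $(d-1)!\cdot i$ needed to match the right-hand side.
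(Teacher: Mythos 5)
Your proof is correct, but it takes a genuinely different route from the paper's. The paper's proof establishes the auxiliary identity
\[
|R_d|\;=\;\sum_{\overline{k}\in K_d}\;\prod_{i=1}^d\frac{(t_i|R_i|)^{k_i}}{k_i!}
\]
by partitioning $S_d\times R_d$ over all orbit-type partitions $\overline{k}$ of $d$ at once, and then extracts the recursion by recognizing this as the coefficient identity of an exponential generating function, differentiating $\exp\!\left(\sum_i t_i|R_i|x^i\right)$, and matching coefficients of $x^{d-1}$. Your argument instead fixes attention on the single orbit of $1\in[d]$, so only one ``size-$i$ block'' is isolated and the complement is treated as a black box counted by $(d-i)!\,|R_{d-i}|$. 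This localizes the entire bookkeeping to one orbit, avoids the generating function machinery altogether, and produces the recursion in one stroke. The binomial simplification $\binom{d-1}{i-1}\cdot i!\cdot(d-i)!=(d-1)!\cdot i$ is exactly right, and the convention $|R_0|=1$ (needed for the $i=d$ term) is the same one the paper uses implicitly when it divides by $|R_d|$ to pass from its Equation (Rec2) to (Rec3).

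You were right to flag the recombination issue: the hypothesis on $R_\bullet$ is stated as a one-directional implication (decomposition), but your count in stages two and three — and indeed the paper's own sentence ``we may assume $\sigma$ and $\tau$ have been uniformly chosen from $S_1^{k_1}\times\cdots$ and $R_1^{k_1}\times\cdots$'' — requires the reverse direction as well: that any gluing of block-wise data lands back in $S_d\times R_d$. This is automatic for the $S$-factor and holds for all the concrete families the paper considers ($S_i$, involutions, fixed-point-free involutions), but it is not a formal consequence of the stated hypothesis. Both proofs have this same unstated strengthening, so your reinterpretation of the hypothesis as an exact factorization is the honest reading of what the proposition is really assuming, and your proposal is not deficient relative to the paper on this point. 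Your approach buys a shorter and more elementary proof; the paper's approach is longer but makes the connection to Dixon's original generating-function argument explicit, which is its stated aim.
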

\begin{proof}
We use the same strategy as \cite{Duff, Dixon} to determine a recursion for the probabilities $t_d$. 

Let $K_d = \{ \overline{k} \in \mathbb{N}^d \mid \sum i k_i = d\}$ be the set of number partitions of $d$. The number of set partitions which have parts corresponding to some $\overline k$ is $\left(\frac{d!}{\prod_{i=1}^d (i!)^{k_i} k_i!}\right)$. This is since there are $d!$ ways to place the numbers $\{1,\ldots,n\}$ into a sequence of cycles of sizes $k_1,\ldots,k_d$, but we have over counted since each cycle can be permuted $i!$ ways, and cycles of the same size may be permuted as well. Let $(\sigma, \tau) \in S_d \times R_d$, then if  $\langle \sigma, \tau \rangle$ is a subgroup whose orbits have sizes $\overline k$, then $\sigma, \tau $ must respect these partitions so $(\sigma, \tau) \in (S_1 \times R_1)^{k_1}\times \cdots \times (S_d \times R_d)^{k_d}$. So we may assume $\sigma$ and $\tau$ have been uniformly chosen from $S_1^{k_1}\times \cdots \times S_d^{k_d}$ and $R_1^{k_1} \times \cdots \times R_d^{k_d}$ respectively. Therefore, using the probabilities $t_i$ we may count the elements in $S_d \times R_d$ via
\begin{align*}
|S_d \times R_d| &= \sum_{\overline k \in K_d } \left(\frac{d!}{\prod_{i=1}^d (i!)^{k_i} k_i!}\right) \prod_{i=1}^d \left[t_i(i! \cdot |R_i|) \right]^{k_i} \\
&=d! \sum_{\overline k \in K_d} \prod_{i=1}^d \left(\frac{t_i i! |R_i|}{i!} \right)^{k_i} \frac{1}{k_i!}\\
&=d! \sum_{\overline k \in K_d} \prod_{i=1}^d \frac{(t_i\cdot |R_i|)^{k_i}}{k_i!}
\end{align*}
and since $|S_d \times R_d| = d!\cdot |R_d|$ we have
\begin{equation}
\label{Eq:Rec1}
|R_d|=\sum_{\overline{k} \in K_d} \prod_{i=1}^d\frac{(t_i\cdot |R_i|)^{k_i}}{k_i!}.
\end{equation}
Using the theory of generating functions, we extract a recursion on the numbers $t_i$. Let $\hat F(x)$ be the generating function of $F(d)=|R_d|$ and recall the formal identity 
$$\text{exp}\left(\sum_{i=1}^\infty y_ix^i \right) = \sum_{d=0}^\infty x^d \left(\sum_{\overline k \in K_d} \prod_{i=1}^d \frac{y_i^{k_i}}{k_i!} \right).$$ Applying this formula to $\hat F(x)$ using Equation \eqref{Eq:Rec1} gives us  
$$ \text{exp}\left(\sum_{i=1}^\infty  t_i|R_i| x^i \right)=\hat F(x).$$
Now consider $\hat F'(x)$:
\begin{align*}
\sum_{d=1}^\infty d |R_d| x^{d-1} &= \frac{\partial}{\partial x} \hat F(x) \\
&=  \frac{\partial}{\partial x} \text{exp}\left(\sum_{i=1}^\infty t_i |R_i| x^i \right).
\end{align*}
When we apply the chain rule to differentiate this expression we get
\begin{align*}
&= \hat F(x) \cdot  \left(\sum_{i=1}^\infty it_i|R_i|x^{i-1} \right)\\
&=\left [ \sum_{d=1}^\infty |R_d| x^d \right] \cdot  \left[ \sum_{i=1}^\infty i t_i |R_i| x^{i-1} \right]\\
&=\sum_{d=1}^\infty \left(\sum_{i=1}^\infty i t_i |R_i| \cdot |R_{d}| x^{d+i-1} \right)
\end{align*}
making the substitution $d'=d+i$ yields the equality
$$\sum_{d=1}^\infty d |R_d| x^{d-1} = \sum_{d'=1}^\infty x^{d'-1} \left(\sum_{i=1}^{d'} i t_i |R_i|\cdot |R_{d'-i}|\right).$$
Equating the coefficients of $x^d$ gives a recursion
\begin{equation}
\label{Eq:Rec2}
d|R_d| = \sum_{i=1}^d i t_i |R_i|\cdot |R_{d-i}|
\end{equation}
\begin{equation}
\label{Eq:Rec3}
t_d=1-\sum_{i=1}^{d-1} \frac{i}{d} t_i \frac{|R_i|\cdot |R_{d-i}|}{|R_d|},
\end{equation}
completing the proof.
\end{proof}
Even though we followed exactly the same argument used in the results of \cite{Duff, Dixon} in our proof of Proposition \ref{prop:probabilities}, we are not aware of this elementary result in the literature. 
We remark that when $R_i$ is a subgroup of $S_i$, the analysis of the probabilities that random elements of $R_i \times S_i$ generating either $A_i$ or $S_i$ has been done \cite{Babai}.

As mentioned, there are three natural choices of $R_i$ to analyze: 
\begin{enumerate}
\item$R_i=S_i$
\item $R_i=T_i$
\item $R_{2i}=\{$all fixed point free involutions$\}=:\mathbb{T}_{2i}$.
\end{enumerate} The first case is the subject of Dixon's theorem. The second case corresponds to choosing a random involution to model complex conjugation on a fiber of the monodromy algorithm. The third case corresponds to modeling complex conjugation on a fiber where every solution is nonreal (in particular $d$ is even). 
We list the first few terms of the probabilities $t_i$ in each case in Table \ref{Table:ThreeCases}.

\begin{table}[htpb]
\begin{center}
\begin{tabular}{c|c|c|c|c|c|c|c|c}
$d$ & $1$ & $2$ & $3$ & $4$ & $5$ & $10$ & $20$ & $30$ \\
\hline
$S_d$ & $1$ & $0.75$ & $0.722$ & $0.739$ & $0.768$ & $0.881$ & $0.946$ & $ 0.965$\\ 
\hline
$T_d$ & $1$ & $0.75$ & $0.583$ & $0.575$ & $0.546$ & $0.607$ & $0.731$  & $0.792$\\
\hline
$\mathbb{T}_d$ & -&$1$ & - & $0.833$ & -  & $0.863$ & $0.937$ & $0.962$
\end{tabular}
\caption{\label{Table:ThreeCases} Some probabilities of generating a transitive action by uniformly choosing from $S_d \times R_d$}
\end{center}
\end{table}
We remark that the only property of a sequence $\{R_i\}_{i \in \mathbb{N}}$ determining the probabilities $t_i$ are the cardinalities $|R_i|$.

\begin{corollary}
For $d=2n$, the probability that a fixed point free involution and a random element of $S_d$ generate a transitive subgroup of $S_d$ approaches $1$ as $d \to \infty$.
\end{corollary}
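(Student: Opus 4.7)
The plan is to apply Proposition~\ref{prop:probabilities} with $R_{2n}=\mathbb{T}_{2n}$ and $R_{2n+1}=\emptyset$, derive the resulting recursion for $t_{2n}$, and then show its tail vanishes as $n\to\infty$.

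I first verify the hypothesis of Proposition~\ref{prop:probabilities}. If $\tau\in\mathbb{T}_{2n}$ and $\sigma\in S_{2n}$, each orbit $O$ of $\langle\sigma,\tau\rangle$ is setwise preserved by $\tau$, so $\tau|_O$ is a fixed-point-free involution of $O$; hence $|O|$ is even. Writing the orbit sizes as $2j_1,\dotsc,2j_r$, it follows that $(\sigma,\tau)\in\prod_k (S_{2j_k}\times \mathbb{T}_{2j_k})$, which is precisely what the proposition requires. Using $|\mathbb{T}_{2k}|=(2k-1)!!=(2k)!/(2^k k!)$, and setting $s_n:=t_{2n}$ and $r_{n,j}:=|\mathbb{T}_{2j}||\mathbb{T}_{2n-2j}|/|\mathbb{T}_{2n}|=\binom{n}{j}/\binom{2n}{2j}$, the proposition's recursion specializes to
$$ s_n \;=\; 1 \;-\; \sum_{j=1}^{n-1} \frac{j}{n}\, s_j\, r_{n,j}. $$

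Using $s_j\in[0,1]$ together with the symmetry $r_{n,j}=r_{n,n-j}$ gives
$$ 1-s_n \;\leq\; \sum_{j=1}^{n-1} \frac{j}{n}\, r_{n,j} \;=\; \frac{1}{2}\sum_{j=1}^{n-1} r_{n,j}, $$
so it is enough to show $\sum_{j=1}^{n-1} r_{n,j}\to 0$. A direct computation yields the ratio $r_{n,j+1}/r_{n,j}=(2j+1)/(2n-2j-1)$, which is at most $\tfrac{1}{2}$ for $j\leq n/4$ and $n$ large. Thus the sum over $1\leq j\leq n/4$ is dominated by a geometric series with first term $r_{n,1}=1/(2n-1)$ and so is $O(1/n)$; by symmetry the same bound holds for $3n/4\leq j\leq n-1$. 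For $n/4\leq j\leq 3n/4$, Stirling's formula gives $r_{n,j}\sim \sqrt{2}\cdot 2^{-nH(j/n)}$ where $H$ is the binary entropy, so each such term is at most a constant times $2^{-nH(1/4)}$ and the sum of these $O(n)$ middle terms is exponentially small. Combining these estimates, $\sum_{j=1}^{n-1}r_{n,j}\to 0$, and hence $s_n\to 1$.

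The main obstacle is essentially bookkeeping: verifying that the hypothesis of Proposition~\ref{prop:probabilities} still holds for a sequence supported only in even degrees, and then patching together the boundary and middle estimates for $r_{n,j}$ so they cover all ranges of $j$. No input beyond Proposition~\ref{prop:probabilities} and Stirling's formula is required, but one does need to track the right normalizations (the factor $\sqrt{2}$ in the Stirling asymptotic, and the transitional regime $j\approx n/4$) throughout.
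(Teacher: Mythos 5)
Your proof is correct, and it does take a genuinely different route from the paper's once you get past the common setup (applying Proposition~\ref{prop:probabilities} with $R_{2j}=\mathbb{T}_{2j}$, using $t_{2j}\leq 1$, and symmetrizing $\sum \tfrac{j}{n}r_{n,j}=\tfrac12\sum r_{n,j}$). Where you split the range $1\leq j\leq n-1$ into two geometrically-decaying boundary zones plus an exponentially small middle zone handled by Stirling and the binary entropy $H$, the paper instead observes a monotonicity: from $a_j=(2j-1)a_{j-1}$ one gets $a_ja_{n-j} = a_{j+1}a_{n-j-1}\,\tfrac{2n-2j-1}{2j+1} > a_{j+1}a_{n-j-1}$ for $j\leq m=\lfloor(n-1)/2\rfloor$, so every term with $j\geq 2$ is bounded by the $j=2$ term $\tfrac{a_2 a_{n-2}}{a_n}=\tfrac{3}{(2n-1)(2n-3)}$, giving the sum over $j\leq m$ a bound $\tfrac{1}{2n-1}+\tfrac{3(m-1)}{(2n-1)(2n-3)}=O(1/n)$ without Stirling. (The paper still has to separately bound the middle term $\tfrac12\tfrac{a_{n/2}^2}{a_n}$ when $n$ is even, which it does by the claim $\leq(1/2)^m$, left unjustified; your Stirling estimate $r_{n,n/2}\sim\sqrt2\cdot2^{-n}$ in effect fills that gap.) Your approach buys a cleaner picture of the three regimes and more explicit asymptotics; the paper's buys a shorter, more elementary argument for the bulk of the sum. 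One thing you do that the paper leaves implicit, and which I'd keep, is the explicit verification that the orbits of $\langle\sigma,\tau\rangle$ are even-sized when $\tau$ is a fixed-point-free involution, so that the hypothesis of Proposition~\ref{prop:probabilities} really is satisfied by the sequence $R_{2j}=\mathbb{T}_{2j}$, $R_{\text{odd}}=\emptyset$.
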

\begin{proof}
Let $p_d=1-t_d$ be the probability that a random element of $S_d$ and a fixed point free involution do \emph{not} generate a transitive subgroup of $S_d$. Note that if $d$ is odd, then there are no fixed point free involutions. We let $a_j$ be the number of fixed point free involutions on a set of cardinality $2j$ so that 
$$a_j = (2j-1)!! = \frac{(2j)!}{j!2^j}.$$
These may be recursively defined by $a_j=(2j-1)a_{j-1}$ and $a_1=1$. 

Set $d=2n$ so that \eqref{Eq:Rec2} becomes
$$2n\cdot a_n = \sum_{j=1}^{n}(2j)\cdot t_{2j}\cdot a_j\cdot a_{n-j}$$
which we may rearrange so that 
$$t_{2n} = 1-\sum_{j=1}^{n-1} \frac{2j}{2n}t_{2j}\frac{a_j\cdot a_{n-j}}{a_n}=1-\sum_{j=1}^{n-1} \frac{j}{n}t_{2j}\frac{a_j\cdot a_{n-j}}{a_n}.$$
Consequently, to show that $p_{d} \to 0$ as $d \to \infty$ we show that 
$$\lim_{n \to \infty} p_{2n} =\lim_{n \to \infty}\sum_{j=1}^{n-1} \frac{j}{n}t_{2j}\frac{a_j\cdot a_{n-j}}{a_n}=0.$$
Let $m=\lfloor \frac{n-1}{2} \rfloor$ and observe that since the $t_{2j}$ are probabilities, they are bounded by $1$ so
$$
p_d \leq \sum_{j=1}^{n-1} \frac j n \frac{a_j \cdot a_{n-j}}{a_n}.$$
By symmetry, if $n$ is even, we have
$$p_{2n} \leq \frac 1 2 \frac{a_{\frac{n}{2}}^2}{a_n}+\sum_{j=1}^{m} \frac{a_j \cdot a_{n-j}}{a_n}\leq \left(\frac{1}{2}\right)^m + \sum_{j=1}^m \frac{a_j\cdot a_{n-j}}{a_n}$$
and so $p_{2n}$ will approach $0$ as $n \to \infty$ if and only if 
$$\sum_{j=1}^{m} \frac{a_j\cdot a_{n-j}}{a_n}$$ does. If $n$ is odd, this bound also holds.
Since the $a_j$ satisfy the recursion $a_j=(2j-1)a_{j-1}$, we know that $$a_ja_{n-j} = a_{j+1}a_{n-j-1}\frac{2n-2j}{2j+1} > a_{j+1}a_{n-j-1}$$ for all $1 \leq j \leq m$. Thus,

\begin{align*}
\lim_{n \to \infty} \sum_{j=1}^m \frac{a_j\cdot a_{n-j}}{a_n} &\leq \left(\lim_{n \to \infty} \frac{a_1\cdot a_{n-1}}{a_n} + \sum_{j=2}^m \frac{a_2\cdot a_{n-2}}{a_n} \right)\\&=\lim_{n \to \infty} \left( \frac{1}{2n-1} + \sum_{j=2}^m \frac{3}{(2n-1)(2n-3)}\right) \\ &=\lim_{n \to \infty} \frac{1}{2n-1} + \frac{3(m-1)}{(2n-1)(2n-3)} =0
\end{align*}
showing that $\lim\limits_{n \to \infty} p_{2n} \to 0$ so $\lim\limits_{n \to \infty} t_{2n} = 1$. 
\end{proof}

\chapter{NEWTON POLYTOPES AND TROPICAL MEMBERSHIP VIA NUMERICAL ALGEBRAIC GEOMETRY \label{section:numericalNP}}
A major theme of numerical algebraic geometry is the extraction of information about a variety $X$ using witness sets. For varieties arising as the image of a map, the algebraic information of generators of the ideal $\I(X)$ may not be readily available. Finding these generators is the problem of {implicitization}. While this may be done using symbolic methods involving Gr\"obner bases, this technique is often computationally prohibitive for moderate to large problems. Even when $X$ is a hypersurface, its defining polynomial may be so large that it is not human-readable. Thus, one naturally desires a coarser description of the polynomial, such as its Newton polytope.

In 2012, Hauenstein and Sottile \cite{NP} sketched a numerical algorithm which functions as a vertex oracle for the Newton polytope of a hypersurface, relying only on the computation of witness sets. In Section~\ref{sec:hsalgorithm}, we explain how this algorithm, which we call the HS-algorithm, actually functions as a numerical oracle and is therefore stronger than originally anticipated. 

Following ideas from Hept and Theobald,  we extend the HS-algorithm to an algorithm for computing the tropicalization of an ideal in Section~\ref{sec:tropicalalgorithms}. In Section~\ref{sec:convergencerates} we analyze the convergence of the HS-algorithm. We discuss our implementation in the {\bf Macaulay2} \cite{M2} package {\bf NumericalNP.m2} \cite{taylor} in Section~\ref{sec:numericalnp} and give large examples showcasing our software in Sections~\ref{sec:AlgebraicVision} and \ref{sec:Luroth}. 
\renewcommand*{\thefootnote}{\fnsymbol{footnote}}
Much of this material is contained in the article \cite{Bry:NPtrop} by the author\footnote{Reprinted with permission from T. Brysiewicz, ``Numerical Software to Compute Newton polytopes and Tropical Membership,''  {\it{Mathematics in Computer Science,}} 2020. Copyright 2020 by Springer Nature.}.
\renewcommand*{\thefootnote}{\arabic{footnote}}

\section{The HS-Algorithm}
\label{sec:hsalgorithm}

Let $\mydefMATH{\mathcal H} \subseteq \mathbb{C}^n$ be a degree $\mydefMATH{d}$ hypersurface defined by $$\mydefMATH{f}=\sum_{\alpha \in \mathcal A} c_\alpha x^\alpha  \in \mathbb{C}[x]\hspace{0.5 in} c_\alpha \neq 0, \mathcal A \subseteq \mathbb{N}^n, |\mathcal A|<\infty$$ so that $\supp(f)=\mydefMATH{\mathcal A}$.
Suppose that $\mydefMATH{a},\mydefMATH{b} \in (\C^{\times})^n$ are general so that the line parametrized by $s \mapsto (a_1s-b_1,\ldots,a_ns-b_n)$ intersects $\mathcal H$ at $d$ points, making the map
\begin{align*}
\mydefMATH{\mathcal W}&\mydefMATH{(\mathcal H)} = \{(p_1,\ldots,p_n,s) \mid f(p_1(a_1s-b_n),\ldots,p_n(a_ns-b_n))=0\} \\
\mydefMATH{\pi}\downarrow & \\
\C&^n_{p}
\end{align*}
a degree $d$ branched cover. For any direction $\mydefMATH{\omega} \in \mathbb{R}^n$, the path $t \mapsto (t^{\omega_1},\ldots,t^{\omega_n})$ in $\C^n_p$ corresponds to a family of lines $\mydefMATH{\mathcal L_t}$ parametrized by
\begin{align*}
\mydefMATH{{\bf L}_t}\colon  \C_s &\to \C^n \\
s &\mapsto (t^{\omega_1}(a_1s-b_1),\ldots,t^{\omega_n}(a_ns-b_n)).
\end{align*} This family of lines lifts to $d$ paths $\{\mydefMATH{s_i(t)}\}_{i=1}^d$ in $\mathcal W(\mathcal H)$ via  $\pi$, each corresponding to an intersection point of $\mathcal L_t$ and $\mathcal H$. In other words, these paths comprise the data of witness points and so $\pi$ essentially functions as a witness cover.

Each path $s_i(t)$ may be tracked numerically with respect to the homotopy
\begin{align*}
f(\textbf{L}_t(s))\colon (0,1] \times \C_s &\to \C \\
(t^{-1},s) &\mapsto f(\textbf{L}_t(s))
\end{align*}
in the variables $t^{-1}$ and $s$. We remark that the use of  $t^{-1}$ is a definitional technicality and that we will mostly work in $t$ using the map 
\begin{align}
\label{eq:hshomotopy}
\mydefMATH{\pi_\omega}\colon f(\textbf{L}_t(s))^{-1}(0) &\to (1,\infty)\\
(t^{-1},s) & \mapsto t \nonumber
\end{align} so that the fiber of $\pi_\omega$ over $t$ is identified with $\{s_i(t)\}_{i=1}^d$. The following lemma is the basis of the HS-algorithm.
\begin{lemma}
As $t\to \infty$ the $s$-coordinates of the fibers $\pi_\omega^{-1}(t)$ converge to the solutions of $f_\omega({\bf L}_{1}(s))$.
\end{lemma}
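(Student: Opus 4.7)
My plan is to show the convergence by explicitly computing the homotopy polynomial, rescaling it to obtain a well-defined limit as $t \to \infty$, and then invoking continuity of roots of a polynomial in terms of its coefficients.

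\textbf{Step 1: Compute the homotopy explicitly.} Substituting the parametrization ${\bf L}_t(s)=(t^{\omega_1}(a_1 s - b_1),\dotsc,t^{\omega_n}(a_n s - b_n))$ into $f$ gives
$$
f({\bf L}_t(s)) \;=\; \sum_{\alpha\in\mathcal A} c_\alpha\, t^{\langle \alpha,\omega\rangle}\,\prod_{i=1}^n (a_i s - b_i)^{\alpha_i}.
$$
This is a polynomial in $s$ of degree at most $d=\deg(f)$ whose coefficients are Laurent monomials in $t$. Its zero set in $s$ (for each fixed $t$) is the $s$-coordinate of $\pi_\omega^{-1}(t)$.

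\textbf{Step 2: Rescale to extract a finite limit.} Let $h := h_{\mathcal A}(\omega)=\max_{\alpha\in\mathcal A}\langle \alpha,\omega\rangle$. Multiplying by $t^{-h}$ does not change the zero locus in $s$, and yields
$$
t^{-h}\, f({\bf L}_t(s)) \;=\; \sum_{\alpha\in\mathcal A} c_\alpha\, t^{\langle \alpha,\omega\rangle - h}\, \prod_{i=1}^n (a_i s - b_i)^{\alpha_i}.
$$
For each $\alpha\in\mathcal A$ the exponent $\langle \alpha,\omega\rangle - h$ is nonpositive, and is zero exactly when $\alpha\in\mathcal A_\omega$. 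Letting $t\to\infty$ kills every term with $\langle \alpha,\omega\rangle<h$ and preserves the $\mathcal A_\omega$-terms, giving
$$
\lim_{t\to\infty} t^{-h} f({\bf L}_t(s)) \;=\; \sum_{\alpha\in\mathcal A_\omega} c_\alpha \prod_{i=1}^n (a_i s - b_i)^{\alpha_i} \;=\; f_\omega({\bf L}_1(s)).
$$
The convergence is uniform on compact subsets of $\C_s$, since each of the finitely many coefficients (as polynomials in $s$) converges at the rate $t^{\langle \alpha,\omega\rangle-h}$.

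\textbf{Step 3: Pass from coefficients to roots.} The roots of a monic univariate polynomial vary continuously with its coefficients (a standard consequence of Rouch\'e's theorem). Apply this to the rescaled polynomial $t^{-h} f({\bf L}_t(s))$ after normalizing so that the leading coefficient (in $s$) is one. The limiting polynomial is $f_\omega({\bf L}_1(s))$; hence as $t\to\infty$ the roots of $f({\bf L}_t(s))$ converge to the roots of $f_\omega({\bf L}_1(s))$, which are precisely the $s$-coordinates of $\pi_\omega^{-1}(t)$ and the solutions of $f_\omega({\bf L}_1(s))=0$ respectively.

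\textbf{Main obstacle.} The subtle point is the possible drop in degree: if $\deg_s f_\omega({\bf L}_1(s))< d$, then $d - \deg_s f_\omega({\bf L}_1(s))$ of the paths $s_i(t)$ diverge to $\infty$ as $t\to\infty$ (equivalently, the leading coefficient of $t^{-h}f({\bf L}_t(s))$ in $s$ tends to zero). One therefore must either interpret the statement as convergence of the finite branches to the finite roots of $f_\omega({\bf L}_1(s))$ together with the remaining branches diverging, or work projectively in $s$. The latter is cleaner: homogenize in $s$ and use continuity of roots in $\mathbb P^1$, so that divergent branches are recorded as the appropriate number of roots of $f_\omega({\bf L}_1(s))$ at infinity. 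Either interpretation makes Steps 1--3 rigorous; the generic choice of $a,b$ guarantees that all limits are simple roots, ruling out pathological collisions.
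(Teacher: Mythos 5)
Your proof is correct and follows essentially the same route as the paper: substitute, rescale by $t^{-h_{\mathcal A}(\omega)}$, and observe that the non-exposed terms vanish in the limit, giving $f_\omega(\mathbf{L}_1(s))$. You go a bit further than the paper by making the continuity-of-roots step explicit (Rouch\'e) and by flagging the degree drop in $s$ and how to handle it projectively; the paper leaves the first implicit and defers the diverging-paths bookkeeping ($\beta_\infty = d-|\beta|$) to the subsequent proof of correctness of Algorithm~\ref{alg:hsalgorithm}.
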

\begin{proof}
Using the notation $\mydefMATH{(as-b)^\alpha}=(a_1s-b_1)^{\alpha_1}\cdots(a_ns-b_n)^{\alpha_n}$, observe that 
\begin{align}
f({\bf L}_t)&=\sum_{\alpha \in \mathcal A} c_\alpha [t^{\omega_1}(a_1s-b_1)]^{\alpha_1} \cdots [t^{\omega_n}(a_ns-b_n)]^{\alpha_n}\nonumber \\ 
&= \sum_{\alpha \in \mathcal A} t^{\langle \omega,\alpha \rangle}c_\alpha(as-b)^{\alpha}\nonumber \\
 \label{eq:composition0}
&= \sum_{\alpha \in \mathcal A_\omega}t^{h_{\mathcal A}(\omega)} c_\alpha(as-b)^\alpha + \sum_{\alpha \in \mathcal A_\omega^c}t^{\langle \omega,\alpha \rangle}c_\alpha(as-b)^{\alpha} 
\end{align}
Since $t$ is not zero, we may scale \eqref{eq:composition0} by $t^{-h_\mathcal A(\omega)}$ without changing its zeros. Thus the solutions of \eqref{eq:composition0} are the same as those of 
\begin{equation}
\label{eq:composition}
\sum_{\alpha \in \mathcal A_\omega} c_\alpha(as-b)^\alpha + \sum_{\alpha \in \mathcal A_\omega^c}t^{\langle \omega,\alpha \rangle-h_{\mathcal A}(\omega)}c_\alpha(as-b)^{\alpha} 
\end{equation}
where $\mydefMATH{\mathcal A_\omega^c}$ is the complement of $\mathcal A_\omega$ in $\mathcal A$. Note that $\pi_\omega^{-1}(t) = \{(t,s_i(t))\}_{i=1}^d$ where $\{s_i(t)\}_{i=1}^d$ are the solutions of \eqref{eq:composition}. Finding the values of each $s_i(t)$ as $t \to \infty$ is the same as finding the values of $s_i(t^{-1})$ as $t \to 0$ and since these paths are continuous, we substitute $t^{-1}$ for $t$ in \eqref{eq:composition} and take the limit as $t \to 0$:
\begin{align}
\sum_{\alpha \in \mathcal A_\omega}& c_\alpha(as-b)^\alpha + \sum_{\alpha \in \mathcal A_\omega^c}(t^{-1})^{\langle \omega,\alpha \rangle-h_{\mathcal A}(\omega)}c_\alpha(as-b)^{\alpha} \nonumber \\
\label{eq:composition2}
\sum_{\alpha \in \mathcal A_\omega}& c_\alpha(as-b)^\alpha + \sum_{\alpha \in \mathcal A_\omega^c}(t)^{-\langle \omega,\alpha \rangle+h_{\mathcal A}(\omega)}c_\alpha(as-b)^{\alpha}.
\end{align}
Note that $\langle -\omega,\alpha \rangle+h_{\mathcal A}(\omega)>0$ for all $\alpha \in \mathcal A_\omega^c$ by definition, and so evaluating \eqref{eq:composition2} at $t=0$ gives $\V(\sum_{\alpha \in \mathcal A_\omega}c_\alpha(as-b)^\alpha)=\V(f_\omega({\bf L}_{1}(s))).$
\end{proof}
Suppose a hypersurface $\mathcal H \subset \C^m$ is the image of a map $\varphi\colon X \to \mathcal H$. Recall that a (pseudo)-witness set for $\mathcal H$ can be computed by computing a witness set for the graph of $\varphi$ and applying Algorithm \ref{alg:pseudowitness}.
Consequently, since fibers of $\pi_\omega$ are essentially witness points, we may compute them without access to the defining equation for $\mathcal H$. 
The HS-algorithm follows from the above observations.

We remind the reader that $\widetilde{f}$ denotes the homogenization of a polynomial $f$ and $\mathcal O_P$ denotes a numerical oracle for a polytope $P$. 

\boxit{
\begin{algorithm}[HS-Algorithm]
\nothing \\
\myline
{\bf Input:} \\
$\bullet$ A witness set, or pseudo-witness set, $W$ for a hypersurface $\mathcal H \subseteq \C^n$\\
$\bullet$ A direction $\omega \in \R^n$ \\
 \myline
{\bf Output:}\\
$\bullet$ $\mathcal O_{\New(\widetilde{f})}(\omega)$ where $\mathcal H = \V(f)$\\
\myline
{\bf Steps:}
\begin{enumerate}[nosep]
\item[1] Pick random $a,b \in (\C^{\times})^n$ and construct $\textbf{L}_t$ 
\item[2] Track the witness points in $W$ to the intersection $\mathcal H \cap \mathcal L_1$
\item[3] Track all points $\{s_i(1)\}_{i=1}^d$ along \eqref{eq:hshomotopy} from $t=1 \to 2$. 
\item[4] If none of the points move, \return \texttt{EEP}
\item[5] Initialize $\beta = (0_1,0_2,\ldots,0_n,0_\infty) \in \mathbb{N}^{n+1}$
\item[6] \myfor $i$ from $1$ to $d$ \mydo
\begin{enumerate}[nosep]
\item[6.1] Track the point $s_i(1)$ along \eqref{eq:hshomotopy} as $t \to \infty$
\item[6.2] If $s_i(t)$ converges or diverges, stop tracking it
\begin{enumerate}
\item[6.2.1] If $s_i(t)$ converged to $\rho_i$, increment $\beta_i$ by one
\item[6.2.2] If $s_i(t)$ diverged, increment $\beta_\infty$ by one
\end{enumerate}
\end{enumerate}
\item[7] \return $\beta$
\end{enumerate}
\label{alg:hsalgorithm}
\end{algorithm}}

\textit{Proof of correctness:}
We claim that Algorithm \ref{alg:hsalgorithm} is a numerical oracle (see Definition~\ref{def:numericaloracle}) for $\New(\widetilde{f})$.
We consider three situations, dependent on $\omega \in \R^n$, which result in different behaviors of the set $\{s_i(t)\}_{i=1}^d$ as $t \to \infty$.
\begin{enumerate}
\item {\bf ($\omega$ exposes a single point):} This means that $f_\omega({\bf L}_1(s))= c_\beta(as-b)^\beta$ is a monomial with exponent $\mydefMATH{\beta}=(\beta_1,\ldots,\beta_n) \in \mathcal A$. This clearly has roots of $\mydefMATH{\rho_i}=b_i/a_i$ appearing with multiplicity $\beta_i$. Note that if $|\beta|<d$ then there are $\mydefMATH{\beta_\infty}=d-|\beta|$ paths which diverge as $t \to \infty$. One way to see this is to observe that $\beta_\infty$ is the exponent of the homogenizing variable in the term $\widetilde{f}_\omega$. 

\item {\bf ($\omega$ exposes $\mathcal A$):}
If $\omega$ exposes the entire polytope defined by $\mathcal A$, then the roots $\{s_i(t)\}_{i=1}^d$ remain constant as $t$ varies since $f({\bf L}_t)=t^{h_{\mathcal A}(\omega)}f({\bf L}_1)$.

\item {\bf ($\omega$ exposes a proper subset of $\mathcal A$ consisting of more than one point):}
If $\omega$ exposes a proper non-singleton subset of $\mathcal A$, then there is more than one term in $f_\omega$. We remark this happens exactly when $\omega \in \trop(\V(f))$. The terms of $f_\omega$ will have a common factor of $\prod_{i=1}^n (a_is-b_i)^{m_i}$ where the vector $\mydefMATH{m}$ is the coordinate-wise minimum of the points in $\mathcal A_\omega$. Therefore,  $m_i$ roots will converge to $\rho_i$ and $\mydefMATH{m_\infty}=\min_{\beta \in \mathcal A_\omega} \left(d-|\beta|\right)$ points will diverge. All other roots will converge somewhere else in $\mathbb{C}$. 
\end{enumerate}
In each case, the output is that of a numerical oracle. \hfill $\square$

\section{Tropical membership}
\label{sec:tropicalalgorithms}
A direct consequence of the HS-algorithm is a tropical membership algorithm for hypersurfaces. Recall that for a polynomial $f \in \C[x]$ of degree $d$, a direction $\omega \in \R^n$ is an element of $\trop(f)$ if and only if  $\omega$ exposes a positive-dimensional face of $\New(\widetilde{f})$. Equivalently, recall that $\omega \in \trop(f)$ if and only if a numerical oracle outputs a vector $v=\mathcal O_{\New(\widetilde{f})}(\omega) \in \Z^{n+1}$ satisfying $|v|<d$. Thus, since the HS-algorithm functions as a numerical oracle, it may be used  as a tropical membership algorithm for hypersurfaces. 

\boxit{\begin{algorithm}[Tropical Membership for Hypersurfaces]
\nothing \\
\myline
{\bf Input:} \\
$\bullet$ A witness set, or pseudo-witness set, $W$ for a hypersurface $\mathcal H \subseteq \C^n$\\
$\bullet$ A direction $\omega \in \R^n$ \\
 \myline
{\bf Output:}\\
$\bullet$ \texttt{true} if $\omega \in \trop(\mathcal H)$ and \texttt{false} otherwise.\\
\myline
{\bf Steps:}
\begin{enumerate}[nosep]
\item[0] \set $d=\deg(\mathcal H)$ and \set $\beta$ to be the output of the HS-algorithm on input $W$ and $\omega$
\item[1] \myif $|\beta|<d$ \then \return \texttt{true},
 \myelse 
 \return \texttt{false}

\end{enumerate}
\label{alg:tropmembershiphypersurfaces}
\end{algorithm}
}

Given an arbitrary variety $\V(I) \subset \C^n$, recall that the tropicalization of $\V(I)$ may be realized as the intersection of preimages of projections of $\trop(\V(I))$ (Lemma \ref{thm:TropicalProjection}). When the coordinate projections are sufficiently generic, Algorithm \ref{alg:tropmembershiphypersurfaces} extends immediately to a tropical membership algorithm for the tropicalization of $\V(I)$. When they are not, this algorithm can only yield false positives. To handle this, we may obtain new projections of $\trop(\V(I))$ by taking the coordinate projections of $\trop(\V(I))$ after a linear change of coordinates on $\trop(\V(I))$. We recall that a linear change of coordinates on $\trop(\V(I))$ amounts to a monomial change of coordinates on $I$ (see Remark \ref{remark:producingprojections}) which will likely increase the degree of $\V(I)$. While this makes the computation of a witness set more difficult, it is often still manageable.

\boxit{
\begin{algorithm}[Tropical Membership]
\nothing \\
\myline
{\bf Input:} \\
$\bullet$ An $m$-dimensional variety $X=\V(I)\subseteq \mathbb{C}^n$\\
$\bullet$ A direction $\omega \in \R^n$ \\
 \myline
{\bf Output:}\\
$\bullet$ \texttt{true} if $\omega \in \trop(\mathcal H)$ and \texttt{false} otherwise\\
\myline
{\bf Steps:}
\begin{enumerate}[nosep]
\item[1] Replace $I$ with its image under a generic monomial map $\Phi$ so that the coordinate projections of $\V(I)$ are generic 
\item[2] Replace $\omega$ with $\varphi^{-1}\omega$ where $\varphi = \Phi^*$
\item[3] Compute a witness set $W$ for $X$.
\item[4] \myfor each coordinate projection $\{\pi_J\}_{J \subseteq [n]}$ with $|J|=n-m-1$ \mydo 
\begin{enumerate}[nosep]
\item[4.1] Compute a pseudo-witness set $W_J$ for $\pi_J(X)$
\item[4.2] \myif Algorithm \ref{alg:tropmembershiphypersurfaces} returns $\texttt{false}$ on input $(W_J,\pi_J(\omega))$, \then \texttt{STOP} and \return \texttt{false}
\end{enumerate}
\item[5] \return \texttt{true}
\end{enumerate}
\label{alg:TropicalMembership}
\end{algorithm}
}

\section{Convergence rates of the HS-Algorithm}
\label{sec:convergencerates}

Theorem 8 of \cite{NP} gives an analysis of the convergence of the HS-algorithm whenever $\omega$ exposes a vertex. We generalize this result to include the case where $\omega \in \trop(\V(f))$. First, we introduce some notation. As before, let 
$$f= \sum_{\alpha \in \mathcal A} c_\alpha x^\alpha$$
be a polynomial with support $\mathcal A$ and let $\omega \in \R^n$.  The polynomial $f_\omega$ may be written as $f_\omega=x^\mydefMATH{m}\cdot \mydefMATH{g(x)}$ for some polynomial $g(x) \in \C[x]$ whose terms have no common monomial factor. After choosing generic points $a,b \in \C^n$, we write $f_\omega({\bf L}_1)$ in its factored form as 
$$f_\omega({\bf L}_1)= (as-b)^m g({\bf L}_1)=(as-b)^m\cdot \mydefMATH{K} \cdot (s-\mydefMATH{\tau})^\mydefMATH{k}.$$
Note that the $\mydefMATH{\tau}=(\tau_1,\ldots,\tau_{n'})$ are the $\mydefMATH{n'}$ complex roots of $g({\bf L}_t)$ and so $k$ is some $n'$-tuple satisfying $|k|<d-|m|$.
In the case that $\New(f)_\omega=\beta$ is a vertex, we have $m=\beta$, $g(x)=1$, and $K=c_\beta$. We define the following constants based on the coefficients $c_\alpha$, the support $\mathcal A$, and the constant $K$.
\begin{center}
$$\mydefMATH{C}={\max}\left\{\frac{|c_\alpha|}{|K|} \mymid \alpha \in \mathcal A\right\}, \qquad \mydefMATH{d_\omega}=h_{\mathcal A}(\omega)-h_{ \mathcal A_\omega^c}(\omega)$$
$$\mydefMATH{a_{\min}}={\min}\{1,|a_i| \mid i=1,\ldots,n\},
 \quad \mydefMATH{a_{\max}}={\max}\{1,|a_i| \mid i=1,\ldots,n\}$$
\end{center}
Finally, based on the positions of the $\rho_i=\frac{b_i}{a_i}$ and the $\tau_j$ appearing as roots of $g({\bf L}_1)$ we define the following constants for any $z \in \{\rho_i\}_{i=1}^n \cup \{\tau_j\}_{j=1}^{n'}$.
$$
\mydefMATH{\gamma_{z}}={\min}\left\{a_{\min},\frac{|z-\hat z|}{2} \mymid \hat z \in \{\rho_i\}_{i=1}^n \cup \{\tau_j\}_{j=1}^{n'} \smallsetminus z \right\}$$
$$
\mydefMATH{\Gamma_{z}}={\max}\left\{\frac{2}{a_{\max}},|z-\rho_i|\mymid i=1,\ldots,n \right\}
$$ 
The constant $d_\omega$ describes how close $\omega$ is to exposing a positive-dimensional face of $\New(f)$. The constant $\gamma_z$ is defined so that any point inside the circle of radius $\gamma_z$ centered at $z$ is closer to $z$ than any other point in $\{\rho_i\}_{i=1}^{n'} \cup \{\tau_j\}_{j=1}^{n'}$. 
We include helpful graphics describing this notation in Figure \ref{fig:numericaldefs} and Figure \ref{fig:dw}.
\begin{figure}[!htpb]
\includegraphics[scale=0.55]{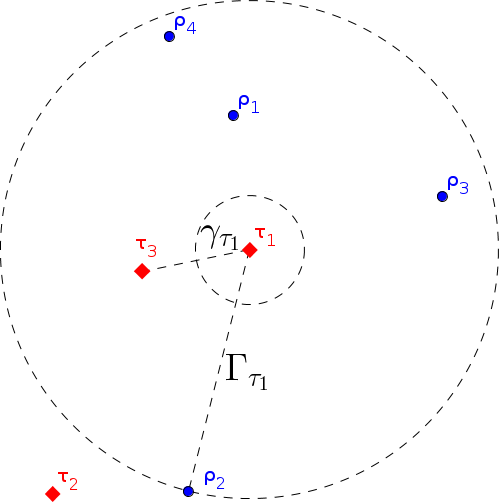}
\caption{\label{fig:numericaldefs}An example of locations of $\{\rho_i\}_{i=1}^4$ and $\{\tau_j\}_{j=1}^3$ in $\C_s$. The smaller circle has radius $\gamma_{\tau_1}$ and the larger circle has radius $\Gamma_{\tau_1}$.}
\end{figure}
\begin{figure}[!htpb]
\includegraphics[scale=0.55]{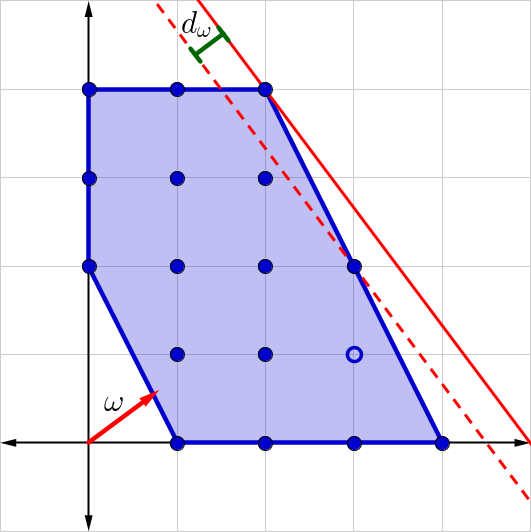}
\caption{\label{fig:dw}A unit vector $\omega$ and a geometric description of $d_\omega$.}
\end{figure}

\begin{theorem}
\label{thm:convergencerates}
Suppose $\omega \in \R^n$. Let $s(t)$ be a path of the HS-algorithm converging to $z$ as $t \to \infty$ and let $\beta$ be the number of such paths converging to $z$. Let $t_1 \geq 0$ be a number such that if $t >t_1$ then 
$|s(t) - z| \leq \gamma_z$.
Then for all $t > t_1$ 
$$|s(t)-z|^\beta \leq t^{-d_\omega} \cdot C \cdot |\mathcal A_\omega^c| \cdot \left(\frac{a_{\max}}{a_{\min}} \left(1+\frac{\Gamma_z}{\gamma_z}\right)\right)^d.$$
\end{theorem}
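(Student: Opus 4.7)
My plan is to obtain the inequality by substituting $s=s(t)$ into the scaled homotopy equation, factoring the ``dominant'' part $f_\omega({\bf L}_1(s))$ so that the factor $(s-z)^\beta$ is exposed, and then bounding the two sides in opposite directions. Specifically, recall from the proof of correctness of the HS-algorithm that dividing $f({\bf L}_t(s))=0$ by $t^{h_\mathcal A(\omega)}$ yields
\[
f_\omega({\bf L}_1(s(t)))\;=\;-\sum_{\alpha\in\mathcal A_\omega^c}t^{\langle\omega,\alpha\rangle-h_\mathcal A(\omega)}\,c_\alpha\,(as(t)-b)^\alpha.
\]
Writing $f_\omega({\bf L}_1(s))=K\,(as-b)^m(s-\tau)^k=K\prod_i (a_is-b_i)^{m_i}\prod_j(s-\tau_j)^{k_j}$ isolates the factor $(s(t)-z)^\beta$ on the left-hand side.

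For the lower bound on the left-hand side I will use the hypothesis $|s(t)-z|\le\gamma_z$ together with the definition of $\gamma_z$: for any other root $\hat z\in\{\rho_i\}\cup\{\tau_j\}\setminus\{z\}$ we have $|z-\hat z|\ge 2\gamma_z$, so by the triangle inequality $|s(t)-\hat z|\ge\gamma_z$, and hence $|a_is(t)-b_i|=|a_i|\,|s(t)-\rho_i|\ge a_{\min}\gamma_z$ whenever $\rho_i\ne z$. Collecting the factors at the roots different from $z$ gives
\[
\bigl|f_\omega({\bf L}_1(s(t)))\bigr|\;\ge\;|K|\cdot|s(t)-z|^\beta\cdot\gamma_z^{|k|+|m|-\beta}\cdot a_{\min}^{|m|}.
\]

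For the upper bound on the right-hand side, note that each exponent satisfies $\langle\omega,\alpha\rangle-h_\mathcal A(\omega)\le-d_\omega$, so $t^{\langle\omega,\alpha\rangle-h_\mathcal A(\omega)}\le t^{-d_\omega}$ for $t\ge1$; also $|c_\alpha|\le C|K|$ by definition. To bound $|as(t)-b|^\alpha$ I will again use the triangle inequality: $|a_is(t)-b_i|\le a_{\max}(|s(t)-z|+|z-\rho_i|)\le a_{\max}(\gamma_z+\Gamma_z)$, since $|z-\rho_i|\le\Gamma_z$ by definition. Because $a_{\max}\Gamma_z\ge 2$, this factor is at least $1$, so I may inflate $|\alpha|$ to $d$ and write $|as(t)-b|^\alpha\le(a_{\max}(\gamma_z+\Gamma_z))^d$. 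Combined with $|\mathcal A_\omega^c|$ summands this gives
\[
\Bigl|\sum_{\alpha\in\mathcal A_\omega^c}t^{\langle\omega,\alpha\rangle-h_\mathcal A(\omega)}c_\alpha(as(t)-b)^\alpha\Bigr|\;\le\;t^{-d_\omega}\cdot C|K|\cdot|\mathcal A_\omega^c|\cdot\bigl(a_{\max}(\gamma_z+\Gamma_z)\bigr)^d.
\]

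Equating the two bounds and solving for $|s(t)-z|^\beta$ yields
\[
|s(t)-z|^\beta\;\le\;t^{-d_\omega}\cdot C\cdot|\mathcal A_\omega^c|\cdot\frac{(a_{\max}(\gamma_z+\Gamma_z))^d}{\gamma_z^{|k|+|m|-\beta}\,a_{\min}^{|m|}}.
\]
The main (and only real) obstacle is verifying the clean form of the constant. I will finish by observing that $|k|+|m|\le d$, $\beta\ge 0$, $|m|\le d$, $\gamma_z\le a_{\min}\le 1$, so both $\gamma_z^{d-|k|-|m|+\beta}\le 1$ and $a_{\min}^{d-|m|}\le 1$; multiplying numerator and denominator by $\gamma_z^{d-|k|-|m|+\beta}a_{\min}^{d-|m|}$ converts the denominator to $\gamma_z^d a_{\min}^d$, giving the claimed bound
\[
|s(t)-z|^\beta\;\le\;t^{-d_\omega}\cdot C\cdot|\mathcal A_\omega^c|\cdot\left(\frac{a_{\max}}{a_{\min}}\left(1+\frac{\Gamma_z}{\gamma_z}\right)\right)^d.
\]
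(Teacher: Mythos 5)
Your proposal is correct and follows essentially the same strategy as the paper: start from the scaled homotopy equation, bound the right-hand sum from above using $t^{-d_\omega}$, $C$, and the triangle-inequality estimate $|a_is(t)-b_i|\le a_{\max}(\gamma_z+\Gamma_z)$, bound the factored left-hand side from below by estimating each linear factor at a root different from $z$ by $a_{\min}\gamma_z$, and then absorb the leftover powers of $\gamma_z$ and $a_{\min}$ using $\gamma_z\le a_{\min}\le 1$. The only difference from the paper's write-up is cosmetic: the paper splits into the two cases $z=\rho_i$ and $z=\tau_j$ when isolating $|s(t)-z|^\beta$ and phrases the lower bound as $(a_{\min}\gamma_z)^{d-\beta}$, whereas you give a uniform lower bound $|K|\,|s(t)-z|^\beta a_{\min}^{|m|}\gamma_z^{|m|+|k|-\beta}$ that covers both cases at once. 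These reduce to the same final constant, so there is no substantive difference in the argument.
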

\begin{proof}
Recall from \eqref{eq:composition} we have
\begin{equation}
\label{eq:numproof1}
t^{-h_\mathcal A(\omega)}\cdot f({\bf L}_t)=\sum_{\alpha \in \mathcal A_\omega} c_\alpha(as-b)^\alpha + \sum_{\alpha \in \mathcal A_\omega^c}t^{\langle \omega,\alpha \rangle-h_{\mathcal A}(\omega)}c_\alpha(as-b)^{\alpha}.
\end{equation}
Suppose $s(t)\colon (1,\infty) \to \C_s$ is a continuous path in \eqref{eq:hshomotopy} so that $f({\bf L}_t(s(t))) = 0$ for all $t > 1$. Then \eqref{eq:numproof1} gives 
\begin{align}
\label{eq:numproof2}
|f_\omega(as(t)-b)| &= \left|\sum_{\alpha \in \mathcal A_\omega^c} t^{\langle \omega, \alpha \rangle-h_\mathcal{A}(\omega)}c_\alpha(as(t)-b)^\alpha \right| 
\end{align}
and so after dividing through by $K$ and extracting the largest power of $t$ from the sum,
\begin{align}
 \label{eq:numproof3}|(as-b)^m \cdot (s-\tau)^k|&\leq t^{-d_\omega} \cdot \sum_{\alpha \in \mathcal A_\omega^c} \left| \frac{c_\alpha}{K} \right| \cdot |(as(t)-b)^\alpha|  \\
 &\leq t^{-d_\omega} \cdot C \cdot \sum_{\alpha \in \mathcal A_\omega^c}   |(as(t)-b)^\alpha|.
\end{align}
Recalling that $|s(t)-z|\leq \gamma_z$ by hypothesis, we bound the right-hand summands,
\begin{align*}
|a_js(t)-b_j| = |a_j| \cdot |s(t)-\rho_j| &\leq a_{\max}\cdot |s(t)-z+z-\rho_j| \\
&\leq a_{\max}\cdot (|s(t)-z|+|z-\rho_j|) \\
&\leq a_{\max} \cdot (\gamma_z + \Gamma_{z})
\end{align*}
and so since $2 \leq a_{\max}\Gamma_z$,
\begin{align}
\label{eq:numproof4}
|(as(t)-b)^\alpha| \leq (a_{\max}(\gamma_z+\Gamma_z))^{|\alpha|} \leq (a_{\max}(\gamma_z+\Gamma_z))^{d}.
\end{align}
Substituting \eqref{eq:numproof4} into \eqref{eq:numproof3} gives
\begin{align*}
|(as-b)^m \cdot (s-\tau)^k| &\leq t^{-d_\omega} \cdot C \cdot |\mathcal A_\omega^c| \cdot (a_{\max} \cdot (\gamma_z+\Gamma_z))^d.
\end{align*}
We now bound the factors on the left-hand-side of \eqref{eq:numproof3},
\begin{align*}
|s(t)a_j-b_j| = |a_j| \cdot |s(t)-\rho_j| = |a_j| \cdot |s(t)-z+z-\rho_j| &\geq a_{\min} \Bigl| |z-\rho_j|-|s(t)-z| \Bigr| \\
& \geq a_{\min} \cdot (2\gamma_z-\gamma_z)=a_{\min}\gamma_z.
\end{align*}
Similarly, 
\begin{align*}
|s(t)-\tau_j| = |s(t)-z+z-\tau_j| &\geq \Bigl| |z-\tau_j|-|s(t)-z| \Bigr| \\
&\geq 2\gamma_z-\gamma_z=\gamma_z \geq a_{\min}\gamma_z
\end{align*}
and since $a_{\min}\gamma_z \leq 1$ and $|m|+|k| \leq d$ we have that
$$|(as(t)-b)^m(s(t)-\tau)^k| \geq (a_{\min}\gamma_z)^d.$$
So for either $z=\tau_j$ or $z=\rho_i$ we have 
$$\left| \frac{(as(t)-b)^m(s(t)-\tau)^k}{(s(t)-\tau_j)^{k_j}} \right| \geq (a_{\min} \gamma_{\tau_j})^{d-k_j},$$
$$\left| \frac{(as(t)-b)^m(s(t)-\tau)^k}{(a_is(t)-b_i)^{m_i}} \right| \geq (a_{\min} \gamma_{\rho_i})^{d-m_i},$$
respectively.

We now suppose that $z=\rho_i$ and essentially recover Theorem 8 of \cite{NP}. Note that,
$$|s(t)-\rho_i|^{m_i} = \frac{1}{|a_i|^{m_i}} \cdot\left| \frac{(as(t)-b)^m(s(t)-\tau)^k}{\left( \prod_{j \neq i}(a_js(t)-b_j)^{m_j}  \right)\cdot (s(t)-\tau)^k}\right| $$
and so putting our bounds together gives 
$$|s(t)-\rho_i|^{m_i} \leq  t^{-d_\omega} \cdot C \cdot |\mathcal A_\omega^c| \cdot (a_{\max} \cdot (\gamma_{\rho_i}+\Gamma_{\rho_i}))^d \cdot \frac{1}{a_{\min}^{m_i}} \cdot \frac{1}{(a_{\min}\gamma_{\rho_i})^{d-m_i}}.$$
Recall that $1 \geq a_{\min} \geq \gamma_{\rho_i}$ so 
$$|s(t)-\rho_i|^{m_i} \leq  t^{-d_\omega} \cdot C \cdot |\mathcal A_\omega^c| \cdot  \left(\frac{a_{\max}}{a_{\min}} \left(1 + \frac{\Gamma_{\rho_i}}{\gamma_{\rho_i}} \right)\right)^d.$$
On the other hand, if $z=\tau_j$, we have 

$$|s(t)-\tau_j|^{k_j} \leq  \frac{(as(t)-b)^m(s(t)-\tau)^k}{\left( \prod_{j \neq i}(s(t)-\tau_j)^{k_j}  \right)\cdot (as(t)-b)^m} $$
and so putting our bounds together gives 
$$|s(t)-\tau_j|^{k_j} \leq t^{-d_\omega} \cdot C \cdot |\mathcal A_\omega^c| \cdot (a_{\max} \cdot (\gamma_{\tau_j}+\Gamma_{\tau_j}))^d \cdot \frac{1}{(a_{\min}\gamma_{\tau_j})^{d-k_j}}.$$
Since $1 \geq a_{\min} \geq \gamma_{\tau_j}$, we obtain
\begin{align*}
|s(t)-\tau_j|^{k_j}&\leq  t^{-d_\omega} \cdot C \cdot |\mathcal A_\omega^c| \cdot (a_{\max} \cdot (\gamma_{\tau_j}+\Gamma_{\tau_j}))^d \cdot \frac{1}{(a_{\min}\gamma_{\tau_j})^{d}} \\
& \leq t^{-d_\omega} \cdot C \cdot |\mathcal A_\omega^c| \cdot  \left(\frac{a_{\max}}{a_{\min}} \left(1 + \frac{\Gamma_{\tau_j}}{\gamma_{\tau_j}} \right)\right)^d,
\end{align*}
completing the proof.
\end{proof}

We give an example displaying the convergence rates in Theorem \ref{thm:convergencerates}.
\begin{example}
\label{ex:simulatedconvergence}
Consider the plane curve (hypersurface) given by $\V(f)\subset \C^2$ where
\begin{align*}
f=&x+20x^2-4x^3+x^4-4xy+10x^2y+y^2+8xy^2+\\ +&4x^2y^2+x^3y^2-4y^3-6xy^3+4x^2y^3+4y^4-4xy^4+x^2y^4.
\end{align*}
Its Newton polytope and tropicalization are displayed in Figure \ref{fig:newtandtrop}. 
\begin{figure}[!htpb]
\includegraphics[scale=0.55]{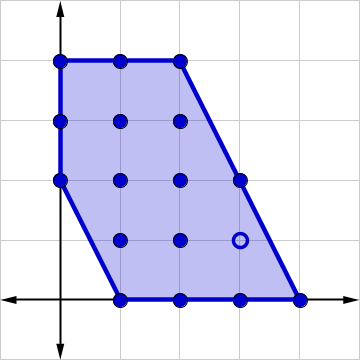}
\hspace{0.5 in}
\includegraphics[scale=0.33]{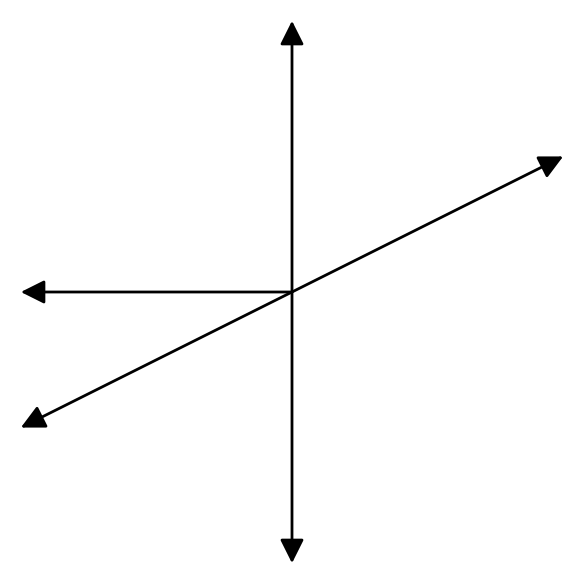}
\caption{The Newton polytope and tropicalization of a hypersurface.}
\label{fig:newtandtrop}
\end{figure}
Note that the only lattice point of the Newton polytope not appearing in the support of $f$ is the point $(3,1)$.
Figure \ref{fig:simulatedconvergence} displays the convergence rates in Theorem \ref{thm:convergencerates} as follows. For a uniform sample of unit vectors $\omega \in S^2 \subset \R^2$, we draw a ray in the direction of $\omega$ with length equal to the minimum of $1$ and $\frac{1}{d^{\omega}}$, the exponent appearing in Theorem \ref{thm:convergencerates}. 
\begin{figure}[!htpb]
\includegraphics[scale=0.5]{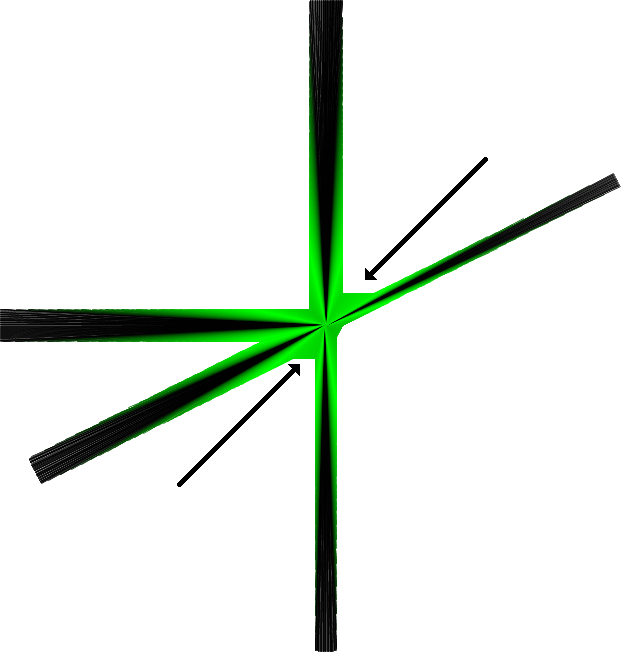}
\caption{For directions $\omega \in S^1$ we draw the ray in direction $\omega$ with length $\min(1,d^{-\omega})$ describing the convergence rate proven in Theorem \ref{thm:convergencerates}.}
\label{fig:simulatedconvergence}
\end{figure}
We remark that setting the length of the rays to be the minimum of $d^{-\omega}$ and $1$ models the feature that when this algorithm is used in practice, the user must specify a tolerance describing how far to track $t$ to see convergence.  We also point out that the ridges indicated in Figure \ref{fig:simulatedconvergence} occur because $d_\omega$ depends not only on the vertices of $\New(f)$ but also on the monomials in $\supp(f)$. \hfill $\diamond$ 
\end{example}
Example \ref{ex:simulatedconvergence} shows that in practice, the numerical oracle for the Newton polytope of a hypersurface coming from the HS-algorithm comes with a cost associated to inputs near the tropicalization of the hypersurface: as the input of the HS-algorithm approaches the tropical hypersurface, the convergence rate becomes arbitrarily slow. Due to this feature, pairing Algorithm \ref{alg:hsalgorithm} with Algorithm \ref{alg:notbeneathbeyond} may be too computationally expensive for computing large Newton polytopes.
\begin{remark}
\label{rem:positivemeasure}
Figure \ref{fig:simulatedconvergence} exposes an important drawback of the algorithms involved in this dissertation: many of our algorithms require a blind random choice of parameters avoiding some forbidden set of measure zero (in this case, the tropical variety). 

The first issue with this is that choosing parameters \emph{near} the forbidden set can cause computations to take arbitrarily long. Consequently, the true space of parameters which we want to avoid in our computations has positive measure. In the case of Figure \ref{fig:simulatedconvergence}, this is the set of directions which correspond to a black ray. 
Another example is the choice of $\gamma$ in Lemma \ref{lem:gammatrick} and Lemma \ref{lem:generalgammatrick}. If $\gamma$ is chosen near the set of measure zero, then the condition number involved in path tracking can become large. This will either cause an instance of path-jumping, or if one is using adaptive precision, can cause the computation to take arbitrarily long.

The second issue is that our computations inherently work over a subset of rational numbers with bounded height. This technically causes problems with observations such as Remark \ref{rem:oracletovertex} where the set of directions which do not expose a vertex form a finite subset of the finite set of rational numbers with bounded height; that is, a set of positive measure. 

Nonetheless, even with positive measure, the forbidden sets involved in our computations remain heuristically small and in practice the algorithms remain effective. \hfill $\diamond$
\end{remark}

\section{Implementation of Algorithms \ref{alg:hsalgorithm} and \ref{alg:TropicalMembership}}
\label{sec:numericalnp}
We describe our implementation of Algorithm \ref{alg:hsalgorithm} and Algorithm \ref{alg:TropicalMembership} along with the relevant supporting functions in our {\bf Macaulay2} package {\bf NumericalNP.m2} \cite{taylor}. This package contains four main user functions, the first three of which implement the HS-algorithm and the last implements the tropical membership algorithm. All numerical computations are piped to {\bf Bertini} \cite{Bertini} through the package {\bf Bertini.m2} \cite{B4M2}.

Function \ref{witnessForProjection} computes a pseudo-witness set for the image of a variety $X \subseteq \mathbb{C}^N$  under a projection $\pi\colon  \mathbb{C}^N \to \mathbb{C}^n$. 

\boxit{
\begin{funct}{\texttt{witnessForProjection}}
\label{witnessForProjection}\\
{\bf Input:}\\
$\bullet$ I: Ideal defining $X \subseteq \mathbb{C}^N$ \\
$\bullet$ ProjCoord: List of coordinates which are forgotten by $\pi$\\
$\bullet$ OracleLocation (option): Path in which to create witness files\\
{\bf Output:} \\ $\bullet$ A subdirectory \texttt{/OracleLocation/WitnessSet} containing\\
- witnessPointsForProj: Preimages of witness points of $\overline{\pi(X)}$\\
- projectionFile: List of coordinates in ProjCoord\\
- equations: List of equations defining $X' \subseteq X$ such that $\pi|_{X'}$ is generically finite and that $\overline{\pi(X')}=\overline{\pi(X)}$
\end{funct}}

Given a hypersurface $\mathcal H$, Function \ref{witnessToOracle}, \texttt{witnessToOracle},  creates all necessary {\bf Bertini} files to track the witness set $\mathcal H \cap \mathcal L_t$ as $t \to \infty$ for any $\omega \in \mathbb{R}^n$. These files treat $\omega$ as a parameter so that the user who wants to query many directions needs only to produce these files once. 

\boxit{\begin{funct}{\texttt{witnessToOracle}}
\label{witnessToOracle}\\
{\bf Input:} \\
$\bullet$ OracleLocation: Path containing the directory \texttt{/WitnessSet} \\
{\bf Optional Input:}\\
$\bullet$ \texttt{PointChoice}: Prescribes $a$ and $b$ explicitly (see Algorithm \ref{alg:hsalgorithm})\\
$\bullet$ \texttt{TargetChoice}: Prescribes targets $b_i/a_i$ \\
$\bullet$ \texttt{NPConfigs}: List of {\bf Bertini} path tracking configurations \\
{\bf Output:} \\ 
$\bullet$ A subdirectory \texttt{/OracleLocation/Oracle} containing all necessary files to run the homotopy described in Algorithm \ref{alg:hsalgorithm}.

\end{funct}
}

Function \ref{witnessToOracle}, by default, chooses  $a,b \in \mathbb{C}^n$ such that $\rho_i =a_i/b_i$ are the $n$-th roots of unity. One may choose to either specify $a$ and $b$ (\texttt{PointChoice}), or $\rho_i=a_i/b_i$ (\texttt{TargetChoice}) or request that these choices are random. When random, the function ensures that the points $\rho_i$ are far from each other so that convergence to $\rho_i$ is easily distinguished from convergence to $\rho_j$.  
{\bf Bertini} is called to track the points in \texttt{/OracleLocation/WitnessSet} to the points $\overline{\pi(X)} \cap \mathcal L_1$. These become start solutions of the homotopy described in Algorithm \ref{alg:hsalgorithm} with parameters $\omega$ and $t$. There are many numerical choices for {\bf Bertini}'s native path-tracking algorithms which can be specified via \texttt{NPConfigs}.

The fundamental function of \textbf{NumericalNP.m2} is \texttt{oracleQuery}. It runs the homotopy described in the HS-algorithm on a hypersurface $\mathcal H=\V(f)$, monitors convergence, and outputs the result of the numerical oracle.

\boxit{
\begin{funct}{\texttt{oracleQuery}}
\label{queryOracle}

{\bf Input:}\\
$\bullet$  OracleLocation (Option): Location containing the directory \texttt{/Oracle}\\
$\bullet$  $\omega$: A vector in $\mathbb{R}^n$\\
{\bf Optional Input:}\\ 
$\bullet$ \texttt{Certainty} \hspace{0.05 in}  $\bullet$ \texttt{Epsilon} \hspace{0.05 in}  $\bullet$ 
  \texttt{MinTracks} \hspace{0.05 in}  $\bullet$ 
 \texttt{MaxTracks} \hspace{0.05 in}
   $\bullet$
 \texttt{StepResolution} \hspace{0.05 in} \\ {$\bullet$ 
 \texttt{MakeSageFile}}\\  
{\bf Output:}\\
$\bullet$ $\mathcal O_{\New(\widetilde{f})}(\omega)$ or \texttt{Reached MaxTracks}\\
$\bullet$  A subdirectory \texttt{/OracleLocation/OracleCalls/Call\#} containing\\
-  \texttt{SageFile}: Sage code animating the paths $s(t)$\\
-  \texttt{OracleCallSummary}: a human-readable file summarizing the results
\end{funct}}

To monitor convergence of points $s(t)$ the software tracks $t \to \infty$ in discrete steps. The option \texttt{StepResolution} specifies these step sizes. In each step and for each path $s_{i}(t)$, a numerical derivative is computed to heuristically determine convergence or divergence of the solution. If the solution is large and the numerical derivative exceeds $10^{\texttt{Certainty}}$ in two consecutive steps, then the path is declared to diverge, and if the numerical derivative is below $10^{-\texttt{Certainty}}$ in two consecutive steps, then the point is declared to converge. If a converged point is at most \texttt{Epsilon} from some $\rho_i$, then the software deems that it has converged to $\rho_i$. When a point is declared to converge or diverge, it is not tracked further.
\begin{figure}[!htpb]
\includegraphics[scale=0.32]{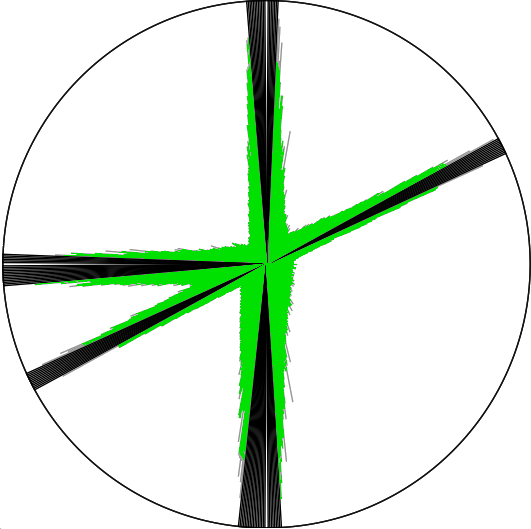} \hspace{0.15 in} 
\includegraphics[scale=0.45]{numericalnppolytope.png} \hspace{0.1 in}
\includegraphics[scale=0.32]{fig3b.png}
\caption{(Reprinted from \cite{Bry:NPtrop}) Left: Values of $t$ (magnitude of rays) such that queryOracle finishes for different $\omega$ (direction of rays) on a hypersurface with Newton polytope (center) and normal fan (right).}
\label{koosh}
\end{figure}
The option \texttt{MaxTracks} allows the user to specify how long to wait for convergence of the paths $s(t)$.

\begin{example}
Figure \ref{koosh} shows the Newton polytope of the same plane sextic as in Example \ref{ex:simulatedconvergence}. It also shows the convergence rate of the algorithm on different directions $\omega \in S^1$. The length of each green ray is proportional to the number of steps required for \texttt{oracleQuery} to finish and the black rays indicate that this convergence was not observed within the limit specified by \texttt{MaxTracks}.  We note that the striking resemblance of Figure \ref{fig:simulatedconvergence} and \ref{koosh} indicates that the value of $t$ at which our implementation recognizes convergence is approximately proportional to the convergence rate we prove in Theorem \ref{thm:convergencerates}. We include the image of the tropicalization of this curve to illustrate how the convergence rate involved in the HS-algorithm slows as $\omega$ approaches directions in the tropical variety. Nonetheless, we remind the reader that this slow convergence rate does not occur when $\omega$ is in the tropical variety. \hfill $\diamond$
\end{example}

One may also specify \texttt{MinTracks} which indicates the step at which convergence begins to be monitored. The option to create a Sage \cite{sage} animation (see Figure \ref{SAGE}) of the solution paths helps the user recognize pathological behavior in the numerical computations and fine-tune parameters such as \texttt{Certainty}, \texttt{StepResolution}, or \texttt{Epsilon} accordingly.
\begin{example}
\label{ex:example}
Consider the curve in $X \subseteq \mathbb{C}^3$ defined by $$I=\langle xyt-(x-y-t)^2+3x+t,x+y^2+t^2 \rangle\subseteq \mathbb{C}[x,y,t]$$ and let $\pi$ be the projection forgetting the $t$ coordinate. The following code written in {\bf Macaulay2} computes a witness set for $\mathcal C=\overline{\pi(X)}$, prepares oracle files for the HS-algorithm and then runs the HS-algorithm in the direction $(3,2)$. The software returns the list $\{2,4,0\}$ indicating that $\New(\overline{\pi(X)})_{(3,2)}=(2,4)$.
{\small
\begin{verbatim}
i1: loadPackage("NumericalNP");
i2: R=CC[x,y,t];
i3: I=ideal(x*y*t-(x-y-t)^2+3*x+t,x+y^2+t^2);
i4: witnessForProjection(I,{2},OracleLocation=>"Example");
i5: witnessToOracle("Example") ;
i6: time oracleQuery({3,2},OracleLocation=>"Example",MakeSageFile=>true)
     -- used 0.178448 seconds
o6: {2,4,0} 
\end{verbatim}}
The equation of $\pi(X)$ is the polynomial in Example \ref{ex:simulatedconvergence} and so its Newton polytope is displayed in Figure \ref{koosh}. Snapshots of the Sage animation created by \texttt{queryOracle} are shown in Figure \ref{SAGE}. There, the circles are centered at $\rho_1=1$ and $\rho_2=-1$ and have radius \texttt{epsilon}.
\begin{figure}[!htpb]
\centering
\includegraphics[scale=0.45]{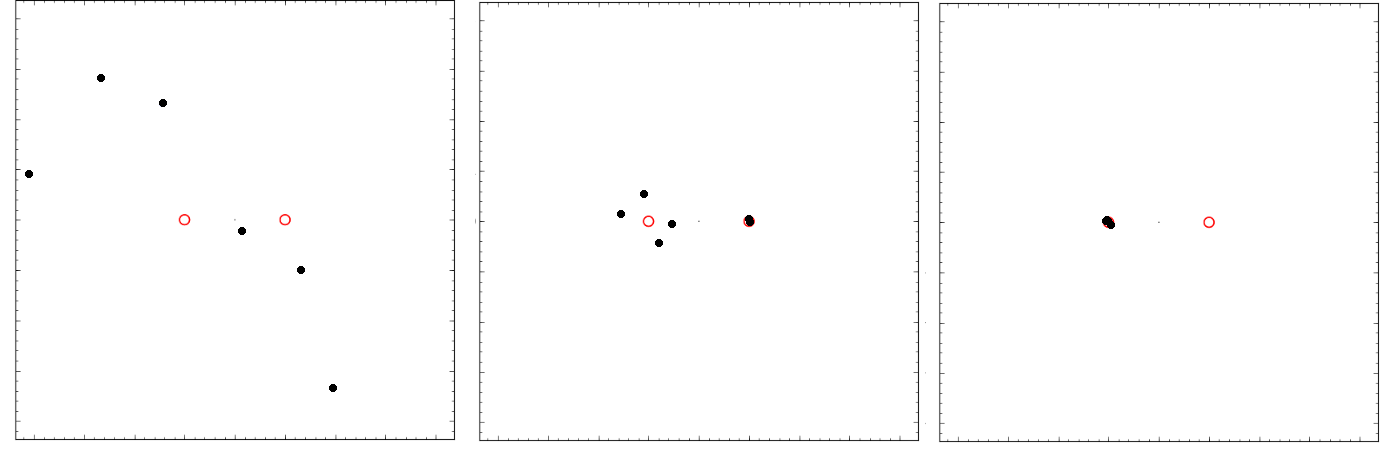}
\caption{Three snapshots of Sage animation from example with viewing window $[-4,4]^2$}
\label{SAGE}
\end{figure} The first image shows the intersections (in the $s$-coordinates) of the sextic $\overline{\pi(X)}$ with $\mathcal L_1$ in the complex plane $\C_s$. The second image is a snapshot showing two points converging to $s=1$ and the third image shows four other points converging to $s=-1$. \hfill $\diamond$
\end{example}

Given an ideal $I$, the fourth function \texttt{tropicalMembership} computes a pseudo-witness set for each coordinate projection $\pi(\V(I))$ whose image is a hypersurface. The algorithm subsequently checks that \texttt{oracleQuery} indicates that $\pi(\omega) \in \trop(\pi(\V(I))$. If this is true for each coordinate projection, the algorithm returns \texttt{true} and otherwise returns \texttt{false}. The numerical options fed to \texttt{tropicalMembership} are passed along to \texttt{oracleQuery}.

\boxit{
\begin{funct}{\texttt{tropicalMembership}}
\label{tropicalMembership}

{\bf Input:}\\
$\bullet$   $I:$ Ideal defining $X \subseteq \mathbb{C}^n$\\
$\bullet$  $\omega$: A vector in $\mathbb{R}^n$\\
{\bf Optional Input:}\\
$\bullet$ \texttt{Certainty} \hspace{0.03 in}  $\bullet$ \texttt{Epsilon}\hspace{0.03 in}  \hspace{0.03 in}  $\bullet$ 
  \texttt{MinTracks} \hspace{0.03 in}  $\bullet$
 \texttt{MaxTracks} \hspace{0.03 in}$\bullet$ \texttt{StepResolution} \hspace{0.03 in} \\ {$\bullet$ \texttt{MakeSageFile}}\\  
{\bf Output:}\\
$\bullet$ A list of oracle queries of $\pi(X)$ in directions $\pi(\omega)$ where $\pi$ runs through all coordinate projections such that $\pi(X)$ is a hypersurface.\\
$\bullet$ \texttt{true} if all oracle queries exposed positive-dimensional faces and \texttt{false} otherwise
\end{funct}}

\begin{example}
\label{ex:trop} We return to Example \ref{example:tropcube} of two tropical space curves which are different, yet have the same tropicalized coordinate projections. 
We depict these tropical curves again in Figure \ref{fig:badtrop} and illustrate their behavior with our software.
\begin{figure}[!htpb]
\includegraphics[scale=0.5]{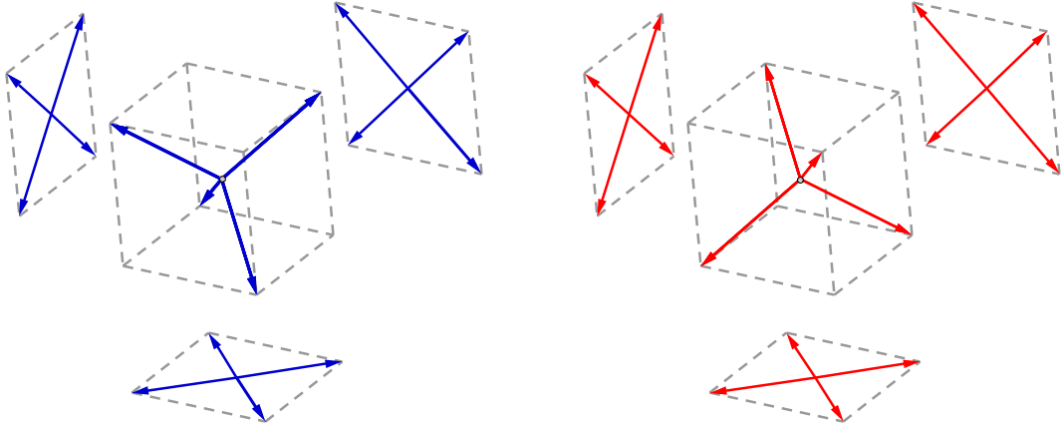} 
\caption{(Reprinted from \cite{Bry:NPtrop}) Two tropical space curves with the same tropical coordinate projections}
\label{fig:badtrop}
\end{figure}
{\small
\begin{verbatim}
i1 : loadPackage("NumericalNP");
i2 : R=QQ[x,y,z];
i3 : I_1=ideal {x*z+4*y*z-z^2+3*x-12*y+5*z,x*y-4*y^2+y*z+x+2*y-z};
i4 : I_2=ideal{x*y-3*x*z+3*y*z-1,3*x*z^2-12*y*z^2+x*z+4*y*z+5*z-1};
i5 : I_1==I_2
o5 = false
i6 : directions:={{1,1,1},{1,1,-1},{1,-1,1},
{1,-1,-1},{-1,1,1},{-1,1,-1},{-1,-1,1},{-1,-1,-1}};
i7 : apply(directions,d->tropicalMembership(I_2,d))
o7 = {true, true, true, true, true, true, true, true}
i8 : apply(directions,d->tropicalMembership(I_1,d))
o8 = {true, true, true, true, true, true, true, true}
\end{verbatim}}\hfill $\diamond$
\end{example}
Every projection of every vertex of $\text{cube}(3)$ is in the tropicalization of the corresponding projection of $\V(I_1)$ and $\V(I_2)$. Nonetheless, the tropicalizations of $\V(I_1)$ and $\V(I_2)$ are disjoint subsets of the vertices of the cube, exemplifying that an output of \texttt{true} from \texttt{tropicalMembership} is not a certification of membership in the tropical variety. Unfortunately, we cannot {\it a priori} decide whether or not our coordinate projections are generic. \\
\\
{\textbf{ Example \ref{ex:trop} (continued).}}
Consider the monomial change of coordinates $\Phi$ given by $$\Phi(x)=xyz,\quad \Phi(y)=y,\quad  \text{and} \quad \Phi(z)=z,$$ and let $\Phi^*=\varphi:\Z^3 \to \Z^3$ be the linear map corresponding to $\Phi$.  Let $F$ and $G$ be the generators used in the above code of $I_1$ and $I_2$ respectively. By Equation \eqref{eq:tropunderchange} of Section~\ref{subsection:tropicalgeometry} we have that $$\trop(\V(F)) = \varphi(\trop(\V(F \circ \Phi))), \hspace{0.2 in} \varphi={
\begin{bmatrix}
1 & 1 & 1 \\
0 & 1 & 0 \\
0 & 0 & 1
\end{bmatrix}}={ \begin{bmatrix}
1 & -1 & -1 \\
0 & 1 & 0 \\
0 & 0 & 1
\end{bmatrix}^{-1}.}
$$
The linear transformation $\varphi$ produces generic coordinate projections in the sense of Theorem \ref{thm:TropicalProjection} and the function \texttt{tropicalMembership} is able to distinguish $\trop(\V(I_1))$ from $\trop(\V(I_2))$.
{\small
\begin{verbatim}
i9 : I'_1=ideal apply((I_1)_*,f->sub(f,{x=>x*y*z,y=>y,z=>z}));
i10 : I'_2=ideal apply((I_2)_*,f->sub(f,{x=>x*y*z,y=>y,z=>z}));
i11 : directions'=apply(directions,d->{d#0-d#1-d#2,d#1,d#2})
o11 = {{-1, 1, 1}, {1, 1, -1}, {1, -1, 1}, {3, -1, -1},
      {-3, 1, 1}, {-1, 1, -1}, {-1, -1, 1}, {1, -1, -1}}
i12 : apply(directions',d->tropicalMembership(I'_1,d))
o12 = {false, true, true, false, true, false, false, true}
i13 : apply(directions',d->tropicalMembership(I'_2,d))
o13 : {true, false, false, true, false, true, true, false}
\end{verbatim}
}

\section{A hypersurface from algebraic vision}
\label{sec:AlgebraicVision}
The following example is a hypersurface in the space of $3 \times 2 \times 2$ tensors coming from a multiview variety of a pinhole camera and a two slit camera. This example can be found in Proposition $7.5$ of \cite{Ponce2017}, where the authors computed the polynomial symbolically via elimination with respect to another variety in the space of $3 \times 3 \times 3$ tensors. Although this computation is not new, it serves to demonstrate the strength of our implementation.

Consider the matrix 
$$\begin{bmatrix}
A&B&C
\end{bmatrix}
=
\begin{bmatrix}
a_{1,1} & a_{1,2} & a_{1,3} & b_{1,1} & b_{1,2} & c_{1,1} & c_{1,2} \\
a_{2,1} & a_{2,2} & a_{2,3} & b_{2,1} & b_{2,2} & c_{2,1} & c_{2,2} \\
a_{3,1} & a_{3,2} & a_{3,3} & b_{3,1} & b_{3,2} & c_{3,1} & c_{3,2} \\
a_{4,1} & a_{4,2} & a_{4,3} & b_{4,1} & b_{4,2} & c_{4,1} & c_{4,2} 
\end{bmatrix}.
$$
The matrix $A$ represents a pinhole camera and $(B,C)$, a two slit camera. The corresponding multi-view variety $X$ is a hypersurface in $\mathbb{P}^{11}$.
Let $f_{i,j,k}$ be the minor corresponding to the submatrix which ignores columns $a_i,b_j,$ and $c_k$. Then $X$ is parametrized by these twelve minors 
\begin{align*}
F\colon \mathbb{C}^{28} &\to \mathbb{C}^{12}\\
\begin{bmatrix}
a_{1,1} & a_{1,2} & a_{1,3} & b_{1,1} & b_{1,2} & c_{1,1} & c_{1,2} \\
a_{2,1} & a_{2,2} & a_{2,3} & b_{2,1} & b_{2,2} & c_{2,1} & c_{2,2} \\
a_{3,1} & a_{3,2} & a_{3,3} & b_{3,1} & b_{3,2} & c_{3,1} & c_{3,2} \\
a_{4,1} & a_{4,2} & a_{4,3} & b_{4,1} & b_{4,2} & c_{4,1} & c_{4,2} 
\end{bmatrix}
&\overset{F}{\longmapsto}
[f_{i,j,k}]_{i \in \{1,2,3\},j,k \in \{1,2\}}.
\end{align*}

 This map has $17$-dimensional fibers. Rather than taking generic linear slices in $\C^{28}$, we find constant replacements for $17$ of the variables under the condition that the Jacobian of $F$ does not drop rank. This substitution gives a new map $\mathcal F\colon  \mathbb{C}^{11} \to \mathbb{C}^{12}$ whose image is $X$.
 
We order the $f_{i,j,k}$ variables lexicographically,
$$(f_{111},f_{112},f_{121},f_{122},
f_{211},f_{212},f_{221},f_{222},
f_{311},f_{312},f_{321},f_{322}).$$
The polynomial $f$ which cuts out $X$ is homogeneous of degree $6$ in $12$ variables, giving an {\it a priori} upper bound of $12,376$ possible monomials appearing in $\mathcal A= \supp(f)$. There is a group action of $G \cong S_3 \times S_2 \times S_2$ on the coordinates in $\mathbb{C}^{12}$ taking $f_{i,j,k} \to f_{\sigma(i),\tau(j),\nu(k)}$ which extends to an action on the vertices of the polytope. This action is transitive on $\{f_{i,j,k}\}_{i \in \{1,2,3\}, j,k \in \{1,2\}}$ and so to get a bound on the size of any coordinate $\alpha \in \mathcal A$, it is enough to bound one. An oracle query in the $(1,0,\ldots,0)$ direction returns the vector $(2,0,\ldots,0)$ along with four points which converge somewhere other than a target. As such,  $\New(f) \subset \bigcap_{i=1}^{12} \R^{12}_{e_i,2} = 2 \cdot [0,1]^{12}$. This reduces the possible number of lattice points in $\New(f)$ to $8,074$. Querying the oracle in the independent directions
\begin{align*}
(1,1,1,1,0,0,0,0,0,0,0,0)&\qquad
(0,0,0,0,1,1,1,1,0,0,0,0)\\
(1,0,1,0,1,0,1,0,1,0,1,0) \quad
(1,1,0,0,1,1&,0,0,1,1,0,0) \quad
(1,1,1,1,1,1,1,1,1,1,1,1)
\end{align*} returns \texttt{Exposes entire polytope}.
Thus, $\New(f)$ is a subset of a $7$-dimensional subspace of $\mathbb{R}^{12}$. The following four directions expose four vertices of $\New(f)$ which, after applying symmetries of $G$, become $60$ vertices $V$ of a $7$-dimensional polytope $P_*\subset \New(f)$ containing $60+6$ lattice points.
\begin{align*}
\mathcal{O}_{\New(f)}&(6,-3.5,-1,0.4,0.16,.6,0.2,1.33,.66,.9,4,-4)\\
&=(2,0,0,0,0,0,0,2,0,1,1,0)\\
\mathcal{O}_{\New(f)}&(.31, -.31, -.31, .31, -.31, .09, -.31, .31, .31, -.31, .09, -.31)\\
&=(1,0,0,1,0,1,0,1,1,0,1,0)\\
\mathcal{O}_{\New(f)}&(-.31, -.31, .31, .09, -.31, .31, .31, -.31, .09, .31, -.31, -.31)\\
&=(0,0,1,1,0,1,1,0,1,1,0,0)\\
\mathcal{O}_{\New(f)}&(.19, -.39, .13, .19, .04, .08, -.33, .04, .25, -.20, -.13, .71)\\
&=(1,0,1,0,1,1,0,0,0,0,0,2).
\end{align*}
We conclude that $\New(f)$ is $7$-dimensional. Two more oracle queries,
\begin{align*}
\mathcal{O}_{\New(f)}(-11,-3,-3,5,-11,-3,-3,5,1,9,9,-31)&=(0,0,0,0,0,0,0,0,0,0,0,0)\\
\mathcal{O}_{\New(f)}(-5,3,3,-5,-5,3,3,-5,-5,3,3,-5)&=(0,0,0,0,0,0,0,0,0,0,0,0)
\end{align*}
imply that a positive-dimensional face of $\New(f)$ is exposed in each of these directions. The facets of $P_*$ are also exposed by these directions but no other pair of points within $P^*=\bigcap_{i=1}^{12} \R^{12}_{e_i,2}$ are exposed. Thus, $\New(f)=P_*$.

Knowing the support of $f$, interpolation successfully recovers the polynomial computed in \cite{Ponce2017}:
\singlespacing
{{
$$
f=f_{111}^2f_{212}f_{221}f_{322}^2-f_{111}^2f_{212}f_{222}f_{321}f_{322}-f_{111}^2f_{221}f_{222}f_{312}f_{322}+\\
$$
$$
f_{111}^2f_{222}^2f_{312}f_{321}-f_{111}f_{112}f_{211}f_{221}f_{322}^2+f_{111}f_{112}f_{211}f_{222}f_{321}f_{322}-\\
$$
$$
f_{111}f_{112}f_{212}f_{221}f_{321}f_{322}+f_{111}f_{112}f_{212}f_{222}f_{321}^2+
f_{111}f_{112}f_{221}^2f_{312}f_{322}+\\
$$
$$
f_{111}f_{112}f_{221}f_{222}f_{311}f_{322}-f_{111}f_{112}f_{221}f_{222}f_{312}f_{321}-f_{111}f_{112}f_{222}^2f_{311}f_{321}-\\
$$
$$
f_{111}f_{121}f_{211}f_{212}f_{322}^2+f_{111}f_{121}f_{211}f_{222}f_{312}f_{322}+
f_{111}f_{121}f_{212}^2f_{321}f_{322}-\\
$$
$$
f_{111}f_{121}f_{212}f_{221}f_{312}f_{322}+f_{111}f_{121}f_{212}f_{222}f_{311}f_{322}-f_{111}f_{121}f_{212}f_{222}f_{312}f_{321}+\\
$$
$$
f_{111}f_{121}f_{221}f_{222}f_{312}^2-f_{111}f_{121}f_{222}^2f_{311}f_{312}+f_{111}f_{122}f_{211}f_{212}f_{321}f_{322}+\\
$$
$$
f_{111}f_{122}f_{211}f_{221}f_{312}f_{322}-2f_{111}f_{122}f_{211}f_{222}f_{312}f_{321}-f_{111}f_{122}f_{212}^2f_{321}^2-\\
$$
$$
2f_{111}f_{122}f_{212}f_{221}f_{311}f_{322}+2f_{111}f_{122}f_{212}f_{221}f_{312}f_{321}+
f_{111}f_{122}f_{212}f_{222}f_{311}f_{321}-\\
$$
$$
f_{111}f_{122}f_{221}^2f_{312}^2+f_{111}f_{122}f_{221}f_{222}f_{311}f_{312}+
f_{112}^2f_{211}f_{221}f_{321}f_{322}-\\
$$
$$
f_{112}^2f_{211}f_{222}f_{321}^2-f_{112}^2f_{221}^2f_{311}f_{322}+f_{112}^2f_{221}f_{222}f_{311}f_{321}\\
$$
$$
+f_{112}f_{121}f_{211}^2f_{322}^2-f_{112}f_{121}f_{211}f_{212}f_{321}f_{322}-f_{112}f_{121}f_{211}f_{221}f_{312}f_{322}-\\
$$
$$
2f_{112}f_{121}f_{211}f_{222}f_{311}f_{322}+2f_{112}f_{121}f_{211}f_{222}f_{312}f_{321}+
2f_{112}f_{121}f_{212}f_{221}f_{311}f_{322}-\\
$$
$$
f_{112}f_{121}f_{212}f_{222}f_{311}f_{321}-f_{112}f_{121}f_{221}f_{222}f_{311}f_{312}+
f_{112}f_{121}f_{222}^2f_{311}^2\\
$$
$$
-f_{112}f_{122}f_{211}^2f_{321}f_{322}+f_{112}f_{122}f_{211}f_{212}f_{321}^2+
f_{112}f_{122}f_{211}f_{221}f_{311}f_{322}-\\
$$
$$
f_{112}f_{122}f_{211}f_{221}f_{312}f_{321}+f_{112}f_{122}f_{211}f_{222}f_{311}f_{321}-f_{112}f_{122}f_{212}f_{221}f_{311}f_{321}+\\
$$
$$
f_{112}f_{122}f_{221}^2f_{311}f_{312}-f_{112}f_{122}f_{221}f_{222}f_{311}^2+f_{121}^2f_{211}f_{212}f_{312}f_{322}-\\
$$
$$
f_{121}^2f_{211}f_{222}f_{312}^2-f_{121}^2f_{212}^2f_{311}f_{322}+
f_{121}^2f_{212}f_{222}f_{311}f_{312}-\\
$$
$$
f_{121}f_{122}f_{211}^2f_{312}f_{322}+f_{121}f_{122}f_{211}f_{212}f_{311}f_{322}-f_{121}f_{122}f_{211}f_{212}f_{312}f_{321}+\\
$$
$$
f_{121}f_{122}f_{211}f_{221}f_{312}^2+f_{121}f_{122}f_{211}f_{222}f_{311}f_{312}
+f_{121}f_{122}f_{212}^2f_{311}f_{321}-\\
$$
$$
f_{121}f_{122}f_{212}f_{221}f_{311}f_{312}-f_{121}f_{122}f_{212}f_{222}f_{311}^2
+f_{122}^2f_{211}^2f_{312}f_{321}-\\
$$
$$
f_{122}^2f_{211}f_{212}f_{311}f_{321}-f_{122}^2f_{211}f_{221}f_{311}f_{312}+f_{122}^2f_{212}f_{221}f_{311}^2
$$}}
\doublespacing

\section{The L\"uroth invariant}
\label{sec:Luroth}
\subsection{The L\"uroth invariant, hypersurface, and polytope.}
Let $\C_q^{15}$ be the vector space spanned by all homogeneous quartic plane curves with coefficients $\{q_{ijk}\}_{i+j+k=4}$ so that a quartic $Q\in\C_q^{15}$ is written as
$$Q=\sum_{i+j+k=4} q_{ijk}x^iy^jz^k.$$
Such a quartic $\V(Q) \subset \P^2$ is called \mydef{L\"uroth} if it passes through the ten intersection points of five lines in $\P^2$. We display one such quartic in Figure \ref{fig:lurothquartic}.

\begin{figure}[!htpb]
\includegraphics[scale=0.4]{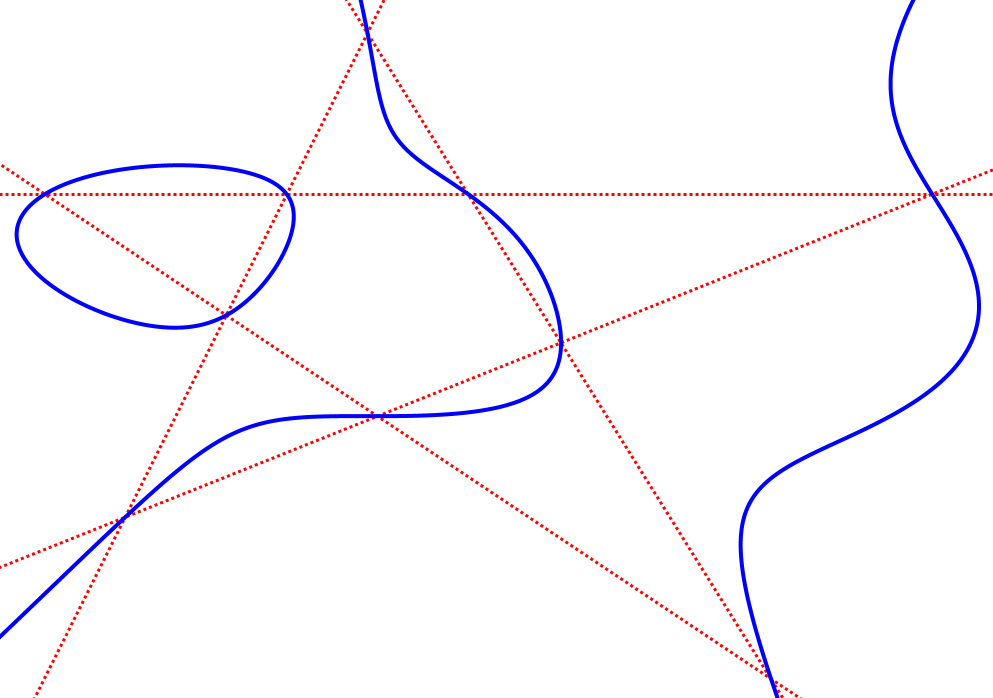}
\caption{A L\"uroth quartic.}
\label{fig:lurothquartic}
\end{figure}

The set of all L\"uroth quartics $\mydefMATH{\mathbb{L}}$ is a hypersurface of degree $54$ in $\P_q^{14}$ called the \mydef{L\"uroth hypersurface}. The group $\text{PGL}(3,\C)$ of all projective linear transformations of $\P^2$ acts on a plane quartic $\V(Q)$ by some element $A \in \text{PGL}(3,\C)$ in the natural way: $\V(Q) \mapsto A\cdot \V(Q)=\V(Q(A^*(x,y,z)))$. This action preserves intersection points and so if $\V(Q)$ is a L\"uroth quartic, so is $A \cdot \V(Q)$.  The defining equation $\mydefMATH{\Lambda}$ of the L\"uroth hypersurface is called the \mydef{L\"uroth invariant}.  

The L\"uroth hypersurface is parametrized by the coefficients of five homogeneous linear polynomials $\mydefMATH{\ell_i} = a_ix+b_iy+c_iz \in \C[x,y,z]$. This parametrization is 
\begin{align}
\label{eq:lurothparametrization}
\mydefMATH{\varphi}\colon  \P((\mathbb{C}^3)^5) &\dashrightarrow \mathbb{P}^{14}\\(\ell_1,\ldots,\ell_5)&\mapsto \sum_{j=1}^5 \prod_{i \neq j} \ell_i=\sum_{i+j+k=4} q_{ijk}x^iy^jz^k.
\end{align}
Finding $\Lambda$ using symbolic elimination algorithms is computationally infeasible. Moreover, it is expected that $\Lambda$ in its expanded form is not human-readable. Thus, we attempt to determine the \mydef{L\"uroth polytope}, $\mydefMATH{\mathfrak{P}}=\New(\Lambda)\subset \R^{15}$ using Algorithm \ref{alg:hsalgorithm}. Before discussing computations, we explain some reductions to the problem.

\begin{corollary}
\label{cor:lurothresult}
Every point $p$ in the L\"uroth polytope $\mathfrak{P}$ solves the linear equation
$$\begin{pmatrix}
4&3&3&2&2&2&1&1&1&1&0&0&0&0&0 \\
0&1&0&2&1&0&3&2&1&0&4&3&2&1&0 \\
0&0&1&0&1&2&0&1&2&3&0&1&2&3&4 \\
1&1&1&1&1&1&1&1&1&1&1&1&1&1&1
\end{pmatrix} p
= \begin{pmatrix} 72 \\ 72  \\ 72 \\54
\end{pmatrix}.$$
\end{corollary}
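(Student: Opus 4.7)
The approach is to apply Proposition \ref{prop:invariantresult}(2) directly to the L\"uroth invariant $\Lambda$. In the notation of that proposition, I would take the ambient polynomial $f$ to be a general homogeneous ternary quartic $Q = \sum_{i+j+k=4} q_{ijk} x^i y^j z^k$, so $n = 3$, $\mathcal{A} = \{(i,j,k) \in \Z_{\geq 0}^3 : i+j+k = 4\}$, and $|\alpha| = d = 4$ for every $\alpha \in \mathcal{A}$. The invariant $\mathcal{F}$ of the proposition becomes $\Lambda \in \C[q_{ijk}]$, which is homogeneous of degree $D = 54$ because $\mathbb{L}$ is a degree-$54$ hypersurface in $\P_q^{14}$.

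The essential input to verify is that $\Lambda$ is invariant under the action on $\C_q^{15}$ induced by (i) rescaling each of $x, y, z$ individually by $t \in \C^\times$ and (ii) permuting $x, y, z$. For (i), if $Q$ defines a L\"uroth quartic passing through the ten intersection points of five lines $\ell_1, \ldots, \ell_5$, then $Q(tx, y, z)$ defines the quartic obtained by applying the diagonal transformation $\mathrm{diag}(t^{-1},1,1) \in \mathrm{PGL}(3,\C)$ to $\V(Q)$, which passes through the intersection points of the transformed lines and so is again L\"uroth; the argument for $y$ and $z$ is identical. For (ii), permuting $x, y, z$ is itself a coordinate change in $\mathrm{PGL}(3,\C)$ and so preserves $\mathbb{L} = \V(\Lambda)$. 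Both invariances are also immediately visible from the parametrization \eqref{eq:lurothparametrization}, which is equivariant for the diagonal $\mathrm{PGL}(3,\C)$-action on $(\C^3)^5$ and on $\P^{14}$.

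Once the invariance hypotheses are established, the conclusion is a direct transcription. The $3 \times 15$ matrix $A$ of Proposition \ref{prop:invariantresult} has rows $\omega_x, \omega_y, \omega_z$ whose entry at a column indexed by $\alpha = (i,j,k)$ is $i$, $j$, and $k$ respectively. Ordering the fifteen triples with $i+j+k=4$ as
$(4,0,0), (3,1,0), (3,0,1), (2,2,0), (2,1,1), (2,0,2), (1,3,0), (1,2,1), (1,1,2), (1,0,3), (0,4,0), (0,3,1), (0,2,2), (0,1,3), (0,0,4)$
reproduces the three rows of the displayed matrix, and the homogeneity row $\mathbf{1}$ appears as the fourth row. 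The right-hand side is $(dD/n, dD/n, dD/n, D) = (72, 72, 72, 54)$ since $dD/n = 4 \cdot 54 / 3 = 72$. There is no real obstacle: every nontrivial step is packaged inside Proposition \ref{prop:invariantresult}, and the corollary is simply its specialization to the numerical values $(n, d, D) = (3, 4, 54)$. The only minor bookkeeping point is that here $\R\mathcal{A}$ is $2$-dimensional rather than $n$-dimensional, so the ``codimension $n$'' addendum of Proposition \ref{prop:invariantresult}(2) weakens accordingly, but the linear equation itself still holds and that is all the corollary asserts.
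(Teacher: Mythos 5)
Your proof is correct and takes essentially the same approach as the paper, which simply notes that $\Lambda$ is a degree-$54$ homogeneous polynomial invariant under permutations and scalings of $x,y,z$, computes $54\cdot 4/3=72$, and cites Proposition~\ref{prop:invariantresult}. You have usefully spelled out the $\mathrm{PGL}(3,\CC)$-equivariance underlying the invariance hypothesis, which the paper leaves implicit, and you are right that the ``codimension $n$'' addendum of Proposition~\ref{prop:invariantresult}(2) is not part of what the corollary asserts, so the linear equation is all that needs to be read off.
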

\begin{proof}
The L\"uroth invariant is a homogeneous polynomial of degree $54$ in the coefficient space $\C_q^{4\Delta_3} \cong \P^{14}$ which is invariant under permutations and scalings of variables $x,y,$ and $z$. Observing that $\frac{54 \cdot 4}{3}=72$ and applying Lemma \ref{prop:invariantresult} gives the result.
\end{proof}

We order the coordinates of the space $\R^{15}_p$ containing $\mathfrak{P}$ as follows 
\begin{equation}
\label{eq:variableorder}\{\underset{0}{p_{400}},\underset{1}{p_{310}},\underset{2}{p_{301}},\underset{3}{p_{220}},\underset{4}{p_{211}},\underset{5}{p_{202}},\underset{6}{p_{130}},\underset{7}{p_{121}},\underset{8}{p_{112}},\underset{9}{p_{103}},
\underset{10}{p_{040}},\underset{11}{p_{031}},\underset{12}{p_{022}},\underset{13}{p_{013}},\underset{14}{p_{004}}\}.
\end{equation}
We may identify coordinates by their numerical bijection with $0,1,\ldots,14$ as listed above. For example, $p_{301}$ may be written as $p_2$. 
Under this bijection, the permutation group $S_3$ acting on coordinates of the subscripts of $p_{ijk}$ induces the following involutions,
\begin{align*}
\mydefMATH{\sigma_{xy}}&=(0,10)(1,6)(2,11)(4,7)(5,12)(9,13)\\
\mydefMATH{\sigma_{yz}}&=(1,2)(3,5)(6,9)(7,8)(10,14)(11,13)\\
\mydefMATH{\sigma_{xz}}&=\sigma_{xy}\circ \sigma_{yz} \circ \sigma_{xy}= (0,14)(1,13)(2,9)(3,12)(4,8)(6,11),\\
\end{align*}
written in cycle notation. We write $\mydefMATH{G}$ for this subgroup $S_3 \hookrightarrow S_{15}$. Corollary \ref{cor:lurothresult} gives the \emph{a priori} bounds of
\begin{equation}
\label{eq:aprioribounds}
p_{400} \in [0,18], \quad
p_{310} \in[0,24], \quad
p_{220} \in [0,36],\quad
p_{211} \in [0,36]
 \end{equation}on the sizes of each coordinate $p_{ijk}$ of a point in $\mathfrak P$.
In Section~\ref{subsubsection:verticesofluroth}, we use the HS-algorithm to show that these bounds are not sharp.

\subsection{Computational setup}
To perform computations, we dehomogenize the domain $\P((\C^{3})^5)$ of the parametrization \eqref{eq:lurothparametrization} with respect to a random linear polynomial and work with the restricted map ${\varphi}\colon  \C^{14} \to \C_q^{15}$.

We parametrize the lines $\mathcal L_t$ in the HS-algorithm by $t \overset{\textbf{L}_t} \mapsto \{t^{\omega_i}(a_is-b_i)\}_{i=0}^{14}$ and chose $a,b \in (\C^\times)^{15}$ so that the target points $\rho_i = b_i/a_i$ are the $15$-th roots of unity $\{\zeta_{15}^i\}_{i=0}^{14}$ where $\mydefMATH{\zeta_{15}}=e^{2\pi \sqrt{-1}/15}$. We do this under the heuristic assumption that choosing targets far from one another decreases the chances of the implementation \textbf{NumericalNP.m2} misattributing which target is the true limit of a path. 

To compute the intersection points of $\mathcal L_t \cap \Lambda$ in the parameters $s$, we employ the fiber product construction in Figure \ref{eq:fibreproductluroth} 
\begin{figure}[t]
\begin{tikzpicture}
  \matrix (m) [matrix of math nodes,row sep=3em,column sep=4em,minimum width=2em]
  {
     \C^{14}_a \times \C_s& \C_s \\
     \C^{14}_a  & \C_q^{15} \\};
  \path[-stealth]
    (m-1-1) edge node [left] {$\pi_a$} (m-2-1)
            edge node [below] {$\pi_s$} (m-1-2)
    (m-2-1.east|-m-2-2) edge node [below] {$\varphi$}
            node [above] {} (m-2-2)
    (m-1-2) edge node [right] {$\textbf{L}_t$} (m-2-2)
            ;
\end{tikzpicture}
\caption{
\label{eq:fibreproductluroth}A fiber product construction to compute witness points.}
\end{figure}
 and solve the equations 
 \begin{equation}
  \mydefMATH{F_t}=\{q_{ijk}(a)-t^{\omega_\iota}(a_\iota s - b_\iota) \}_{\iota=0}^{14}
 \end{equation}
in $\C_a^{14} \times \C_s$,  where $ijk \leftrightarrow \iota$ is the identification of $\{(i,j,k) \mid i,j,k \geq 0, \quad i+j+k = 4\}$ with $\{0,\ldots,14\}$ given in \eqref{eq:variableorder}. During the homotopy process, we project a solution $(a,s) \in \C_a^{14} \times \C_s$ to $s\in \C_s$ to monitor convergence.

Whenever querying the numerical oracle in a direction $\omega \in S^{14} \subset \R^{15}$, by Theorem \ref{thm:convergencerates} it is best to attempt to maximize $d_\omega$. Given that we do not \emph{a priori} know the polytope $\mathfrak{P}$, this is generally difficult. Nonetheless, we always project $\omega$ onto the kernel of the matrix in Corollary \ref{cor:lurothresult} and rescale this projection to be a unit vector. Given that $|\omega|=1$, this process increases $d_\omega$ thus increasing the convergence rate of the HS-algorithm. 

Given the size of these computations, we expect numerical errors to occur. However, assessing whether something went wrong during the path tracking process can be done in several ways.

\begin{remark} 
Suppose that a numerical implementation of the HS-algorithm returns a vertex $\mathcal O_{\mathfrak P}(\omega)=v$ on the direction $\omega \in \R^{15}$. A numerical error has occurred if any of the following are true.
\begin{enumerate}
\item $v_\infty \neq 0$, (Since $\Lambda$ is homogeneous, no points in the HS-algorithm will diverge)
\item $\mathcal O_{\mathfrak{P}}(\sigma(\omega))\neq \sigma(v)$ for any $\sigma \in G$, ($\Lambda$ is invariant under $G$)
\item $v$ does not solve the matrix equation in Corollary \ref{cor:lurothresult}.
\item $v \not\in H_{\mathfrak P}(\nu)$ where $H_{\mathfrak P}(\nu)$ is the halfspace containing $\mathfrak P$ implied by an oracle call $\mathcal O_{\mathfrak P}(\nu)$ we have already performed on $\nu \in \R^{15}$, (Since $\mathfrak P \subset H_{\mathfrak P}(\nu)$ for any $\nu \in \R^{15}$).
\end{enumerate}
In the last case, an error has occurred on either $\mathcal O_{\mathfrak P}(\omega)$ or $\mathcal O_{\mathfrak P}(\nu)$. \hfill $\diamond$
\end{remark}

\subsection{Vertices of the L\"uroth polytope}
\label{subsubsection:verticesofluroth}
 Using \textbf{NumericalNP.m2}, we reproduce the result of \cite{NP} that  $$\mathfrak{P}_{(3,-5,3,2,3,-2,-1,4,-3,-2,3,1,-5,3,-5)=}(6,0,6,0,0,0,0,30,0,0,0,0,0,12,0),$$ indicating that $q_{400}^6q_{301}^6q_{121}^{30}q_{013}^{12}$ is a monomial in the support of $\Lambda$.
Acting on the exponents by $G$ reveals that  
\begin{align*}
q_{040}^6q_{301}^6q_{211}^{30}q_{103}^{12}, \quad
q_{400}^6q_{310}^6&q_{112}^{30}q_{031}^{12}, \quad
q_{040}^6q_{130}^6q_{112}^{30}q_{301}^{12},\\
q_{004}^6q_{013}^6q_{211}^{30}q_{130}^{12},&\quad
q_{004}^6q_{130}^6q_{121}^{30}q_{310}^{12},
\end{align*}
are also monomials of $\Lambda$. 

Figures \ref{fig:sage1}-\ref{fig:sage3} display snapshots from the Sage \cite{sage} animation (produced by Function \ref{queryOracle} for $t=1,4,8,20,30,$ and $75$ respectively) of the paths $\{s_i(t)\}_{i=1}^{54}$ in the HS-algorithm. The second image shows a clustering of $12$ points toward $\zeta_{15}^{13}$. The third image shows a clustering of six points toward $\zeta_{15}^{0}$ and $1$ point converging to $\zeta_{15}^{2}$. Next, a large cluster of $30$ points move toward $\zeta_{15}^{7}$ and five points move toward $\zeta_{15}^{2}$. They converge in the last image.
 \begin{figure}[!htpb]
 \includegraphics[scale=0.42]{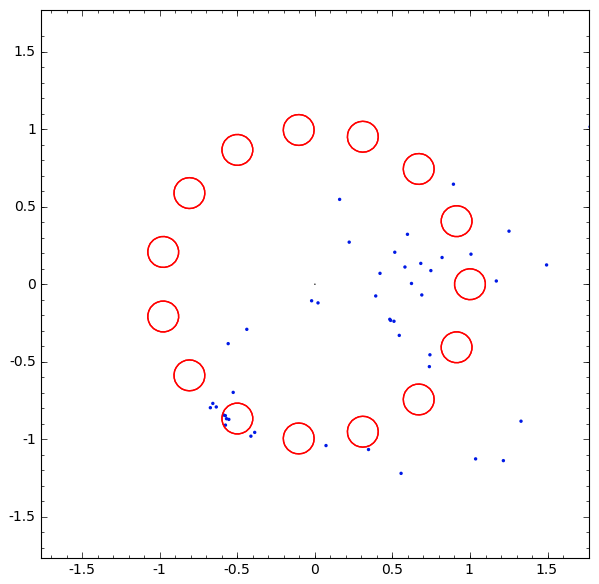} \qquad \qquad
  \includegraphics[scale=0.42]{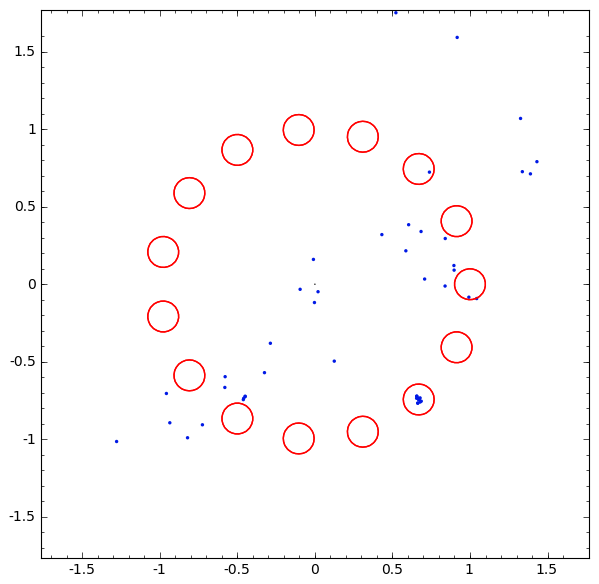}
\caption{(Reprinted from \cite{Bry:NPtrop}) Two snapshots of a Sage animation of the paths $\{s_i(t)\}_{i=1}^{54}$ (for $t=1$ and $t=4$ resp.) of Algorithm \ref{alg:hsalgorithm}.}
\label{fig:sage1}
\end{figure}
\begin{figure}[!htpb]
 \includegraphics[scale=0.42]{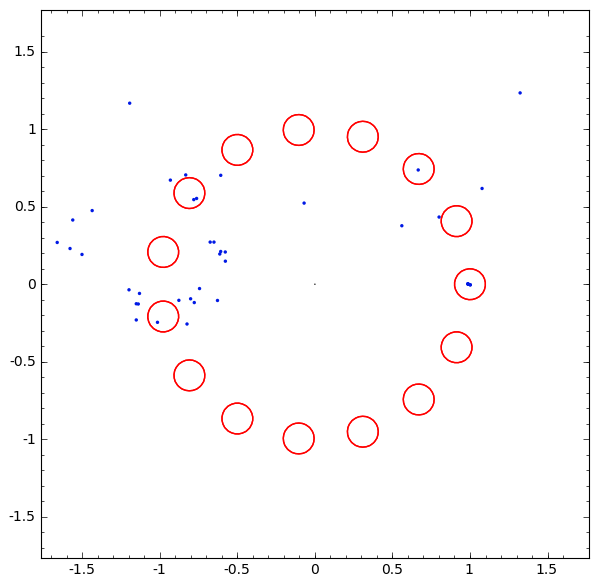}\qquad \qquad
 \includegraphics[scale=0.42]{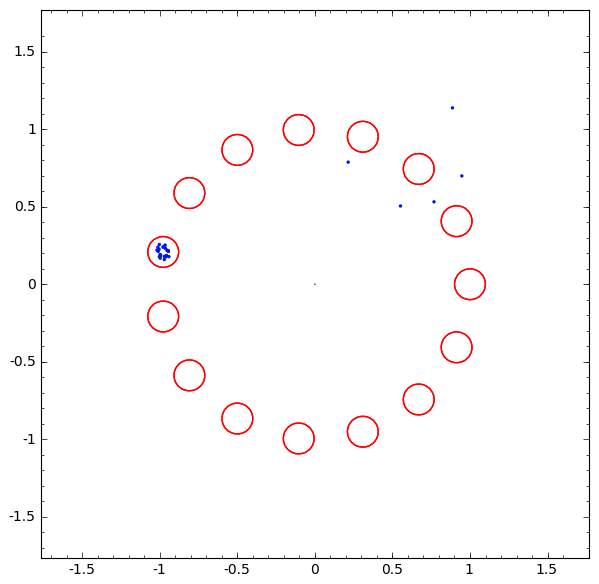}\caption{(Reprinted from \cite{Bry:NPtrop}) Two snapshots of a Sage animation of the paths $\{s_i(t)\}_{i=1}^{54}$ (for $t=8$ and $t=20$ resp.) of Algorithm \ref{alg:hsalgorithm}.}
\label{fig:sage2}
\end{figure}
\begin{figure}[!htpb]
 \includegraphics[scale=0.42]{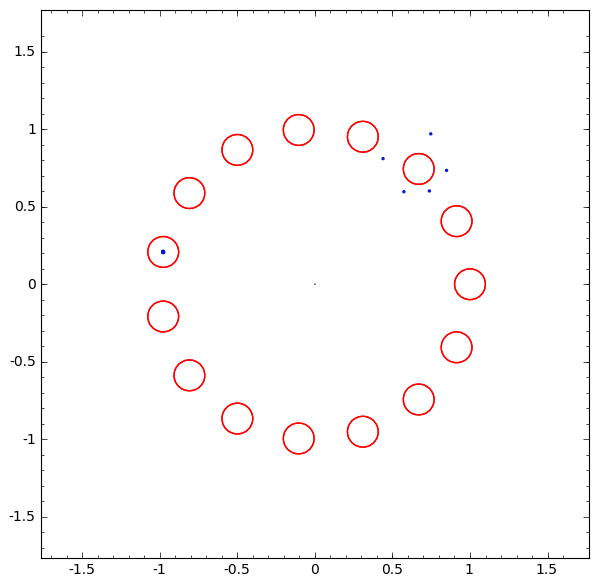}\qquad \qquad
 \includegraphics[scale=0.42]{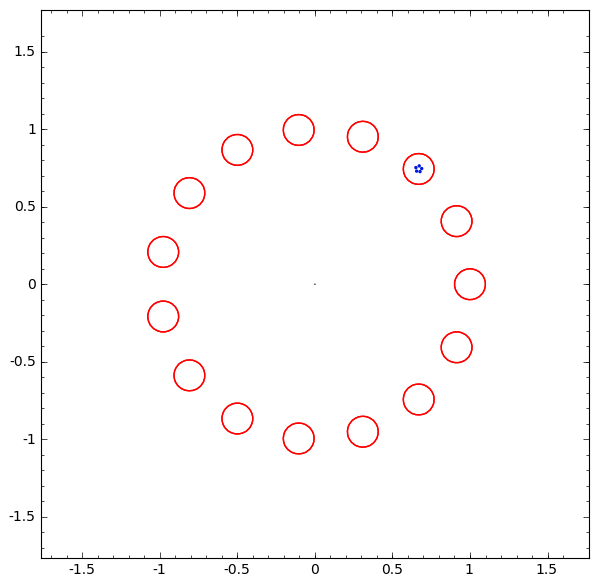}
\caption{(Reprinted from \cite{Bry:NPtrop}) Two snapshots of a Sage animation of the paths $\{s_i(t)\}_{i=1}^{54}$ (for $t=30$ and $t=75$ resp.) of Algorithm \ref{alg:hsalgorithm}.}
\label{fig:sage3}
\end{figure}
As $t \to \infty$, there are two clusters of points which converge to $\zeta_{15}^2$: one of size one and another of size five. This suggests that these paths have winding numbers one and five respectively (see the Cauchy endgame in Section~\ref{subsection:endgames}). We do not have any conjectures about what this means for the polytope $\mathfrak{P}$.
 
Querying the oracle in the coordinate directions with respect to $p_{400}, p_{310}, p_{220}$, and $p_{211}$ returns $18e_{400}, 24e_{310}, 28e_{220}$, and $32e_{211}$ respectively where $e_{ijk}$ is the standard basis vector in the coordinate $p_{ijk}$. This gives new bounds of 
\begin{equation}
\label{eq:newbounds}
p_{400} \in [0,18], \quad 
p_{310} \in [0,24], \quad
p_{220} \in [0,28], \quad
p_{211} \in [0,32]
\end{equation}  improving the bounds \eqref{eq:aprioribounds} for $p_{220}$ and $p_{211}$. 

The initial upper bound on the number of terms in $f$ based on homogeneity and degree is ${{54+15-1}\choose{54}}=123,234,279,768,160$. Taking into account the linear space containing $\mathfrak{P}$ we see that the $p_{400},p_{040}$, and $p_{004}$ coordinates of a point in $\mathfrak{P}$ are determined by the rest. Thus, the number of points in $\Z^{15}$ subject to the bounds of \eqref{eq:newbounds} and the matrix equation of Corollary \ref{cor:lurothresult} is $$[x^{54}]\left( \frac{1-x^{24}}{1-x} \right)^6\left( \frac{1-x^{28}}{1-x} \right)^3\left( \frac{1-x^{32}}{1-x} \right)^3=879,008,719,165$$ which improves the initial bound by a factor of $\approx 140$. Nonetheless, this number remains so large that interpolation is infeasible.

In total, we have found $1713$ vertices, belonging to $1,1,28,$ and $271$ orbits of sizes $1,2,3,$ and $6$ using our implementation \textbf{NumericalNP.m2}. 
The orbits of size one and two which we found are $$\{q_{400}^{18}q_{040}^{18}q_{004}^{18}\}\quad \text{ and } \quad \{q_{301}^{18}q_{130}^{18}q_{013}^{18},q_{103}^{18}q_{031}^{18}q_{310}^{18}\},$$ respectively.  We list the other orbits in Table \ref{table:luroth1} along with an orbit representative, the number of times a representative of each orbit was found in our search, and the number of elements of each orbit. The two orbits colored in blue were found by Hauenstein and Sottile \cite{NP} in their computation of the Newton polytope of the hypersurface  of even L\"uroth quartics which has five vertices, two having a $G$-orbit of size one and three belonging to a $G$-orbit of size three. We did not find the vertex $(4,0,0,14,0,14,0,0,0,0,4,0,14,0,4)$.
Up-to-date computations regarding the L\"uroth polytope can be found at the author's webpage \cite{taylor}. 
 \begin{table}[!htpb]
 \begin{center}
{
 \begin{tabular}{|c|c|c|}
\hline
Vertex $v$ & $\#$  & $|G\cdot v|$ \\ \hline
$(0,0,24,0,0,0,0,0,0,0,18,0,0,0,12)$&$360$&$ 6$\\ \hline
${\color{blue}{(4,0,0,0,0,28,0,0,0,0,18,0,0,0,4)}}$&$126$&$ 3$\\ \hline
$(0,0,0,0,0,25,22,0,0,0,0,0,3,0,4)$&$117$&$ 6$\\ \hline
$(0,0,18,0,0,0,0,0,0,18,18,0,0,0,0)$&$113$&$ 3$\\ \hline
$(0,0,0,0,32,0,0,0,0,8,10,0,0,0,4)$&$106$&$ 6$\\ \hline
$(0,0,24,0,0,0,0,0,0,0,7,0,22,0,1)$&$88$&$ 6$\\ \hline
$(0,0,0,0,0,25,22,0,0,0,0,0,0,6,1)$&$88$&$ 6$\\ \hline
$(0,0,18,0,0,0,18,0,0,0,0,0,0,18,0)$&$78$&$ 2$\\ \hline
$(0,0,0,0,32,0,0,0,0,8,6,0,8,0,0)$&$72$&$ 6$\\ \hline
$(0,0,8,0,0,24,0,0,0,0,18,0,0,0,4)$&$64$&$ 6$\\ \hline
$(0,0,0,0,30,0,0,0,0,12,6,6,0,0,0)$&$64$&$ 6$\\ \hline
$(0,0,0,0,0,24,22,0,0,2,0,0,0,6,0)$&$64$&$ 6$\\ \hline
${\color{blue}{(18,0,0,0,0,0,0,0,0,0,18,0,0,0,18)}}$&$63$&$ 1$\\ \hline
$(4,0,0,0,28,0,0,0,0,0,11,0,0,0,11)$&$60$&$ 3$\\ \hline
$(0,0,0,0,30,0,0,0,0,12,10,0,0,2,0)$&$54$&$ 6$\\ \hline
$(0,0,0,0,0,25,21,1,0,0,0,0,0,7,0)$&$50$&$ 6$\\ \hline
$(0,0,12,0,0,0,22,0,0,14,0,0,0,6,0)$&$49$&$ 6$\\ \hline
$(0,0,12,0,0,0,16,0,0,20,6,0,0,0,0)$&$48$&$ 6$\\ \hline
$(0,0,0,0,30,0,6,0,0,6,0,0,12,0,0)$&$46$&$ 3$\\ \hline
$(0,0,12,0,0,0,24,0,0,12,0,0,0,0,6)$&$45$&$ 6$\\ \hline
$(0,0,0,0,0,24,16,0,0,8,6,0,0,0,0)$&$45$&$ 6$\\ \hline
$(0,0,0,0,0,28,0,16,0,0,10,0,0,0,0)$&$44$&$ 3$\\ \hline
$(0,0,0,0,0,27,0,18,0,0,9,0,0,0,0)$&$43$&$ 3$\\ \hline
$(0,0,8,24,0,0,0,0,0,0,6,0,0,0,16)$&$41$&$ 6$\\ \hline
$(0,0,0,0,0,28,16,0,0,0,6,0,0,0,4)$&$41$&$ 6$\\ \hline
$(0,0,8,0,24,0,0,0,0,0,12,0,0,0,10)$&$40$&$ 6$\\ \hline
$(0,0,0,0,24,6,12,0,0,0,0,0,0,12,0)$&$40$&$ 6$\\ \hline
$(0,0,0,0,29,0,0,0,0,14,10,1,0,0,0)$&$39$&$ 6$\\ \hline
$(0,0,12,0,18,0,0,0,0,0,10,0,0,14,0)$&$37$&$ 6$\\ \hline
$(0,0,12,0,0,0,20,0,0,16,0,0,6,0,0)$&$36$&$ 6$\\ \hline
$(0,0,1,24,0,0,0,0,0,21,0,8,0,0,0)$&$31$&$ 6$\\ \hline
$(0,0,13,0,0,0,24,0,0,9,0,0,0,0,8)$&$29$&$ 6$\\ \hline
$(0,0,0,0,28,1,0,0,0,14,11,0,0,0,0)$&$29$&$ 6$\\ \hline
$(0,0,12,0,0,0,18,0,0,18,0,6,0,0,0)$&$28$&$ 6$\\ \hline
$(0,0,0,0,32,0,0,0,0,8,6,4,0,4,0)$&$27$&$ 6$\\ \hline
$(0,0,24,0,0,0,0,0,0,0,17,0,0,4,9)$&$26$&$ 6$\\ \hline
$(0,0,2,0,0,21,24,0,0,0,0,0,0,0,7)$&$25$&$ 6$\\ \hline
$(0,0,1,0,27,0,15,0,0,0,0,0,0,0,11)$&$25$&$ 6$\\ \hline
$(0,0,14,0,0,0,21,0,9,0,0,0,0,0,10)$&$24$&$ 6$\\ \hline
$(0,0,11,0,9,0,21,0,0,0,0,0,0,0,13)$&$21$&$ 6$\\ \hline
\end{tabular}
}
\end{center}
\caption{Vertices of the L\"uroth polytope found.}
\label{table:luroth1}
\end{table}

\newpage
 \begin{table}[!htpb]
 \begin{center}
{
 \begin{tabular}{|c|c|c|}
\hline
Vertex $v$ & $\#$  & $|G\cdot v|$ \\ \hline
$(0,0,21,0,0,0,0,0,0,9,16,0,0,8,0)$&$18$&$ 6$\\ \hline
$(0,0,18,6,0,0,0,0,0,6,0,20,0,0,4)$&$18$&$ 6$\\ \hline
$(0,0,0,0,30,0,6,0,0,6,0,8,0,0,4)$&$18$&$ 6$\\ \hline
$(0,0,20,0,0,0,12,0,0,0,9,0,0,0,13)$&$17$&$ 6$\\ \hline
$(0,0,12,0,0,0,18,0,18,0,0,0,0,0,6)$&$17$&$ 6$\\ \hline
$(0,0,4,28,0,0,0,0,0,4,4,0,0,0,14)$&$17$&$ 6$\\ \hline
$(3,0,0,0,0,16,0,28,0,0,4,0,0,0,3)$&$16$&$ 3$\\ \hline
$(0,0,17,0,0,0,21,0,0,0,0,0,0,9,7)$&$16$&$ 6$\\ \hline
$(0,0,8,0,24,0,0,0,0,0,6,4,0,12,0)$&$16$&$ 6$\\ \hline
$(0,0,24,0,0,0,0,0,0,0,15,4,0,0,11)$&$15$&$ 6$\\ \hline
$(0,0,8,18,0,0,12,0,0,0,0,0,0,0,16)$&$14$&$ 6$\\ \hline
$(0,0,4,16,0,0,0,0,28,0,3,0,0,0,3)$&$14$&$ 6$\\ \hline
$(0,0,0,4,0,28,0,8,0,0,12,0,0,0,2)$&$14$&$ 6$\\ \hline
$(0,0,4,28,0,0,0,0,4,0,3,0,0,0,15)$&$13$&$ 6$\\ \hline
$(0,0,0,0,20,12,8,0,0,0,0,0,14,0,0)$&$12$&$ 6$\\ \hline
$(0,0,19,0,0,0,6,0,0,9,0,18,0,0,2)$&$11$&$ 6$\\ \hline
$(0,0,0,0,0,24,19,0,0,5,0,3,3,0,0)$&$11$&$ 6$\\ \hline
$(0,0,24,0,0,0,0,0,0,0,6,2,21,0,1)$&$10$&$ 6$\\ \hline
$(0,0,0,3,0,22,22,0,0,0,0,0,0,0,7)$&$10$&$ 6$\\ \hline
$(0,0,0,0,32,0,2,0,0,6,4,4,0,6,0)$&$10$&$ 6$\\ \hline
$(0,12,12,0,0,0,0,0,0,0,1,0,28,0,1)$&$9$&$ 3$\\ \hline
$(0,0,8,0,0,20,0,0,0,8,18,0,0,0,0)$&$9$&$ 3$\\ \hline
$(0,0,0,0,28,0,4,0,0,12,2,8,0,0,0)$&$9$&$ 6$\\ \hline
$(0,0,0,0,0,28,16,0,0,0,0,6,3,0,1)$&$9$&$ 6$\\ \hline
$(0,0,0,0,0,24,22,0,0,2,0,0,3,0,3)$&$9$&$ 6$\\ \hline
$(0,0,23,0,0,0,3,0,0,0,9,9,0,0,10)$&$8$&$ 6$\\ \hline
$(0,0,8,12,12,0,0,0,0,0,0,12,0,0,10)$&$8$&$ 6$\\ \hline
$(0,0,0,0,28,0,0,0,8,8,6,4,0,0,0)$&$8$&$ 6$\\ \hline
$(0,0,0,0,27,0,9,0,0,9,0,0,9,0,0)$&$8$&$ 3$\\ \hline
$(0,0,0,0,16,8,4,0,20,0,6,0,0,0,0)$&$8$&$ 6$\\ \hline
$(0,0,24,0,0,0,0,0,0,0,6,13,0,9,2)$&$7$&$ 6$\\ \hline
$(0,0,17,0,0,0,0,21,0,0,6,0,0,6,4)$&$7$&$ 6$\\ \hline
$(0,0,6,0,24,0,6,0,0,0,4,0,0,14,0)$&$7$&$ 6$\\ \hline
$(0,0,2,20,0,0,3,0,23,0,0,0,0,0,6)$&$7$&$ 6$\\ \hline
$(0,0,1,23,0,0,0,0,0,23,6,0,1,0,0)$&$7$&$ 6$\\ \hline
$(0,0,12,0,0,0,2,24,0,10,4,0,0,2,0)$&$6$&$ 6$\\ \hline
$(0,0,8,0,0,8,8,24,0,0,0,0,0,0,6)$&$6$&$ 6$\\ \hline
$(0,0,0,12,0,20,0,0,8,0,10,0,0,0,4)$&$6$&$ 6$\\ \hline
\end{tabular}
}\\ \nothing \\  \nothing \\ 
\textbf{Table \ref{table:luroth1}} Continued.
\end{center}
\end{table}

\newpage

 \begin{table}[!htpb]
 \begin{center}
{
 \begin{tabular}{|c|c|c|}
\hline
Vertex $v$ & $\#$  & $|G\cdot v|$ \\ \hline
$(0,0,0,2,0,24,20,0,0,0,0,0,0,8,0)$&$6$&$ 6$\\ \hline
$(0,0,0,0,26,2,12,0,0,4,0,0,0,10,0)$&$6$&$ 6$\\ \hline
$(0,0,12,0,0,0,6,18,0,12,0,6,0,0,0)$&$5$&$ 3$\\ \hline
$(0,0,8,0,0,21,6,0,0,0,0,18,0,0,1)$&$5$&$ 6$\\ \hline
$(0,0,6,0,26,0,2,0,0,0,4,4,0,12,0)$&$5$&$ 6$\\ \hline
$(0,0,2,0,24,0,6,0,0,12,0,10,0,0,0)$&$5$&$ 6$\\ \hline
$(0,0,1,0,0,24,21,0,0,0,0,0,1,7,0)$&$5$&$ 6$\\ \hline
$(0,0,0,0,26,6,8,0,0,0,0,4,0,10,0)$&$5$&$ 6$\\ \hline
$(0,0,0,0,26,6,8,0,0,0,0,0,8,6,0)$&$5$&$ 6$\\ \hline
$(0,0,0,0,24,8,0,0,8,0,6,0,8,0,0)$&$5$&$ 6$\\ \hline
$(2,0,0,0,0,19,0,26,0,0,5,0,0,0,2)$&$4$&$ 3$\\ \hline
$(0,0,20,6,0,0,0,0,0,0,6,12,0,0,10)$&$4$&$ 6$\\ \hline
$(0,0,8,6,18,0,0,0,0,0,0,10,0,12,0)$&$4$&$ 6$\\ \hline
$(0,0,6,0,24,0,6,0,0,0,0,6,0,12,0)$&$4$&$ 6$\\ \hline
$(0,0,4,0,0,18,24,0,0,0,0,0,0,0,8)$&$4$&$ 6$\\ \hline
$(0,0,3,28,0,0,0,0,7,0,0,3,0,0,13)$&$4$&$ 3$\\ \hline
$(0,0,0,12,0,20,0,0,0,8,12,0,0,0,2)$&$4$&$ 6$\\ \hline
$(0,0,0,10,0,22,0,8,0,0,0,12,0,0,2)$&$4$&$ 6$\\ \hline
$(0,0,0,4,0,28,8,0,0,0,10,0,0,0,4)$&$4$&$ 6$\\ \hline
$(0,0,0,0,30,0,6,0,0,6,0,6,0,6,0)$&$4$&$ 3$\\ \hline
$(0,0,0,0,28,0,6,0,0,10,0,8,0,2,0)$&$4$&$ 6$\\ \hline
$(0,0,0,0,24,0,6,0,12,6,4,0,0,2,0)$&$4$&$ 6$\\ \hline
$(0,0,0,0,20,12,0,0,8,0,10,0,0,4,0)$&$4$&$ 6$\\ \hline
$(0,0,0,0,0,28,14,2,0,0,0,6,4,0,0)$&$4$&$ 6$\\ \hline
$(0,0,0,0,0,24,16,0,6,2,0,6,0,0,0)$&$4$&$ 6$\\ \hline
$(0,0,0,0,0,24,14,2,8,0,0,6,0,0,0)$&$4$&$ 6$\\ \hline
$(3,0,18,0,0,0,0,0,0,6,3,20,0,0,4)$&$3$&$ 6$\\ \hline
$(3,0,0,0,29,0,0,0,0,2,7,5,0,0,8)$&$3$&$ 6$\\ \hline
$(0,2,10,0,0,0,0,26,0,10,4,0,0,2,0)$&$3$&$ 3$\\ \hline
$(0,0,12,0,0,0,12,0,24,0,0,4,0,0,2)$&$3$&$ 6$\\ \hline
$(0,0,8,6,0,18,0,0,0,0,0,20,0,0,2)$&$3$&$ 6$\\ \hline
$(0,0,6,0,20,0,12,0,2,0,0,0,0,14,0)$&$3$&$ 6$\\ \hline
$(0,0,6,0,0,12,6,24,0,0,0,0,0,6,0)$&$3$&$ 6$\\ \hline
$(0,0,2,0,30,0,6,0,0,0,0,6,0,6,4)$&$3$&$ 6$\\ \hline
$(0,0,2,0,0,27,12,0,0,0,0,12,0,0,1)$&$3$&$ 6$\\ \hline
$(0,0,2,0,0,22,0,22,0,0,7,0,0,0,1)$&$3$&$ 6$\\ \hline
$(0,0,2,0,0,20,18,0,0,8,0,6,0,0,0)$&$3$&$ 6$\\ \hline
$(0,0,1,22,0,1,0,0,0,23,7,0,0,0,0)$&$3$&$ 6$\\ \hline
\end{tabular}
}\\ \nothing \\ \nothing \\ 
\textbf{Table \ref{table:luroth1}} Continued.
\end{center}
\end{table}

\newpage

 \begin{table}[!htpb]
 \begin{center}
{
 \begin{tabular}{|c|c|c|}
\hline
Vertex $v$ & $\#$  & $|G\cdot v|$ \\ \hline
$(0,0,1,21,2,0,0,0,0,23,7,0,0,0,0)$&$3$&$ 6$\\ \hline
$(0,0,0,4,0,21,0,22,0,0,5,0,0,0,2)$&$3$&$ 6$\\ \hline
$(0,0,0,3,0,28,0,10,0,0,11,0,0,2,0)$&$3$&$ 6$\\ \hline
$(0,0,0,0,32,0,2,0,0,6,4,0,8,2,0)$&$3$&$ 6$\\ \hline
$(0,0,0,0,29,0,0,1,0,13,10,0,0,1,0)$&$3$&$ 6$\\ \hline
$(0,0,0,0,28,0,6,0,0,10,0,6,4,0,0)$&$3$&$ 6$\\ \hline
$(0,0,0,0,24,0,2,10,0,12,4,2,0,0,0)$&$3$&$ 6$\\ \hline
$(0,0,0,0,18,6,4,0,20,0,4,2,0,0,0)$&$3$&$ 6$\\ \hline
$(0,0,0,0,8,16,8,0,16,0,6,0,0,0,0)$&$3$&$ 6$\\ \hline
$(0,0,0,0,6,22,14,2,0,0,0,0,10,0,0)$&$3$&$ 6$\\ \hline
$(0,0,0,0,4,22,0,20,0,0,7,0,0,0,1)$&$3$&$ 6$\\ \hline
$(0,0,0,0,3,21,19,0,0,5,0,0,6,0,0)$&$3$&$ 6$\\ \hline
$(0,0,0,0,0,28,10,6,0,0,0,10,0,0,0)$&$3$&$ 3$\\ \hline
$(0,0,0,0,0,28,8,8,0,0,6,0,4,0,0)$&$3$&$ 6$\\ \hline
$(0,0,0,0,0,25,19,0,3,0,3,0,0,0,4)$&$3$&$ 6$\\ \hline
$(0,0,0,0,0,24,18,6,0,0,0,0,0,6,0)$&$3$&$ 6$\\ \hline
$(0,3,18,0,0,0,3,0,0,6,0,20,0,0,4)$&$2$&$ 6$\\ \hline
$(0,0,23,0,0,0,3,0,0,0,3,9,12,0,4)$&$2$&$ 6$\\ \hline
$(0,0,14,0,0,0,0,21,0,9,6,2,0,0,2)$&$2$&$ 6$\\ \hline
$(0,0,12,7,0,0,2,20,0,0,3,0,0,0,10)$&$2$&$ 6$\\ \hline
$(0,0,12,7,0,0,0,22,0,0,3,0,0,2,8)$&$2$&$ 6$\\ \hline
$(0,0,12,6,0,12,0,0,0,0,0,20,0,0,4)$&$2$&$ 6$\\ \hline
$(0,0,12,3,0,0,0,26,0,4,3,0,0,2,4)$&$2$&$ 6$\\ \hline
$(0,0,12,0,0,0,16,0,18,2,0,0,0,6,0)$&$2$&$ 6$\\ \hline
$(0,0,12,0,0,0,6,24,0,6,0,2,0,0,4)$&$2$&$ 6$\\ \hline
$(0,0,9,4,2,0,0,22,0,11,2,2,2,0,0)$&$2$&$ 6$\\ \hline
$(0,0,8,0,20,4,0,0,0,0,10,0,0,12,0)$&$2$&$ 6$\\ \hline
$(0,0,8,0,20,0,0,8,0,0,6,0,0,12,0)$&$2$&$ 6$\\ \hline
$(0,0,6,24,0,0,0,0,0,6,0,8,0,0,10)$&$2$&$ 6$\\ \hline
$(0,0,5,27,0,0,0,0,0,3,3,2,0,0,14)$&$2$&$ 6$\\ \hline
$(0,0,4,27,0,0,0,0,6,0,0,4,0,0,13)$&$2$&$ 3$\\ \hline
$(0,0,4,24,4,0,0,4,0,0,3,0,0,0,15)$&$2$&$ 6$\\ \hline
$(0,0,4,24,0,4,4,0,0,0,3,0,0,0,15)$&$2$&$ 6$\\ \hline
$(0,0,4,24,0,0,8,0,0,4,0,0,0,0,14)$&$2$&$ 6$\\ \hline
$(0,0,1,0,27,0,0,0,1,14,11,0,0,0,0)$&$2$&$ 6$\\ \hline
$(0,0,0,12,0,18,0,0,0,12,12,0,0,0,0)$&$2$&$ 6$\\ \hline
$(0,0,0,10,0,22,4,4,0,0,0,4,10,0,0)$&$2$&$ 3$\\ \hline
\end{tabular}
}\\ \nothing \\  \nothing \\ 
\textbf{Table \ref{table:luroth1}} Continued.
\end{center}
\end{table}

\newpage

 \begin{table}[!htpb]
 \begin{center}
{
 \begin{tabular}{|c|c|c|}
\hline
Vertex $v$ & $\#$  & $|G\cdot v|$ \\ \hline
$(0,0,0,10,0,20,12,0,0,0,0,0,8,0,4)$&$2$&$ 6$\\ \hline
$(0,0,0,6,18,8,0,0,8,0,0,6,8,0,0)$&$2$&$ 6$\\ \hline
$(0,0,0,6,0,26,8,0,0,0,0,8,6,0,0)$&$2$&$ 3$\\ \hline
$(0,0,0,4,0,23,0,18,0,0,5,0,4,0,0)$&$2$&$ 3$\\ \hline
$(0,0,0,2,8,20,12,0,0,0,0,0,12,0,0)$&$2$&$ 6$\\ \hline
$(0,0,0,0,32,0,4,0,0,4,2,0,10,0,2)$&$2$&$ 3$\\ \hline
$(0,0,0,0,28,4,0,0,0,8,10,0,0,4,0)$&$2$&$ 6$\\ \hline
$(0,0,0,0,28,1,14,0,0,0,0,0,1,0,10)$&$2$&$ 6$\\ \hline
$(0,0,0,0,26,6,6,0,2,0,0,6,0,8,0)$&$2$&$ 6$\\ \hline
$(0,0,0,0,26,2,6,0,10,0,0,6,0,0,4)$&$2$&$ 6$\\ \hline
$(0,0,0,0,26,0,2,0,12,6,4,4,0,0,0)$&$2$&$ 6$\\ \hline
$(0,0,0,0,26,0,0,10,0,10,6,0,0,2,0)$&$2$&$ 6$\\ \hline
$(0,0,0,0,24,6,0,0,0,12,12,0,0,0,0)$&$2$&$ 6$\\ \hline
$(0,0,0,0,24,0,4,0,14,6,4,2,0,0,0)$&$2$&$ 6$\\ \hline
$(0,0,0,0,24,0,2,12,0,10,4,0,0,2,0)$&$2$&$ 6$\\ \hline
$(0,0,0,0,22,10,8,0,0,0,4,0,0,10,0)$&$2$&$ 6$\\ \hline
$(0,0,0,0,20,4,8,0,16,0,0,4,0,0,2)$&$2$&$ 6$\\ \hline
$(0,0,0,0,16,16,0,8,0,0,6,0,8,0,0)$&$2$&$ 6$\\ \hline
$(0,0,0,0,16,11,0,2,16,0,9,0,0,0,0)$&$2$&$ 3$\\ \hline
$(0,0,0,0,12,16,0,16,0,0,6,0,0,4,0)$&$2$&$ 6$\\ \hline
$(0,0,0,0,2,22,20,0,4,0,0,0,0,6,0)$&$2$&$ 6$\\ \hline
$(0,0,0,0,0,28,15,1,0,0,0,6,3,1,0)$&$2$&$ 6$\\ \hline
$(0,0,0,0,0,24,8,8,8,0,6,0,0,0,0)$&$2$&$ 6$\\ \hline
$(3,0,4,0,0,10,0,28,0,0,4,0,0,0,5)$&$1$&$ 6$\\ \hline
$(3,0,2,0,27,0,0,0,0,0,10,0,0,5,7)$&$1$&$ 6$\\ \hline
$(3,0,2,0,27,0,0,0,0,0,3,9,0,6,4)$&$1$&$ 6$\\ \hline
$(3,0,0,0,25,0,0,0,8,2,3,9,0,0,4)$&$1$&$ 6$\\ \hline
$(2,0,4,26,0,0,0,0,0,0,4,0,2,0,16)$&$1$&$ 6$\\ \hline
$(2,0,0,0,30,0,0,0,0,4,4,7,0,5,2)$&$1$&$ 6$\\ \hline
$(1,0,0,4,4,13,0,26,0,0,2,0,0,0,4)$&$1$&$ 6$\\ \hline
$(0,9,11,0,0,0,12,0,0,0,0,9,0,0,13)$&$1$&$ 6$\\ \hline
$(0,4,4,0,24,0,0,0,0,0,3,0,16,0,3)$&$1$&$ 3$\\ \hline
$(0,3,20,0,0,0,3,0,0,0,6,12,0,0,10)$&$1$&$ 6$\\ \hline
$(0,2,9,2,0,0,0,28,0,7,2,0,0,2,2)$&$1$&$ 6$\\ \hline
$(0,0,20,0,6,0,0,0,0,0,12,6,0,0,10)$&$1$&$ 6$\\ \hline
$(0,0,20,0,6,0,0,0,0,0,6,0,21,0,1)$&$1$&$ 6$\\ \hline
$(0,0,20,0,3,0,6,0,0,0,0,15,0,6,4)$&$1$&$ 6$\\ \hline
$(0,0,20,0,0,0,0,12,0,0,11,0,2,0,9)$&$1$&$ 6$\\ \hline
$(0,0,19,0,2,0,0,11,0,0,12,0,0,0,10)$&$1$&$ 6$\\ \hline
\end{tabular}
}\\ \nothing \\  \nothing \\ 
\textbf{Table \ref{table:luroth1}} Continued.
\end{center}
\end{table}

\newpage

 \begin{table}[!htpb]
 \begin{center}
{
 \begin{tabular}{|c|c|c|}
\hline
Vertex $v$ & $\#$  & $|G\cdot v|$ \\ \hline
$(0,0,18,0,0,2,0,14,0,0,11,0,0,0,9)$&$1$&$ 6$\\ \hline
$(0,0,17,0,0,0,12,0,9,0,0,9,0,0,7)$&$1$&$ 6$\\ \hline
$(0,0,16,9,0,0,0,0,6,0,0,16,0,0,7)$&$1$&$ 3$\\ \hline
$(0,0,15,0,0,0,0,23,0,4,6,0,0,2,4)$&$1$&$ 6$\\ \hline
$(0,0,15,0,0,0,0,21,0,6,6,2,0,0,4)$&$1$&$ 6$\\ \hline
$(0,0,14,0,9,0,12,0,0,0,0,9,0,0,10)$&$1$&$ 6$\\ \hline
$(0,0,14,0,2,0,0,16,10,0,7,0,0,0,5)$&$1$&$ 6$\\ \hline
$(0,0,12,18,0,0,0,0,0,0,4,0,0,20,0)$&$1$&$ 6$\\ \hline
$(0,0,12,9,0,0,0,18,0,0,3,2,0,0,10)$&$1$&$ 6$\\ \hline
$(0,0,12,3,0,0,0,24,2,4,3,0,2,0,4)$&$1$&$ 6$\\ \hline
$(0,0,12,0,9,0,6,0,12,0,0,11,0,0,4)$&$1$&$ 6$\\ \hline
$(0,0,12,0,2,0,0,22,0,10,6,0,0,2,0)$&$1$&$ 6$\\ \hline
$(0,0,12,0,0,2,0,23,0,9,6,0,0,2,0)$&$1$&$ 6$\\ \hline
$(0,0,12,0,0,0,16,0,20,0,0,0,0,4,2)$&$1$&$ 6$\\ \hline
$(0,0,12,0,0,0,4,18,0,14,6,0,0,0,0)$&$1$&$ 6$\\ \hline
$(0,0,12,0,0,0,2,20,2,12,6,0,0,0,0)$&$1$&$ 6$\\ \hline
$(0,0,11,3,0,2,0,25,0,4,4,0,0,0,5)$&$1$&$ 6$\\ \hline
$(0,0,11,2,0,0,2,26,0,7,2,0,0,2,2)$&$1$&$ 6$\\ \hline
$(0,0,10,9,2,0,0,20,0,0,3,0,0,0,10)$&$1$&$ 6$\\ \hline
$(0,0,10,5,0,0,0,17,0,15,7,0,0,0,0)$&$1$&$ 6$\\ \hline
$(0,0,10,0,14,0,14,0,0,0,0,0,0,16,0)$&$1$&$ 6$\\ \hline
$(0,0,9,7,0,0,0,21,0,10,4,0,0,0,3)$&$1$&$ 6$\\ \hline
$(0,0,9,7,0,0,0,18,0,13,4,0,3,0,0)$&$1$&$ 6$\\ \hline
$(0,0,9,4,2,0,0,26,0,7,2,0,0,2,2)$&$1$&$ 6$\\ \hline
$(0,0,8,24,0,0,0,0,0,0,1,0,0,20,1)$&$1$&$ 6$\\ \hline
$(0,0,8,3,0,8,0,26,0,0,3,0,0,2,4)$&$1$&$ 6$\\ \hline
$(0,0,8,0,24,0,0,0,0,0,10,0,0,8,4)$&$1$&$ 6$\\ \hline
$(0,0,8,0,24,0,0,0,0,0,8,0,8,0,6)$&$1$&$ 6$\\ \hline
$(0,0,6,6,20,0,0,2,0,0,0,8,0,12,0)$&$1$&$ 6$\\ \hline
$(0,0,6,4,22,0,2,0,0,0,0,8,0,12,0)$&$1$&$ 6$\\ \hline
$(0,0,6,0,22,0,2,8,0,0,4,0,0,12,0)$&$1$&$ 6$\\ \hline
$(0,0,4,25,0,0,6,0,4,0,0,0,0,0,15)$&$1$&$ 6$\\ \hline
$(0,0,4,24,0,4,0,4,0,0,4,0,0,0,14)$&$1$&$ 6$\\ \hline
$(0,0,4,21,4,0,6,4,0,0,0,0,0,0,15)$&$1$&$ 6$\\ \hline
$(0,0,4,21,0,4,10,0,0,0,0,0,0,0,15)$&$1$&$ 6$\\ \hline
$(0,0,4,16,0,0,0,0,25,3,3,0,0,3,0)$&$1$&$ 6$\\ \hline
$(0,0,3,20,0,0,0,0,0,23,8,0,0,0,0)$&$1$&$ 6$\\ \hline
$(0,0,3,18,0,0,8,0,0,19,0,0,6,0,0)$&$1$&$ 6$\\ \hline
\end{tabular}
}\\ \nothing \\  \nothing \\ 
\textbf{Table \ref{table:luroth1}} Continued.
\end{center}
\end{table}

\newpage

 \begin{table}[!htpb]
 \begin{center}
{
 \begin{tabular}{|c|c|c|}
\hline
Vertex $v$ & $\#$  & $|G\cdot v|$ \\ \hline
$(0,0,3,7,0,22,0,5,0,0,0,16,0,0,1)$&$1$&$ 6$\\ \hline
$(0,0,2,0,26,4,6,0,0,0,0,6,0,10,0)$&$1$&$ 6$\\ \hline
$(0,0,2,0,0,20,24,0,0,2,0,0,0,0,6)$&$1$&$ 6$\\ \hline
$(0,0,1,28,0,0,0,0,1,12,1,0,0,11,0)$&$1$&$ 6$\\ \hline
$(0,0,1,27,0,0,2,0,1,12,0,0,0,11,0)$&$1$&$ 6$\\ \hline
$(0,0,1,23,0,0,0,0,0,23,5,2,0,0,0)$&$1$&$ 6$\\ \hline
$(0,0,1,22,0,0,2,0,0,23,5,0,1,0,0)$&$1$&$ 6$\\ \hline
$(0,0,1,0,0,27,15,0,0,0,0,6,4,1,0)$&$1$&$ 6$\\ \hline
$(0,0,0,14,0,18,8,0,0,0,0,0,8,4,2)$&$1$&$ 6$\\ \hline
$(0,0,0,14,0,18,4,4,0,0,0,0,10,4,0)$&$1$&$ 6$\\ \hline
$(0,0,0,14,0,18,4,0,0,4,0,4,10,0,0)$&$1$&$ 6$\\ \hline
$(0,0,0,14,0,17,0,10,0,0,0,8,0,0,5)$&$1$&$ 6$\\ \hline
$(0,0,0,12,0,18,12,0,0,0,0,0,0,12,0)$&$1$&$ 6$\\ \hline
$(0,0,0,10,4,18,4,4,0,0,0,0,14,0,0)$&$1$&$ 6$\\ \hline
$(0,0,0,10,0,20,8,4,0,0,0,0,10,0,2)$&$1$&$ 6$\\ \hline
$(0,0,0,10,0,19,0,14,0,0,0,8,0,0,3)$&$1$&$ 6$\\ \hline
$(0,0,0,9,0,21,0,12,0,0,0,10,0,0,2)$&$1$&$ 6$\\ \hline
$(0,0,0,8,0,22,4,8,0,0,2,0,10,0,0)$&$1$&$ 6$\\ \hline
$(0,0,0,8,0,22,0,12,0,0,4,0,8,0,0)$&$1$&$ 3$\\ \hline
$(0,0,0,6,20,6,0,2,6,0,0,8,0,6,0)$&$1$&$ 6$\\ \hline
$(0,0,0,6,8,18,8,0,0,0,0,0,14,0,0)$&$1$&$ 6$\\ \hline
$(0,0,0,6,0,26,8,0,0,0,0,12,0,0,2)$&$1$&$ 6$\\ \hline
$(0,0,0,6,0,19,0,22,0,0,4,0,0,0,3)$&$1$&$ 6$\\ \hline
$(0,0,0,6,0,19,0,22,0,0,3,0,0,4,0)$&$1$&$ 6$\\ \hline
$(0,0,0,4,0,28,8,0,0,0,2,8,4,0,0)$&$1$&$ 3$\\ \hline
$(0,0,0,1,0,25,20,0,0,0,0,1,0,7,0)$&$1$&$ 6$\\ \hline
$(0,0,0,0,29,0,1,0,0,13,10,0,0,0,1)$&$1$&$ 6$\\ \hline
$(0,0,0,0,28,4,6,0,0,2,0,6,0,8,0)$&$1$&$ 6$\\ \hline
$(0,0,0,0,28,0,2,8,0,6,4,0,0,6,0)$&$1$&$ 6$\\ \hline
$(0,0,0,0,28,0,0,8,0,8,6,0,0,4,0)$&$1$&$ 6$\\ \hline
$(0,0,0,0,28,0,0,6,0,10,6,0,4,0,0)$&$1$&$ 6$\\ \hline
$(0,0,0,0,27,0,7,0,0,11,6,0,0,0,3)$&$1$&$ 6$\\ \hline
$(0,0,0,0,27,0,5,0,0,13,3,6,0,0,0)$&$1$&$ 6$\\ \hline
$(0,0,0,0,26,6,2,0,6,0,4,4,0,6,0)$&$1$&$ 6$\\ \hline
$(0,0,0,0,26,2,0,0,6,10,10,0,0,0,0)$&$1$&$ 6$\\ \hline
$(0,0,0,0,26,0,6,4,4,6,0,0,8,0,0)$&$1$&$ 3$\\ \hline
$(0,0,0,0,26,0,0,8,2,10,6,0,2,0,0)$&$1$&$ 6$\\ \hline
\end{tabular}
}\\ \nothing \\  \nothing \\ 
\textbf{Table \ref{table:luroth1}} Continued.
\end{center}
\end{table}

\newpage

 \begin{table}[!htpb]
 \begin{center}
{
 \begin{tabular}{|c|c|c|}
\hline
Vertex $v$ & $\#$  & $|G\cdot v|$ \\ \hline
$(0,0,0,0,26,0,0,8,0,12,6,2,0,0,0)$&$1$&$ 6$\\ \hline
$(0,0,0,0,24,8,0,0,0,8,6,8,0,0,0)$&$1$&$ 6$\\ \hline
$(0,0,0,0,24,6,6,0,6,0,0,0,12,0,0)$&$1$&$ 6$\\ \hline
$(0,0,0,0,24,2,0,8,0,12,8,0,0,0,0)$&$1$&$ 6$\\ \hline
$(0,0,0,0,24,0,7,0,6,11,3,3,0,0,0)$&$1$&$ 6$\\ \hline
$(0,0,0,0,24,0,4,8,0,12,5,0,0,0,1)$&$1$&$ 6$\\ \hline
$(0,0,0,0,24,0,4,8,0,12,2,4,0,0,0)$&$1$&$ 6$\\ \hline
$(0,0,0,0,22,4,6,12,0,2,0,0,0,8,0)$&$1$&$ 6$\\ \hline
$(0,0,0,0,22,2,10,0,14,0,0,2,0,0,4)$&$1$&$ 6$\\ \hline
$(0,0,0,0,20,4,12,0,12,0,0,0,0,4,2)$&$1$&$ 6$\\ \hline
$(0,0,0,0,20,4,6,14,4,0,0,0,0,2,4)$&$1$&$ 6$\\ \hline
$(0,0,0,0,18,8,6,14,0,0,0,0,0,8,0)$&$1$&$ 6$\\ \hline
$(0,0,0,0,18,6,14,0,0,10,0,0,6,0,0)$&$1$&$ 6$\\ \hline
$(0,0,0,0,16,16,8,0,0,0,2,0,12,0,0)$&$1$&$ 6$\\ \hline
$(0,0,0,0,16,16,0,8,0,0,10,0,0,0,4)$&$1$&$ 6$\\ \hline
$(0,0,0,0,16,8,10,0,14,0,0,0,6,0,0)$&$1$&$ 6$\\ \hline
$(0,0,0,0,14,10,7,17,0,0,0,1,0,0,5)$&$1$&$ 6$\\ \hline
$(0,0,0,0,12,20,6,0,0,2,0,14,0,0,0)$&$1$&$ 6$\\ \hline
$(0,0,0,0,12,20,0,8,0,0,10,0,0,4,0)$&$1$&$ 6$\\ \hline
$(0,0,0,0,12,12,6,18,0,0,0,0,0,6,0)$&$1$&$ 6$\\ \hline
$(0,0,0,0,8,20,14,0,2,0,0,0,10,0,0)$&$1$&$ 6$\\ \hline
$(0,0,0,0,6,21,12,6,0,0,0,0,9,0,0)$&$1$&$ 6$\\ \hline
$(0,0,0,0,2,22,14,0,10,0,0,6,0,0,0)$&$1$&$ 6$\\ \hline
$(0,0,0,0,1,24,21,0,1,0,0,0,0,7,0)$&$1$&$ 6$\\ \hline
$(0,0,0,0,0,28,15,1,0,0,3,3,0,4,0)$&$1$&$ 6$\\ \hline
$(0,0,0,0,0,27,9,9,0,0,0,9,0,0,0)$&$1$&$ 3$\\ \hline
$(0,0,0,0,0,25,20,2,0,0,0,0,4,0,3)$&$1$&$ 6$\\ \hline
$(0,0,0,0,0,25,19,3,0,0,0,3,0,0,4)$&$1$&$ 6$\\ \hline
$(0,0,0,0,0,25,16,0,6,0,0,6,0,0,1)$&$1$&$ 6$\\ \hline
$(0,0,0,0,0,24,20,2,2,0,0,0,0,6,0)$&$1$&$ 6$\\ \hline
$(0,0,0,0,0,24,18,6,0,0,0,0,3,0,3)$&$1$&$ 6$\\ \hline
$(0,0,0,0,0,24,16,3,0,5,0,6,0,0,0)$&$1$&$ 6$\\ \hline
$(0,0,0,0,0,24,15,9,0,0,0,3,0,0,3)$&$1$&$ 6$\\ \hline
$(0,0,0,0,0,24,12,9,0,3,0,6,0,0,0)$&$1$&$ 6$ \\ \hline
\end{tabular}
}\\ \nothing \\  \nothing \\ 
\textbf{Table \ref{table:luroth1}} Continued.
\end{center}
\end{table}

\chapter{SOLVING SPARSE DECOMPOSABLE SYSTEMS \label{section:solvingsparsedecomposablesystems}}
We describe how to use a numerical homotopy to solve polynomial systems corresponding to fibers of decomposable branched covers. Recall that a branched cover $\pi\colon X \to Z$ is decomposable if there is a dense open subset $V \subset Z$ over which $\pi$ factors as
 \begin{equation}\label{Eq:decomposition}
  \pi^{-1}(V)\ \longrightarrow\  Y\ \longrightarrow\  V
 \end{equation}
 with $\varphi$ and $\psi$ both nontrivial branched covers. As discussed in Section~\ref{subsection:decomposablebranchedcovers}, a result of Pirola and Schlesinger~\cite{PirolaSchlesinger} states that the Galois group $G_\pi$ acts imprimitively if and only if $\pi$ is decomposable.

Am\'endola and Rodriguez ~\cite{AmendolaRodriguez} explained how to use an explicit decomposition to compute fibers
$\pi^{-1}(z)$ using monodromy.
They also showed how several examples from the literature involve a decomposable branched cover; for these, the variety $Y$
and intermediate maps were determined using invariant theory as there was a finite group acting as automorphisms of
$\pi\colon X\to Z$.
In general, it is  nontrivial to determine a decomposition~\eqref{Eq:decomposition} of a branched cover $\pi\colon X\to Z$
with imprimitive Galois group, especially when the cover has trivial automorphism group. 

Esterov~\cite{Esterov} determined which systems of sparse polynomials have an imprimitive Galois group.
One goal was to classify those which are solvable by radicals.
He identified two simple structures which imply that the system is decomposable. In these cases, the decomposition is transparent.
He also showed that the Galois group is full symmetric when neither structure occurs.
We use Esterov's classification to give a recursive numerical homotopy continuation algorithm for solving decomposable
sparse systems.

The first such structure is when a polynomial system is composed with a monomial map. For example, if $f(x)=g(x^3)$ then to solve $f(x)=0$, first solve $g(y)=0$ and then for each solution $y$, extract its third roots.
The second structure is when the system is triangular, such as
 \[
     f(x,y)\ =\ g(y)\ =\ 0\,.
 \]
To solve this, first  solve $g(y)=0$ and then for each solution $y$, solve $f(x,y)=0$.

In general, Esterov's classification leads to a sequence of branched covers, each corresponding to 
a sparse system with symmetric monodromy or to a monomial map.
Our algorithm identifies this structure and uses it to recursively solve a decomposable system.
We give some examples which demonstrate that, despite its overhead, this algorithm is a significant improvement over a
direct use of the polyhedral homotopy (Algorithm \ref{alg:polyhedralhomotopymethod}).

By the Bernstein-Kushnirenko Theorem (Proposition \ref{prop:BKK}), a general system of sparse polynomials has the same
number of solutions as a system whose supports have the same convex hull.
When the system supported on the vertices is decomposable, we propose using it as a start system in a parameter
homotopy to solve the original system.
This is similar in spirit to the B\'ezout homotopy (Algorithm \ref{alg:bezouthomotopymethod}).

We remind the reader of the general background we developed in Section~\ref{section:branchedcoversandgroups} on Galois groups of branched covers as well as our explanation of the  relation
between decompositions of branched covers and imprimitivity of the corresponding Galois groups.

In Section~\ref{SS:Decompositions}, we explain Esterov's classification and describe
how to compute the corresponding decompositions in Section~\ref{S:SNF}.
We present our algorithms for solving sparse decomposable systems in Section~\ref{S:Algorithm}, and give an application to
furnish start systems for parameter homotopies.
Section~\ref{S:computations} gives timings and information on the performance of our algorithm.
Much of the material in this section appears in the paper of the same name \cite{BRSY:DecSpaSys} with Rodriguez, Sottile, and Yahl.
\section{Decompositions of sparse polynomial systems}\label{SS:Decompositions}
Let $\Adot=(\mathcal A_1, \mathcal A_2,\ldots, \mathcal A_n)$ be a collection of supports $\mathcal A_i \subset \Z^n$. 
We describe two properties that a collection $\Adot$ may have, lacunary and (strictly) triangular, and then
state Esterov's theorem about the Galois group $G_\Adot$.
We then present explicit decompositions of the projection $\pi\colon X_{\Adot}\to \CC^{\Adot}$ when $\Adot$ is lacunary
and when $\Adot$ is triangular.
These form the basis for our algorithms.

Assume that $\MV(\Adot)>1$.
We say that $\Adot$ is \mydef{lacunary} if $\ZZ\Adot\neq\ZZ^n$ (it has rank $n$ as $\MV(\Adot)\neq 0$).
We say that $\Adot$ is \mydef{triangular} if there is a nonempty proper subset $\emptyset\neq I\subsetneq[n]$ such that
$\rank(\ZZ\calA_I)=|I|$, or equivalently, the defect of the collection of polytopes $\{\conv(\mathcal A_i)\}_{i \in I}$ is zero.
As we explain in Section~\ref{S:SNF}, we may change coordinates and assume that $\ZZ\calA_I\subset\ZZ^{|I|}$ so that
$\MV(\calA_I)$ is defined using $\conv(\calA_i)\subset\RR^{|I|}$ for $i\in I$.
A system $\Adot$ of triangular supports is \mydef{strictly triangular} if for some $\emptyset\neq I\subsetneq[n]$ with 
$\rank(\ZZ\calA_I)=|I|$, we have $1<\MV(\calA_I)<\MV(\Adot)$.
It is elementary that if  $\Adot$ is either lacunary or strictly triangular, then the
branched cover $X_\Adot\to\CC^\Adot$ is decomposable and therefore $G_\Adot$ is an imprimitive permutation group. We do this explicitly in Sections \ref{SSS:Lacunary} and \ref{SSS:triangular}. 
  
\begin{proposition}[Esterov~\cite{Esterov}]
  Let $\Adot$ be a collection of supports with $\MV(\Adot)\neq 0$.
  The Galois group $G_\Adot$ is equal to the symmetric group $S_{\MV(\Adot)}$ if and only if $\Adot$ is neither lacunary
  nor strictly triangular.
\end{proposition}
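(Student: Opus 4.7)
The plan is to exploit Proposition~\ref{P:DecomposableIsImprimitive} together with two explicit constructions for the easy direction, and then handle the converse by combining a primitivity argument with a monodromy transposition argument. Throughout I would assume $\MV(\Adot)>1$, since otherwise $G_\Adot$ is trivial and the statement is vacuous.

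For the ``only if'' direction, I would show that both lacunary and strictly triangular $\Adot$ yield an honest factorization of $\pi_\Adot \colon X_\Adot \to \CC^\Adot$, so that by Proposition~\ref{P:DecomposableIsImprimitive} the group $G_\Adot$ is imprimitive and therefore not the full symmetric group. In the lacunary case the inclusion $\ZZ\Adot\subset\ZZ^n$ has finite index $d>1$; by the Smith normal form discussion of Section~\ref{subsubsection:integerlinear} this inclusion is realized by a monomial map $\Phi\colon(\CC^\times)^n\to(\CC^\times)^n$ of degree $d$, and every $F\in\CC^\Adot$ has the form $F=G\circ\Phi$ for a unique $G\in\CC^{\Phi^*(\Adot)}$, so $\pi_\Adot$ factors through the $d$-sheeted cover $\Phi$. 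In the strictly triangular case, fix $I\subsetneq[n]$ with $\rank(\ZZ\calA_I)=|I|$ and $1<\MV(\calA_I)<\MV(\Adot)$; after a monomial coordinate change on $(\CC^\times)^n$ the polynomials $\{f_i\}_{i\in I}$ involve only the variables indexed by $I$. Letting $Y$ be the incidence variety of pairs $(F,y)$ with $y$ a solution of the subsystem $\{f_i(y)\}_{i\in I}=0$, one obtains a factorization $X_\Adot\to Y\to\CC^\Adot$ in which both maps have degree strictly between $1$ and $\MV(\Adot)$.

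For the ``if'' direction, assuming $\Adot$ is neither lacunary nor strictly triangular, I would establish $G_\Adot=S_{\MV(\Adot)}$ in two steps. First, I would show that $G_\Adot$ is primitive, equivalently that $\pi_\Adot$ admits no nontrivial decomposition $\pi^{-1}(V)\to Y\to V$. Any such decomposition corresponds to an intermediate field $\CC(\CC^\Adot)\subsetneq M\subsetneq\CC(X_\Adot)$; since $\CC(X_\Adot)$ is generated over $\CC(\CC^\Adot)$ by the torus coordinates $x_1,\dots,x_n$, the intermediate field $M$ is constrained by how the subgroup $L/H\subset G_\Adot/H$ (in the notation of Proposition~\ref{P:DecomposableIsImprimitive}) acts on the $x_i$. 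The combinatorial heart of this step is to classify the possible non-trivial blocks: one shows that every such block system must either be compatible with a proper sublattice of $\ZZ^n$ of finite index (forcing $\Adot$ to be lacunary) or with a partition of $[n]$ that decouples a proper subsystem (forcing a strictly triangular splitting). Second, I would show that $G_\Adot$ contains a simple transposition by a monodromy argument at a smooth point of the principal $\Adot$-discriminant: by the Gelfand--Kapranov--Zelevinsky theory this discriminant has an irreducible component whose generic point parametrizes a system with exactly one double root, and a small loop around such a point lifts to a transposition in $G_\Adot$. A primitive permutation group of degree $\MV(\Adot)$ containing a transposition is the full symmetric group, finishing the proof.

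The main obstacle will be the combinatorial classification inside the primitivity step. Showing that \emph{every} block system for $G_\Adot$ arises either from a sublattice of $\ZZ^n$ or from a triangular decomposition of $\Adot$ requires a careful analysis of how monodromy around the $\Adot$-discriminant interacts with Newton polytope structure; this is precisely where Esterov's tropical and valuation-theoretic techniques, applied to fractional power series expansions of solutions along degenerations of $F$, are essential. By contrast, the transposition step, while technically nontrivial, is fairly standard once the irreducibility of the relevant discriminant component has been established.
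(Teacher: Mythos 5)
The paper does not actually prove this proposition; it cites Esterov and proves only the easy implication, namely that a lacunary or strictly triangular collection $\Adot$ yields a decomposable branched cover and hence an imprimitive Galois group. That part of the paper's argument is carried by Lemma~\ref{L:Lacunary} and Lemma~\ref{L:Triangular}, which construct the explicit factorizations of $\pi_\Adot$, together with Proposition~\ref{P:DecomposableIsImprimitive}. Your ``only if'' direction mirrors this exactly: the lacunary case via the monomial cover $\Phi$ built from the Smith normal form, and the triangular case via the intermediate incidence variety over $\CC^{\calA_I}$. One small caveat is that in the lacunary case you also need $\MV(\iota(F))>1$ (i.e.\ $\MV(\Bdot)>1$) for the inner cover to be nontrivial; the paper's Lemma~\ref{L:Lacunary} states this hypothesis explicitly, and it does hold once $\MV(\Adot)>1$ because the lacunary index and $\MV(\Bdot)$ multiply to $\MV(\Adot)$, but it is worth stating.

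For the ``if'' direction your plan is a correct high-level roadmap of Esterov's actual argument (primitivity of $G_\Adot$ combined with the presence of a transposition, hence full symmetry by Jordan), but as you yourself flag, the classification of block systems --- showing that any nontrivial block structure must come from either a proper finite-index sublattice of $\ZZ^n$ or a decoupling of a proper subsystem --- is the entire technical content of Esterov's theorem, and your sketch does not supply it. Saying that this step ``requires Esterov's tropical and valuation-theoretic techniques'' is honest but leaves the gap unfilled. The transposition step via a smooth point of the $\Adot$-discriminant is also a real piece of work (you need irreducibility of the relevant discriminant component and that a generic point of it corresponds to exactly one coalescing pair), though it is the more standard half. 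In short: your easy direction matches the paper's own argument; your hard direction is a restatement of Esterov's strategy with the central difficulty openly acknowledged rather than resolved, which is consistent with the paper's decision simply to cite Esterov rather than reprove it.
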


\subsection{Lacunary support}\label{SSS:Lacunary}

Let us begin with an example when $n=2$.
Let
\[
  \calA_1\ =\ \left(\begin{matrix} 0&0&3&6&12\\0&4&3&6&0 \end{matrix}\right)
   \qquad\mbox{and}\qquad
  \calA_2\ =\ \left(\begin{matrix} 0&3&6&9&9\\0&7&2&1&5 \end{matrix}\right)
\]
be supports in $\ZZ^2$.
Then $\ZZ\Adot$ has index 12 

in $\ZZ^2$ as the map $\varphi(a,b)^T=(3a,4b-a)^T$ is an isomorphism
$\varphi\colon\ZZ^2\xrightarrow{\sim}\ZZ\Adot$, and $\det(\begin{smallmatrix}3&0\\-1&4\end{smallmatrix})=12$.
If we set $\mydefMATH{\calB_i}=\varphi^{-1}(\calA_i)$, then 
\[
  \calB_1\ =\ \left(\begin{matrix} 0&0&1&2&4\\0&1&1&2&1 \end{matrix}\right)
   \qquad\mbox{and}\qquad
  \calB_2\ =\ \left(\begin{matrix} 0&1&2&3&3\\0&2&1&1&2 \end{matrix}\right)\ .
\]
We display $\calA_1$, $\calA_2$, $\calB_1$, and $\calB_2$ in Figure \ref{fig:sdslacunary}.
\begin{figure}[!htpb]
\includegraphics[scale=0.5]{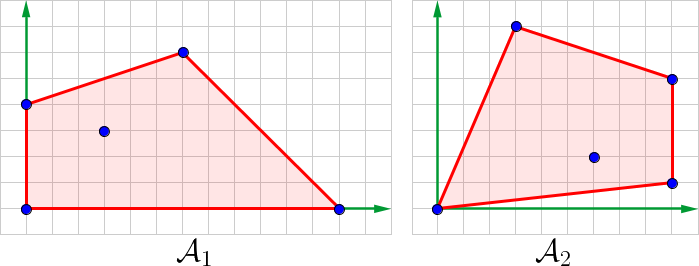} \hspace{0.2 in}
\includegraphics[scale=0.5]{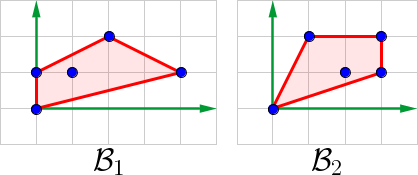}
\caption{\label{fig:sdslacunary}The lacunary image $(\mathcal A_1,\mathcal A_2)$ of the support $(\mathcal B_1,\mathcal B_2)$ under the map $\varphi$.}
\end{figure}
Then the map $\Phi=\varphi^*\colon(\CC^\times)^2\twoheadrightarrow(\CC^\times)^2$ is given by
$\Phi(x,y)=(x^3y^{-1},y^4)=(z,w)$.
If 
\begin{align*}
  f_1\ &=\ 1+2y^4+4x^3y^3+8x^6y^6+16x^{12}\\
  f_2\ &=\ 3+5x^3y^7+7x^6y^2+11x^9y+13x^9y^5\ ,
\end{align*}
which is a polynomial system with support $\Adot$, then $f_i=g_i\circ\Phi$, where  
 \begin{align*}
  g_1\ &=\ 1+2w+4zw+8z^2w^2+16z^4w\\
  g_2\ &=\ 3+5zw^2+7z^2w+11z^3w+13z^3w^2\,,
\end{align*}
is a polynomial system with support $\Bdot$. 
Therefore, the branched cover $X_{\Adot}\to\CC^{\Adot}$ factors as $X_{\Adot}\to X_{\Bdot}\to\CC^{\Bdot}=\CC^{\Adot}$
with the map $X_{\Adot}\to X_{\Bdot}$ induced by $\Phi$.
Consequently, this implies that $G_{\Adot}\subset (\ZZ/12\ZZ)^{10}\rtimes S_{10}$, as $\ZZ^2/\ZZ\Adot\simeq\ZZ/12\ZZ$, $\Bdot$ is neither
lacunary nor triangular, and $\MV(\Bdot)=10$.

We generalize this example.
Suppose  that $\Adot=(\calA_1,\dotsc,\calA_n)$ is lacunary.
Then $\ZZ\Adot$ has rank $n$ but $\ZZ\Adot\neq\ZZ^n$.
Let $\varphi\colon\ZZ^n\xrightarrow{\,\sim\,}\ZZ\Adot$ be an isomorphism.
Then the corresponding map $\Phi=\varphi^*\colon(\CC^\times)^n\to(\CC^\times)^n$ 
is a surjection with kernel $\Hom(\ZZ^n/\ZZ\Adot,\CC^\times)$.
For each $i=1,\dotsc,n$, set $\mydefMATH{\calB_i}=\varphi^{-1}(\calA_i)$.
Then $\Bdot=(\calB_1,\dotsc,\calB_n)$ is a collection of supports with $\ZZ\Bdot=\ZZ^n$.
Since $\varphi$ is a bijection, we identify $\CC^{\calB_i}$ with $\CC^{\calA_i}$ and $\CC^{\Bdot}$ with
$\CC^{\Adot}$.
Given a system $F\in\CC^{\Adot}$, let $\mydefMATH{\iota(F)}\in\CC^{\Bdot}$  be the corresponding system with support
$\Bdot$. 

\begin{lemma}\label{L:Lacunary}
 Suppose that $\Adot$ is lacunary, $\varphi\colon\ZZ^n\xrightarrow{\sim}\ZZ\Adot$ is an
 isomorphism with corresponding surjection  $\Phi\colon(\CC^\times)^n\to(\CC^\times)^n$.
 Let $\Bdot=\varphi^{-1}(\Adot)$ and suppose that $\MV(\Bdot)>1$.
 Then the branched cover $X_\Adot\to\CC^\Adot$ is decomposable and
 $X_\Adot\to X_\Bdot\to\CC^\Adot=\CC^\Bdot$ is a nontrivial decomposition of branched covers induced by the map $\Phi$.
\end{lemma}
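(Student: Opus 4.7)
The plan is to construct an explicit map $\psi\colon X_\Adot\to X_\Bdot$ so that the composition $\pi_\Bdot\circ\psi$ agrees with $\pi_\Adot$ under the canonical identification $\iota\colon\CC^\Adot\xrightarrow{\sim}\CC^\Bdot$, and then verify that both $\psi$ and $\pi_\Bdot$ are nontrivial branched covers. The natural candidate is $\psi(F,x)=(\iota(F),\Phi(x))$, and the content of the proof is showing that this is well-defined and that both factors of $\pi_\Adot=\pi_\Bdot\circ\psi$ have degree strictly greater than one.

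First I would verify the key algebraic identity $F=\iota(F)\circ\Phi$ as functions on $(\CC^\times)^n$. Writing $F=\sum_{\alpha\in\calA_i}c_\alpha x^\alpha$ so that $\iota(F)=\sum_{\alpha\in\calA_i}c_\alpha z^{\varphi^{-1}(\alpha)}$, this follows from the observation in Section~\ref{subsubsection:integerlinear} that pulling back a character along $\Phi=\varphi^*$ acts on exponents by $\varphi$: that is, $z^\beta\circ\Phi=x^{\varphi(\beta)}$. With $\beta=\varphi^{-1}(\alpha)$ this gives $\iota(F)(\Phi(x))=F(x)$. Well-definedness of $\psi$ is then immediate, since $F(x)=0$ implies $\iota(F)(\Phi(x))=0$, and commutativity of the diagram is also immediate: $\pi_\Bdot\circ\psi(F,x)=\iota(F)$, which corresponds to $F=\pi_\Adot(F,x)$ under the identification $\CC^\Adot=\CC^\Bdot$. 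Since $X_\Adot$ and $X_\Bdot$ are both irreducible of dimension $\dim\CC^{\Adot}=\dim\CC^{\Bdot}$ (as $\varphi$ is a bijection, $|\calA_i|=|\calB_i|$), and $\psi$ is surjective (because $\Phi$ is), $\psi$ is a branched cover in the sense of Section~\ref{subsection:branchedcovers}.

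Finally I would compute degrees. The map $\pi_\Bdot$ has degree $\MV(\Bdot)>1$ by hypothesis. The map $\psi$ is generically $|\ker\Phi|$-to-one: for $(G,z)\in X_\Bdot$ with $z$ a regular value of $\Phi$, the fiber $\psi^{-1}(G,z)$ equals $\{(\iota^{-1}(G),x)\colon \Phi(x)=z\}$, which has cardinality $|\ker\Phi|=|\Hom(\ZZ^n/\ZZ\Adot,\CC^\times)|=[\ZZ^n:\ZZ\Adot]$. Because $\Adot$ is lacunary, $\ZZ\Adot$ is a proper sublattice of $\ZZ^n$ (of full rank, as $\MV(\Adot)\neq 0$), so this index exceeds one. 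As a consistency check, since $\varphi$ is represented by an integer matrix with $|\det\varphi|=[\ZZ^n:\ZZ\Adot]$, mixed volume scales to give $\MV(\Adot)=[\ZZ^n:\ZZ\Adot]\cdot\MV(\Bdot)=\deg(\psi)\cdot\deg(\pi_\Bdot)$, which is the expected $\deg(\pi_\Adot)$. There is no real obstacle here; the only subtle point is keeping the direction of the character/lattice duality straight when verifying $F=\iota(F)\circ\Phi$.
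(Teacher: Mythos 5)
Your proposal is correct and follows essentially the same route as the paper: define the map $(F,x)\mapsto(\iota(F),\Phi(x))$ from $X_\Adot$ to $X_\Bdot$ using the key identity $\iota(F)\circ\Phi=F$, check it factors $\pi_\Adot$, and verify both factors are nontrivial. You spell out the degree of the first factor as $|\ker\Phi|=[\ZZ^n:\ZZ\Adot]$ more explicitly than the paper, which instead observes that $\ker\Phi$ acts freely as deck transformations on the fibers; the two formulations are equivalent.
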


\begin{proof}
If $g$ is a polynomial with support $\calB\subset\ZZ^n$, then the composition $g\circ\Phi$ is a polynomial with support
$\varphi(\calB)$, with the coefficient of $x^\beta$ in $g$ equal to the coefficient of $x^{\varphi(\beta)}$ in
$g\circ\Phi$.
Since $\varphi(\calB_i)=\calA_i$, this gives the natural identifications 
$\iota\colon\CC^{\calA_i}\xrightarrow{\,\sim\,}\CC^{\calB_i}$ and $\iota\colon\CC^\Adot\xrightarrow{\,\sim\,}\CC^\Bdot$
mentioned before the lemma.
Under this identification, we have $\iota(f)(\Phi(x))=f(x)$.

Since  $\MV(\Bdot)>1$, the branched cover $X_\Bdot\to\CC^\Bdot$ is nontrivial by definition.
The identification $\iota\colon\CC^\Adot\to\CC^\Bdot$ extends to a commutative diagram
 \begin{equation}\label{Eq:Lacunary}
  \raisebox{-25pt}{\begin{picture}(97,58)(-4,0)
                          \put(35,52){\small$\iota\times\Phi$}  
    \put(0,45){$X_\Adot$} \put(24,49){\vector(1,0){47}} \put(73,45){$X_\Bdot$}
      \put(5,40){\vector(0,-1){27}}                    \put(78,40){\vector(0,-1){27}}
      \put(-4,25){\small$\pi$}                          \put(80,25){\small$\pi$}
                          \put(45,7){\small$\iota$}  
    \put(0,0){$\CC^\Adot$} \put(25,4){\vector(1,0){45}} \put(73, 0){$\CC^\Bdot$}
    \end{picture}}\ 
 \end{equation}
where $\iota\times\Phi$ is the restriction of the map
$\iota\times\Phi\colon \CC^\Adot\times(\CC^\times)^n \to \CC^\Bdot\times(\CC^\times)^n$ to $X_\Adot$.
The map $\iota\times\Phi\colon X_\Adot\to X_\Bdot$ is a map of branched covers with $\ker\Phi$ acting freely on the fibers.
If we restrict the diagram~\eqref{Eq:Lacunary} to the open subset $V$ of $\CC^\Bdot$ over which
$X_\Bdot\to\CC^\Bdot$ is a covering space, we obtain a composition of covering spaces with $\ker\Phi$ acting as deck
transformations on ${\pi^{-1}(V)}\subset X_\Adot$.
Thus $X_\Adot\to\CC^\Adot$ is decomposable.
\end{proof}

\subsection{Triangular support}\label{SSS:triangular}
This requires more discussion before we can state the analog of Lemma~\ref{L:Lacunary}.
Let us begin with an example when $n=3$.
Suppose that
\[
  \calA_1\ =\ \calA_2\ =\ \calA\ =\ 
  \begin{pmatrix} 0&1&1&1&2&2&2&3\\
                  0&0&1&2&0&1&2&1\\
                  0&1&2&3&2&3&4&4\end{pmatrix}
  \quad\mbox{and}\quad
 \calA_3\ =\
  \begin{pmatrix} 0&0&0&0&1&1\\
                  0&0&0&1&0&1\\
                  0&2&4&5&3&4\end{pmatrix}.
\]
The span $\ZZ\calA$ of the first two supports is isomorphic to $\ZZ^2$, with 
$\varphi(a,b)^T\mapsto(a,b,a+b)^T$ an isomorphism $\varphi\colon\ZZ^2\xrightarrow{\sim}\ZZ\Adot$.
Set $\mydefMATH{\calB}=\varphi^{-1}(\calA)$.
We display $\calA$, $\calA_3$, and $\calB$ in the horizontal plane together on the left in Figure \ref{fig:sdstriangular}, and $\calB$ on the right.
\begin{figure} [!htpb]
\includegraphics[scale=0.4]{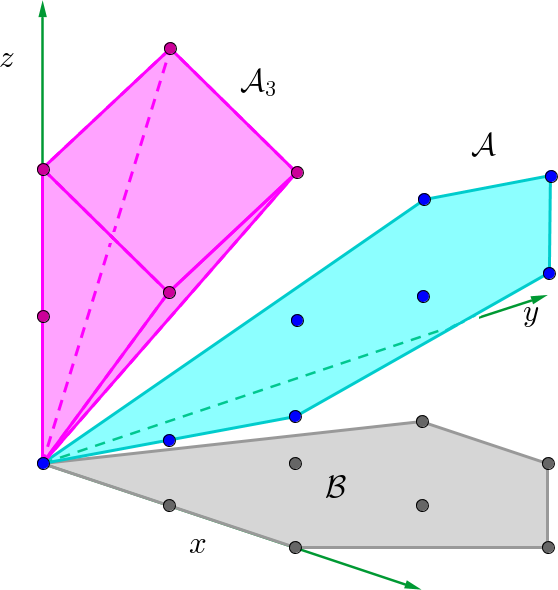} \hspace{0.8 in}
\includegraphics[scale=0.6]{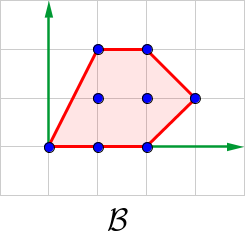}
\caption{\label{fig:sdstriangular}An example of triangular support.}
\end{figure}
Consider the polynomial system $F=(f_1,f_2,f_3)\in\CC[x,y,z]$ with support $\Adot$,
\begin{align*}
  f_1 &=\ 1 + 2xz+3xyz^2+4xy^2z^3+5x^2z^2+6x^2yz^3+7x^2y^2z^4+8x^3yz^4\\
  f_2 &=\ 2 + 3xz+5xyz^2+7xy^2z^3+11x^2z^2+13x^2yz^3+17x^2y^2z^4+19x^3yz^4\\
  f_3 &=\ 1 + 3z^2 + 9 z^4 + 27 yz^5 + 81 xz^3 + 243 xyz^4\,.
\end{align*}
Let $\Phi\colon(\CC^\times)^3\to(\CC^\times)^2$ be given by $\Phi(x,y,z)=(xz,yz)=(u,v)$.
If 
\begin{align*}
  g_1 &=\ 1 + 2u+3uv+4uv^2+ 5u^2+ 6u^2v+ 7u^2v^2+ 8u^3v\\
  g_2 &=\ 2 + 3u+5uv+7uv^2+11u^2+13u^2v+17u^2v^2+19u^3v\,,
\end{align*}
then $f_i=g_i\circ\Phi$ for $i=1,2$.
To compute $\calV(F)$, we first may compute $\calV(g_1,g_2)$ which consists of eight points.
For each solution $(u_0,v_0)\in \calV(g_1,g_2)$, we may identify the fiber $\Phi^{-1}(u_0,v_0)$ with $\CC^\times$
by $z\mapsto (u_0z^{-1},v_0z^{-1},z)$.
Then the restriction of $f_3$ to this fiber is
\[
  1 + (3 + 81 u_0 + 243 u_0v_0)z^2 + (9 + 27 v_0 )z^4\,,
\]
which is a lacunary univariate polynomial with support $\{0,2,4\}$, and has four solutions (counted with multiplicity) when  
$v_0\neq -1/3$.

This example generalizes to all triangular systems.
Suppose  that $\Adot=(\calA_1,\dotsc,\calA_n)$ is triangular.
Let $\emptyset\neq I\subsetneq[n]$ be a proper subset witnessing the triangularity, so that 
$\rank(\ZZ\calA_I)=|I|$.
Set $\mydefMATH{J}=[n]\smallsetminus I$.
Let
\[
  \mydefMATH{\ZZ^I}\ =\ \QQ\calA_I\cap\ZZ^n\ =\
  \{ v\in\ZZ^n\mid \exists m\in\NN\mbox{ with }mv\in\ZZ\calA_I\}\,,
\]
be the saturation of $\ZZ\calA_I$, which is a free abelian group of rank $|I|$.
As it is saturated, $\mydefMATH{\ZZ_J}=\ZZ^n/\ZZ^I$ is free abelian of rank $n-|I|=|J|$.

Applying $\Hom(\bullet,\CC^\times)$ to the short exact sequence
$\ZZ^I\hookrightarrow \ZZ^n\twoheadrightarrow\ZZ_J$ gives the short exact sequence of tori
(whose characters are $\ZZ_J$, $\ZZ^n$, and $\ZZ^I$) with indicated maps,
\begin{equation}
\label{Eq:Triangular}
  (\CC^\times)^{|J|}\simeq\mydefMATH{\TT_J}:=\Hom(\ZZ_J,\CC^\times)\
  \lhra\ (\CC^\times)^n\ \stackrel{\Phi}{\lthra}\
  \mydefMATH{\TT^I}:=\Hom(\ZZ^I,\CC^\times)\simeq(\CC^\times)^{|I|}\,.
\end{equation}
A  polynomial $f$ with support in $\ZZ^I$ determines polynomial functions on $(\CC^\times)^n$ and on $\TT^I$ with the first
the pullback of the second. 
Let $f$ be a polynomial on $(\CC^\times)^n$ with support $\calA\subset\ZZ^n$.
Then its restriction to a fiber $\Phi^{-1}(y_0)$ of $\Phi$ is a regular function $\overline{f}$ on the fiber, which
is a coset of $\TT_J$. 
Choosing an identification of $\TT_J\simeq\Phi^{-1}(y_0)$, we obtain a polynomial
\mydefMATH{$\overline{f}$} on  $\TT_J$ whose
support is the image \mydefMATH{$\overline{\calA}$} of $\calA$ in $\ZZ_J=\ZZ^n/\ZZ^I$.
This polynomial $\overline{f}$ depends upon the identification of the fiber with $\TT_J$.
Let $\mydefMATH{\overline{\calA_J}}$ be the image in $\ZZ_J$ of the collection $\calA_J$ of supports.
Then we have the product formula (see~\cite[Lem.~6]{ThSt} or~\cite[Thm.~1.10]{Esterov})
\begin{equation}\label{Eq:MVproduct}
  \MV(\Adot)\ =\ \MV(\calA_I)\cdot\MV(\overline{\calA_J})\,.
\end{equation}

Since $\Adot = \calA_I\sqcup\calA_J$, we have the identification $\CC^\Adot=\CC^{\calA_I}\oplus\CC^{\calA_J}$.
Suppose that $F\in\CC^\Adot$ is a polynomial system with support $\Adot$.
Write $\mydefMATH{F_I}\in\CC^{\calA_I}$ for its
restriction to the indices in $I$, and the same for $F_J$.
We have the diagram
 \begin{equation}\label{Eq:Triangular_Decomposition}
  \raisebox{-25pt}{\begin{picture}(97,60)(-4,0)
                          \put(32,52){\small$p_I\times\Phi$}  
    \put(0,45){$X_\Adot$} \put(24,49){\vector(1,0){47}} \put(73,45){$X_{\calA_I}$}
      \put(5,40){\vector(0,-1){27}}                    \put(78,40){\vector(0,-1){27}}
      \put(-4,25){\small$\pi$}                          \put(80,25){\small$\pi$}
                          \put(44,8){\small$p_I$}  
    \put(0,0){$\CC^\Adot$} \put(25,4){\vector(1,0){45}} \put(73, 0){$\CC^{\calA_I}$}
    \end{picture}}\ 
 \end{equation}
where $p_I\times\Phi$ is the restriction of the map
$p_I\times\Phi\colon\CC^\Adot\times(\CC^\times)^n\to \CC^{\calA_I}\times\TT^I$ to $X_\Adot$.
 
Let $\mydefMATH{V_\Adot}\subset\CC^\Adot$ be the dense open subset over which $X_\Adot$ is a covering space.
This is the set of polynomial systems $F$ with support $\Adot$ which have exactly
$\MV(\Adot)$ solutions in $(\CC^\times)^n$.
Similarly, let $\mydefMATH{V_{\calA_I}}\subset\CC^{\calA_I}$ be the subset where $X_{\calA_I}\to\CC^{\calA_I}$ is a covering space.
We will show that under the projection $\CC^{\Adot}\to\CC^{\calA_I}$, the image of $V_\Adot$ is a subset of $V_{\calA_I}$.
Define $\mydefMATH{Y_{\Adot}}\to V_\Adot$ to be the restriction of $X_\Adot\to\CC^{\Adot}$ to the dense open set $V_\Adot$.
Also define $\mydefMATH{Y_{\calA_I}}\to V_\Adot$ to be the pullback of $X_{\calA_I}\to\CC^{\calA_I}$ along the map
$V_\Adot\to V_{\calA_I}$.
Write $\Phi\colon Y_\Adot\to Y_{\calA_I}$ for the map induced by $\Phi$.

\begin{lemma}\label{L:Triangular}
 Suppose that $\Adot$ is a triangular set of supports in $\ZZ^n$ witnessed by $I\subsetneq[n]$.
 Then $Y_\Adot\to Y_{\calA_I}\to V_{\Adot}$ a composition of covering spaces.
 If $1<\MV(\calA_I)<\MV(\Adot)$, then this decomposition is nontrivial, so that $X_\Adot\to\CC^\Adot$ is decomposable.

 Furthermore, each fiber of the map $Y_\Adot\to Y_{\calA_I}$ may be identified with the set of solutions of a polynomial
 system with support $\overline{\calA_J}$. 
\end{lemma}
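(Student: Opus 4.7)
The plan is to first establish the fiber identification, and then deduce the covering-space decomposition and its nontriviality from the product formula $\MV(\Adot) = \MV(\calA_I) \cdot \MV(\overline{\calA_J})$ of~\eqref{Eq:MVproduct} combined with Bernstein-Kushnirenko. The recurring device will be: two upper bounds whose product equals the actual solution count must each be sharp.

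First I would make the fiber description precise. Any $x \in (\CC^\times)^n$ with $F(x)=0$ projects under $\Phi$ to a zero $y = \Phi(x) \in \TT^I$ of $F_I$, since the polynomials $f_i$ for $i \in I$ have support in $\ZZ^I$ and are therefore pullbacks along $\Phi$ of polynomials on $\TT^I$. Each fiber $\Phi^{-1}(y)$ is a coset of $\TT_J$; choosing a base point identifies the fiber with $\TT_J$, and the restrictions of $\{f_j\}_{j \in J}$ to the fiber become a sparse polynomial system on $\TT_J$ with support $\overline{\calA_J}$. This gives the last sentence of the lemma and, by Bernstein-Kushnirenko, a bound of $\MV(\overline{\calA_J})$ solutions per fiber.

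Next I would run the sharpness argument. Fix $F \in V_\Adot$, so $F$ has exactly $\MV(\Adot)$ solutions in $(\CC^\times)^n$. Bernstein-Kushnirenko applied to $F_I$ on $\TT^I$ gives at most $\MV(\calA_I)$ zeros there, and summing over these zeros the per-fiber bound $\MV(\overline{\calA_J})$ yields at most $\MV(\calA_I)\cdot \MV(\overline{\calA_J})$ solutions of $F$. By \eqref{Eq:MVproduct} this equals $\MV(\Adot)$, so both inequalities must be sharp: $F_I$ has exactly $\MV(\calA_I)$ solutions in $\TT^I$, and over each such $y$ the fiber $\Phi^{-1}(y)\cap\V(F)$ has exactly $\MV(\overline{\calA_J})$ points. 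The first statement says the projection $\CC^\Adot \to \CC^{\calA_I}$ sends $V_\Adot$ into $V_{\calA_I}$, legitimizing the definition of $Y_{\calA_I}$ and the map $Y_{\calA_I} \to V_\Adot$ as a covering space of degree $\MV(\calA_I)$. The second statement says that the map $Y_\Adot \to Y_{\calA_I}$ has constant fiber size $\MV(\overline{\calA_J})$, so it too is a covering space. The nontriviality claim is then immediate: if $1 < \MV(\calA_I) < \MV(\Adot)$, then by the product formula also $\MV(\overline{\calA_J}) > 1$, so neither map in $Y_\Adot \to Y_{\calA_I} \to V_\Adot$ is trivial, giving the desired decomposition in the sense of Section~\ref{subsection:decomposablebranchedcovers}.

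The main obstacle I anticipate is being careful about the fact that, while each fiber of $Y_\Adot\to Y_{\calA_I}$ is the zero set in $\TT_J$ of a system with support $\overline{\calA_J}$, that system need not be generic in $\CC^{\overline{\calA_J}}$; the sharpness argument only guarantees that it has the maximal torus solution count. This is exactly what is needed to conclude the covering property (constant fiber cardinality plus local triviality, via the standard étale argument applied to the incidence variety), but it does require shrinking $V_\Adot$ further by semicontinuity to exclude the closed locus where either count drops, and checking this locus remains proper. The product-formula sharpness at a dense set of $F \in V_\Adot$ then propagates to the local triviality used in identifying $Y_\Adot\to Y_{\calA_I}\to V_\Adot$ as a composition of covering spaces, completing the proof.
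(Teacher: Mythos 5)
Your proof follows the paper's argument essentially line-for-line: the key step in both is applying the product formula~\eqref{Eq:MVproduct} to force the Bernstein--Kushnirenko bound on $\#\calV(F_I)$ and the per-fiber bound on $\#\calV(\overline{F_J})$ to be simultaneously sharp for every $F\in V_\Adot$, which in turn shows the image of $V_\Adot$ lands in $V_{\calA_I}$. The cautionary remark in your final paragraph is unnecessary: the sharpness argument holds pointwise for \emph{every} $F\in V_\Adot$, not merely a dense subset, so there is no further locus to exclude and no need to shrink $V_\Adot$ by semicontinuity.
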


\begin{proof}
  Let $F\in V_{\Adot}$.
  Then its number of solutions is $\#\calV(F)=\MV(\Adot)$.
  If $x\in\calV(F)$, then $\Phi(x)\in\TT^I$ is a solution of $f_i=0$ for $i\in I$.
  Thus $\Phi(\calV(F))\subset\calV(F_I)$, the latter being the solutions of $F_I$ on $\TT^I$.
  For any $y\in\calV(F_I)$, if we choose an identification $\TT_J \simeq\Phi^{-1}(y)$ of the fiber, 
  then the restriction of $F$ to $\Phi^{-1}(y)$ is the system $\mydefMATH{\overline{F_J}}=\{\overline{f_j}\mid j\in J\}$.
  By the Bernstein-Kushnirenko Theorem, this has at most $\MV(\overline{\calA_J})$ solutions.
  By the product formula~\eqref{Eq:MVproduct} and our assumption on $\#\calV(F)$, we conclude that
  the system $F_I$ has $\MV(\calA_I)$ solutions, and for each $y\in\calV(F_I)$, the system $\overline{F_J}$ has 
  $\MV(\overline{\calA_J})$ solutions.

  In particular, this implies that the image of $V_{\Adot}$ in $\CC^{\calA_I}$ is a subset of $V_{\calA_I}$.
  As  $V_{\Adot}$ is open and dense in $\CC^\Adot$, its image contains an open dense subset.
  This proves the assertion that  $Y_\Adot\to Y_{\calA_I}\to V_{\Adot}$ is a decomposition of covering spaces.
  We have already shown that each fiber  of the map $Y_\Adot\to Y_{\calA_I}$ is a polynomial system with support
  $\overline{\calA_J}$ with exactly $\MV(\overline{\calA_J})$ solutions.
  Thus when $1<\MV(\calA_I)<\MV(\Adot)$, we have $\MV(\overline{\calA_J})>1$, which shows that this
  decomposition is nontrivial.
\end{proof}

\section{Computing the decompositions}\label{S:SNF}
We show how to compute the decompositions of $X_\Adot\to\CC^{\Adot}$ from 
Section~\ref{SS:Decompositions} when $\Adot$ is either lacunary or strictly triangular.

Let us consider the Smith normal form (see Section~\ref{subsubsection:smithnormalform}),
\begin{equation}
\label{Eq:SNF}
\calA = PDQ,
\end{equation} when $\calA$ is the matrix whose columns are the vectors in $\Adot$ and
$\MV(\Adot)>0$. 
Then $d_n> 0$ as $\ZZ\Adot$ has rank $n$, and $\Adot$ is lacunary when $d_n> 1$.
In this case, an identification $\varphi\colon\ZZ^n\xrightarrow{\sim}\ZZ\calA$ is given by $PD_n$, where
\mydefMATH{$D_n$} is the principal $n\times n$ submatrix of $D$.
Recall that the 
corresponding surjection $\varphi^*=\Phi\colon(\CC^\times)^n\to(\CC^\times)^n$ has kernel
$\Hom(\ZZ^n/\ZZ\Adot,\CC^\times)$.
Let $\mydefMATH{\psi}=P^{-1}$.
Then $\psi\circ\varphi=D_n$, so that if we set $\mydefMATH{\Psi}=\psi^*$, then
$\Phi\circ\Psi\colon(\CC^\times)^n\to(\CC^\times)^n$  is diagonal, 
 \begin{equation}\label{Eq:diagonal}
  \Phi\circ\Psi(x_1,\dotsc,x_n)\ =\ (x_1^{d_1},\dotsc,x_n^{d_n})\,.
 \end{equation}
Let $y=(y_1,\dotsc,y_n)\in(\CC^\times)^n$.
If we set $\mydefMATH{\rho_i}=|y_i|$ and $\mydefMATH{\zeta_i}=\arg(z_i)$ so that $y_i=\rho_i e^{\sqrt{-1}\zeta_i}$, then 
$(\Phi\circ\Psi)^{-1}(y)$ is the set
 \begin{equation}\label{Eq:PhiInv}
    \left\{ \left(\rho_1^{1/d_1}e^{\sqrt{-1}\theta_1}\,,\,\dotsc\,,\,\rho_n^{1/d_n}e^{\sqrt{-1}\theta_n}\right)
   \,\middle\vert\,  \theta_i = \tfrac{\zeta_i{+}2\pi j}{d_i}\mbox{ for } j=0,\dotsc,d_i{-}1\right\}\vspace{3pt}
 \end{equation}
 as explained in Section~\ref{subsubsection:smithnormalform}.
 %

Suppose that $\Adot$ is triangular, and let us use the notation of Section~\ref{SSS:triangular}.
We suppose that $I=[k]=\{1,\dotsc,k\}$ and $J=\{k{+}1,\dotsc,n\}$.
Given a polynomial $f$ on $(\CC^\times)^n$, its restriction $\overline{f}$ to a fiber of $\Phi\colon(\CC^\times)^n\to\TT^I$
is a regular function on the fiber, which is isomorphic to $\TT_J$.
To represent $\overline{f}$ as a polynomial on $\TT_J$ depends on the choice of a point in that fiber. 
Indeed, suppose that $f=\sum_{\alpha\in\calA}c_\alpha x^\alpha$.
Let $y\in\TT^I$ and $y_0\in\Phi^{-1}(y)$ be a point in
the fiber above $y$, so that $\TT_J\ni z\mapsto y_0z\in\Phi^{-1}(y)$ parameterizes $\Phi^{-1}(y)$.
If we write \mydefMATH{$\overline{\alpha}$} for the image of $\alpha\in\ZZ^n$ in $\ZZ_J=\ZZ^n/\ZZ^I$, then 
 \begin{equation}\label{Eq:substituteFibre}
   \overline{f}(z)\ =\ \sum_{\alpha\in\calA} c_\alpha (y_0z)^\alpha\
    =\ \sum_{\beta\in\overline{\calA}} z^\beta \ 
   \biggl(\,\sum_{\alpha\in\calA\ \mbox{\scriptsize with}\ \overline{\alpha}=\beta} c_\alpha y_0^\alpha\biggr)\,.
 \end{equation}
A uniform choice of a point in each fiber is given by fixing a splitting
$\TT^I\hookrightarrow(\CC^\times)^n$ of the map $\Phi\colon(\CC^\times)^n\twoheadrightarrow\TT^I$.
This gives an identification $(\CC^\times)^n=\TT^I\times\TT_J$.
Then points $y\in\TT^I$ are canonical representatives of cosets of $\TT_J$.
As $k=|I|$, we may further fix isomorphisms $\TT^I\simeq(\CC^\times)^k$ giving $\ZZ^I\simeq\ZZ^k$ and
$\TT_J\simeq(\CC^\times)^{n-k}$ giving $\ZZ_J\simeq\ZZ^{n-k}$.


Suppose now that $\calA=\calA_I$, and we compute a decomposition~\eqref{Eq:SNF}.
Since $\ZZ\calA_I$ has rank $k$, the diagonal matrix $D$ has $k$ nonzero invariant factors.
The saturation $L$ of  $\ZZ\calA_I$ is the image of $PI_k$, where $I_k$ is the $n\times n$
matrix whose only nonzero entries are in its principal $k\times k$ submatrix, which forms an identity matrix.
Then $\varphi=PI_k$ and $\Phi=\varphi^*$.
Applying the coordinate change $\mydefMATH{\psi}=P^{-1}$ to $\ZZ^n$ identifies this saturation as the coordinate plane 
$\ZZ^k\oplus\bzero^{n-k}$ and the lattice $\ZZ\calA_I$ as $d_1\ZZ\oplus d_2\ZZ\oplus\dotsb\oplus d_k\ZZ\oplus\bzero^{n-k}$.
As in Section~\ref{SSS:triangular}, this identifies $\ZZ/L$ with the complementary coordinate plane,
$\bzero^k\oplus\ZZ^{n-k}$.
Setting $\mydefMATH{\Psi}=\psi^*$ , the composition $\Phi\circ\Psi$ is the projection to the first $k$ coordinates,
 \begin{equation}\label{Eq:projection}
  \Phi\circ\Psi\;\colon\; (\CC^\times)^n\ \lthra\ (\CC^\times)^k
 \end{equation}
and we identify $\TT_J=1^k\times (\CC^\times)^{n-k}$ and 
$\TT^I=(\CC^\times)^k\times 1^{n-k}$.

 
\section{Solving decomposable sparse systems}
\label{S:Algorithm}
We describe algorithms that use Esterov's condi\-tions to solve sparse decomposable systems and suggest an application for computing
a start system for solving a general (not necessarily decomposable) sparse polynomial system.
In each, we let \mydefMATH{$\texttt{SOLVE}$} be an arbitrary algorithm for solving a polynomial system.
We assume that the system $F$ to be solved is general given its support $\Adot$ in that it has $\MV(\Adot)$ 
solutions in $(\CC^\times)^n$.
If not, then one may instead solve a general polynomial system with support $\Adot$ and
then use a parameter homotopy together with endgames to compute $\calV(F)$. 
Recall the identification in \eqref{Eq:Lacunary} for Algorithm~\ref{alg:Lacunary}
and the notation ${F_I}\in\CC^{\calA_I}$ used in \eqref{Eq:Triangular_Decomposition} for Algorithm~\ref{alg:Triangular}.

\boxit{
\begin{algorithm}[SolveLacunary]\ 
\label{alg:Lacunary}\\ \myline 
{\bf Input:} \\
$\bullet$  A general polynomial system $F$ whose support $\Adot$ is  lacunary.
\\ \myline 
{\bf Output:} \\
$\bullet$ All solutions $\calV(F)\subset (\CC^\times)^n$
\\ \myline 
{\bf Steps:}
\begin{enumerate}[nosep]
\item[1]  Compute the Smith normal form \eqref{Eq:SNF} of $\Adot$, giving  $\varphi=PD_n$, $\Phi=\varphi^*$, $\psi = P^{-1}$, and $\Psi=\psi^*$,
  so that $\Phi\circ\Psi$ is diagonal~\eqref{Eq:diagonal}
\item[2] Use $\texttt{SOLVE}$ to compute $\calV(\iota(F))\subset (\CC^\times)^n$

\item[3] Using the formula~\eqref{Eq:PhiInv} to compute $(\Phi\circ\Psi)^{-1}(y)$ for $y\in\calV(\iota(F))$, \return
  \[
    \left\{ \Psi(w) \mymid w\in \bigcup_{z \in\calV(\iota(F))} (\Phi\circ\Psi)^{-1}(z)\right\}
  \]
\end{enumerate}
\end{algorithm}}

\begin{proof}[Proof of Correctness.]
  By Lemma~\ref{L:Lacunary}, $\calV(F)=\Phi^{-1}(\calV(\iota(F)))$.
  We apply $\Psi$ to the points of $(\Phi\circ\Psi)^{-1}(z)$ for $z\in\calV(\iota(F))$ to obtain
  points of $\calV(F)$ in their original coordinates.
\end{proof}

\boxit{
\begin{algorithm}[SolveTriangular]\ 
\label{alg:Triangular}\\ \myline
{\bf Input:}\\
$\bullet$  A general polynomial system $F$ whose support $\Adot$ is triangular, witnessed by $0<k<n$ such that 
 $\rank(\ZZ\calA_{[k]})=k$ \\ \myline
{\bf Output:}\\
$\bullet$ All solutions of $\calV(F)\subset(\CC^\times)^n$ \\ \myline
{\bf Steps:}
\begin{enumerate}[nosep]
\item[1]  Compute the Smith normal form \eqref{Eq:SNF} of $\calA_{[k]}$, giving $\varphi=PI_k$, $\Phi=\varphi^*$, $\psi = P^{-1}$, and
  $\Psi=\psi^*$, so that  $\Phi\circ\Psi$ is the projection~\eqref{Eq:projection}
  
\item[2] Use $\texttt{SOLVE}$ to compute $\calV(F_{[k]})\subset (\CC^\times)^k$

\item[3] Choose $y_0\in\calV(F_{[k]})$
  Use $\texttt{SOLVE}$ to compute the points of the fiber $(\Phi\circ\Psi)^{-1}(y_0)$ in $Y_{\Adot}$,
  which are $\calV(\overline{F_J})\subset\{y_0\}\times(\CC^\times)^{n-k}$, where $\overline{F_J}$ has support
  $\overline{\calA_J}$ and $J=[n]\smallsetminus[k]$
 
\item[4] \myfor each $y\in\calV(F_{[k]})$ use a parameter homotopy with start system
  $\calV(\overline{F_J})$ to compute $(\Phi\circ\Psi)^{-1}(y)$ and \return
  \[
    \left\{ \Psi(w) \mymid w\in \bigcup_{y \in\calV(F_{[k]})} (\Phi\circ\Psi)^{-1}(y)\right\}
  \]
\end{enumerate}
\end{algorithm}}

\begin{proof}[Proof of Correctness.]
  By Lemma~\ref{L:Triangular}, every solution $x\in\calV(F)$ lies over a solution $y=\Phi(x)$ to $F_{[k]}$ in
  $(\CC^\times)^k$.
  As explained in Section~\ref{S:SNF}, the map $\Phi\circ\Psi$ is a coordinate projection and
  $(\Phi\circ\Psi)^{-1}(y)=\calV(\overline{F_J})$.
  Here, $\overline{F_J}=(\overline{f_{k+1}},\dotsc,\overline{f_n})$ where $\overline{f_j}$ has support $\overline{\calA_j}$
  and is computed using~\eqref{Eq:substituteFibre}.
  We apply $\Psi$ to convert these points to the original coordinates.  
\end{proof}

Our main algorithm  takes a sparse system and checks Esterov's criteria for decomposability.  
If the system is decomposable, the algorithm calls Algorithm~\ref{alg:Lacunary} (if lacunary) or
Algorithm~\ref{alg:Triangular} (if triangular), and in each of these algorithms calls to the solver $\texttt{SOLVE}$ are assumed to
be recursive calls back to Algorithm~\ref{alg:SDS}.
If the polynomial system is indecomposable, then Algorithm~\ref{alg:SDS}
calls a black box solver $\mydefMATH{\texttt{BLACKBOX}}$.

\boxit{
\begin{algorithm}[SolveDecomposable]\ 
\label{alg:SDS} \\ \myline
{\bf Input:}\\ $\bullet$ A generic polynomial system $F$ with support $\Adot$ \\ \myline
{\bf Output:} \\ $\bullet$ All solutions of $\calV(F)\subset (\CC^\times)^n$ \\ \myline
{\bf Steps:}
\begin{enumerate}[nosep]
\item[1] Compute the Smith normal form $PDQ$~\eqref{Eq:SNF} of $\Adot$
\item[2]  \myif $d_n>1$, then {\return} SolveLacunary$(F)$
\item[3] \myif $d_n=1$, then
\begin{enumerate}[nosep]
  \item[3.1]\myfor all $\emptyset\neq I\subsetneq [n]$
    compute the Smith normal form  $PD_IQ$~\eqref{Eq:SNF} of $\calA_I$

 \item[3.2]\myif $\rank(D_I)=|I|$ for some $I$, reorder so $I=[k]$ and {\return} SolveTriangular$(F,k)$
\item[3.3]  \myelse neither of Esterov's conditions hold and \return $\texttt{BLACKBOX}(F)$
    \end{enumerate}
\end{enumerate}
\end{algorithm}}

\begin{proof}[Proof of Correctness.]
  First note that if the algorithm halts, then it returns the solutions $\calV(F)$.
  Halting is clear in Case (3), but the other cases involve recursive calls back to  Algorithm~\ref{alg:SDS}.
  In Case (1), SolveLacunary will call Algorithm~\ref{alg:SDS} on a system $\iota(F)$ whose mixed volume is
  less than $\MV(\Adot)$.
  In Case (2), SolveTriangular  will call Algorithm~\ref{alg:SDS} on systems $F_{[k]}$ and $\overline{F_J}$, each involving
  fewer variables than $F$.
  Thus, in each recursive call back to  Algorithm~\ref{alg:SDS}, either the mixed volume or the number of variables
  decreases, which proves that the algorithm halts.
\end{proof}

%
\section{Start systems}
The start system in the B\'ezout homotopy (Algorithm \ref{alg:bezouthomotopy}) is a highly decomposable sparse polynomial system consisting of supports which are subsets of the original support of $F$, but have the same mixed volume.
We propose a generalization, in which Algorithm~\ref{alg:SDS} is used to compute a start system.

\begin{example}\label{Ex:start}
  Suppose that we have supports $\calA_1=\calA_2=\calA$, shown in Figure \ref{fig:sdsvert1} which are given by the columns of the matrix
  $(\begin{smallmatrix}0&0&1&1&2&3&3&3&4&5&5&6\\0&2&0&1&3&0&1&4&2&3&4&4\end{smallmatrix})$.
  \begin{figure}[!htpb]
\includegraphics[scale=0.5]{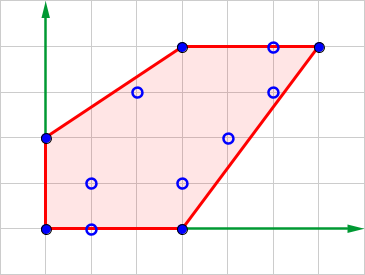}
  \caption{\label{fig:sdsvert1}A support $\mathcal A$ such that $(\mathcal A, \mathcal A)$ is neither triangular nor lacunary.}
  \end{figure}
  Then $\MV(\calA_1,\calA_2)=2!\vol(\conv(\calA))=30$.
  Let $\calB_1=\calB_2=(\begin{smallmatrix}0&0&3&3&6\\0&2&0&4&4\end{smallmatrix})$\vspace{1pt} be the set of vertices
  of $\conv(\calA)$.
  Given a general system $F\in\CC^\Adot$, let $G\in\CC^\Bdot\subset\CC^\Adot$ be obtained from $F$ by restriction to the
  monomials in $\calB$.
  (That is, we set coefficients of monomials $x^\alpha$ in $F$ to zero if $\alpha\not\in\calB$.)
  Then $\Bdot$ is lacunary with the map $\Phi(x_1,x_2)=(x_1^3,x_2^2)$, and $\iota(G)$ has five solutions.
  We may use Algorithm~\ref{alg:SDS} (more specifically, Algorithm~\ref{alg:Lacunary}) to compute $\calV(G)$, and then
  compute $\calV(F)$ using the straight-line homotopy $H(t;x)$ with start system $G=H(1;x)$ and tracking from the solutions $\calV(G)$ at $t=1$.\hfill$\diamond$ 
\end{example}

Example~\ref{Ex:start} motivates our final algorithm.
For a collection $\Adot=(\calA_1,\dotsc,\calA_n)$ of supports, let $\mydefMATH{\vertices(\Adot)}=(\vertices(\calA_1),\dotsc,\vertices(\calA_n))$ where $\mydefMATH{\vertices(\calA_i)}=\vertices(\conv(\calA_i))$.
Note that if $G \in \CC^{\vertices(\Adot)}$ is a regular value of the branched cover
$\pi|_{X_{\vertices(\Adot)}}\colon X_{\vertices(\Adot)} \to \CC^{\vertices(\Adot)}$ then $G$ is also a regular value of
$\pi\colon X_\Adot \to \CC^\Adot$.
As such, $G$ may be taken as a start system for a straight-line homotopy and used to compute $\calV(F)$
for any $F\in\CC^{\Adot}$ with $\calV(F)$ finite.
 The benefit of this approach is that $\pi|_{X_{\vertices(\Adot)}}$ decomposes if $\pi$ does. Therefore, as seen in Example~\ref{Ex:start}, $\pi|_{X_{\vertices(\Adot)}}$ is more likely (and no less likely) than $\pi$ to be decomposable.

\boxit{
\begin{algorithm}[Decomposable Start System]\ 
\label{alg:SDSS}\\ 
{\bf Input:}\\ 
$\bullet$ A set $\Adot$ of supports \\ 
{\bf Output:} \\ $\bullet$ A start system $G$ for a homotopy coming from $\pi_{\Adot}$ and start solutions $\V(G)$\\ 
{\bf Steps:}
\begin{enumerate}[nosep]
  \item[1] Choose a general system $G\in\CC^{\vertices(\Adot)}$
  \item[2] Compute $\calV(G)$ using Algorithm~\ref{alg:SDS}
  \item[3] \return the pair $(G,\calV(G))$
\end{enumerate}

\end{algorithm}
}

\begin{proof}[Proof of Correctness.]
  As $G\in\CC^{\vertices(\Adot)}$ is  general, it has $\MV(\vertices(\Adot))$ solutions.
  Since for each $i$, $\conv(\calA_i)=\conv(\vertices(\calA_i))$, we have $\MV(\vertices(\Adot))=\MV(\Adot)$.
  Finally, $\CC^{\vertices(\Adot)}$ is the subspace of $\CC^{\Adot}$ where the coefficients of non-extreme monomials in each
  polynomial are zero.
  Thus $G\in\CC^{\Adot}$, which shows that $(G,\calV(G))$ is a start system for $\Adot$.
\end{proof}

\begin{remark}\label{R:more}
  The B\'ezout homotopy motivated Algorithm~\ref{alg:SDSS}.
  However, if we apply Algorithm~\ref{alg:SDSS} to the system of supports $\Adot$, where $\calA_i$ consists of all monomials of
  degree at most $d_i$, then we will not get the start system for the B\'ezout homotopy.
  For example, when $n=2$, $d_1=2$, and $d_2=3$, the supports are  as shown in Figure \ref{fig:sdsbezout}.
 Here, $\calB_1$ and $\calB_2$ are the supports of the start system for the B\'ezout homotopy.
 
   \begin{figure}[!htpb]
\includegraphics[scale=0.45]{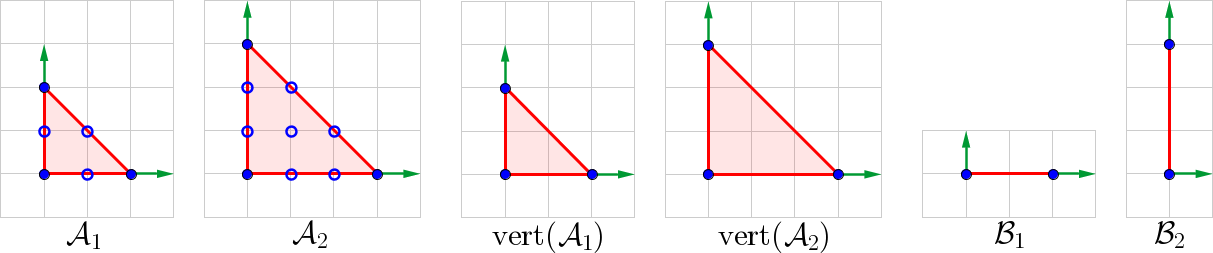}
 \caption{\label{fig:sdsbezout} Dense support $(\mathcal A_1, \mathcal A_2)$, the support $(\vertices(\calA_1),\vertices(\calA_2))$, and the support of the B\'ezout start system.} 
 \end{figure}

 We leave open the challenge of finding a simple, general method to replace each set $\calA_i$ by a subset (or superset)
 $\calB_i$ of $\calA_i$, so that $\MV(\Adot)=\MV(\Bdot)$ and $\pi\colon X_{\Bdot} \to \CC^\Bdot$ is decomposable.

 A possible first step would be to take advantage of the results on monotonicity developed in Section \ref{subsection:monotonicity}. For example, if $\mathcal A_1 = (\begin{smallmatrix}1&3&1&3&2\\1&1&3&3&4\end{smallmatrix})$ and $\mathcal A_2=\mathcal B_1=\mathcal B_2= (\begin{smallmatrix}2&0&2&4\\0&2&4&2\end{smallmatrix})$ then $\Adot=(\mathcal A_1,\mathcal A_2)$ is neither lacunary nor triangular, but $\Bdot=(\mathcal B_1,\mathcal B_2)$ is lacunary. Moreover, $\MV(\Adot)=\MV(\Bdot) = 8$ and so a general sparse polynomial system supported on $\Adot$ corresponds to a regular value of $\pi_\Bdot$. Thus, one may solve a general sparse decomposable system on $\Bdot$, and subsequently solve a system supported on $\Adot$ via a parameter homotopy. 
\hfill$\diamond$
\end{remark}


\section{A computational experiment}\label{S:computations}
We explored the computational cost of using Algorithm~\ref{alg:SDS} to solve sparse decomposable systems, comparing
timings to \textbf{PHCPack} \cite{V99,PHC_M2} on a family of related systems.

Let $\calA_1=(\begin{smallmatrix}0&1&2&0&1\\0&0&0&1&1\end{smallmatrix})$, 
$\calA_2=(\begin{smallmatrix}1&0&1&2&1\\0&1&1&1&2\end{smallmatrix})$, 
$\calB_1=(\begin{smallmatrix}0&2&0&2\\0&0&1&3\end{smallmatrix})$, and 
$\calB_2=(\begin{smallmatrix}0&1&2&0&2&0\\0&0&0&1&1&2\end{smallmatrix})$.
We display these supports and their convex hulls in Figure \ref{fig:sdsexperiment}.
\begin{figure}[!htpb]
\includegraphics[scale=0.6]{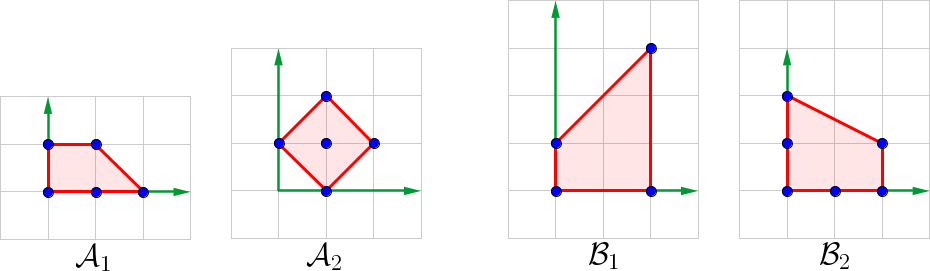}
\caption{\label{fig:sdsexperiment}The four supports involved in a computational experiment.}
\end{figure}
Let $\mydefMATH{\calC}=\{0,1\}^5$ be the vertices of the five-dimensional cube.
We construct sparse decomposable systems from $\mydefMATH{\Adot}=(\calA_1,\calA_2)$,
$\mydefMATH{\Bdot}=(\calB_1,\calB_2)$, and $\calC$ as follows.

Choose two injections $\mydefMATH{\imath},\mydefMATH{\jmath}\colon\ZZ^2\to\ZZ^5$ such that
$\imath(\ZZ^2)\cap\jmath(\ZZ^2)=\{0\}$.
For example, choose four linearly independent vectors $\imath_1,\imath_2,\jmath_1,\jmath_2\in\ZZ^5$, and define
$\imath(a,b)=a\imath_1+b\imath_2$, and the same for $\jmath$.
Let us set
\[
  \mydefMATH{\calA(\imath,\jmath)}\ =\
  \bigl(\imath(\calA_1)\,,\,\imath(\calA_2)\,,\ \jmath(\calB_1)\,,\,\jmath(\calB_2)\,,\ \calC\bigr)\,.
\]

%
%
%
\begin{example}\label{Ex:computation1}
We now illustrate Algorithm~\ref{alg:SDS} in detail on $\calA(\imath,\jmath)$ by considering the case when
$\imath_1,\imath_2,\jmath_1,\jmath_2$ are the first four standard unit vectors $e_1,\ldots,e_4$.  
  Suppose $F = (f_1,f_2,g_1,g_2,h)$ is a system of polynomials $\mathbb{C}[x_1,x_2,y_1,y_2,z]$ with support
  $\mathcal A(e_1,e_2,e_3,e_4)$. 
  We use superscripts to distinguish different calls of the same algorithm.
  When \texttt{SolveDecomposable}$^{(1)}(F)$ is called, it first checks if $F$ is lacunary (it is not as $\ZZ\calC=\ZZ^5$),
  and then recognizes that $F$ is triangular witnessed by $(f_1,f_2)$.
  As such, it calls \texttt{SolveTriangular}$^{(1)}(F,2)$ which computes the $\MV(\Adot)=5$ solutions $p_1,\ldots,p_5$ to
  $\calV(f_1,f_2)$ with \textbf{PHCPack}, our choice of \texttt{BLACKBOX}.

  As its penultimate task, \texttt{SolveTriangular}$^{(1)}$ computes a fiber of the first solution $p_1$ by
  performing the substitution $(x_1,x_2)=p_1$ in $g_1,g_2$ and $h$, and recursively calls
  \texttt{Solve\-Decomposable}$^{(2)}$ on the system $(g_1(p_1,y,z),g_2(p_1,y,z),h(p_1,y,z))\in\CC[y_1,y_2,z]$.
  This system is recognized to be triangular witnessed by $(g_1,g_2)$ and \texttt{SolveTriangular}$^{(2)}(g_1,g_2)$
  computes the $\MV(\Bdot)=10$ solutions $q_1,\ldots,q_{10}$ using \textbf{PHCPack}.
  Next, \texttt{SolveTriangular}$^{(2)}$ computes a fiber above $q_1$ by performing  the substitution
  $y=(y_1,y_2)=q_1$ in $h(p_1,y,z)$ producing the univariate polynomial $h(p_1,q_1,z)$ of degree $1$ which
  has solution $(p_1,q_1,z_1)$.
  Finally, \texttt{SolveTriangular}$^{(2)}$ performs a parameter homotopy from $q_1$ to $q_i$ to populate the fibers above
  each $q_i$.
  Thus \texttt{SolveTriangular}$^{(1)}$ populates the fiber above $p_1$ consisting of $10 \cdot 1=10$ solutions.
  As its final step, \texttt{SolveTriangular}$^{(1)}$  uses parameter homotopies from $p_1$ to $p_i$ to populate all
  fibers producing all $5 \cdot 10 = 50$ solutions of $\calV(F)$.

\begin{figure}[htpb!]
\includegraphics[scale=0.6]{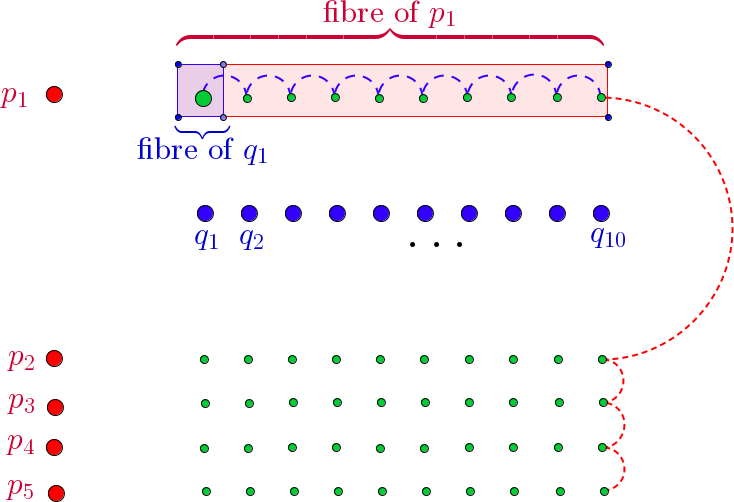}
\caption{\label{fig:DSSexperiment}A schematic of the process in Example \ref{Ex:computation1}.}
\end{figure}
  
   Figure \ref{fig:DSSexperiment} depicts a schematic of this process. Red objects correspond to the function \texttt{solve}\-$\texttt{Decomposable}^{(1)}$ and blue objects correspond to $\texttt{solveDecomposable}^{(2)}$. The largest points represent solutions which were computed directly. The dotted lines represent the use of monodromy to move fibers. \hfill$\diamond$
\end{example}

The overhead of this algorithm includes the computation of Smith normal forms and the search for subsets witnessing
triangularity.
Additionally, it often requires more path-tracking than a direct use of \textbf{PHCPack}.
Nonetheless, the overhead seems nominal, and compared to the paths tracked in \textbf{PHCPack}, the paths tracked in our
algorithm either involve fewer variables or  polynomials of smaller degree. 

For example, in Example \ref{Ex:computation1}, our algorithm called \textbf{PHCPack} to solve two sparse polynomial systems with $5$
and $10$ solutions respectively.
A parameter homotopy was called $10-1=9$ times on a system with $1$ solution, then a different parameter homotopy was
called $5-1=4$ times on a system with $10$ solutions.
In total, $5+10+9+40=64$ individual paths were tracked.
In contrast, a direct use of \textbf{PHCPack} involves tracking exactly $\MV(\mathcal A(e_1,e_2,e_3,e_4)) = 50$ paths, albeit
in a higher dimensional space.  

For more general 
$\imath$ and $\jmath$, the recursive structure of our computation is similar to
Example~\ref{Ex:computation1}. 
Some notable differences include 
\begin{enumerate}
\item $\imath(\Adot)$ or $\jmath(\Bdot)$ may be lacunary which induces further decompositions. 
\item Monomial changes must be computed as $\imath(\Adot)$ or $\jmath(\Bdot)$ could involve all variables.
\item For most $\imath,\jmath$ the univariate polynomial obtained from $h$ has degree $5$ and is solved by computing
  eigenvalues of its companion matrix.
\end{enumerate}
For example,
 if we choose $e_1-e_2, e_2-e_3,e_3-e_4, e_4-e_5$ for $\imath_1,\imath_2,\jmath_1,\jmath_2$, then again, no
system in the algorithm is lacunary, but the univariate polynomial obtained from $h$ has support
$\{0,1,2,3,4,5\}$, so that $\MV(\calA(\imath,\jmath))=250$.

\begin{figure}[htpb]
	{\includegraphics[scale=0.36]{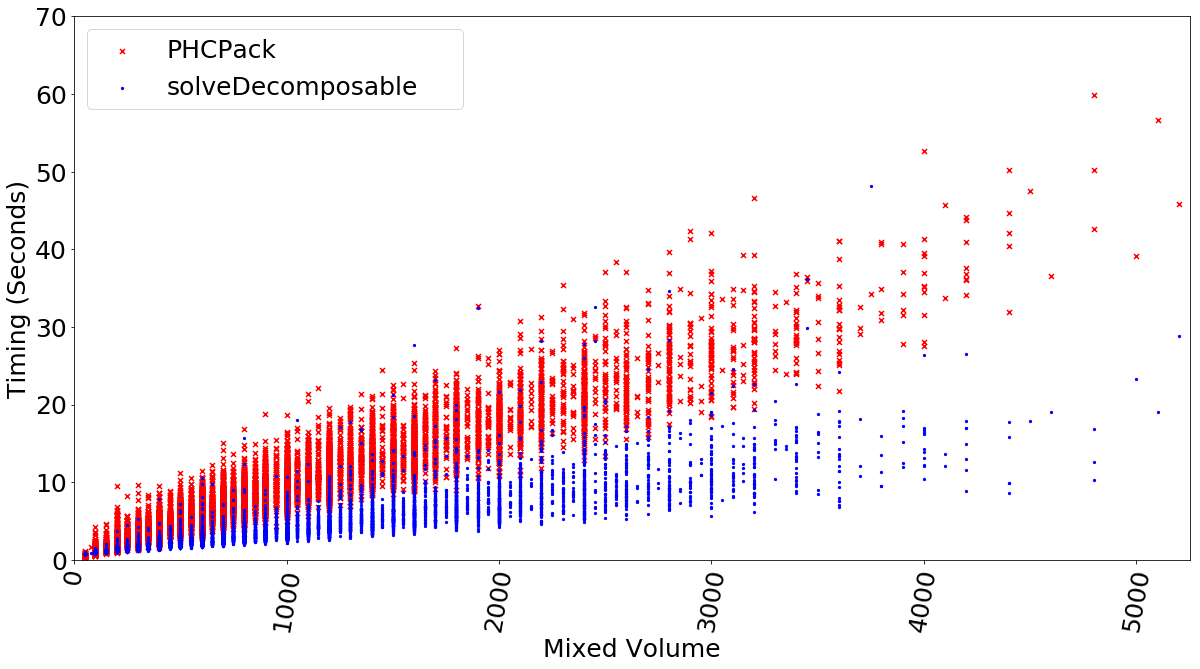}}
\caption{Scatter plot of timings}
\label{fig:scatter}
\end{figure}
\begin{figure}[htpb]
	{\includegraphics[scale=0.36]{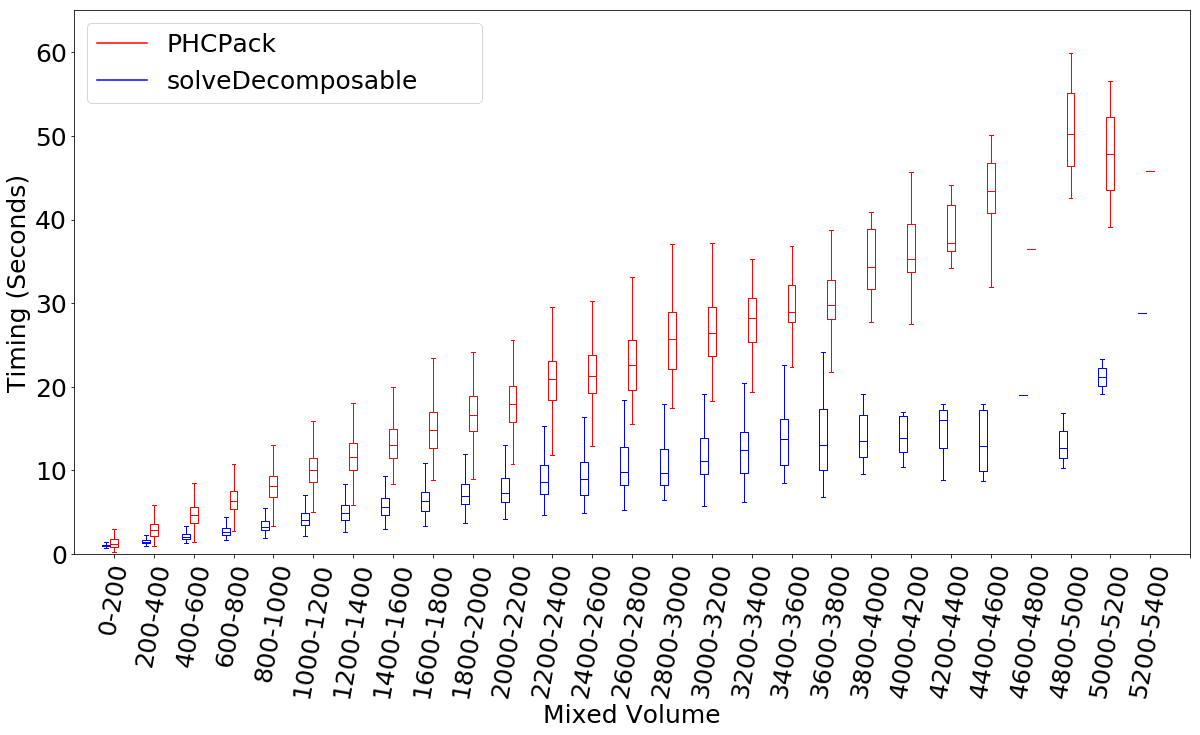}}
\caption{Box plot of timings}
\label{fig:boxplots}
\end{figure}

In our computational experiment, we produced $13563$  instances of $\mathcal A(\imath,\jmath)$ and solved each instance using our implementation of Algorithm
\ref{alg:SDS} as well as with \textbf{PHCPack}.
Due to ill-conditioning 
and heuristic choices of tolerances, some computations failed to produce all solutions.
We only include the $10962$ instances such that both \textbf{PHCPack} and Algorithm \ref{alg:SDS} computed all solutions.

We give a scatter plot of the elapsed timings in Figure \ref{fig:scatter} with respect to the mixed volume of the
system. Figure \ref{fig:boxplots} displays box plots of the timings of each algorithm grouped by sizes of mixed
volumes.
The boxes range from the first quartile $q_1$ to the third quartile $q_3$ of the group data with whiskers
extending to the smallest and largest data points which are not outliers.
Outliers are the data points which are smaller than $q_1-1.5I$ or larger than $q_3+1.5I$ where $I$ is the length of the
interquartile range $(q_1,q_3)$.

A more detailed account of these computations, along with our implementation in \textbf{Macaulay2}, may be found
at the website~\cite{ThomasWebsite}. 
%
\chapter{SUMMARY}

Newton polytopes provide a rich combinatorial structure by which we may delineate polynomials and thus polynomial systems. The geometric nature of numerical algebraic geometry lends itself to algorithms which can extract and use this combinatorial data via the HS-algorithm and polyhedral homotopy algorithm, respectively. We augment both of these algorithms. 

Using the HS-algorithm as a subroutine, we develop a tropical membership algorithm. We implement both the HS-algorithm and the tropical membership algorithm in a computer algebra system and analyze their convergence rates. We use the HS-algorithm to completely identify a large polynomial defining a hypersurface from algebraic vision. With the same software, we also determine many vertices of the L\"uroth polytope.

We augment the polyhedral homotopy by developing and implementing an algorithm which recognizes when a sparse polynomial system is decomposable. It then uses this decomposition to numerically and recursively solve the sparse system. We compare timings of our software against the use of a polyhedral homotopy.


\let\oldbibitem\bibitem
\renewcommand{\bibitem}{\setlength{\itemsep}{0pt}\oldbibitem}
\bibliographystyle{ieeetr}

\phantomsection
\addcontentsline{toc}{chapter}{REFERENCES}

\renewcommand{\bibname}{{\normalsize\rm REFERENCES}}

\bibliography{myReference}


\end{document}